\newlength{\myhmargin} \setlength{\myhmargin}{1in} \addtolength{\myhmargin}{18pt}
\title{Sutured ECH is a natural invariant}
\author[{\c C}a{\u g}atay Kutluhan]{{\c C}a{\u g}atay Kutluhan}
\address{Department of Mathematics \\ University at Buffalo}
\email{kutluhan@buffalo.edu}
\author[Steven Sivek]{Steven Sivek}
\address{Department of Mathematics \\ Imperial College London}
\email{s.sivek@imperial.ac.uk}
\let\@wraptoccontribs\wraptoccontribs
\address{Department of Mathematics \\ Harvard University}
\email{chtaubes@math.harvard.edu}
\thanks{{\c C}a{\u g}atay Kutluhan was supported by NSF grant DMS-1360293. Steven Sivek was supported by NSF postdoctoral fellowship DMS-1204387. C.~H.~Taubes was supported by NSF grant DMS-1401192.}
\DeclareMathAlphabet{\mathpzc}{OT1}{pzc}{m}{it}
\def\R{{\mathbb{R}}}
\def\C{{\mathbb{C}}}
\def\Z{{\mathbb{Z}}}
\def\T{{\mathbb{T}}}
\newcommand\hf{\widehat{HF}}
\newcommand\zz{\mathbb{Z}}
\newcommand\Sc{\text{Spin}^c}
\newcommand\spc{\mathfrak{s}}
\newcommand\ssm{\smallsetminus}
\newcommand\ecc{\mathit{ECC}}
\newcommand\ech{\mathit{ECH}}
\newcommand\isomto{\xrightarrow{\sim}}
\newcommand\ft{\mathfrak{t}}
\newcommand\HMfrom{\widehat{\mathit{HM}}}
\newcommand\CMfrom{\widehat{\mathit{CM}}}
\newcommand\calh{\mathcal{H}}
\newcommand\calp{\mathcal{P}}
\newcommand\ynt{{Y_n}^T}
\newcommand\ynth{({Y_n}^T)'}
\newcommand\ant{{\alpha_n}^T}
\newcommand\anth{({\alpha_n}^T)'}
\newcommand\antz{\alpha_n^{0,T}}
\newcommand\antzp{(\alpha_n^{0,T})'}
\newcommand\anto{\alpha_n^{1,T}}
\newcommand\antop{(\alpha_n^{1,T})'}
\newcommand\jntz{J_n^{0,T}}
\newcommand\jntzp{(J_n^{0,T})'}
\newcommand\jnto{J_n^{1,T}}
\newcommand\jntop{(J_n^{1,T})'}
\newcommand\jnt{{J_n}^T}
\newcommand\jpnt{{J'_n}^T}
\newcommand\spinc{\mathrm{Spin}^c}
\newcommand\spb{\mathbb{S}}
\newcommand\si{\mathbb{S}_{{\rm I}}}
\newcommand\psii{\psi_{{\rm I}}}
\newcommand\nabi{\nabla_{\mathrm{I}}}
\newcommand\diracop{\mathcal{D}}
\newcommand\aA{\hat{\mathrm{a}}_A}
\newcommand\Ai{\uptheta_0}
\newcommand\ahat{\hat{\mathrm{a}}}
\newcommand\calB{\mathcal{B}_{(A,\psi)}}
\newcommand\cale{\mathcal{E}}
\newcommand\en{{\textsc e}}
\newcommand\bbh{\mathbb{H}}
\newcommand\clm{\mathfrak{cl}}
\newcommand\fra{\mathfrak{a}}
\newcommand\frb{\mathfrak{b}}
\newcommand\frc{\mathfrak{c}}
\newcommand\frcs{\mathfrak{cs}}
\newcommand\frd{\mathfrak{d}}
\newcommand\dis{\operatorname{dist}}
\newcommand\fre{\mathfrak{e}}
\newcommand\frE{\mathfrak{E}}
\newcommand\frf{\mathfrak{f}}
\newcommand\frg{\mathfrak{g}}
\newcommand\frh{\mathfrak{h}}
\newcommand\frhi{\frh_{{\rm I}}}
\newcommand\fri{\mathfrak{i}}
\newcommand\frp{\mathfrak{p}}
\newcommand\frP{\mathfrak{P}}
\newcommand\frq{\mathfrak{q}}
\newcommand\rmD{\mathrm{D}}
\newcommand\LL{\mathcal{L}}
\newcommand\LLf{\mathfrak{L}}
\newcommand\Li{\mathcal{L}_{{\rm I}}}
\newcommand\Lif{\mathfrak{L}_{{\rm I}}}
\newcommand\sca{\textsc{a}} 
\newcommand\sch{{\textsc h}}
\newcommand\scL{{\textsc l}}
\newcommand\uscL{\underline{{\textsc l}}}
\newcommand\scm{\textsc{m}}
\newcommand\sco{\textsc{o}}  
\newcommand\usco{\underline{\textsc{o}}} 
\newcommand\scp{\textsc{p}}
\newcommand\scq{\textsc{q}}
\newcommand\scu{{\textsc u}}
\newcommand\hi{h_{{\rm I}}}
\newcommand\rmn{\mathrm{N}}
\newcommand\rmx{\mathrm{x}}
\newcommand\frw{\mathfrak{w}}
\newcommand\pzee{\mathpzc{e}}
\newcommand\pzz{\mathpzc{z}}
\newcommand\frz{\mathfrak{z}}
\newcommand\rmu{\mathrm{U}}
\newcommand\pze{\mathpzc{E}}
\newcommand\upze{\underline{\mathpzc{E}}}
\newcommand\pzq{\mathpzc{q}}
\newcommand\hu{\hat{u}}
\newcommand\hatom{\hat{\upomega}_0}
\newcommand\ind{\upiota_\frd}
\newcommand\he{\hat{e}}
\newcommand\frQ{\mathfrak{Q}}
\newcommand\frS{\mathfrak{S}}
\newcommand\frT{\mathfrak{T}}
\newcommand\frv{\mathfrak{v}}
\newcommand\calv{\mathcal{V}}
\DeclareFontFamily{U}{mathx}{\hyphenchar\font45}
\DeclareFontShape{U}{mathx}{m}{n}{
      <5> <6> <7> <8> <9> <10>
      <10.95> <12> <14.4> <17.28> <20.74> <24.88>
      mathx10
      }{}
\DeclareSymbolFont{mathx}{U}{mathx}{m}{n}
\DeclareMathAccent{\widecheck}{0}{mathx}{"71}
\newcommand{\HMto}{\widecheck{\mathit{HM}}}
\newtheorem{theorem}{Theorem}[section]
\newtheorem{lemma}[theorem]{Lemma}
\newtheorem{conjecture}[theorem]{Conjecture}
\newtheorem{corollary}[theorem]{Corollary}
\newtheorem{proposition}[theorem]{Proposition}
\theoremstyle{definition}
\newtheorem{definition}[theorem]{Definition}
\newtheorem{remark}[theorem]{Remark}
\newtheorem*{warning}{Warning}
\newtheorem{pt}{Part}
\newtheorem{pt2}{Part}
\newtheorem{pt3}{Part}
\newtheorem{prt}{Part}
\newtheorem{step}{Step}
\newtheorem{sp2}{Step}
\newtheorem{sp3}{Step}
\newtheorem{sp4}{Step}
\newtheorem{sp5}{Step}
\newtheorem{sp6}{Step}
\let\@@pmod\pmod
\DeclareRobustCommand{\pmod}{\@ifstar\@pmods\@@pmod}
\def\@pmods#1{\mkern4mu({\operator@font mod}\mkern 6mu#1)}
\newtheorem*{rep@thm}{\rep@title}
\newcommand{\newreptheorem}[2]{%
\newenvironment{rep#1}[1][0,0]{%
\def\rep@title{#2##1}%
\begin{rep@thm}}%
{\end{rep@thm}}}
\numberwithin{equation}{section}
\begin{document}
\sloppy
\begin{abstract} 
We show that sutured embedded contact homology is a natural invariant of sutured contact $3$-manifolds which can potentially detect some of the topology of the space of contact structures on a $3$-manifold with boundary. The appendix, by C.~H.~Taubes, proves a compactness result for the completion of a sutured contact $3$-manifold in the context of Seiberg--Witten Floer homology, which enables us to complete the proof of naturality.
\end{abstract}

\maketitle

\section{Introduction}
\label{sec:intro}

Embedded contact homology, defined by Hutchings, is an invariant of closed 3-manifolds $Y$ equipped with nondegenerate contact forms $\lambda$.  Given a homology class $\Gamma \in H_1(Y)$ and a generic symplectization-admissible almost complex structure $J$ on $\R \times Y$, one considers the free Abelian group $\ecc(Y,\lambda,\Gamma,J)$ generated by \emph{admissible orbit sets} $\Theta = \{(\Theta_i, m_i)\}$ with the following properties:
\begin{itemize}\leftskip-0.35in
\item $\{\Theta_i\}$ is a finite collection of distinct embedded Reeb orbits in $(Y,\lambda)$, and the $m_i$ are positive integers;
\item $m_i = 1$ if $\Theta_i$ is a hyperbolic Reeb orbit;
\item $[\Theta] := \sum_i m_i [\Theta_i]$ is equal to $\Gamma$.
\end{itemize}
This group admits a differential $\partial$ defined by its action on the set of generators via a suitable count of $J$-holomorphic curves in $\R\times Y$ with {\it ECH index}, denoted $I$, equal to 1. To be more explicit, given two generators $\Theta = \{(\Theta_i, m_i)\}$ and $\Theta' = \{(\Theta'_i, m'_i)\}$, the coefficient $\langle \partial\Theta,\Theta'\rangle$ is a signed count, modulo $\R$-translation, of $I=1$ $J$-holomorphic curves asymptotic to $\R\times\Theta_i$ with multiplicity $m_i$ at $+\infty$ and to $\R\times\Theta'_i$ with multiplicity $m'_i$ at $-\infty$.  The resulting homology is denoted $\ech(Y,\lambda,\Gamma,J)$, while the direct sum over all $\Gamma$ is denoted $\ech(Y,\lambda,J)$.  Embedded contact homology is equipped with a natural $\zz[U]$-module structure, and there exists a contact class $c(\lambda) = [\emptyset]\in\ech(Y,\lambda,0,J)$ corresponding to the empty set of Reeb orbits.  For more information, see the detailed expositions by Hutchings \cite{hutchings-icm, hutchings-budapest}.

It was conjectured that $\ech(Y,\lambda,J)$ should in fact be a topological invariant of $Y$, independent of $J$ and of the contact form $\lambda$, but direct proofs of this have been elusive.  The independence was finally established by Taubes, who showed the following in a long series of papers \cite{taubes1,taubes2,taubes3,taubes4,taubes5}:
\begin{theorem}[Taubes]
\label{thm:taubes}
There is a canonical isomorphism of relatively graded $\zz[U]$-modules
\[ \ech_*(Y,\lambda,\Gamma,J) \cong \widehat{HM}^{-*}(Y,\spc_{\xi}+PD(\Gamma)), \]
where $\widehat{HM}^{*}$ denotes a particular version of Seiberg--Witten Floer cohomology \cite{kmbook} and $\spc_\xi$ is the canonical $\Sc$ structure associated to $\xi = \ker(\lambda)$.  This isomorphism sends the ECH contact class $c(\lambda)$ to the contact invariant \cite{km-contact} in Seiberg--Witten Floer cohomology.
\end{theorem}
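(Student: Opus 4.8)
The plan is to follow Taubes's strategy of deforming the Seiberg--Witten equations on $Y$ by a large real parameter $r$ constructed from the contact form, and then analyzing the $r\to\infty$ limit. One works with the $\Sc$ structure $\spc_\xi+PD(\Gamma)$, whose spinor bundle has the canonical decomposition $E\oplus (E\otimes\xi)$ coming from $\xi=\ker(\lambda)$, and adds to the curvature and Dirac terms a perturbation proportional to $r$ together with a small generic abstract perturbation. The input from Taubes's proof of the Weinstein conjecture is the a priori control on solutions: any sequence of solutions of the $r$-perturbed equations with $r\to\infty$ and uniformly bounded energy $E(A)=i\int_Y\lambda\wedge F_A$ has a subsequence along which the zero set of the $E$-component $\alpha$ of the spinor, weighted by vanishing multiplicity, converges as a current to a closed Reeb orbit set in the class $\Gamma$. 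The first main step is to upgrade this into a bijection, for all sufficiently large $r$, between gauge-equivalence classes of irreducible solutions of the $r$-perturbed equations and the ECH generators, i.e.\ the admissible orbit sets $\Theta=\{(\Theta_i,m_i)\}$ with $[\Theta]=\Gamma$. This is done by a local analysis in a tubular neighborhood of each embedded Reeb orbit, where the equations can be solved perturbatively and the relevant linearization is the operator governing nondegeneracy of the orbit, followed by a gluing construction assembling the local models along a collection of orbits; the constraint $m_i=1$ at hyperbolic orbits, and the allowed multiplicities at elliptic orbits, fall out of the structure of these local solutions.

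The second main step is the analogous correspondence on the symplectization $\R\times Y$: solutions of the four-dimensional $r$-perturbed Seiberg--Witten equations asymptotic at the two ends to a given pair of three-dimensional solutions converge, as $r\to\infty$, to $J$-holomorphic curves of the type counted in the ECH differential, and conversely each (possibly broken) ECH-index $I=1$ curve is the $r\to\infty$ limit of a unique Seiberg--Witten instanton. Matching the Seiberg--Witten relative grading, computed as a spectral flow of a family of Dirac-type operators, with the ECH index $I$ via its formula in terms of Conley--Zehnder indices and relative intersection data, then identifies the two chain complexes, giving $\ech_*(Y,\lambda,\Gamma,J)\cong\HMfrom^{-*}(Y,\spc_\xi+PD(\Gamma))$ as relatively graded groups. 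One checks the isomorphism intertwines the $U$-maps, since both are realized by cutting the relevant moduli space down by a generic point constraint, and that it is canonical by a continuation/cobordism argument interpolating between different choices of $r$ and of abstract perturbation. Finally, the ECH contact class $c(\lambda)=[\emptyset]$ corresponds under the bijection of generators to the distinguished large-$r$ solution whose $\alpha$-component has empty zero set; this solution is precisely the one entering Kronheimer--Mrowka's definition of the Seiberg--Witten contact invariant, so the isomorphism is compatible with contact classes.

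The main obstacle is the surjectivity, or ``no extra solutions,'' halves of the first two steps: proving that for all large $r$ \emph{every} three-dimensional solution arises from an orbit set, and every $I=1$ instanton from a holomorphic curve, with exactly the right multiplicities, rather than only up to finitely many exceptional solutions. This requires sharp a priori estimates on the spinor components and on $F_A$ as $r\to\infty$, control of the ``excess'' part of the spinor away from the Reeb orbits, a proof that no solutions are created or destroyed in passing to the limit (compactness paired with a surjective gluing theorem), and careful bookkeeping of orientations so that the signed counts agree. Handling the different local models at elliptic versus hyperbolic orbits, and the Morse--Bott-type degenerations that occur in families, is where most of the technical work lies and is the reason the argument occupies a long series of papers.
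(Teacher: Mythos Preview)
The paper does not contain a proof of this theorem: it is stated in the introduction as a result due to Taubes, with citations to the five papers \cite{taubes1,taubes2,taubes3,taubes4,taubes5}, and is used throughout as a black box. So there is no ``paper's own proof'' to compare your proposal against.

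That said, your outline is a faithful high-level summary of Taubes's actual strategy in those papers: the large-$r$ perturbation of the Seiberg--Witten equations, the bijection between irreducible solutions and admissible orbit sets via concentration of $\alpha^{-1}(0)$ along Reeb orbits, the corresponding four-dimensional correspondence between instantons and $I=1$ holomorphic curves, and the identification of gradings, $U$-maps, and contact classes. Your assessment of where the difficulty lies (surjectivity of the correspondence, sharp a priori estimates, gluing, and orientations) is also accurate. But what you have written is a sketch of a several-hundred-page argument, not a proof; in the context of the present paper no proof is expected, only the citation.
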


More recently, following an extension of Heegaard Floer homology to balanced sutured manifolds by Juh{\'a}sz \cite{juhasz-sutured} and subsequent analogues in Seiberg--Witten and instanton Floer homologies by Kronheimer and Mrowka \cite{km-excision}, Colin, Ghiggini, Honda, and Hutchings \cite{cghh} defined a generalization of embedded contact homology to sutured contact manifolds.  To any sutured contact $3$-manifold $(M,\Gamma)$ with contact form $\alpha$ and generic almost complex structure $J$ {\it tailored} to $(M,\alpha)$, they define the \emph{sutured embedded contact homology} group
\[ \ech(M,\Gamma,\alpha,J), \]
with a direct sum decomposition 
\[\ech(M,\Gamma,\alpha,J)=\bigoplus_{h\in H_1(M)}\ech(M,\Gamma,\alpha,J,h),\] 
just as in the case of closed manifolds, and conjecture \cite[Conjecture 1.2]{cghh} that this is a topological invariant of $(M,\Gamma)$.

In this paper we prove that sutured ECH does not depend on the contact form $\alpha$ or the almost complex structure $J$. 
\begin{theorem}
\label{thm:intro-alpha-isomorphism}
Let $(M,\Gamma)$ be a sutured manifold, and let $\alpha_0,\alpha_1$ be adapted, nondegenerate contact forms on $(M,\Gamma)$ such that $\ker(\alpha_0)$ and $\ker(\alpha_1)$ are isotopic rel a neighborhood of $\Gamma$.  For any two generic almost complex structures $J_0$ and $J_1$ tailored to $(M,\alpha_0)$ and $(M,\alpha_1)$, respectively, there is an isomorphism
\[ \ech(M,\Gamma,\alpha_0,J_0) \cong \ech(M,\Gamma,\alpha_1,J_1) \]
which respects direct sum decompositions over $H_1(M)$ and sends the contact class $c(\alpha_0)$ to the contact class $c(\alpha_1)$.
\end{theorem}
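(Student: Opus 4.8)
The plan is to reduce Theorem~\ref{thm:intro-alpha-isomorphism} to Taubes's Theorem~\ref{thm:taubes} by running the closed-manifold argument on a completion of the sutured contact manifold. First I would fix a completion $\widehat M$ of $M$, obtained by attaching cylindrical ends whose Reeb dynamics are modeled on the boundary behavior near $\Gamma$, together with an extension $\hat\alpha$ of $\alpha$ that is standard on the ends and adapted in the sense of \cite{cghh}. Because the hypothesis of the theorem only constrains $\ker(\alpha_i)$ near $\Gamma$, the end geometry can be chosen identically for $i=0,1$, so that $(\widehat M,\hat\alpha_0)$ and $(\widehat M,\hat\alpha_1)$ agree outside a fixed compact set. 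By the construction of sutured ECH, the chain complex $\ecc(M,\Gamma,\alpha,J)$ is generated by admissible orbit sets of the Reeb flow of $\hat\alpha$ lying in the interior of $M$ --- the ends carrying no such orbits --- and its differential counts $I=1$ $J$-holomorphic curves in $\R\times\widehat M$.

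Next I would set up the Seiberg--Witten side: work with the $r$-dependent Seiberg--Witten equations on $\widehat M$ twisted by the contact form $\hat\alpha$, in the $r\to\infty$ regime, and define the appropriate flavor of $\widehat{HM}$ of the completion $(\widehat M,\hat\alpha)$. The one genuinely new ingredient relative to the closed case is compactness: the Seiberg--Witten moduli spaces on $\R\times\widehat M$ must be compact up to broken trajectories despite the noncompactness of $\widehat M$, with no loss of solutions or energy into the ends. This is precisely what Taubes's appendix establishes. Granting it, the chain of isomorphisms of \cite{taubes1,taubes2,taubes3,taubes4,taubes5} can be reconstructed on the completion: for $r$ large the irreducible solutions are in bijection with admissible orbit sets in $M$, the $I=1$ curves correspond to Seiberg--Witten flow lines, the $U$-map and the $H_1(M)$-grading match up, and $c(\alpha)=[\emptyset]$ corresponds to the monopole contact class. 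This produces a canonical isomorphism of relatively graded $\zz[U]$-modules $\ech(M,\Gamma,\alpha,J)\cong\widehat{HM}(\widehat M,\hat\alpha)$ respecting the decomposition over $H_1(M)$ and sending $c(\alpha)$ to the monopole contact class.

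The last step is to note that the target is a topological invariant. Once the end model is fixed, $\widehat{HM}(\widehat M,\hat\alpha)$ no longer sees the contact structure in the interior: by the metric- and perturbation-independence of monopole Floer homology it depends only on the underlying manifold with its prescribed cylindrical ends (equivalently, it agrees up to canonical isomorphism with a version of Kronheimer--Mrowka's sutured monopole homology $SHM(M,\Gamma)$, for instance by relating it through a closure), hence only on $(M,\Gamma)$ and the isotopy class of $\ker(\alpha)$ near $\Gamma$. Since those data coincide for $\alpha_0$ and $\alpha_1$, there is a canonical identification $\widehat{HM}(\widehat M,\hat\alpha_0)\cong\widehat{HM}(\widehat M,\hat\alpha_1)$; composing it with the two ECH-to-$\widehat{HM}$ isomorphisms yields $\ech(M,\Gamma,\alpha_0,J_0)\cong\ech(M,\Gamma,\alpha_1,J_1)$, respecting the $H_1(M)$-splitting and sending $c(\alpha_0)$ to $c(\alpha_1)$ because each of the three constituent maps does.

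I expect the main obstacle to be the analysis on the ends of $\widehat M$. Taubes's closed-manifold proof is most delicate exactly where it relies on a priori estimates for Seiberg--Witten solutions and for $I=1$ curves, and on a manifold with infinite cylindrical ends there is a real danger of solutions, or of holomorphic curves, concentrating at or escaping to infinity; obtaining the necessary compactness and exponential decay along the ends is the crux of the matter, which is why it is isolated in Taubes's appendix. The remaining points --- arranging the end model to be independent of the choice of $\alpha_i$, matching the canonical $\Sc$ structures and relative gradings through the isomorphism, and checking that the ambiguity in the ECH-to-$\widehat{HM}$ identification is no worse than in the closed case so that the composite is canonical --- should be routine once that compactness is in place.
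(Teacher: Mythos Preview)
Your approach is genuinely different from the paper's and has a real gap.

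The paper does \emph{not} build a Seiberg--Witten Floer theory on the noncompact completion $M^*$ and then invoke its topological invariance. Instead, for each action bound $L$ it embeds $(M,\alpha)$ into a \emph{closed} contact $3$-manifold $(Y_n,\alpha_n)$ (attach $1$-handles to connect sutures, glue $R_+$ to $R_-$ across a long $t$-neck, fill with a solid torus) so that for $n\gg L$ the filtered complexes $\ecc^L(M,\Gamma,\alpha,J)$ and $\ecc^L(Y_n,\alpha_n,J_n)$ coincide canonically (Lemmas~\ref{lem:tau-stretching}--\ref{lem:canonical-cor}). All the heavy lifting is then done with the \emph{existing} closed-manifold machinery of Hutchings--Taubes \cite{ht2}: the canonical isomorphisms for different $J$ on a closed manifold give $J$-independence (Section~\ref{sec:j-independence}), and the exact-cobordism maps of \cite[Theorem~1.9]{ht2} applied to product cobordisms $[0,R]\times Y_n$ give the $\alpha$-independence (Section~\ref{sec:alpha-independence}). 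Theorem~\ref{thm:intro-alpha-isomorphism} is completed at Corollary~\ref{cor:alpha-independence-general}, before the appendix is ever invoked.

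The appendix plays a narrower role than you suppose: it is used only in Section~\ref{sec:stabilize} to show that the isomorphisms are independent of the \emph{embedding data} (the choice of $1$-handles and gluing map producing $Y_n$), which is what upgrades Theorem~\ref{thm:intro-alpha-isomorphism} to the naturality statements of Theorems~\ref{thm:filtered-sutured-ech} and~\ref{thm:intro-alpha-naturality}. It proves correspondence results between SW solutions on $Y_T$ and on $Y_\infty$ for large $T$, but it does \emph{not} construct a full Floer theory $\widehat{HM}(\widehat M,\hat\alpha)$ on the completion, prove it is independent of the interior contact form, or prove an ECH$\,\cong\,$HM isomorphism there.

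That is where your argument breaks. Your third paragraph asserts that ``$\widehat{HM}(\widehat M,\hat\alpha)$ no longer sees the contact structure in the interior'' and can be identified with $SHM(M,\Gamma)$. Neither statement is available: there is no established monopole Floer homology on manifolds with these cylindrical ends with proven metric/perturbation independence, and $SHM$ is defined via closed closures, with no theorem identifying it with a hypothetical HM of the completion. Even granting such a theory, your claim that the middle identification preserves the contact class is unsupported: the monopole contact invariant depends on the contact structure, so you would still need an argument (on the noncompact manifold) that isotopic contact structures give the same class. The paper avoids all of this by never leaving the closed setting for the proof of Theorem~\ref{thm:intro-alpha-isomorphism}.
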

\noindent We hope to address the question of topological invariance in future work. 

Theorem \ref{thm:intro-alpha-isomorphism} was proved simultaneously by Colin, Ghiggini, and Honda \cite[Theorem 10.2.2]{cgh-openbook}, using a construction which is virtually identical to ours. Our strategy of proof is to embed sutured contact manifolds $(M,\Gamma,\alpha)$ inside certain families $(Y_n,\alpha_n)$ of closed contact manifolds whose construction is reminiscent of an open book decomposition.  The contact forms $\alpha_n$ are such that given a constant $L>0$, the ECH generators with symplectic action less than $L$ associated to $(M,\Gamma,\alpha,J)$ for a generic tailored almost complex structure $J$ and to $(Y_n,\alpha_n,J_n)$ for an appropriate extension $J_n$ of $J$ coincide canonically when $n$ is sufficiently large, as well as the moduli spaces of pseudo-holomorphic curves which define the respective differentials.  Hutchings and Taubes \cite{ht2} constructed canonical isomorphisms between filtered ECH groups for the closed contact manifolds $(Y_n,\alpha_n)$ with different choices of $J_n$, and we can transfer these back to the filtered ECH groups for $(M,\Gamma,\alpha)$.  We then turn isotopies of contact forms on $(M,\Gamma)$ into exact symplectic cobordisms between the $Y_n$ and use the cobordism maps defined in \cite{ht2} for filtered ECH to relate the ECH groups for pairs of isotopic contact forms.

By construction, the isomorphism of Theorem \ref{thm:intro-alpha-isomorphism} potentially depends on a choice of what we call \emph{embedding data} (see Definition \ref{def:embedding-data}) for $(M,\Gamma,\alpha)$, appearing in both our work and in that of Colin--Ghiggini--Honda \cite{cgh-openbook}. However, one might expect that the embedding data (essentially the triple $(Y_n,\alpha_n, J_n)$) should play a fairly minor role, since the filtered ECH group which coincides with the appropriate filtered ECH group for $(M,\Gamma,\alpha,J)$ is concentrated near the submanifold $M \subset Y_n$.   (See Section \ref{ssec:sutured-ech} for the precise definition of filtered ECH.)  We can make this precise using the correspondence results proved in the appendix to this paper concerning Seiberg--Witten theory for completions of $(M,\Gamma,\alpha,J)$, allowing us to prove an analogue of \cite[Theorem~1.3]{ht2} for sutured ECH.

\begin{theorem}
\label{thm:filtered-sutured-ech}
Let $(M,\Gamma,\alpha)$ be a sutured contact manifold.
\begin{enumerate}\leftskip-0.25in
\item If $\alpha$ is an $L$-nondegenerate contact form, then there is a canonically defined group $\ech^L(M,\Gamma,\alpha)$ which is isomorphic to $\ech^L(M,\Gamma,\alpha,J)$ for any $J$.
\item If $L < L'$ and $\alpha$ is $L'$-nondegenerate, then there is a canonical map 
\[i^{L,L'}: \ech^L(M,\Gamma,\alpha) \to \ech^{L'}(M,\Gamma,\alpha)\] induced by the maps $i^{L,L'}_J$ of \eqref{eq:i-LL-J} and satisfying $i^{L,L''} = i^{L',L''} \circ i^{L,L'}$ for $L<L'<L''$.
\item The direct limit $\ech(M,\Gamma,\alpha)$ of the system $(\{\ech^L(M,\Gamma,\alpha)\}_L, \{i^{L,L'}\}_{L,L'})$ is canonically isomorphic to $\ech(M,\Gamma,\alpha,J)$ for any $J$.
\end{enumerate}
\end{theorem}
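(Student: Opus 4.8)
The plan is to mimic, in the sutured setting, the argument of Hutchings--Taubes \cite{ht2} that produces canonical filtered ECH groups for closed contact manifolds, using the correspondence between filtered ECH for $(M,\Gamma,\alpha,J)$ and filtered ECH for the closed manifolds $(Y_n,\alpha_n,J_n)$ together with the Seiberg--Witten side supplied by the appendix. First I would recall that for each $L$ and each $J$ tailored to $(M,\alpha)$, the filtered group $\ech^L(M,\Gamma,\alpha,J)$ is, for all sufficiently large $n$, canonically identified with the filtered ECH $\ech^L(Y_n,\alpha_n,J_n)$ in the relevant $\spinc$ structures --- this is the content of the action-window correspondence sketched in the introduction, matching both generators of action less than $L$ and the $I=1$ curves between them. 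Hutchings--Taubes give a canonical group $\ech^L(Y_n,\alpha_n)$ independent of $J_n$ (isomorphic to a version of filtered Seiberg--Witten Floer cohomology), so the first task is to check that, under the embedding-data identifications, these canonical closed invariants are compatible as $n$ varies and as the embedding data varies, so that one obtains a well-defined $\ech^L(M,\Gamma,\alpha)$.

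For part (1), then, the key steps are: (i) fix embedding data $(Y_n,\alpha_n,J_n)$ and declare $\ech^L(M,\Gamma,\alpha)$ to be $\ech^L(Y_n,\alpha_n)$ (the Hutchings--Taubes canonical group) transported along the correspondence, for $n$ large; (ii) show independence of $n$ by comparing $Y_n$ and $Y_{n+1}$ through the exact symplectic cobordisms already built in the proof of Theorem \ref{thm:intro-alpha-isomorphism}, so that the cobordism maps on filtered ECH of \cite{ht2} intertwine the canonical groups; (iii) show independence of the remaining embedding-data choices by the same cobordism-map mechanism; and (iv) use the appendix's compactness result to guarantee that the relevant Seiberg--Witten moduli spaces for the \emph{completion} of $(M,\Gamma,\alpha,J)$ behave well, which is what lets one phrase the invariance intrinsically in terms of $M$ rather than through a particular $Y_n$. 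The statement that $\ech^L(M,\Gamma,\alpha)\cong\ech^L(M,\Gamma,\alpha,J)$ for any $J$ then follows by composing the $J$-dependent correspondence isomorphism with the canonical ($J_n$-independent) identification on the closed side.

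For parts (2) and (3), the inclusion-induced maps $i^{L,L'}_J$ on the sutured side are defined (as referenced by \eqref{eq:i-LL-J}) to be compatible with the closed-manifold maps $i^{L,L'}$ of Hutchings--Taubes under the correspondence; since those satisfy $i^{L,L''}=i^{L',L''}\circ i^{L,L'}$ and commute with the canonical identifications, the induced $i^{L,L'}$ on $\ech^L(M,\Gamma,\alpha)$ inherit the same composition law, giving (2). For (3), one observes that the direct limit of the canonical closed groups $\ech^L(Y_n,\alpha_n)$ over $L$ is $\ech(Y_n,\alpha_n)$ in the appropriate $\spinc$ structures, which by the correspondence (now in the unfiltered limit, where every generator and curve eventually enters some action window) is $\ech(M,\Gamma,\alpha,J)$; passing to the limit over $L$ of the isomorphisms from part (1), which are compatible with the $i^{L,L'}$ by construction, yields the canonical isomorphism $\varinjlim_L \ech^L(M,\Gamma,\alpha)\cong \ech(M,\Gamma,\alpha,J)$.

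The main obstacle I expect is step (iv) above together with the independence-of-embedding-data claim: one must ensure that the Seiberg--Witten analysis on the \emph{noncompact} completion of the sutured manifold --- in particular the compactness of moduli spaces proved in Taubes's appendix --- is strong enough to make the correspondence with $\ech^L$ genuinely canonical and not merely canonical ``for $n\gg 0$ relative to fixed auxiliary data.'' Concretely, the delicate point is that different embedding data produce a priori different cobordisms, and one needs the cobordism maps of \cite{ht2} to not only exist but to be \emph{canonical} and compatible with composition on the nose at the chain level in the action-window range; reconciling this with the fact that the cobordisms are only symplectically exact (so that Stokes-type action estimates control which curves appear) is where the real work lies, and it is precisely here that the appendix's completion-compactness result is indispensable.
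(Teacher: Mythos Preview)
Your broad strategy---embed $(M,\Gamma,\alpha)$ in closed $(Y_n,\alpha_n)$, invoke the canonical filtered groups of \cite{ht2}, then remove the dependence on auxiliary choices---matches the paper, and your steps (i) and the treatment of parts (2)--(3) are essentially what is done in Section~\ref{sec:j-independence}. However, your step~(ii) is not how the paper proceeds: independence of $n$ (Proposition~\ref{prop:j-independence-n-iso}) is proved not by cobordism maps but by pulling back through a family of diffeomorphisms $\varphi_t:Y_n\to Y_{(1-t)n+tn'}$ and invoking Lemma~\ref{lem:j-independence-homotopy}, which reduces the canonical \cite{ht2} isomorphism to the tautological chain map $\Theta\mapsto\Theta$.

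The genuine gap is in your step~(iii). You propose to handle independence of embedding data ``by the same cobordism-map mechanism,'' but there is no natural exact symplectic cobordism between $Y_n$ and $Y_n'$ when the embedding data differ---for different handle sets these are not even diffeomorphic in general. Your final paragraph, asking for the \cite{ht2} cobordism maps to be canonical at the chain level, is precisely the content of Conjecture~\ref{conj:hm-curves-diagram}, which the paper leaves open. The actual mechanism is different and is carried out in Section~\ref{sec:stabilize}: one first stretches in the $\tau$-direction to pass to $(M^T,\Gamma,\alpha^T)$ (Lemma~\ref{lem:tau-neck}), and then the appendix establishes that for $T$ large the Seiberg--Witten solutions and instantons on the closed manifold $Y_n^T$ are in canonical 1--1 correspondence with those on the noncompact completion $Y_\infty$ (Lemma~\ref{lem:monopoles-114}, Propositions~\ref{prop:comp-62} and~\ref{prop:comp-63}). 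Since two choices of embedding data with the same handle set share the same $Y_\infty$, this identifies their chain complexes directly and shows the \cite{ht2} isomorphisms agree (Proposition~\ref{prop:gluing-map-doesn't-matter}). For different handle sets $\calh,\calh'$ one cannot compare directly at all; instead the paper organizes all handle data into a connected poset under inclusion (Lemma~\ref{lem:poset-connected}), uses the 1-handle invariance of Section~\ref{sec:handle-invariance} together with the appendix to produce canonical isomorphisms $\Psi^L_{\calh,\calh'}$ along comparable pairs (Proposition~\ref{prop:poset-functor}), and then checks these compose consistently through the poset (Theorem~\ref{thm:naturality}). So the appendix is not a compactness input that makes cobordism maps canonical; it is the replacement for cobordism maps, furnishing a direct chain-level identification through the common completion.
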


The same results also produce a stronger version of Theorem~\ref{thm:intro-alpha-isomorphism}:

\begin{theorem}
\label{thm:intro-alpha-naturality}
Fix a contact form $\alpha$ which is adapted to the sutured manifold $(M,\Gamma)$, and let $\Xi(M,\Gamma,\alpha)$ denote the space of cooriented contact structures on $(M,\Gamma)$ which agree with $\ker(\alpha)$ on a neighborhood of $\partial M$.  Then sutured ECH canonically defines a local system on $\Xi(M,\Gamma,\alpha)$, i.e.\ a functor
\[ \ech: \Pi_1(\Xi(M,\Gamma,\alpha)) \to \textsc{AbGroup}, \]
from the fundamental groupoid of $\Xi(M,\Gamma,\alpha)$ to the category of Abelian groups.  It canonically assigns a group $\ech(M,\Gamma,\xi,\alpha|_{\partial M})$ to any $\xi \in \Xi(M,\Gamma,\alpha)$, depending only on $\xi$ and on the restriction of $\alpha$ to $\partial M$, and an isomorphism
\[ F_{\xi_s}: \ech(M,\Gamma,\xi_0,\alpha|_{\partial M}) \isomto \ech(M,\Gamma,\xi_1,\alpha|_{\partial M}) \]
to any homotopy class of paths $\xi_s \subset \Xi(M,\Gamma,\alpha)$.  These groups and isomorphisms decompose naturally with respect to $H_1(M)$, and there is a contact class $c(\xi) \in \ech(M,\Gamma,\xi,\alpha|_{\partial M})$ satisfying $F_{\xi_s}(c(\xi_0)) = c(\xi_1)$ for all paths $\xi_s$.
\end{theorem}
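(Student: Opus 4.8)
The plan is to use the \emph{canonical} filtered groups $\ech^L(M,\Gamma,\alpha)$ and the canonical cobordism maps furnished by Theorem~\ref{thm:filtered-sutured-ech} and the appendix in order to promote the a priori non-canonical isomorphisms of Theorem~\ref{thm:intro-alpha-isomorphism} to a functor $\ech\colon \Pi_1(\Xi(M,\Gamma,\alpha)) \to \textsc{AbGroup}$. To a contact structure $\xi \in \Xi(M,\Gamma,\alpha)$ I would attach a group as follows. Since $\xi$ agrees with $\ker(\alpha)$ near $\partial M$, after rescaling $\alpha$ by a positive function that is $\equiv 1$ near $\partial M$ and then a generic $C^\infty$-small perturbation supported away from $\partial M$, one obtains a nondegenerate adapted contact form $\alpha_\xi$ with $\ker(\alpha_\xi)$ isotopic to $\xi$ rel a neighborhood of $\partial M$ and $\alpha_\xi = \alpha$ near $\partial M$. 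Define $\ech(M,\Gamma,\xi,\alpha|_{\partial M})$ to be the direct-limit group $\ech(M,\Gamma,\alpha_\xi)$ of Theorem~\ref{thm:filtered-sutured-ech}(3); by that part it is canonically isomorphic to $\ech(M,\Gamma,\alpha_\xi,J)$ for every tailored $J$, so it depends neither on $J$ nor on the embedding data. Its splitting over $H_1(M)$ and its contact class $c(\xi) = [\emptyset]$ are read off at the filtered level, and independence of the choice of $\alpha_\xi$ (among nondegenerate adapted forms with kernel isotopic to $\xi$ rel $\partial M$ and equal to $\alpha$ near $\partial M$) will follow from the constant-path case of the next step together with the contractibility of the space of such forms.

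To an isotopy $\{\xi_s\}_{s\in[0,1]} \subset \Xi(M,\Gamma,\alpha)$ I would attach the isomorphism $F_{\xi_s}$ by choosing a path of adapted contact forms $\{\alpha_s\}$ realizing $\{\xi_s\}$ up to isotopy rel a neighborhood of $\partial M$, with $\alpha_0 = \alpha_{\xi_0}$, $\alpha_1 = \alpha_{\xi_1}$, and $\alpha_s = \alpha$ near $\partial M$ for all $s$; turning this path, exactly as in the proof of Theorem~\ref{thm:intro-alpha-isomorphism}, into an exact symplectic cobordism between the closed contact manifolds $Y_n$; applying the Hutchings--Taubes cobordism maps \cite{ht2} on filtered ECH, which are canonical on the closed side by \cite{ht2} and on the sutured side by Theorem~\ref{thm:filtered-sutured-ech} together with the appendix; and passing to the direct limit over $L$ and $n$. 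By Theorem~\ref{thm:intro-alpha-isomorphism} the result is an isomorphism $\ech(M,\Gamma,\xi_0,\alpha|_{\partial M}) \isomto \ech(M,\Gamma,\xi_1,\alpha|_{\partial M})$ respecting the $H_1(M)$-splitting and sending $c(\xi_0)$ to $c(\xi_1)$.

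It then remains to verify that $F_{\xi_s}$ is independent of the auxiliary data ($\{\alpha_s\}$, the tailored almost complex structures, and the embedding data), that $F$ of a constant path is the identity, that $F$ of a concatenation is the corresponding composition, and that $F_{\xi_s}$ depends only on the homotopy class of $\{\xi_s\}$ rel endpoints; these are precisely the axioms making $\ech$ a functor on $\Pi_1(\Xi(M,\Gamma,\alpha))$, and they upgrade the $H_1(M)$-splitting and the identity $F_{\xi_s}(c(\xi_0)) = c(\xi_1)$ from individual morphisms to the whole local system. Independence of auxiliary data and the constant-path identity reduce to the identity and naturality properties of the cobordism maps of \cite{ht2}, and the composition law reduces to the fact that a stacking of exact symplectic cobordisms induces the composition of the associated maps, up to the chain homotopies built into \cite{ht2}; I would transport each of these statements to the sutured setting through the embeddings $M \subset Y_n$.

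The genuinely hard part is homotopy invariance. A homotopy between two isotopies $\{\xi_s\}$ produces a two-parameter family of exact symplectic cobordisms, and I would show it induces the same map by invoking the homotopy invariance of cobordism maps in Seiberg--Witten Floer cohomology — available because, by the appendix together with Theorem~\ref{thm:filtered-sutured-ech}, each $\ech^L(M,\Gamma,\alpha)$ is canonically a filtered Seiberg--Witten Floer group — and then returning to ECH via Taubes's isomorphism (Theorem~\ref{thm:taubes}), which also guarantees that the contact class is carried along. The technical input that makes this possible is the appendix's compactness result for pseudo-holomorphic curves in the completion of $(M,\Gamma,\alpha,J)$: it confines the relevant moduli spaces, for all $n$ sufficiently large and all members of the two-parameter family, to a fixed neighborhood of $M \subset Y_n$, which is what keeps the entire construction inside the filtered ECH groups where every map in sight is canonical.
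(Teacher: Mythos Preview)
Your overall architecture matches the paper's---define canonical filtered groups, define path-isomorphisms via cobordism maps on the closed $(Y_n,\alpha_n)$, then verify the functoriality axioms---and in that sense the proposal is on the right track. But you have inverted what is easy and what is hard, and you have mischaracterized the content of the appendix.

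Homotopy invariance of $F_{\xi_s}$, for a \emph{fixed} choice of embedding data, is the easy step: a homotopy of paths $\{\xi_s\}$ gives a homotopy of Liouville forms on $[0,R]\times Y_n$ between the \emph{same} closed contact 3-manifolds, and the homotopy-invariance axiom of \cite[Theorem~1.9]{ht2} applies directly; this is Proposition~\ref{prop:F-alpha-homotopy-class}, and no appeal to the appendix is needed. The hard step is the one you dismiss as ``reducing to the identity and naturality properties of the cobordism maps of \cite{ht2}'': independence of the embedding data. Two different choices of embedding data produce \emph{different} families of closed manifolds $Y_n$ and $Y_n'$, and \cite{ht2} gives no mechanism for comparing cobordism maps between different underlying 3-manifolds. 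The body of the paper (Proposition~\ref{prop:gromov-compactness-cobordisms}) does confine the relevant holomorphic curves to a fixed $\R\times M_k$---this is presumably the curve-confinement you have in mind, and it lives in Section~\ref{ssec:isomorphism-embedding}, not the appendix---but since the \cite{ht2} maps are defined via Seiberg--Witten Floer theory rather than by counting curves, curve confinement alone is insufficient; see the discussion surrounding Conjecture~\ref{conj:hm-curves-diagram}. What the appendix actually supplies is a \emph{Seiberg--Witten} correspondence: for $T$ large, the monopoles and instantons on $Y_T$ are in canonical bijection with those on the completion $Y_\infty$ (Lemma~\ref{lem:monopoles-114}, Propositions~\ref{prop:comp-62} and~\ref{prop:comp-63}), and since $Y_\infty$ does not see the embedding data, this is what Section~\ref{sec:stabilize} uses to prove that the isomorphisms $\Phi^L_{J,J'}$ and $F_{\alpha_s}$ agree across different embedding data.
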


\begin{remark}
\label{rem:signs}
All of the theorems stated above are true with integer coefficients.  In Theorems \ref{thm:intro-alpha-isomorphism} and \ref{thm:intro-alpha-naturality} this requires some care, because we use the ECH cobordism maps defined by Hutchings and Taubes \cite[Theorem 1.9]{ht2} (most notably throughout Section \ref{sec:alpha-independence}), and these are only proved to work over $\Z/2\Z$. The issue is that one must choose a homology orientation to avoid a sign ambiguity in the corresponding Seiberg--Witten Floer cobordism map.  In our case, we work with topologically product cobordisms, and these have canonical homology orientations, so the cobordism maps on ECH exist and have the desired properties over $\Z$.
\end{remark}

We do not know whether the local system of Theorem \ref{thm:intro-alpha-naturality} has nontrivial monodromy: in other words, whether there exists a closed loop $\xi_s: (S^1,*) \to \Xi(M,\Gamma,\alpha)$ for some $(M,\Gamma,\alpha)$ which induces a nontrivial automorphism of $\ech(M,\Gamma,\xi_*,\alpha|_{\partial M})$.  This could potentially be used to detect interesting topology in the space of contact structures on $(M,\Gamma)$.  In general little is known about such spaces, though some cases are understood due to work of Eliashberg \cite{eliashberg-ot, eliashberg-twenty}, Ding--Geiges \cite{ding-geiges}, and Geiges--Klukas \cite{geiges-klukas}, among others.

The fact that the isomorphisms of Theorem \ref{thm:intro-alpha-naturality} may depend on the paths $\xi_s$ rather than just their endpoints is not terribly surprising, however, because similar phenomena occur in other sutured homology theories.  In these other homology theories, one associates a natural invariant to a sutured manifold and a canonical isomorphism to any diffeomorphism of sutured manifolds; this naturality was proved for sutured Floer homology by Juh{\'a}sz and Thurston \cite{juhasz-thurston} and for sutured monopole homology by Baldwin and the second author \cite{bs1}, and there is no guarantee in either case that a sequence of diffeomorphisms whose composition is the identity map on $(M,\Gamma)$ will produce the identity on the sutured homology. For example, there is a natural identification $SFH(Y(p)) = \hf(Y,p)$, where $p$ is a point in the closed $3$-manifold $Y$ and $Y(p)$ is the complement of a ball around $p$ with a single suture.  Moving $p$ along a closed loop in $Y$ produces an automorphism of $\hf$ and thus an action of $\pi_1(Y,p)$ on $\hf(Y,p)$ which is expected to be nontrivial, and similarly for $\widetilde{HM}(Y,p) := \textbf{\textup{\underline{SHM}}}(Y(p))$.  In contrast, the other variants $\HMto$, $\HMfrom$, and $\overline{HM}$ of Seiberg--Witten Floer homology do not use a basepoint or have a nontrivial $\pi_1$-action \cite{kmbook}, hence neither does $\ech$ for closed manifolds, and the same is expected for $HF^+$, $HF^-$, and $HF^\infty$ \cite{juhasz-thurston}.

\subsection{Organization}
The organization of this paper is as follows.  Section \ref{sec:constructions} reviews the definition of sutured ECH and gathers some topological constructions involving contact manifolds which will be used in upcoming parts of this paper. In Section \ref{sec:j-independence} we show that both $\ech(M,\Gamma,\alpha,J)$ and the filtered groups $\ech^L(M,\Gamma,\alpha,J)$ are canonically independent of the almost complex structure $J$, up to a choice of embedding data.  In Section \ref{sec:alpha-independence}, we use ECH cobordism maps to show that $\ech(M,\Gamma,\alpha)$ is also independent of $\alpha$ up to isotopy, and we prove a weaker version of Theorem \ref{thm:intro-alpha-naturality}, showing that the claimed local system exists but may also depend on embedding data.  In Section \ref{sec:contact-class} we use this to prove some properties of the contact class in sutured ECH, and in Section \ref{sec:handle-invariance} we show that isomorphisms corresponding to attaching contact $1$-handles can be made natural for certain compatible choices of embedding data.  In Section \ref{sec:stabilize} we prove that all of the above constructions are independent of the embedding data and hence canonical, completing the proof of Theorems \ref{thm:filtered-sutured-ech} and \ref{thm:intro-alpha-naturality}.  This relies on the technical results in Appendix \ref{sec:appendix}, which relates the Seiberg-Witten Floer homology of certain closed manifolds (the embedding data) to that of an open manifold built by completing the sutured contact manifold $(M,\Gamma,\alpha)$.


\subsection{Acknowledgements}

The first two authors thank Russell Avdek, Michael Hutchings, and Chris Wendl for helpful conversations. We are also greatly indebted to the third author for providing the crucial correspondence results that make up the appendix, leading to a proof of naturality.

\section{Sutured ECH and some related constructions}
\label{sec:constructions}

\subsection{Sutured ECH}
\label{ssec:sutured-ech}

In this section we will recall the definition of sutured contact manifolds and of sutured ECH from \cite{cghh}.

\begin{definition}[{\cite[Definition 2.8]{cghh}}]
\label{def:sutured-contact-manifold}
A \emph{sutured contact manifold} is a triple $(M,\Gamma,\alpha)$, where $M$ is an oriented $3$-manifold with corners, $\Gamma \subset \partial M$ is a closed, embedded, oriented multicurve, and $\alpha$ is a contact form.  We require the following to hold:
\begin{itemize}\leftskip-0.35in
\item There is a neighborhood 
\[ U(\Gamma) = [-1,0]_\tau \times [-1,1]_t \times \Gamma \]
of $\Gamma = \{(0,0)\}\times \Gamma$ such that $U(\Gamma) \cap \partial M$ is the closure of $\partial U(\Gamma) \ssm \{-1\}\times[-1,1]\times \Gamma$.
\item The corners of $M$ are $\{(0,\pm 1)\} \times \Gamma$.
\item The closure of $\partial M \ssm \{0\}\times [-1,1]\times \Gamma$ is a disjoint union of oriented surfaces
\[ R_+(\Gamma) \sqcup R_-(\Gamma), \]
where $R_\pm(\Gamma)$ has oriented boundary $\{(0,\pm 1)\} \times \Gamma$.  (In particular, $R_-(\Gamma)$ has orientation opposite to the boundary orientation of $\partial M$.)
\item The contact form $\alpha$ restricts to Liouville forms $\beta_\pm$ on $R_\pm(\Gamma)$.
\item On $U(\Gamma)$ we have $\alpha = Cdt + e^\tau \beta_0$ for some constant $C>0$ and volume form $\beta_0$ on $\Gamma$.
\end{itemize}
We will often refer to the neighborhood $U(\Gamma)$ using the above coordinates $\tau$ and $t$; in these coordinates the Reeb vector field on $U(\Gamma)$ is $\frac{1}{C}\partial_t$.
\end{definition}

We remark that the sutured manifold $(M,\Gamma)$ underlying a sutured contact manifold is always \emph{balanced}, meaning that $\chi(R_+(\Gamma)) = \chi(R_-(\Gamma))$.  Indeed, up to rounding corners the boundary $\partial M$ is a convex surface (in the sense of Giroux \cite{giroux-convex}) with respect to $\xi=\ker(\alpha)$, having positive region $R_+(\Gamma)$ and negative region $R_-(\Gamma)$, and so $\langle e(\xi), \partial M\rangle = \chi(R_+(\Gamma))-\chi(R_-(\Gamma))$.  The left side is zero since $[\partial M] = 0$ in $H_2(M)$, from which the claim follows.

Note that the last condition in Definition~\ref{def:sutured-contact-manifold} implies that the Liouville forms on $R_\pm(\Gamma) \cap U(\Gamma) = [-1,0]_\tau \times \{\pm 1\}_t \times \Gamma$ are identical, namely they are equal to $e^\tau \beta_0$ and have Liouville vector field $\partial_\tau$.  Thus we can extend $(R_{\pm}(\Gamma),\beta_\pm)$ to complete Liouville manifolds $(\widehat{R_\pm(\Gamma)}, \widehat{\beta}_\pm)$ by gluing on ends of the form $([0,\infty)_\tau \times \{\pm 1\}_t \times \Gamma, e^{\tau}\beta_0)$.  Moreover, the Reeb vector field $R_\alpha$ is everywhere transverse to $R_\pm(\Gamma)$, otherwise $d\beta_\pm(R_\alpha,\cdot) = d\alpha(R_\alpha,\cdot) = 0$ would imply that $R_\alpha=0$ at some point by the nondegeneracy of $d\beta_\pm$, which is a contradiction. Thus we can extend the coordinate $t$ on $U(\Gamma)$ to collar neighborhoods $(1-\epsilon,1]_t\times R_+(\Gamma)$ and $[-1,-1+\epsilon)_t \times R_-(\Gamma)$ of $R_\pm(\Gamma)$ by declaring $\partial_t = C\cdot R_\alpha$, and hence write $\alpha=Cdt+\beta_\pm$ there.

\begin{definition}[{\cite[Section 2.4]{cghh}}]
\label{def:completion}
Let $(M,\Gamma,\alpha)$ be a sutured contact manifold.  We define the vertical completion $(M_v,\alpha_v)$ of $(M,\alpha)$ to be the manifold
\[ M_v =\big( (-\infty,-1] \times R_-(\Gamma)\big) \cup_{R_-(\Gamma)} M \cup_{R_+(\Gamma)} \big([1,\infty) \times R_+(\Gamma)\big), \]
with contact form $\alpha_v$ equal to $\alpha$ on $M$ and extended as $Cdt + \beta_\pm$ to the other pieces.  This manifold has boundary $\{0\} \times \R \times \Gamma$, with contact form $Cdt+e^\tau \beta_0$ on a collar neighborhood $[-1,0]_\tau \times [-1,1]_t \times \Gamma$.

The \emph{completion} $(M^*,\alpha^*)$ of $(M,\Gamma,\alpha)$ is then the open manifold
\[ M^* = M_v \cup_{\{0\}\times \R \times \Gamma} [0,\infty) \times \R \times \Gamma, \]
with $\alpha^*|_{M_v} = \alpha_v$ and $\alpha^* = Cdt+e^\tau\beta_0$ on the rest of $M^*$.
\end{definition}

We remark that the $t$ coordinate is well-defined on all of $M^* \ssm \mathrm{int}(M)$ and on a collar neighborhood of $\partial M$, with the Reeb vector field equal to $\frac{1}{C}\partial_t$ throughout these regions, hence all closed Reeb orbits of $\alpha^*$ lie entirely in $M$.  There is also a well-defined region $[-1,\infty)_\tau \times \R \times \Gamma$ with associated $\tau$ coordinate, where we have $\alpha = Cdt+e^\tau\beta_0$.

Given a contact manifold $(Y,\alpha)$ with $\rm{ker}(\alpha)=\xi$ oriented by $d\alpha$, we say an almost complex structure $J$ on the symplectization $\R_s \times Y$ is \emph{$\alpha$-adapted} if it is $s$-invariant, preserves $\xi$, and satisfies $J(\partial_s) = R_\alpha$ and $d\alpha(v,Jv) > 0$ for all nonzero $v\in\xi$.

\begin{definition}[{\cite[Section 3.1]{cghh}}]
\label{def:tailored}
An almost complex structure $J$ on $\R\times M^*$ is \emph{tailored} to the completion $(M^*,\alpha^*)$ if it is $\alpha^*$-adapted and $\partial_t$-invariant on a neighborhood of $\R_s \times (M^*\ssm\mathrm{int}(M))$, and if moreover its projection $J_0$ to the completed Liouville manifolds $(\widehat{R_\pm(\Gamma)}, \widehat{\beta}_\pm)$ is $\widehat{\beta}_\pm$-adapted, meaning that
\begin{itemize}
\item On $R_\pm(\Gamma)$, we have $d\beta_{\pm}(v,J_0v) > 0$ for all nonzero tangent vectors $v$;
\item On the ends $[0,\infty)_\tau \times \{\pm 1\} \times \Gamma$, $J_0$ is $\tau$-invariant and sends $\partial_\tau$ to the unique tangent vector $R_{\beta_0}$ to $\{(\tau,\pm 1)\} \times \Gamma$ satisfying $\beta_0(R_{\beta_0}) = 1$.
\end{itemize}
\end{definition}

\begin{definition}[{\cite[Section 6.3]{cghh}}]
\label{def:sutured-ech}
Let $(M,\Gamma,\alpha)$ be a sutured contact manifold with a non-degenerate contact form $
\alpha$ and completion $(M^*,\alpha^*)$ and generic (in the sense of \cite{ht0}) tailored almost complex structure $J$ on $\R\times M^*$.  Then we define the chain complex
\[ \ecc(M,\Gamma,\alpha,J) \]
to be generated over $\zz$ by orbit sets of $(M^*,\alpha^*)$, with differential $\partial$ such that the coefficient $\langle \partial\Theta_+, \Theta_-\rangle$ is a signed count of elements in the moduli space
\[ \mathcal{M}_{I=1}(\R\times M^*, J; \Theta_+, \Theta_-) \]
of ECH index 1 J-holomorphic curves from $\Theta_+$ to $\Theta_-$.  The resulting \emph{sutured embedded contact homology} group is denoted $\ech(M,\Gamma,\alpha,J)$.
\end{definition}

The definition of sutured ECH is identical to the definition of ECH for closed manifolds, as is the proof that $\partial^2 = 0$ modulo some extra analysis (see \cite[Section 6.1]{cghh}).  It is helpful to note that given two orbit sets $\Theta_+$ and $\Theta_-$, which we will sometimes call {\it ECH generators}, no curve $u\in\mathcal{M}_{I=1}(\R\times M^*, J; \Theta_+, \Theta_-)$ can enter the region of $\R\times M^*$ where $\tau > 0$ by \cite[Lemma 5.5]{cghh}, and that for fixed $\Theta_+$ and $\Theta_-$, the $t$-coordinates on any curve in this moduli space are uniformly bounded by \cite[Proposition 5.20]{cghh}.

We will also make extensive use of the symplectic action filtration on ECH, whose definition and important properties are borrowed verbatim from the analogue for closed contact manifolds.

\begin{definition}[{\cite[Section 1.2]{ht2}}]
\label{def:filtration}
Let $(M,\Gamma,\alpha)$ be a sutured contact manifold.  The \emph{symplectic action} of an ECH generator $\Theta = \{(\Theta_i,m_i)\}$ is defined as
\[ \mathcal{A}_\alpha(\Theta) := \sum_i m_i \int_{\Theta_i} \alpha. \]
Let $(M,\Gamma,\alpha)$ have no Reeb orbits of action equal to $L$, and all Reeb orbits of action less than $L$ be nondegenerate. Fix a tailored almost complex structure $J$ on $\R\times M^*$ for which the genericity condition from \cite{ht0} holds only for those orbit sets with action less than $L$. Such almost complex structures are called {\it $\textit{ECH}^L$-generic}. Then define the \emph{filtered ECH} subcomplex
\[ \ecc^L(M,\Gamma,\alpha,J) \subset \ecc(M,\Gamma,\alpha,J) \]
to be generated by all admissible orbit sets of action strictly less than $L$.  Its homology is denoted $\ech^L(M,\Gamma,\alpha,J)$. 
\end{definition}

The fact that $\ecc^L$ is a subcomplex follows from the fact that if there is a $J$-holomorphic curve other than a product cylinder from $\Theta_+$ to $\Theta_-$, then $\mathcal{A}_\alpha(\Theta_+) \geq \mathcal{A}_\alpha(\Theta_-)$ by Stokes's theorem, hence $\partial$ lowers the symplectic action.

Given any $L < L'$, there are also natural maps
\begin{equation}
\label{eq:i-LL-J}
i^{L,L'}_J: \ech^L(M,\Gamma,\alpha,J) \to \ech^{L'}(M,\Gamma,\alpha,J)
\end{equation}
induced by the inclusion $\ecc^L \hookrightarrow \ecc^{L'}$, whose direct limit as $L\to\infty$ is the unfiltered homology group $\ech(M,\Gamma,\alpha,J)$.  Hutchings and Taubes \cite[Theorem 1.3]{ht2} showed in the closed case that for any $J$ and $J'$ there are canonical isomorphisms
\[ \ech^L(Y,\lambda,J) \isomto \ech^L(Y,\lambda,J') \]
which compose naturally and commute with the various $i^{L,L'}_J$ and $i^{L,L'}_{J'}$.  Thus for closed contact 3-manifolds, filtered ECH defines a canonical group $\ech^L(Y,\lambda)$ for any $L$, together with canonical maps $i^{L,L'}: \ech^L(Y,\lambda) \to \ech^{L'}(Y,\lambda)$; these are independent of $J$ but depend on $\lambda$ since it is what we use to define the action functional $\mathcal{A}_\lambda$.  In Section \ref{sec:j-independence} we will prove similar results for filtered sutured ECH.

\subsection{Contact 1-handles}
\label{ssec:handles}

In this section we will discuss how to attach contact $1$-handles to a sutured contact manifold. A contact 1-handle is a 3-manifold with corners of the form $H\times [-1,1]$ where $H$ is topologically a disk, equipped with a contact form that is invariant under translations along the $[-1,1]$ factor, and it is attached to $M$ along neighborhoods of a pair of points in $\Gamma$ suitably to form a new sutured contact manifold.

\begin{theorem}
\label{thm:1-handle}
Let $(M,\Gamma, \alpha)$ be a sutured contact manifold, and choose distinct points $p,q \in \Gamma$.  Then it is possible to attach a contact 1-handle $H \times [-1,1]_t$ to $M$ along $p$ and $q$ to produce a new sutured contact manifold $(M',\Gamma',\alpha')$ such that
\begin{enumerate}\leftskip-0.25in
\item $M$ is a submanifold of $M'$, and $\alpha'|_M = \alpha$.
\item The contact form $\alpha'$ has the form $Cdt + \tilde{\beta}$ on $H\times [-1,1]$, where $\tilde{\beta}$ is a 2-form on $H$.
\end{enumerate}
In particular, the Reeb vector field is $\frac{1}{C}\partial_t$ on $H\times [-1,1]$, and so the Reeb orbits of $(M',\alpha')$ are canonically identified with those of $(M,\alpha)$.
\end{theorem}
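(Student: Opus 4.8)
Here is how I would approach Theorem~\ref{thm:1-handle}. The plan is to build the handle $H\times[-1,1]_t$ by hand in the normal form demanded by $(2)$, patch it onto the collars of $\Gamma$ near $p$ and $q$ so that $(1)$ holds on the nose, and then verify that the resulting triple satisfies Definition~\ref{def:sutured-contact-manifold}.

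\emph{Local coordinates.} First I would use that $\beta_0$ is a nowhere-vanishing $1$-form on the $1$-manifold $\Gamma$ to pick coordinates $s_p$ on a short arc of $\Gamma$ through $p$ and $s_q$ on a short arc through $q$ with $\beta_0=ds_p$ and $\beta_0=ds_q$ there. Using the collar $U(\Gamma)$ together with the region $[-1,\infty)_\tau\times\R_t\times\Gamma$ of Definition~\ref{def:completion}, a neighborhood of $p$ (resp.\ $q$) then looks like $[-1,\infty)_\tau\times[-1,1]_t\times(-\delta,\delta)_s$ with $\alpha=C\,dt+e^\tau\,ds$; in particular the faces $\{\tau=0\}\times[-1,1]_t\times(-\delta,\delta)_s$ are the pieces of the ``vertical'' boundary of $M$ near $p$ and $q$ along which the handle will be glued.

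\emph{The handle and its contact form.} I would take the handle to be $H\times[-1,1]_t$ with $H=[-1,1]_\sigma\times[-1,1]_\xi$, glued to $M$ along the two faces $\{\sigma=\pm 1\}\times[-1,1]_\xi\times[-1,1]_t$ (one mapped to the vertical boundary near $p$, the other near $q$, identifying $\xi$ with $s_p$ resp.\ $\pm s_q$ and $1\mp\sigma$ with $\tau$), and would set $\alpha'=C\,dt+\tilde\beta$ there with $\tilde\beta$ pulled back from $H$. The heart of the argument is to produce a $1$-form $\tilde\beta$ on $H$ such that: $d\tilde\beta$ is an area form of a fixed orientation, so $\alpha'$ is contact and, since $\iota_{\partial_t}d\tilde\beta=0$ and $\alpha'(\partial_t)=C$, its Reeb field is $\tfrac1C\partial_t$; the germ of $\tilde\beta$ along $\{\sigma=-1\}$ agrees to infinite order with $e^{\sigma+1}\,d\xi$ and along $\{\sigma=1\}$ with $\pm e^{1-\sigma}\,d\xi$, the sign chosen so the two orientations of $d\tilde\beta$ match, so the gluing to the collars near $p$ and $q$ is smooth; and near each free arc $[-1,1]_\sigma\times\{\xi=\pm1\}$ the Liouville vector field of $\tilde\beta$ is nonvanishing and outward-pointing, so these arcs can serve as part of the new suture. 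I expect this to be the main obstacle. The reason is that one cannot keep the collar normal form $\tilde\beta=e^{\rho}\,d\sigma$ across all of $H$: such a $\rho$ would have to vanish along both attaching arcs, hence attain an interior maximum, where $d\tilde\beta$ degenerates. So $\tilde\beta$ must pick up a genuine $d\sigma$–term away from $\partial H$, and I would build it by a Moser-type interpolation — the data being produced is just an exact area form on the disk $H$ with a primitive having the prescribed boundary germs, and on a disk such data always exist (for instance, realize a standard index-$1$ planar Weinstein handle in the interior and interpolate out to the boundary germs). The corner compatibilities of the germs (only near the four corners of $H$) must be arranged along the way.

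\emph{Gluing and verification.} I would then set $M'=M\cup(H\times[-1,1]_t)$ via the two attaching identifications, with $\alpha'$ equal to $\alpha$ on $M$ and $C\,dt+\tilde\beta$ on the handle; smoothness of $\alpha'$ is exactly the infinite-order germ matching built into $\tilde\beta$, and $(1)$ and $(2)$ hold by construction. Let $\Gamma'$ be $\Gamma$ with short arcs around $p$ and $q$ deleted and the two free arcs of $H$ inserted, oriented so that $R'_\pm:=R_\pm(\Gamma)\cup(H\times\{t=\pm1\})$ has boundary $\{(0,\pm1)\}\times\Gamma'$. Checking the clauses of Definition~\ref{def:sutured-contact-manifold} for $(M',\Gamma',\alpha')$ — most notably producing the collar $U(\Gamma')$, which requires smoothly matching the old collar of $\Gamma$ to the handle's structure near the corners of $H$, using the $\partial_t$-invariance of $\alpha'$ on the handle and the outward Liouville behavior of $\tilde\beta$ near the free arcs to get a normal coordinate $\tau'$ with $\alpha'=C\,dt+e^{\tau'}\beta_0'$ — is routine if somewhat delicate bookkeeping, as is verifying that the corners of $M'$ are $\{(0,\pm1)\}\times\Gamma'$ and that $\alpha'$ restricts to the Liouville form $\beta_\pm\cup\tilde\beta$ on $R'_\pm$. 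Finally, for the Reeb orbits: on $H\times[-1,1]_t$ the field $\tfrac1C\partial_t$ is transverse to the faces $H\times\{t=\pm1\}\subset R'_\pm$, pointing out of $M'$ along $t=1$ and into $M'$ along $t=-1$, so no closed Reeb orbit of $\alpha'$ — and, after forming the completion, none of $(M')^*$ — can enter the handle; hence every closed Reeb orbit lies in $M$, where $\alpha'=\alpha$, and since the closed Reeb orbits of the completion of $(M,\Gamma,\alpha)$ also all lie in $M$, this identifies them canonically with those of $(M',\alpha')$ together with their multiplicities and elliptic/hyperbolic type.
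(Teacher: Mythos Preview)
Your approach is essentially the same as the paper's: both reduce the 3-dimensional contact handle attachment to attaching a 2-dimensional Weinstein 1-handle $H$ to the Liouville domain $R_+(\Gamma)$ and then crossing with $[-1,1]_t$, so that $\alpha'=Cdt+\tilde\beta$ on the handle and the Reeb field is $\tfrac{1}{C}\partial_t$ there. Where you invoke a Moser-type interpolation and general Weinstein handle theory to produce $\tilde\beta$ with the required boundary germs and outward Liouville field, the paper carries this out explicitly with the standard hyperbolic model $\Omega=\{\tfrac{x^2}{2}-y^2\le A,\ |y|\le 1\}$ and primitive $\beta=-2y\,dx-x\,dy$, inserts a symplectization strip $S$ to match $\beta|_{\partial\Omega}$ to $e^\tau\beta_0$, and then carves out a sub-handle so the Liouville field $\tilde Y$ points outward along the new suture---the collar $U(\Gamma')$ then comes directly from the time-$[-1,0]$ flow of $\tilde Y$.
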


\begin{proof}
We start by choosing a coordinate $\theta$ on each component of $\Gamma$ so that $p$ and $q$ have disjoint embedded neighborhoods of the form $N_{p},N_{q}\simeq[-1,1]_\theta\subset\Gamma$, with $p$ and $q$ at coordinate $0$ in their respective neighborhoods and such that $\beta_{0}|_{N_{p}\cup N_{q}}=Kd\theta$ for some constant $K$. Thus along $[-1,0]_\tau \times [-1,1]_t\times N_{p}$ and $[-1,0]_\tau \times [-1,1]_t\times N_{q}$ we can write 
\[ \alpha=Cdt+Ke^{\tau}d\theta. \]
Since the points $p,q$ have neighborhoods contained entirely within $U(\Gamma)$, where the contact structure is $t$-invariant, and since $(R_{+}(\Gamma),Ke^{\tau}d\theta)$ is a Liouville domain, it will suffice to describe how to attach a Weinstein $1$-handle $H$ \cite{weinstein} to $R_{+}(\Gamma)$ along the points $p_{+}=(0,1,p)$ and $q_{+}=(0,1,q)$ so that the resulting domain $(R_{+}(\Gamma)\cup H,\tilde{\beta})$ satisfies 
\[ \tilde{\beta}|_{R_{+}(\Gamma)}=\alpha|_{R_{+}(\Gamma)}=Ke^{\tau}d\theta. \]
Then the restriction of the new contact form $\alpha'$ to the 3-dimensional 1-handle $H\times[-1,1]_{t}$ will read $Cdt + \tilde{\beta}$, as promised.

We now describe a model of our $1$-handle. Fix a constant $L>\sqrt{K^{2}+2}$, let $A=\frac{L^{2}}{2}-1$, and consider the compact region $\Omega \subset \R^2$ defined by 
\begin{eqnarray*}
\frac{x^{2}}{2}-y^{2}\leq A & \mathrm{and} & -1\leq y\leq1
\end{eqnarray*}
with area form $\omega=dx\wedge dy$. This region has ``vertical'' boundary components defined by arcs of the hyperbola $\frac{x^{2}}{2}-y^{2}=A$ between $y=-1$ and $y=1$, and its ``horizontal'' boundary components are the line segments satisfying $y=\pm1$ and $-L\leq x\leq L$. The area form has a primitive $\beta=-2ydx-xdy$, for which the vector field $Y=-x\partial_{x}+2y\partial_{y}$ satisfies $\iota_{Y}d\beta=\iota_{Y}\omega=\beta$. In fact, this vector field is normal to the vertical part of $\partial \Omega$, where it points into $\Omega$, and it is transverse to the horizontal part of $\partial \Omega$ pointing outward. See Figure \ref{fig:model-handle}.

\begin{figure}[ht]
\labellist
\small \hair 2pt
\pinlabel $a$ at 30 50
\tiny
\pinlabel $x$ at 133 65
\pinlabel $y$ [B] at 70 127
\endlabellist
\centering
\includegraphics{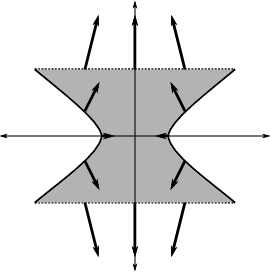}
\caption{A model 1-handle $\Omega$, bounded by $\frac{x^{2}}{2}-y^{2}\leq A$ and $-1\leq y\leq1$.}
\label{fig:model-handle}
\end{figure}

Next, we wish to glue the neighborhood $\{0\}_{\tau}\times\{1\}_t\times[-1,1]_{\theta}\subset \partial R_{+}(\Gamma)$ of $p_{+}$ to the leftmost vertical arc $a$ of $\partial \Omega$. We will use the diffeomorphism $f:[-1,1]\to a$ defined by 
\[ f(\theta)=(-\sqrt{2(\theta^{2}+A)},\theta), \]
which satisfies 
\[ f^{*}(\beta|_a)=f^{*}(-2ydx-xdy)=\frac{6\theta^{2}+2A}{\sqrt{2(\theta^{2}+A)}}d\theta. \]
Letting $g(\theta)=\frac{6\theta^{2}+2A}{\sqrt{2(\theta^{2}+A)}}$, we see that $g(\theta)\geq\sqrt{2(\theta^{2}+A)}>K$ for all $\theta$ as long as $A>\frac{K^{2}}{2}$, or equivalently $L>\sqrt{K^{2}+2}$.

In the symplectization $([0,\infty)_\tau \times\{1\}_t\times [-1,1]_\theta, Ke^\tau d\tau \wedge d\theta)$, the graph $\Gamma$ of $\ln(g(\theta)/K)$ is a contact-type hypersurface with contact form equal to $f^*(\beta|_a)$.  Thus it has a small collar neighborhood defined by 
\[ \left|\tau - \ln\left(\frac{g(\theta)}{K}\right)\right| < \epsilon \]
which is canonically identified with the symplectization $((-\epsilon, \epsilon)_\tau \times a, d(e^\tau\cdot \beta|_a))$.  On the other hand, the region of the plane defined by the time $< \epsilon$ flow of $Y$ from $a$ is also symplectomorphic to $([0,\epsilon)_\tau \times a, d(e^\tau \cdot \beta|_a))$, identifying $Y$ with $\partial_\tau$ since $Y$ is a symplectic vector field transverse to $a$, and so we can glue the region $S$ of $[0,\infty)_\tau \times\{1\}_t\times [-1,1]_\theta$ defined by 
\[ 0 \leq \tau \leq \ln\left(\frac{g(\theta)}{K}\right) \]
to $\Omega$ along their boundaries, identifying $\Gamma$ with $a$ and $\partial_\tau$ with $Y$.  This produces the glued-up symplectic manifold and 1-form
\[ (R_{+}(\Gamma)\cup_{\tau=0}S\cup_{\tau=\ln(g/K)}\Omega,\tilde{\beta}) \]
as seen in Figure \ref{fig:1-handle-gluing}. By construction, the $1$-form $\tilde{\beta}$ agrees with $Ke^{\tau}d\theta$ and $\beta$ on $R_{+}(\Gamma)$ and $\Omega$, respectively.

\begin{figure}[ht]
\labellist
\small \hair 2pt
\pinlabel $R_+(\Gamma)$ at 28 60
\pinlabel $S$ [B] at 99 93
\pinlabel $\Omega$ at 185 60
\tiny
\pinlabel $\theta$ [r] at 2 60
\pinlabel $-1$ [r] at 55 37
\pinlabel $1$ [r] at 55 85
\pinlabel $\tau$ [B] at 32 127
\pinlabel $-1$ [B] at 3 128
\pinlabel $0$ [B] at 58 128
\endlabellist
\centering
\includegraphics{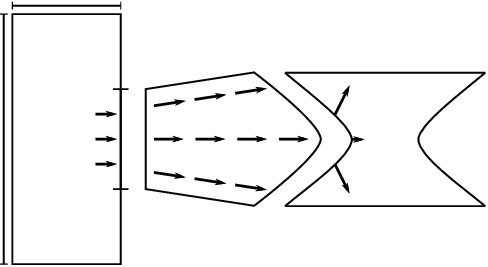}
\caption{The surfaces $R_{+}(\Gamma)$ and $\Omega$ are glued together by inserting a piece $S$ of the symplectization $[0,\infty)_{\tau}\times[-1,1]_{\theta}$.}
\label{fig:1-handle-gluing}
\end{figure}

If we repeat the same procedure to attach a neighborhood of $q_{+}$ to the rightmost vertical arc of $\partial \Omega$, the result is an ordered pair $(R',\tilde{\beta})$, where $R'$ is topologically the union of $R_{+}(\Gamma)$ and a 1-handle, and $\tilde{\beta}$ is a primitive of the area form constructed on $R'$. The vector field $\tilde{Y}$ satisfying $\iota_{\tilde{Y}}d\tilde{\beta}=\tilde{\beta}$ is equal to $\partial_{\tau}$ on $\partial R'\cap\partial R_{+}(\Gamma)$, points outward along $\mathrm{int}(\partial R'\cap\partial \Omega)$, and is tangent to $\partial R'$ along $\partial R'\cap\partial S$. In order to produce a Liouville domain, however, we need $\partial R'$ to be smooth and $\tilde{Y}$ to point out along $\partial R'$, so we achieve this by carving out the actual 1-handle from $R'$. Specifically, we take $R'\cap \Omega$ to consist of the region $|y|\leq\frac{1}{2}$. We then extend this boundary curve $y=\frac{1}{2}$ smoothly along the $S$ regions as the graph of a monotonic function of $\tau$ so that it is always transverse to $\tilde{Y}=\partial_{\tau}$ and extends smoothly to the segment $\{0\}_{\tau}\times\{1\}_t\times[1,1+\epsilon]_{\theta}$ of $\partial R_{+}(\Gamma)$. We reflect this curve across the lines $y=0$ on $\Omega$ and $\theta=0$ on $S$ to get an analogous curve containing the line $y=-\frac{1}{2}$, and then we declare these two curves to be the boundary of the 1-handle $H$; see Figure \ref{fig:1-handle-glued}.

\begin{figure}[ht]
\labellist
\small \hair 2pt
\pinlabel $R_+(\Gamma)$ at 27 60
\pinlabel $R_+(\Gamma)$ at 281 60
\pinlabel $H$ at 154 60
\endlabellist
\centering
\includegraphics{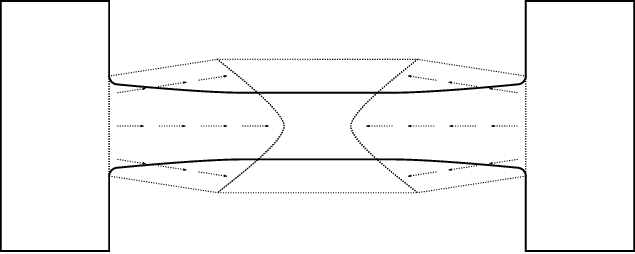}
\caption{The Weinstein 1-handle $H$ glued to $R_{+}(\Gamma)$.}
\label{fig:1-handle-glued}
\end{figure}

Finally, now that we have glued a Weinstein 1-handle to $R_{+}(\Gamma)$ to get a new Liouville domain $(R_{+}(\Gamma'),\tilde{\beta})$ with Liouville vector field $\tilde{Y}$, we need to check that we can find a collar neighborhood $[-1,0]_{\tau}\times\partial R_{+}(\Gamma')$ of the boundary on which $\partial_{\tau}=\tilde{Y}$. This is automatically satisfied at all points except possibly those which belonged to the model $\Omega$, since $\tilde{Y}$ was equal to the original vector field $\partial_{\tau}$ everyhere else. On $\Omega$, however, we have $\tilde{Y}=-x\partial_{x}+2y\partial_{y}$, and so the time-$t$ flow of $-\tilde{Y}$ from a point $(x,\pm\frac{1}{2})$ is $(xe^{t},\pm\frac{1}{2}e^{-2t})$. It is easy to check that these flows are all disjoint for $0\leq t\leq1$, as desired. Note that this collar neighborhood is contained entirely within the union of $H$ and $[-1,0]_{\tau}\times\partial R_{+}(\Gamma)$.

We can now define the tuple $(M',\Gamma',\alpha')$. We let $M'$ be constructed by attaching the handle 
\[ H \times [-1,1]_t =\overline{R_{+}(\Gamma')\smallsetminus R_{+}(\Gamma)}\times[-1,1]_t \]
to $M$ in the obvious way; its contact form $\alpha'$ is taken to be $\alpha$ on $M$, hence $Cdt + e^\tau \beta_0$ on $U(\Gamma)$, and so it extends over $H \times [-1,1]$ as $Cdt + \tilde{\beta}$.  This is shown in Figure \ref{fig:1-handle-mprime}, with the new sutures $\Gamma'$ in red.
\begin{figure}[ht]
\labellist
\small \hair 2pt
\pinlabel $H\times\{1\}$ at 155 136
\pinlabel $H\times\{-1\}$ at 155 54
\tiny
\pinlabel $\rotatebox{61}{$\partial{M}$}$ at 70 139
\pinlabel $\rotatebox{61}{$\partial{M}$}$ at 239 134
\pinlabel $t$ [r] at 3 48
\pinlabel $1$ [r] at 2 89
\pinlabel $-1$ [r] at 2 6
\pinlabel $\tau$ [t] at 26 1
\pinlabel $-1$ [t] at 1 1
\pinlabel $0$ [t] at 46 1
\pinlabel $p$ at 80 94
\pinlabel $q$ at 240 94
\pinlabel $\theta$ at 69 100
\pinlabel $-1$ at 52 76
\pinlabel $1$ at 79 117
\pinlabel $\tau$ [t] at 228 1
\pinlabel $0$ [t] at 207 1
\pinlabel $-1$ [t] at 245 1
\color{red}
\pinlabel $\Gamma'$ at 145 89
\color{black}
\endlabellist
\centering
\includegraphics[scale=0.75]{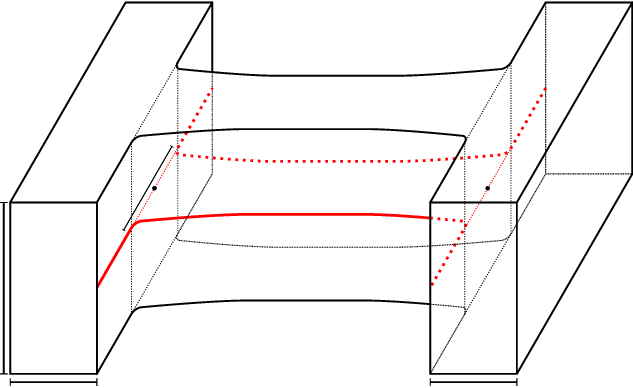}
\caption{The new sutured manifold $(M',\Gamma')$. The dotted lines on the interior show the boundary and sutures of the original $(M,\Gamma)$.}
\label{fig:1-handle-mprime}
\end{figure}
We define $U(\Gamma') = [-1,0]_\tau \times [-1,1]_t \times \Gamma'$ by identifying $\tau=0$ with the closure of $\partial M' \ssm (R_+(\Gamma') \cup R_-(\Gamma'))$, and then extending this to the rest of $-1 \leq \tau \leq 0$ by the flow of $\tilde{Y}$.  (Although $\tilde{Y}$ was only defined on $R_{+}(\Gamma')$, the neighborhood $U(\Gamma')$ is still well-defined because $U(\Gamma')$ is contained within $U(\Gamma)\cup (H\times [-1,1])$, which is by construction identified as a region of $R_{+}(\Gamma')$ times $[-1,1]_{t}$.) It is straightforward to check that $(M',\Gamma',\alpha')$ satisfies the hypotheses of Theorem \ref{thm:1-handle}, so the proof is complete.
\end{proof}

\subsection{Liouville forms on the boundary}
\label{ssec:liouville-boundary}

Suppose that $(M,\Gamma,\alpha)$ is a sutured contact manifold whose horizontal boundary regions $R_+(\Gamma)$ and $R_-(\Gamma)$ are diffeomorphic rel boundary.  We may wish to glue $R_+(\Gamma)$ to $R_-(\Gamma)$ in a way which preserves not only the contact structure but also the contact form, and so it would be useful to know that the Liouville forms $\beta_\pm = \alpha|_{R_\pm(\Gamma)}$ are equal under some diffeomorphism.  We will show in this subsection that this can be done without changing the $\ech$ chain complex or even the symplectic action functional.

First, we note that $\alpha$ has the form $Cdt + \beta_\pm$ in collar neighborhoods of the surfaces $R_\pm(\Gamma)$, and likewise $Cdt + \beta$ on the neighborhood $U(\Gamma)$ of $\Gamma$, and it may be convenient to replace $C$ with a larger constant.  We now explain how to do this.

By definition, there exists a parametrization $\psi:(-1,0]_\tau\times[-1,1]_{t}\times\Gamma\rightarrow U(\Gamma)$, and neighborhoods of $R_+(\Gamma)$ and $R_-(\Gamma)$ respectively diffeomorphic to $(1-\epsilon_+,1]\times R_+(\Gamma)$ and $[-1,-1+\epsilon_-)\times R_-(\Gamma)$ for some $\epsilon_-,\epsilon_+ \ll 1$. The contact form $\alpha$ restricts to $U(\Gamma)$ and each of the latter neighborhoods respectively as $Cdt+\beta$, $Cdt+\beta_+$, and $Cdt+\beta_-$ for some $C$. Now, let $t_0>0$ and define 
\[ M_{t_0}=M\cup_{\{1\}\times R_+(\Gamma)}[1,1+2t_0]_t\times R_+(\Gamma). \]
The latter is a sutured contact manifold with sutures $\Gamma_{t_0}=\{t_0\}\times\Gamma$ in a neighborhood
\[ U(\Gamma_{t_0})=U(\Gamma)\bigcup_{(-1,0]_\tau\times\{1\}\times\Gamma}(-1,0]_\tau\times[1,1+2t_0]_t\times\Gamma, \]
and adapted contact form $\alpha_{t_0}$ extending $\alpha$ over $[1,1+t_0]_t\times R_+(\Gamma)$ as $Cdt+\beta_+$. In these coordinates we have $R_+(\Gamma_{t_0})=\{1+2t_0\}\times R_+(\Gamma)$ and $R_-(\Gamma_{t_0})=R_-(\Gamma)$. We may parametrize $U(\Gamma_{t_0})$ via 
\[ \Psi:(-1,0]_\tau\times[-1,1]_{t'}\times\Gamma\rightarrow U(\Gamma_{t_0}) \]
such that $\Psi(\tau,t',\theta)=(\tau,(1+t_0)t'+t_0,\theta)$. With respect to this parametrization, we compute
\[ \alpha_{t_0}|_{U(\Gamma_{t_0})}=C(1+t_0)dt'+\beta, \]
and there exist collar neighborhoods of $R_+(\Gamma_{t_0})$ and $R_-(\Gamma_{t_0})$ respectively diffeomorphic to $(1-\frac{\epsilon_+ + 2t_0}{1+t_0},1] \times R_+(\Gamma_{t_0})$ and $[-1,-1+\frac{\epsilon_-}{1+t_0}) \times R_-(\Gamma_{t_0})$ in this parametrization. Note that $\epsilon_+<\frac{\epsilon_+ + 2t_0}{1+t_0}$ for any $t_0>0$. On these neighborhoods, $\alpha_{t_0}$ reads $C(1+t_0)dt'+\beta_+$ and $C(1+t_0)dt'+\beta_-$, respectively. 

Now consider the completions $(M^\ast,\alpha^\ast)$ and $(M^\ast_{t_0},\alpha^\ast_{t_0})$ of the sutured contact 3-manifolds $(M,\Gamma,\alpha)$ and $(M_{t_0},\Gamma_{t_0},\alpha_{t_0})$, respectively, as in \cite[Section 2.4]{cghh}. The completions $(M^\ast,\alpha^\ast)$ and $(M^\ast_{t_0},\alpha^\ast_{t_0})$ are identical. Therefore, an almost complex structure $J$ tailored to $(M^\ast,\alpha^\ast)$ is also tailored to $(M^\ast_{t_0},\alpha^\ast_{t_0})$ and vice versa, and so we have proved the following.
\begin{lemma}
\label{lem:scaling}
There exists a canonical isomorphism $\ech(M,\Gamma,\alpha,J) \cong \ech(M_{t_0},\Gamma_{t_0},\alpha_{t_0},J)$.  Moreover, since any closed Reeb orbits in $(M_{t_0},\alpha_{t_0})$ actually lie in $(M,\alpha = \alpha_{t_0}|_M)$, the action functionals $\mathcal{A}_\alpha$ and $\mathcal{A}_{\alpha_{t_0}}$ are identical, and so this canonical isomorphism preserves the action filtration on each homology group.
\end{lemma}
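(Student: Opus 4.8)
The plan is to leverage the observation, made in the discussion just above the statement, that the completions $(M^\ast,\alpha^\ast)$ and $(M^\ast_{t_0},\alpha^\ast_{t_0})$ are literally the same contact manifold; once this is pinned down precisely, both assertions become essentially formal. So the first task is to make the identification of completions explicit. Running the recipe of Definition~\ref{def:completion} for $(M_{t_0},\Gamma_{t_0},\alpha_{t_0})$, the collar $[1,1+2t_0]_t\times R_+(\Gamma)$ used to build $M_{t_0}$, together with the half-cylinder $[1+2t_0,\infty)_t\times R_+(\Gamma)$ attached during completion, reassemble into the half-cylinder $[1,\infty)_t\times R_+(\Gamma)$ appearing in $M^\ast$, with $\alpha^\ast_{t_0}=Cdt+\beta_+$ there, matching $\alpha^\ast$; the $R_-$ end and the interior $M$ are untouched. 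Near the sutures one uses the reparametrization $\Psi$: in the original $t$-coordinate, which satisfies $t=(1+t_0)t'+t_0$, we have $\alpha_{t_0}|_{U(\Gamma_{t_0})}=C(1+t_0)dt'+e^\tau\beta_0=Cdt+e^\tau\beta_0$, so after completion (where $t$ becomes unbounded) the regions $[1,\infty)_\tau\times\R\times\Gamma$ glue up identically. This produces a canonical identification $M^\ast\cong M^\ast_{t_0}$ taking $\alpha^\ast$ to $\alpha^\ast_{t_0}$ and compatible with the inclusions of $M$.

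Given this identification, the notion of an almost complex structure tailored to the completion (Definition~\ref{def:tailored}) depends only on the completed contact manifold together with its distinguished ``interior'' region $\mathrm{int}(M)$, which is the same for both; hence $J$ is tailored to $(M^\ast,\alpha^\ast)$ if and only if it is tailored to $(M^\ast_{t_0},\alpha^\ast_{t_0})$. Consequently the chain complexes $\ecc(M,\Gamma,\alpha,J)$ and $\ecc(M_{t_0},\Gamma_{t_0},\alpha_{t_0},J)$ of Definition~\ref{def:sutured-ech} are literally equal --- same generating set of orbit sets, same differential computed from the same moduli spaces $\mathcal{M}_{I=1}(\R\times M^\ast,J;\Theta_+,\Theta_-)$ --- and passing to homology gives the asserted canonical isomorphism.

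For the action filtration, I would note that, as observed after Definition~\ref{def:completion}, all closed Reeb orbits of $\alpha^\ast_{t_0}$ lie in $\mathrm{int}(M_{t_0})$; in fact on the new collar $[1,1+2t_0]_t\times R_+(\Gamma)$ and throughout $U(\Gamma_{t_0})$ the Reeb field is $\frac{1}{C}\partial_t$, which has no closed orbits, so every orbit lies where $\alpha_{t_0}=\alpha$. Thus the Reeb orbits of the two completions coincide and $\int_\theta\alpha=\int_\theta\alpha_{t_0}$ for each, so $\mathcal{A}_\alpha=\mathcal{A}_{\alpha_{t_0}}$ on the common set of ECH generators. Hence $\ecc^L(M,\Gamma,\alpha,J)=\ecc^L(M_{t_0},\Gamma_{t_0},\alpha_{t_0},J)$ as subcomplexes for every $L$, the canonical isomorphism preserves the filtration, and it commutes with the inclusion-induced maps $i^{L,L'}_J$.

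The only genuine work is the coordinate bookkeeping in the first paragraph --- verifying that the gluing parameters for the collar, the corner region $U(\Gamma_{t_0})$, and their completions all match on the nose, in particular that the rescaling $t\mapsto t'$ is exactly compensated by the factor $C(1+t_0)$ in the contact form. This is routine but should be checked with care; everything downstream of it is formal.
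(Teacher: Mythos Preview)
Your argument is essentially the paper's own: the discussion preceding the lemma already establishes that $(M^\ast,\alpha^\ast)=(M^\ast_{t_0},\alpha^\ast_{t_0})$ and concludes that a tailored $J$ works for both, so the chain complexes coincide on the nose. One small slip: you write that the distinguished interior region ``is the same for both,'' but in fact $\mathrm{int}(M)\subsetneq\mathrm{int}(M_{t_0})$, so the tailored condition for $M_{t_0}$ is strictly \emph{weaker} (requiring $\partial_t$-invariance on a smaller set); thus $J$ tailored to $(M^\ast,\alpha^\ast)$ is automatically tailored to $(M^\ast_{t_0},\alpha^\ast_{t_0})$, which is the only implication you need, and the rest of your argument goes through unchanged.
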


\begin{remark}
Note that the above construction and lemma would hold if we replaced $R_+(\Gamma)$ with $R_-(\Gamma)$.  Also, this construction does not require that $R_+(\Gamma)$ be diffeomorphic to $R_-(\Gamma)$, but rather works for any sutured contact manifold.
\end{remark}

Thus we can replace the constant $C$ appearing in $\alpha$ with any $C'>C$ by taking $t_0 = \frac{C'-C}{C}$.  With the above understood, we now turn to the question of whether we can find a diffeomorphism between the two Liouville manifolds $(R_+(\Gamma),\beta_+)$ and $(R_-(\Gamma),\beta_-)$.
 
\begin{lemma}
\label{lem:glue-liouville}
Let $(M,\Gamma,\alpha)$ be a sutured contact manifold with completion $(M^*, \alpha^*)$ and tailored almost complex structure $J$ on $\R \times M^*$, and suppose that there is a diffeomorphism $\psi: R_+(\Gamma) \to R_-(\Gamma)$ which fixes $U(\Gamma) = [-1,0]_\tau \times [-1,1]_t \times \Gamma$ in the sense that
\[ \psi(\tau,1,x) = (\tau,-1,x). \]
Then there is a sutured contact manifold $(M',\Gamma',\alpha')$ and tailored almost complex structure $J'$ on $\R\times (M')^*$ such that:
\begin{enumerate}\leftskip-0.25in
\item \label{item:glue-liouville-cond1} There exists a diffeomorphism $(M,\Gamma, U(\Gamma)) \xrightarrow{\sim} (M',\Gamma',U(\Gamma'))$ such that $\alpha$ and $\alpha'$ agree away from collar neighborhoods of $\partial M$ and $\partial M'$.  In particular, the closed Reeb orbits of $(M,\Gamma)$ are the same with respect to either contact form, and the action functionals $\mathcal{A}_\alpha$ and $\mathcal{A}_{\alpha'}$ are equal.
\item \label{item:glue-liouville-cond2} There is a diffeomorphism $f: R_+(\Gamma') \to R_-(\Gamma')$ such that $f^*(\beta'_-) = \beta'_+$, and $f$ is isotopic to $\psi$ under the identifications $R_+(\Gamma') \cong R_+(\Gamma)$ and $R_-(\Gamma') \cong R_-(\Gamma)$ (which are well-defined up to isotopy).
\item There is a canonical isomorphism $\ecc(M,\Gamma,\alpha,J) \cong \ecc(M',\Gamma',\alpha',J')$, defined by sending an orbit set $\Theta \in \ecc(M,\Gamma,\alpha,J)$ to itself.
\end{enumerate}
\end{lemma}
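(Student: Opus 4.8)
The plan is to build $(M',\Gamma',\alpha')$ by attaching a long collar to $M$ along $R_+(\Gamma)$ on which $\alpha'$ interpolates between $Cdt+\beta_+$ and $Cdt+\psi^*\beta_-$, so that the new positive horizontal boundary carries the Liouville form $\psi^*\beta_-$ and we may simply take $f=\psi$. Note first that for the second conclusion to be possible at all, $\psi$ must be orientation-preserving (for the orientations on $R_\pm(\Gamma)$ fixed in Definition \ref{def:sutured-contact-manifold}), since $f^*\beta_-'=\beta_+'$ forces $f^*(d\beta_-')=d\beta_+'$; consequently $\psi^*\beta_-$ is again a Liouville form on $R_+(\Gamma)$, and $\psi^*\beta_-=e^\tau\beta_0=\beta_+$ on $R_+(\Gamma)\cap U(\Gamma)$ because $\psi$ fixes $U(\Gamma)$. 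The convex combinations $\lambda_s:=(1-s)\beta_++s\,\psi^*\beta_-$ are then Liouville forms for every $s\in[0,1]$, since a convex combination of positive area forms is a positive area form, and they all equal $e^\tau\beta_0$ near $\partial R_+(\Gamma)$. Picking $\rho\colon[0,T]\to[0,1]$ smooth with $\rho\equiv 0$ near $0$ and $\rho\equiv 1$ near $T$, set $\gamma_t:=\lambda_{\rho(t)}$; the $1$-form $Cdt+\gamma_t$ satisfies the contact condition $Cd\gamma_t-\gamma_t\wedge\dot\gamma_t>0$ on $[0,T]_t\times R_+(\Gamma)$ once $C$ is large compared with $\sup|\dot\gamma_t|$, which we arrange by taking $T$ large (or by first enlarging $C$ via Lemma \ref{lem:scaling}), and then its Reeb vector field has everywhere positive $\partial_t$-component.

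Next I would glue $[1,1+T]_t\times R_+(\Gamma)$ to $M$ along $\{1\}\times R_+(\Gamma)=R_+(\Gamma)$, letting $\alpha'$ be $\alpha$ on $M$ and $Cdt+\gamma_{t-1}$ on the collar; this is smooth because $\gamma$ is constant ($=\beta_+$) near the gluing region. Extending the $\tau$-collar of $\Gamma$ across the new piece and reparametrizing exactly as in the construction of $(M_{t_0},\Gamma_{t_0},\alpha_{t_0})$ preceding Lemma \ref{lem:scaling}, one checks that $(M',\Gamma',\alpha')$ is a sutured contact manifold with $R_+(\Gamma')=\{1+T\}\times R_+(\Gamma)$ carrying $\beta_+'=\psi^*\beta_-$, with $R_-(\Gamma')=R_-(\Gamma)$ carrying $\beta_-'=\beta_-$, and with $U(\Gamma')\cong U(\Gamma)$. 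Since attaching a collar does not change the diffeomorphism type, there is a diffeomorphism $(M,\Gamma,U(\Gamma))\isomto(M',\Gamma',U(\Gamma'))$, and $\alpha$ and $\alpha'$ agree off the collar; as all closed Reeb orbits of either form lie in the region where the two forms coincide, this gives the first conclusion. For the second conclusion take $f=\psi$ under $R_+(\Gamma')\cong R_+(\Gamma)$ and $R_-(\Gamma')=R_-(\Gamma)$, so that $f^*\beta_-'=\psi^*\beta_-=\beta_+'$ and $f$ is (literally) isotopic to $\psi$.

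The heart of the matter is the third conclusion. The completions $(M^*,\alpha^*)$ and $((M')^*,(\alpha')^*)$ have the same underlying manifold off the positive end, and on the positive end they are $([1,\infty)_t\times R_+(\Gamma),\,Cdt+\beta_+)$ and $([1,\infty)_t\times R_+(\Gamma),\,Cdt+\tilde\gamma_t)$ with $\tilde\gamma_t=\beta_+$ near $t=1$ and $\tilde\gamma_t\equiv\psi^*\beta_-$ for $t$ large. In each case the Reeb flow issuing from $\{1\}\times R_+(\Gamma)$ preserves the contact form (the Reeb flow always preserves $\alpha$), has $\partial_t$-component bounded below so it escapes to $t=+\infty$ and sweeps out the entire end, and is therefore a diffeomorphism $[0,\infty)_\sigma\times R_+(\Gamma)\isomto[1,\infty)_t\times R_+(\Gamma)$ pulling the contact form back to $d\sigma+\beta_+$. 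Composing the two such coordinate charts produces a diffeomorphism $\Phi$ of the positive ends which intertwines the two contact forms, equals the identity near $t=1$ and near $\partial R_+(\Gamma)$ (where both Reeb fields are $\tfrac1C\partial_t$), and for $t$ large has the form $\Phi(t,x)=(t+h(x),\eta(x))$, hence is $\partial_t$-equivariant there, with $\eta$ carrying $d\beta_+$ to $d(\psi^*\beta_-)$. Extending $\Phi$ by the identity over $M$ gives a diffeomorphism $(M^*,\alpha^*)\isomto((M')^*,(\alpha')^*)$ fixing $M$; I would then set $J':=\Phi_*J$, which is tailored to $((M')^*,(\alpha')^*)$ because $\Phi$ is $\partial_t$-equivariant and symplectic on the $R_+$-slices near the ends, so the conditions of Definition \ref{def:tailored} are preserved. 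Then $\Phi$ is the identity on the generators of both chain complexes (orbit sets supported in $M$), and $u\mapsto\Phi\circ u$ is an index- and sign-preserving bijection of the moduli spaces $\mathcal{M}_{I=1}$ defining the two differentials, so $\Theta\mapsto\Theta$ is the desired chain isomorphism.

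I expect the main obstacle to be this last step: manufacturing the diffeomorphism $\Phi$ of completions that intertwines the two contact forms while remaining compatible with the auxiliary structures (translation invariance at infinity, symplectic behaviour on the $R_+$-factor) entering the definition of ``tailored'', so that $J$ transports to a tailored $J'$ and the two $I=1$ moduli spaces literally coincide. The Reeb-flow-coordinate description of $\Phi$ is what makes this go through, but confirming that $\Phi_*J$ is again tailored requires unwinding the precise definitions of \cite{cghh}; everything else is a routine adaptation of the construction in Lemma \ref{lem:scaling} and Theorem \ref{thm:1-handle}.
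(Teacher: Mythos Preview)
Your argument is correct, but it takes a genuinely different route from the paper's. The paper does not attach an interpolating collar; instead it works entirely inside the vertical completion $(M_v,\alpha_v)$. One considers on a neighborhood $(1-\tfrac{\epsilon}{2},1+\tfrac{\epsilon}{2})\times R_+(\Gamma)\subset M_v$ the family $\alpha_\lambda=Cdt+\big(\lambda\beta_++(1-\lambda)\psi^*\beta_-\big)$ and runs a Moser argument to produce an isotopy $\tilde\phi_\lambda$ with $\tilde\phi_1^*\alpha_v=Cdt+\psi^*\beta_-$; then $M':=\phi_1(M)\subset M_v$ with $\alpha'=\alpha_v|_{M'}$, and $f=\psi\circ(\phi_1|_{R_+(\Gamma)})^{-1}$. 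The payoff is that $M$ and $M'$ now have \emph{literally the same} vertical completion $(M_v,\alpha_v)$, hence the same completion $((M')^*,(\alpha')^*)=(M^*,\alpha^*)$, so one may simply take $J'=J$ and the identification of chain complexes is tautological. Your approach instead produces two different contact forms on the positive end of $M^*$ and then straightens them out via the Reeb-flow contactomorphism $\Phi$; this works, but it obliges you to verify (as you correctly flag) that $\Phi_*J$ is again tailored, in particular that the induced map $\eta$ on $\widehat{R_+}$ is a symplectomorphism carrying a $\widehat\beta_+$-adapted structure to a $\widehat\beta_+'$-adapted one. That check goes through (essentially because $\Phi^*(\alpha')^*=\alpha^*$ forces $\eta^*d(\psi^*\beta_-)=d\beta_+$ and $\eta=\mathrm{id}$ near $\partial R_+$), so your proof is complete, but the Moser route sidesteps this bookkeeping entirely.
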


\begin{proof}
We start by identifying small neighborhoods of $R_+(\Gamma)$ and $R_-(\Gamma)$ with $[1-\epsilon,1]\times R_+(\Gamma)$ and $[-1,-1+\epsilon] \times R_-(\Gamma)$, respectively.  On these neighborhoods, $\alpha$ has the form
\[ \alpha = Cdt + \beta_\pm \]
where $\beta_\pm$ is a Liouville form on $R_\pm(\Gamma)$ which restricts to $e^\tau \beta_0$ on $R_\pm(\Gamma)\cap U(\Gamma)$.  We then glue the contact manifolds
\begin{eqnarray*}
([1,\infty) \times R_+(\Gamma), Cdt + \beta_+) & \mathrm{and} & ((-\infty,-1] \times R_-(\Gamma), Cdt + \beta_-)
\end{eqnarray*}
to $M$ along $R_+(\Gamma)$ and $R_-(\Gamma)$ respectively to form the vertical completion $(M_v,\alpha_v)$ as in Definition \ref{def:completion}.

Given the diffeomorphism $\psi: R_+(\Gamma) \to R_-(\Gamma)$, we know that $\psi^*\beta_-$ and $\beta_+$ are identical along $R_+(\Gamma) \cap U(\Gamma)$. We will construct an isotopy $\phi_\lambda: R_+(\Gamma) \hookrightarrow (1-\frac{\epsilon}{2}, 1+\frac{\epsilon}{2}) \times R_+(\Gamma)$ such that 
\begin{itemize}\leftskip-0.35in
\item $\phi_0$ is the embedding $x \mapsto (1,x)$.
\item Each $\phi_\lambda(R_+(\Gamma))$ is transverse to the Reeb vector field $\frac{1}{C}\partial_t$.
\item For all $x \in R_+(\Gamma) \cap U(\Gamma)$ and all $\lambda\in[0,1]$, we have $\phi_\lambda(x) = (1,x)$.
\item $\psi^\ast\beta_-=\phi_1^\ast\alpha_v$.
\end{itemize}

Take the contact 1-forms $\alpha_v=Cdt+\beta_+$ and $\alpha_v'=Cdt+\psi^\ast\beta_-$ on $(1-\frac{\epsilon}{2}, 1+\frac{\epsilon}{2}) \times R_+(\Gamma)$. For any $\lambda\in[0,1]$ the 1-form $\alpha_\lambda=\lambda\alpha_v+(1-\lambda)\alpha_v'=Cdt+[\lambda\beta_++(1-\lambda)\psi^\ast\beta_-]$ is contact on $(1-\frac{\epsilon}{2}, 1+\frac{\epsilon}{2}) \times R_+(\Gamma)$ since each $\beta_\lambda=\lambda\beta_++(1-\lambda)\psi^\ast\beta_-$ is a Liouville form on $R_+(\Gamma)$. Now we will apply the Moser trick to the family of contact 1-forms $\alpha_\lambda$ to construct a vector field $v_\lambda=h\partial_t+u_\lambda$ such that 
\begin{equation*}
\mathcal{L}_{v_\lambda}\alpha_\lambda+\frac{d\alpha_\lambda}{d\lambda}=0.
\end{equation*}
If $\tilde{\phi}_\lambda$ is the flow of $v_\lambda$, then it will follow from the identity $\frac{d}{d\lambda}\left(\tilde{\phi}^{\ast}_\lambda(\alpha_\lambda)\right) = \tilde{\phi}^{\ast}_\lambda(\mathcal{L}_{v_\lambda} \alpha_\lambda + \dot{\alpha}_\lambda)$ that $\tilde{\phi}_\lambda^*(\alpha_\lambda)$ is constant, and hence $\tilde{\phi}_1^*(\alpha_v) = \tilde{\phi}_1^*(\alpha_1)$ is equal to $\tilde{\phi}_0^*(\alpha_0) = \alpha'_v$.  Note that
\begin{eqnarray*}
\mathcal{L}_{v_\lambda}\alpha_\lambda&=&\iota_{v_\lambda}d\alpha_\lambda+d\iota_{v_\lambda}\alpha_\lambda\\
&=&\iota_{u_\lambda}d\beta_\lambda+d(Ch+\iota_{u_\lambda}\beta_\lambda)\\
\frac{d\alpha_\lambda}{d\lambda}&=&\beta_+-\psi^\ast\beta_-.
\end{eqnarray*}
Hence, solving the equation
\[ \iota_{u_\lambda}d\beta_\lambda = \psi^\ast\beta_--\beta_+ \]
for the vector field $u_\lambda$, we obtain the desired vector field as $v_\lambda=-\frac{\iota_{u_\lambda}\beta_\lambda}{C}\partial_t+u_\lambda$, which is $t$-independent. In light of Lemma \ref{lem:scaling}, assume that $C$ is sufficiently large so that this vector field induces an isotopy 
\[ \tilde{\phi}_\lambda:\left(1-\frac{\epsilon}{4}, 1+\frac{\epsilon}{4}\right) \times R_+(\Gamma)\hookrightarrow \left(1-\frac{\epsilon}{2}, 1+\frac{\epsilon}{2}\right) \times R_+(\Gamma) \]
satisfying $\tilde{\phi}_1^\ast\alpha_v=\alpha_v'$. Note also that since $\alpha_v=\alpha_v'$ on $(1-\frac{\epsilon}{4}, 1+\frac{\epsilon}{4}) \times (R_+(\Gamma)\cap U(\Gamma))$, the isotopy is the identity over this region. Then $\phi_\lambda:=\tilde{\phi}_\lambda|_{\{1\}\times R_+(\Gamma)}$ satisfies the properties listed above.

By the isotopy extension theorem, we can extend $\phi_\lambda$ to an isotopy of all of $M_v$ which is supported on $(1-\epsilon,1+\epsilon) \times R_+(\Gamma)$ and fixes $U(\Gamma)$ pointwise. Now let $M'$ denote the submanifold $\phi_1(M)$ of $M_v$. This is a sutured contact manifold with sutures $\Gamma' = \Gamma$, $R_-(\Gamma')=R_-(\Gamma)$ and $R_+(\Gamma')=\phi_1(R_+(\Gamma))$, and contact form $\alpha' = \alpha_v|_{M'}$. On $M\ssm((1-\epsilon, 1] \times R_+(\Gamma))$ we have $\phi_1^*(\alpha_v) = \alpha$, thus
\[ \phi_1|_M: (M,\Gamma,U(\Gamma)) \to (M',\Gamma',U(\Gamma')) \]
is a diffeomorphism satisfying condition \eqref{item:glue-liouville-cond1}. 
Regarding $\phi_1|_{R_+(\Gamma)}$ as a diffeomorphism from $R_+(\Gamma)$ to $R_+(\Gamma')$, the diffeomorphism
$$f = \psi \circ (\phi_1|_{R_+(\Gamma)})^{-1}: R_+(\Gamma') \to R_-(\Gamma)$$
satisfies $f^\ast\beta_-=\alpha_v|_{R_+(\Gamma')}=:\beta'_+$, and it is isotopic to $\psi$ by construction. This proves condition \eqref{item:glue-liouville-cond2}.

Finally, if we complete $(M',\Gamma',\alpha')$ vertically then it is clear that the resulting manifold is canonically $(M_v, \alpha_v)$.  Thus when we extend horizontally as well, the resulting completed manifold $(M'^*,\alpha'^*)$ is identical to the completion $(M^*,\alpha^*)$ of $(M,\Gamma,\alpha)$.  Since $R_+(\Gamma')$ is a smooth perturbation of $R_+(\Gamma)$ in $M_v$, it is easy to check that $J' = J$ is tailored to $(M^*,\alpha^*)$ if we take $\epsilon$, and hence the perturbation, sufficiently small.  We conclude that
\[ \ech(M,\Gamma,\alpha, J) = \ech(M^*,\alpha^*, J) = \ech((M')^*,(\alpha')^*, J') = \ech(M',\Gamma',\alpha',J'), \]
as desired.
\end{proof}

\subsection{Closed manifolds and continuation maps}
\label{ssec:continuation}

We wish to imitate the neck-stretching arguments of \cite{cghh} to show that sutured ECH does not depend on a choice of almost complex structure or contact form, and that gluing contact 1-handles to $(M,\Gamma,\alpha)$ preserves ECH up to isomorphism.  In order to do so, we will need to use filtered ECH and define suitable continuation maps (cf.\ \cite{ht2}), and the latter will require us to work with closed contact manifolds.

\begin{lemma}
\label{lem:glue-pm-filtered}
Let $(M',\Gamma',\alpha')$ be a sutured contact manifold with connected suture $\Gamma'$ such that $\alpha' = Cdt + \beta'_\pm$ on neighborhoods of $R_\pm(\Gamma')$.  Suppose that there is a diffeomorphism
\[ f: R_+(\Gamma') \to R_-(\Gamma') \]
sending $[-1,0]_\tau \times \{1\}_t \times \Gamma' \subset R_+(\Gamma') \cap U(\Gamma')$ to $[-1,0]_\tau \times \{-1\}_t \times \Gamma' \subset R_-(\Gamma') \cap U(\Gamma')$ by the map $(\tau,1,x) \mapsto (\tau,-1,x)$, and that $f^*(\beta'_-) = \beta'_+$.  Then for each $N>0$ and $n>\frac{N}{2C}$, there is a contact manifold $(M'_n,\alpha'_n)$ and an embedding
\[ (M',\alpha') \hookrightarrow (M'_n,\alpha'_n) \]
such that $\alpha'_n|_{M'} = \alpha'$ and
\begin{enumerate}\leftskip-0.25in
\item \label{cond:glue-pm-filtered-1} $\partial M'_n$ is a pre-Lagrangian torus foliated by Reeb orbits of action at least $N$;
\item \label{cond:glue-pm-filtered-2} $M' \cap \partial M'_n = \{0\} \times [-1,1] \times \Gamma' \subset U(\Gamma')$;
\item \label{cond:glue-pm-filtered-3} if $\gamma \subset M'_n$ is a closed Reeb orbit of action less than $N$ for $\alpha'_n$, then $\gamma \subset M'$.
\end{enumerate}
\end{lemma}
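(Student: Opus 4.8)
The plan is to build $(M'_n,\alpha'_n)$ by truncating the vertical completion of $(M',\Gamma',\alpha')$ and then gluing its two ``free'' horizontal ends together along $f$, so that the identification inserts a Reeb ``neck'' of $t$-length $2n$ which any orbit crossing it must fully traverse. Concretely, inside the vertical completion $(M'_v,\alpha'_v)$ of Definition~\ref{def:completion} I would take the compact piece
\[ M'_{(n)} = \left([-1-n,-1]_t \times R_-(\Gamma')\right) \cup_{R_-(\Gamma')} M' \cup_{R_+(\Gamma')} \left([1,1+n]_t \times R_+(\Gamma')\right), \]
on which $\alpha'_v$ equals $\alpha'$ on $M'$ and equals $Cdt + \beta'_\pm$ on the two attached collars. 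Its two free horizontal faces are the copies $\{1+n\}_t \times R_+(\Gamma')$ and $\{-1-n\}_t \times R_-(\Gamma')$, and I would form $(M'_n,\alpha'_n)$ by identifying $(1+n,x)$ with $(-1-n,f(x))$. Since $f^*(\beta'_-)=\beta'_+$, the forms $Cdt+\beta'_+$ and $f^*(Cdt+\beta'_-)$ agree across the gluing locus and $\partial_t$ equals $C$ times the Reeb field on a neighborhood of it, so $\alpha'_n$ is a well-defined smooth contact form with $\alpha'_n|_{M'}=\alpha'$; near the suture, where $f(\tau,1,x)=(\tau,-1,x)$, the neighborhood $U(\Gamma')$ and the two collars assemble into $[-1,0]_\tau \times (\R/(2n+2)\Z)_t \times \Gamma'$ with $\alpha'_n = Cdt+e^\tau\beta_0$, which in particular smooths out the corners of $M'$. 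Hence $\partial M'_n = \{0\}_\tau \times (\R/(2n+2)\Z)_t \times \Gamma'$, a torus because $\Gamma'$ is connected, and it meets $M'$ exactly in $\{0\}\times[-1,1]\times\Gamma' \subset U(\Gamma')$, which is condition~\eqref{cond:glue-pm-filtered-2}.

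\textbf{Verification of \eqref{cond:glue-pm-filtered-1} and \eqref{cond:glue-pm-filtered-3}.} On $\partial M'_n$ we have $\alpha'_n = Cdt+\beta_0$ with $\beta_0$ a volume form on $\Gamma'$, so $d\alpha'_n$ restricts to zero there and $\partial M'_n$ is pre-Lagrangian; its characteristic foliation is the family of circles $\{0\}\times(\R/(2n+2)\Z)\times\{\theta\}$, each of which is a closed Reeb orbit since the Reeb field equals $\tfrac1C\partial_t$ on this region, and each has action $\int C\,dt = C(2n+2) > 2Cn > N$ by the hypothesis $n > \tfrac{N}{2C}$; this is \eqref{cond:glue-pm-filtered-1}. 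For \eqref{cond:glue-pm-filtered-3}, the two attached collars together with their glued free faces form a single neck $\mathcal N \cong [0,2n]_s \times R_+(\Gamma')$ carrying $Cds+\beta'_+$, whose Reeb field is $\tfrac1C\partial_s$; thus inside $\mathcal N$ the Reeb flow is monotone in $s$, entering through $\{s=0\}=R_+(\Gamma')$ and leaving through $\{s=2n\}=R_-(\Gamma')$ back into $M'$, traversing $s$-distance $2n$ each pass. Any closed Reeb orbit $\gamma$ of $\alpha'_n$ meeting $\mathcal N$ must therefore cross it completely, and since $\alpha'_n$ evaluates to $1$ on its Reeb field everywhere, the part of $\gamma$ in $\mathcal N$ already contributes at least $2Cn$ to $\mathcal A_{\alpha'_n}(\gamma)$, forcing $\mathcal A_{\alpha'_n}(\gamma) > N$. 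Hence every closed Reeb orbit of action less than $N$ is disjoint from $\mathcal N$, i.e.\ contained in $M'$.

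\textbf{Main obstacle.} There is no hard analysis here: the argument is a cut-and-paste construction followed by bookkeeping of Reeb actions. The only point requiring care is the smoothness of the gluing---that attaching the two product collars and identifying their free ends by $f$ yields a genuinely smooth contact manifold with boundary, in particular that the corners of $M'$ along $\{(0,\pm1)\}\times\Gamma'$ are removed consistently---which reduces to the facts that near $R_\pm(\Gamma')$ and near $\Gamma'$ the form $\alpha'$ is $Cdt$ plus a $t$-independent term and that $f$ is, near the suture, the evident product identification, together with a careful tracking of the orientation conventions (the Reeb flow exits $R_+$ and enters $R_-$, while $f$ maps $R_+(\Gamma')$ to $R_-(\Gamma')$).
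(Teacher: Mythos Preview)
Your proof is correct and follows essentially the same approach as the paper. The only cosmetic difference is that the paper attaches a single neck $[-n,n]_t \times R_+(\Gamma')$ glued at one end to $R_+(\Gamma')$ by the identity and at the other to $R_-(\Gamma')$ by $f$, whereas you split this neck into two half-collars attached to $R_\pm(\Gamma')$ and then glue their free ends; the total neck length is $2n$ in both cases and the action estimates are identical.
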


\begin{proof}
Let $M'_n$ be the 3-manifold
\[ \left( M' \cup ([-n,n]_t \times R_+(\Gamma')) \right) / {\sim}, \]
in which we glue $R_+(\Gamma') \subset M'$ to $\{-n\} \times R_+(\Gamma')$ by the identity map and $\{n\} \times R_+(\Gamma')$ to $R_-(\Gamma') \subset M'$ by $f$.  This admits a contact form $\alpha'_n$ which restricts to $M'$ and to $[-n,n]\times R_+(\Gamma')$ as $\alpha'$ and $Cdt + \beta'_+$, respectively.  The Reeb vector field on $[-n,n]\times R_+(\Gamma')$ is $\frac{1}{C}\partial_t$, so any closed Reeb orbit $\gamma$ of $(M'_n,\alpha'_n)$ which passes through some point $(t_0, x)$ of $[-n,n]\times R_+(\Gamma')$ must contain all of $[-n,n] \times \{x\}$, and in particular
\[ \int_{\gamma} \alpha'_n \geq \int_{[-n,n]\times\{x\}} Cdt + \beta'_+ = 2Cn. \]
If we take $n > \frac{N}{2C}$ then it follows that $\mathcal{A}_{\alpha'_n}(\gamma) > N$.

The boundary of the contact manifold $(M'_n,\alpha'_n)$ is now a pre-Lagrangian torus diffeomorphic to $\{0\}_\tau \times S^1_t \times \Gamma'$, where $S^1_t$ is formed by identifying the endpoints of the $[-1,1]$-factor of $U(\Gamma')$ with the endpoints of the $[-n,n]$-factor of $[-n,n] \times R_+(\Gamma')$.  The contact form $\alpha'_n$ is equal to $Cdt + e^\tau \beta_0$ on the collar neighborhood $[-1,0]_\tau \times S^1_t \times \Gamma'$ of $\partial M'_n$, where $\beta_0$ is a volume form on $\Gamma'$.  The boundary of $M'_n$ is foliated by closed orbits of the Reeb vector field $\frac{1}{C}\partial_t$ with action at least $N$, and any Reeb orbit of action at most $N$ lies on the interior of $M' \subset M'_n$, so $(M'_n,\alpha'_n)$ satisfies all of the desired conditions.
\end{proof}

\begin{proposition}
\label{prop:embed-filtered}
Let $(M'',\alpha'')$ be a contact manifold whose boundary is a pre-Lagrangian torus with collar neighborhood diffeomorphic to
\[ ([-1,0]_\tau \times S^1_t \times \Gamma, Cdt + e^\tau \beta_0), \]
where $\beta_0$ is a volume form on $\Gamma \cong S^1$, and $\partial M''$ is identified with the $\{\tau=0\}$ locus. If each Reeb orbit on the boundary has action greater than $2N$, then there is a closed contact manifold $(Y,\lambda)$ and an embedding
\[ (M'',\alpha'') \hookrightarrow (Y,\lambda) \]
such that $\lambda|_{M''} = \alpha''$ and every closed Reeb orbit $\gamma \subset (Y,\lambda)$ with $A_{\lambda}(\gamma) < N$ lies in $M''$.
\end{proposition}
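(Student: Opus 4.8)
The plan is to cap off the boundary torus $\partial M''$ by gluing in a solid torus $W$ carrying a contact form chosen so that every closed Reeb orbit it contains has action at least $N$, and then to set $Y = M'' \cup_{\partial M''} W$. Write the collar of $\partial M''$ as $[-1,0]_\tau \times S^1_t \times \Gamma$ with $\alpha'' = Cdt + e^\tau\beta_0$ and put $\ell_t = |S^1_t|$, $V = \int_\Gamma\beta_0 > 0$; the Reeb vector field on the collar is $\tfrac1C\partial_t$, so $\partial M''$ is foliated by the circles $S^1_t\times\{\mathrm{pt}\}$, each of action $C\ell_t$, and the hypothesis reads $C\ell_t > 2N$. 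In $W$ the circle $\Gamma$ will be a core-parallel longitude while $S^1_t$ bounds a meridian disk; this is the only choice compatible with the collar, since there the coefficient $C$ of $dt$ is constant while the coefficient $e^\tau$ of $\beta_0$ is not, so it must be the $S^1_t$-direction that is allowed to degenerate.

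Concretely I would take $W = S^1_\theta \times D^2$, with $\theta$ identifying $S^1_\theta$ with $\Gamma$, polar coordinates $(\rho,t)$ on $D^2$ with $\rho\in[0,\rho_1]$ (core at $\rho = 0$) and $t$ identified with $S^1_t$, and glue $\partial W = \{\rho = \rho_1\}$ to $\partial M''$ accordingly. I then seek a contact form $\lambda_W = a(\rho)\,dt + b(\rho)\,\pi^*\beta_0$, where $\pi\colon W\to\Gamma$ is the projection, for a smooth plane curve $\rho\mapsto(a(\rho),b(\rho))$ satisfying: (i) near $\rho = \rho_1$, $a\equiv C$ and $b$ is strictly decreasing in $\rho$ with $b(\rho_1) = 1$, so that setting $\tau := \ln b(\rho)$ rewrites $\lambda_W$ as $Cdt + e^\tau\beta_0$ and makes it glue smoothly to $\alpha''$ across $\partial M''$; (ii) near $\rho = 0$, $a(\rho) = a_0\rho^2 + O(\rho^4)$ with $a_0 > 0$ and $b(\rho) = b_0 + O(\rho^2)$ with $b_0 > N/V$, which makes $\lambda_W$ a smooth contact form on all of $W$, the core included (nothing in the $\Gamma$-direction degenerates, so $b(\rho)\pi^*\beta_0$ is automatically smooth, while $a(\rho)\,dt$ is smooth because $a_0\rho^2\,dt = a_0(x\,dy-y\,dx)+O(|z|^4)$); and (iii) on $(0,\rho_1]$, $b' < 0$ and $a'\ge 0$ — so that the contact condition $ab' - a'b < 0$ is automatic — together with the crucial requirement that $a$ be held equal to $C$ until $b$ has already grown past $N/V$, and only then allowed to descend to $0$ near the core. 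Such a curve exists by an elementary interpolation. Then $\lambda := \alpha''$ on $M''$ and $\lambda := \lambda_W$ on $W$ is a global contact form on the closed manifold $Y$, and $(M'',\alpha'')\hookrightarrow(Y,\lambda)$ with $\lambda|_{M''} = \alpha''$.

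The remaining step is the Reeb analysis. A direct computation shows the Reeb field of $\lambda_W$ has vanishing $\partial_\rho$-component, is tangent to every torus $\{\rho = c\}$, and is proportional there to $-b'(c)\,\partial_t + a'(c)\,\partial_\theta$; in particular it equals $\tfrac1C\partial_t$ near $\partial M''$, hence is tangent to $\partial M''$, so no closed Reeb orbit of $(Y,\lambda)$ crosses between $\mathrm{int}(M'')$ and $W$. Thus every closed orbit lies in $\mathrm{int}(M'')$, or on a single torus $\{\rho = c\}$ with $c\in(0,\rho_1)$, or is the core. On $\{\rho = c\}$ an orbit wraps $m\ge 1$ times in the $t$-direction and $n\ge 0$ times around $\Gamma$ (both non-negative, since $-b' > 0$ and $a'\ge 0$) and has action $m\ell_t a(c) + nVb(c)$. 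If $n = 0$ then $a'(c) = 0$, which by construction (for $c\in(0,\rho_1)$) happens only where $a(c) = C$, so the action is $\ge \ell_t C > 2N$; if $n\ge 1$ then $a'(c) > 0$, which by (iii) forces $b(c) > N/V$, so the action exceeds $N$. The core orbit is a reparametrized $\Gamma$-circle of action $b_0V > N$. Hence every closed Reeb orbit of $(Y,\lambda)$ lying outside $M''$ has action greater than $N$, and the proposition follows.

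I expect item (iii) to be the main obstacle: a product contact form on $W$, or turning the curve $(a,b)$ in the wrong order, creates short Reeb orbits on the intermediate tori — small $\Gamma$-circles, or rational-slope tori on which cancellation in $m\ell_t a(c) - nVb(c)$ makes the period small. Inflating $b$ past $N/V$ before letting $a$ shrink is precisely what keeps $a(c)$ and $b(c)$ from being small simultaneously, and having the boundary orbits of action $> 2N$ rather than merely $> N$ supplies the slack that makes the estimate close (and would also leave room to perturb $\lambda$ to a nondegenerate contact form without dropping any orbit below action $N$).
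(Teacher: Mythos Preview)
Your proof is correct and follows the same strategy as the paper: cap off $\partial M''$ by a solid torus with a Thurston--Winkelnkemper-type contact form $a(\rho)\,dt+b(\rho)\,\beta_0$ (the paper writes it as $f(r)\,d\theta+g(r)\,dt$), chosen so that the curve $(a,b)$ turns in the right order, and then read off the action of the toroidal Reeb orbits.

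The only substantive difference is the case split in the Reeb analysis. The paper splits on the $t$-winding number $m$: when $m\ge 1$ it forces $r_0>\tfrac{2\rho}{3}$ so that $g(r_0)>\tfrac{3C}{4}$, and then uses $g(r_0)\cdot 2\pi r_0>\pi\rho C>N$, which is exactly where the hypothesis $C\ell_t>2N$ is consumed. You split instead on the $\Gamma$-winding number $n$: when $n=0$ you land on the plateau $a=C$ and get action $\ge C\ell_t$, and when $n\ge 1$ you are in the region $b>N/V$ and get action $>N$ directly. Your split is the cleaner one; in fact as written it only uses $C\ell_t>N$, which recovers (and slightly sharpens) the paper's own Remark immediately after the proof that $2N$ can be relaxed to $(1+\epsilon)N$. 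Two small cosmetic points: to make ``$a'(c)=0$ only where $a(c)=C$'' literally true you should say explicitly that $a'>0$ on $(0,\rho_\ast)$ in your interpolation, and in your closing comment the action is $m\ell_t a(c)+nVb(c)$ with a plus sign, so ``cancellation'' is not quite the right word---the danger is simply that $a(c)$ and $b(c)$ could be simultaneously small.
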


\begin{proof}
We will form $(Y,\lambda)$ by gluing a solid torus to $\partial M''$ with an appropriate contact form, following Thurston--Winkelnkemper \cite{tw}.  Suppose for convenience that we have picked a coordinate $\theta$ on $\Gamma$ so that $\beta_0 = Kd\theta$ for some constant $K > 0$, and so $\alpha'' = Cdt+Ke^\tau d\theta$ on a neighborhood of $\partial M'$.  Let $l_t > \frac{2N}{C}$ denote the length of $S^1_t$ with respect to the $t$ coordinate, so that for example if $(M'',\alpha'')$ was constructed in the proof of Lemma \ref{lem:glue-pm-filtered} as $(M'_n,\alpha'_n)$ then we would have $l_t = 2n+2$, and likewise let $l_\theta$ denote the length of $\Gamma$ in the $\theta$ coordinate.

Consider the solid torus $\mathbb{T} = D^2_{(r,t)} \times S^1_\theta$, where $D^2$ has radius $\rho = \frac{l_t}{2\pi}$, $t$ is an angular coordinate measuring arclength along $\partial D^2$, and $S^1_\theta$ is identified with $\Gamma$.  We will glue $\partial \mathbb{T}$ to $\partial M''$ by the map
\[ (r=\rho, t, \theta) \mapsto (\tau=0, t, \theta), \]
identifying the coordinate $r$ on $\mathbb{T}$ with $\rho-\tau$ on $\partial M''$.  We thus wish to find a contact form
\[ \lambda_{\mathbb{T}} = f(r)d\theta + g(r)dt \]
on $\mathbb{T}$, satisfying the contact condition $\frac{fg' - gf'}{r} > 0$, such that 
\[ (f,g) = \begin{cases} (Ke^{\rho-r}, C) & \mathrm{for\ } r\geq\frac{3\rho}{4} \\ (A,r^2) & \mathrm{for\ } r\leq\frac{2\rho}{3} \end{cases} \]
for some constant $A > \frac{N}{l_\theta}$ which also satisfies $A > Ke^\rho$.
The Reeb vector field on $\mathbb{T}$ will be
\[ R_{\lambda_{\mathbb{T}}} = \frac{g' \partial_\theta - f'\partial_t}{fg' - gf'}. \]
Thus on the region where $(f,g) = (A,r^2)$, near $r=0$, we will have $R_{\lambda_{\mathbb{T}}} = \frac{1}{A}\partial_\theta$ and so the Reeb orbits will have the form $\gamma_{r_0,t_0} = (r_0,t_0) \times S^1_\theta$, with $A_{\lambda_\mathbb{T}}(\gamma_{r_0,t_0}) = Al_\theta > N$.  Similarly, on the region where $(f,g) = ((Ke^\rho)e^{-r},C)$, near $r=\rho$, we will have $R_{\lambda_{\mathbb{T}}} = \frac{1}{C}\partial_t$, so the Reeb orbits will have the form $\gamma_{r_0,\theta_0} = \{r = r_0\} \times \{\theta_0\}$ and action $2\pi r_0 \cdot C$.  By assumption we know that $2\pi\rho \cdot C > 2N$, or equivalently that $\pi\rho \cdot C > N$.

We now define $(f(r),g(r))$ as a smooth path satisfying $\frac{fg'-gf'}{r} > 0$ and the above boundary conditions near $r=0$ and $r=\rho$, and such that
\begin{itemize}\leftskip-0.35in
\item $f'(r) \leq 0$ and $g'(r) \geq 0$ for all $r$, and $f'(r) < 0$ whenever $r < A$.
\item $f'(r)$ and $g'(r)$ are never simultaneously $0$.
\item $f(r) = A$ for all $r \leq \frac{2\rho}{3}$, and $\frac{3C}{4} < g(\frac{2\rho}{3}) < C$.
\end{itemize}
See Figure \ref{fig:torus-contact} for a graph of $(f,g)$.
\begin{figure}[ht]
\labellist
\small \hair 2pt
\pinlabel $C$ at -5 92
\pinlabel $A$ at 82 -4
\pinlabel $K$ at 14 -4
\tiny
\pinlabel $r=0$ [l] at 83 7
\pinlabel $r=\frac{2\rho}{3}$ [l] at 85 75
\pinlabel $r=\rho$ at 25 95
\pinlabel $f$ at 119 2
\pinlabel $g$ at 7 111
\endlabellist
\centering
\includegraphics{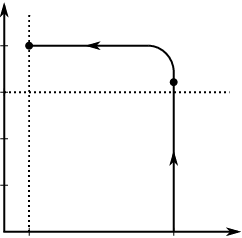}
\caption{A graph of the functions $(f(r),g(r))$ used to construct $\lambda$, $0 \leq r \leq \rho$.}
\label{fig:torus-contact}
\end{figure}
Each torus $r = r_0$ is foliated by Reeb orbits, and these are closed when $\frac{-f'}{g'} = \frac{2\pi r_0 m}{l_\theta n}$ for some relatively prime, nonnegative integers $m$ and $n$.  In this case each embedded Reeb orbit $\gamma$ has action
\[ \mathcal{A}_{\lambda_{\mathbb{T}}}(\gamma) = \int_\gamma f(r_0)d\theta + g(r_0)dt = f(r_0)\cdot l_\theta n + g(r_0)\cdot 2\pi r_0 m. \]
If $m \geq 1$ then $-f'(r_0) > 0$, so $r_0 > \frac{2\rho}{3}$ and $g(r_0) \geq g(\frac{2\rho}{3}) > \frac{3C}{4}$ and thus
\[ \mathcal{A}_{\lambda_{\mathbb{T}}}(\gamma) \geq g(r_0)\cdot 2\pi r_0 > \frac{3C}{4} \cdot 2\pi\left(\frac{2\rho}{3}\right) = \pi \rho \cdot C > N. \]
Otherwise $m=0$, hence $f'(r_0)=0$ and $n=1$ and so $\mathcal{A}_{\lambda_{\mathbb{T}}}(\gamma) \geq f(r_0)\cdot l_\theta = A l_\theta > N$.  We conclude that all closed Reeb orbits of $(\mathbb{T},\lambda_{\mathbb{T}})$ have action greater than $N$, and so
\[ (Y,\lambda) = (M'',\alpha'') \cup (\mathbb{T},\lambda_{\mathbb{T}}) \]
is the desired closed contact manifold.
\end{proof}

\begin{remark}
Although it will not be needed, we can strengthen Proposition \ref{prop:embed-filtered} by only requiring that the Reeb orbits along $\partial M''$ have action at least $(1+\epsilon)N$ for any fixed $\epsilon > 0$.  The use of $2N$ comes from the construction of the pair $(f(r), g(r))$, where we chose $f(r)=A$ for all $r \leq \frac{2}{3}\rho$ and $g\left(\frac{2}{3}\rho\right) > \frac{3}{4}C$, but if we replace the fractions $\frac{2}{3}$ and $\frac{3}{4}$ with $(1+\epsilon)^{-1/2}$ then the case $m \geq 1$ in the proof would become $\mathcal{A}_{\lambda_{\mathbb{T}}}(\gamma) > \frac{2\pi \rho C}{1+\epsilon}$, and $2\pi\rho C$ is exactly the action of those boundary orbits.
\end{remark}

Suppose we have a sutured contact manifold $(M',\Gamma',\alpha')$ with connected suture, and we choose a constant $L > 0$.  Then we can use Lemma \ref{lem:glue-pm-filtered} to embed $(M',\alpha')$ inside some $(M'',\alpha'') = (M'_n,\alpha'_n)$ such that $\partial M''$ is pre-Lagrangian and any Reeb orbit of $(M'',\alpha'')$ with action at most $2L$ is contained in $M'$.  By Proposition \ref{prop:embed-filtered}, since the Reeb orbits on $\partial M''$ have action greater than $2L$, we can then glue on a solid torus to get a closed contact manifold $(Y,\lambda)$ such that
\begin{itemize}\leftskip-0.35in
\item There is an embedding $M' \hookrightarrow Y$ such that $\lambda|_{M'} = \alpha'$;
\item Every Reeb orbit of $(Y,\lambda)$ of action at most $L$ lies inside $M'$.
\end{itemize}
We will make repeated use of this construction in Sections \ref{sec:j-independence} and \ref{sec:alpha-independence}.

\section{Independence of the almost complex structure}
\label{sec:j-independence}

Let $(M,\Gamma,\alpha)$ be a sutured contact manifold.  In this section we will prove that the embedded contact homology of $(M,\Gamma,\alpha)$ does not depend on a choice of almost complex structure.  More precisely, given generic tailored almost complex structures $J$ and $J'$ on $\R \times M^*$, where $M^*$ is the completion of $M$, we will construct isomorphisms on filtered $\ech$,
\[ \Phi^L_{J,J'}: \ech^L(M,\Gamma,\alpha,J) \isomto \ech^L(M,\Gamma,\alpha,J'), \]
which commute with the natural inclusion-induced maps $i^{L,L'}: \ech^L \to \ech^{L'}$ on either side and thus induce an isomorphism in the direct limit,
\[ \Phi_{J,J'}: \ech(M,\Gamma,\alpha,J) \isomto \ech(M,\Gamma,\alpha,J'). \]

We will assume throughout that there are no $\ech$ generators of action exactly equal to $L$ or $L'$ whenever these variables appear.  We also remark that we can treat $J$ and $J'$ as being defined only on $\R\times M_v$ where $M_v$ is the vertical completion of Definition \ref{def:completion}, since by \cite[Lemma 5.5]{cghh} no holomorphic curve appearing in the construction of $\ech$ will enter the rest of $\R \times M^*$.  

We begin by forming a new sutured contact manifold $(M',\Gamma',\alpha')$ from $(M,\Gamma,\alpha)$ by choosing pairs of points $(p_i,q_i) \in \Gamma$ and attaching a contact $1$-handle $H_i$ along each pair as in Theorem \ref{thm:1-handle}.  We insist on adding these $H_i$ in such a way that $\Gamma'$ is connected, and hence $R_+(\Gamma')$ is connected and diffeomorphic to $R_-(\Gamma')$.  For example, if $\Gamma$ has components $\Gamma_0, \dots, \Gamma_k$ for some $k \geq 0$, then we can pick distinct points $p_i \in \Gamma_0$ and $q_i \in \Gamma_i$ and attach $H_i$ to the pair $(p_i, q_i)$ for each $i=1,\dots,k$.  (If $\Gamma$ is connected then we do not need to attach any handles at all.)  Note that this procedure preserves both the collection of closed orbits and the action functional.  By Lemma \ref{lem:glue-liouville}, we can arrange without changing the completion $((M')^*, (\alpha')^*)$ that there is a diffeomorphism $f: R_+(\Gamma') \to R_-(\Gamma')$ such that $f^*(\beta'_-) = \beta'_+$, where $\beta'_\pm$ are the Liouville forms on $R_\pm(\Gamma')$.  Let $\calh$ denote the collection of 1-handles used to construct $M'$ from $M$ and $J_\calh$ denote any extension of $J$ to $\R \times (M')^*$.

\begin{lemma}
\label{lem:tau-stretching}
Let $\Theta_+$ and $\Theta_-$ be generators of $\ecc(M,\Gamma,\alpha,J)$.  Then
\[ \mathcal{M}_{I=1}(\R \times M^*, J; \Theta_+,\Theta_-) = \mathcal{M}_{I=1}(\R \times (M')^*, J_\calh; \Theta_+, \Theta_-). \]
\end{lemma}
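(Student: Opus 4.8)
The plan is to show that holomorphic curves in the ECH differential for $(M,\Gamma,\alpha,J)$ and $(M',\Gamma',\alpha',J_H)$ are the same by showing that no such curve can enter the region $\R \times (H_i \times [-1,1]_t)$ where the handles were attached. First I would recall the two confinement facts already quoted in the excerpt: by \cite[Lemma 5.5]{cghh} no curve in either moduli space enters the region where $\tau > 0$, and by \cite[Proposition 5.20]{cghh} the $t$-coordinate on any such curve is uniformly bounded in terms of $\Theta_\pm$. The key observation is that attaching the contact $1$-handle $H_i$ by Theorem \ref{thm:1-handle} does not introduce any new closed Reeb orbits — the Reeb orbits of $(M',\alpha')$ are canonically identified with those of $(M,\alpha)$ — so $\Theta_+$ and $\Theta_-$ make sense as ECH generators on both sides, and the symplectic action functional is unchanged. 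Thus the only thing to rule out is that a curve asymptotic to these orbits wanders into $\R \times (M' \smallsetminus \mathrm{int}(M))$.

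The main step is the confinement argument. The handle region $H_i \times [-1,1]_t$ sits inside the region where the contact form has the product form $Cdt + \tilde\beta$, with $\tilde\beta$ a primitive of an area form on the surface $H_i$ glued to $R_+(\Gamma')$, and $\partial_t = C R_{\alpha'}$ there; moreover $J_H$ is tailored, hence $\partial_t$-invariant on a neighborhood of $\R_s \times ((M')^* \smallsetminus \mathrm{int}(M'))$. The point is that the new region $M' \smallsetminus \mathrm{int}(M)$ lies in the part of $(M')^*$ where the $\tau$-coordinate is defined and equals $e^\tau$ times a fixed Liouville form — in fact $U(\Gamma')$ is built as a region of $R_+(\Gamma') \times [-1,1]_t$ by the flow of $\tilde Y$, and the handle sits at $\tau \leq 0$. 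I would argue that a curve trying to enter the handle region would, by the maximum principle applied to the $\tau$-coordinate (exactly as in the proof of \cite[Lemma 5.5]{cghh}, since $\tau$ is a plurisubharmonic-type function there for a tailored $J$), be forced to have already entered the $\tau > 0$ region of the original $\R \times M^*$, contradicting the confinement lemma; alternatively one observes directly that the projection of the curve to the completed Liouville surface $\widehat{R_+(\Gamma')}$ cannot reach the handle $H_i$ because $\widehat{R_+(\Gamma')} = \widehat{R_+(\Gamma)}$ away from a compact set and the handle region projects into a part of $\widehat{R_+(\Gamma)}$ that the original curves already avoid. Once a curve is confined to $\R \times M^* \subset \R \times (M')^*$ (where the two completed manifolds and their almost complex structures literally coincide), the two moduli spaces consist of exactly the same curves, and the ECH index $I$ is computed identically, giving the claimed equality.

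The main obstacle I expect is making the confinement argument airtight: the region $M' \smallsetminus \mathrm{int}(M)$ is not simply "$\tau > 0$'' — it is a handle attached within $U(\Gamma)$, where $\tau \leq 0$ — so one cannot directly invoke \cite[Lemma 5.5]{cghh}. The resolution is to use that the handle $H_i \times [-1,1]_t$ sits inside the product region $\R_s \times ((M')^* \smallsetminus \mathrm{int}(M'))$ together with $U(\Gamma)$, where $J_H$ is $\partial_t$-invariant and the contact form is $Cdt + (\text{Liouville on a surface})$; on such a region \cite{cghh} already establishes (via the blocking and trapping lemmas, \cite[Lemma 5.5]{cghh} and the discussion preceding \cite[Proposition 5.20]{cghh}) that holomorphic curves with prescribed asymptotics in $\mathrm{int}(M)$ cannot escape into the completion. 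Since the handle is a completion-type region in exactly this sense — it is foliated by the Reeb flow $\frac{1}{C}\partial_t$ with no closed orbits — the same lemmas apply verbatim to confine curves to $\R \times M_v$, and in fact to $\R \times M^*$ minus the handle, i.e. to the common overlap of $\R \times M^*$ and $\R \times (M')^*$. I would phrase the whole argument by noting that $J_H$ restricted to this overlap equals $J$, so after confinement the identification of moduli spaces is tautological.
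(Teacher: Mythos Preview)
Your setup is correct --- one only needs to show that no $J_H$-holomorphic curve in $\R \times (M')^*$ with asymptotics $\Theta_\pm \subset \mathrm{int}(M)$ can enter the handle region --- but the confinement mechanism you propose does not work. The handle $H$ is a Weinstein $1$-handle: its Liouville vector field (equal to $-x\partial_x + 2y\partial_y$ in the model of Theorem~\ref{thm:1-handle}) has a hyperbolic zero in the interior of $H$, so there is no monotone ``$\tau$-like'' exhaustion function on the handle to which a maximum principle can be applied. In particular, the handle is \emph{not} a completion-type region in the sense required for \cite[Lemma~5.5]{cghh} to apply verbatim; that lemma relies on the Liouville direction being outward-pointing throughout the region in question, which is exactly what fails across an index-$1$ Weinstein handle. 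Your alternative via projections to $\widehat{R_+(\Gamma)}$ versus $\widehat{R_+(\Gamma')}$ is also problematic, since these are genuinely different Liouville surfaces and $H$ lies in $R_+(\Gamma')$ but not in $R_+(\Gamma)$.

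The paper's argument avoids any maximum principle on the handle. Suppose a curve $\mathcal{C}$ projects to some $p_0 = (x_0,t_0) \in H \times [-1,1]$, and let $\gamma_0 = \{x_0\} \times \R_t$ be the Reeb trajectory through it. Pick any $x_1$ with positive $\tau$-coordinate in the horizontal end of $(M')^*$, with Reeb trajectory $\gamma_1 = \{x_1\} \times \R_t$, and a path $P$ from $x_0$ to $x_1$ lying in $(M')^* \ssm M$. Since the asymptotic orbits of $\mathcal{C}$ lie in $\mathrm{int}(M)$, the portion of $\mathcal{C}$ over $(M')^* \ssm M$ is compact, hence has bounded $s$- and $t$-coordinates; so for large enough $S,T$ the $3$-chain $A = [-S,S]_s \times [-T,T]_t \times P$ meets $\mathcal{C}$ only along $[-S,S]\times[-T,T]\times\partial P$, giving $\mathcal{C}\cdot\partial A = \mathcal{C}\cdot(\R\times\gamma_1) - \mathcal{C}\cdot(\R\times\gamma_0)$. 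Now $\partial A$ is nullhomologous, so this equals zero, while $\mathcal{C}\cdot(\R\times\gamma_1) = 0$ by \cite[Lemma~5.5]{cghh} applied to $(M')^*$. Hence $\mathcal{C}\cdot(\R\times\gamma_0)=0$; but $\R\times\gamma_0$ is itself $J_H$-holomorphic and meets $\mathcal{C}$, so positivity of intersections yields a contradiction. The idea you are missing is precisely this transfer: rather than excluding the handle directly, one shows that the intersection number with any Reeb trajectory through the handle equals that with a trajectory in the genuine $\tau>0$ region, where the blocking lemma does apply.
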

\begin{proof}
Any curve $\mathcal{C} \in \mathcal{M}_{I=1}(\R\times M^*, J; \Theta_+,\Theta_-)$ corresponds to a unique $J_\calh$-holomorphic curve in $\R \times (M')^*$ in an obvious way, since as mentioned above we know from \cite[Lemma 5.5]{cghh} that no point of $\mathcal{C}$ projects into the region $\{\tau \geq 0\}$ of $M^*$.  Conversely, any curve in $\mathcal{M}_{I=1}(\R\times (M')^*, J_\calh; \Theta_+,\Theta_-)$ whose projection to $(M')^*$ avoids the handles $H_i$ corresponds to a unique $J$- holomorphic curve in $\R \times M^*$, so our goal is to show that these curves do not enter the handles.

Let $\mathcal{C} \in \mathcal{M}_{I=1}(\R \times (M')^*, J_\calh; \Theta_+, \Theta_-)$ be a curve whose projection enters one of the handles $H_i = H \times [-1,1]_t$, where the contact form is $\alpha' = Cdt+\beta$ for some $2$-form $\beta$ on $H$.  If we let $(\tau,t,\theta)$ denote the coordinates on $H_i \cap U(\Gamma')$ induced from $U(\Gamma') = [-1,0]_\tau \times [-1,1]_t \times \Gamma'$, then we can identify a surface $H^*$ by attaching a cylindrical end of the form $[0,\infty)_\tau \times (\Gamma' \cap \partial H)$ to $H$ along $\{\tau = 0\}$; the associated region $H^* \times \R_t \subset (M')^*$ is fibered by Reeb trajectories of the form $\{x\} \times \R_t$.  Fix a point $p_0 = (x_0,t_0) \in H \times [-1,1]$ which lies in the projection of $\mathcal{C}$ to $(M')^*$, and let $\gamma_0 = \{x_0\} \times \R$ be the Reeb trajectory through that point.  Then we can take any point $x_1 \in H^* \ssm H$ with positive $\tau$-coordinate, on the Reeb trajectory $\gamma_1 = \{x_1\} \times \R$, and let $P$ be a path from $p_0$ to $p_1$ inside $H^*$.  This determines a path $P \times \{t_0\} \subset (M')^*$ which is disjoint from $M$.

Since the asymptotic Reeb orbits of $\mathcal{C}$ all lie on the interior of $M \subset M'$, we can find constants $S,T > 0$ such that the $s$-- and $t$-coordinates of $\mathcal{C}$ on $\R_s \times ((M')^* \ssm M)$ have absolute values everywhere less than $S$ and $T$ respectively.  Let $A$ denote the $3$-chain $[-S,S] \times [-T,T] \times P \subset \R_s \times (M')^*$.  Then $\partial A$ is nullhomologous, so the intersection number $\mathcal{C}\cdot \partial A$ is zero.  However, $\mathcal{C}$ does not intersect $\partial A$ along $\partial([-S,S]\times[-T,T]) \times P$, so all of the points of intersection occur along $[-S,S] \times [-T,T] \times \partial P \subset \R_s \times \R_t \times \partial P$, and thus
\[ 0 = \mathcal{C} \cdot \partial A = \mathcal{C} \cdot (\R\times\gamma_1) - \mathcal{C} \cdot (\R\times\gamma_0). \]
Again we know that $\mathcal{C} \cdot (\R \times \gamma_1) = 0$ by \cite[Lemma 5.5]{cghh}, so it follows that $\mathcal{C} \cdot (\R \times \gamma_0) = 0$ as well.  But $\mathcal{C}$ and $\R \times \gamma_0$ are $J_\calh$-holomorphic curves, so all of their points of intersection are positive \cite{mcduff, micallef-white}, and since their intersection is nonempty we have a contradiction.
\end{proof}

Next, for each sufficiently large $n>0$, we embed $(M',\alpha')$ in a closed contact manifold $(Y_n,\alpha_n)$. Following Lemma \ref{lem:glue-pm-filtered}, let
\begin{equation}\label{eq:with-t-neck}
M'_n = \left(M' \cup \left([-n,n]_{t} \times R_+(\Gamma')\right)\right) / {\sim},
\end{equation}
where we identify $\{-n\} \times R_+(\Gamma')$ with $R_+(\Gamma') = \{1\} \times R_+(\Gamma') \subset M'$ by the identity map and $\{n\} \times R_+(\Gamma')$ with $R_-(\Gamma') = \{-1\} \times R_-(\Gamma')$ by the above diffeomorphism $f$.  We can then define a contact form $\alpha'_n$ on $M'_n$ by $\alpha'_n|_{M'} = \alpha'$ and $\alpha'_n|_{[-n,n]\times R_+(\Gamma')} = Cdt+\beta'_+$, where $\alpha' = Cdt+\beta'_+$ on a neighborhood $(1-\epsilon,1] \times R_+(\Gamma')$ of $R_+(\Gamma')$ in $M'$.  The condition $f^*(\beta'_-) = \beta'_+$ ensures that this is a well-defined contact form.  Let $J'_n$ be an almost complex structure on $\R \times M'_n$ which agrees with $J_\calh$ on $\R \times M'$ and is $t$-invariant on $[-n,-\epsilon')\times R_+(\Gamma')$ and on $(\epsilon',n]\times R_+(\Gamma')$ for some fixed $\epsilon' > 0$.

The manifold $(M'_n,\alpha'_n)$ admits a completion $(M'_{n,h},\alpha'_{n,h})$ in the sense of \cite[Section 8.1]{cghh}, obtained by attaching a semi-infinite horizontal end to $M'_n$ of the form $[0,\infty)_\tau\times\partial M'_n$.  In what follows, we will use $(M'_n,J'_n)$ when we actually mean $M'_{n,h}$ and some tailored extension of $J'_n$ over $\R \times M'_{n,h}$, but for the sake of discussing moduli spaces of holomorphic curves there is no difference, since by \cite[Lemma 5.5]{cghh} such curves can never enter the horizontal end.

\begin{lemma}
\label{lem:t-stretching}
Let $\Theta_+$ and $\Theta_-$ be generators of $\ecc(M',\Gamma',\alpha',J_\calh)$. Then for sufficiently large $n>0$, depending only on $\alpha'$, $J_\calh$, and the orbit sets $\Theta_\pm$, we have
$$\mathcal{M}_{I=1}(\mathbb{R}\times (M')^\ast,J_\calh;\Theta_+,\Theta_-) = \mathcal{M}_{I=1}(\mathbb{R}\times M'_n,J'_{n};\Theta_+,\Theta_-).$$
\end{lemma}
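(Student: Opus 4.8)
The plan is to exhibit a ``common region'' $\mathcal{R}_n = M' \cup \big(\{|t|\le T_0\}\times R_+(\Gamma')\big) \subset M'_n$, for a constant $T_0$ to be fixed, which embeds canonically into $(M')^*$ and on which $J'_n$ and $J_H$ agree (the latter is $\partial_t$--invariant near the positive end of $(M')^*$ by Definition~\ref{def:tailored}, and we may choose $J'_n$ to match it over the neck); a holomorphic curve whose image lies in $\R\times\mathcal{R}_n$ then has a well-defined ECH index, computed entirely within $\mathcal{R}_n$ and hence the same in either ambient manifold. It therefore suffices to prove two confinement statements for all sufficiently large $n$: (i) every curve in $\mathcal{M}_{I=1}(\R\times(M')^*,J_H;\Theta_+,\Theta_-)$ has image in $\R\times\mathcal{R}_n$; and (ii) likewise for every curve in $\mathcal{M}_{I=1}(\R\times M'_n,J'_n;\Theta_+,\Theta_-)$. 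Granting both, the two moduli spaces coincide with the single set of index-$1$ holomorphic curves confined to $\R\times\mathcal{R}_n$, which is the claim. Statement (i) is immediate from \cite{cghh}: by \cite[Lemma~5.5]{cghh} such curves avoid the horizontal end of $(M')^*$, and by \cite[Proposition~5.20]{cghh} their $t$--coordinate is bounded by a constant $T = T(\Theta_+,\Theta_-)$, so (i) holds once $T_0 \ge T$ and $n$ is large.

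The substantive point is (ii), which I would establish by contradiction together with compactness. If no uniform-in-$n$ bound on the depth to which such curves penetrate the neck $[-n,n]_t\times R_+(\Gamma')$ existed, there would be $n_j\to\infty$ and curves $\mathcal{C}_j \in \mathcal{M}_{I=1}(\R\times M'_{n_j},J'_{n_j};\Theta_+,\Theta_-)$ reaching arbitrarily large $|t|$ inside the neck. Their $d\alpha'_{n_j}$--energy $\int_{\mathcal{C}_j}d\alpha'_{n_j} = \mathcal{A}_{\alpha'_{n_j}}(\Theta_+) - \mathcal{A}_{\alpha'_{n_j}}(\Theta_-)$ is independent of $j$, because the orbits of $\Theta_\pm$ lie in $\mathrm{int}(M)$, where every $\alpha'_{n_j}$ restricts to $\alpha'$ (cf.\ Lemma~\ref{lem:scaling}). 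Applying the SFT compactness theory underlying the neck-stretching arguments of \cite{cghh}---the neck being an ever longer slab $[-n_j,n_j]_t\times R_+(\Gamma')$ inside the fixed region $(\R_t\times R_+(\Gamma'),\,Cdt+\beta'_+)$---the $\mathcal{C}_j$ converge to a broken holomorphic curve from $\Theta_+$ to $\Theta_-$, which, since the penetration depths diverge, must possess at least one level lying in the symplectization $\R_s\times\big(\R_t\times R_+(\Gamma')\big)$ equipped with the $s$-- and $t$--invariant limit of $J'_{n_j}$.

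Ruling out such a neck level is the step I expect to be hardest. The Reeb vector field $\tfrac1C\partial_t$ of $(\R_t\times R_+(\Gamma'),\,Cdt+\beta'_+)$ has no closed orbits, so no trivial cylinders occur over this piece; a nonconstant holomorphic level there would thus be a closed curve, and since the symplectization $2$--form $d(e^s(Cdt+\beta'_+))$ is exact, Stokes's theorem forces it to be constant, contradicting the stability of the building's levels. (One could also argue by a monotonicity-lemma estimate showing that penetrating the neck to depth $d$ costs at least a fixed positive multiple of $d$ in $d\alpha$--energy, incompatible with the uniform bound; and if the a priori $t$--coordinate estimate of \cite[Proposition~5.20]{cghh} is in fact local to the $\partial_t$--invariant region, it carries over verbatim to $\R\times M'_n$ and yields (ii) at once.) Hence no neck level exists, the penetration depths were bounded by some $D_0$ after all, and taking $T_0 = \max(T,D_0)$ gives both (i) and (ii) for all large $n$; together with the observation---as in the proof of Lemma~\ref{lem:tau-stretching}---that no curve enters the region $\{\tau\ge0\}$ of $(M')^*$ or the horizontal end of $M'_{n,h}$, this identifies the two moduli spaces and proves the lemma.
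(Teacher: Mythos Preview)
Your overall plan matches the paper's: reduce to two confinement statements, with (i) following from \cite[Lemma~5.5 and Proposition~5.20]{cghh}, and (ii) argued by contradiction via compactness. The gap is in the compactness step of (ii).

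You invoke ``SFT compactness theory'' to produce a holomorphic building with a level in $\R_s\times(\R_t\times R_+(\Gamma'))$, and then assert that this level ``would thus be a closed curve.'' Neither step is justified. This is not a standard SFT neck-stretch: you are not stretching the symplectic $4$-manifold along a contact-type hypersurface, but rather lengthening a $t$-interval inside the $3$-manifold $M'_n$ itself and then symplectizing. The BEHWZ compactness theorem does not apply to this setup as stated, and no building compactness for such limits is available off the shelf. Even granting some limit, your closed-curve claim fails: nothing prevents a proper noncompact limit such as $\R_s\times\{\text{Reeb trajectory}\}$, which is exactly the shape the paper's limit takes.

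The paper's argument replaces SFT compactness with Taubes's Gromov compactness for holomorphic currents. Assuming curves $C_{n_i}$ reach the middle slab $[-\epsilon',\epsilon']_t\times R_+(\Gamma')$, one finds boxes $[a_i-s_i,a_i+s_i]\times[b_i-t_i,b_i+t_i]\times R_+(\Gamma')$ with $s_i,t_i\to\infty$, lying in the region where $J'_{n_i}$ is $s$- and $t$-invariant, on which the $d\alpha'$-energy of $C_{n_i}$ tends to zero (the total $d\alpha'$-energy is fixed, so some slab must carry arbitrarily little). After translation, Taubes's compactness yields a proper curve $\mathcal{C}$ in $\R_s\times\R_t\times R_+(\Gamma')$ with zero $d\alpha'$-energy; such a curve is $\R\times\gamma$ for a Reeb trajectory $\gamma$. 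Since the Reeb field is $\tfrac{1}{C}\partial_t$, $\gamma$ is not closed, so $\R\times\gamma$ has infinite Hofer energy---contradicting the uniform Hofer-energy bound on the translated pieces. This current-compactness plus Hofer-energy argument is the missing ingredient.
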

\begin{proof}
Our goal is to show that for sufficiently large $n>0$ there are no ECH index-1 $J'_{n}$-holomorphic curves in $\mathbb{R}\times M'_n$ with positive ends at $\Theta_+$ and negative ends at $\Theta_-$ that has non-empty intersection with $\mathbb{R}\times[-\epsilon',\epsilon']\times R_+(\Gamma')$.  We know from \cite[Proposition 5.20]{cghh} that there are uniform upper bounds on the absolute value of the $t$-coordinates of any curve in $\mathcal{M}_{I=1}(\R\times(M')^*,J_\calh;\Theta_+,\Theta_-)$, so it will then follow that there is a natural correspondence between ECH index-1 curves in $\R\times(M')^*$ and in $\R \times M'_n$ for $n$ sufficiently large both in the above sense and also compared to these uniform bounds.

Suppose to the contrary that there exists a strictly increasing sequence $\{n_i\}_{i\in\mathbb{N}}$ and $C_{n_i}\in\mathcal{M}_{I=1}(\mathbb{R}\times M'_{n_i}, J'_{n_i};\Theta_+,\Theta_-)$ that has non-empty intersection with $\mathbb{R}\times[-\epsilon',\epsilon']\times R_+(\Gamma')$. Without loss of generality, we may assume that either the projection of each $C_{n_i}$ onto $[-n_i,-\epsilon')$ is surjective or the projection of each $C_{n_i}$ onto $(\epsilon',n_i]$ is surjective. Since there is no essential difference between the two cases, we will consider the former.

With the preceding understood, let $C'_{n_i}=C_{n_i}\cap\mathbb{R}\times[-n_i,n_i]\times R_+(\Gamma')$. Since the $d(\alpha')^\ast$-energy of $C_{n_i}$ is uniformly bounded, so is the $d(\alpha')^\ast$-energy of $C'_{n_i}$. Then, by passing to a subsequence if necessary, there exist sequences of numbers $a_i\in\mathbb{R}$ and $b_i\in(-n_i,n_i)$, and unbounded sequences of strictly increasing positive numbers $s_i$ and $t_i$ such that 
\begin{itemize}\leftskip-0.35in
\item $[b_i-t_i,b_i+t_i]\subset(-n_i,-\epsilon')$,
\item $C'_{n_i}\cap[a_i-s_i,a_i+s_i]\times[b_i-t_i,b_i+t_i]\times R_+(\Gamma')\neq\emptyset$, and 
$$\int_{C'_{n_i}\cap[a_i-s_i,a_i+s_i]\times[b_i-t_i,b_i+t_i]\times R_+(\Gamma')}d(\alpha')^\ast\to0$$
\end{itemize}
as $i\to\infty$. The fact that $C_{n_i}$ surjects onto $[-n_i,-\epsilon')$ is what allows us to find suitable $t_i \to \infty$: given a value of $t$, we can find some interval of length $2t$ in which an arbitrarily small but nonzero fraction of the $d(\alpha')^*$-energy of $C_{n_i}$ is concentrated as long as $n_i$ is sufficiently large.  In particular, we can choose the $s_i \to \infty$ arbitrarily and the above properties will still hold. Now translate each $C'_{n_i}\cap[a_i-s_i,a_i+s_i]\times[b_i-t_i,b_i+t_i]\times R_+(\Gamma')$ so as to get curves $C''_{n_i}\subset[-s_i,s_i]\times[-t_i,t_i]\times R_+(\Gamma')$.  Note that these curves are holomorphic with respect to a single almost complex structure on $\R \times \R \times R_+(\Gamma')$, since before the translation the $C''_{n_i}$ all lived in regions $\R_s \times (-n_i,-\epsilon')_{t} \times \R_+(\Gamma')$ where the corresponding $J'_{n_i}$ is $s$- and $t$-invariant.

Since the curves $C''_{n_i}$ have uniformly bounded Hofer energy by \cite[Proposition 5.13]{behwz}, an application of Gromov compactness for holomorphic currents by Taubes \cite[Proposition 3.3]{taubes-compendium} shows that $\{C''_{n_i}\}_{i\in\mathbb{N}}$ admits a subsequence which converges weakly to a proper pseudo-holomorphic curve $\mathcal{C}$ in $\mathbb{R}_s\times\mathbb{R}_{t}\times R_+(\Gamma')$ on which $d(\alpha')^\ast$ vanishes. To be more explicit, let $\{K_j\}_{j\in\mathbb{N}}$ be an exhausting sequence of compact subsets of $\mathbb{R}\times\mathbb{R}\times R_+(\Gamma')$. Then for each $j\in\mathbb{N}$, $\{C''_{n_i}\cap K_j\}_{i\in\mathbb{N}}$ has a subsequence that converges to a pseudo-holomorphic curve in $K_j$, which in turn gives a subsequence that converges to a pseudo-holomorphic curve in $\mathbb{R}\times\mathbb{R}\times R_+(\Gamma')$ as $j\to\infty$. Since $\mathcal{C}$ has zero $d(\alpha')^*$-energy, it must be supported on $\R_s \times \gamma$ for some Reeb orbit $\gamma$; then we have $\mathcal{C}=\mathbb{R}\times\gamma$ by properness and the fact that holomorphic maps are open. Moreover, $\gamma$ cannot be closed since the Reeb vector field on $\mathbb{R}_{t}\times R_+(\Gamma')$ is given by $\frac{1}{C}\frac{\partial}{\partial t}$. But then $\mathbb{R}\times\gamma$ does not have finite Hofer energy, which is a contradiction. 
\end{proof}

We can now finish the construction of $(Y_n,\alpha_n)$ using Proposition \ref{prop:embed-filtered} to glue a solid torus to $(M'_n, \alpha'_n)$ in such a way that all closed Reeb orbits of action at most $n$ lie in the interior of $(M',\alpha')$ for $n$ sufficiently large.  We will take $J_{n}$ to be an almost complex structure on $\R \times Y_n$ which agrees with $J'_{n}$ on $\R \times M'_n$: it is identical to $J_\calh$ on $\R \times M'$ and is $t$-invariant on $[-n,-\epsilon) \times R_+(\Gamma')$ and on $(\epsilon,n] \times R_+(\Gamma')$ for some fixed $\epsilon > 0$.  This defines a tuple $(Y_n, \alpha_n, J_{n})$ with $Y_n$ closed, and from the construction it is clear that this tuple can be taken to vary smoothly with $n$.  Since $(M,\alpha)$ embeds in $(Y_n, \alpha_n)$, we can canonically identify Reeb orbits in $(M,\alpha)$ with their images in $(Y_n,\alpha_n)$.

\begin{definition}
\label{def:embedding-data}
The collection of $1$-handles $\calh$ and diffeomorphism $f: R_+(\Gamma') \isomto R_-(\Gamma)$ used to construct each $(Y_n,\alpha_n)$ from $(M,\Gamma,\alpha)$ will be referred to as \emph{embedding data} for $(M,\Gamma,\alpha)$.
\end{definition}

\begin{lemma}
\label{lem:embedding}
Let $\Theta_+$ and $\Theta_-$ be generators of $\ecc(M,\Gamma,\alpha,J)$. Then for sufficiently large $n>0$, depending only on $\alpha'$, embedding data, $J_\calh$, and the orbit sets $\Theta_\pm$, we have
\[ \mathcal{M}_{I=1}(\R \times M'_n, J'_{n}; \Theta_+, \Theta_-) = \mathcal{M}_{I=1}(\R \times Y_n, J_{n}; \Theta_+,\Theta_-). \]
\end{lemma}

\begin{proof}
We will show that no pseudo-holomorphic curve in $\mathcal{M}_{I=1}(\R \times Y_n, J_{n}; \Theta_+, \Theta_-)$ has non-empty projection onto the solid torus $V_n = \overline{Y_n \ssm M'_n}$. In order to prove this, suppose to the contrary that there exists a pseudo-holomorphic curve $\mathcal{C}\in\mathcal{M}_{I=1}(\R \times Y_n, J_{n}; \Theta_+,\Theta_-)$ whose projection onto the the solid torus $V_n$ is non-empty. Then either $\mathcal{C}$ has non-empty intersection with $\mathbb{R}\times[-1,0]_\tau\times\partial M'_n$, or $\mathcal{C}$ decomposes into connected components some of which lie in $\mathbb{R}\times M'_n$ and the others in $\mathbb{R}\times V_n$. In the latter case, a connected component of $\mathcal{C}$ must have ends asymptotic to closed Reeb orbits in $V_n$. But neither $\Theta_+$ nor $\Theta_-$ contains such closed Reeb orbits. Therefore, suppose that $\mathcal{C}$ has non-empty intersection with $\mathbb{R}\times[-1,0]_\tau\times\partial M'_n$.

Since the restriction of the contact form $\alpha'_n$ to $[-1,0]_\tau\times\partial M'_n \cong [-1,0]_\tau \times S^1_{t} \times \Gamma'$ reads $Cdt+e^\tau\beta'_0$, the Reeb vector field there is $\frac{1}{C}\partial_t$ and so it is foliated by closed Reeb orbits, which are all null-homologous in $Y_n$. Hence, $\mathcal{C}$ has non-empty intersection with $\mathbb{R}\times \gamma$ for a closed Reeb orbit $\gamma$ in $[-1,0]_\tau\times\partial M'_n$. Note that the simple Reeb orbits that appear in the collections $\Theta_+$ and $\Theta_-$ are disjoint and sufficiently far from any closed Reeb orbit in this region. Hence the algebraic intersection number of $\mathcal{C}$ with $\mathbb{R}\times\gamma$ for any closed Reeb orbit $\gamma$ in $[-1,0]_\tau\times\partial M'_n$ is well-defined and positive since local intersection numbers for two pseudo-holomorphic curves are positive \cite{mcduff, micallef-white}. Furthermore, the algebraic intersection number of $\mathcal{C}$ with $\mathbb{R}\times\gamma$ is equal to the intersection of the homology classes $[\mathcal{C}]\in H_2(Y_n,\Theta_+,\Theta_-)$ and $[\gamma]\in H_1(Y_n)$, which is zero since the latter homology class is zero. This is a contradiction.\end{proof}

\begin{lemma}
\label{lem:canonical-cor}
Let $\Theta_+$ and $\Theta_-$ be generators of $\ecc(M,\Gamma,\alpha,J)$. Then for sufficiently large $n>0$, depending only on $\alpha'$, the embedding data, $J_\calh$, and the orbit sets $\Theta_\pm$, we have
\[ \mathcal{M}_{I=1}(\R \times M^\ast, J; \Theta_+, \Theta_-) = \mathcal{M}_{I=1}(\R \times Y_n, J_{n}; \Theta_+,\Theta_-). \]
\end{lemma}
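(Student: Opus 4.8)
The plan is to chain together the three correspondence results just established, namely Lemmas~\ref{lem:tau-stretching}, \ref{lem:t-stretching}, and \ref{lem:embedding}, each of which strips off one of the pieces that were glued on in passing from $M^\ast$ to $Y_n$: the contact $1$-handles $H_i$, the inserted neck $[-n,n]_t \times R_+(\Gamma')$, and the solid torus $V_n = \overline{Y_n \ssm M'_n}$. Before applying them I would settle the bookkeeping: since every closed Reeb orbit of $\alpha^\ast$ lies in $\mathrm{int}(M)$, the orbit sets $\Theta_+,\Theta_-$ consist of Reeb orbits contained in $M$, and via the inclusions $M \subset M' \subset M'_n \subset Y_n$ together with the compatible restrictions of the contact forms ($\alpha'|_M = \alpha$, $\alpha'_n|_{M'} = \alpha'$, $\alpha_n|_{M'_n} = \alpha'_n$) and almost complex structures ($J_H$ extends $J$, $J'_n$ agrees with $J_H$ on $\R\times M'$, and $J_n$ agrees with $J'_n$ on $\R\times M'_n$) the pair $(\Theta_+,\Theta_-)$ is simultaneously a pair of generators of $\ecc(M,\Gamma,\alpha,J)$, of $\ecc(M',\Gamma',\alpha',J_H)$, and of the chain complexes attached to $(M'_n,\alpha'_n,J'_n)$ and $(Y_n,\alpha_n,J_n)$, so that each of the moduli spaces below is defined.

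Then I would apply the three lemmas in turn. Lemma~\ref{lem:tau-stretching} gives $\mathcal{M}_{I=1}(\R\times M^\ast, J; \Theta_+,\Theta_-) = \mathcal{M}_{I=1}(\R\times (M')^\ast, J_H; \Theta_+,\Theta_-)$, the identification being the tautological one since no such curve enters the handles $H_i$. Lemma~\ref{lem:t-stretching} then yields an $n_1$ so that for all $n > n_1$ one has $\mathcal{M}_{I=1}(\R\times (M')^\ast, J_H; \Theta_+,\Theta_-) = \mathcal{M}_{I=1}(\R\times M'_n, J'_n; \Theta_+,\Theta_-)$, again tautologically, since for such $n$ no curve wanders into the inserted neck. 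Finally Lemma~\ref{lem:embedding} yields an $n_2$ so that for all $n > n_2$ one has $\mathcal{M}_{I=1}(\R\times M'_n, J'_n; \Theta_+,\Theta_-) = \mathcal{M}_{I=1}(\R\times Y_n, J_n; \Theta_+,\Theta_-)$, since no such curve projects onto $V_n$. Composing these for $n > \max(n_1,n_2)$ gives the stated equality.

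I do not expect a genuine obstacle here: all of the analytic content is contained in the three cited lemmas, and the remaining argument is essentially formal. The only two points I would pause on are that the two ``$n$ sufficiently large'' conditions can be imposed at once — immediate by taking a maximum, since the thresholds depend only on the a~priori $\R_s$- and $\R_t$-bounds for curves in $\mathcal{M}_{I=1}(\R\times (M')^\ast, J_H; \Theta_+,\Theta_-)$ coming from \cite[Proposition~5.20]{cghh} and on the actions $\mathcal{A}_\alpha(\Theta_\pm)$ — and that the three bijections compose to a single well-defined bijection, which holds because at each stage the matched curves have image disjoint from the region added at the next stage, so each identification is literally ``the same curve regarded inside a larger manifold.''
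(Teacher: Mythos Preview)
Your proposal is correct and follows exactly the paper's approach: the paper's proof consists of the single sentence ``This follows immediately from Lemmas~\ref{lem:tau-stretching}, \ref{lem:t-stretching}, and \ref{lem:embedding}.'' Your additional bookkeeping about the simultaneous generator status of $\Theta_\pm$ and the compatibility of the two ``sufficiently large $n$'' thresholds is sound and simply spells out what the paper leaves implicit.
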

\begin{proof}
This follows immediately from Lemmas \ref{lem:tau-stretching}, \ref{lem:t-stretching}, and \ref{lem:embedding}.
\end{proof}

Given a constant $L > 0$ and an $\textit{ECH}^L$-generic tailored almost complex structure $J$ on $\R\times M^\ast$, if we take $n$ sufficiently large with respect to $L$ as in Lemma \ref{lem:glue-pm-filtered}, and depending on $J_\calh$ in the sense of Lemma \ref{lem:t-stretching}, we have now shown that there are canonical identifications
\begin{equation}
\label{eq:canid}
\ecc^L(M,\Gamma,\alpha,J) \cong \ecc^L(M',\Gamma',\alpha',J_\calh) \cong \ecc^L(Y_n, \alpha_n, J_{n}), 
\end{equation}
since we can canonically identify the finitely many generators of all three complexes and the moduli spaces defining the differential on each complex are all identical.  We define an isomorphism
\[\Phi^{L,J}_{n}: \ech^L(M,\Gamma,\alpha,J) \isomto \ech^L(Y_n,\alpha_n)\]
as the composition of the isomorphism $\tilde{\Phi}^{L,J}_n: \ech^L(M,\Gamma,\alpha,J) \isomto \ech^L(Y_n,\alpha_n, J_{n})$ corresponding to this identification with the canonical isomorphisms
\begin{equation}
\label{eq:huttab}
\ech^L(Y_n, \alpha_n, J_{n}) \to \ech^L(Y_n,\alpha_n)
\end{equation}
between any member of the transitive system $\{\ech^L(Y_n,\alpha_n,J_n)\}_{J_n}$ and the group $\ech^L(Y_n,\alpha_n)$ canonically associated to the transitive system (see \cite[Theorem 1.3]{ht2}).

The following lemma will be useful in understanding how to relate the various maps $\Phi^{L,J}_{n}$.

\begin{lemma}
\label{lem:j-independence-homotopy}
Fix $L > 0$ and let $Y$ be a closed $3$-manifold with a 1-parameter family of pairs $\{(\lambda_t, J_t)\mid 0 \leq t \leq 1\}$, where each $\lambda_t$ is an $L$-nondegenerate contact form and $J_t$ is an $\ech^L$-generic, symplectization-admissible almost contact structure for $\lambda_t$.  Suppose that
\begin{enumerate}\leftskip-0.25in
\item \label{item:j-independence-homotopy-1} The family $\{(\lambda_t, J_t)\}$ is constant on some neighborhood of each closed Reeb orbit of $(Y,\lambda_0)$ with $\lambda_0$-action at most $L$.
\item \label{item:j-independence-homotopy-2} For each $t$, every closed Reeb orbit $\gamma$ of $(Y,\lambda_t)$ with $A_{\lambda_t}(\gamma) \leq L$ coincides with a Reeb orbit of $(Y,\lambda_0)$ with $A_{\lambda_0}(\gamma) = A_{\lambda_t}(\gamma)$ from item \eqref{item:j-independence-homotopy-1}.
\end{enumerate}
Then the canonical isomorphism 
\[ \ech^L(Y,\lambda_0,J_0) \isomto \ech^L(Y,\lambda_1,J_1) \]
of \cite[Theorem 1.3]{ht2} is induced by the isomorphism of chain complexes which sends a generator $\Theta \in \ecc^L(Y,\lambda_0,J_0)$ to its image $\Theta \in \ecc^L(Y,\lambda_1,J_1)$.
\end{lemma}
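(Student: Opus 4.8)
The plan is to unwind the definition of the canonical isomorphism in \cite{ht2} and to observe that, under hypotheses \eqref{item:j-independence-homotopy-1}--\eqref{item:j-independence-homotopy-2}, every piece of data entering its construction is constant along the path $\{(\lambda_t,J_t)\}$. Recall that for an $L$-nondegenerate closed contact $3$-manifold $(Y,\lambda)$ the isomorphism of \cite[Theorem~1.3]{ht2} is not produced on the ECH side directly but by passing through Seiberg--Witten theory: Taubes's isomorphism identifies $\ech^L(Y,\lambda,J)$ with a filtered Seiberg--Witten Floer cohomology group $\HMfrom^L(Y,\lambda)$, built from $\lambda$ together with a large parameter $r$, and this identification is independent of $J$. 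For a path of contact forms one extends it by the continuation isomorphism along $\{\lambda_t\}$, so that the isomorphism named in the statement is the composite
\[ \ech^L(Y,\lambda_0,J_0)\isomto\HMfrom^L(Y,\lambda_0)\isomto\HMfrom^L(Y,\lambda_1)\isomto\ech^L(Y,\lambda_1,J_1), \]
which reduces to the usual $J$-independence isomorphism of \cite[Theorem~1.3]{ht2} when $\lambda_0=\lambda_1$.

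First I would check that this composite makes sense: by \eqref{item:j-independence-homotopy-2} each $\lambda_t$ is $L$-nondegenerate and the set of Reeb orbit sets of action less than $L$, together with their symplectic actions, does not change along the path, so no generator of the filtered Seiberg--Witten complex ever crosses the energy threshold corresponding to $L$, and the continuation map is a well-defined isomorphism. The heart of the matter is then to show that this continuation isomorphism is the tautological one: under the canonical bijection of generating sets coming from \eqref{item:j-independence-homotopy-2}, it is induced by a chain isomorphism sending each generator to its image. Here \eqref{item:j-independence-homotopy-1} is essential, because for $r$ large the generators of $\HMfrom^L(Y,\lambda_t)$ are exponentially concentrated near the Reeb orbit sets of action less than $L$, so once $r$ is large compared to the size of the neighborhoods on which $(\lambda_t,J_t)$ is constant these generators are literally unchanged along the path; and, after passing to large $r$ and truncating by the action filtration, the differential as well should depend on $(\lambda_t,J_t)$ only through its restriction to those neighborhoods, so the entire filtered Seiberg--Witten complex is constant along the path and the continuation isomorphism is the identity at the chain level. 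Composing with Taubes's isomorphism, which carries the ECH orbit set $\Theta$ to the Seiberg--Witten generator concentrating near it, then shows that the composite above sends $\Theta\in\ecc^L(Y,\lambda_0,J_0)$ to its image $\Theta\in\ecc^L(Y,\lambda_1,J_1)$ and that this bijection of generating sets is the asserted chain isomorphism.

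I expect the main obstacle to be exactly the rigidity claim in the previous paragraph --- that after passing to large $r$ and truncating, the filtered Seiberg--Witten complex, differential included, is insensitive to deformations of $(\lambda_t,J_t)$ supported away from the short Reeb orbits. This cannot be seen purely on the ECH side, where the holomorphic curves contributing to the filtered differential need not remain near those orbits; rather it is the kind of quantitative statement established by Taubes in the course of identifying ECH with Seiberg--Witten Floer cohomology \cite{taubes1,taubes2,taubes3,taubes4,taubes5} and refined to the filtered setting in \cite{ht2}, and the proof will cite those results rather than reprove them. Once the chain-level description is in hand there is nothing further to verify, since the statement concerns only the single isomorphism.
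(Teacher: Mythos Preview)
Your overall strategy---pass to Seiberg--Witten, use that the family is constant near the short Reeb orbits, and conclude that the continuation map is the tautological $\Theta\mapsto\Theta$---is the right one, and it is also what the paper does.  But the way you justify it has a genuine gap.  You write that ``after passing to large $r$ and truncating by the action filtration, the differential as well should depend on $(\lambda_t,J_t)$ only through its restriction to those neighborhoods.''  This is not true: the Seiberg--Witten instantons counted by the filtered differential, like the holomorphic curves they approximate, are \emph{not} confined to neighborhoods of the short Reeb orbits.  Only the generators localize; the differential genuinely feels the global geometry, and changing $J$ away from the orbits can change it.  So the heuristic you lean on does not hold, and you cannot simply say ``the entire filtered Seiberg--Witten complex is constant along the path.''

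What the paper does instead is invoke a specific piece of machinery from \cite{ht2} and \cite{taubes1}: the \emph{preferred $L$-flat approximation}.  One chooses an $L$-flat approximation $(\lambda_0^1,J_0^1)$ to $(\lambda_0,J_0)$, which by \cite[Lemma~3.6(c)]{ht2} modifies the pair only inside an arbitrarily small neighborhood $N_\epsilon$ of the short orbits.  Taking $\epsilon$ small enough that hypothesis~\eqref{item:j-independence-homotopy-1} guarantees the family is constant on $N_{2\epsilon}$, the \emph{same} modification in $N_\epsilon$ gives an $L$-flat approximation $(\lambda_t^1,J_t^1)$ for every $t$.  The canonical isomorphism then factors as
\[
\ech^L(Y,\lambda_0,J_0)\isomto\ech^L(Y,\lambda_0^1,J_0^1)\isomto\ech^L(Y,\lambda_1^1,J_1^1)\isomto\ech^L(Y,\lambda_1,J_1),
\]
where the first and third arrows are $\Theta\mapsto\Theta$ by the discussion after \cite[Definition~3.2]{ht2}, and the middle arrow---between two $L$-flat pairs connected by an $L$-flat admissible deformation---is $\Theta\mapsto\Theta$ by \cite[Lemma~3.4(d)]{ht2}.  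These are the results you need to name; your sketch does not get there without them.
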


\begin{proof}
Let $(\lambda^1_0, J^1_0)$ be a preferred $L$-flat approximation to $(\lambda^0_0,J^0_0) : = (\lambda_0,J_0)$, as defined in \cite[Appendix B]{taubes1} and used in \cite[Lemma 3.6]{ht2}, with preferred homotopy $\{(\lambda^s_0,J^s_0) \mid 0 \leq s \leq 1\}$.  We can assume for any fixed $\epsilon > 0$ that each $(\lambda^s_0, J^s_0)$ agrees with $(\lambda_0,J_0)$ outside an $\epsilon$-neighborhood $N_\epsilon$ of the Reeb orbits of $(Y,\lambda_0)$ of action at most $L$ (this is stated as \cite[Lemma 3.6{c}]{ht2}), so we will take $\epsilon$ small enough to ensure that the family $(\lambda_t,J_t)$ is constant on a $2\epsilon$-neighborhood of these same orbits.

Since the condition of being a preferred $L$-flat approximation depends only on neighborhoods of these orbits, we can now define a preferred $L$-flat approximation for any $(\lambda^0_t, J^0_t) : = (\lambda_t,J_t)$ with $t > 0$ using the preferred homotopy
\[ (\lambda^s_t, J^s_t) = \begin{cases}
(\lambda^s_0, J^s_0) & \mathrm{on\ }N_\epsilon \\
(\lambda^0_t, J^0_t) & \mathrm{on\ }Y \ssm N_\epsilon.
\end{cases} \]
The desired isomorphism is now a composition of isomorphisms
\[ \ech^L(Y,\lambda_0,J_0) \isomto \ech^L(Y,\lambda^1_0,J^1_0) \isomto \ech^L(Y,\lambda^1_1,J^1_1) \isomto \ech^L(Y,\lambda_1,J_1) \]
in which the first and third isomorphism are induced by chain maps $\Theta \mapsto \Theta$ by the discussion following \cite[Definition 3.2]{ht2} using the homotopies $\{(\lambda^s_0,J^s_0) \mid 0 \leq s \leq 1\}$ and $\{(\lambda^s_1,J^s_1) \mid 0 \leq s \leq 1\}$ respectively, and the second isomorphism is as well by \cite[Lemma 3.4(d)]{ht2} using the homotopy $\{(\lambda^1_t,J^1_t) \mid 0 \leq t \leq 1\}$.
\end{proof}

We can now show that the isomorphisms $\Phi^{L,J}_n$ do not depend on how we chose our extension $J_\calh$ of an $\textit{ECH}^L$-generic tailored almost complex structure $J$ on $\R\times M^\ast$ to $\R \times (M')^*$.

\begin{lemma}
\label{lem:j-independence-kappa}
Suppose that we have fixed embedding data for which $J_{\calh,0}$ and $J_{\calh,1}$ are almost complex structures on $\R \times (M')^*$ as constructed above, and $n$ is large enough with respect to both $J_{\calh,0}$ and $J_{\calh,1}$ to define the corresponding isomorphisms $\Phi^{L,J}_{n,0}$ and $\Phi^{L,J}_{n,1}$.  Then these isomorphisms are equal for all sufficiently large $n$ with respect to the embedding data, $L$, and both $J_{\calh,0}$ and $J_{\calh,1}$.
\end{lemma}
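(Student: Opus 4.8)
The plan is to reduce Lemma~\ref{lem:j-independence-kappa} to Lemma~\ref{lem:j-independence-homotopy}, applied on the closed manifold $Y_n$. First I would unwind the construction of $\Phi^{L,J}_{n,\kappa}$: for $\kappa\in\{0,1\}$ it is the composite of the tautological chain-level identification $\tilde\Phi^{L,J}_{n,\kappa}\colon \ecc^L(M,\Gamma,\alpha,J)\isomto\ecc^L(Y_n,\alpha_n,J_{n,\kappa})$, $\Theta\mapsto\Theta$, which is a chain map by Lemma~\ref{lem:canonical-cor}, with the Hutchings--Taubes isomorphism $A_\kappa\colon\ech^L(Y_n,\alpha_n,J_{n,\kappa})\isomto\ech^L(Y_n,\alpha_n)$ of \cite[Theorem~1.3]{ht2}. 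The key point is that the closed contact manifold $(Y_n,\alpha_n)$ itself is independent of the extension $J_{H,\kappa}$; only $J_{n,\kappa}$ depends on it. By the composition law for the isomorphisms of \cite[Theorem~1.3]{ht2} we have $A_0 = A_1\circ\Psi$, where $\Psi\colon\ech^L(Y_n,\alpha_n,J_{n,0})\isomto\ech^L(Y_n,\alpha_n,J_{n,1})$ is the canonical $J$-change isomorphism. Hence it suffices to prove that $\Psi$ is induced by the chain isomorphism $\Theta\mapsto\Theta$ under the canonical identifications of both complexes with $\ecc^L(M,\Gamma,\alpha,J)$; granting this, $\Psi\circ\tilde\Phi^{L,J}_{n,0}=\tilde\Phi^{L,J}_{n,1}$, so $\Phi^{L,J}_{n,0}=A_0\circ\tilde\Phi^{L,J}_{n,0}=A_1\circ\Psi\circ\tilde\Phi^{L,J}_{n,0}=A_1\circ\tilde\Phi^{L,J}_{n,1}=\Phi^{L,J}_{n,1}$.

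To establish the claim about $\Psi$ I would build a suitable one-parameter family. Since the space of tailored almost complex structures on $\R\times(M')^*$ extending $J$ is connected, choose a path $\{J_{H,t}\}_{t\in[0,1]}$ of such extensions from $J_{H,0}$ to $J_{H,1}$; each $J_{H,t}$ then restricts to $J$ on $\R\times M$. Extend each $J_{H,t}$ to $J_{n,t}$ on $\R\times Y_n$ exactly as in the construction preceding Definition~\ref{def:embedding-data}. Using compactness of the path $\{J_{H,t}\}$, the $t$-coordinate bounds of \cite[Proposition~5.20]{cghh} and the energy estimates in Lemmas~\ref{lem:t-stretching} and \ref{lem:embedding} can be taken uniform in $t$, so there is a single $n_0$ such that for all $n\ge n_0$ and all $t$ the conclusion of Lemma~\ref{lem:canonical-cor} holds for $(Y_n,\alpha_n,J_{n,t})$. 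Moreover the arguments of Lemmas~\ref{lem:tau-stretching}--\ref{lem:embedding} never use the ECH index, so in fact every moduli space of curves between generators of $\ecc^L$ for $(Y_n,\alpha_n,J_{n,t})$ coincides with the corresponding moduli space for $(\R\times M^*,J)$; in particular each $J_{n,t}$ is $\ech^L$-generic, inheriting this from $J$, and the chain complexes $\ecc^L(Y_n,\alpha_n,J_{n,t})$, with differentials, are all literally identified with $\ecc^L(M,\Gamma,\alpha,J)$ via $\Theta\mapsto\Theta$.

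Now I would apply Lemma~\ref{lem:j-independence-homotopy} with $Y=Y_n$, $\lambda_t\equiv\alpha_n$, and the family $\{J_{n,t}\}$. Hypothesis~\eqref{item:j-independence-homotopy-2} is automatic because $\lambda_t$ is constant, so the closed Reeb orbits never change; hypothesis~\eqref{item:j-independence-homotopy-1} holds because every closed Reeb orbit of $(Y_n,\alpha_n)$ of action at most $L$ lies in $\mathrm{int}(M)$ for $n\ge n_0$, where $(\alpha_n,J_{n,t})=(\alpha_n,J)$ is independent of $t$. The lemma then asserts precisely that $\Psi$ is induced by the chain map $\Theta\mapsto\Theta$, which is the required claim; combined with the reduction above, this gives $\Phi^{L,J}_{n,0}=\Phi^{L,J}_{n,1}$ for all $n\ge n_0$.

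I expect the main obstacle to be the uniformity in $t$, i.e.\ checking that a single value of $n$ works for the entire path $\{J_{H,t}\}_{t\in[0,1]}$, so that the ``sufficiently large $n$'' appearing in Lemmas~\ref{lem:t-stretching}, \ref{lem:embedding}, and \ref{lem:canonical-cor} can be chosen independently of $t$. This should follow from compactness of the path together with the Gromov-compactness arguments already used in those proofs, now applied to the continuous family rather than a single almost complex structure; no genuinely new analytic input is needed.
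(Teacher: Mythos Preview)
Your proposal is correct and follows essentially the same approach as the paper: reduce to showing that the canonical isomorphism $\ech^L(Y_n,\alpha_n,J_{n,0})\isomto\ech^L(Y_n,\alpha_n,J_{n,1})$ of \cite[Theorem~1.3]{ht2} is induced by $\Theta\mapsto\Theta$, build a path $\{J_{n,t}\}$ through almost complex structures restricting to $J$ on $\R\times M$, and apply Lemma~\ref{lem:j-independence-homotopy}. The paper handles the uniformity-in-$t$ issue exactly as you anticipate, by running the Gromov compactness argument of Lemma~\ref{lem:t-stretching} on a diagonal sequence $(n_i,t_i)$ with $n_i\to\infty$.
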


\begin{proof}
Under the hypotheses of this lemma, each of the complexes $\ecc^L(Y_n,\alpha_n,J_{n,0})$ and $\ecc^L(Y_n,\alpha_n,J_{n,1})$ is canonically identical to $\ecc^L(M,\Gamma,\alpha,J)$.  Thus for all large enough $n$ we have a diagram
\label{}\[ \xymatrix{
\ech^L(M,\Gamma,\alpha,J) \ar@{=}[d] \ar[r]^-{\sim} &
\ech^L(Y_n,\alpha_n,J_{n,0}) \ar[r]^-{\sim} & 
\ech^L(Y_n,\alpha_n) \ar@{=}[d] \\
\ech^L(M,\Gamma,\alpha,J) \ar[r]^-{\sim} &
\ech^L(Y_n,\alpha_n,J_{n,1}) \ar[r]^-{\sim} &
\ech^L(Y_n,\alpha_n)
} \]
in which the indicated isomorphisms are the canonical ones and the compositions of the maps across each row are equal to $\Phi^{L,J}_{n,0}$ and $\Phi^{L,J}_{n,1}$, respectively.  This diagram commutes, and hence $\Phi^{L,J}_{n,0} = \Phi^{L,J}_{n,1}$, if and only if the composition
\[ \ech^L(Y_n,\alpha_n,J_{n,0}) \isomto \ech^L(Y_n,\alpha_n) \isomto \ech^L(Y_n,\alpha_n,J_{n,1}) \]
is induced by a chain map $\Theta \mapsto \Theta$ sending each collection of Reeb orbits to itself. Here, the rightmost isomorphism is the inverse of the rightmost isomorphism in the bottom row of the diagram above. Note that this composition is exactly the canonical isomorphism
\[ \ech^L(Y_n,\alpha_n,J_{n,0}) \isomto \ech^L(Y_n,\alpha_n,J_{n,1}) \]
of \cite[Theorem 1.3]{ht2}.

Let $\{J_{n,s} \mid 0 \leq s \leq 1\}$ be any continuous, symplectization-admissible family which restricts to $\R \times M$ as $J$.  Lemmas \ref{lem:tau-stretching} and \ref{lem:t-stretching} ensure that each $J_{n,s}$ is $\ech^L$-generic for $n$ sufficiently large.  More precisely, fix generators $\Theta_+$ and $\Theta_-$ of $\ecc^L(Y_n,\alpha_n,J_{n,s})$; then we claim that for all sufficiently large $n$, the moduli space $\mathcal{M}_{I=1}(\R\times Y_n, J_{n,s}; \Theta_+, \Theta_-)$ is independent of $s$ and canonically identified with $\mathcal{M}_{I=1}(\R\times M^*, J; \Theta_+, \Theta_-)$.  If not, then we can find a sequence $(n_i, s_i)$ with $0\leq s_i \leq 1$ and $n_i \to \infty$, and $J_{n_i,s_i}$-holomorphic curves $\mathcal{C}_i$ which intersect the neck $\R \times [-\epsilon',\epsilon'] \times R_+(\Gamma')$ in $\R\times Y_{n_i}$ as in the proof of Lemma \ref{lem:t-stretching}, and then applying the same Gromov compactness argument as before to the sequence $\mathcal{C}_i$ yields a contradiction.  For $n$ which is large enough in this sense, an application of Lemma \ref{lem:j-independence-homotopy} to the family
\[ (\lambda_t, J_t) = \left(\alpha_n, J_{n,s}\right) \]
on $Y = Y_n$ now completes the proof.
\end{proof}

Suppose that $L' > L$, $J$ is an $\textit{ECH}^{L'}$-generic tailored almost complex structure on $\R\times M^\ast$, and that $n$ is chosen to be large with respect to $L'$. Then we have a commutative diagram
\[ \xymatrix{
\ech^L(M,\Gamma,\alpha,J) \ar[r]^-{\sim}_{\eqref{eq:canid}} \ar[d]_{i^{L,L'}_J} &
\ech^L(Y_n,\alpha_n,J_{n}) \ar[d]^{i^{L,L'}_{J_{n}}} \ar[r]^-{\sim}_-{\eqref{eq:huttab}} &
\ech^L(Y_n, \alpha_n) \ar[d]^{i^{L,L'}} \\
\ech^{L'}(M,\Gamma,\alpha,J) \ar[r]^-{\sim}_{\eqref{eq:canid}} &
\ech^{L'}(Y_n,\alpha_n,J_{n}) \ar[r]^-{\sim}_-{\eqref{eq:huttab}}&
\ech^{L'}(Y_n, \alpha_n),
} \]
in which the leftmost square commutes by the above discussion, and the existence and commutativity of the rightmost square are essentially the content of \cite[Theorem 1.3(b)]{ht2}.  It follows that the maps $\Phi^{L,J}_{n}$ and $\Phi^{L',J}_{n}$ commute with the $i^{L,L'}$ for all such large $n$.  We would also like to show that these maps do not depend on $n$ in the following sense.

\begin{proposition}
\label{prop:j-independence-n-iso}
Having fixed $L$ and embedding data, there exists $N>0$ depending on $L$ and the embedding data such that for all  $n, n'$ with $N<n < n'$, there are isomorphisms 
\[ \Psi^L_{n,n'}: \ech^L(Y_n,\alpha_n) \isomto \ech^L(Y_{n'},\alpha_{n'}) \]
such that:
\begin{enumerate}\leftskip-0.25in
\item $\Psi^L_{n,n''} = \Psi^L_{n',n''} \circ \Psi^L_{n,n'}$ whenever $n<n'<n''$;
\item The diagram
\[ \xymatrix{
\ech^L(M,\Gamma,\alpha,J) \ar[r]^{\Phi^{L,J}_{n}} \ar[dr]_{\Phi^{L,J}_{n'}}
& \ech^L(Y_n, \alpha_n) \ar[d]^{\Psi^L_{n,n'}} \\
& \ech^L(Y_{n'}, \alpha_{n'})
} \]
commutes whenever $n$ and $n'$ are large enough so that the maps $\Phi^{L,J}_{n}$ and $\Phi^{L,J}_{n'}$ are defined for $J$.
\end{enumerate}
\end{proposition}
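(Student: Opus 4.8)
The plan is to define $\Psi^L_{n,n'}$ at the chain level and then obtain (1) and (2) as formal consequences of the fact that all the chain identifications involved are ``the identity on generators''; no new holomorphic-curve analysis is needed beyond what Lemma \ref{lem:canonical-cor} already provides. First I would fix, once and for all, an extension $J_H$ of $J$ to $\R\times(M')^*$ and, for each sufficiently large $n$, the associated closed tuple $(Y_n,\alpha_n,J_n)$, with $J_n$ being $\textit{ECH}^L$-generic for $\alpha_n$. Exactly as in the definition of $\Phi^{L,J}_n$, for all large $n$ there is a canonical chain isomorphism
\[ \ecc^L(M,\Gamma,\alpha,J)\;\cong\;\ecc^L(M',\Gamma',\alpha',J_H)\;\cong\;\ecc^L(Y_n,\alpha_n,J_n), \]
in which every arrow is the identity on the finite common generating set and the differentials agree because the moduli spaces defining them coincide (Lemma \ref{lem:canonical-cor}). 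Composing the identification for $n$ with the inverse of the one for $n'$, where $n<n'$ are both large, produces a canonical chain isomorphism $\ecc^L(Y_n,\alpha_n,J_n)\cong\ecc^L(Y_{n'},\alpha_{n'},J_{n'})$, again the identity on generators; call its effect on homology $\tilde{\Psi}^L_{n,n'}$. Writing $G_n:\ech^L(Y_n,\alpha_n,J_n)\isomto\ech^L(Y_n,\alpha_n)$ for the isomorphism of \cite[Theorem 1.3]{ht2} and recalling that $\Phi^{L,J}_n=G_n\circ\tilde{\Phi}^{L,J}_n$, I would then set
\[ \Psi^L_{n,n'}:=G_{n'}\circ\tilde{\Psi}^L_{n,n'}\circ G_n^{-1}, \]
which is an isomorphism since $G_n$, $G_{n'}$, and $\tilde{\Psi}^L_{n,n'}$ all are.

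Checking the two conditions will be routine bookkeeping. For (1), given $n<n'<n''$, the $G_{n'}$ and $G_{n'}^{-1}$ cancel and one is reduced to $\tilde{\Psi}^L_{n',n''}\circ\tilde{\Psi}^L_{n,n'}=\tilde{\Psi}^L_{n,n''}$, which holds because both sides are induced by the identity-on-generators chain isomorphism $\ecc^L(Y_n,\alpha_n,J_n)\cong\ecc^L(Y_{n''},\alpha_{n''},J_{n''})$, each factoring through $\ecc^L(M',\Gamma',\alpha',J_H)$. For (2), using $\Phi^{L,J}_n=G_n\circ\tilde{\Phi}^{L,J}_n$ the $G_n$ and $G_n^{-1}$ cancel and one is reduced to $\tilde{\Psi}^L_{n,n'}\circ\tilde{\Phi}^{L,J}_n=\tilde{\Phi}^{L,J}_{n'}$, which again holds because, traced through $\ecc^L(M',\Gamma',\alpha',J_H)$, the composite is the identity-on-generators map $\ecc^L(M,\Gamma,\alpha,J)\cong\ecc^L(Y_{n'},\alpha_{n'},J_{n'})$; hence $\Psi^L_{n,n'}\circ\Phi^{L,J}_n=G_{n'}\circ\tilde{\Phi}^{L,J}_{n'}=\Phi^{L,J}_{n'}$, so the triangle commutes.

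I do not anticipate a serious obstacle: the substantive input --- that for $n$ sufficiently large, and uniformly so once $J_H$ is chosen, the three differentials are defined by literally the same moduli spaces, and that $J_n$ is $\textit{ECH}^L$-generic for $\alpha_n$ --- is exactly the content of Lemma \ref{lem:canonical-cor} (assembled from Lemmas \ref{lem:tau-stretching}, \ref{lem:t-stretching}, and \ref{lem:embedding}) together with the Gromov-compactness argument in the proof of Lemma \ref{lem:t-stretching}, and is already available; the main care needed is just to make sure all the ``large $n$'' thresholds can be met simultaneously, which they can since $J_H$ is fixed. It is also worth recording that the construction makes no choices beyond $J_H$, and is in fact independent even of that: condition (2) forces $\Psi^L_{n,n'}=\Phi^{L,J}_{n'}\circ(\Phi^{L,J}_n)^{-1}$, and the maps $\Phi^{L,J}_n$ are independent of $J_H$ by Lemma \ref{lem:j-independence-kappa}, so $\Psi^L_{n,n'}$ is as well.
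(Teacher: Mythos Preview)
Your argument is correct and notably more direct than the paper's. You observe that since both $\ecc^L(Y_n,\alpha_n,J_n)$ and $\ecc^L(Y_{n'},\alpha_{n'},J_{n'})$ are canonically identified with $\ecc^L(M,\Gamma,\alpha,J)$ (same generators, same differentials, by Lemma~\ref{lem:canonical-cor}), the map $\tilde\Psi^L_{n,n'}=\Theta\mapsto\Theta$ is a chain isomorphism, and then $\Psi^L_{n,n'}:=G_{n'}\circ\tilde\Psi^L_{n,n'}\circ G_n^{-1}$ satisfies (1) and (2) by pure bookkeeping. The paper instead builds a one-parameter family of diffeomorphisms $\varphi_t:Y_n\to Y_{(1-t)n+tn'}$, pulls everything back to $Y_n$, and invokes Lemma~\ref{lem:j-independence-homotopy} to identify the \emph{canonical} isomorphism of \cite[Theorem~1.3]{ht2} between $\ech^L(Y_n,\alpha_n,J_n)$ and $\ech^L(Y_n,\varphi_1^*\alpha_{n'},\varphi_1^*J_{n'})$ with $\Theta\mapsto\Theta$; the resulting $\Psi^L_{n,n'}$ is then of course the same map as yours (as you note, it is forced by~(2)).

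The payoff of the paper's longer route is not visible in Proposition~\ref{prop:j-independence-n-iso} itself, but appears later: because the paper realizes $\Psi^L_{n,n'}$ as coming from an admissible deformation in the sense of \cite{ht2}, it can directly apply results such as \cite[Lemma~5.6]{ht2} and \cite[Lemma~3.4]{ht2} to show that $\Psi^L_{n,n'}$ commutes with the Seiberg--Witten/ECH cobordism maps (see the proof of Proposition~\ref{prop:alpha-ind-psi-n} and Lemma~\ref{lem:tau-neck}). With your definition that compatibility is not automatic; one would have to supply the $\varphi_t$-deformation argument at that point. So your proof of the proposition as stated is complete and cleaner, but the paper's construction packages an extra piece of information that gets used downstream.
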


\begin{proof}
We will use the almost complex structures $J_{n}$ and $J_{n'}$ constructed above on $\R \times Y_n$ and $\R \times Y_{n'}$.  Let $f_t: [-n,n] \hookrightarrow [-n',n']$ be a family of smooth, injective maps such that $f_0(x) = x$ and $f_t(x) = x \pm t(n'-n)$ near $x=\pm n$, so that $f_1$ has image $[-n',n']$.  Then we can define a continuous family of diffeomorphisms
\[ \varphi_t: Y_n \to Y_{(1-t)n + tn'} \]
by letting $\varphi_t|_{M'} = \mathrm{Id}_{M'}$ and $\varphi_s|_{[-n,n]\times R_+(\Gamma')} = f_s \times \mathrm{Id}_{R_+(\Gamma')}$, and by extending this smoothly over the solid torus $V_n = \overline{Y_n \ssm (M' \cup [-n,n]\times R_+(\Gamma'))}$ by a diffeomorphism $V_n \to V _{(1-t)n+tn'}$ which depends continuously on $t$ and is the identity when $t=0$.  Note that $\varphi_0: Y_n \to Y_n$ is the identity map.

We now apply Lemma \ref{lem:j-independence-homotopy} to the family of contact forms and almost complex structures
\[ (\lambda_t, J_t) = \left(\varphi_t^*(\alpha_{(1-t)n+tn'}), \varphi_t^*(J_{(1-t)n+tn'})\right) \]
on $Y_n$ to see that the canonical isomorphism
\[ \ech^L(Y_n,\alpha_n,J_{n}) \isomto \ech^L(Y_n,\varphi_1^*(\alpha_{n'}), \varphi_1^*(J_{n'})) \]
is induced by a chain map of the form $\Theta \mapsto \Theta$.  Since $\ech^L(Y_n,\varphi_1^*(\alpha_{n'}),\varphi_1^*(J_{n'}))$ is canonically identified with $\ech^L(Y_{n'}, \alpha_{n'}, J_{n'})$ in the same way, we now have an isomorphism
\[ \psi_{n,n'}: \ech^L(Y_n, \alpha_n, J_{n}) \isomto \ech^L(Y_{n'}, \alpha_{n'}, J_{n'}) \]
which is defined at the chain level by $\Theta \mapsto \Theta$.  This map fits into a commutative triangle with the isomorphisms $\ech^L(M,\Gamma,\alpha,J) \isomto \ech^L(Y_s, \alpha_s, J_{s})$ for $s = n$ and $s = n'$, hence the isomorphism
\[ \Psi^L_{n,n'}: \ech^L(Y_n, \alpha_n) \isomto \ech^L(Y_{n'}, \alpha_{n'}) \]
which $\psi_{n,n'}$ canonically induces must also commute with $\Phi^{L,J}_{n}$ and $\Phi^{L,J}_{n'}$, as desired.  It is also clear from the construction that $\psi_{n,n''} = \psi_{n',n''} \circ \psi_{n,n'}$, and hence $\Psi^L_{n,n''} = \Psi^L_{n',n''} \circ \Psi^L_{n,n'}$ follows immediately.
\end{proof}

Given an $\textit{ECH}^L$-generic tailored almost complex structure $J'$ on $\R \times M^*$, we can likewise construct isomorphisms
\[ \Phi^{L,J'}_{n}: \ech^L(M,\Gamma,\alpha, J') \isomto \ech^L(Y_n,\alpha_n). \]
This gives us an isomorphism
\[ \Phi^L_{J,J',n} = \left(\Phi^{L,J'}_{n}\right)^{-1} \circ \Phi^{L,J}_{n}: \ech^L(M,\Gamma,\alpha,J) \isomto \ech^L(M,\Gamma,\alpha,J'). \]

\begin{lemma}
\label{lem:j-independence-n}
Having fixed $L$ and embedding data, the isomorphism $\Phi^L_{J,J',n}$ does not depend on the choice of $n$ that is sufficiently large with respect to $L$, the embedding data, $J$ and $J'$.
\end{lemma}

\begin{proof}
We have shown in Proposition \ref{prop:j-independence-n-iso} that the diagram
\[ \xymatrix{
\ech^L(M,\Gamma,\alpha,J) \ar[r]^-{\Phi^{L,J}_{n}} \ar[dr]_{\Phi^{L,J}_{n'}}
& \ech^L(Y_n, \alpha_n) \ar[d]^{\Psi^L_{n,n'}}
& \ech^L(M,\Gamma,\alpha,J') \ar[l]_-{\Phi^{L,J'}_{n}} \ar[dl]^{\Phi^{L,J'}_{n'}} \\
& \ech^L(Y_{n'}, \alpha_{n'}) &
} \]
commutes whenever $n < n'$ are large enough for these maps to be defined, and so we have
\begin{align*}
\Phi^L_{J,J',n'} &= \left(\Phi^{L,J'}_{n'}\right)^{-1} \circ \Phi^{L,J}_{n'} \\
&= \left(\Psi_{n,n'} \circ \Phi^{L,J'}_{n}\right)^{-1} \circ \left(\Psi_{n,n'} \circ \Phi^{L,J}_{n}\right) \\
&= \left(\Phi^{L,J'}_{n}\right)^{-1} \circ \Phi^{L,J}_{n} \\
&= \Phi^L_{J,J',n}.
\qedhere
\end{align*}
\end{proof}

Since $i^{L,L'} \circ \Phi^{L,J}_{n} = \Phi^{L',J}_{n} \circ i^{L,L'}$, we thus have commutative diagrams
\[ \xymatrix{
\ech^L(M,\Gamma,\alpha,J) \ar[r]^-{\Phi^L_{J,J'}}_{\sim} \ar[d]_{i^{L,L'}_{J}} &
\ech^L(M,\Gamma,\alpha,J') \ar[d]^{i^{L,L'}_{J'}} \\
\ech^{L'}(M,\Gamma,\alpha,J) \ar[r]^-{\Phi^{L'}_{J,J'}}_{\sim} &
\ech^{L'}(M,\Gamma,\alpha,J')
} \]
where $\Phi^L_{J,J'} = \Phi^L_{J,J',n}$ for $n$ large and likewise for $\Phi^{L'}_{J,J'}$; although both maps require a choice of $n$, we may take $n$ to be sufficiently large for both maps and according to Lemma \ref{lem:j-independence-n} the maps do not depend on our choice.

\begin{theorem}
\label{thm:j-independence}
Let $J$ and $J'$ be generic tailored almost complex structures on $\R \times M^*$, and fix a choice of embedding data for $(M,\Gamma,\alpha)$ as in Definition \ref{def:embedding-data}.  Then for all $L>0$ there are natural isomorphisms
\[ \Phi^L_{J,J'}: \ech^L(M,\Gamma,\alpha,J) \isomto \ech^L(M,\Gamma,\alpha,J'), \]
depending only on $L$, $J$, and $J'$, and satisfying
\begin{enumerate}\leftskip-0.25in
\item \label{item:thm-j-indep-1} $i^{L,L'}_{J'} \circ \Phi^L_{J,J'} = \Phi^{L'}_{J,J'} \circ i^{L,L'}_{J}$ for all $L' > L$.
\item \label{item:thm-j-indep-2} $\Phi^L_{J,J} = \mathrm{Id}$, and $\Phi^L_{J,J''} = \Phi^L_{J',J''} \circ \Phi^L_{J,J'}$ for any $J,J',J''$.
\end{enumerate}
The limit of these maps as $L \to \infty$ is a natural isomorphism
\[ \Phi_{J,J'}: \ech(M,\Gamma, \alpha, J) \isomto \ech(M,\Gamma, \alpha, J') \]
which also satisfies $\Phi_{J,J''} = \Phi_{J',J''} \circ \Phi_{J,J'}$.
\end{theorem}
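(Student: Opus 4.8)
The plan is to assemble the theorem from the results already in hand, since all of the analytic work has been carried out in Lemmas \ref{lem:tau-stretching}--\ref{lem:j-independence-n} and Proposition \ref{prop:j-independence-n-iso}. First I would \emph{define} the isomorphism: given $L>0$, generic tailored $J,J'$ on $\R\times M^*$, and the fixed embedding data, I would set $\Phi^L_{J,J'}:=\Phi^L_{J,J',n}=\big(\Phi^{L,J'}_n\big)^{-1}\circ\Phi^{L,J}_n$ for any $n$ large enough that all the constructions leading to $\Phi^{L,J}_n$ and $\Phi^{L,J'}_n$ apply. That this is well-posed is exactly Lemma \ref{lem:j-independence-n} (independence of $n$) together with Lemma \ref{lem:j-independence-kappa} (independence of the auxiliary extension $J_H$ of $J$ to $\R\times(M')^*$); and since the closed manifolds $(Y_n,\alpha_n)$ are determined by the embedding data alone, the only remaining inputs are $L$, $J$, and $J'$, which is the asserted naturality.

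Next I would verify properties \eqref{item:thm-j-indep-1} and \eqref{item:thm-j-indep-2}. Property \eqref{item:thm-j-indep-1} is precisely the commutativity of the two-step diagram displayed immediately before the theorem, which was already deduced (for $n$ large with respect to $L'$, hence also $L$) from the compatibility of the maps $\Phi^{L,J}_n$ with the inclusion-induced maps $i^{L,L'}$ on the closed manifolds $Y_n$, itself part of \cite[Theorem~1.3(b)]{ht2}. For property \eqref{item:thm-j-indep-2}, the identity $\Phi^L_{J,J}=\mathrm{Id}$ is immediate from $\Phi^L_{J,J}=\big(\Phi^{L,J}_n\big)^{-1}\circ\Phi^{L,J}_n$, and for three generic tailored complex structures $J,J',J''$ I would fix a single $n$ exceeding the thresholds needed to define all three of $\Phi^{L,J}_n$, $\Phi^{L,J'}_n$, and $\Phi^{L,J''}_n$ and then compute
\[ \Phi^L_{J',J''}\circ\Phi^L_{J,J'}=\big(\Phi^{L,J''}_n\big)^{-1}\circ\Phi^{L,J'}_n\circ\big(\Phi^{L,J'}_n\big)^{-1}\circ\Phi^{L,J}_n=\big(\Phi^{L,J''}_n\big)^{-1}\circ\Phi^{L,J}_n=\Phi^L_{J,J''}. \]

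For the direct limit, since the $\Phi^L_{J,J'}$ commute with the maps $i^{L,L'}$ whose direct limit is the unfiltered group, they induce a map $\Phi_{J,J'}$ on $\varinjlim_L\ech^L(M,\Gamma,\alpha,J)=\ech(M,\Gamma,\alpha,J)$; it is an isomorphism because each $\Phi^L_{J,J'}$ is, and the identity $\Phi_{J,J''}=\Phi_{J',J''}\circ\Phi_{J,J'}$ descends from the corresponding filtered statement by passing to the limit.

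I expect the only genuine subtlety to be the bookkeeping of the ``sufficiently large $n$'' quantifiers: before composing $\Phi^L_{J',J''}\circ\Phi^L_{J,J'}$ or forming the inclusion square, one must check that a single $n$ simultaneously works for every complex structure, action level, and auxiliary extension in play. This holds because each requirement is of the form ``$n$ larger than some threshold'' produced by the Gromov-compactness arguments of Lemmas \ref{lem:tau-stretching}, \ref{lem:t-stretching}, and \ref{lem:embedding}, so one takes the maximum over the finitely many thresholds; but I would state this explicitly so that the compositions and the inclusion square are genuinely well-posed.
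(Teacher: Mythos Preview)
Your proposal is correct and follows essentially the same route as the paper's proof: define $\Phi^L_{J,J'}$ via a single large $n$ using Lemmas \ref{lem:j-independence-kappa} and \ref{lem:j-independence-n}, verify \eqref{item:thm-j-indep-1} from the diagram immediately preceding the theorem and \eqref{item:thm-j-indep-2} by direct computation at a fixed $n$, then pass to the direct limit. Your explicit attention to the ``choose one $n$ that works for everything'' bookkeeping is a reasonable expansion of what the paper leaves implicit.
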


\begin{proof}
We have already seen that the maps $\Phi^L_{J,J'} = \Phi^L_{J,J',n}$ for $n$ large with respect to $L$ satisfy property \eqref{item:thm-j-indep-1}, and property \eqref{item:thm-j-indep-2} can be easily checked by looking at the maps for a single choice of $n$.  The maps $\Phi^L_{J,J'}$ specify an isomorphism from the directed system
\[ (\{\ech^L(M,\Gamma,\alpha,J)\}_L, \{i^{L,L'}_{J}\}_{L,L'}) \]
to the directed system
\[ (\{\ech^L(M,\Gamma,\alpha,J')\}_L, \{i^{L,L'}_{J'}\}_{L,L'}), \]
and hence induce an isomorphism $\Phi_{J,J'}$ on their direct limits, since the direct limit is an exact functor.  But since homology commutes with taking direct limits, and
\[ \ecc(M,\Gamma,\alpha,J) = \lim_{L\to\infty} \ecc^L(M,\Gamma,\alpha,J) \]
and likewise for $J'$, the direct limits of each system are canonically isomorphic to the associated unfiltered homology groups.  Thus $\Phi_{J,J'}$ is the desired isomorphism
\[ \ech(M,\Gamma,\alpha,J) \isomto \ech(M,\Gamma,\alpha,J'). \qedhere \]
\end{proof}

In other words, a choice of embedding data uniquely determines transitive systems of groups $(\{\ech^L(M,\Gamma,\alpha,J)\}_J,\{\Phi^L_{J,J'}\}_{J,J'})$, and hence a canonical group
\begin{equation}
\label{eq:ech-l-canonical}
\ech^L(M,\Gamma,\alpha) \subset \prod_J \ech^L(M,\Gamma,\alpha,J)
\end{equation}
as the subgroup consisting of tuples $(x_J)_{J}$ such that $\Phi^L_{J,J'}(x_J) = x_{J'}$ for all pairs $J,J'$. For fixed $L>0$, and $J$, there exist canonical isomorphisms 
\begin{equation}
\label{eq:P-L-J}
P^{L,J}:\ech^L(M,\Gamma,\alpha,J)\to\ech^L(M,\Gamma,\alpha)
\end{equation}
depending only on the embedding data. The inclusion maps $i^{L,L'}_{J}$ induce a well-defined map $i^{L,L'}: \ech^L(M,\Gamma,\alpha) \to \ech^{L'}(M,\Gamma,\alpha)$, and the direct limit as $L\to\infty$ is a well-defined group
\[ \ech(M,\Gamma,\alpha) \]
which is also determined by the transitive system $(\{\ech(M,\Gamma,\alpha,J)\}_J, \{\Phi_{J,J'}\}_{J,J'})$ as in \eqref{eq:ech-l-canonical}.  Thus we can rephrase the last part of Theorem \ref{thm:j-independence} as saying that there is a group $\ech(M,\Gamma,\alpha)$ equipped with a canonical isomorphism
\[ P^J: \ech(M,\Gamma,\alpha,J) \isomto \ech(M,\Gamma,\alpha) \]
for every $J$.

Each group $\ech^L(M,\Gamma,\alpha,J)$ contains a class $[\emptyset]$ corresponding to the empty set of Reeb orbits, which is a cycle because there are no non-empty $J$-holomorphic curves with empty positive end by Stokes's Theorem since a $J$-holomorphic curve has positive $d\alpha$-energy. We claim that this induces a well-defined element of $\ech^L(M,\Gamma,\alpha)$, and hence of $\ech(M,\Gamma,\alpha)$ as well.

\begin{proposition}
\label{prop:contact-class-j}
The isomorphisms $\Phi^L_{J,J'}: \ech^L(M,\Gamma,\alpha,J) \isomto \ech^L(M,\Gamma,\alpha,J')$ carry $[\emptyset]$ to $[\emptyset]$, and hence define a unique element $[\emptyset] \in \ech^L(M,\Gamma,\alpha)$ such that $i^{L,L'}([\emptyset]) = [\emptyset]$ for all $L' > L$.
\end{proposition}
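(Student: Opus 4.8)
The plan is to unwind the definition of $\Phi^L_{J,J'}$ and reduce the statement to a fact about the Hutchings--Taubes $J$-independence isomorphism for the closed contact manifold $Y_n$. Recall that for $n$ large with respect to $L$ one has $\Phi^L_{J,J'}=\Phi^L_{J,J',n}=(\Phi^{L,J'}_n)^{-1}\circ\Phi^{L,J}_n$, where $\Phi^{L,J}_n=\Lambda^J_n\circ\tilde{\Phi}^{L,J}_n$: here $\tilde{\Phi}^{L,J}_n:\ech^L(M,\Gamma,\alpha,J)\isomto\ech^L(Y_n,\alpha_n,J_n)$ is induced by the canonical chain-level identification sending each orbit set $\Theta$ to itself, and $\Lambda^J_n:\ech^L(Y_n,\alpha_n,J_n)\isomto\ech^L(Y_n,\alpha_n)$ is the canonical isomorphism of \cite[Theorem 1.3]{ht2}, set up so that $(\Lambda^{J'}_n)^{-1}\circ\Lambda^J_n$ is the canonical ($J$-independence) isomorphism between $\ech^L(Y_n,\alpha_n,J_n)$ and $\ech^L(Y_n,\alpha_n,J'_n)$. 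Since $\tilde{\Phi}^{L,J}_n$ comes from a chain map fixing the empty orbit set, $\tilde{\Phi}^{L,J}_n([\emptyset])=[\emptyset]$, and likewise for $J'$. So the proposition follows once we show $\Lambda^J_n([\emptyset])=\Lambda^{J'}_n([\emptyset])$, equivalently that the Hutchings--Taubes isomorphism $\ech^L(Y_n,\alpha_n,J_n)\isomto\ech^L(Y_n,\alpha_n,J'_n)$ carries $[\emptyset]$ to $[\emptyset]$.

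The subtlety is that $J_n$ and $J'_n$ restrict to $J$ and $J'$ on $\R\times M$, which may genuinely disagree near the closed Reeb orbits of action at most $L$, so Lemma \ref{lem:j-independence-homotopy} does not apply at level $L$. I would get around this by first passing to a sub-threshold. Fix $0<L_0<L$ smaller than the symplectic action of every closed Reeb orbit of $(M,\Gamma,\alpha)$. By construction, for $n$ large every closed Reeb orbit of $(Y_n,\alpha_n)$ of action at most $L$ lies in $M$, hence has action greater than $L_0$; since $L_0<L$ this means $(Y_n,\alpha_n)$ has no closed Reeb orbit of action at most $L_0$ at all. Consequently, for every symplectization-admissible almost complex structure $J''$ on $\R\times Y_n$, the complex $\ecc^{L_0}(Y_n,\alpha_n,J'')$ is generated over $\Z$ by $\emptyset$ alone with vanishing differential (no nonconstant curve has empty positive end, by positivity of $d\alpha_n$-energy), so $\ech^{L_0}(Y_n,\alpha_n,J'')=\Z\langle[\emptyset]\rangle$, and any such $J''$ is automatically $\ech^{L_0}$-generic. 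Choosing a path $\{J''_s\}_{0\le s\le 1}$ of symplectization-admissible almost complex structures from $J_n$ to $J'_n$ (the space of such structures is connected) and applying Lemma \ref{lem:j-independence-homotopy} to the family $(\alpha_n,J''_s)$, whose hypotheses hold vacuously because there are no closed Reeb orbits of action at most $L_0$, shows that the canonical isomorphism $\ech^{L_0}(Y_n,\alpha_n,J_n)\isomto\ech^{L_0}(Y_n,\alpha_n,J'_n)$ is induced by the chain map $\Theta\mapsto\Theta$ and therefore fixes $[\emptyset]$.

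It then remains to promote this from level $L_0$ to level $L$. By \cite[Theorem 1.3(b)]{ht2} the canonical isomorphisms commute with the inclusion-induced maps $i^{L_0,L}_{J_n}$ and $i^{L_0,L}_{J'_n}$, and these last two maps fix $[\emptyset]$ because they are induced by the inclusions of chain complexes; hence the commuting square forces the level-$L$ Hutchings--Taubes isomorphism to fix $[\emptyset]$ as well. Therefore $\Phi^{L,J}_n([\emptyset])=\Lambda^J_n([\emptyset])=\Lambda^{J'}_n([\emptyset])=\Phi^{L,J'}_n([\emptyset])$, and applying $(\Phi^{L,J'}_n)^{-1}$ yields $\Phi^L_{J,J'}([\emptyset])=[\emptyset]$. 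Finally, since $\Phi^L_{J,J'}$ fixes $[\emptyset]$ for every pair $J,J'$, the tuple $([\emptyset])_J$ lies in the subgroup $\ech^L(M,\Gamma,\alpha)\subset\prod_J\ech^L(M,\Gamma,\alpha,J)$ of \eqref{eq:ech-l-canonical}, defining the desired class $[\emptyset]\in\ech^L(M,\Gamma,\alpha)$, and $i^{L,L'}([\emptyset])=[\emptyset]$ because the map $i^{L,L'}$ on the canonical groups is built from the maps $i^{L,L'}_J$, each of which is induced on chain level by $\ecc^L\hookrightarrow\ecc^{L'}$ and hence fixes $[\emptyset]$. The main obstacle here is precisely the use of Lemma \ref{lem:j-independence-homotopy}: since $J$ and $J'$ need not agree near the Reeb orbits, one cannot invoke the homotopy lemma directly at level $L$, so the crux is to descend to a level $L_0$ below all Reeb orbit actions, where the lemma applies trivially, and then transport the conclusion back up using functoriality of the inclusion maps.
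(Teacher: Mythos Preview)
Your argument is correct and takes a genuinely different route from the paper. Both reduce to showing that the Hutchings--Taubes isomorphism $\ech^L(Y_n,\alpha_n,J_n)\isomto\ech^L(Y_n,\alpha_n,J'_n)$ of \cite[Theorem~1.3]{ht2} fixes $[\emptyset]$, but they diverge from there.

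The paper argues via the ECH cobordism maps of \cite[Theorem~1.9]{ht2}: it considers the trivial product cobordism $([-\epsilon,0]\times Y_n,e^s\alpha_n)$, uses that the induced map $\Phi^L(X,\lambda)$ is independent of the cobordism-admissible almost complex structure to obtain a commuting square, identifies one side with a scaling-and-inclusion composite (hence an isomorphism preserving $[\emptyset]$), and invokes the Holomorphic Curves axiom on the other side to see that the chain-level map sends $\emptyset\mapsto\emptyset$. The commuting square then forces the $J$-independence isomorphism to fix $[\emptyset]$.

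Your approach avoids cobordism maps entirely. By descending to a threshold $L_0>0$ below the minimal Reeb action (which exists since $M$ is compact and the Reeb vector field is nowhere zero), the filtered complexes become $\Z\langle\emptyset\rangle$, Lemma~\ref{lem:j-independence-homotopy} applies with vacuous hypotheses, and the conclusion is promoted to level $L$ via \cite[Theorem~1.3(b)]{ht2}. This is more elementary: it uses only the $J$-independence package of \cite[Theorem~1.3]{ht2} and the paper's own Lemma~\ref{lem:j-independence-homotopy}, not the Seiberg--Witten-based cobordism maps of \cite[Theorem~1.9]{ht2}. On the other hand, the paper's method previews exactly the cobordism-map technique that is used repeatedly in Section~\ref{sec:alpha-independence} (and in Theorem~\ref{thm:alpha-independence}) to show that the maps $F_{\alpha_s}$ preserve the contact class, so in context it is the more thematically unified choice.
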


\begin{proof}
We construct a closed manifold $(Y_n,\alpha_n)$, together with almost complex structures $J_n$ and $J'_n$ on $\R\times Y_n$ corresponding to $J$ and $J'$, for sufficiently large $n$ as above.  Unraveling the definition of $\Phi^L_{J,J'}$, we wish to show that the canonical isomorphism
\begin{equation}
\label{eq:yn-change-of-j}
\ech^L(Y_n,\alpha_n,J_n) \isomto \ech^L(Y_n,\alpha_n,J'_n)
\end{equation}
of \cite[Theorem 1.3]{ht2} carries $[\emptyset]$ to $[\emptyset]$.  This claim will follow from the fact that the ECH cobordism maps induced by exact symplectic cobordisms do not depend on almost complex structures.

Fix $\epsilon > 0$ small enough to ensure that $(M,\Gamma,\alpha)$ has no ECH generators whose $\alpha_n$--action lies in the interval $[L,e^{\epsilon}L]$, and consider the product cobordism $(X,\lambda) = ([-\epsilon,0] \times Y_n, e^s \alpha_n)$.  The fact that the cobordism maps $\Phi^L(X,\lambda)$ of \cite[Theorem 1.9]{ht2} do not depend on an almost complex structure means that we have a commutative diagram
\[ \xymatrix{
\ech^L(Y_n,\alpha_n, J_n) \ar[rr]^-{\Phi^L(X,\lambda,J)} \ar[d] &&
\ech^L(Y_n,e^{-\epsilon}\alpha_n, J_n) \ar@{=}[d] \\
\ech^L(Y_n,\alpha_n,J'_n) \ar[rr]^-{\Phi^L(X,\lambda,J')} &&
\ech^L(Y_n,e^{-\epsilon}\alpha_n, J_n)
} \]
where the leftmost vertical map is the isomorphism \eqref{eq:yn-change-of-j} and each $\Phi^L(X,\lambda)$ is determined by equipping $(X,\lambda)$ with an appropriate cobordism-admissible almost complex structure $J$ or $J'$.  Now it follows from \cite[Corollary 5.8]{ht2} that the top map $\Phi^L(X,\lambda,J)$ is a composition
\[ \ech^L(Y_n,\alpha_n,J_n) \xrightarrow{s} \ech^{e^{-\epsilon}L}(Y_n,e^{-\epsilon}\alpha_n,J_n) \xrightarrow{i^{e^{-\epsilon}L,L}_{J_n}} \ech^L(Y_n,e^{-\epsilon}\alpha_n,J_n) \]
of scaling and inclusion-induced maps, and the latter is an isomorphism since by assumption there are no ECH generators with $(e^{-\epsilon}\alpha_n)$--action in the interval $[e^{-\epsilon}L,L]$, so $\Phi^L(X,\lambda,J)$ is an isomorphism and it carries $[\emptyset]$ to $[\emptyset]$.

The holomorphic curves axiom of \cite[Theorem 1.9]{ht2} says that $\Phi^L(X,\lambda,J')$ is induced by a chain map
\[ \hat{\Phi}: \ecc^L(Y_n, \alpha_n, J'_n) \to \ecc^L(Y_n, e^{-\epsilon}\alpha_n, J_n) \]
such that for any ECH generator $\Theta \neq \emptyset$, we have $\langle \hat{\Phi}(\emptyset), \Theta\rangle = 0$ since there are no broken $J'$-holomorphic curves in the completion $\overline{X}$ from $\emptyset$ to $\Theta$; and $\langle \hat{\Phi}(\emptyset), \emptyset\rangle = 1$ since the unique broken $J'$-holomorphic curve from $\emptyset$ to $\emptyset$ in $\overline{X}$ is a union of covers of product cylinders, which is empty. In other words, $\hat{\Phi}(\emptyset) = \emptyset$, and so we have $\Phi^L(X,\lambda,J')([\emptyset])=[\emptyset]$; but now the commutativity of the above diagram implies that \eqref{eq:yn-change-of-j} sends $[\emptyset]$ to $[\emptyset]$ as well.
\end{proof}

Proposition \ref{prop:contact-class-j} tells us that there is a natural element
\[ c(\alpha) := [\emptyset] \in \ech(M,\Gamma,\alpha). \]
We will show that this contact class $c(\alpha)$ is an invariant of the underlying contact structure in Theorem \ref{thm:alpha-independence-natural}, just as Taubes \cite{taubes5} showed it to be for closed contact $3$-manifolds.

\begin{remark}
\label{rem:canonical-transitive-systems}
The canonical isomorphisms $\Phi^L_{J,J'}$ are ``canonical'' in the sense that they do not depend on the choices of $J_\calh$ and $n$ and they compose nicely, but in order to define them we had to fix an initial choice of embedding data.  In particular, the transitive systems of groups we build out of the $\Phi^L_{J,J'}$ are not necessarily canonical, since different embedding data might yield a different set of ``canonical'' isomorphisms and hence a different transitive system.  However, the groups $\ech^L(M,\Gamma,\alpha)$ and $\ech(M,\Gamma,\alpha)$ we build for any two choices of embedding data are still isomorphic to each other, since each of them is isomorphic to any individual $\ech^L(M,\Gamma,\alpha,J)$ or $\ech(M,\Gamma,\alpha,J)$.
\end{remark}


\section{Independence of the contact form}
\label{sec:alpha-independence}

\subsection{An isomorphism for isotopic contact forms}
\label{ssec:alpha-independence}

In this section we will prove that $\ech(M,\Gamma,\alpha)$ is independent of the contact form $\alpha$ up to isotopy.  We will assume from the beginning that we have fixed a choice of embedding data for $(M,\Gamma,\alpha)$ as in Definition \ref{def:embedding-data}. 

\begin{warning}
It should be understood that the ECH homology groups and homomorphisms between them that appear in this section depend \emph{a priori} on a choice of embedding data.
\end{warning}

Suppose that $(M,\Gamma)$ is a sutured contact manifold with respect to two different contact forms $\alpha_0$ and $\alpha_1$, and that the contact forms agree on a neighborhood of $\partial M$: there is a neighborhood of $R_\pm(\Gamma)$ on which the contact forms both satisfy $\alpha_i = C dt + \beta_\pm$ for some constant $C$ and Liouville forms $\beta_\pm$ on $R_\pm(\Gamma)$, and a neighborhood $U(\Gamma)$ of the sutures $\Gamma$ where $\alpha_i = Cdt + e^\tau \beta_0$ for a volume form $\beta_0$ on $\Gamma$. Suppose furthermore that there is a 1-parameter family of contact forms $\alpha_s$ interpolating between $\alpha_0$ and $\alpha_1$, and that this family is constant on that neighborhood of $\partial M$.

Given $n > 0$, we can embed $(M,\alpha_0)$ into a closed contact manifold $(Y_n,\alpha^0_n)$ following the procedure of Section \ref{sec:j-independence}.  If we replace $\alpha^0_n|_M = \alpha_0$ with $\alpha_s$, then we get another contact form $\alpha^s_n$ on $Y_n$ such that $\alpha^0_n = \alpha^s_n$ except potentially on the interior of $M$.  Then the obvious embedding $i: M \hookrightarrow Y_n$ satisfies $i^*(\alpha^s_n) = \alpha_s$ for all $s$, and for any fixed $L$ we recall that by construction any Reeb orbits in $(Y_n,\alpha^s_n)$ of action less than $L$ will lie inside $(M,\alpha_s)$ whenever $n$ is large with respect to $L$.  If $\alpha_s$ depends smoothly on $s$ then this construction can be made smooth with respect to both $s$ and $n$.

\begin{lemma}
\label{lem:cobordism-family}
Let $\alpha_s$ be a family of contact forms on $Y$, $0 \leq s \leq 1$.  Then for $R$ sufficiently large there are exact symplectic cobordisms
\[ ([0,R] \times Y, \lambda_{01}) \mathrm{\ and\ } ([R,2R]\times Y, \lambda_{10}) \]
from $(Y,\alpha_0)$ to $(Y,e^R\alpha_1)$ and from $(Y,e^R\alpha_1)$ to $(Y,e^{2R}\alpha_0)$ respectively\footnote{Our description is the opposite of the convention in \cite{ht2}, in which a 4-manifold $X$ with $\partial X = Y_2 - Y_1$ is said to be a cobordism from $Y_2$ to $Y_1$.}, so that the composite cobordism
\[ ([0,2R] \times Y, \lambda_{01} \cup \lambda_{10}) \]
is homotopic through exact symplectic cobordisms to the symplectization $([0,2R]\times Y, e^s \alpha_0)$.
\end{lemma}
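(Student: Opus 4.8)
The plan is to build $\lambda_{01}$ and $\lambda_{10}$ directly as slowly reparametrized symplectizations carried by the path $\alpha_s$, and then to produce the homotopy to $e^s\alpha_0$ by contracting the loop in the space of contact forms traced out by the composite.

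Fix, for each large $R$, a smooth $\phi=\phi_R\colon[0,R]\to[0,1]$ which is identically $0$ near $s=0$, identically $1$ near $s=R$, monotone in between, and chosen so that $\sup_s|\phi'(s)|\to 0$ as $R\to\infty$; set
\[ \lambda_{01}=e^{s}\,\alpha_{\phi(s)}\qquad\text{on } [0,R]\times Y. \]
Since $\lambda_{01}$ is a global primitive of $d\lambda_{01}$ the cobordism is automatically exact, and as $\phi$ is locally constant near the two ends we have $\lambda_{01}=e^{s}\alpha_0$ near $s=0$ and $\lambda_{01}=e^{s}\alpha_1=e^{s-R}(e^{R}\alpha_1)$ near $s=R$, so $\lambda_{01}$ carries standard symplectization collars and restricts to $\alpha_0$ and $e^{R}\alpha_1$ at the negative and positive ends. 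It remains to check that $d\lambda_{01}$ is symplectic. A direct computation gives
\[ (d\lambda_{01})^{2}=2e^{2s}\,ds\wedge\bigl(\alpha_{\phi(s)}+\phi'(s)\,\dot\alpha_{\phi(s)}\bigr)\wedge d\alpha_{\phi(s)}, \]
where $\dot\alpha_u:=\partial_u\alpha_u$ and $d$ on the right is the exterior derivative in the $Y$ directions; so it suffices to know that $(\alpha_u+c\,\dot\alpha_u)\wedge d\alpha_u>0$ pointwise on $Y$ for every $u\in[0,1]$ whenever $|c|$ is below some constant. Writing $\alpha_u\wedge d\alpha_u=h_u\,\mathrm{vol}$ and $\dot\alpha_u\wedge d\alpha_u=g_u\,\mathrm{vol}$ for a fixed volume form $\mathrm{vol}$ on $Y$, the contact condition and compactness of $\{\alpha_u\}_{u\in[0,1]}$ give uniform bounds $h_u\ge\delta>0$ and $|g_u|\le M$, so $(\alpha_u+c\,\dot\alpha_u)\wedge d\alpha_u\ge(\delta-|c|M)\,\mathrm{vol}>0$ once $|c|<\delta/M$. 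Taking $R$ large enough that $\sup|\phi'|<\delta/M$ makes $\lambda_{01}$ symplectic. One constructs $\lambda_{10}=e^{s}\,\alpha_{\psi(s)}$ on $[R,2R]\times Y$ identically, with $\psi(s):=\phi(2R-s)$ running from $1$ near $s=R$ to $0$ near $s=2R$; since $\phi$ and $\psi$ are both identically $1$ near $s=R$ the glued $1$-form $\lambda_{01}\cup\lambda_{10}$ is smooth, and it equals $e^{s}\alpha_{\Phi(s)}$ for the ``tent'' function $\Phi\colon[0,2R]\to[0,1]$ that is $\phi$ on $[0,R]$ and $\psi$ on $[R,2R]$.

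For the homotopy, note that $s\mapsto\alpha_{\Phi(s)}$ is, after reparametrization, the concatenation of the path $u\mapsto\alpha_u$ with its reverse, hence nullhomotopic rel endpoints. Explicitly one ``clips'' $\Phi$: for $r\in[0,1]$ let $\Phi_r$ equal $\phi$ on $[0,rR]$, the constant $\phi(rR)$ on $[rR,2R-rR]$, and $\psi$ on $[2R-rR,2R]$, smoothing the two corners so that the family is smooth in $(r,s)$; then $\Phi_1=\Phi$, $\Phi_0\equiv 0$, each $\Phi_r$ is identically $0$ near $s=0$ and $s=2R$, and $\sup_s|\Phi_r'(s)|\le\sup_s|\Phi'(s)|=\sup|\phi'|$ for all $r$. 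Setting $\lambda_r=e^{s}\,\alpha_{\Phi_r(s)}$ on $[0,2R]\times Y$, the computation above shows (for the same large $R$, uniformly in $r$) that each $\lambda_r$ is an exact symplectic cobordism from $(Y,\alpha_0)$ to $(Y,e^{2R}\alpha_0)$; since $\lambda_1=\lambda_{01}\cup\lambda_{10}$ and $\lambda_0=e^{s}\alpha_0$ is the symplectization, the family $\{\lambda_r\}$ is the desired homotopy through exact symplectic cobordisms with fixed ends.

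The only genuinely delicate point is the uniform positivity of $(d\lambda)^{2}$: it forces us to take $R$ large, depending on the given family $\alpha_s$, so that the reparametrizing functions vary slowly enough for the correction terms $\phi'\dot\alpha$ (resp.\ $\Phi_r'\dot\alpha$) not to overwhelm the contact-type condition. Because the bound $\sup|\Phi_r'|\le\sup|\phi'|$ is uniform in $r$, a single choice of $R$ handles the whole homotopy at once; and smoothing the corners of the clipped functions $\Phi_r$ is harmless since it can be done without increasing these suprema.
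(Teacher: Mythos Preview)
Your construction of $\lambda_{01}$ and $\lambda_{10}$ is identical to the paper's (up to the renaming $\phi\leftrightarrow\psi$): both set $\lambda=e^{s}\alpha_{\phi(s)}$ for a slow reparametrization $\phi$ and compute $(d\lambda)^2$ to reduce the symplectic condition to $\sup|\phi'|$ being small. The only difference is in the homotopy to $e^s\alpha_0$. You contract the tent function $\Phi$ by ``clipping'' it to $\Phi_r$, which forces you to smooth corners at $s=rR$ and $s=2R-rR$ and to argue that the smoothing can be done without increasing $\sup|\Phi_r'|$; the paper instead takes the simpler linear homotopy $\lambda_t=e^s\alpha_{t\Phi(s)}$, for which $\partial_s(t\Phi(s))=t\Phi'(s)$ automatically satisfies $|t\Phi'(s)|\le|\Phi'(s)|$ with no smoothing needed. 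Both homotopies are correct and require only the single lower bound on $R$ already used for $\lambda_{01}$, but the paper's version sidesteps the (admittedly routine) smoothing discussion entirely.
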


\begin{proof}
Take $R > 0$ and let $\psi: [0,R] \to [0,1]$ be smooth, nondecreasing function with $\psi(s)=0$ on a neighborhood of $s=0$ and $\psi(s)=1$ on a neighborhood of $s=R$, and consider the $1$-form
\[ \lambda_{01} = e^s \alpha_{\psi(s)} \]
on $[0,R]_s \times Y$.  We compute
\[ d\lambda_{01} = e^s \left[ds\wedge \left(\alpha_{\psi(s)} + \psi'(s)\dot{\alpha}_{\psi(s)}\right) + d\alpha_{\psi(s)}\right] \]
and so
\[ d\lambda_{01} \wedge d\lambda_{01} = 2e^{2s}\left[ ds \wedge \alpha_{\psi(s)} \wedge d\alpha_{\psi(s)} + \psi'(s)ds \wedge \dot{\alpha}_{\psi(s)} \wedge d\alpha_{\psi(s)} \right]. \]
The first term in brackets is a volume form on $[0,R]\times Y$, and by taking $R$ large we can make $\psi'(s)$ small enough to ensure that the sum is a volume form.  Hence for large $R$ we can find $\psi$ such that $([0,R] \times Y, \lambda_{01})$ is an exact symplectic cobordism from $(Y, \alpha_0)$ to $(Y, e^R\alpha_1)$.
We can extend $\psi$ to a function $\psi: [0,2R] \to [0,1]$ by symmetry, setting $\psi(R+t) = \psi(R-t)$ for $0 \leq t \leq R$, and  then let $\lambda_{10} = e^s \alpha_{\psi(s)}$ on $[R,2R] \times Y$.  If $R$ is large enough then $([R,2R] \times Y, \lambda_{10})$ will also be an exact symplectic cobordism from $e^R\alpha_1$ to $e^{2R} \alpha_0$.

If we have taken $\psi'(s)$ to be very small, then every element of the family of 1-forms $\lambda_t = e^s\alpha_{t\psi(s)}$ on $[0,2R]\times Y$ will be a Liouville form as well, and so we have a homotopy from $\lambda_0 = e^s\alpha_0$ to $\lambda_1 = \lambda_{01}\cup\lambda_{10}$.  Thus $([0,2R], \lambda_{01}\cup\lambda_{01})$ is homotopic as an exact symplectic cobordism to the symplectization $([0,2R],e^s\alpha_0)$.
\end{proof}

We can use Lemma \ref{lem:cobordism-family} to find $R>0$ and exact symplectic cobordisms
\[ X^n_{01} = ([0,R] \times Y_n, \lambda^{01}_n) \mathrm{\ and\ } X^n_{10} = ([R,2R] \times Y_n, \lambda^{10}_n) \]
from $(Y_n, \alpha^0_n)$ to $(Y_n, e^R\alpha^1_n)$ and from $(Y_n,e^R\alpha^1_n)$ to $(Y_n,e^{2R}\alpha^0_n)$ respectively, whose composition is homotopic to the symplectization $([0,2R]\times Y_n, e^s\alpha^0_n)$.  Since the families of contact forms $\alpha^s_n$ are independent of $s$ on $Y_n \ssm M$, the Liouville forms agree with $e^s\alpha^0_n$ on $[0,2R] \times \overline{Y_n \ssm M}$ and so the choice of $\psi(s)$ in the proof only matters on the interior of $M$.  This means that our choice of $R$ depends only on the family $(M,\alpha_s)$ and not on $n$, and so we can fix a universal choice of $R$ once and for all.

Now if $\alpha_0$, $e^R\alpha_1$, and $e^{2R}\alpha_0$ are $e^{2R}L$-nondegenerate, then according to \cite[Theorem 1.9]{ht2}, we have well-defined cobordism maps
\begin{align*}
\Phi^{e^{2R}L}(X^n_{01})&: \ech^{e^{2R}L}(Y_n,e^R\alpha^1_n) \to \ech^{e^{2R}L}(Y_n,\alpha^0_n) \\
\Phi^{e^{2R}L}(X^n_{10})&: \ech^{e^{2R}L}(Y_n,e^{2R}\alpha^0_n) \to \ech^{e^{2R}L}(Y_n,e^R\alpha^1_n)
\end{align*}
when $n$ is sufficiently large so that each of the contact forms involved is $e^{2R}L$-nondegenerate (as the only Reeb orbits of action less than $e^{2R}L$ are contained in the interior of $M$). If $n$ is sufficiently large with respect to $e^{2R}L$, then these $\ech$ groups can be associated canonically with certain sutured $\ech$ groups for $M$, following the discussion in Section \ref{sec:j-independence}.  Namely, given appropriate almost complex structures $J^i_{n}$ on $\R\times Y_n$ corresponding to generic tailored almost complex structures $J_i$ on $\R \times M^*$, we have
\[ \ech^{e^{2R}L}(Y_n, \alpha^i_n) \cong \ech^{e^{2R}L}(M,\Gamma,\alpha_i,J_i) \]
and, using the scaling isomorphisms of \cite[Theorem 1.3]{ht2},
\begin{align*}
\ech^{e^{2R}L}(Y_n, e^R \alpha^1_n) \cong & \ech^{e^R L}(Y_n, \alpha^1_n) \cong \ech^{e^R L}(M,\Gamma,\alpha_1,J_1) \\
\ech^{e^{2R}L}(Y_n, e^{2R}\alpha^0_n) \cong & \ech^{L}(Y_n, \alpha^0_n) \cong \ech^{L}(M,\Gamma,\alpha_0,J_0).
\end{align*}
Each of these isomorphisms carries a collection of Reeb orbits $\Theta \subset Y_n$, which lies in $M$ if $n$ is large, to its image $\Theta \subset M$.  Composing them with the maps $\Phi^{e^{2R}L}(X^n_{01})$ and $\Phi^{e^{2R}L}(X^n_{10})$ gives us homomorphisms
\begin{align*}
\Psi^{R,L}_{n,01}&: \ech^{e^{R}L}(M,\Gamma,\alpha_1,J_1) \to \ech^{e^{2R}L}(M,\Gamma,\alpha_0,J_0) \\
\Psi^{R,L}_{n,10}&: \ech^{L}(M,\Gamma,\alpha_0,J_0) \to \ech^{e^{R}L}(M,\Gamma,\alpha_1,J_1). 
\end{align*}

\begin{lemma}
\label{lem:alpha-ind-composition}
For sufficiently large $n$ with respect to $e^{2R}L$, the composition
\[ \Psi^{R,L}_{n,01} \circ \Psi^{R,L}_{n,10}: \ech^L(M,\Gamma,\alpha_0,J_0) \to \ech^{e^{2R}L}(M,\Gamma,\alpha_0,J_0) \]
is equal to the map $i^{L,e^{2R}L}$ induced by the inclusion of chain complexes.
\end{lemma}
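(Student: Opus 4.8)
The plan is to deduce the lemma from the composition and homotopy-invariance properties of the Hutchings--Taubes cobordism maps in \cite[Theorem~1.9]{ht2}, together with their computation of the cobordism map of a symplectization in \cite[Corollary~5.8]{ht2}. By construction, $\Psi^{R,L}_{n,10}$ is the cobordism map $\Phi^{e^{2R}L}(X^n_{10})$ read through the canonical identifications $\ech^{e^{2R}L}(Y_n,e^{2R}\alpha^0_n)\cong\ech^{L}(Y_n,\alpha^0_n)\cong\ech^{L}(M,\Gamma,\alpha_0,J_0)$ on the source (a scaling isomorphism followed by the identification of Section~\ref{sec:j-independence}) and $\ech^{e^{2R}L}(Y_n,e^{R}\alpha^1_n)\cong\ech^{e^{R}L}(Y_n,\alpha^1_n)\cong\ech^{e^{R}L}(M,\Gamma,\alpha_1,J_1)$ on the target, and likewise $\Psi^{R,L}_{n,01}$ is $\Phi^{e^{2R}L}(X^n_{01})$ read through the analogous identifications. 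Hence $\Psi^{R,L}_{n,01}\circ\Psi^{R,L}_{n,10}$ is, under these canonical isomorphisms, the composite $\Phi^{e^{2R}L}(X^n_{01})\circ\Phi^{e^{2R}L}(X^n_{10})\colon\ech^{e^{2R}L}(Y_n,e^{2R}\alpha^0_n)\to\ech^{e^{2R}L}(Y_n,\alpha^0_n)$, and it suffices to identify this composite, on the closed manifolds, with the scaling isomorphism followed by $i^{L,e^{2R}L}$.

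First I would take $n$ large with respect to $e^{2R}L$, both so that the canonical identifications above are valid and so that all three contact forms $\alpha^0_n$, $e^{R}\alpha^1_n$, $e^{2R}\alpha^0_n$ on $Y_n$ are $e^{2R}L$-nondegenerate. The composition axiom of \cite[Theorem~1.9]{ht2} then gives
\[
\Phi^{e^{2R}L}(X^n_{01})\circ\Phi^{e^{2R}L}(X^n_{10})=\Phi^{e^{2R}L}\!\left([0,2R]\times Y_n,\ \lambda^{01}_n\cup\lambda^{10}_n\right),
\]
the cobordism map of the composite exact symplectic cobordism from $(Y_n,\alpha^0_n)$ to $(Y_n,e^{2R}\alpha^0_n)$. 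By Lemma~\ref{lem:cobordism-family} this composite is homotopic, through exact symplectic cobordisms with fixed ends, to the symplectization $([0,2R]\times Y_n,e^s\alpha^0_n)$, so the homotopy invariance of cobordism maps in \cite[Theorem~1.9]{ht2} yields
\[
\Phi^{e^{2R}L}\!\left([0,2R]\times Y_n,\ \lambda^{01}_n\cup\lambda^{10}_n\right)=\Phi^{e^{2R}L}\!\left([0,2R]\times Y_n,\ e^s\alpha^0_n\right).
\]

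Next I would apply \cite[Corollary~5.8]{ht2}, exactly as in the proof of Proposition~\ref{prop:contact-class-j}, to identify the right-hand side with the composition of the scaling isomorphism $\ech^{e^{2R}L}(Y_n,e^{2R}\alpha^0_n)\isomto\ech^{L}(Y_n,\alpha^0_n)$ with the inclusion-induced map $i^{L,e^{2R}L}\colon\ech^{L}(Y_n,\alpha^0_n)\to\ech^{e^{2R}L}(Y_n,\alpha^0_n)$. The scaling isomorphism appearing here is precisely the one used to define the source identification for $\Psi^{R,L}_{n,10}$, and the identification of Section~\ref{sec:j-independence} is, for $n$ large, induced at the chain level by $\Theta\mapsto\Theta$ on generators (all orbits of sub-threshold action lying in $M$) and so commutes with inclusion-induced maps. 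Reading the displayed chain of equalities back through the canonical isomorphisms on source and target thus produces exactly $i^{L,e^{2R}L}\colon\ech^{L}(M,\Gamma,\alpha_0,J_0)\to\ech^{e^{2R}L}(M,\Gamma,\alpha_0,J_0)$, which is the claim.

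The main obstacle I expect is the filtration bookkeeping: verifying that $n$ can be chosen large enough that the composition axiom of \cite[Theorem~1.9]{ht2} applies at level $e^{2R}L$ to the pair $X^n_{01}$, $X^n_{10}$, and that the resulting square of canonical identifications between the closed groups $\ech^{\bullet}(Y_n,\cdot)$ and the sutured groups $\ech^{\bullet}(M,\Gamma,\alpha_i,J_i)$ commutes with the scaling and inclusion maps in play. Both points reduce, as in Section~\ref{sec:j-independence}, to the fact that for $n$ large all Reeb orbits of sub-threshold action are confined to $M$ and every identification in sight is defined on generators by $\Theta\mapsto\Theta$; granting this, the lemma follows from the displayed equalities.
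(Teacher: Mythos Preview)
Your proposal is correct and follows essentially the same route as the paper: use the composition axiom to glue $X^n_{01}$ and $X^n_{10}$, invoke the homotopy from Lemma~\ref{lem:cobordism-family} to the symplectization, apply \cite[Corollary~5.8]{ht2}, and then translate back through the scaling and $\Theta\mapsto\Theta$ identifications. The paper's proof is slightly terser about the composition step (folding it into ``using the scaling axiom''), but the logical structure is identical.
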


\begin{proof}
Using the canonical isomorphisms to various filtered $\ech$ complexes for $(Y_n,\alpha^i_n, J^i_{n})$ together with the composition and the scaling axioms of \cite[Theorem 1.9]{ht2} for $\ech$ cobordism maps, we can identify this composition with the cobordism map
\[ \Phi^{e^{2R}L}([0,2R]\times Y_n, \lambda^{01}_n\cup\lambda^{10}_n): \ech^{e^{2R}L}(Y_n,e^{2R}\alpha^0_n) \to \ech^{e^{2R}L}(Y_n,\alpha^0_n). \]
Since $\lambda^{01}_n \cup \lambda^{10}_n$ is homotopic to the symplectization $e^s \alpha^0_n$, the homotopy invariance axiom of \cite[Theorem 1.9]{ht2} says that this map is equal to 
\[ \Phi^{e^{2R}L}([0,2R]\times Y_n,  e^s\alpha^0_n): \ech^{e^{2R}L}(Y_n,e^{2R}\alpha^0_n) \to \ech^{e^{2R}L}(Y_n,\alpha^0_n) \]
which by \cite[Corollary 5.8]{ht2} is equal to the composition
\[ \ech^{e^{2R}L}(Y_n,e^{2R}\alpha^0_n) \xrightarrow{s} \ech^{L}(Y_n,\alpha^0_n) \xrightarrow{i^{L,e^{2R}L}} \ech^{e^{2R}L}(Y_n,\alpha^0_n) \]
of scaling and inclusion-induced maps.  Translating this back into a morphism from $\ech^L(M,\Gamma,\alpha_0,J_0)$ to $\ech^{e^{2R}L}(M,\Gamma,\alpha_0,J_0)$ now completes the proof.
\end{proof}

One corollary of Lemma \ref{lem:alpha-ind-composition} is that the composition $\Psi^{R,L}_{n,01} \circ \Psi^{R,L}_{n,10}$ is independent of $n$ for all sufficiently large $n$.  We claim that the individual morphisms are themselves independent of $n$:

\begin{proposition}
\label{prop:alpha-ind-psi-n}
Given $n<n'$, both of which are large with respect to $e^{2R}L$, we have
\[ \Psi^{R,L}_{n,01} = \Psi^{R,L}_{n',01}: \ech^{e^{R}L}(M,\Gamma,\alpha_1,J_1) \to \ech^{e^{2R}L}(M,\Gamma,\alpha_0,J_0) \]
and
\[ \Psi^{R,L}_{n,10} = \Psi^{R,L}_{n',10}: \ech^L(M,\Gamma,\alpha_0,J_0) \to \ech^{e^{R}L}(M,\Gamma,\alpha_1,J_1). \]
\end{proposition}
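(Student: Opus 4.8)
The plan is to follow the proof of Proposition~\ref{prop:j-independence-n-iso} very closely, transporting the diffeomorphisms used there along the product cobordisms $X^m_{01}$ and $X^m_{10}$. Recall the continuous family of diffeomorphisms $\varphi_t\colon Y_n \to Y_{(1-t)n+tn'}$ from that proof, which are the identity on $M' \supset M$ and satisfy $\varphi_0 = \mathrm{Id}_{Y_n}$. Since $\varphi_t$ fixes $M$ pointwise and every closed Reeb orbit of each $\alpha^i_m$ with action below the relevant threshold lies in $M$, the pulled-back family $\varphi_t^*(\alpha^i_{(1-t)n+tn'})$ on $Y_n$ is constant (equal to $\alpha_i$) on $M$ for $i=0,1$, hence constant near all such orbits. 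Choosing almost complex structures $J^i_m$ as in Section~\ref{sec:j-independence} and applying Lemma~\ref{lem:j-independence-homotopy} to the families $\bigl(\varphi_t^*\alpha^i_{(1-t)n+tn'},\,\varphi_t^*J^i_{(1-t)n+tn'}\bigr)$ — exactly as in Proposition~\ref{prop:j-independence-n-iso} — shows that the canonical isomorphisms of \cite[Theorem~1.3]{ht2},
\[ \ech^{*}(Y_n,\alpha^i_n, J^i_n) \isomto \ech^{*}(Y_{n'},\alpha^i_{n'}, J^i_{n'}), \]
are induced at the chain level by $\Theta \mapsto \Theta$, and that the resulting isomorphisms $\ech^{*}(Y_n,\alpha^i_n) \isomto \ech^{*}(Y_{n'},\alpha^i_{n'})$ are compatible with the canonical identifications with $\ech^{*}(M,\Gamma,\alpha_i,J_i)$ from Section~\ref{sec:j-independence}.

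Next I would extend each $\varphi_t$ to a diffeomorphism $\mathrm{Id}\times\varphi_t$ of the product cobordisms and pull back the Liouville forms. For the ``$01$'' cobordisms this produces a family
\[ \Lambda_t = (\mathrm{Id}_{[0,R]}\times\varphi_t)^*\lambda^{01}_{(1-t)n+tn'} \]
of exact symplectic cobordism forms on $[0,R]\times Y_n$ (exactness and nondegeneracy of $d\Lambda_t$ being preserved under pullback by a diffeomorphism, and $R$ having been chosen uniformly in $n$), with $\Lambda_0 = \lambda^{01}_n$ and $\Lambda_1 = (\mathrm{Id}\times\varphi_1)^*\lambda^{01}_{n'}$. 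The restriction of $\Lambda_t$ to $\{0\}\times Y_n$ is $\varphi_t^*\alpha^0_{(1-t)n+tn'}$ and to $\{R\}\times Y_n$ is $e^R\varphi_t^*\alpha^1_{(1-t)n+tn'}$, each agreeing on $M$ with the corresponding restriction of $\lambda^{01}_n$ and hence, by the previous paragraph, connected to it through a family of contact forms inducing the chain-level canonical isomorphisms. Invoking the homotopy-invariance axiom of \cite[Theorem~1.9]{ht2} together with the naturality of the cobordism maps under the diffeomorphism $\mathrm{Id}\times\varphi_1$, we conclude that $\Phi^{e^{2R}L}(X^n_{01})$ and $\Phi^{e^{2R}L}(X^{n'}_{01})$ are intertwined by the identifications of the first paragraph. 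Running the identical argument with $\mathrm{Id}_{[R,2R]}\times\varphi_t$ handles $X^n_{10}$ versus $X^{n'}_{10}$.

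Assembling this: unwinding the definitions, $\Psi^{R,L}_{n,01}$ is the composite of $\Phi^{e^{2R}L}(X^n_{01})$ with the canonical isomorphisms relating the filtered sutured ECH groups of $(M,\Gamma,\alpha_1,J_1)$ and $(M,\Gamma,\alpha_0,J_0)$ to the corresponding closed groups and with the scaling isomorphisms of \cite[Theorem~1.3]{ht2}, and similarly for $n'$. The first two paragraphs furnish a commuting ladder between these two composites, every rung of which is a chain-level $\Theta\mapsto\Theta$ identification, so the composites agree, yielding $\Psi^{R,L}_{n,01} = \Psi^{R,L}_{n',01}$; the same reasoning gives $\Psi^{R,L}_{n,10} = \Psi^{R,L}_{n',10}$.

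I expect the crux to be justifying, in the second paragraph, that moving the ends of the cobordism along the family $\Lambda_t$ changes the cobordism map only by the canonical isomorphisms of \cite[Theorem~1.3]{ht2} — that is, a version of the homotopy-invariance axiom of \cite[Theorem~1.9]{ht2} permitting the contact forms at the ends to vary within a family that is constant near the short Reeb orbits. Making this precise will likely require either attaching thin collar cobordisms to the ends of $\Lambda_t$ so as to reduce to the fixed-ends statement of homotopy invariance (the collars realizing exactly the canonical end isomorphisms), or else re-deriving, via a neck-stretching and Gromov-compactness argument in the spirit of Lemmas~\ref{lem:t-stretching} and \ref{lem:embedding}, that for $n$ large the broken holomorphic curves in the completed cobordism computing $\Phi^{e^{2R}L}(X^m_{01})$ never enter the neck $[-m,m]\times R_+(\Gamma')$ or the attached solid torus, so that the relevant chain map depends on $m$ only through data supported over $M$.
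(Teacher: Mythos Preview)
Your overall strategy is exactly the paper's: pull back along the family $\varphi_t$ of diffeomorphisms from Proposition~\ref{prop:j-independence-n-iso}, then compare the two cobordism maps via a homotopy of Liouville forms.  You have also correctly located the one real issue, namely that the homotopy~$\Lambda_t$ does not have fixed ends, so the homotopy-invariance axiom of \cite[Theorem~1.9]{ht2} does not apply directly.

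The paper resolves this precisely by your option~(a): it attaches thin collars at the two ends.  Concretely, one first uses compactness of $[n,n']$ to reduce to the case where $n'$ lies in an arbitrarily small neighborhood of~$n$ (a step you omit, but which is needed so that the interpolating $1$-forms below are actually Liouville).  Then, choosing $\epsilon>0$ so that $\lambda^{01}_n$ is a product on $[0,\epsilon]\times Y_n$ and on $[R-\epsilon,R]\times Y_n$, one builds Liouville forms
\[
\lambda_0 = e^s\alpha^0_{\phi(s)} \text{ on } [0,\epsilon]\times Y_n,\qquad
\lambda_1 = e^s\alpha^1_{\phi(R-s)} \text{ on } [R-\epsilon,R]\times Y_n,
\]
for a suitable $\phi:[0,\epsilon]\to[n,n']$.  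The key point is that the cobordism maps $\Phi^{e^{2R}L}(\lambda_0)$ and $\Phi^{e^{2R}L}(\lambda_1)$ are identified, via \cite[Lemma~5.6 and Corollary~5.8]{ht2}, with the canonical end isomorphisms $\Psi^{e^{2R}L}_{n,n'}$ composed with scaling/inclusion maps.  The composition axiom then writes $\Phi^{e^{2R}L}(X^n_{01})$ as a three-fold composite whose middle piece is $\Phi^{e^{2R}L}\bigl([\epsilon,R-\epsilon]\times Y_n,\lambda^{01}_{n'}\bigr)$, and this middle piece is common to both $n$ and $n'$; the outer pieces supply exactly the $\Psi_{n,n'}$ needed to make the diagram commute.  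This is the argument of \cite[Lemma~6.5]{ht2} adapted to the present setting.

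Your option~(b) is not pursued: the cobordism maps of \cite{ht2} are defined via Seiberg--Witten theory rather than by counting holomorphic curves, so a Gromov-compactness argument alone would not suffice (cf.\ the discussion around Conjecture~\ref{conj:hm-curves-diagram}).
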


\begin{proof}
We focus on $\Psi^{R,L}_{n,01} = \Psi^{R,L}_{n',01}$, following the strategy of \cite[Lemma 6.5]{ht2}; the other case is proved identically.  Moreover, we can identify a smooth family of diffeomorphisms $Y_n \isomto Y_m$ for all $m \geq n$ by identifying the copies of $M$ inside each $Y_m$ and identifying the neck $[-n,n]_t \times R_+(\Gamma')$ in $Y_n$ with the corresponding necks in each $Y_m$ via a smooth family of diffeomorphisms $[-n,n] \isomto [-m,m]$; having done so, we will abuse notation and write $\alpha^s_m$ to mean the pullback of $\alpha^s_m$ to $Y_n$, and likewise for the corresponding Liouville forms $\lambda^{01}_m$.  Unraveling the definitions of the maps, we now wish to prove that the diagram
\[ \xymatrix{
\ech^{e^{2R}L}(Y_n, e^{R}\alpha^1_n) \ar[rr]^{\Phi^{e^{2R}L}(X^n_{01})} \ar[d]_{\Psi^{e^{2R}L}_{n,n'}} &&
\ech^{e^{2R}L}(Y_n,\alpha^0_n) \ar[d]^{\Psi^{e^{2R}L}_{n,n'}} \\
\ech^{e^{2R}L}(Y_n, e^{R}\alpha^1_{n'}) \ar[rr]^{\Phi^{e^{2R}L}(X^{n'}_{01})} &&
\ech^{e^{2R}L}(Y_n,\alpha^0_{n'})
} \]
commutes, where the maps $\Psi^{e^{2R}L}_{n,n'}$ are the isomorphisms defined in Section \ref{sec:j-independence} which identify a collection of Reeb orbits $\Theta \subset Y_n$ with its image in $Y_{n'}$.  Using the compactness of $[n,n']$, it suffices to prove this for all $n'$ in some open neighborhood of $n$.

Take $\epsilon > 0$ small enough that the Liouville form $\lambda^{01}_n$ is equal to $e^s\alpha^0_n$ on $[0,\epsilon] \times Y_n$ and to $e^s\alpha^1_n$ on $[R-\epsilon,R]\times Y_n$ (in which case the analogous statement holds for $\lambda^{01}_{n'}$).  Assume without loss of generality that $n < n'$.  If $n'$ is sufficiently close to $n$, then we may fix a smooth, nondecreasing function $\phi:[0,\epsilon] \to [n,n']$ with $\phi(s)=n$ near $s=0$ and $\phi(s)=n'$ near $s=\epsilon$ such that the 1-forms
\[ \begin{array}{ll}
\lambda_0 = e^s\alpha^0_{\phi(s)} &\mathrm{\ on\ } [0,\epsilon] \times Y_n \\
\lambda_1 = e^s\alpha^1_{\phi(R-s)} &\mathrm{\ on\ } [R-\epsilon, R] \times Y_n
\end{array} \]
are both Liouville forms.  The induced cobordism map
\[ \Phi^{e^{2R}L}([0,\epsilon]\times Y_n,\lambda_0) : \ech^{e^{2R}L}(Y_n, e^\epsilon\alpha^0_{n'}) \to \ech^{e^{2R}L}(Y_n, \alpha^0_n) \]
fits into a commutative diagram
\[ \xymatrix{
\ech^{e^{2R}L}(Y_n, e^{\epsilon}\alpha^0_{n'}) \ar[rr]^-{\Phi^{e^{2R}L}(\lambda_0)} \ar[d]_s &&
\ech^{e^{2R}L}(Y_n, \alpha^0_n) \ar[d]^{\Psi^{e^{2R}L}_{n,n'}} \\
\ech^{e^{2R-\epsilon}L}(Y_n, \alpha^0_{n'}) \ar[rr]^-{i^{e^{2R-\epsilon}L,e^{2R}L}} &&
\ech^{e^{2R}L}(Y_n, \alpha^0_{n'})
} \]
by applying \cite[Lemma 5.6]{ht2} as in the proof of \cite[Lemma 6.5]{ht2}.  We know that the composition $i^{e^{2R-\epsilon}L,e^{2R}L} \circ s$ is equal to $\Phi^{e^{2R}L}([0,\epsilon] \times Y_n, e^s \alpha^0_{n'})$ by \cite[Corollary 5.8]{ht2}, hence
\begin{equation}
\label{eq:alpha-ind-psi-n-1}
\Phi^{e^{2R}L}([0,\epsilon] \times Y_n, e^s \alpha^0_{n'}) = \Psi^{e^{2R}L}_{n,n'} \circ \Phi^{e^{2R}L}([0,\epsilon] \times Y_n, \lambda_0).
\end{equation}
Applying the same arguments to the cobordism map
\[ \Phi^{e^{2R}L}([R-\epsilon,R]\times Y_n,\lambda_1) : \ech^{e^{2R}L}(Y_n, e^R\alpha^1_n) \to \ech^{e^{2R}L}(Y_n, e^{R-\epsilon}\alpha^1_{n'}), \]
we conclude that the diagram
\[ \xymatrix{
\ech^{e^{2R}L}(Y_n, e^{R}\alpha^1_{n'}) \ar[rr]^-{(\Psi^{e^{2R}L}_{n,n'})^{-1}} \ar[d]_{s} &&
\ech^{e^{2R}L}(Y_n, e^{R}\alpha^1_n) \ar[d]^{\Phi^{e^{2R}L}(\lambda_1)} \\
\ech^{e^{2R-\epsilon}L}(Y_n, e^{R-\epsilon}\alpha^1_{n'}) \ar[rr]^-{i^{e^{2R-\epsilon}L,e^{2R}L}} &&
\ech^{e^{2R}L}(Y_n, e^{R-\epsilon}\alpha^1_{n'})
} \]
commutes, and hence that
\begin{equation}
\label{eq:alpha-ind-psi-n-2}
 \Phi^{e^{2R}L}([R-\epsilon,R]\times Y_n, \lambda_1) = \Phi^{e^{2R}L}([R-\epsilon,R]\times Y_n, e^s \alpha^1_{n'}) \circ \Psi^{e^{2R}L}_{n,n'}.
\end{equation}

Now the composition axiom of \cite[Theorem 1.9]{ht2} says that the composition
\[ \xymatrix @R=10pt {
\llap{$\ech^{e^{2R}L}(Y_n, e^R \alpha^1_n)$}
\ar[rrr]^-{\Phi^{e^{2R}L}(\lambda_1)} 
&&& \rlap{$\ech^{e^{2R}L}(Y_n, e^{R-\epsilon} \alpha^1_{n'})$} \\
\ar[rrr]^-{\Phi^{e^{2R}L}([\epsilon,R-\epsilon]\times Y_n, \lambda^{n'}_{10})}
&&& \rlap{$\ech^{e^{2R}L}(Y_n, e^\epsilon \alpha^0_{n'})$} \\
\ar[rrr]^-{\Phi^{e^{2R}L}(\lambda_0)}
&&& \rlap{$\ech^{e^{2R}L}(Y_n, \alpha^0_n)$}
} \]
is equal to the cobordism map
induced by the union $\lambda_0 \cup (\lambda^{n'}_{01}|_{[\epsilon,R-\epsilon]\times Y_n}) \cup \lambda_1$, which is homotopic to $\lambda^n_{01}$ by a homotopy stationary on $[0,R]\times M$ and on $\{0,R\}\times Y_n$.  Hence by the homotopy axiom of \cite[Theorem 1.9]{ht2}, the composition is in fact equal to the map $\Phi^{e^{2R}L}(X^n_{01})$.  Using equations \eqref{eq:alpha-ind-psi-n-1} and \eqref{eq:alpha-ind-psi-n-2}, we have a commutative diagram
\[ \xymatrix{
\ech^{e^{2R}L}(Y_n,e^R\alpha^1_n) \ar[d]_{\Phi^{e^{2R}L}(\lambda_1)} \ar[rr]^-{\Psi^{e^{2R}L}_{n,n'}} &&
\ech^{e^{2R}L}(Y_n, e^R\alpha^1_{n'}) \ar[d]^{\Phi^{e^{2R}L}(e^s\alpha^1_{n'})} \\
\ech^{e^{2R}L}(Y_n,e^{R-\epsilon}\alpha^1_{n'}) \ar[d]_{\Phi^{e^{2R}L}(\lambda_{01}^{n'})} \ar[rr]^{\mathrm{Id}} &&
\ech^{e^{2R}L}(Y_n, e^{R-\epsilon}\alpha^1_{n'}) \ar[d]^{\Phi^{e^{2R}L}(\lambda_{01}^{n'})} \\
\ech^{e^{2R}L}(Y_n,e^{\epsilon}\alpha^0_{n'}) \ar[d]_{\Phi^{e^{2R}L}(\lambda_0)} \ar[rr]^{\mathrm{Id}} &&
\ech^{e^{2R}L}(Y_n, e^{\epsilon}\alpha^0_{n'}) \ar[d]^{\Phi^{e^{2R}L}(e^s\alpha^0_{n'})} \\
\ech^{e^{2R}L}(Y_n,\alpha^0_n) \ar[rr]^-{\Psi^{e^{2R}L}_{n,n'}} &&
\ech^{e^{2R}L}(Y_n,\alpha^0_{n'})
} \]
such that the compositions along the left and right columns are $\Phi^{e^{2R}L}(X^n_{01})$ and $\Phi^{e^{2R}L}(X^{n'}_{01})$ respectively; the latter claim follows from the fact that the composition of the three cobordisms on the right is exactly the cobordism $([0,R]\times Y_n, \lambda^{n'}_{01})$.  The outermost square is then exactly the commutative diagram we wanted, so the proof is complete.
\end{proof}

Thus we can drop the $n$ from the subscript and speak of well-defined maps $\Psi^{R,L}_{10}$ and $\Psi^{R,L}_{01}$.  In fact, it now follows that these maps do not depend on $J_0$ and $J_1$: given another generic tailored almost complex structure $J'_0$ on $\R \times (M)^*$, and fixing $L$, we can choose $n$ sufficiently large for both $J_0$ and $J'_0$ so that the rows of the commutative diagram
\[ \xymatrix{
\ech^L(M,\Gamma,\alpha_0,J_0) \ar[r]^-{\Phi^L_{n}} \ar[d]_{\Phi^L_{J_0,J'_0}} &
\ech^L(Y_n,\alpha^0_n) \ar[r]^-\phi \ar@{=}[d] &
\ech^{e^{R}L}(M,\Gamma,\alpha_1,J_1) \ar@{=}[d] \\
\ech^L(M,\Gamma,\alpha_0,J'_0) \ar[r]^-{(\Phi')^L_{n}} &
\ech^L(Y_n,\alpha^0_n) \ar[r]^-\phi &
\ech^{e^{R}L}(M,\Gamma,\alpha_1,J_1),
} \]
in which $\phi$ denotes the composition
\[
\ech^L(Y_n,\alpha^0_n) \xrightarrow{s\circ\Phi^{e^{2R}L}(X^n_{10})\circ s}
\ech^{e^{R}L}(Y_n, \alpha^1_n) \xrightarrow{(\Phi^{e^{R}L}_{n})^{-1}}
\ech^{e^{R}L}(M,\Gamma,\alpha_1,J_1)
\]
for some appropriate scaling maps $s$, define the maps $\Psi^{R,L}_{10}$ for $J_0$ and $J'_0$ respectively.  Note that the commutativity of the leftmost square follows from the definition of $\Phi^L_{J_0,J'_0}$ in Section~\ref{sec:j-independence}.  By the same argument we can show that given $J_1$ and $J'_1$ for $(M,\Gamma,\alpha_1)$, the corresponding maps $\Psi^{R,L}_{10}$ differ by post-composing with $\Phi^{e^{R}L}_{J_1,J'_1}$.  In this sense the maps $\Psi^{R,L}_{10}$ do not depend on either $J_0$ or $J_1$, and so we have well-defined map
\[ \Psi^{R,L}_{10}: \ech^L(M,\Gamma,\alpha_0) \to \ech^{e^{R}L}(M,\Gamma,\alpha_1) \]
independently of the almost complex structures involved, and likewise for $\Psi^{R,L}_{01}$.

For any fixed $L' > L$, we can take $n$ to be large with respect to $e^{2R}L'$, and then the corresponding maps on filtered sutured ECH are induced by ECH cobordism maps, so the inclusion axiom of \cite[Theorem 1.9]{ht2} gives us a commutative diagram
\[ \xymatrix{
\ech^{L}(M,\Gamma,\alpha_0) \ar[r]^-{\Psi^{R,L}_{10}} \ar[d]_{i^{L,L'}} &
\ech^{e^{R}L}(M,\Gamma,\alpha_1) \ar[r]^-{\Psi^{R,L}_{01}} \ar[d]^{i^{e^{R}L,e^{R}L'}} &
\ech^{e^{2R}L}(M,\Gamma,\alpha_0) \ar[d]^{i^{e^{2R}L,e^{2R}L'}} \\
\ech^{L'}(M,\Gamma,\alpha_0) \ar[r]^-{\Psi^{R,L'}_{10}} &
\ech^{e^{R}L'}(M,\Gamma,\alpha_1) \ar[r]^-{\Psi^{R,L'}_{01}} &
\ech^{e^{2R}L'}(M,\Gamma,\alpha_0).
}\]
Taking the direct limit as $L \to \infty$, we get a pair of maps
\[ \ech(M,\Gamma,\alpha_0) \xrightarrow{F} \ech(M,\Gamma,\alpha_1) \xrightarrow{G} \ech(M,\Gamma,\alpha_0). \]

\begin{proposition}
\label{prop:alpha-ind-gf}
The composition $G\circ F$ is an isomorphism.
\end{proposition}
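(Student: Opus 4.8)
The plan is to prove the stronger statement that $G\circ F$ is the identity map on $\ech(M,\Gamma,\alpha_0)$; the proposition is then immediate. Everything will rest on Lemma~\ref{lem:alpha-ind-composition}, which identifies the composition $\Psi^{R,L}_{01}\circ\Psi^{R,L}_{10}$ at each finite filtration level with the inclusion-induced map $i^{L,e^{2R}L}$, together with the $n$-independence of the $\Psi^{R,L}$'s established in Proposition~\ref{prop:alpha-ind-psi-n} and the discussion following it.

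First I would recall that, by the commutative square preceding the proposition, the maps $\{\Psi^{R,L}_{10}\}_{L>0}$ form a morphism of directed systems from $\{\ech^L(M,\Gamma,\alpha_0)\}_L$ to $\{\ech^{e^RL}(M,\Gamma,\alpha_1)\}_L$, and the $\{\Psi^{R,L}_{01}\}_{L>0}$ form a morphism from $\{\ech^{e^RL}(M,\Gamma,\alpha_1)\}_L$ to $\{\ech^{e^{2R}L}(M,\Gamma,\alpha_0)\}_L$. Each of these ``shifted'' systems is a reindexing of the standard directed system along the order isomorphism $L\mapsto e^RL$ (respectively $L\mapsto e^{2R}L$) of $(0,\infty)$, so it still computes $\ech(M,\Gamma,\alpha_1)$ (respectively $\ech(M,\Gamma,\alpha_0)$), and its canonical map out of the term indexed by $L$ is $\iota_{e^RL}$ (respectively $\iota_{e^{2R}L}$), where I write $\iota_L\colon\ech^L(M,\Gamma,\alpha_i)\to\ech(M,\Gamma,\alpha_i)$ for the canonical map into the direct limit. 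Passing to direct limits produces precisely the maps $F$ and $G$ of the text, and unwinding the definitions gives $F\circ\iota_L=\iota_{e^RL}\circ\Psi^{R,L}_{10}$ and $G\circ\iota_{e^RL}=\iota_{e^{2R}L}\circ\Psi^{R,L}_{01}$ for every $L>0$.

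Next I would combine these two identities with Lemma~\ref{lem:alpha-ind-composition} to compute, for every $L>0$,
\[ (G\circ F)\circ\iota_L \;=\; \iota_{e^{2R}L}\circ\Psi^{R,L}_{01}\circ\Psi^{R,L}_{10} \;=\; \iota_{e^{2R}L}\circ i^{L,e^{2R}L} \;=\; \iota_L, \]
the last equality being the defining compatibility $\iota_{L'}\circ i^{L,L'}=\iota_L$ of the direct limit. Since every element of $\ech(M,\Gamma,\alpha_0)$ lies in the image of $\iota_L$ for some $L>0$, this forces $G\circ F=\mathrm{id}$, and in particular $G\circ F$ is an isomorphism.

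I do not expect a genuine obstacle here: all of the analytic input has already been packaged into Lemma~\ref{lem:alpha-ind-composition} (via the homotopy and scaling axioms of \cite[Theorem~1.9]{ht2}) and into the $n$- and $J$-independence of the maps $\Psi^{R,L}$, so what remains is the routine bookkeeping of direct limits. The only point that merits a sentence of care is the observation that reindexing a directed system along an order isomorphism of $(0,\infty)$ does not change its direct limit, which is what allows one to interpret $F$ and $G$ as honest maps between $\ech(M,\Gamma,\alpha_0)$ and $\ech(M,\Gamma,\alpha_1)$ and to identify the relevant canonical maps as $\iota_{e^RL}$ and $\iota_{e^{2R}L}$.
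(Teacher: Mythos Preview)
Your proof is correct and follows essentially the same approach as the paper: both use Lemma~\ref{lem:alpha-ind-composition} to identify $\Psi^{R,L}_{01}\circ\Psi^{R,L}_{10}$ with $i^{L,e^{2R}L}$ and then pass to the direct limit to conclude that $G\circ F$ is the identity. The paper phrases this via a commutative square of inclusion maps rather than via the canonical maps $\iota_L$, but the content is the same.
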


\begin{proof}
Lemma \ref{lem:alpha-ind-composition} tells us that $G\circ F$ is a direct limit of morphisms corresponding to the rows of the commutative diagram
\[ \xymatrix{
\ech^L(M,\Gamma,\alpha_0) \ar[rr]^-{i^{L,e^{2R}L}} \ar[d]_{i^{L,L'}} &&
\ech^{e^{2R}L}(M,\Gamma,\alpha_0) \ar[d]^{i^{e^{2R}L,e^{2R}L'}} \\
\ech^{L'}(M,\Gamma,\alpha_0) \ar[rr]^-{i^{L',e^{2R}L'}} &&
\ech^{e^{2R}L'}(M,\Gamma,\alpha_0) \\
} \]
as $L \to \infty$.  Since the maps $i^{L,e^{2R}L}$ are the same inclusion-induced maps as the ones used to define the directed system $(\{\ech^L(M,\Gamma,\alpha_0)\}_L, \{i^{L,L'}\}_{L,L'})$, it follows immediately that $G\circ F$ is in fact the identity map.
\end{proof}

\begin{theorem}
\label{thm:alpha-independence}
Let $(M,\Gamma)$ be a sutured contact manifold with respect to two different nondegenerate contact forms $\alpha_0$ and $\alpha_1$ which agree on a neighborhood of $\partial M$, and suppose that $\alpha_0$ is isotopic to $\alpha_1$ through a family of contact forms which is constant on a neighborhood of $\partial M$.  Then there is an isomorphism
\[ \ech(M,\Gamma,\alpha_0) \isomto \ech(M,\Gamma,\alpha_1) \]
which a priori depends on the choice of embedding data, and which carries $c(\alpha_0)$ to $c(\alpha_1)$.
\end{theorem}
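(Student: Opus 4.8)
The plan is to upgrade Proposition~\ref{prop:alpha-ind-gf} to the statement that $F$ is an isomorphism by producing a right inverse for it, and then to check the behavior of the contact class. Fix once and for all a choice of embedding data as in Definition~\ref{def:embedding-data}; since $\alpha_0$ and $\alpha_1$ agree on a neighborhood of $\partial M$ it serves both contact forms at once, and we will take direct limits over a cofinal sequence of levels $L$ for which neither $\alpha_0$ nor $\alpha_1$ has an ECH generator of action exactly $L$. Proposition~\ref{prop:alpha-ind-gf} already supplies maps $F\colon \ech(M,\Gamma,\alpha_0)\to\ech(M,\Gamma,\alpha_1)$ and $G\colon \ech(M,\Gamma,\alpha_1)\to\ech(M,\Gamma,\alpha_0)$ with $G\circ F=\mathrm{id}$, and these do not depend on the auxiliary almost complex structures by the discussion following Proposition~\ref{prop:alpha-ind-psi-n}.

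The first step is to run the construction of Section~\ref{ssec:alpha-independence} verbatim for the reversed isotopy $s\mapsto\alpha_{1-s}$, that is, with the roles of $\alpha_0$ and $\alpha_1$ exchanged, using the same embedding data. This produces maps $F^{\mathrm{rev}}\colon\ech(M,\Gamma,\alpha_1)\to\ech(M,\Gamma,\alpha_0)$ and $G^{\mathrm{rev}}\colon\ech(M,\Gamma,\alpha_0)\to\ech(M,\Gamma,\alpha_1)$, and Proposition~\ref{prop:alpha-ind-gf} applied to the reversed data gives $G^{\mathrm{rev}}\circ F^{\mathrm{rev}}=\mathrm{id}$ on $\ech(M,\Gamma,\alpha_1)$. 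The second step is to identify these with the maps we already have: $F^{\mathrm{rev}}=G$ and $G^{\mathrm{rev}}=F$. The point is that each of $F$, $G$, $F^{\mathrm{rev}}$, $G^{\mathrm{rev}}$ is, in the direct limit, a composite of the canonical isomorphisms of Section~\ref{sec:j-independence}, scaling isomorphisms, and an ECH cobordism map of \cite[Theorem~1.9]{ht2} attached to one of the two pieces $X^n_{01}$, $X^n_{10}$ of the composite cobordism built in Lemma~\ref{lem:cobordism-family}. Unwinding the definitions, the exact symplectic cobordism underlying $G^{\mathrm{rev}}$ and the cobordism $X^n_{10}$ underlying $F$ are built from the same one-parameter family of contact forms on $Y_n$ traversed in the same direction; they differ only by an overall scaling by $e^R$ and by the choice of the monotone cutoff function, hence become homotopic through exact symplectic cobordisms after rescaling. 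The scaling and homotopy-invariance axioms of \cite[Theorem~1.9]{ht2}, together with the fact that the relevant scaling identifications are already incorporated into the definitions of $F$ and $G^{\mathrm{rev}}$, then give $F=G^{\mathrm{rev}}$, and symmetrically $G=F^{\mathrm{rev}}$. Consequently
\[
F\circ G \;=\; G^{\mathrm{rev}}\circ F^{\mathrm{rev}} \;=\; \mathrm{id}_{\ech(M,\Gamma,\alpha_1)},
\]
so $F$ is an isomorphism with inverse $G$. (A single one of the two identifications would in fact suffice: if $G=F^{\mathrm{rev}}$ then $G^{\mathrm{rev}}\circ G=\mathrm{id}$, so $G$ is injective, and this together with $G\circ F=\mathrm{id}$ forces $F\circ G=\mathrm{id}$.)

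It remains to see that $F$ carries the contact class $c(\alpha_0)=[\emptyset]$ to $c(\alpha_1)=[\emptyset]$. By construction $F$ is the direct limit of the maps $\Psi^{R,L}_{10}$, and each such map is a composite of the canonical isomorphisms $\Phi^{L,J}_n$ and their inverses, of scaling isomorphisms, and of an ECH cobordism map $\Phi^{e^{2R}L}(X^n_{10})$. The canonical isomorphisms preserve $[\emptyset]$ by Proposition~\ref{prop:contact-class-j}; the scaling maps act as the identity on generators, so they fix $[\emptyset]$; and the cobordism map sends $[\emptyset]$ to $[\emptyset]$ by the holomorphic curves axiom of \cite[Theorem~1.9]{ht2}, because the only broken holomorphic current in the completed cobordism with empty positive end is the empty one, exactly as in the proof of Proposition~\ref{prop:contact-class-j}. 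Hence $F(c(\alpha_0))=c(\alpha_1)$, completing the proof.

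The step I expect to be the main obstacle is the identification $F=G^{\mathrm{rev}}$ (and $G=F^{\mathrm{rev}}$): one has to keep careful track of the direction in which the family $\alpha_s$ is traversed, of the placement of the $e^{kR}$ scaling factors --- recall that with the conventions in use here the ECH cobordism maps run opposite to the Liouville direction --- and of the homotopies between the various monotone cutoff functions, in order to be certain that the cobordisms arising from the reversed construction genuinely agree with $X^n_{01}$ and $X^n_{10}$ up to rescaling and homotopy, so that the axioms of \cite[Theorem~1.9]{ht2} can be invoked. Everything else is either already established in the preceding sections or follows formally from the cited axioms.
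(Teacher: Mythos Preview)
Your proposal is correct and follows essentially the same route as the paper: reverse the isotopy, observe that the resulting pair of maps coincides with $(G,F)$, and conclude from Proposition~\ref{prop:alpha-ind-gf} that $F\circ G=\mathrm{id}$ as well. The paper simply asserts the identification $F^{\mathrm{rev}}=G$, $G^{\mathrm{rev}}=F$ as ``straightforward'' where you spell out the scaling and homotopy-invariance argument, and it cites \cite[Remark~1.11]{ht2} for the contact class where you invoke the holomorphic curves axiom directly; both differences are cosmetic.
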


\begin{proof}
The composition $\ech(M,\Gamma,\alpha_0) \xrightarrow{F} \ech(M,\Gamma,\alpha_1) \xrightarrow{G} \ech(M,\Gamma,\alpha_0)$, which was constructed using the isotopy $\alpha_s$ from $\alpha_0$ to $\alpha_s$, is an isomorphism by Proposition \ref{prop:alpha-ind-gf}.  On the other hand, we could repeat its construction using the isotopy $\alpha_{1-s}$ from $\alpha_1$ to $\alpha_0$ instead, and it is straightforward to see that the corresponding isomorphism we would construct is
\[ \ech(M,\Gamma,\alpha_1) \xrightarrow{G} \ech(M,\Gamma,\alpha_0) \xrightarrow{F} \ech(M,\Gamma,\alpha_1). \]
Therefore $F$ and $G$ are inverses, and $F$ is the desired isomorphism.

It remains to be shown that $F(c(\alpha_0)) = c(\alpha_1)$, where we recall that each $c(\alpha_i)$ is the homology class of the empty set of Reeb orbits.  We observe that each of the maps $\Psi^{R,L}_{10}: \ech^L(M,\Gamma,\alpha_0) \to \ech^{e^{R}L}(M,\Gamma,\alpha_1)$ carries $[\emptyset]$ to $[\emptyset]$, which is a consequence of the fact that each cobordism map $\Phi^{e^{2R}L}(X^n_{10})$ does by \cite[Remark 1.11]{ht2}.  Since $F$ is the direct limit of these maps, it follows that $F([\emptyset]) = [\emptyset]$ as desired.
\end{proof}

We can now complete the proof of Theorem \ref{thm:intro-alpha-isomorphism}.

\begin{corollary}
\label{cor:alpha-independence-general}
Let $(M,\Gamma)$ be a sutured contact manifold with respect to two different contact forms $\alpha_1$ and $\alpha_2$ such that $\xi_1 = \ker(\alpha_1)$ and $\xi_2 = \ker(\alpha_2)$ are isotopic rel $N(\Gamma) = \overline{\partial M\ssm(R_+(\Gamma)\cup R_-(\Gamma))}$.  Then $\ech(M,\Gamma,\alpha_1) \cong \ech(M,\Gamma,\alpha_2)$ by an isomorphism carrying $c(\alpha_1)$ to $c(\alpha_2)$.  In other words, sutured ECH depends up to isomorphism only on $(M,\Gamma)$ and the underlying contact structure up to isotopy rel $N(\Gamma)$.
\end{corollary}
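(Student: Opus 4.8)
The plan is to deform $\alpha_1$ and $\alpha_2$, without changing the associated sutured ECH, into a situation governed by Theorem~\ref{thm:alpha-independence}, whose hypotheses require the two contact forms to agree on a neighborhood of $\partial M$ and to be joined by a path of contact forms that is constant there.

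\emph{Boundary normalization.} Both $\alpha_i$ are adapted to $(M,\Gamma)$, so on $U(\Gamma)$ they have the form $C_i\,dt+e^\tau\beta_{0,i}$ and on collars of $R_\pm(\Gamma)$ they have the form $C_i\,dt+\beta_\pm^{(i)}$ for Liouville forms $\beta_\pm^{(i)}$ on $R_\pm(\Gamma)$. Using Lemma~\ref{lem:scaling} to rescale the constant of whichever form has the smaller one, I would first arrange $C_1=C_2=:C$; since $\ker(\alpha_1)$ and $\ker(\alpha_2)$ agree along $N(\Gamma)$ (after a preliminary normal-form adjustment near $\Gamma$ if the isotopy hypothesis is only pointwise), this forces $\beta_{0,1}=\beta_{0,2}$ and hence $\alpha_1=\alpha_2$ on $U(\Gamma)$. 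Next I would make them agree near $R_\pm(\Gamma)$: the forms $\beta_+^{(1)}$ and $\beta_+^{(2)}$ are Liouville forms on the \emph{surface} $R_+(\Gamma)$ which already agree near $\partial R_+(\Gamma)$, and the space of such Liouville forms is convex, since a convex combination of two area forms inducing the same orientation is again an area form. A Moser argument exactly as in the proof of Lemma~\ref{lem:glue-liouville} then yields a diffeomorphism of $R_+(\Gamma)$, isotopic to the identity rel boundary, carrying $\beta_+^{(2)}$ to $\beta_+^{(1)}$, and likewise for $R_-(\Gamma)$; extending these to an ambient isotopy of $M$ supported near $R_\pm(\Gamma)$ and pulling back $\alpha_2$ produces an adapted contact form, still called $\alpha_2$, which now agrees with $\alpha_1$ on a neighborhood of $\partial M$ while $\ker(\alpha_2)$ is still isotopic to $\ker(\alpha_1)$ rel $N(\Gamma)$. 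Each of these moves is either the canonical isomorphism of Lemma~\ref{lem:scaling} or arises from an ambient isotopy of $M$ supported near $\partial M$, and hence leaves the completion $(M^*,\alpha^*)$---and with it $\ech$, its $H_1(M)$-decomposition, and its contact class---unchanged.

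\emph{Gray stability and conclusion.} Applying Gray's stability theorem to the isotopy from $\ker(\alpha_1)$ to $\ker(\alpha_2)$ rel $N(\Gamma)$ produces a diffeomorphism $\phi$ of $M$, isotopic to the identity rel $N(\Gamma)$, with $\phi^*\ker(\alpha_2)=\ker(\alpha_1)$; thus $\phi^*\alpha_2=h\,\alpha_1$ for a positive function $h$ equal to $1$ near $N(\Gamma)$, and after a further Moser correction near $R_\pm(\Gamma)$ as above one can take $h\equiv 1$ on a neighborhood of $\partial M$. Since $\phi$ preserves $\Gamma$ and $R_\pm(\Gamma)$ and is isotopic to the identity---hence acts trivially on $H_1(M)$---pulling back by $\phi$, together with a tailored almost complex structure, gives a tautological isomorphism $\ech(M,\Gamma,\alpha_2)\cong\ech(M,\Gamma,\phi^*\alpha_2)=\ech(M,\Gamma,h\,\alpha_1)$ which respects the $H_1(M)$-decomposition and carries $c(\alpha_2)$ to $c(h\,\alpha_1)$. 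Now $\alpha_1$ and $h\,\alpha_1$ share the kernel $\ker(\alpha_1)$, agree on a neighborhood of $\partial M$ where $h\equiv 1$, and are joined by the path $h^s\alpha_1$, $0\le s\le1$, of adapted contact forms, all with kernel $\ker(\alpha_1)$ and all equal to $\alpha_1$ near $\partial M$. Theorem~\ref{thm:alpha-independence} therefore provides an isomorphism $\ech(M,\Gamma,\alpha_1)\isomto\ech(M,\Gamma,h\,\alpha_1)$ carrying $c(\alpha_1)$ to $c(h\,\alpha_1)$; composing it with the isomorphism just described, and with the moves of the boundary normalization, yields the desired isomorphism $\ech(M,\Gamma,\alpha_1)\cong\ech(M,\Gamma,\alpha_2)$. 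Each constituent isomorphism sends the class of the empty orbit set to itself, so the composite carries $c(\alpha_1)$ to $c(\alpha_2)$, and together with Theorem~\ref{thm:alpha-independence} this completes the proof of Theorem~\ref{thm:intro-alpha-isomorphism}.

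\emph{The main obstacle.} The one substantive point is the boundary normalization: one must verify that every intermediate object is genuinely an \emph{adapted} sutured contact form and that the Liouville forms on $R_+(\Gamma)$ and $R_-(\Gamma)$ carried by $\alpha_1$ and by $\phi^*\alpha_2$ can be matched up without touching the region of $M$ where the Reeb orbits and holomorphic curves defining $\ech$ live. This reduces to the convexity of the space of Liouville forms on a surface with prescribed behavior near $\Gamma$, so that the Moser scheme of Lemma~\ref{lem:glue-liouville} applies, together with the repeated use of the principle---already exploited in Lemmas~\ref{lem:scaling} and~\ref{lem:glue-liouville}---that an ambient isotopy of $M$ supported near $\partial M$, or the rescaling of Lemma~\ref{lem:scaling}, leaves $(M^*,\alpha^*)$, and hence $\ech$ with all of its structure, unchanged, while a diffeomorphism of the sutured manifold $(M,\Gamma)$ that is isotopic to the identity induces by pullback an isomorphism of $\ech$ respecting the $H_1(M)$-decomposition and the contact class. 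Once the hypotheses of Theorem~\ref{thm:alpha-independence} are in force, the rest of the argument is formal.
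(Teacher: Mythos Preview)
Your argument is correct and follows the same overall strategy as the paper: normalize both contact forms to agree near $\partial M$ (equalize the constants $C_i$, deduce $\beta_0^1=\beta_0^2$ from the agreement of the contact planes along $N(\Gamma)$, then match the Liouville forms on $R_\pm(\Gamma)$ via the Moser argument of Lemma~\ref{lem:glue-liouville}), and then feed everything into Theorem~\ref{thm:alpha-independence}.

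The one genuine divergence is your Gray-stability step. The paper skips it entirely: once $\alpha_1$ and $\alpha_2$ agree near $\partial M$, it simply realizes the isotopy $\xi_s$ by a family of adapted contact forms $\alpha_s$ which are constant near $\partial M$ and applies Theorem~\ref{thm:alpha-independence} directly to that family. Your route instead produces a diffeomorphism $\phi$ with $\phi^*\alpha_2=h\alpha_1$, invokes a pullback isomorphism $\ech(M,\Gamma,\alpha_2)\cong\ech(M,\Gamma,\phi^*\alpha_2)$, and then applies Theorem~\ref{thm:alpha-independence} only along the constant-kernel path $h^s\alpha_1$. This works, but it buys you nothing and costs you some bookkeeping: you must check that the Gray flow (whose generating vector field vanishes only where $\xi_s$ is constant, i.e.\ along $N(\Gamma)$, not along $R_\pm(\Gamma)$) actually yields a diffeomorphism of the sutured manifold, that $\phi^*\alpha_2$ is again adapted, and that the pullback isomorphism respects the ECH structures---all issues the paper's more direct path avoids. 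Also, the paper equalizes the $C_i$ by simply multiplying one $\alpha_i$ by a positive constant (which changes neither $\xi_i$ nor the ECH), rather than invoking Lemma~\ref{lem:scaling}; this is a little cleaner since Lemma~\ref{lem:scaling} literally changes the underlying manifold $M$.
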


\begin{proof}
Suppose the given contact forms can be written $\alpha_i = C_i dt + \beta^i_\pm$ near $R_\pm(\Gamma)$ and $\alpha_i = C_i dt + e^\tau \beta_0^i$ on $U(\Gamma)$.  Since both ECH and the contact structures $\xi_i$ are unchanged when we multiply either $\alpha_i$ by some positive constant, we may assume that $C_1 = C_2 = C$ for some $C > 0$.  Then the contact forms restrict to the region $\{\tau = 0\} \subset U(\Gamma)$ as $Cdt + \beta_0^i$, and since the contact structures are identical here we must have $\beta^1_0 = \beta^2_0$, hence $\alpha_1|_{U(\Gamma)} = \alpha_2|_{U(\Gamma)}$.  Finally, by following the proof of Lemma \ref{lem:glue-liouville} we may perturb $(M,\alpha_2)$ in a neighborhood of $R_\pm(\Gamma)$ without changing the completion $(M^*,\alpha_2^*)$ in order to assume that $\beta_\pm^1 = \beta_\pm^2$.

We have reduced the problem to the case where $\alpha_1$ and $\alpha_2$ agree near $\partial M$, and now we can realize the isotopy from $\xi_1$ to $\xi_2$ by a family of contact forms $\alpha_s$ which also agree with these at the boundary.  It follows from Theorem \ref{thm:alpha-independence} that $\ech(M,\Gamma,\alpha_1) \cong \ech(M,\Gamma,\alpha_2)$ and that this isomorphism identifies the contact classes.
\end{proof}

\subsection{Isomorphisms and embedding data}
\label{ssec:isomorphism-embedding}

Theorem \ref{thm:alpha-independence} relies on a choice of embedding data for $(M,\Gamma,\alpha)$ in several ways.  First of all, as explained in Remark \ref{rem:canonical-transitive-systems} we had to fix a choice of embedding data in order to uniquely define the groups $\ech(M,\Gamma,\alpha_i)$ independently of a choice of generic tailored almost complex structure $J_i$; this data was used to construct each of the closed manifolds $Y_n$.  Once we had this in hand, we used the same embedding data to construct the cobordisms which define each of the maps $\Psi^{R,L}_{n,10}$, whose direct limit over $L$ is the isomorphism
\[ F: \ech(M,\Gamma,\alpha_0) \isomto \ech(M,\Gamma,\alpha_1) \]
of Theorem \ref{thm:alpha-independence}.  Our goal in this subsection is to see that while the parts of these constructions which use Seiberg--Witten Floer homology are sensitive to the topology of $Y_n$ and hence to the embedding data, the parts which make use of holomorphic curves are not.

\begin{proposition}
\label{prop:gromov-compactness-cobordisms}
Let $\alpha_s$ be a path of contact forms on $(M,\Gamma)$ which is constant near $\partial M$.  Given a choice of embedding data for $(M,\Gamma,\alpha_i)$ resulting in the closed manifold $(Y_n,\alpha^i_n)$ for $i=0,1$, and generic tailored almost complex structures $J_i$ for $\alpha_i$ on $\R \times M^\ast$, let $(\R \times Y_n, \lambda_n^{10})$ be the exact symplectic cobordism used to construct the map
\[ \Psi^{R,L}_{n,10}: \ech^L(M,\Gamma,\alpha_0,J_0) \to \ech^{e^{R}L}(M,\Gamma,\alpha_1,J_1) \]
of Section \ref{ssec:alpha-independence} for some strongly cobordism-admissible $J$ on $(\R\times Y_n, \lambda_n^{10})$.  Suppose moreover that $J$ is $t$-invariant on $\R \times ([1,n-\epsilon'] \times R_+(\Gamma))$ and on $\R\times ([-n+\epsilon',-1] \times R_-(\Gamma))$, where we glue $\{\pm 1\} \times R_{\pm}(\Gamma)$ to $R_\pm(\Gamma) \subset M$ and $\epsilon'$ is as in Section \ref{sec:j-independence}.  

Given any fixed Reeb orbit sets $\Theta_+ \in \ecc^{e^{2R}L}(M,e^{2R}\alpha_0)$ and $\Theta_- \in \ecc^{e^{2R}L}(M,e^R \alpha_1)$, there exists $k$ depending on $\alpha_0$, $\alpha_1$, the embedding data, $J$, and the orbit sets $\Theta_\pm$ such that the following is true for all $n$ which are sufficiently large in a sense depending on $k$ and the data that $k$ depends on.  Let $M_k \subset Y_n$ denote the submanifold
\[ M_k = M \cup ([1,k] \times R_+(\Gamma) ) \cup ([-k,-1] \times R_-(\Gamma)). \]
Then any broken $J$-holomorphic curve from $\Theta_+$ to $\Theta_-$ in $\R \times Y_n$ is contained in $\R \times M_k$.
\end{proposition}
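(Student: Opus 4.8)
The plan is to argue by contradiction, excluding first the solid torus by the positivity-of-intersection argument of Lemma~\ref{lem:embedding} and then bounding the neck-penetration depth by the neck-stretching argument of Lemma~\ref{lem:t-stretching}, now applied level by level to a broken curve in the completed exact cobordism $\R\times Y_n$. The first step is bookkeeping. Since the path $\alpha_s$ is constant near $\partial M$, so is the interpolating family from Lemma~\ref{lem:cobordism-family}, and hence $\lambda_n^{10}$ restricts to the honest symplectization form $e^s\alpha^0_n$ on $\R\times(Y_n\ssm\mathrm{int}(M))$. Consequently the total $d\lambda_n^{10}$-energy of any broken $J$-holomorphic curve from $\Theta_+$ to $\Theta_-$ is $\mathcal{A}_{e^{2R}\alpha_0}(\Theta_+)-\mathcal{A}_{e^R\alpha_1}(\Theta_-)$, a constant $E_0 < e^{2R}L$ independent of $n$; and descending the chain of levels, using $\int d\lambda_n^{10}\ge 0$ on the cobordism level and Stokes' theorem on the symplectization levels, every orbit set occurring as an asymptotic limit of some level has action less than $e^{2R}L$ for its ambient contact form. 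By the construction of $Y_n$ through Lemma~\ref{lem:glue-pm-filtered} and Proposition~\ref{prop:embed-filtered}, once $n$ is large with respect to $e^{2R}L$ all of these asymptotic Reeb orbits lie in $\mathrm{int}(M)$.

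Next I would rule out the solid torus $V_n=\overline{Y_n\ssm M_n}$, where $M_n=M\cup([1,n]\times R_+(\Gamma))\cup([-n,-1]\times R_-(\Gamma))$. As part of the choice of $J$ we may take it $\R_s$-invariant over $\R\times(Y_n\ssm\mathrm{int}(M))$, where $\lambda_n^{10}$ is a symplectization; then the cylinders $\R\times\gamma$ over the boundary-parallel closed Reeb orbits $\gamma\subset[-1,0]_\tau\times\partial M_n$ are $J$-holomorphic, and each such $\gamma$ bounds a disk in $V_n$, hence is null-homologous in $Y_n$. No connected component of any level can lie entirely in $\R\times V_n$, since its asymptotic orbits would then be closed Reeb orbits of $V_n$, which have action greater than $e^{2R}L$ and so cannot appear in the chain from $\Theta_+$ to $\Theta_-$. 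Therefore, exactly as in Lemma~\ref{lem:embedding}, if the broken curve met $\R\times V_n$ then some level component would have to cross $\R\times([-1,0]_\tau\times\partial M_n)$ and so meet some $\R\times\gamma$ with strictly positive algebraic intersection by positivity of local intersections of pseudo-holomorphic curves~\cite{mcduff, micallef-white}, contradicting the vanishing of the corresponding homological intersection number. Thus the entire broken curve lies in $\R\times M_n$.

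The remaining, and hardest, step is to bound, uniformly in $n$, the depth to which the curve enters the necks. Suppose not: then there are $n_i\to\infty$ and broken $J$-holomorphic curves $\mathcal{C}_i$ from $\Theta_+$ to $\Theta_-$ in $\R\times Y_{n_i}$ that reach neck-depth at least $i$, say (after passing to a subsequence) in the $R_+(\Gamma)$-neck. Let $v_i$ be the connected component of a level of $\mathcal{C}_i$ that contains such a deep point. Then $v_i$ has energy at most $E_0$, all of its asymptotic ends lie in the fixed compact set $M$, so its projection onto the $t$-coordinate surjects onto $[1,i]$; and since $J$ is both $s$- and $t$-invariant on $\R\times([1,n_i-\epsilon']\times R_+(\Gamma))$, where $\lambda_{n_i}^{10}=e^s(Cdt+\beta_+)$ for the Liouville form $\beta_+$ on $R_+(\Gamma)$, the portion of $v_i$ in the neck is holomorphic for a single almost complex structure on $\R_s\times\R_t\times R_+(\Gamma)$. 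Now the argument of Lemma~\ref{lem:t-stretching} applies verbatim: a pigeonhole argument produces $t$-intervals in $(1,i)$ of length tending to infinity carrying an arbitrarily small positive fraction of the energy of $v_i$, and after translating and invoking Gromov compactness for holomorphic currents~\cite{taubes-compendium} with the uniform Hofer energy bound~\cite{behwz}, one extracts a proper limit curve in $\R_s\times\R_t\times R_+(\Gamma)$ on which the relevant $d\alpha^\ast$ vanishes; this limit must be $\R\times\gamma$ for a Reeb trajectory $\gamma$, which cannot be closed since the Reeb field there is $\tfrac1C\partial_t$, so $\R\times\gamma$ has infinite Hofer energy, a contradiction. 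The penetration bound obtained depends only on $E_0$, hence only on $\Theta_+$ and $\Theta_-$; taking $k$ to be this bound, and the same on the $R_-(\Gamma)$ side, completes the argument, since a broken curve has only finitely many levels and each is thereby contained in $\R\times M_k$.

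The main obstacle is precisely this last step: adapting the neck-stretching compactness argument of Lemma~\ref{lem:t-stretching}, stated there for a single holomorphic curve in a symplectization, to an individual level of a broken curve inside an exact cobordism, and extracting a penetration bound uniform in $n$. What makes it go through is that (i) the cobordism form degenerates to a genuine symplectization on the necks and on the complement of $\mathrm{int}(M)$, so the earlier argument, and the excision of the solid torus, apply there; (ii) the energy bound $E_0$ and the containment of all intermediate asymptotic orbits in the fixed compact $M$ are uniform in $n$; and (iii) the number of levels of a broken curve is finite, so a level-by-level bound yields a bound for the whole curve.
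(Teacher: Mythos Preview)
Your overall strategy---bookkeeping on energy and intermediate orbit sets, exclusion of the solid torus via intersection positivity as in Lemma~\ref{lem:embedding}, and then a neck-stretching contradiction as in Lemma~\ref{lem:t-stretching}---matches the paper's proof. The level-by-level treatment of the broken curve is also fine.

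There is, however, a genuine gap in your third step. You write that ``as part of the choice of $J$ we may take it $\R_s$-invariant over $\R\times(Y_n\ssm\mathrm{int}(M))$,'' and later that on the neck ``$J$ is both $s$- and $t$-invariant.'' This is precisely what the proposition does \emph{not} assume, and it is the main technical point the paper's proof has to address. A strongly cobordism-admissible $J$ is $s$-invariant only on the symplectization ends (here $s<R$ and $s>2R$); on the cobordism region $[R,2R]\times Y_n$ it need not be, even where $\lambda_n^{10}$ happens to be a symplectization form. In particular, the end structures $J^0_n$ and $J^1_n$ on the neck need not agree, so you cannot simply interpolate by an $s$-independent $J$ there. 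Consequently, when your deep level $v_i$ is the cobordism level, the translated pieces $C''_{n_i}$ in the compactness argument are not all holomorphic for a single almost complex structure on $\R_s\times\R_t\times R_+(\Gamma)$, and the argument of Lemma~\ref{lem:t-stretching} does not go through verbatim.

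The paper fixes this as follows. With $a_i$ and $s_i$ as in the proof of Lemma~\ref{lem:t-stretching}, pass to a subsequence and distinguish two cases. If $a_i\to\pm\infty$, shrink $s_i$ so that $s_i<|a_i|-2R$; then each $C''_{n_i}$ lives in a region where $J$ \emph{is} $s$-invariant (either $s>2R$ or $s<R$), and the original argument applies. If instead the $a_i$ are bounded, say $|a_i|<A$ with $a_i\to A_0$, replace $[a_i-s_i,a_i+s_i]$ by $[-A-s_i,A+s_i]$; now all the $C''_{n_i}$ are centered at $s=0$ and holomorphic for the same (possibly $s$-dependent but fixed) almost complex structure, and the limit extraction and contradiction proceed as before. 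Inserting this dichotomy into your third step completes the proof.
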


\begin{proof}
We observe that any Reeb orbit of action at most $e^{2R}L$ on either end of the cobordism $[R,2R]\times Y_n$ actually lies in the corresponding copy of $M \subset M_n$.  Moreover, $d\lambda^{10}_n$ restricts to $[R,2R] \times \overline{M_n \ssm M}$ as the symplectization of $\overline{M_n \ssm M}$, since $\alpha_0$ and $\alpha_1$ agree near $\partial M$.  Thus if we fix a pair of ECH generators $\Theta_+ \in \ecc^{e^{2R}L}(M,e^{2R}\alpha_0)$ and $\Theta_- \in \ecc^{e^{2R}L}(M, e^R\alpha_1)$, the arguments of Lemmas \ref{lem:tau-stretching}, \ref{lem:t-stretching}, and \ref{lem:embedding} show that for large $n$, all broken $J$-holomorphic curves from $\Theta_+$ to $\Theta_-$ are confined to some fixed $\R \times M_k$ in the completed cobordisms $\R \times Y_n$.

To be more precise, the proofs of Lemmas \ref{lem:tau-stretching} and \ref{lem:embedding} work exactly as before but we must be careful in repeating the Gromov compactness argument of Lemma \ref{lem:t-stretching}, which shows that for large $n$ no holomorphic curves from $\Theta_+$ to $\Theta_-$ can project onto the region denoted $[-\epsilon',\epsilon']_\ft \times R_+(\Gamma')$.  The potential problem is that we cannot guarantee that the curves $C_{n_i}''$ constructed in the proof are all holomorphic with respect to a single almost complex structure on $\R_s\times\R_\ft\times R_+(\Gamma')$, since our $J$ is only guaranteed to be $\R_s$-invariant on the regions $s<R$ and $s>2R$.  If we can pass to a subsequence such that either $a_i\to\infty$ or $a_i\to -\infty$, then we can modify our choices of $s_i$ as needed to make sure that $s_i < |a_i|-2R$ while still ensuring that $s_i \to \infty$; then the curves $C''_{n_i}$ all come from a region of $\R_s\times\R\times R_+(\Gamma')$ (either $s>2R$ or $s<R$) where $J$ is independent of $s$, and we can proceed as before.

In the remaining case, the numbers $a_i$ are all bounded, say $|a_i| < A$, and by passing to a subsequence we can assume that they converge to some value $A_0$.  Then we can replace the intervals $[a_i-s_i,a_i+s_i]$ with the larger $[-A-s_i,A+s_i]$ (in other words, replace $a_i$ and $s_i$ with $0$ and $s_i+A$).  Then the resulting curves $C''_{n_i}$ are still nonempty, with $d(\alpha')^*$-energy converging to zero, and since each $[-A-s_i,A+s_i]$ is centered at $s=0$ they are all holomorphic with respect to the same almost complex structure on $\R\times\R\times R_+(\Gamma')$; moreover, all but finitely many of them intersect any given neighborhood of $\{(A_0,0)\}\times R_+(\Gamma')$.  Now we can repeat the rest of the proof of Lemma \ref{lem:t-stretching} and conclude that all of the holomorphic curves from $\Theta_+$ to $\Theta_-$ are confined to $\R \times M_k$ in the completed cobordism, as desired.
\end{proof}

\begin{theorem}
\label{thm:h1-splitting}
There is a natural splitting
\[ \ech(M,\Gamma,\alpha) = \bigoplus_{h\in H_1(M)} \ech(M,\Gamma,\alpha;h) \]
such that $c(\alpha) \in \ech(M,\Gamma,\alpha; 0)$, and each isomorphism $F$ constructed as in Theorem \ref{thm:alpha-independence} restricts to an isomorphism
\[ F_{\alpha_s;h}: \ech(M,\Gamma,\alpha_0; h) \isomto \ech(M,\Gamma,\alpha_1; h) \]
for every $h \in H_1(M)$.
\end{theorem}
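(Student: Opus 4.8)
The plan is to produce the splitting from the $H_1(M)$-grading that each chain complex $\ecc^L(M,\Gamma,\alpha,J)$ already carries, and then to verify that each of the maps used in Sections \ref{sec:j-independence} and \ref{ssec:alpha-independence} to assemble these complexes into $\ech(M,\Gamma,\alpha)$ and into the isomorphism $F$ respects it. For the first point, recall that every orbit set $\Theta$ of $(M^*,\alpha^*)$ is contained in $\mathrm{int}(M)$, so it has a well-defined class $[\Theta]\in H_1(M)$; grading $\ecc^L(M,\Gamma,\alpha,J)$ by this class, the differential $\partial$ is grading-preserving because any $J$-holomorphic curve in $\R\times M^*$ from $\Theta_+$ to $\Theta_-$ determines a class in $H_2(M^*,\Theta_+\cup\Theta_-)$ whose boundary shows $[\Theta_+]=[\Theta_-]$ in $H_1(M^*)=H_1(M)$ (here $M^*$ deformation retracts onto $M$). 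This yields $\ech^L(M,\Gamma,\alpha,J)=\bigoplus_{h\in H_1(M)}\ech^L(M,\Gamma,\alpha,J;h)$, compatibly with the inclusion maps $i^{L,L'}_J$, and with $[\emptyset]$ in the $h=0$ summand; this is the splitting of \cite{cghh}.

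The crux is to show that the change-of-$J$ isomorphisms $\Phi^L_{J,J'}$ preserve this $H_1(M)$-grading, for then the subgroup \eqref{eq:ech-l-canonical} and the canonical isomorphisms $P^{L,J}$ of \eqref{eq:P-L-J} are graded, so $\ech^L(M,\Gamma,\alpha)$ and its limit $\ech(M,\Gamma,\alpha)$ inherit the splitting independently of $J$, with $c(\alpha)$ in degree $0$. Fix embedding data and take $n$ large with respect to $L$. Then every Reeb orbit of $\alpha_n$ of action less than $L$ lies in $M$, so the generators of $\ecc^L(Y_n,\alpha_n,J_n)$ are all supported in $M$ and inherit an $H_1(M)$-grading refining the $H_1(Y_n)$-grading along $\iota_*\colon H_1(M)\to H_1(Y_n)$; its differential preserves the $H_1(M)$-grading because the relevant curves are confined to $\R\times M^*$ by Lemmas \ref{lem:tau-stretching}, \ref{lem:t-stretching} and \ref{lem:embedding}. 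Under the identifications of Section \ref{sec:j-independence}, $\Phi^L_{J,J'}$ is the isomorphism $\ech^L(Y_n,\alpha_n,J_n)\isomto\ech^L(Y_n,\alpha_n,J'_n)$ of \cite[Theorem 1.3]{ht2}, which manifestly preserves the $H_1(Y_n)$-grading; to see that it preserves the finer $H_1(M)$-grading I would unwind its construction through $L$-flat approximations. Since all Reeb orbits of $\alpha_n$ of action less than $L$ lie in $\mathrm{int}(M)$, the $L$-flat approximations of $J_n$ and $J'_n$ and a homotopy between them may be chosen supported in $M$, so the comparison isomorphism of the two $L$-flat models is induced by the chain map $\Theta\mapsto\Theta$ (\cite[Lemma 3.4]{ht2}), while the continuation-type chain maps relating $J_n$ and $J'_n$ to their flat approximations count holomorphic curves in $\R\times Y_n$ which, by Gromov compactness arguments identical in form to Lemmas \ref{lem:tau-stretching}--\ref{lem:embedding} (applied now to a homotopy of almost complex structures with the fixed contact form $\alpha_n$), are confined to $\R\times M^*$ once $n$ is large. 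Each factor therefore respects the $H_1(M)$-grading on orbit sets in $M$, hence so does $\Phi^L_{J,J'}$.

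It remains to check that $F$ is graded. Since $F=\lim_{L\to\infty}\Psi^{R,L}_{10}$ and, by the previous paragraph, the canonical identifications and the obviously graded scaling isomorphisms used in Section \ref{ssec:alpha-independence} are all grading-preserving, it suffices to show that the ECH cobordism map $\Phi^{e^{2R}L}(X^n_{10})$ for the product cobordism $[R,2R]\times Y_n$ preserves the $H_1(M)$-grading on the subcomplexes generated by orbit sets in $M$. But $\Phi^{e^{2R}L}(X^n_{10})$ is induced by a chain map counting broken $J$-holomorphic curves in the completed cobordism, and Proposition \ref{prop:gromov-compactness-cobordisms} confines these to $\R\times M_k$ for a fixed $k$ and all large $n$; as $M_k$ deformation retracts onto $M$, such a broken curve from $\Theta_+$ to $\Theta_-$ forces $[\Theta_+]=[\Theta_-]$ in $H_1(M_k)=H_1(M)$, so the chain map is graded. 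The same applies to $\Psi^{R,L}_{01}$, so $\Psi^{R,L}_{10}$ and $\Psi^{R,L}_{01}$ are graded; since direct limits commute with direct sums, $F=\lim_{L\to\infty}\Psi^{R,L}_{10}$ restricts to isomorphisms $F_{\alpha_s;h}$ for each $h\in H_1(M)$, consistently with $c(\alpha_0)\in\ech(M,\Gamma,\alpha_0;0)$ because $F(c(\alpha_0))=c(\alpha_1)$ by Theorem \ref{thm:alpha-independence}.

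The main obstacle is the second paragraph: the Hutchings--Taubes change-of-$J$ isomorphism is defined through Seiberg--Witten Floer homology and a priori respects only the coarser grading by $H_1(Y_n)$, so one must descend into its construction and use the fact that, since all low-action Reeb orbits lie in $\mathrm{int}(M)$, every holomorphic-curve count entering it takes place inside $\R\times M^*$ and hence sees only $H_1(M)$. Everything else follows from the confinement results of Section \ref{sec:j-independence} and Proposition \ref{prop:gromov-compactness-cobordisms} together with the observation that $M^*$, $M_k$, and the relevant completed cobordisms all deformation retract onto $M$.
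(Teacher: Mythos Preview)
Your treatment of the chain-level splitting and of the map $F$ is essentially the same as the paper's: both use Proposition~\ref{prop:gromov-compactness-cobordisms} and the Holomorphic Curves axiom of \cite[Theorem~1.9]{ht2} to confine the relevant broken curves to $\R\times M_k$, and then deformation retract to $M$. One small correction: the cobordism map $\Phi^{e^{2R}L}(X^n_{10})$ is not literally defined by counting broken curves---it is defined via Seiberg--Witten theory---but the Holomorphic Curves axiom guarantees that a nonzero matrix coefficient forces the existence of such a curve, which is all you need.

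The real divergence, and the genuine gap, is in your second paragraph. You try to show that $\Phi^L_{J,J'}$ respects the $H_1(M)$-grading by decomposing the isomorphism of \cite[Theorem~1.3]{ht2} into pieces you can control. The maps to and from the $L$-flat approximations are indeed $\Theta\mapsto\Theta$, but the middle isomorphism between the two $L$-flat models is defined through Seiberg--Witten Floer homology, and \cite[Lemma~3.4(d)]{ht2} only tells you it is $\Theta\mapsto\Theta$ when the two $L$-flat pairs are joined by a \emph{path of $L$-flat pairs}. Since $J$ and $J'$ differ near the Reeb orbits in general, it is not clear that their $L$-flat approximations can be so joined, and there is no holomorphic-curve count available for this step to which your confinement argument could apply. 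You acknowledge this obstacle in your last paragraph but do not resolve it.

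The paper circumvents this entirely. Rather than dissecting the construction of $\Phi^L_{J,J'}$, it observes that the unfiltered $\Phi_{J,J'}$ is itself the direct limit of cobordism maps: take the constant path $\alpha_s\equiv\alpha$, so that the associated $\Psi^{R,L}_{10}$ with a product almost complex structure reduces to $i^{L,e^RL}_{J'}$, and hence (by $J$-independence of $\Psi^{R,L}_{10}$) the general $\Psi^{R,L}_{10}$ equals $i^{L,e^RL}_{J'}\circ\Phi^L_{J,J'}$, whose direct limit is exactly $\Phi_{J,J'}$. Now the Holomorphic Curves axiom applies to $\Psi^{R,L}_{10}$ just as it does for $F$, and Proposition~\ref{prop:gromov-compactness-cobordisms} finishes the argument uniformly for both $\Phi_{J,J'}$ and $F$. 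This trick avoids any direct appeal to the Seiberg--Witten-theoretic internals of \cite[Theorem~1.3]{ht2}; it also explains why the paper only proves the splitting for the unfiltered $\Phi_{J,J'}$ rather than for each $\Phi^L_{J,J'}$.
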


\begin{proof}
For $h\in H_1(M)$ we define the subcomplex $\ecc(M,\Gamma,\alpha,J;h) \subset \ecc(M,\Gamma,\alpha,J)$ to be generated by all Reeb orbit sets $\Theta = \{(\Theta_i,m_i)\}$ with total homology class $h = \sum_i m_i[\Theta_i]$.  We recall that this is in fact a subcomplex because if $\langle \partial\Theta_+, \Theta_- \rangle \neq 0$, then there is a $J$-holomorphic curve from $\Theta_+$ to $\Theta_-$ in $\R\times M^\ast$ whose projection to $M^\ast$ shows that $\Theta_+$ is homologous to $\Theta_-$ in $M^\ast$, hence in $M$.  This provides the desired splitting of $\ech(M,\Gamma,\alpha,J)$, and our goal is to show that the splitting is compatible with the canonical isomorphisms $\Phi_{J,J'}$ of Theorem \ref{thm:j-independence} and the isomorphisms $F$ of Theorem \ref{thm:alpha-independence}.

We now recall that the maps $\Psi^{R,L}_{10}$ are independent of the almost complex structures.  If we consider the path of contact structures $\alpha_s = \alpha$ for $0\leq s \leq 1$, $R=1$, and $\psi = \mathrm{Id}_{[0,1]}$, then this means that the map
\begin{equation}
\label{eq:psi-constant-alpha}
\Psi^{R,L}_{10}: \ech^L(M,\Gamma,\alpha,J) \to \ech^{e^R L}(M,\Gamma,\alpha,J')
\end{equation}
for an appropriate cobordism-admissible almost complex structure is equal to the composition
\[ \ech^L(M,\Gamma,\alpha,J) \xrightarrow{\Phi^L_{J,J'}} \ech^L(M,\Gamma,\alpha,J') \xrightarrow{\Psi^{R,L}_{10}} \ech^{e^{R}L}(M,\Gamma,\alpha,J') \]
in which the latter cobordism has a product almost complex structure, and so the latter map is equal to the inclusion-induced $i^{L,e^{R}L}_{J'}$, again by \cite[Corollary 5.8]{ht2}.  In particular, the limit of the maps $i^{L,e^{R}L}_{J'}$ is the identity map on $\ech(M,\Gamma,\alpha,J')$, hence the limit of \eqref{eq:psi-constant-alpha} as $L\to\infty$ is just the canonical isomorphism
\[ \Phi_{J,J'}: \ech(M,\Gamma,\alpha,J) \to \ech(M,\Gamma,\alpha,J'). \]

Now supposing that the matrix coefficient $\langle \Phi_{J,J'}(\Theta_+), \Theta_-\rangle$ is nonzero for some Reeb orbit sets $\Theta_+$ and $\Theta_-$, the corresponding coefficient must be nonzero in one of the maps \eqref{eq:psi-constant-alpha}.  The Holomorphic Curves axiom of \cite[Theorem 1.9]{ht2} asserts the existence of a broken holomorphic curve $C$ from $\Theta_+$ to $\Theta_-$ in the corresponding cobordism $\R \times Y_n$, and following the proof of Proposition \ref{prop:gromov-compactness-cobordisms} we see that if $n$ is sufficiently large then $C$ is confined to some $\R\times M_k$.  It follows that $[\Theta_+]=[\Theta_-]$ in $H_1(M_k)$, and since $M_k$ retracts onto $M$ this holds in $H_1(M)$ as well.  Thus $\Phi_{J,J'}$ restricts to a map $\Phi_{J,J';h}: \ech(M,\Gamma,\alpha,J;h) \to \ech(M,\Gamma,\alpha,J';h)$, with $\Phi_{J,J'} = \bigoplus_h \Phi_{J,J';h}$, and the resulting transitive systems of groups produce the $\ech(M,\Gamma,\alpha;h)$.

The splitting of the isomorphisms $F$ associated to a path $\{\alpha_s\}$ of contact forms follows by an identical argument: if $\langle F(\Theta_+), \Theta_-\rangle \neq 0$, then there is a broken holomorphic curve from $\Theta_+$ to $\Theta_-$ in the associated cobordism $\R \times Y_n$, hence in some $\R \times M_k$ for $n$ large enough, and this suffices to show that $[\Theta_+] = [\Theta_-]$ in $H_1(M)$.
\end{proof}

If the cobordism maps $\Psi^{R,L}_{10}$ were constructed by some count of broken holomorphic curves, then Proposition \ref{prop:gromov-compactness-cobordisms} could be used to show that these maps are independent of the original choice of embedding data.  Indeed, we can take the symplectic cobordisms $\R \times Y_n$ and $\R \times Y'_n$, with appropriate almost complex structures $J$ and $J'$ as in the proposition such that $J$ and $J'$ agree on $\R \times M_{n-\epsilon'}$, viewed canonically as a submanifold of both $\R \times Y_n$ and $\R\times Y'_n$.  Then for any fixed $\Theta_+$ and $\Theta_-$, and sufficiently large $n$, there is a canonical identification
\begin{equation}
\label{eq:embedding-data-same-curves}
\mathcal{M}(\R \times Y_n, J; \Theta_+,\Theta_-) = \mathcal{M}(\R\times Y'_n, J'; \Theta_+,\Theta_-)
\end{equation}
of the moduli spaces of holomorphic curves from $\Theta_+$ to $\Theta_-$ in either cobordism, since in both cases all of the curves involved lie in some appropriate $\R \times M_k$.

Unfortunately, the maps from \cite[Theorem 1.9]{ht2} which we used to construct the $\Psi^{R,L}_{10}$ are defined not by counting holomorphic curves, but by identifying ECH with Seiberg--Witten Floer cohomology and using the corresponding Seiberg--Witten cobordism maps, so \eqref{eq:embedding-data-same-curves} will not suffice to show that the two maps $\Psi^{R,L}_{10}$ are the same.  However, in light of \eqref{eq:embedding-data-same-curves} and the canonical isomorphisms between ECH and Seiberg--Witten Floer homology, the following conjecture would immediately imply that the maps $\Phi^{R,L}_{10}$ are in fact independent of the embedding data, and so is their direct limit $F: \ech(M,\Gamma,\alpha_0) \to \ech(M,\Gamma,\alpha_1)$.

\begin{conjecture}
\label{conj:hm-curves-diagram}
Fix $L>0$.  Let $(X_i,\lambda_i)$ be exact symplectic cobordisms from $(Y^i_-,\lambda^i_-)$ to $(Y^i_+,\lambda^i_+)$, with cobordism-admissible almost complex structures $J_i$, for $i=0,1$.  Suppose we have a 4-dimensional manifold $W$ with corners and a $1$-form $\lambda_W$, and disjoint embedded $3$-manifolds with corners $Z_\pm \subset \partial W$, satisfying the following:
\begin{itemize}\leftskip-0.35in
\item There are embeddings $f_i: W \hookrightarrow X_i$ for $i=0,1$ such that $f_i(Z_\pm) \subset Y^i_\pm$ and $\lambda_i|_W = \lambda_W$, so in particular $\lambda^i_\pm|_{Z_\pm} = \lambda_W|_{Z_\pm}$.
\item All Reeb orbits of action less than $L$ in $(Y^i_\pm,\lambda^i_\pm)$ lie in $f_i(Z_\pm)$.
\item All broken $J_i$-holomorphic curves between ECH generators of action less than $L$ in the completed cobordisms $\overline{X}_i$ have image in $\R\times\mathrm{int}(f_i(W))$.
\end{itemize}
Then there is a commutative diagram involving filtered Seiberg--Witten Floer cohomology:
\[ \xymatrix{
\HMfrom^*_{L}(Y^0_+; \lambda^0_+, J^0_+,r)
\ar[rrrr]^{\HMfrom^*_{L}(X_0;\lambda_0,J_0,r)} \ar[d]_-{\rotatebox{90}{$\sim$}} &&&&
\HMfrom^*_{L}(Y^0_-; \lambda^0_-, J^0_-,r) \ar[d]^-{\rotatebox{90}{$\sim$}}
\\
\HMfrom^*_{L}(Y^1_+; \lambda^1_+, J^1_+,r) \ar[rrrr]^{\HMfrom^*_{L}(X_1;\lambda_1,J_1,r)} &&&&
\HMfrom^*_{L}(Y^1_-; \lambda^1_-, J^1_-,r)
} \]
in which the horizontal maps are defined by the chain maps of \cite[Equation (4.17)]{ht2}, and the vertical isomorphisms are defined by the canonical identification of monopoles with ECH generators of action at most $L$ inside $Z_\pm$.
\end{conjecture}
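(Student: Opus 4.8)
The plan is to prove the diagram by showing that, for the large parameter $r$ implicit in the filtered groups $\HMfrom^*_L(\,\cdot\,;\lambda,J,r)$, both the Seiberg--Witten Floer chain complexes on the four ends and the Seiberg--Witten cobordism chain maps of \cite[Equation (4.17)]{ht2} are \emph{localized}: the chain generators are monopoles concentrated near short Reeb orbits, and the cobordism maps count four--dimensional solutions concentrated near the cylinders over those orbits. Under the hypotheses, all of this data lives inside $f_i(Z_\pm)$ and $\R\times\mathrm{int}(f_i(W))$, so the common embeddings $f_0,f_1$ identify everything in sight and the square commutes literally at the chain level. Thus the proof reduces to two localization statements — one three--dimensional, one four--dimensional — together with a matching of signs.

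For the ends I would invoke the chain--level form of Taubes's isomorphism from \cite{taubes1,taubes2,taubes3}: for $r\gg 0$, the irreducible solutions of the $r$--perturbed three--dimensional Seiberg--Witten equations on $(Y^i_\pm,\lambda^i_\pm)$ with energy bounded in terms of $L$ are in canonical, sign--preserving bijection with admissible orbit sets of $\mathcal{A}_{\lambda^i_\pm}$--action less than $L$, and the spinor of each such solution is supported in an arbitrarily small tubular neighborhood of the associated embedded Reeb orbits. By hypothesis every such orbit lies in $f_i(Z_\pm)$ and $\lambda^i_\pm|_{Z_\pm}=\lambda_W|_{Z_\pm}$, so these monopoles, and the chain complexes they generate, are genuinely common to $Y^0_\pm$ and $Y^1_\pm$; this is exactly the identification that the vertical isomorphisms in the diagram encode, and it realizes those maps as the ``identity on generators.''

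For the horizontal maps I would need the analogous four--dimensional statement: for $r$ large and energy controlled by $L$, every solution of the $r$--perturbed Seiberg--Witten equations on the completed cobordism $\overline{X}_i$ asymptotic to a fixed pair of generators of action less than $L$ has its spinor concentrated near the union of cylinders $\R\times(\text{short Reeb orbits})$, hence — once $r$ is large compared to the chosen generators and $L$ — entirely inside $\R\times\mathrm{int}(f_i(W))$. This mirrors the hypothesis that the relevant broken $J_i$--holomorphic curves have image in $\R\times\mathrm{int}(f_i(W))$, and, combined with Proposition \ref{prop:gromov-compactness-cobordisms} and the identification \eqref{eq:embedding-data-same-curves}, it lets one choose $J_0,J_1$ to agree on a neighborhood of $W$ and the perturbation data supported there, so that the moduli spaces of low--energy solutions on $\overline{X}_0$ and on $\overline{X}_1$ are canonically identified via $f_0\circ f_1^{-1}$. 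The signed counts agree over $\Z$ because in the situation at hand the cobordisms are topologically products and carry canonical homology orientations depending only on $W$ (cf. Remark \ref{rem:signs}), so no sign ambiguity intervenes. Transporting this equality of chain maps through the end identifications above gives commutativity of the square.

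The hard part will be the uniform four--dimensional concentration estimate, and in particular making the \emph{filtered} Seiberg--Witten cobordism map genuinely local while letting the parameters $r\to\infty$ and (in the intended application) the size $n$ of the embedding data tend to infinity simultaneously — a Seiberg--Witten analogue of the delicate interchange of limits carried out for pseudo--holomorphic curves in Lemma \ref{lem:t-stretching} and Proposition \ref{prop:gromov-compactness-cobordisms}. The correspondence results of Appendix \ref{sec:appendix}, which compare Seiberg--Witten Floer homology of the closed embedding data with that of the completion $M^*$, are precisely designed to furnish this localization, so I expect the proof to reduce the diagram to a statement about $M^*$ and then cite the appendix; the residual subtlety is that the cobordism maps of \cite{ht2} are constructed through an auxiliary deformation in Seiberg--Witten theory, and one must check that the concentration survives that deformation. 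Because these analytic ingredients are not yet available in the required generality, the statement is recorded here as a conjecture, and the embedding--data independence needed for Theorems \ref{thm:filtered-sutured-ech} and \ref{thm:intro-alpha-naturality} is instead obtained by the route through Appendix \ref{sec:appendix} described in Section \ref{sec:stabilize}.
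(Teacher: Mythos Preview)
You have correctly recognized the essential point: this statement is a \emph{conjecture} in the paper, not a theorem, and the paper does not prove it. Immediately after stating it, the authors write that ``this conjecture is open'' and that Appendix~\ref{sec:appendix} proves enough special cases to remove the embedding-data dependence via the route in Section~\ref{sec:stabilize}. Your final paragraph captures exactly this.

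The strategy you sketch---localizing monopoles near short Reeb orbits on the ends and localizing four-dimensional Seiberg--Witten solutions near holomorphic curves in the cobordism---is the right heuristic, and it is indeed what the appendix carries out in the restricted setting where the cobordisms arise from the specific closed manifolds $Y_T$ built from embedding data. The appendix's Propositions~\ref{prop:comp-62} and~\ref{prop:comp-63} and Lemma~\ref{lem:monopoles-114} establish the needed 1--1 correspondences between monopoles/instantons on $Y_T$ and on the completion $Y_\infty$ for $T$ large, which is precisely the special case required. What prevents the general conjecture from following is, as you say, the uniform four-dimensional concentration estimate for arbitrary exact cobordisms $X_i$ sharing a common piece $W$: the appendix exploits the very particular geometry of $Y_T\ssm M$ (the explicit product-like structure with Reeb flow $\partial_t$) to prove exponential decay away from $M$, and there is no analogue of this for a general $W\hookrightarrow X_i$.

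So your proposal is not a proof, but it is an accurate diagnosis of why the conjecture is plausible, where the difficulty lies, and how the paper circumvents it. That is the appropriate response here.
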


Although this conjecture is open, Appendix \ref{sec:appendix} proves enough cases of it that we will be able to remove the potential dependence on the embedding data.  We will return to this in Section~\ref{sec:stabilize}.

\subsection{A natural version of Theorem \ref{thm:alpha-independence}}
\label{ssec:alpha-naturality}

The construction of the isomorphism $F: \ech(M,\Gamma,\alpha_0) \isomto \ech(M,\Gamma,\alpha_1)$ of Theorem \ref{thm:alpha-independence} involved several choices beyond the embedding data, and it is reasonable to ask whether $F$ depends on any of them.  In this section we will show that many of these choices don't matter and thus determine to what extent we can speak of a well-defined group $\ech(M,\Gamma,\xi)$, rather than just $\ech(M,\Gamma,\alpha)$.

We observe that in addition to embedding data, the isomorphism $F$ potentially depends on the constant $R > 0$, the isotopy $\{\alpha_s \mid 0 \leq s \leq 1\}$ of contact forms, and the diffeomorphism $\psi: [0,R] \to [0,1]$ which we used to construct the Liouville form $\lambda_{01} = e^s\alpha_{\psi(s)}$ on $[0,R] \times Y_n$.  Thus for now we will write $F = F_{\alpha_s}^{R,\psi}$.  We will once again fix a choice of embedding data from the beginning.

Our first goal is to show that $F^{R,\psi}_{\alpha_s}$ does not depend on a sufficiently large choice of $R$.

\begin{lemma}
\label{lem:alpha-naturality-rescale}
For any $R' > R$, we have $F_{\alpha_s}^{R',\psi(Rs/R')} = F_{\alpha_s}^{R,\psi}$.
\end{lemma}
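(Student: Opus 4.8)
The plan is to realize the cobordism used to build $\Psi^{R',L}_{n,10}$ as a concatenation of the cobordism used to build $\Psi^{R,L}_{n,10}$ with a symplectization collar, up to homotopy of exact symplectic cobordisms, and then to feed this into the composition, homotopy, and scaling axioms of \cite[Theorem~1.9]{ht2}, together with \cite[Corollary~5.8]{ht2} and the scaling isomorphisms of \cite[Theorem~1.3]{ht2}, to see that the two constructions agree in the direct limit over $L$. First I would unwind the definitions: by Section~\ref{ssec:alpha-independence}, $F^{R,\psi}_{\alpha_s}$ is the direct limit over $L$ of the maps $\Psi^{R,L}_{n,10}$, each obtained from the ECH cobordism map $\Phi^{e^{2R}L}(X^n_{10})$ of $X^n_{10} = ([R,2R]\times Y_n,\, e^s\alpha^{\psi(s)}_n)$ — with $\psi$ denoting the symmetric extension of the given function from $[0,R]$ to $[0,2R]$ — pre- and post-composed with scaling isomorphisms and with the canonical identifications of filtered $\ech$ of $Y_n$ with filtered sutured $\ech$. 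Replacing $R$ by $R'$ and $\psi$ by $s\mapsto\psi(Rs/R')$ replaces $X^n_{10}$ by $(X^n_{10})' = ([R',2R']\times Y_n,\, e^s\alpha^{\phi(s)}_n)$, where $\phi$ is the symmetric extension of $s\mapsto\psi(Rs/R')$; on $[R',2R']$ it is smooth, nonincreasing, equal to $1$ near $s=R'$ and to $0$ near $s=2R'$, and satisfies $|\phi'|\le\max|\psi'|$.

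Next I would exhibit the homotopy. Let $\phi_0\colon[R',2R']\to[0,1]$ be the function that is identically $1$ on $[R',2R'-R]$ and, on $[2R'-R,2R']$, coincides with the translate of $\psi|_{[R,2R]}$ by $2(R'-R)$. Both $\phi$ and $\phi_0$ lie in the class of smooth nonincreasing functions $[R',2R']\to[0,1]$ equal to $1$ near the left endpoint, equal to $0$ near the right endpoint, and with derivative bounded by $\max|\psi'|$; the straight-line homotopy $\phi_\lambda=(1-\lambda)\phi_0+\lambda\phi$ stays in this class, and for every $\lambda$ the $1$-form $e^s\alpha_{\phi_\lambda(s)}$ is an exact symplectic cobordism, since $|\phi_\lambda'|\le\max|\psi'|$ is small enough for the volume-form computation in the proof of Lemma~\ref{lem:cobordism-family} to go through (the bound there depends only on the compact family $\{\alpha_c\}_{c\in[0,1]}$, and $\max|\psi'|$ was already arranged small for $R$). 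The homotopy is stationary near both ends, so the homotopy-invariance axiom of \cite[Theorem~1.9]{ht2} gives $\Phi^{e^{2R'}L}((X^n_{10})') = \Phi^{e^{2R'}L}([R',2R']\times Y_n, e^s\alpha^{\phi_0}_n)$. Because $\phi_0$ is flat near $s=2R'-R$, the composition axiom of \cite[Theorem~1.9]{ht2} exhibits the latter cobordism as the concatenation of the symplectization $([R',2R'-R]\times Y_n, e^s\alpha^1_n)$ with a copy of $X^n_{10}$ rescaled by the constant conformal factor $e^{2(R'-R)}$; the scaling axiom identifies the map of the rescaled copy with that of $X^n_{10}$ up to the scaling isomorphisms of \cite[Theorem~1.3]{ht2}, and \cite[Corollary~5.8]{ht2} expresses the map of the symplectization factor as a composite of a scaling map and an inclusion-induced map on the $\alpha_1$ side.

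Finally I would assemble the conclusion: for each fixed $L$ and all sufficiently large $n$, the above shows that $\Psi^{R',L}_{n,10}$ equals $\Psi^{R,L}_{n,10}$ followed by scaling isomorphisms and an inclusion-induced map on the $\alpha_1$ side. These inclusion-induced maps are the structure maps of the directed system $\{\ech^\bullet(M,\Gamma,\alpha_1)\}_\bullet$, and the scaling maps become the identifications used to build $\ech(M,\Gamma,\alpha_1)$ as its direct limit, so passing to the limit over $L$ yields $F^{R',\psi(Rs/R')}_{\alpha_s} = F^{R,\psi}_{\alpha_s}$, which is the claim. Every estimate used is controlled by $\max|\psi'|$, already small for $R$; since enlarging $R$ to any $R'>R$ only shrinks it, no further largeness hypothesis on $R'$ is required.

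The step I expect to be the main obstacle is the middle one: checking that the straight-line homotopy remains inside the class of exact symplectic cobordisms — that is, that the convexity/volume-form estimate of Lemma~\ref{lem:cobordism-family} is uniform along the homotopy — and, more tediously, carrying the scaling and filtration bookkeeping precisely enough to be certain that the auxiliary maps introduced by the symplectization collar and the reparametrization genuinely collapse onto the canonical identifications in the direct limit, exactly as in the proofs of Lemma~\ref{lem:alpha-ind-composition} and Proposition~\ref{prop:alpha-ind-psi-n}.
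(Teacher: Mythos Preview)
Your proposal is correct and follows essentially the same strategy as the paper: realize the $R'$-cobordism as homotopic to a concatenation of the $R$-cobordism with a symplectization collar, then use the homotopy, composition, and scaling axioms of \cite[Theorem~1.9]{ht2} together with \cite[Corollary~5.8]{ht2} to see that the extra collar contributes only inclusion-induced maps that vanish in the direct limit. The only cosmetic differences are that the paper works with the inverse maps $G^{R,\psi}_{\alpha_s}$ on the $[0,R']$ side (putting the symplectization collar at the large-$s$ end) and uses a ``stretching'' homotopy $\lambda_t = e^s\alpha_{\psi(Rs/((1-t)R+tR'))}$ rather than your straight-line homotopy; both homotopies stay within exact symplectic cobordisms for the same reason, namely the uniform derivative bound $|\phi_\lambda'| \le \max|\psi'|$.
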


\begin{proof}
For convenience we will work with the analogous maps $G_{\alpha_s}^{R,\psi}$ instead and use the assertion of Theorem \ref{thm:alpha-independence} that these maps are the inverses of $F_{\alpha_s}^{R,\psi}$.  Form a 1-parameter family of Liouville forms $\lambda_t$ on $[0,R'] \times Y_n$ by the formula
\[ \lambda_t = \begin{cases}
e^s \alpha_{\psi\left(\frac{Rs}{(1-t)R+tR'}\right)}, & 0 \leq s \leq (1-t)R+tR' \\
e^s \alpha_1, & (1-t)R+tR' \leq s \leq R'.
\end{cases} \]
The Liouville forms provide exact symplectic cobordisms and hence cobordism maps
\begin{align*}
\Psi_{\lambda_0} &: \ech^{e^{R'}L}(Y_n,e^{R'}\alpha^1_n) \to \ech^{e^{R'}L}(Y_n,e^{R}\alpha^1_n) \to \ech^{e^{R'}L}(Y_n,\alpha^0_n) \\
\Psi_{\lambda_1} &: \ech^{e^{R'}L}(Y_n,e^{R'}\alpha^1_n) \to \ech^{e^{R'}L}(Y_n, \alpha^0_n)
\end{align*}
which are equal because $\lambda_0$ and $\lambda_1$ are homotopic.  If $n$ is big enough with respect to $e^{R'}L$ to provide isomorphisms $(\Phi_i)^{e^{R'}L}_{n}: \ech^{e^{R'}L}(M,\Gamma,\alpha_i,J_i) \isomto \ech^{e^{R'}L}(Y_n,\alpha^i_n)$ for some $J_i$, then the $\Psi_{\lambda_i}$ induce equal maps
\[ \xymatrix{
\ech^{L}(M,\Gamma,\alpha_1) \ar[rr]^-{i^{L,e^{R'-R}L}} &&
\ech^{e^{R'-R}L}(M,\Gamma,\alpha_1) \ar[r] &
\ech^{e^{R}(e^{R'-R}L)}(M,\Gamma,\alpha_0),
} \]
in which the left arrow is an inclusion-induced map by \cite[Corollary 5.8]{ht2} since it corresponds to the symplectization $([R,R'] \times Y_n, e^s\alpha_1)$, and 
\[ \ech^L(M,\Gamma,\alpha_1) \to \ech^{e^{R'}L}(M,\Gamma,\alpha_0), \]
respectively.  Since these maps are identical, so are their direct limits as $L\to\infty$; the direct limit of the first map is the composition
\[ \xymatrix{
\ech(M,\Gamma,\alpha_1) \ar[r]^{\mathrm{Id}} &
\ech(M,\Gamma,\alpha_1) \ar[r]^{G_{\alpha_s}^{R,\psi}} &
\ech(M,\Gamma,\alpha_0),
} \]
while the direct limit of the second map is $G_{\alpha_s}^{R',\psi(Rs/R')}$.  We conclude that $G_{\alpha_s}^{R,\psi} = G_{\alpha_s}^{R',\psi(Rs/R')}$, and thus $F_{\alpha_s}^{R,\psi} = F_{\alpha_s}^{R',\psi(Rs/R')}$ as well.
\end{proof}

\begin{proposition}
\label{prop:alpha-naturality-psi}
The isomorphisms $F_{\alpha_s}^{R,\psi}$ do not depend on $R$ or on $\psi: [0,R] \isomto [0,1]$.
\end{proposition}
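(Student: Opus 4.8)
The plan is to reduce, by means of Lemma \ref{lem:alpha-naturality-rescale}, to the case of a single very large value of $R$, and then to connect any two admissible choices of $\psi$ at that fixed $R$ through a family of exact symplectic cobordisms, invoking the homotopy invariance axiom of \cite[Theorem 1.9]{ht2}. Here and below I call a function $\psi\colon[0,R]\to[0,1]$ \emph{admissible} if it is nondecreasing, equal to $0$ near $0$ and to $1$ near $R$, and such that the $1$-forms used in the construction are symplectic.

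First I would pin down exactly what $F_{\alpha_s}^{R,\psi}$ depends on. By construction it is the direct limit over $L$ of the maps $\Psi^{R,L}_{10}$, which for $n$ large with respect to $e^{2R}L$ are built from the ECH cobordism maps $\Phi^{e^{2R}L}(X^n_{10})$ using the canonical isomorphisms of Section \ref{sec:j-independence} and the scaling maps of \cite[Theorem 1.3]{ht2}; by Proposition \ref{prop:alpha-ind-psi-n} the result is independent of $n$, and we have already seen it is independent of the almost complex structures. The cobordism $X^n_{10} = ([R,2R]\times Y_n,\lambda^{10}_n)$ carries the Liouville form $\lambda^{10}_n = e^s\alpha^{\psi(2R-s)}_n$, where $\alpha^u_n$ denotes the contact form on $Y_n$ obtained by inserting $\alpha_u$ in place of $\alpha_0$ on $\mathrm{int}(M)$; thus it depends on $\psi$ only through its restriction to $[0,R]$. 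Consequently, if $\psi_0$ and $\psi_1$ are admissible for the \emph{same} $R$ and can be joined by a path $\{\psi_t\}_{t\in[0,1]}$ of admissible functions, then $\{([R,2R]\times Y_n,\, e^s\alpha^{\psi_t(2R-s)}_n)\}$ is a homotopy through exact symplectic cobordisms from $(Y_n,e^R\alpha^1_n)$ to $(Y_n,e^{2R}\alpha^0_n)$ with fixed ends, so the homotopy invariance axiom of \cite[Theorem 1.9]{ht2} makes $\Phi^{e^{2R}L}(X^n_{10})$ independent of $t$; taking direct limits over $L$ then gives $F_{\alpha_s}^{R,\psi_0} = F_{\alpha_s}^{R,\psi_1}$.

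Next I would connect two arbitrary admissible pairs $(R_0,\psi_0)$ and $(R_1,\psi_1)$. The key point is that for fixed $R$ a nondecreasing function equal to $0$ near $0$ and to $1$ near $R$ is admissible as soon as $\|\psi'\|_\infty < \epsilon_0$, where $\epsilon_0$ is the symplecticity threshold appearing in the proof of Lemma \ref{lem:cobordism-family} (the term $ds\wedge\alpha^u_n\wedge d\alpha^u_n$ is uniformly a volume form over $u\in[0,1]$, and the correction term is bounded by a constant times $|\psi'|$); moreover $\epsilon_0$ may be taken independent of $n$, since the family $\alpha_s$ is constant outside $\mathrm{int}(M)$ and so the correction term is supported in $[R,2R]\times\mathrm{int}(M)$. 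Choosing $R$ larger than $R_0$ and $R_1$ and large enough that $(R_i/R)\|\psi_i'\|_\infty < \epsilon_0$ for $i=0,1$, Lemma \ref{lem:alpha-naturality-rescale} gives $F_{\alpha_s}^{R_i,\psi_i} = F_{\alpha_s}^{R,\tilde\psi_i}$ with $\tilde\psi_i(s) = \psi_i(R_i s/R)$, and each $\tilde\psi_i$ is admissible at $R$ with $\|\tilde\psi_i'\|_\infty < \epsilon_0$. The straight-line homotopy $\tilde\psi_t = (1-t)\tilde\psi_0 + t\tilde\psi_1$ is then nondecreasing, equal to $0$ near $0$ and to $1$ near $R$ on the common neighborhoods where $\tilde\psi_0$ and $\tilde\psi_1$ are already constant, and satisfies $\|\tilde\psi_t'\|_\infty < \epsilon_0$ for every $t$; hence it is an admissible path, and the previous step yields $F_{\alpha_s}^{R,\tilde\psi_0} = F_{\alpha_s}^{R,\tilde\psi_1}$, so $F_{\alpha_s}^{R_0,\psi_0} = F_{\alpha_s}^{R_1,\psi_1}$.

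The hard part — really the only delicate point — is ensuring that the straight-line homotopy between the reparametrized functions stays inside the admissible class; this is precisely why one first passes, via Lemma \ref{lem:alpha-naturality-rescale}, to an $R$ so large that all the relevant derivatives lie uniformly below $\epsilon_0$, after which admissibility of the entire segment is automatic. Everything else is a formal consequence of the homotopy invariance of ECH cobordism maps together with the $n$- and $J$-independence established earlier.
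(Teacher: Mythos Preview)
Your proposal is correct and follows essentially the same approach as the paper: use Lemma \ref{lem:alpha-naturality-rescale} to reduce to a common large $R$ at which both reparametrized $\psi$'s have uniformly small derivative, then take the straight-line homotopy between them and invoke the homotopy invariance axiom of \cite[Theorem 1.9]{ht2} for the resulting one-parameter family of Liouville forms on $[R,2R]\times Y_n$. Your treatment of the admissibility of the intermediate $\tilde\psi_t$ (nondecreasing, constant near the endpoints on the common neighborhoods, and with derivative bounded by $\epsilon_0$) is slightly more explicit than the paper's, but the argument is the same.
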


\begin{proof}
Given $F_{\alpha_s}^{R_0,\psi_0}$ and $F_{\alpha_s}^{R_1,\psi_1}$, we may pick some $R \gg R_0,R_1$ and rescale $\psi_0$ and $\psi_1$ to $\psi_0(R_0s/R)$ and $\psi_1(R_1s/R)$ respectively.  By Lemma \ref{lem:alpha-naturality-rescale}, it suffices to show that $F_{\alpha_s}^{R,\psi_0(R_0s/R)} = F_{\alpha_s}^{R,\psi_1(R_1s/R)}$, so without loss of generality we can assume that $R_0$ and $R_1$ are both equal to $R$.  Furthermore, given any $\epsilon > 0$ we can choose $R$ sufficiently large so that $0 \leq \psi'_0(s), \psi'_1(s) < \epsilon$ for all $s$.

Now consider the straight line homotopy $\psi_t(s) = (1-t)\psi_0(s) + t\psi_1(s)$ from $\psi_0$ to $\psi_1$, and let $\lambda_t$ denote the $1$-form $e^s\alpha_{\psi_t(s)}$ on $[0,R] \times M$.  For all $t$, $0 \leq t \leq 1$, the map $\psi_t$ is a diffeomorphism $[0,R] \isomto [0,1]$, and given $\epsilon$ as above we have $0 \leq \psi'_t(s) < \epsilon$ as well.  We extend each $\psi_t: [0,R] \isomto [0,1]$ to a function $\psi_t: [0.2R] \to [0,1]$ by reflection across $s=R$, as in Lemma \ref{lem:cobordism-family}, and now we have $|\psi'_t(s)| < \epsilon$ instead.  Thus if $\epsilon$ is sufficiently small, for any $t$ the form $d\lambda_t \wedge d\lambda_t$ will be arbitrarily close to the volume form $2e^{2s}ds \wedge d\alpha_{\psi_t(s)} \wedge d\alpha_{\psi_t(s)}$ on $[0,2R] \times M$ and hence a volume form as well, so each $\lambda_t$ will be a Liouville form.

For each $n$ we now build the contact manifolds $(Y_n,\alpha^i_n)$ by extending $\alpha_0$ and $\alpha_1$ over $\overline{Y_n \ssm M}$ in the same way, and then we extend $\lambda_t$ over $[0,2R]\times Y_n$ by using the symplectization of $\alpha^0_n|_{\overline{Y_n \ssm M}}$.  Then the homotopy $\lambda_t$ extends to a homotopy of exact symplectic cobordisms
\[ ([R,2R] \times Y_n, \lambda_0) \simeq ([R,2R] \times Y_n, \lambda_1), \]
for each $n$, hence by \cite[Theorem 1.9]{ht2} the induced maps are equal.  But we use the $\lambda_0$ cobordisms to construct $F_{\alpha_s}^{R,\psi_0}$ and the $\lambda_1$ cobordisms to construct $F_{\alpha_s}^{R,\psi_1}$ in the same way, so we must have $F_{\alpha_s}^{R,\psi_0} = F_{\alpha_s}^{R,\psi_1}$ as desired.
\end{proof}

Since the isomorphisms $F_{\alpha_s}^{R,\psi}$ are independent of $R$ and $\psi$, we can drop them from the notation and simply write $F_{\alpha_s}$.

\begin{proposition}
\label{prop:F-alpha-homotopy-class}
The isomorphisms $F_{\alpha_s}: \ech(M,\Gamma,\alpha_0) \to \ech(M,\Gamma,\alpha_1)$ only depend on the homotopy class of the path $\alpha_s$ rel boundary.
\end{proposition}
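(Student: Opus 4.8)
The plan is to turn a homotopy of paths into a homotopy of the exact symplectic cobordisms that define $F$, and then appeal to the homotopy axiom for the ECH cobordism maps of \cite[Theorem 1.9]{ht2}.

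We retain the fixed choice of embedding data for $(M,\Gamma,\alpha_0)$. Because this data only involves a neighborhood of $\partial M$, on which all of the contact forms in question agree, for each $n$ it produces a \emph{single} closed manifold $Y_n$, and replacing $\alpha_0$ on $\mathrm{int}(M)$ by any contact form $\alpha$ which is constant near $\partial M$ yields a contact form $\alpha_n$ on $Y_n$ as in Section~\ref{sec:j-independence}. Now let $\{\alpha^u_s \mid (s,u)\in[0,1]^2\}$ be a homotopy rel boundary from $\alpha^0_s$ to $\alpha^1_s$, so that each $\alpha^u_s$ is a contact form agreeing with $\alpha_0$ near $\partial M$, with $\alpha^u_0=\alpha_0$ and $\alpha^u_1=\alpha_1$ for all $u$. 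Fix $L>0$ and a diffeomorphism $\psi\colon[0,R]\isomto[0,1]$, extended to $[0,2R]$ by the symmetry $\psi(R+t)=\psi(R-t)$ as in Lemma~\ref{lem:cobordism-family}, with $R$ large enough that the $1$-form
\[ \lambda^{10}_{n,u} := e^s\,\alpha^{u,\psi(s)}_n \qquad\text{on } [R,2R]\times Y_n \]
is a Liouville form for every $u\in[0,1]$. Such an $R$ exists uniformly in $u$: the computation in Lemma~\ref{lem:cobordism-family} needs only an upper bound on $\psi'$, which is arranged by enlarging $R$, together with bounds on the smooth family $\alpha^u_s$ over the compact square $[0,1]^2$, and the forms vary only over the precompact region $\mathrm{int}(M)\subset Y_n$.

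Since $\psi(R)=1$ and $\psi(2R)=0$ while $\alpha^u_1=\alpha_1$ and $\alpha^u_0=\alpha_0$ for all $u$, each $\lambda^{10}_{n,u}$ restricts to $e^R\alpha^1_n$ on $\{R\}\times Y_n$ and to $e^{2R}\alpha^0_n$ on $\{2R\}\times Y_n$, so
\[ \bigl\{\,\bigl([R,2R]\times Y_n,\ \lambda^{10}_{n,u}\bigr)\,\bigr\}_{u\in[0,1]} \]
is a homotopy of exact symplectic cobordisms from $(Y_n,e^R\alpha^1_n)$ to $(Y_n,e^{2R}\alpha^0_n)$ which is stationary on both ends. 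The homotopy axiom of \cite[Theorem 1.9]{ht2} then gives
\[ \Phi^{e^{2R}L}\bigl([R,2R]\times Y_n,\lambda^{10}_{n,0}\bigr) \;=\; \Phi^{e^{2R}L}\bigl([R,2R]\times Y_n,\lambda^{10}_{n,1}\bigr) \]
for all $n$ sufficiently large with respect to $e^{2R}L$. The map $\Psi^{R,L}_{n,10}$ is obtained from this cobordism map by composing on either side with the canonical identifications of filtered $\ech$ groups of $Y_n$ with those of $M$ and with scaling isomorphisms; none of these ingredients involves the interpolating family $\alpha_s$ on $\mathrm{int}(M)$, since the endpoint forms $\alpha^0_n=\alpha_0$ and $\alpha^1_n=\alpha_1$ are common to both paths. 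Hence the maps $\Psi^{R,L}_{10}$ attached to $\alpha^0_s$ and to $\alpha^1_s$ coincide, and passing to the direct limit over $L$ --- using Proposition~\ref{prop:alpha-ind-psi-n} for independence of $n$ and Proposition~\ref{prop:alpha-naturality-psi} for independence of $R$ and $\psi$ --- we obtain $F_{\alpha^0_s}=F_{\alpha^1_s}$.

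The one point requiring genuine care is the uniformity in $n$: we need a single $n$, large with respect to $e^{2R}L$, for which the curve correspondences of Lemmas~\ref{lem:tau-stretching}, \ref{lem:t-stretching}, and~\ref{lem:embedding}, and hence the identifications of the filtered chain complexes of $Y_n$ with those of $M$, hold simultaneously for all $u\in[0,1]$. I would handle this exactly as in Lemma~\ref{lem:j-independence-kappa} and Proposition~\ref{prop:alpha-ind-psi-n}: were no such $n$ to exist, one could extract a sequence $(n_i,u_i)$ with $n_i\to\infty$ carrying ECH index $1$ curves that meet the neck $\R\times[-\epsilon',\epsilon']\times R_+(\Gamma')$, and the Gromov compactness argument of Lemma~\ref{lem:t-stretching} would yield a contradiction. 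I expect this compactness bookkeeping to be the main --- though essentially routine --- obstacle, with the rest being a direct application of the homotopy axiom.
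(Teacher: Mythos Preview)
Your proof is correct and takes essentially the same approach as the paper: build a one-parameter family of Liouville forms $e^s\alpha^u_{\psi(s)}$ on $[R,2R]\times Y_n$ from the homotopy and apply the homotopy axiom of \cite[Theorem~1.9]{ht2}. One remark: your final paragraph's concern is unnecessary, since the identifications of the filtered complexes of $Y_n$ with those of $M$ (via Lemmas~\ref{lem:tau-stretching}--\ref{lem:embedding}) involve only the endpoint forms $\alpha_0,\alpha_1$ and the chosen almost complex structures, all of which are $u$-independent; the intermediate forms enter only through the abstract cobordism map on the closed manifold, where the homotopy axiom applies without any curve-correspondence input.
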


\begin{proof}
Suppose we have two such paths $\alpha_s^0$ and $\alpha_s^1$ which are related by a homotopy $t \mapsto \alpha_s^t$.  Then just as in the proof of Proposition \ref{prop:alpha-naturality-psi} for sufficiently large $R$ we can find a $\psi:[0,R] \to [0,1]$, extended by symmetry to the domain $[0,2R]$, such that for all $t$, the $1$-form $e^s\alpha^t_{\psi(s)}$ is the primitive of a symplectic form on $[R,2R] \times M$ which extends to a Liouville form $\lambda_t$ on each $[R,2R] \times Y_n$.  Since $\lambda_t$ provides a homotopy between the Liouville forms $\lambda_0$ and $\lambda_1$, it follows that $F_{\alpha^0_s} = F_{\alpha^1_s}$ as before.
\end{proof}

Finally, we observe that if the path $\alpha_s$ is constant then $F_{\alpha_s} = \mathrm{Id}_{\ech(M,\Gamma,\alpha_0)}$, since the maps
\[ \Psi^{R,L}_{10}: \ech^L(M,\Gamma,\alpha_0) \to \ech^{e^{R}L}(M,\Gamma,\alpha_0) \]
whose direct limit as $L\to \infty$ defines $F_{\alpha_s}$ are the inclusion-induced maps $i^{L,e^{R}L}$.  It is also clear that given two paths $\alpha_s^0$ and $\alpha_s^1$ with $\alpha_0^1 = \alpha_1^0$, the map associated to the concatenation of these paths is simply the composition $F_{\alpha_s^1} \circ F_{\alpha_s^0}$.  

\begin{lemma}
\label{lem:f-alpha-independent}
Let $\Lambda(M,\Gamma,\alpha)$ denote the space of contact forms which agree with $\alpha$ on some neighborhood of $\partial M$ and have the same kernel, viewed as a cooriented contact structure.  Then there are canonical isomorphisms
\[ F_{\alpha_0,\alpha_1}: \ech(M,\Gamma,\alpha_0) \to \ech(M,\Gamma,\alpha_1) \]
for any $\alpha_0,\alpha_1\in \Lambda(M,\Gamma,\alpha)$.  Moreover, these isomorphisms compose naturally and hence canonically define a group
\[ \ech(M,\Gamma,\xi,\alpha|_{\partial M}). \]
These isomorphisms and the resulting group depend only on a choice of embedding data.
\end{lemma}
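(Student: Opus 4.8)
The plan is to exploit the contractibility of the parameter space $\Lambda(M,\Gamma,\alpha)$ together with the naturality of the isomorphisms $F_{\alpha_s}$ recorded in Propositions~\ref{prop:alpha-naturality-psi} and~\ref{prop:F-alpha-homotopy-class} and the two remarks preceding the lemma, namely that a constant path induces the identity and that concatenation of paths induces composition of the associated isomorphisms. The whole argument is essentially formal once one has these inputs.

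First I would observe that $\Lambda(M,\Gamma,\alpha)$ is convex: every $\alpha' \in \Lambda(M,\Gamma,\alpha)$ has the form $\alpha' = f\alpha$ for a smooth $f > 0$ on $M$ with $f \equiv 1$ on some neighborhood of $\partial M$, and the convex combination of $f_0\alpha$ and $f_1\alpha$ is $\big((1-t)f_0 + tf_1\big)\alpha$, again a positive function times $\alpha$ which is identically $1$ near $\partial M$, hence again an element of $\Lambda(M,\Gamma,\alpha)$ with the same cooriented kernel $\xi$. The same computation shows that the linear homotopy between two paths in $\Lambda(M,\Gamma,\alpha)$ with common endpoints stays in $\Lambda(M,\Gamma,\alpha)$, so in particular any two such paths are homotopic rel endpoints through paths which are themselves constant near $\partial M$. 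Given (nondegenerate) $\alpha_0,\alpha_1 \in \Lambda(M,\Gamma,\alpha)$ I would take the straight-line path $\alpha_s = (1-s)\alpha_0 + s\alpha_1$, a family of contact forms constant near $\partial M$ with $\ker\alpha_s = \xi$ (its intermediate members need not be nondegenerate, which is permitted by Theorem~\ref{thm:alpha-independence}), and set
\[ F_{\alpha_0,\alpha_1} := F_{\alpha_s} : \ech(M,\Gamma,\alpha_0) \isomto \ech(M,\Gamma,\alpha_1). \]
By Proposition~\ref{prop:F-alpha-homotopy-class} this isomorphism depends only on the homotopy class rel boundary of the chosen path, and by convexity any other path in $\Lambda(M,\Gamma,\alpha)$ from $\alpha_0$ to $\alpha_1$ lies in that class; hence $F_{\alpha_0,\alpha_1}$ is canonical given the fixed embedding data, and Theorem~\ref{thm:alpha-independence} tells us it is an isomorphism carrying $c(\alpha_0)$ to $c(\alpha_1)$.

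For naturality of composition, given $\alpha_0,\alpha_1,\alpha_2 \in \Lambda(M,\Gamma,\alpha)$ I would concatenate the straight-line paths $\alpha_0 \leadsto \alpha_1$ and $\alpha_1 \leadsto \alpha_2$; the remark preceding the lemma yields $F_{\alpha_1,\alpha_2} \circ F_{\alpha_0,\alpha_1} = F_{\alpha_0,\alpha_2}$, while the constant path gives $F_{\alpha,\alpha} = \mathrm{Id}$. Thus $\big(\{\ech(M,\Gamma,\alpha')\}_{\alpha' \in \Lambda(M,\Gamma,\alpha)},\ \{F_{\alpha_0,\alpha_1}\}\big)$ is a transitive system of groups, and I would define
\[ \ech(M,\Gamma,\xi,\alpha|_{\partial M}) \subset \prod_{\alpha' \in \Lambda(M,\Gamma,\alpha)} \ech(M,\Gamma,\alpha') \]
to be the subgroup of tuples $(x_{\alpha'})$ with $F_{\alpha_0,\alpha_1}(x_{\alpha_0}) = x_{\alpha_1}$ for all $\alpha_0,\alpha_1$, exactly as in~\eqref{eq:ech-l-canonical}; the projections give a canonical isomorphism from each $\ech(M,\Gamma,\alpha')$ onto this group. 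Since $\Lambda(M,\Gamma,\alpha)$ depends on $\alpha$ only through its cooriented kernel $\xi$ and its germ $\alpha|_{\partial M}$ along $\partial M$, any contact form with kernel $\xi$ agreeing with $\alpha$ near $\partial M$ produces the very same space, hence the same group, which justifies the notation; and because every group $\ech(M,\Gamma,\alpha')$ and every isomorphism $F_{\alpha_0,\alpha_1}$ was built from the fixed embedding data and nothing else, so is $\ech(M,\Gamma,\xi,\alpha|_{\partial M})$.

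The only point requiring genuine care, rather than bookkeeping over the earlier results, is the interaction with nondegeneracy: one restricts $\Lambda(M,\Gamma,\alpha)$ to nondegenerate contact forms so that the groups $\ech(M,\Gamma,\alpha')$ are defined, and then one must check that the straight-line paths and their homotopies used above — whose intermediate forms may well be degenerate — still fall under the hypotheses of Theorem~\ref{thm:alpha-independence} and Proposition~\ref{prop:F-alpha-homotopy-class}. This holds because those statements only constrain the endpoints of the paths and homotopies involved, not the interior. Everything else follows immediately from convexity and the naturality already in hand.
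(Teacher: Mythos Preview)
Your proposal is correct and follows essentially the same approach as the paper: both exploit the convexity of $\Lambda(M,\Gamma,\alpha)$ to show that the straight-line path gives a well-defined $F_{\alpha_0,\alpha_1}$ via Proposition~\ref{prop:F-alpha-homotopy-class}, then use concatenation to obtain the transitive system and the resulting canonical group. Your additional care about nondegeneracy of intermediate forms is a helpful clarification that the paper leaves implicit.
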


\begin{proof}
Any $\alpha_0, \alpha_1 \in \Lambda(M,\Gamma,\alpha)$ have the form $f_i\alpha$ for some positive functions $f_i: M\to \R$ which are equal to $1$ on a neighborhood of $\partial M$, so $\Lambda(M,\Gamma,\alpha)$ is convex and hence contractible.  In particular, any two paths $\alpha_s$ and $\alpha'_s$ from $\alpha_0$ to $\alpha_1$ inside $\Lambda(M,\Gamma,\alpha)$ are homotopic, so they define identical isomorphisms
\[ F_{\alpha_s} = F_{\alpha'_s}: \ech(M,\Gamma,\alpha_0) \isomto \ech(M,\Gamma,\alpha_1) \]
by Proposition \ref{prop:F-alpha-homotopy-class}.  We can thus use the straight line homotopy $\alpha_s = (1-s)\alpha_0 + s\alpha_1$ to define a canonical isomorphism which only depends on the endpoints, denoted $F_{\alpha_0,\alpha_1}$.

Given a third $\alpha_2 \in \Lambda(M,\Gamma,\alpha)$, it follows immediately from that
\[ F_{\alpha_0,\alpha_2} = F_{\alpha_1,\alpha_2} \circ F_{\alpha_0,\alpha_1}, \]
and so we have a transitive system of isomorphisms
\[ \left( \{ \ech(M,\Gamma,\alpha_0) \}_{\alpha\in \Lambda(M,\Gamma,\alpha)}, \{F_{\alpha_0,\alpha_1}\}_{\alpha_0,\alpha_1 \in \Lambda(M,\Gamma,\alpha)} \right). \]
This canonically defines a group $\ech(M,\Gamma,\xi,\alpha|_{\partial M})$ up to the choice of embedding data, as desired.
\end{proof}

We remark that the contact class defines an element $c(\xi) \in \ech(M,\Gamma,\xi,\alpha|_{\partial M})$, since the isomorphisms $F_{\alpha_0,\alpha_1}$ send $c(\alpha_0)$ to $c(\alpha_1)$.

\begin{lemma}
\label{lem:f-xi-independent}
Let $\Xi(M,\Gamma,\alpha|_{\partial M})$ be the space of all contact structures on $(M,\Gamma)$ admitting contact forms which agree with $\alpha$ on a neighborhood of $\partial M$.  For any isotopy $\xi_s$ of contact structures in $\Xi(M,\Gamma,\alpha|_{\partial M})$, there is a canonical isomorphism
\[ F_{\xi_s}: \ech(M,\Gamma,\xi_0,\alpha|_{\partial M}) \isomto \ech(M,\Gamma,\xi_1,\alpha|_{\partial M}) \]
depending only on the homotopy class of $\xi_s$ and sending $c(\xi_0)$ to $c(\xi_1)$.
\end{lemma}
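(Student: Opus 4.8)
The plan is to reduce to the case of a path of contact \emph{forms}, which is handled by Theorem~\ref{thm:alpha-independence}, Proposition~\ref{prop:F-alpha-homotopy-class}, and Lemma~\ref{lem:f-alpha-independent}. First I would upgrade the standing ``agrees with $\alpha$ near $\partial M$'' condition to something uniform in $s$: the neighborhoods of $\partial M$ on which a contact structure can coincide with $\ker(\alpha)$ form a directed system under reverse inclusion with a countable cofinal sequence, so compactness of $[0,1]$ lets me fix a single neighborhood $U$ of $\partial M$ with $\xi_s|_U = \ker(\alpha)|_U$ for all $s$. Next I would pick \emph{nondegenerate} contact forms $\alpha_0,\alpha_1$ with $\ker(\alpha_i) = \xi_i$ and $\alpha_i|_U = \alpha|_U$ (such forms exist by the usual genericity argument, perturbing only in $\mathrm{int}(M)$, where all closed Reeb orbits lie), and then a smooth path $\alpha_s$ of contact forms from $\alpha_0$ to $\alpha_1$ with $\ker(\alpha_s) = \xi_s$ and $\alpha_s|_U = \alpha|_U$ for every $s$. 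Such a path exists because, for each $s$, the set of defining forms for $\xi_s$ restricting to $\alpha$ on $U$ is a nonempty convex subset of the $1$-forms on $M$, varying smoothly with $s$, so the associated bundle over $[0,1]$ admits sections with the two prescribed endpoints.

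Given such a lift, Theorem~\ref{thm:alpha-independence} produces an isomorphism $\tilde F_{\alpha_s}:\ech(M,\Gamma,\alpha_0)\isomto\ech(M,\Gamma,\alpha_1)$ sending $c(\alpha_0)$ to $c(\alpha_1)$. Writing $P_\xi : \ech(M,\Gamma,\beta)\isomto\ech(M,\Gamma,\xi,\alpha|_{\partial M})$ for the canonical isomorphism from the transitive system of Lemma~\ref{lem:f-alpha-independent} (for any $\beta$ with $\ker\beta=\xi$), I would set $F_{\xi_s} := P_{\xi_1}\circ\tilde F_{\alpha_s}\circ P_{\xi_0}^{-1}$. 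The real content is that this does not depend on the choice of lift. Given a second lift $\alpha_s'$ of the same $\xi_s$, I would form the square $\Sigma(s,r) = (1-r)\alpha_s + r\alpha_s'$ for $(s,r)\in[0,1]^2$; since $\alpha_s' = g_s\alpha_s$ with $g_s>0$, each $\Sigma(s,r)$ is a positive multiple of $\alpha_s$, hence a contact form with $\ker\Sigma(s,r)=\xi_s$ and $\Sigma(s,r)|_U=\alpha|_U$. Reading the four edges of $\partial([0,1]^2)$ shows that the concatenation of the straight-line path $\alpha_0\to\alpha_0'$ with $(\alpha_s')$, and the concatenation of $(\alpha_s)$ with the straight-line path $\alpha_1\to\alpha_1'$, are the two ways around the boundary and hence homotopic rel endpoints through contact forms constant on $U$. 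Proposition~\ref{prop:F-alpha-homotopy-class}, the concatenation property of the maps $F$ recorded before Lemma~\ref{lem:f-alpha-independent}, and the definition of $F_{\alpha_0,\alpha_0'}$, $F_{\alpha_1,\alpha_1'}$ via straight-line paths then give $\tilde F_{\alpha_s'}\circ F_{\alpha_0,\alpha_0'} = F_{\alpha_1,\alpha_1'}\circ\tilde F_{\alpha_s}$; since the $P_\xi$ intertwine the transitive-system maps, this is precisely what is needed to conclude $P_{\xi_1}\tilde F_{\alpha_s'}P_{\xi_0}'^{-1} = P_{\xi_1}\tilde F_{\alpha_s}P_{\xi_0}^{-1}$.

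For dependence only on the homotopy class of $\xi_s$ rel endpoints, I would apply the same idea one dimension up: given a homotopy $\xi_s^t$ fixing $\xi_0,\xi_1$, compactness of $[0,1]^2$ again yields a uniform $U$, and I would lift to a smooth two-parameter family $\alpha_s^t$ of contact forms constant on $U$ with $\alpha_0^t\equiv\alpha_0$ and $\alpha_1^t\equiv\alpha_1$, so that $\alpha_s^0$ and $\alpha_s^1$ are homotopic rel endpoints and Proposition~\ref{prop:F-alpha-homotopy-class} gives $\tilde F_{\alpha_s^0}=\tilde F_{\alpha_s^1}$, whence $F_{\xi_s^0}=F_{\xi_s^1}$. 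The contact-class assertion is then immediate: $c(\xi_i)=P_{\xi_i}(c(\alpha_i))$ by the remark after Lemma~\ref{lem:f-alpha-independent} and $\tilde F_{\alpha_s}(c(\alpha_0))=c(\alpha_1)$ by Theorem~\ref{thm:alpha-independence}, so $F_{\xi_s}(c(\xi_0))=c(\xi_1)$. Finally I would note that $F_{\xi_s}$ still carries the dependence on the fixed embedding data inherited from the groups themselves, so ``canonical'' here means ``canonical given the embedding data'', to be improved in Section~\ref{sec:stabilize}.

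I expect the main obstacle to be not a hard estimate but the bookkeeping in the well-definedness step: one must verify that the candidate lifts, straight-line paths, and the square $\Sigma$ really do assemble into families of \emph{contact} forms that are \emph{constant on $U$} with the \emph{correct fixed endpoints}, so that Proposition~\ref{prop:F-alpha-homotopy-class} is genuinely applicable. The auxiliary existence claims — a uniform boundary neighborhood $U$, nondegenerate endpoint forms with prescribed kernel and boundary behavior, and a smooth lift of the isotopy — are each routine but should be stated with care, since Theorem~\ref{thm:alpha-independence} requires $\alpha_0$ and $\alpha_1$ to be nondegenerate.
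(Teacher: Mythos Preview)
Your argument is correct and follows essentially the same route as the paper's proof: lift the path $\xi_s$ to a path of contact forms, and check independence of the lift by observing that two lifts $\alpha_s,\alpha'_s$ of the same $\xi_s$ differ by positive functions, so the concatenated paths $\alpha_0\to\alpha'_0\to\alpha'_1$ and $\alpha_0\to\alpha_1\to\alpha'_1$ are homotopic rel endpoints in the space of contact forms, whence Proposition~\ref{prop:F-alpha-homotopy-class} applies. The paper phrases this last step via the fibration $\Lambda(M,\Gamma,\alpha)\to\Xi(M,\Gamma,\alpha)$ with contractible fibers, while you write down the explicit linear interpolation $\Sigma(s,r)=(1-r)\alpha_s+r\alpha'_s$; these are the same homotopy. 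Your treatment is if anything slightly more careful than the paper's on a couple of points: you make explicit the compactness argument producing a uniform boundary collar $U$, you spell out the two-parameter lift needed for dependence only on the homotopy class of $\xi_s$, and you flag the nondegeneracy requirement on the endpoint forms---all of which the paper leaves implicit.
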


\begin{proof}
Suppose we have two different contact forms $\alpha_0,\alpha'_0$ for $\xi_0$ and likewise $\alpha_1,\alpha'_1$ for $\xi_1$ such that all four contact forms agree with $\alpha$ near $\partial M$.  Let $\alpha_s$ and $\alpha'_s$ be families of contact forms in $\Lambda(M,\Gamma,\alpha)$ from $\alpha_0$ to $\alpha_1$ and $\alpha'_0$ to $\alpha'_1$, respectively, such that $\ker(\alpha_s) = \ker(\alpha'_s)$ for all $s$.  Then there are positive functions $g_s: M\to\R$ such that $g_s = 1$ on a neighborhood of $\partial M$ and $\alpha'_s = g_s\alpha_s$.
We wish to show that the diagram
\[ \xymatrix{
\ech(M,\Gamma,\alpha_0) \ar[r]^{F_{\alpha_s}} \ar[d]_{F_{\alpha_0,\alpha'_0}} &
\ech(M,\Gamma,\alpha_1) \ar[d]^{F_{\alpha_1,\alpha'_1}} \\
\ech(M,\Gamma,\alpha'_0) \ar[r]^{F_{\alpha'_s}} &
\ech(M,\Gamma,\alpha'_1)
} \]
commutes, where the vertical isomorphisms are the canonical ones from Lemma \ref{lem:f-alpha-independent}.

The composition of the top and right arrows is the isomorphism induced by a path from $\alpha_0$ to $\alpha'_1 = g_1 \alpha_1$; this path is defined by going from $\alpha_0$ to $\alpha_1$ via $\alpha_s$, and then from $\alpha_1$ to $g_1\alpha_1$ via a straight line homotopy.  Likewise, the composition of the left and bottom arrows is induced by a path from $\alpha_0$ to $\alpha'_0=g_0\alpha_0$ via a straight line homotopy, and then to $\alpha'_1$ via $\alpha'_s = g_s\alpha_s$.  There is a natural fibration $\Lambda(M,\Gamma,\alpha) \to \Xi(M,\Gamma,\alpha)$ whose fibers are the contractible spaces of positive functions $M\to\R$ which are $1$ near $\partial M$, and these compositions of paths in $\Lambda(M,\Gamma,\alpha)$ project to homotopic paths in the base, hence they are homotopic in $\Lambda(M,\Gamma,\alpha)$ as well.  Proposition \ref{prop:F-alpha-homotopy-class} says that they induce the same isomorphisms, so the diagram commutes as desired.
\end{proof}

Putting all of these results together finishes the proof of Theorem \ref{thm:intro-alpha-naturality}, up to showing that the construction is independent of the choice of embedding data:

\begin{theorem}
\label{thm:alpha-independence-natural}
Given a sutured contact manifold $(M,\Gamma,\alpha)$, fix a choice of embedding data and let $\Xi(M,\Gamma,\alpha)$ denote the space of all cooriented contact structures which agree with $\ker(\alpha)$ on a neighborhood of $\partial M$.  Then the assignment of $\ech(M,\Gamma,\xi,\alpha|_{\partial M})$ to each $\xi \in \Xi(M,\Gamma,\alpha)$,
together with the map
\[ \big(\xi_s: [0,1] \to \Xi(M,\Gamma,\alpha)\big) \mapsto \big(F_{\xi_s}: \ech(M,\Gamma,\xi_0,\alpha|_{\partial M}) \isomto \ech(M,\Gamma,\xi_1,\alpha|_{\partial M})\big), \]
defines a local system on $\Xi(M,\Gamma,\alpha)$ which satisfies $F_{\xi_s}(c(\xi_0)) = c(\xi_1)$ for every path $\xi_s$.  Moreover, it is independent of all choices except possibly the embedding data.
\end{theorem}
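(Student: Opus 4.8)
The plan is to assemble the functor $\ech\colon \Pi_1(\Xi(M,\Gamma,\alpha)) \to \textsc{AbGroup}$ directly from Lemmas \ref{lem:f-alpha-independent} and \ref{lem:f-xi-independent}, then to verify the groupoid axioms and collect the various independence statements already proved in this section.

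On objects, having fixed a choice of embedding data, Lemma \ref{lem:f-alpha-independent} already produces for each $\xi \in \Xi(M,\Gamma,\alpha)$ a group $\ech(M,\Gamma,\xi,\alpha|_{\partial M})$, canonically independent of the defining contact form in $\Lambda(M,\Gamma,\alpha)$ and, by Theorem \ref{thm:j-independence}, of the tailored almost complex structure used to compute it. On morphisms, a morphism from $\xi_0$ to $\xi_1$ in the fundamental groupoid is a homotopy class rel endpoints of paths $\xi_s \subset \Xi(M,\Gamma,\alpha)$, and Lemma \ref{lem:f-xi-independent} assigns to it the isomorphism $F_{\xi_s}$, which depends only on the homotopy class. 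So the data of the functor are in place, and what remains is functoriality.

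For the identity axiom, I would realize the constant path $\xi_s \equiv \xi$ by the constant path of contact forms, for which the maps $\Psi^{R,L}_{10}\colon \ech^L(M,\Gamma,\alpha_0) \to \ech^{e^{R}L}(M,\Gamma,\alpha_0)$ are the inclusion-induced maps $i^{L,e^{R}L}$, as observed just before Lemma \ref{lem:f-alpha-independent}; their direct limit is the identity, so $F_{\xi_s} = \mathrm{Id}$. For composition, I would invoke the observation, also recorded before Lemma \ref{lem:f-alpha-independent}, that the isomorphism attached to a concatenation of paths of contact forms is the composition of the two isomorphisms, together with the commuting square in the proof of Lemma \ref{lem:f-xi-independent} relating $F_{\alpha_s}$ for different choices of representative contact forms; chasing these diagrams gives $F_{\xi_s \cdot \xi'_s} = F_{\xi'_s} \circ F_{\xi_s}$ for composable paths, and in particular shows that $F_{\xi_s}$ is invertible with inverse induced by the reversed path. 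Thus we obtain a genuine functor on $\Pi_1(\Xi(M,\Gamma,\alpha))$, and the contact class satisfies $F_{\xi_s}(c(\xi_0)) = c(\xi_1)$ by Lemma \ref{lem:f-xi-independent}.

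For the final clause — independence of all choices except possibly the embedding data — I would simply collect what has already been proved: the group $\ech(M,\Gamma,\xi,\alpha|_{\partial M})$ is independent of the defining contact form and of the almost complex structure by Lemma \ref{lem:f-alpha-independent}; and the isomorphism $F_{\xi_s}$ is independent of the auxiliary constant $R$ and the reparametrization $\psi$ by Proposition \ref{prop:alpha-naturality-psi}, of the tailored almost complex structures by the discussion following Proposition \ref{prop:alpha-ind-psi-n}, and of the choice of representative path within its homotopy class by Proposition \ref{prop:F-alpha-homotopy-class} together with Lemma \ref{lem:f-xi-independent}. The one input not yet shown to be immaterial is the embedding data used to build the closed manifolds $Y_n$; removing that dependence is precisely the goal of Section \ref{sec:stabilize} and relies on the appendix, and here it is simply carried along. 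I expect the only mildly delicate point in the write-up to be the diagram chase for composition — ensuring that the identifications of $\ech(M,\Gamma,\xi,\alpha|_{\partial M})$ obtained via different contact forms are compatible with concatenation of paths — but this is routine given the compatibility square already established in the proof of Lemma \ref{lem:f-xi-independent}.
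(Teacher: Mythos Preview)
Your proposal is correct and matches the paper's approach: the paper simply states that ``putting all of these results together finishes the proof,'' treating Theorem~\ref{thm:alpha-independence-natural} as an immediate consequence of Lemmas~\ref{lem:f-alpha-independent} and~\ref{lem:f-xi-independent} together with the composition and identity observations preceding Lemma~\ref{lem:f-alpha-independent}. Your write-up actually supplies more detail than the paper itself on the functoriality check.
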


Once we have proved in Section \ref{sec:stabilize} that the maps $\Psi^{R,L}_{n,10}: \ech^L(M,\Gamma,\alpha_0,J_0) \to \ech^{e^{R}L}(M,\Gamma,\alpha_1,J_1)$ of Section \ref{ssec:alpha-independence} do not depend on embedding data, it should be possible to prove that the isomorphism of Corollary \ref{cor:alpha-independence-general} will be natural.  We will not prove this claim here.


\section{Some properties of the contact class}
\label{sec:contact-class}

Now that we know that the contact class $c(\xi) \in \ech(M,\Gamma,\xi,\alpha|_{\partial M})$ is well-defined, and that isotopies rel boundary of $\xi$ induce isomorphisms on ECH which preserve $c(\xi)$, we can investigate some of the properties of $c(\xi)$.  Although the sutured ECH contact class had not been explicitly written down before, the definitions and theorems in this section are straightforward adaptations of results which are known for closed 3-manifolds and which should not be surprising to experts. We remark that Theorem \ref{thm:alpha-independence} will suffice for these applications, and thus we can choose to omit $\alpha|_{\partial M}$ from the notation, because vanishing and nonvanishing results will only require us to understand ECH up to isomorphism.  We begin by proving that $c(\xi)$ detects tightness. 

\begin{theorem}
\label{thm:overtwisted}
Suppose that $(M,\Gamma,\xi)$ is a sutured contact manifold for which $\xi$ is overtwisted.  Then $c(\xi) = 0$.
\end{theorem}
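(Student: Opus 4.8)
The plan is to reduce the vanishing of $c(\xi)$ for overtwisted $\xi$ to the analogous statement for closed contact $3$-manifolds, which follows from the ECH-to-Seiberg--Witten correspondence together with known vanishing of the Seiberg--Witten contact invariant for overtwisted contact structures. Concretely, since $\xi$ is overtwisted, there is an overtwisted disk $D\subset \mathrm{int}(M)$, and I would first isotope $\xi$ rel a neighborhood of $\partial M$ so that it is supported (in the Giroux sense) by an open book whose page contains $D$ in a standard position; by Theorem \ref{thm:alpha-independence} this isotopy does not change $c(\xi)$. Choosing a contact form $\alpha$ adapted to $(M,\Gamma)$ realizing this, I would then run the embedding construction of Section \ref{sec:j-independence} to sit $(M,\alpha)$ inside a closed contact manifold $(Y_n,\alpha_n)$ for $n$ large, where by construction $Y_n$ is also overtwisted (the overtwisted disk $D$ lies inside the copy of $M\subset Y_n$, and overtwistedness is detected by any open subset).

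The key point is then to compare contact classes. For a fixed action level $L$, the empty orbit set $\emptyset$ is an $\ecc^L$-generator of action $0$, hence lies in $\ecc^L(M,\Gamma,\alpha,J)$ for every $L>0$, and under the canonical identification $\ecc^L(M,\Gamma,\alpha,J)\cong\ecc^L(Y_n,\alpha_n,J_n)$ of Section \ref{sec:j-independence} it is sent to the empty orbit set in $Y_n$. Passing to homology and then to the direct limit over $L$, the isomorphism $P^J\colon \ech(M,\Gamma,\alpha,J)\isomto\ech(M,\Gamma,\alpha)$ together with the canonical isomorphism $\ech^L(M,\Gamma,\alpha,J)\cong\ech^L(Y_n,\alpha_n)$ carries $c(\alpha)=[\emptyset]$ to the image of $c(\alpha_n)=[\emptyset]\in\ech(Y_n,\alpha_n)$ in the appropriate filtered level. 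Thus it suffices to show that the image of the ECH contact class $c(\alpha_n)$ in $\ech^L(Y_n,\alpha_n)$ vanishes for some $L$ exceeding the action of the shortest closed Reeb orbit; equivalently, that $c(\alpha_n)=0$ in $\ech(Y_n,\alpha_n)$, and that the chain-level cycle $\emptyset$ bounds within a bounded action window.

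For the closed case I would invoke Theorem \ref{thm:taubes}: the ECH contact class $c(\alpha_n)$ corresponds under Taubes's isomorphism to the Seiberg--Witten contact invariant of $\xi_n=\ker(\alpha_n)$ in $\HMfrom^*(Y_n,\spc_{\xi_n})$, and the latter vanishes whenever $\xi_n$ is overtwisted. (This is the Seiberg--Witten analogue of the Heegaard-Floer vanishing for overtwisted contact structures; it follows for instance from the fact that an overtwisted contact structure is supported by a stabilized open book, combined with the behaviour of the contact invariant under such stabilizations and the computation for a negative Giroux torsion / overtwisted model, or directly from Kronheimer--Mrowka's construction.) Hence $c(\alpha_n)=0$ in $\ech(Y_n,\alpha_n)$. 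To land this back in \emph{filtered} ECH at a finite level I would note that the direct limit $\ech(Y_n,\alpha_n)=\varinjlim_L \ech^L(Y_n,\alpha_n)$ is exact, so the class $[\emptyset]\in\ech^{L_0}$ maps to zero in some $\ech^{L_1}$ for $L_1$ large; choosing the embedding data parameter $n$ (and the action cutoff) large enough that all the relevant holomorphic curves and the nullhomology witnessing $\partial^{-1}(\emptyset)$ are confined to $M\subset Y_n$ — exactly the confinement afforded by Lemmas \ref{lem:canonical-cor} and the Gromov-compactness arguments of Section \ref{sec:j-independence} — transfers this nullhomology back to $\ecc^{L_1}(M,\Gamma,\alpha,J)$. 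Therefore $c(\alpha)=[\emptyset]=0$ in $\ech(M,\Gamma,\alpha)$, and by naturality (Theorem \ref{thm:alpha-independence-natural}) $c(\xi)=0$.

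The main obstacle I anticipate is the transfer of the \emph{vanishing} back from $Y_n$ to $M$ at a controlled action level: while Lemma \ref{lem:canonical-cor} gives an identification of $I=1$ moduli spaces between ECH generators of bounded action, one must ensure that a chain $\eta$ with $\partial\eta=\emptyset$ in $\ecc^{L_1}(Y_n,\alpha_n,J_n)$ is itself supported on generators lying in $M$ (so that it pulls back to $\ecc^{L_1}(M,\Gamma,\alpha,J)$). This is automatic because every ECH generator of action $<L_1$ in $Y_n$ lies in $M$ by the construction of the embedding data, but one should state this carefully. A secondary point requiring care is extracting the finite-level statement $[\emptyset]=0\in\ech^{L_1}$ from $[\emptyset]=0\in\ech$ — this needs the elementary fact that in a directed system of abelian groups a class mapping to $0$ in the colimit already maps to $0$ at some finite stage, which is standard.
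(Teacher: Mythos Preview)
Your approach has a genuine quantifier gap in the transfer step. You first fix $n$ and invoke $c(\alpha_n)=0$ in the unfiltered $\ech(Y_n,\alpha_n)$; from this you extract an action level $L_1$ at which $[\emptyset]$ already vanishes. But $L_1$ depends on $n$: it is determined by the action of whatever chain $\eta$ in $\ecc(Y_n,\alpha_n,J_n)$ witnesses $\partial\eta=\emptyset$, and nothing in the Seiberg--Witten vanishing statement bounds this action. The chain-level identification $\ecc^{L_1}(M,\Gamma,\alpha,J)\cong\ecc^{L_1}(Y_n,\alpha_n,J_n)$ from Section~\ref{sec:j-independence} requires $n$ large \emph{relative to $L_1$}, so you cannot ``choose $n$ large enough'' after the fact. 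Your remark that every generator of action $<L_1$ in $Y_n$ lies in $M$ is correct but insufficient: even if the generators lie in $M$, the $Y_n$-differential between them need not agree with the $M^*$-differential unless the neck is long compared to $L_1$ (this is precisely the content of Lemmas~\ref{lem:t-stretching} and~\ref{lem:canonical-cor}). Proposition~\ref{prop:j-independence-n-iso} does not rescue this either, since the isomorphisms $\Psi^L_{n,n'}$ require both $n$ and $n'$ large relative to $L$.

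The paper sidesteps this entirely by working inside $(M,\Gamma)$ and producing a bounding chain of \emph{controlled} action. Using Eliashberg's classification, one performs an $n$-fold Lutz twist in a neighborhood disjoint from the overtwisted disk; the resulting $\xi'$ is isotopic to $\xi$ rel $\partial M$, and on a suitable Morse--Bott perturbation of the standard Lutz-tube form there is an explicit nondegenerate Reeb orbit $\gamma$ with $\partial\gamma=\emptyset$ (Eliashberg's argument in the appendix to \cite{yau-overtwisted}). This gives $c(\alpha_1)=0$ directly at the chain level, with no action bookkeeping and no passage through $Y_n$ or Seiberg--Witten theory; the isotopy invariance of Theorem~\ref{thm:alpha-independence-natural} then transports this back to $c(\xi)=0$. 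To salvage your route you would need an \emph{effective} version of the closed-manifold vanishing with an action bound independent of $n$, which amounts to redoing the paper's local argument anyway.
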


\begin{proof}
Let $\Delta \subset \mathrm{int}(M)$ be an overtwisted disk and $U$ an open neighborhood of $\Delta$ which is disjoint from an open neighborhood of $\partial M$.  If we perform a full Lutz twist on a transverse knot in $U \ssm \Delta$, then the resulting contact structure $\xi'$ is homotopic to $\xi$ as a $2$-plane field by a homotopy supported in a neighborhood of that knot in $U\ssm \Delta$, and in particular this homotopy is constant on $M\ssm U$ and on a neighborhood of $\Delta$.  A theorem of Eliashberg \cite[Theorem 3.1.1]{eliashberg-ot} now says that $\xi$ and $\xi'$ are isotopic as contact structures by an isotopy supported in $U$.

The same argument works if we perform an $n$-fold Lutz twist, so for any given $n$ we may actually assume that $\xi'$ contains such a Lutz tube inside $U$.  Since $\xi$ and $\xi'$ are isotopic rel $M\ssm U$, we can find a family $\{\alpha_s\}$ of contact forms constant on a neighborhood of $\partial M$ such that $\xi = \ker(\alpha_0)$, $\xi' = \ker(\alpha_1)$, and $\alpha_1$ equals a smooth perturbation of $\cos(r)dz + \sin(nr)d\theta$ on the Lutz tube $S^1_z\times D^2_{r,\theta}$.  Eliashberg \cite[Appendix]{yau-overtwisted} shows that if $n$ is large, then we can replace $\alpha_1$ by a suitable Morse-Bott perturbation containing a nondegenerate Reeb orbit $\gamma$ such that $\partial \gamma = \emptyset$, and hence $c(\alpha_1) = [\emptyset]$ vanishes.  The isomorphism
\[ \ech(M,\Gamma,\xi,\alpha|_{\partial M}) \isomto \ech(M,\Gamma,\xi',\alpha|_{\partial M}) \]
of Theorem \ref{thm:alpha-independence-natural} constructed from $\{\xi_s=\ker(\alpha_s)\}$ sends $c(\xi)$ to $c(\xi')=0$, and thus $c(\xi)=0$ as well.
\end{proof}

In fact, overtwisted disks are just the simplest of a family of objects which force the contact class to vanish, namely the planar torsion domains studied by Wendl \cite{wendl-hierarchy}, to which we refer for full details.

\begin{definition}[{\cite[Definition 2.13]{wendl-hierarchy}}]
For any integer $k\geq 0$, a \emph{planar $k$-torsion domain} is a contact manifold $(M,\xi)$, possibly with boundary, supported by a blown-up summed open book which is not a symmetric summed open book and containing an irreducible subdomain $M^P \subset M$ such that $M^P$ does not touch $\partial M$, $\overline{M \ssm M^P}$ is nonempty, and $M^P$ contains planar pages with $k+1$ boundary components.

We say that $(M,\xi)$ has \emph{planar $k$-torsion} if there is a contact embedding of a planar $k$-torsion domain into $(M,\xi)$.
\end{definition}

Wendl showed that a closed contact $3$-manifold has planar $0$-torsion if and only if it is overtwisted, and that if a closed contact manifold has Giroux torsion then it also has planar $1$-torsion \cite[Theorem 3]{wendl-hierarchy}.  He also showed that if a closed contact manifold $(Y,\xi)$ contains planar $k$-torsion for any $k$, then its ECH contact invariant $c(\xi)$ vanishes \cite[Theorem 2]{wendl-hierarchy}; we claim that this applies to the contact class in sutured ECH as well.

\begin{theorem}
\label{thm:planar-torsion}
Let $(M,\Gamma,\xi)$ be a sutured contact manifold for which $(M,\xi)$ has planar torsion.  Then $c(\xi) = 0$.
\end{theorem}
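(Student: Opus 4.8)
The plan is to follow the template of Theorem~\ref{thm:overtwisted}: produce a contact form with kernel $\xi$ for which $\emptyset$ is already a boundary in the sutured ECH chain complex, then invoke isotopy invariance of the contact class. Fix a contact embedding of a planar $k$-torsion domain $N$ into $(M,\xi)$ with planar piece $M^P\subset N$; by definition $M^P$ is disjoint from $\partial N$, and since the condition ``has planar torsion'' only concerns $M^P$ we may arrange $N\subset\mathrm{int}(M)$. As the embedding is a contact embedding, the contact structure of the planar torsion domain coincides with $\xi|_N$, so the Giroux form $\lambda_W$ on $N$ adapted to its supporting blown-up summed open book --- the one used to prove \cite[Theorem~2]{wendl-hierarchy} --- is a contact form for $\xi|_N$, hence of the form $f_0\cdot(\alpha|_N)$ for some positive function $f_0$ on $N$. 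Extend $f_0$ to a positive function $f$ on $M$ with $f\equiv 1$ near $\partial M$ and set $\alpha_1=f\alpha$; then $\alpha_1$ is a contact form for $\xi$, it agrees with $\alpha$ near $\partial M$, and $\alpha_1|_N=\lambda_W$. After a Morse--Bott perturbation supported in $\mathrm{int}(N)$, as in \cite{wendl-hierarchy}, we take $\alpha_1$ nondegenerate.

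Next I would transplant Wendl's argument into the completion $(M^*,\alpha_1^*)$. Pick a generic tailored almost complex structure $J_1$ on $\R\times M^*$ that agrees, over a neighborhood of $\R\times N$, with the one used in \cite{wendl-hierarchy}. In the closed setting, Wendl's proof exhibits an admissible orbit set $\gamma$ supported in $M^P$ with $\partial\gamma=\pm\emptyset$ in the ECH chain complex, the holomorphic currents realizing this relation being confined to a neighborhood of the planar torsion domain --- a confinement that is local in nature, forced by the supporting open book on $N$ and insensitive to the rest of the manifold. Since $\alpha_1$ and $J_1$ restrict over $\R\times N$ exactly as in that setup, and since by \cite[Lemma~5.5]{cghh} no holomorphic current counted in $\ecc(M,\Gamma,\alpha_1,J_1)$ reaches the ends of $M^*$, the very same orbit set $\gamma$ and the very same currents occur in $\ecc(M,\Gamma,\alpha_1,J_1)$, so the relation $\partial\gamma=\pm\emptyset$ persists there. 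Therefore $c(\alpha_1)=[\emptyset]=0$ in $\ech(M,\Gamma,\alpha_1)$.

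Finally, the family $\alpha_s=\big((1-s)+sf\big)\alpha$, $0\le s\le 1$, is a path of contact forms all of which have kernel $\xi$ and which is constant on a neighborhood of $\partial M$. Theorem~\ref{thm:alpha-independence} then supplies an isomorphism $\ech(M,\Gamma,\alpha)\isomto\ech(M,\Gamma,\alpha_1)$ carrying $c(\alpha)$ to $c(\alpha_1)=0$, so $c(\xi)=c(\alpha)=0$.

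The main obstacle is the middle step: one must verify that Wendl's vanishing mechanism for the ECH contact invariant is genuinely supported near the planar torsion domain, so that the chain-level relation $\partial\gamma=\pm\emptyset$ transplants verbatim from the closed setting to the sutured complex; the other steps --- choosing $\alpha_1$ and applying the isotopy invariance of Theorem~\ref{thm:alpha-independence} --- are routine. As a consistency check, since planar $0$-torsion is equivalent to overtwistedness \cite{wendl-hierarchy}, the case $k=0$ recovers Theorem~\ref{thm:overtwisted}, with $\gamma$ the single Reeb orbit satisfying $\partial\gamma=\emptyset$ produced there.
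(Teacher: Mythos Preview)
Your proposal is correct and follows essentially the same approach as the paper: both construct a contact form and almost complex structure near the planar torsion domain using Wendl's setup \cite[Theorem~7]{wendl-hierarchy}, obtain an orbit set $\gamma$ supported in the planar piece with $\partial\gamma=\emptyset$, and then invoke isotopy invariance of the contact class. The only detail the paper adds that you omit is to take the torsion order $k$ to be minimal, which is needed for Wendl's explicit curve count to go through.
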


\begin{proof}
The proof is exactly as in \cite[Section 4.2.2]{wendl-hierarchy}: let $M_0 \subset M$ be a planar $k$-torsion domain with planar piece $M_0^P$, choosing $k$ as small as possible.  Then there is a contact form $\alpha'$ with $\xi'=\ker(\alpha')$ isotopic to $\xi$, and a compatible almost complex structure $J$ on $\R \times M^*$ provided by \cite[Theorem 7]{wendl-hierarchy}, for which (after a Morse--Bott perturbation) one can explicitly find an ECH orbit set $\bf{\gamma}_0$ supported on $M_0^P$ such that $\partial \bf{\gamma}_0 = \emptyset$, hence $c(\alpha')=0$.  Since $M^P_0$ avoids $\partial M$, we can arrange $\xi'$ to agree with $\xi$ near $\partial M$ and apply Theorem \ref{thm:alpha-independence-natural} to conclude that $c(\xi) = 0$.
\end{proof}

We can also prove some nonvanishing results for the contact class:

\begin{theorem}
\label{thm:stein-fillable}
Let $Y$ be a closed $3$-manifold with Stein fillable contact structure $\xi$.  Then the contact invariant $c(\xi|_{Y(1)}) \in \ech(Y(1),\xi|_{Y(1)})$ is nonzero.
\end{theorem}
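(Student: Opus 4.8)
The plan is to factor the theorem through two ingredients: a comparison between sutured ECH of $Y(1)$ and the ordinary ECH of the closed manifold $Y$, and the known nonvanishing of the monopole (equivalently ECH) contact invariant of a Stein fillable closed contact $3$--manifold.

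\textbf{Step 1: a contact-class-preserving isomorphism $\ech(Y(1),\xi|_{Y(1)}) \cong \ech(Y,\xi)$.} Choose a nondegenerate contact form $\lambda$ with $\ker\lambda = \xi$ and a point $p\in Y$ lying on no closed Reeb orbit of $\lambda$. Put $\lambda$ into Darboux normal form near $p$ and excise a small open ball $B \ni p$, arranging the shape of $\partial B$ so that $M := Y\ssm\mathrm{int}(B)$, equipped with a single dividing circle $\Gamma\subset\partial B$ splitting the sphere into disks $R_\pm(\Gamma)$ and with the contact form $\lambda$, is a sutured contact manifold modelling $Y(1)$; since $B$ may be taken arbitrarily small it contains no closed Reeb orbit. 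Then the closed Reeb orbits of the completion $(Y(1)^*,\lambda^*)$ are exactly those of $(Y,\lambda)$, all contained in $M\subset Y$. Fix a tailored almost complex structure $J$ on $\R\times Y(1)^*$ which is the restriction of a symplectization--admissible almost complex structure on $\R\times Y$, chosen to be suitably product--like near $\R\times B$. By \cite[Lemma 5.5]{cghh} no holomorphic curve relevant to $\ech(Y(1),\Gamma,\lambda,J)$ enters the ends attached to $M$, and by a Gromov--compactness and positivity--of--intersections argument --- a neck--stretching across $\partial B$ in the spirit of \cite{cghh} and of Section~\ref{sec:j-independence}, in which the complementary ball carries no Reeb orbits onto which curve components could break --- no holomorphic curve defining $\ech(Y,\lambda,J)$ enters $\R\times B$ either. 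Hence $\ecc(Y,\lambda,J)$ and $\ecc(Y(1),\Gamma,\lambda,J)$ are the \emph{same} chain complex, with the empty orbit set corresponding to itself and with matching ECH indices (the sphere $[\partial B]$ is nullhomologous in $Y$, so relative homology classes and the index are unchanged). Independence of $J$ (Theorem~\ref{thm:j-independence}, \cite[Theorem~1.3]{ht2}) and of the contact form within its isotopy class (Corollary~\ref{cor:alpha-independence-general}, and \cite{taubes5} on the closed side), each of which preserves the contact class, then upgrade this to a canonical isomorphism $\ech(Y(1),\xi|_{Y(1)})\cong\ech(Y,\xi)$ carrying $c(\xi|_{Y(1)})$ to $c(\xi)$.

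\textbf{Step 2: $c(\xi)\neq 0$ for Stein fillable $\xi$.} By Taubes' theorem (Theorem~\ref{thm:taubes}) there is an isomorphism $\ech(Y,\xi)\cong\HMfrom^{-*}(Y,\spc_\xi)$ carrying $c(\xi)$ to the Kronheimer--Mrowka contact invariant of \cite{km-contact}. That invariant is nonzero whenever $(Y,\xi)$ admits a strong symplectic filling --- in particular a Stein filling --- as proved by Kronheimer and Mrowka \cite{km-contact}; one may alternatively invoke the nonvanishing of the Ozsv\'ath--Szab\'o contact invariant for fillable contact structures together with the identification of $\HMfrom$ with Heegaard Floer homology. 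Hence $c(\xi)\neq0$ in $\ech(Y,\xi)$, and by Step~1 $c(\xi|_{Y(1)})\neq0$ in $\ech(Y(1),\xi|_{Y(1)})$, as claimed.

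\textbf{The main obstacle} is Step~1, and within it the confinement of holomorphic curves: one must choose the excised ball so that $\lambda$ is genuinely adapted to the induced sutured structure and free of closed orbits, and then run the compactness argument showing that every curve contributing to $\ech(Y,\lambda)$ (including the ones governing the class $c(\xi)=[\emptyset]$) stays in $\R\times M$. This is parallel to the neck--stretching arguments of \cite{cghh} and of Lemmas~\ref{lem:tau-stretching}--\ref{lem:embedding}, but here the degeneration is across a sphere rather than a pre--Lagrangian torus, so the relevant bookkeeping near $\partial B$ requires extra care.
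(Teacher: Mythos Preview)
Your approach differs from the paper's, but Step~1 contains a fatal error: the claimed isomorphism $\ech(Y(1),\xi|_{Y(1)}) \cong \ech(Y,\xi)$ is false.  For $Y = S^3$ with its tight contact structure, $S^3(1)$ is a product sutured manifold and $\ech(S^3(1)) \cong \Z$ by \cite[Lemma~1.7]{cghh}, whereas $\ech(S^3) \cong \HMfrom^{-*}(S^3)$ is infinitely generated as an abelian group.  So even when your two chain complexes share the same generators (as happens, say, for an irrational ellipsoid with the ball $B$ chosen disjoint from the two short Reeb orbits), their differentials must differ.  The symplectization $\R\times Y$ and the completion $\R\times Y(1)^*$ are genuinely different noncompact $4$--manifolds sharing only $\R\times M$, and $J$--holomorphic curves in $\R\times Y$ asymptotic to orbit sets in $M$ can and do pass through $\R\times B$.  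Your proposed neck-stretching across $\partial B \cong S^2$ cannot work: $\partial B$ is a convex surface, not a contact-type hypersurface, so the SFT neck-stretching package is unavailable, and there is no analogue here of the nullhomologous Reeb-orbit foliation used in Lemma~\ref{lem:embedding}.  (In the Heegaard/monopole analogy, sutured Floer homology of $Y(1)$ recovers the hat-flavor $\hf(Y)$, not the plus-flavor to which closed ECH corresponds.)

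The paper instead never compares $\ech(Y(1))$ with $\ech(Y)$.  It carves an exact symplectic cobordism with corners $\tilde W$ from $S^3(1)$ to $Y(1)$ out of the Stein filling by removing a product neighborhood of a Liouville flow line, then uses the embedding data of Section~\ref{sec:j-independence} to cap $\tilde W$ off to a genuine exact cobordism between closed manifolds $Y'_n \supset S^3(1)$ and $Y_n \supset Y(1)$.  The resulting filtered ECH cobordism map sends $[\emptyset]$ to $[\emptyset]$; hence if $c(\xi|_{Y(1)})$ vanished in some $\ech^L$, so would $c(\xi_{\mathrm{std}}|_{S^3(1)})$, contradicting \cite[Lemma~1.7]{cghh}.
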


\begin{proof}
Write $(Y,\xi)$ as the boundary of a Stein domain $(X,J)$ with strictly plurisubharmonic exhaustion function $\phi: X \to \R$.  We can write $Y=\phi^{-1}(c)$ for some regular value $c$ of $\phi$, and if $\lambda = -d\phi \circ J$ then we can take the contact form on $Y$ to be $\alpha = \lambda|_Y$.  Without loss of generality we may assume that $\phi$ has isolated critical points and a unique global minimum at $p$, and fix $b > \phi(p)$ such that $p$ is the only critical point in $\phi^{-1}((-\infty,b])$, which is therefore a $4$-ball.  We will let $W = \phi^{-1}([b,c])$.

It now follows that $(W,\lambda)$ is an exact symplectic cobordism from $(S^3,\eta)$ to $(Y,\alpha)$, where $\eta = \lambda|_{S^3}$ and $\xi_{\mathrm{std}} = \ker(\eta)$ is the standard tight contact structure on $S^3$.  It has Liouville vector field $\nabla\phi$ with respect to the metric $d\lambda(\cdot,J\cdot)$.  Thus we can find a point $y \in Y$ such that the flow line of $-\nabla\phi$ through $y$ avoids all of the critical points of $\phi$ and hence reaches $S^3 \subset \partial W$ in time $c-b$.  Since this condition is open in $Y$, we can also find a sufficiently small Darboux ball $B\subset Y$ around $y$ such that the $-\nabla\phi$-flow through any point of $B$ reaches $S^3$ in the same time.  Thus we have an embedding
\[ ([b,c]_t \times B, e^{t-c}\alpha|_B) \hookrightarrow (W,\lambda) \]
which identifies the Liouville vector fields $\partial_t$ and $\nabla\phi$, and satisfies $\{b,c\} \times B \subset \partial W$.

Since the boundary of $(\overline{Y\ssm B}, \alpha)$ is not the boundary of a sutured contact manifold but is simply a convex surface, we must now modify $\overline{Y\ssm B}$ near its boundary as in \cite[Lemma 4.1]{cghh} to get the sutured contact manifold $Y(1)$.  Applying an identical modification to each $\{t\} \times B$ for $t \in [b,c]$, we turn $\tilde{W} = \overline{W \ssm ([b,c]\times B)}$ into a cobordism with corners from $S^3(1)$ to $Y(1)$.  The boundary of $\tilde{W}$ consists of $-S^3(1) \sqcup Y(1)$ together with a horizontal component $[b,c]_t \times \partial Y(1)$, along which the Liouville vector field $\nabla\phi$ is identified with $\partial_t$.

Now suppose that $c(\alpha|_{Y(1)})$ is zero, and fix a generic almost complex structure $J_Y$ tailored to $(Y(1),\alpha|_{Y(1)})$. Then there is a relation
\[ \sum c_i \partial \Theta_i = \emptyset \]
for some $\sum c_i\Theta_i\in\ecc(Y(1),\alpha|_{Y(1)}, J_Y)$, and if $L>0$ is greater than the symplectic action of any of the $\Theta_i$ then the same relation holds in the filtered complex $\ecc^L(Y(1),\alpha|_{Y(1)}, J_Y)$.  If we choose embedding data for $Y(1)$ as in Section \ref{sec:j-independence}, then for sufficiently large $n$ we have a tuple
\[ (Y_n, \alpha_n, J_n) \]
and an embedding $(Y(1),\alpha|_{Y(1)}) \hookrightarrow (Y_n,\alpha_n)$ for which there is a canonical identification
\[ \ech^L(Y(1),\alpha|_{Y(1)}, J_Y) \cong \ech^L(Y_n,\alpha_n, J_n). \]
But then if we let $Z = \overline{Y_n \ssm Y(1)}$, then we can glue $([b,c]_t \times Z, e^{t-c}\alpha_n|_Z)$ to $\tilde{W}$ along its horizontal boundary to get an exact symplectic cobordism from some closed manifold to $(Y_n,\alpha_n)$.  The concave end corresponds to a manifold $(Y'_{n},\alpha'_{n})$ determined by making the same choice of embedding data for $S^3(1)$ (recall that the definition of embedding data only involves the contact form near the boundary of a sutured manifold), and so if $n$ is also sufficiently large for $(Y'_n,\alpha'_n)$ and $L$ then we can also canonically identify
\[ \ech^L(S^3(1), \eta|_{S^3(1)}, J_{S^3}) \cong \ech^L(Y'_{n}, \alpha'_{n}, J'_{n}) \]
in the same way.

The symplectic cobordism we have constructed induces a map
\[ \ech^L(Y_n,\alpha_n,J_n) \to \ech^L(Y'_{n}, \alpha'_{n}, J'_{n}) \]
which sends $[\emptyset]$ to $[\emptyset]$ by Theorem 1.9 and Remark 1.11 of \cite{ht2}.  We know that the class $[\emptyset]$ in the source is zero since it vanishes in $\ech^L(Y(1),\alpha|_{Y(1)},J_Y)$, hence the target $[\emptyset]$ is zero as well.  But this means that $[\emptyset] = 0$ in $\ech^L(S^3(1),\eta|_{S^3(1)},J_{S^3})$, and the relation which forces it to vanish must persist in the unfiltered $\ech(S^3(1),\eta|_{S^3(1)},J_{S^3})$ as well.  We know that $c(\xi_{\mathrm{std}}|_{S^3(1)})$ is nonzero by \cite[Lemma 1.7]{cghh}, since $S^3(1)$ is a product sutured manifold, so this is a contradiction and we conclude that $c(\xi|_{Y(1)}) \neq 0$ as desired.
\end{proof}

The same ideas as in the proof of Theorem \ref{thm:stein-fillable} can be used to construct morphisms on sutured ECH corresponding to an appropriate notion of exact symplectic cobordisms $(X,\lambda)$ with corners, in which $\partial X$ consists of a ``vertical'' part comprised of two connected sutured contact manifolds transverse to the Liouville vector field $Y$ as well as a ``horizontal'' part tangent to $Y$ and foliated by its flow lines.  For example, the construction in the proof would yield a map
\[ \ech(Y(1),\xi|_{Y(1)},\alpha|_{\partial Y(1)}) \to \ech(S^3(1),\xi_{\mathrm{std}}|_{S^3(1)},\eta|_{\partial S^3(1)}) \]
sending $c(\xi|_{Y(1)})$ to $c(\xi_{\mathrm{std}}|_{S^3(1)})$ up to sign (cf.\ Remark \ref{rem:signs}).  We will not pursue this further, though the details are straightforward.

\section{Invariance under gluing 1-handles}
\label{sec:handle-invariance}

We will now use the results of Section \ref{sec:j-independence} to show that if we attach contact $1$-handles to a sutured contact manifold $(M,\Gamma,\alpha)$ to produce $(M',\Gamma',\alpha')$, that there is a canonical isomorphism
\[ \ech(M,\Gamma,\alpha) \isomto \ech(M',\Gamma',\alpha'). \]
We have already seen from Lemma \ref{lem:tau-stretching} that given a tailored almost complex structure $J$ on $\R\times M^*$, there is actually an isomorphism of chain complexes 
\[\ecc(M,\Gamma,\alpha,J) \cong \ecc(M',\Gamma',\alpha',J_H)\] for an appropriate $J_H$.  However, these isomorphisms are not automatically natural, meaning compatible with the canonical isomorphisms $\Phi_{J,J'}$ of Theorem \ref{thm:j-independence} which relate ECH for different choices of $J$ on the left and likewise for the $\Phi_{J_H,J'_H}$ on the right.  Thus we need to work to show that we have a canonical isomorphism of the $J$-independent sutured ECH groups.

To make this goal precise, suppose that $(M,\Gamma,\alpha)$ is a sutured contact manifold, and that the dividing set $\Gamma$ has standard neighborhood
\[ U(\Gamma) = [-1,0]_\tau \times [-1,1]_t \times \Gamma \]
on which $\alpha = Cdt + e^\tau \beta_0$ for some area form $\beta_0$ on $\Gamma$.  We will pick distinct points $p,q \in \Gamma$ and attach a contact $1$-handle $H\times[-1,1]_t$ along the points $p$ and $q$ as in Theorem \ref{thm:1-handle} to form a new contact manifold $(M',\Gamma',\alpha')$.  Then $M$ embeds into $M'$ in an obvious way, with $\alpha'|_M = \alpha$, and the closed Reeb orbits of $(M',\alpha')$ all lie in $M$ and are hence canonically identified with the closed Reeb orbits of $(M,\alpha)$.  Any choice of embedding data for $(M',\Gamma',\alpha')$ in which none of the $1$-handles are attached along $H\times[-1,1]_t$ also determines embedding data for $(M,\Gamma,\alpha)$: we simply let the set of $1$-handles attached to $M$ be the set of $1$-handles attached to $M'$ together with $H\times[-1,1]_t$, and then we use the same diffeomorphism $R_+ \isomto R_-$ for both $M'$ and $M$.  We will say that such simultaneous choices of embedding data for $M$ and $M'$ are \emph{$H$-compatible}.  We will now prove:

\begin{theorem}
\label{thm:1-handle-invariance}
Let $(M',\Gamma',\alpha')$ be constructed from $(M,\Gamma,\alpha)$ by attaching a contact 1-handle $H\times[-1,1]_t$ as described above, with a fixed choice of $H$-compatible embedding data for $(M,\Gamma,\alpha)$ and $(M',\Gamma',\alpha')$, and take $L>0$ such that no collection of Reeb orbits has action exactly $L$.  Then there is a canonical isomorphism $F^L: \ech^L(M,\Gamma,\alpha) \isomto \ech^L(M',\Gamma',\alpha')$ such that the diagram
\[ \xymatrix{
\ech^{L}(M,\Gamma,\alpha) \ar[r]^-{F^L}_-{\sim} \ar[d]_{i^{L,L'}} &
\ech^{L}(M',\Gamma',\alpha') \ar[d]^{i^{L,L'}} \\
\ech^{L'}(M,\Gamma,\alpha) \ar[r]^-{F^{L'}}_-{\sim} &
\ech^{L'}(M',\Gamma',\alpha')
} \]
commutes whenever $L < L'$.  Moreover, the induced isomorphism on the direct limits of each column,
\[ F: \ech(M,\Gamma,\alpha) \isomto \ech(M',\Gamma',\alpha'), \]
carries the contact class $c(\alpha)$ to the contact class $c(\alpha')$.
\end{theorem}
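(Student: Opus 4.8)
The plan is to use the fact that the $H$-compatibility of the embedding data forces the two instances of the construction of Section~\ref{sec:j-independence} --- the one for $(M,\Gamma,\alpha)$ and the one for $(M',\Gamma',\alpha')$ --- to pass through \emph{the same} auxiliary closed contact manifolds. Write $\widetilde{M}$ for the manifold obtained from $M'$ by attaching the $1$-handles of the chosen embedding data for $(M',\Gamma',\alpha')$; since the $H$-compatible embedding data for $(M,\Gamma,\alpha)$ consists of exactly those handles together with $H\times[-1,1]_t$, the same $\widetilde{M}$ is obtained by attaching all the embedding-data handles to $M$, and the Liouville-form modification of Lemma~\ref{lem:glue-liouville} and the diffeomorphism $f\colon R_+\isomto R_-$ agree for the two. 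Hence for each large $n$ the closed manifold $(Y_n,\alpha_n)$ built in Section~\ref{sec:j-independence} is literally the same for $(M,\Gamma,\alpha)$ and $(M',\Gamma',\alpha')$. Given a generic tailored $J$ on $\R\times M^*$, I would extend it first to the generic tailored $J_H$ on $\R\times(M')^*$ of the discussion preceding the theorem (Lemma~\ref{lem:tau-stretching}), then further to $\widehat{J}$ on the completion of $\widetilde{M}$, inducing $\widehat{J}_n$ on $\R\times Y_n$. By Lemma~\ref{lem:j-independence-kappa} the map $\Phi^{L,J}_n$ attached to $(M,\Gamma,\alpha)$ may be computed using this particular extension, so the same $\widehat{J}_n$ then appears in the map $\Phi^{L,J_H}_n$ attached to $(M',\Gamma',\alpha')$.

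Unwinding the definitions (the paragraph following Lemma~\ref{lem:canonical-cor}), both $\Phi^{L,J}_n$ and $\Phi^{L,J_H}_n$ are the composition of the isomorphism $\ech^L(Y_n,\alpha_n,\widehat{J}_n)\isomto\ech^L(Y_n,\alpha_n)$ of \cite[Theorem~1.3]{ht2} with the canonical chain-level identification $\Theta\mapsto\Theta$ of, respectively, $\ecc^L(M,\Gamma,\alpha,J)$ and $\ecc^L(M',\Gamma',\alpha',J_H)$ with $\ecc^L(Y_n,\alpha_n,\widehat{J}_n)$ furnished by Lemmas~\ref{lem:tau-stretching}, \ref{lem:t-stretching} and \ref{lem:embedding}. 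Therefore
\[ F^L_J \;:=\; \bigl(\Phi^{L,J_H}_n\bigr)^{-1}\circ\Phi^{L,J}_n\colon\ech^L(M,\Gamma,\alpha,J)\isomto\ech^L(M',\Gamma',\alpha',J_H) \]
is nothing but the isomorphism induced by the chain-level map $\Theta\mapsto\Theta$ implicit in Lemma~\ref{lem:tau-stretching}; in particular it is independent of $n$ and of the chosen extensions. Using the defining identity $\Phi^L_{J,J'}=(\Phi^{L,J'}_n)^{-1}\circ\Phi^{L,J}_n$ for $n$ large, together with the analogous identity for $(M',\Gamma',\alpha')$ involving an extension $J'_H$ of a second generic tailored $J'$, a one-line cancellation gives
\[ F^L_{J'}\circ\Phi^L_{J,J'} \;=\; \bigl(\Phi^{L,J'_H}_n\bigr)^{-1}\circ\Phi^{L,J}_n \;=\; \Phi^L_{J_H,J'_H}\circ F^L_J . \]

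Consequently the $F^L_J$ are compatible with the transitive systems $\{\Phi^L_{J,J'}\}$ and $\{\Phi^L_{J_H,J'_H}\}$ used to build $\ech^L(M,\Gamma,\alpha)$ and $\ech^L(M',\Gamma',\alpha')$, so they descend to a single isomorphism $F^L\colon\ech^L(M,\Gamma,\alpha)\isomto\ech^L(M',\Gamma',\alpha')$, characterized by $F^L\circ P^{L,J}=P^{L,J_H}\circ F^L_J$ for every $J$. Since the chain-level map $\Theta\mapsto\Theta$ commutes with the inclusions $\ecc^L\hookrightarrow\ecc^{L'}$, each $F^L_J$ commutes with $i^{L,L'}_J$ and $i^{L,L'}_{J_H}$, hence $F^L$ commutes with the maps $i^{L,L'}$ on the $J$-independent groups; passing to the direct limit as $L\to\infty$ produces the asserted isomorphism $F\colon\ech(M,\Gamma,\alpha)\isomto\ech(M',\Gamma',\alpha')$. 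Finally, $\Theta\mapsto\Theta$ sends $\emptyset$ to $\emptyset$, so each $F^L_J$ fixes the class $[\emptyset]$; combined with Proposition~\ref{prop:contact-class-j} this gives $F^L([\emptyset])=[\emptyset]$ for all $L$, whence $F(c(\alpha))=c(\alpha')$.

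The main obstacle --- essentially the only place care is needed --- is the alignment in the first step: checking that $H$-compatibility genuinely forces the auxiliary manifolds $\widetilde{M}$, the gluings, and (after invoking Lemma~\ref{lem:j-independence-kappa}) the almost complex structures $\widehat{J}_n$ on $\R\times Y_n$ to coincide for the two constructions, so that $\Phi^{L,J}_n$ and $\Phi^{L,J_H}_n$ differ only by the canonical $\Theta\mapsto\Theta$ identification. Once the two constructions are aligned in this way there is no further analysis to carry out: everything reduces to diagram chasing with the maps of Section~\ref{sec:j-independence} and \cite[Theorem~1.3]{ht2}.
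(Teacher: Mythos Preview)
Your argument is correct and follows essentially the same route as the paper: both proofs hinge on the observation that $H$-compatibility forces the auxiliary closed manifolds $(Y_n,\alpha_n)$ (and, after invoking Lemma~\ref{lem:j-independence-kappa}, the almost complex structures on them) to coincide for $(M,\Gamma,\alpha)$ and $(M',\Gamma',\alpha')$, so that the comparison reduces to the chain-level map $\Theta\mapsto\Theta$. Your packaging is slightly more streamlined---you define $F^L_J$ with target $\ech^L(M',\Gamma',\alpha',J_H)$ and verify compatibility with both transitive systems in a single cancellation, whereas the paper introduces an auxiliary $J'$ on $\R\times(M')^*$, defines $F^L_{J,J',J_H}$, and separately checks independence of $J_H$ before establishing the analogous compatibility; the content is the same.
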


\begin{remark}
Although Theorem \ref{thm:1-handle-invariance} is stated for $(M',\Gamma',\alpha')$ obtained by attaching a single handle, we remark that this construction may be considered a special case of an \emph{interval-fibered extension} \cite[Example 2.10]{cghh} where the Liouville manifold $W$ is $[-\epsilon,0]\times\Gamma_0\cup H$ for some $\epsilon\ll 1$ with $\Gamma_0\subset\Gamma$ being a collection of connected components of $\Gamma$ along which the contact 1-handle is attached. We state the theorem in this ``special case'' for convenience, but in fact the proof of Theorem \ref{thm:1-handle-invariance} will work equally well for arbitrary interval-fibered extensions with compatible embedding data. The analogous statement for sutured contact homology is \cite[Theorem 9.1]{cghh}. The main difference between the methods used in the proof of Theorem \ref{thm:1-handle-invariance} and in the proof of \cite[Theorem 9.1]{cghh} is that continuation maps for varying almost complex structures in the context of sutured ECH are currently available to us only after taking closures since continuation maps in ECH are defined through cobordism maps in Seiberg--Witten Floer cohomology, whereas in sutured contact homology the authors of \cite{cghh} can use holomorphic curve techniques to define continuation maps by hand.
\end{remark}

\begin{proof}[Proof of Theorem \ref{thm:1-handle-invariance}]
Fix generic tailored almost complex structures $J$ and $J'$ on $\R \times M^*$ and $\R \times (M')^*$, where $M^*$ and $(M')^*$ are the completions of $M$ and $M'$.  Let $J_H$ be another tailored almost complex structure on $\R \times (M')^*$ which restricts to the vertical completion $M_v \subset M^*$ of $M$ as $J$. Then Lemma \ref{lem:tau-stretching} applies in this situation, so that given any two generators $\Theta_+$ and $\Theta_-$ of $\ecc(M,\Gamma,\alpha,J)$ we have 
\[ \mathcal{M}_{I=1}(\R \times M^*, J; \Theta_+, \Theta_-) = \mathcal{M}_{I=1}(\R \times (M')^*, J_H; \Theta_+, \Theta_-). \]
Hence for any $L > 0$ such that there are no $\ech$ generators of action exactly $L$, the complexes $\ecc^L(M,\Gamma,\alpha,J)$ and $\ecc^L(M',\Gamma',\alpha',J_H)$ are canonically equal, by the map which identifies an orbit set $\Theta$ in $M$ with its image in $M'$.

Using this canonical identification, we can now define an isomorphism
\[ F^L_{J,J',J_H}: \ech^L(M,\Gamma,\alpha,J) \isomto \ech^L(M',\Gamma',\alpha',J') \]
as the composition
\[ \ech^L(M,\Gamma,\alpha,J) \isomto \ech^L(M',\Gamma',\alpha',J_H) \xrightarrow{\Phi^L_{J_H,J'}} \ech^L(M',\Gamma',\alpha',J'). \]
Given any two choices $J_{H,0}$ and $J_{H,1}$ for $J_H$, it follows essentially from Lemma \ref{lem:j-independence-kappa} that the isomorphism
\[ \Phi^L_{J_{H,0},J_{H,1}}: \ech^L(M',\Gamma',\alpha',J_{H,0}) \isomto \ech^L(M',\Gamma',\alpha',J_{H,1}) \]
is induced by the chain map which sends $\Theta \mapsto \Theta$ for each collection $\Theta$ of Reeb orbits, and so the leftmost square of the diagram
\[ \xymatrix{
\ech^L(M,\Gamma,\alpha,J) \ar[r]^-{\sim} \ar@{=}[d] &
\ech^L(M',\Gamma',\alpha',J_{H,0}) \ar[r]^-{\Phi^L_{J_{H,0},J'}} \ar[d]^{\Phi^L_{J_{H,0},J_{H,1}}} &
\ech^L(M',\Gamma',\alpha',J') \ar@{=}[d] \\
\ech^L(M,\Gamma,\alpha,J) \ar[r]^-{\sim} &
\ech^L(M',\Gamma',\alpha',J_{H,1}) \ar[r]^-{\Phi^L_{J_{H,1},J'}} &
\ech^L(M',\Gamma',\alpha',J')
} \]
commutes, while the rightmost square commutes by Theorem \ref{thm:j-independence}.  The compositions of the maps along the top and bottom rows are $F^L_{J,J',J_{H,0}}$ and $F^L_{J,J',J_{H,1}}$, respectively, so we conclude that they are equal.  In particular, $F^L_{J,J',J_H}$ is independent of the choice of $J_H$, and so we can simply write it as $F^L_{J,J'}$.

Now we can pick any $L' > L$ and a choice of $J_H$ as above.  Then the diagram
\[ \xymatrix{
\ech^L(M,\Gamma,\alpha,J) \ar[r]^-{\sim} \ar[d]_{i^{L,L'}_J} &
\ech^L(M',\Gamma',\alpha',J_H) \ar[r]^-{\Phi^L_{J_H,J'}} \ar[d]^{i^{L,L'}_{J_H}} &
\ech^L(M',\Gamma',\alpha',J') \ar[d]^{i^{L,L'}_{J'}} \\
\ech^{L'}(M,\Gamma,\alpha,J) \ar[r]^-{\sim} &
\ech^{L'}(M',\Gamma',\alpha',J_H) \ar[r]^-{\Phi^{L'}_{J_H,J'}} &
\ech^{L'}(M',\Gamma',\alpha',J')
} \]
commutes, again by applying Theorem \ref{thm:j-independence}, and the maps along each row are $F^L_{J,J'}$ and $F^{L'}_{J,J'}$ respectively, so we have $i^{L,L'}_{J'} \circ F^L_{J,J'} = F^{L'}_{J,J'} \circ i^{L,L'}_{J}$.

Suppose that $J''$ is another generic tailored almost complex structure on $\R \times M^*$, and fix a corresponding $J''_H$ on $\R \times (M')^*$.  Then the $H$-compatibility of the embedding data ensures that the following diagram commutes for any $L$:
\[ \xymatrix{
\ech^L(M,\Gamma,\alpha,J) \ar[r]^-{\sim} \ar[d]_{\Phi^L_{J,J''}} &
\ech^L(M',\Gamma',\alpha',J_H) \ar[r]^-{\sim} \ar[d]^{\Phi^L_{J_H,J''_H}} &
\ech^L(Y_n,\alpha_n) \ar@{=}[d] \\
\ech^L(M,\Gamma,\alpha,J'') \ar[r]^-{\sim} &
\ech^L(M',\Gamma',\alpha',J''_H) \ar[r]^-{\sim} &
\ech^L(Y_n,\alpha_n)
} \]
where $(Y_n,\alpha_n)$ is the closed manifold constructed from the embedding data for some large $n$ as in Section \ref{sec:j-independence}, and in each row the first arrow labeled ``$\sim$'' is the canonical identification of ECH generators while the second such arrow is the isomorphism of \cite[Theorem 1.3]{ht2} used in Section \ref{sec:j-independence}.  Indeed, the rightmost square commutes by the definition of $\Phi^L_{J_H,J''_H}$, since $(Y_n,\alpha_n)$ is built from embedding data for $(M',\Gamma',\alpha')$.  Similarly, the outermost square commutes because the embedding data of $(M,\Gamma,\alpha)$ also leads to the use of $(Y_n,\alpha_n)$ to construct $\Phi^L_{J,J''}$ from the composition of ``$\sim$'' isomorphisms along each row.  (We ensure this by extending $J$ from $\R\times M$ across the embedding handles in a way which agrees with $J_H$ on $\R \times H$.)  Since every map in the diagram is an isomorphism, it follows that the leftmost square commutes as well.

Using the commutativity of the left square, we now construct another commutative diagram
\[ \xymatrix{
\ech^L(M,\Gamma,\alpha,J) \ar[r]^-{\sim} \ar[d]_{\Phi^L_{J,J''}} &
\ech^L(M',\Gamma',\alpha',J_H) \ar[r]^{\Phi^L_{J_H,J'}} \ar[d]_{\Phi^L_{J_H,J''_H}} &
\ech^L(M',\Gamma',\alpha',J') \ar@{=}[d]
\\
\ech^L(M,\Gamma,\alpha,J'') \ar[r]^-{\sim} &
\ech^L(M',\Gamma',\alpha',J''_H) \ar[r]^{\Phi^L_{J''_H,J'}} &
\ech^L(M',\Gamma',\alpha',J')
} \]
whose right square commutes by Theorem \ref{thm:j-independence}.  The compositions of the maps along each row are $F^L_{J,J'}$ and $F^L_{J'',J'}$ respectively, so we conclude that $F^L_{J'',J'} \circ \Phi^L_{J,J''} = F^L_{J,J'}$.

Now suppose that $J'''$ is another generic tailored almost complex structure on $\R \times (M')^*$.  It is immediate from the definition and from Theorem \ref{thm:j-independence} that $F^L_{J,J'''} = \Phi^L_{J',J'''} \circ F^L_{J,J'}$ for each $L$, since we have the identity $\Phi^L_{J_H,J'''} = \Phi^L_{J',J'''} \circ \Phi^L_{J_H,J'}$ for any $J_H$.  Combining this with the above information, we see that for any $J,J''$ on $\R \times M^*$ and $J',J'''$ on $\R \times (M')^*$, there is a commutative diagram
\[ \xymatrix{
\ech^L(M,\Gamma,\alpha,J) \ar[r]^-{F^L_{J,J'}} \ar[d]_{\Phi^L_{J,J''}} &
\ech^L(M',\Gamma',\alpha',J') \ar[d]^{\Phi^L_{J',J'''}} \\
\ech^L(M,\Gamma,\alpha,J'') \ar[r]^-{F^L_{J'',J'''}} &
\ech^L(M',\Gamma',\alpha',J''')
} \]
and so there is a well-defined isomorphism $F^L: \ech^L(M,\Gamma,\alpha) \isomto \ech^L(M',\Gamma',\alpha')$.

Since each map of the form $F^L_{J,J'}$ or $\Phi^L_{J,J'}$ is intertwined with the various $i^{L,L'}_{J}$ and $i^{L,L'}_{J'}$ maps, it follows that $F^{L'} \circ i^{L,L'} = i^{L,L'} \circ F^L$ as well.  Thus we have an isomorphism of directed systems
\[ F:
\left(\{\ech^L(M,\Gamma,\alpha)\}_L, \{i^{L,L'}\}_{L,L'}\right)
\isomto
\left(\{\ech^L(M',\Gamma',\alpha')\}_L, \{i^{L,L'}\}_{L,L'}\right) \]
and hence taking direct limits gives an isomorphism
\[ F: \ech(M,\Gamma,\alpha) \isomto \ech(M',\Gamma',\alpha'). \]
Moreover, each $F^L_{J,J'}$ sends the class $[\emptyset]$ to $[\emptyset]$, so we have $F([\emptyset]) = [\emptyset]$ as well and we conclude that $F(c(\alpha)) = c(\alpha')$.
\end{proof}

It is straightforward to show that given an isotopy of contact forms supported on $\mathrm{int}(M)$, the isomorphism of Theorem \ref{thm:1-handle-invariance} also behaves naturally with respect to the isomorphism of Theorem \ref{thm:alpha-independence-natural}, and hence it defines a natural isomorphism
\[ \ech(M,\Gamma,\xi,\alpha|_{\partial M}) \isomto \ech(M',\Gamma',\xi',\alpha'|_{\partial M'}) \]
carrying $c(\xi)$ to $c(\xi')$ as expected.

For any closed contact manifold $(Y,\xi)$ and collection of $n$ points in $Y$, we can define the sutured contact manifold $Y(n)$ by removing the interior of a Darboux ball centered at each of the $n$ points and then turning the resulting manifold into a sutured contact manifold $(M,\Gamma,\xi)$ as in \cite[Proposition 4.6]{cghh}. In what follows, use $\xi$ to denote the restriction of $\xi$ to $Y(n)$ as well\footnote{For the purposes of computing ECH, our notation should keep track of a contact form on the boundary as well as the collection of $n$ points, cf. Theorem \ref{thm:alpha-independence-natural}, but up to isomorphism this does not matter.}.  Then we have the following immediate corollary, which was originally \cite[Theorem 1.8(2)]{cghh}.

\begin{corollary}
Given any two contact manifolds $(Y_1,\xi_1)$ and $(Y_2, \xi_2)$ we have an isomorphism
\[ \ech((Y_1 \# Y_2)(1), \xi_1\#\xi_2) \cong \ech(Y_1(1),\xi_1) \otimes \ech(Y_2(1),\xi_2) \]
if either we take coefficients in a field or one of the groups $\ech(Y_i(1),\xi_i)$ is free Abelian.  This isomorphism identifies $c(\xi_1 \# \xi_2)$ with $c(\xi_1) \otimes c(\xi_2)$.
\end{corollary}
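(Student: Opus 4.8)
The plan is to realize the one-point sutured complement of a connected sum as an interval-fibered extension of the disjoint union of the two individual complements, and then to combine the $1$-handle invariance of Theorem \ref{thm:1-handle-invariance} with an algebraic Künneth argument for disjoint unions. First I would recall the standard fact, already used in \cite{cghh} to prove their Theorem~1.8(2), that $(Y_1 \# Y_2)(1)$ is obtained from the disconnected sutured contact manifold $Y_1(1) \sqcup Y_2(1)$ by attaching a single contact $1$-handle $H\times[-1,1]_t$ connecting a point $p\in\Gamma_1$ to a point $q\in\Gamma_2$: since each $R_+(\Gamma_i)$ is a disk, attaching a Weinstein $1$-handle to $R_+(\Gamma_1)\sqcup R_+(\Gamma_2)$ produces a disk again, with connected suture, and the underlying $3$-manifold is the boundary connected sum $Y_1(1)\natural Y_2(1)\cong (Y_1\# Y_2)\ssm \mathring{B^3}$, equipped with the contact structure $\xi_1\#\xi_2$ (the handle is attached along Darboux balls near $p$ and $q$, so this is the usual connect-sum operation on contact structures). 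By Theorem \ref{thm:1-handle-invariance}, in the interval-fibered generality noted in the remark following it, once we fix $H$-compatible embedding data for $Y_1(1)\sqcup Y_2(1)$ and for $(Y_1\#Y_2)(1)$ there is a canonical isomorphism
\[ \ech((Y_1\#Y_2)(1),\xi_1\#\xi_2) \isomto \ech(Y_1(1)\sqcup Y_2(1),\xi_1\sqcup\xi_2) \]
carrying $c(\xi_1\#\xi_2)$ to the contact class $[\emptyset]$ of the disjoint union. This step uses that the embedding-data formalism of Section \ref{sec:j-independence} and Theorems \ref{thm:j-independence} and \ref{thm:1-handle-invariance} apply verbatim when the suture is disconnected, the auxiliary closed manifolds $Y_n$ being built by attaching further $1$-handles to connect everything up.

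Next I would identify $\ech(Y_1(1)\sqcup Y_2(1),\xi_1\sqcup\xi_2)$ with the tensor product $\ech(Y_1(1),\xi_1)\otimes_\Z\ech(Y_2(1),\xi_2)$. The completion of $Y_1(1)\sqcup Y_2(1)$ is the disjoint union of the two completions, a generic tailored almost complex structure may be taken of the form $J=J_1\sqcup J_2$ with each $J_i$ generic and tailored, every closed Reeb orbit and every $J$-holomorphic curve lies entirely in one of the two pieces, and the ECH index is additive; the usual reasoning then shows that every $I=1$ curve contributing to the differential consists of an $I=1$ curve in one factor together with a union of covers of trivial cylinders in the other, so at the chain level $\ecc(Y_1(1)\sqcup Y_2(1),J)$ is the tensor product complex $\ecc(Y_1(1),J_1)\otimes_\Z\ecc(Y_2(1),J_2)$ with differential $\partial\otimes 1+1\otimes\partial$. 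The algebraic Künneth theorem then yields $\ech(Y_1(1)\sqcup Y_2(1),J)\cong\ech(Y_1(1),J_1)\otimes\ech(Y_2(1),J_2)$ provided we work over a field or one of the two groups is free Abelian, so that the Tor term vanishes; compatibility with the isomorphisms $\Phi_{J,J'}$ of Theorem \ref{thm:j-independence} lets this descend to the canonical groups. Under this identification the class $[\emptyset]$ of the disjoint union corresponds to $[\emptyset]\otimes[\emptyset]=c(\xi_1)\otimes c(\xi_2)$.

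Composing the two isomorphisms gives the desired $\ech((Y_1\#Y_2)(1),\xi_1\#\xi_2)\cong\ech(Y_1(1),\xi_1)\otimes\ech(Y_2(1),\xi_2)$ taking $c(\xi_1\#\xi_2)$ to $c(\xi_1)\otimes c(\xi_2)$. The only points that are not purely formal are the identification in the first paragraph, which is contained in \cite{cghh}, and the verification that the disjoint-union case of the embedding-data machinery behaves exactly as in the connected case; I expect this bookkeeping, rather than any new geometric input, to be the main obstacle. The sign subtleties in the ECH differential are sidestepped by the hypotheses (field coefficients, or a free summand), under which the Künneth isomorphism holds irrespective of the precise sign conventions.
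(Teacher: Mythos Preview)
Your proposal is correct and follows essentially the same approach as the paper: realize $(Y_1\#Y_2)(1)$ as $Y_1(1)\sqcup Y_2(1)$ with a single contact $1$-handle attached, apply Theorem~\ref{thm:1-handle-invariance}, and use the chain-level tensor product identification together with the K\"unneth theorem. The paper's proof is terser and does not dwell on the disconnected-suture bookkeeping you flag, but the strategy is identical.
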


\begin{proof}
We can build $(Y_1 \# Y_2)(1)$ by gluing a contact $1$-handle to the disjoint union $Y_1(1) \sqcup Y_2(1)$ with one foot on the boundary of each component.  The ECH chain complex of the disjoint union is canonically identified as
\[ \ecc(Y_1(1),\alpha_1, J_1) \otimes \ecc(Y_2(1),\alpha_2, J_2) \]
for any choice of contact form and almost complex structure on each $Y_i(1)$, with contact class represented by $\emptyset \otimes \emptyset$, so the result follows from Theorem \ref{thm:1-handle-invariance} and the K\"{u}nneth theorem.
\end{proof}

It is clear that $Y(n+1)$ can be obtained by taking $Y(n) \sqcup S^3(2)$, where $S^3$ has the standard tight contact structure, and attaching a contact $1$-handle with one foot on the boundary of each component.  Thus in order to compute $\ech(Y(n+1), \xi)$ from $\ech(Y(n),\xi)$ it suffices to determine the homology of $S^3(2)$.

\begin{lemma}
\label{lem:s3-2}
Suppose that $S^3$ is equipped with its tight contact structure $\xi_{\mathrm{std}}$.  Then there is an isomorphism $\ech(S^3(2), \xi_{\mathrm{std}}) \cong \zz^2$ identifying the contact class with $(1,0)$.
\end{lemma}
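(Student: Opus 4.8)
The plan is to reduce to an ECH computation for the sutured contact manifold $(S^1\times S^2)(1)$ by attaching a contact $1$-handle, and then to carry out that computation by embedding into a closed contact manifold and invoking Taubes' isomorphism together with the filtered version of Hutchings--Taubes. First I would identify the $1$-handle attachment topologically. Writing $S^3$ as a union of two Darboux balls, $S^3(2)$ is diffeomorphic to $S^2\times[-1,1]$ with one suture on each boundary sphere, so that $R_+(\Gamma)$ and $R_-(\Gamma)$ each consist of two disks. Attaching a contact $1$-handle joining the two sutures as in Theorem \ref{thm:1-handle} produces $(M',\Gamma',\alpha')$, and an Euler--characteristic and fundamental--group computation shows $\chi(M')=1$, that $\Gamma'$ is connected, $\pi_1(M')\cong\Z$ and $\partial M'=S^2$, whence $M'\cong(S^1\times S^2)\ssm B^3=(S^1\times S^2)(1)$; moreover the operation is the contact self--connected--sum, so $\alpha'$ caps off over $B^3$ to the tight contact structure on $S^1\times S^2$ and $(M',\Gamma',\xi')=(S^1\times S^2)(1)$ with its tight structure. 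Since attaching a contact $1$-handle is a special case of an interval--fibered extension, Theorem \ref{thm:1-handle-invariance} gives an isomorphism $\ech(S^3(2),\xi_{\mathrm{std}})\isomto\ech((S^1\times S^2)(1),\xi_{\mathrm{std}})$ carrying the contact class to the contact class, and by Theorem \ref{thm:h1-splitting} it respects the $H_1$-splittings (the inclusion $S^3(2)\hookrightarrow(S^1\times S^2)(1)$ being zero on $H_1$). Thus it suffices to prove $\ech((S^1\times S^2)(1),\xi_{\mathrm{std}})\cong\Z^2$ with the contact class a free generator.

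To compute $\ech((S^1\times S^2)(1),\xi_{\mathrm{std}})$ I would run the closed--manifold embedding of Sections \ref{sec:j-independence}--\ref{ssec:continuation}: fix a generic tailored almost complex structure, and for each $L$ and all sufficiently large $n$ embed $(S^1\times S^2)(1)$ in a closed contact $3$-manifold $(Y_n,\lambda_n)$ so that every Reeb orbit of $\lambda_n$ of action $<L$ lies inside $(S^1\times S^2)(1)$, giving a canonical isomorphism $\ech^L((S^1\times S^2)(1),J)\cong\ech^L(Y_n,\lambda_n,J_n)$. By Taubes' Theorem \ref{thm:taubes} together with the identification of filtered ECH with filtered Seiberg--Witten Floer cohomology \cite{ht2}, $\ech^L(Y_n,\lambda_n)$ is identified with a filtered piece of $\widehat{HM}^{-*}(Y_n)$. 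One then identifies $Y_n$ as a Dehn filling of the manifold obtained by gluing the neck $[-n,n]\times R_+(\Gamma')$ to $(S^1\times S^2)(1)$ along the two disks $R_\pm(\Gamma')$, i.e.\ a surgery on an unknot contained in a ball in $S^1\times S^2$, with slope dictated by the construction of Proposition \ref{prop:embed-filtered}. Passing to the direct limit over $L$ (and $n$), all "new" Reeb orbits—those in the neck and in the filling solid torus—have action tending to infinity, so $\ech((S^1\times S^2)(1))$ is exactly the part of $\widehat{HM}^{-*}(Y_n)$ generated by orbit sets confined to $(S^1\times S^2)(1)$; computing this portion yields $\Z^2$, the part corresponding under the usual dictionary to $\widehat{HF}(S^1\times S^2)\cong\Z^2$. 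Since $H_1((S^1\times S^2)(1))\cong\Z$, one checks along the way that the two contributing generators both have homology class $0$. Throughout, the contact class $c=[\emptyset]$ corresponds under Taubes' isomorphism to the Seiberg--Witten contact invariant, which is nonzero because $(S^1\times S^2,\xi_{\mathrm{std}})$ is subcritically Stein fillable—exactly as in the argument of Theorem \ref{thm:stein-fillable}—so $c$ is one of the two free generators, and we choose the identification so that $c\mapsto(1,0)$.

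Finally, since the construction used a choice of embedding data, I would invoke Remark \ref{rem:canonical-transitive-systems} and Section \ref{sec:stabilize} (which rests on Appendix \ref{sec:appendix}) to conclude that $\ech((S^1\times S^2)(1),\xi_{\mathrm{std}})$ and the position of its contact class are independent of that choice; combined with the computation above and the reduction of the first paragraph, this gives $\ech(S^3(2),\xi_{\mathrm{std}})\cong\Z^2$ sending $c(\xi_{\mathrm{std}})$ to $(1,0)$.

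The main obstacle is the Seiberg--Witten/monopole computation in the middle step: pinning down the closed manifold $Y_n$ precisely, computing enough of its filtered $\widehat{HM}^{-*}$, verifying that the ``orbits--confined--to--$M$'' part stabilizes to exactly $\Z^2$ rather than to something larger, and checking that the contact invariant is a free generator of it (not twice a generator, nor zero). The remaining ingredients—the topological identification of the $1$-handle attachment, the reduction via Theorem \ref{thm:1-handle-invariance}, and the bookkeeping with the $H_1$-splitting and with embedding data—are routine given the machinery already in place.
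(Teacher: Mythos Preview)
Your strategy is quite different from the paper's, and its central step has a genuine gap. The paper does not pass through $(S^1\times S^2)(1)$ or Seiberg--Witten at all: it builds an explicit rotationally symmetric model for $(S^3(2),\alpha)$ with exactly one closed Reeb orbit $h$, which is hyperbolic, so that the chain complex is $\Z\emptyset\oplus\Z h$. The vanishing of $\partial h$ is then proved by an intersection--positivity argument: any holomorphic curve from $h$ to $\emptyset$ would have to meet certain product cylinders $\R\times\gamma_i$ over Reeb flowlines through the axis of symmetry (since $h$ generates $H_1$ of the complement of that axis), but a homological argument of the type in Lemma~\ref{lem:tau-stretching} forces the algebraic intersection with any such cylinder to vanish, contradicting positivity. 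This is an elementary, self--contained computation.

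The gap in your argument is the ``computation'' in the second paragraph. You correctly have $\ech^L((S^1\times S^2)(1))\cong\ech^L(Y_n,\lambda_n)$ for $n\gg L$, and filtered ECH of $Y_n$ is indeed filtered $\widehat{HM}$ of $Y_n$. But as $L\to\infty$ the required $n$ also goes to infinity, so the direct limit is not a piece of $\widehat{HM}^{-*}(Y_n)$ for any fixed $n$; there is no statement in this paper (or in \cite{ht2}) identifying $\ech(Y(1))$ with any closed--manifold Floer group, let alone with $\widehat{HF}(Y)$. Your sentence ``computing this portion yields $\Z^2$, the part corresponding under the usual dictionary to $\widehat{HF}(S^1\times S^2)$'' is an assertion, not a computation: you have not identified $Y_n$, nor its $\widehat{HM}$, nor argued why the limit stabilizes to $\Z^2$ rather than something larger. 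Establishing such a dictionary is essentially the content of \cite[Conjecture~1.2]{cghh} and lies well beyond what the machinery here provides.

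A secondary issue: the identification $M'\cong(S^1\times S^2)(1)$ is not determined by $\pi_1$ and $\chi$ alone, and even granting the diffeomorphism you would need to pin down the contact structure on $M'$ as the restriction of $\xi_{\mathrm{std}}$ on $S^1\times S^2$; ``contact self--connected--sum'' is suggestive but not a proof. These points are fixable with more care, but the Seiberg--Witten step is not, and the paper's direct argument avoids all of this.
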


\begin{proof}
We can realize $S^3(2)$ topologically by taking the disk $D^2_{r,\theta} \times [-2,2]_t$, where $D^2$ has radius 2, with contact form $\alpha = dt + \beta$ for some rotationally symmetric Liouville form $\beta$ on $D^2$, and removing the region $\{r \leq 1\} \times [-1,1]_t$.  Now there are two boundary components, but the innermost one is ``concave'' in the sense of \cite{cghh}, so we must glue on a piece to make it convex as in \cite[Proposition 4.6]{cghh}.  We can modify that procedure slightly for convenience: while they glue on a solid torus with corners of the form $([0,1]_\tau \times[-1,1]_t \ssm \mathrm{int}(D\cup N_\epsilon)) \times S^1$ (which in their notation is $\overline{M''\ssm M}$; see \cite[Figure 2]{cghh}) and then round corners, we can rotate the given $[0,1]_\tau \times [-1,1]_t \ssm \mathrm{int}(D\cup N_\epsilon)$ about the $\tau=1$ portion of its boundary (call the axis of rotation $L$), thus gluing in a thickened $2$-sphere with corners instead.  The result is a sutured contact manifold $(S^3(2),\alpha)$ with rotational symmetry about the pair of line segments $\tilde{L}$ obtained by extending $L$ to $\partial S^3(2)$ via the Reeb flow.

By construction, $(S^3(2),\alpha)$ has a single closed hyperbolic Reeb orbit $h$, and the complement of $h$ is fibered by Reeb flowlines from $R_-(\Gamma)$ to $R_+(\Gamma)$, any of which is isotopic to one of the flowlines which foliate the neighborhood $U(\Gamma)$ of $\Gamma$.  Letting $M^*$ denote its completion and $J$ a tailored almost complex structure on $\R\times M^*$, it follows that
\[ \ecc(S^3(2),\alpha,J) = \zz \emptyset \oplus \zz h. \]
The differential can only count ECH index 1 $J$-holomorphic curves from $h$ to $\emptyset$, so suppose we have such a curve $\mathcal{C}$; its projection $\pi(\mathcal{C})$ to $M^*$ is a 2-chain bounded by $h$.  Since $S^3(2) \ssm \tilde{L}$ retracts onto $h$, and hence $h$ is homologically nontrivial there, we observe that $\pi(\mathcal{C})$ cannot avoid the pair of Reeb flow lines $\gamma_1,\gamma_2$ through $\tilde{L}$ in $M^*$, hence $\mathcal{C} \cdot (\R\times(\gamma_1 \cup \gamma_2)) > 0$ by intersection positivity.  Now given any flow line $\gamma_0$ through a point of $U(\Gamma)$, we can take the segments of $\gamma_0,\gamma_1,\gamma_2$ with $-t_0 \leq t \leq t_0$ for some positive $t_0$ and then connect the endpoints of these segments by paths contained in the regions $\{t=\pm t_0\}$ to get a closed cycle.  For $t_0$ sufficiently large we now repeat the arguments of Lemma \ref{lem:tau-stretching} to show that $\mathcal{C} \cdot (\R\times \gamma_0) = \mathcal{C} \cdot (\R\times(\gamma_1\cup\gamma_2)) > 0$, which is again impossible.

In particular, we must have $\partial h = 0$, so the differential on $\ecc(S^3(2),\alpha,J)$ is zero and the lemma follows.
\end{proof}

\begin{corollary}
\label{cor:remove-darboux-balls}
For any closed contact manifold $(Y,\xi)$ and $n \geq 1$, there is an isomorphism
\[ \ech(Y(n+1),\xi) \cong \ech(Y(n),\xi) \otimes_{\zz} \zz^2 \]
sending $c(\xi|_{Y(n+1)})$ to $c(\xi|_{Y(n)}) \otimes (1,0)$, and so $\ech(Y(n+1),\xi) \cong \ech(Y(1),\xi) \otimes_{\zz} \zz^{2^n}$.
\end{corollary}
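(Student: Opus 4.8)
The plan is to realize $Y(n+1)$ as a contact $1$-handle attachment to a disjoint union involving $S^3(2)$, and then combine the $1$-handle invariance of Theorem \ref{thm:1-handle-invariance} with the computation of $\ech(S^3(2),\xi_{\mathrm{std}})$ in Lemma \ref{lem:s3-2}.

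First I would recall, as noted just before the statement, that $Y(n+1)$ is obtained from the disjoint union $Y(n)\sqcup S^3(2)$ by attaching a single contact $1$-handle $H\times[-1,1]_t$ with one foot on a suture of $Y(n)$ and the other on a suture of $S^3(2)$. Strictly speaking, Theorem \ref{thm:1-handle-invariance} is stated for a connected sutured manifold, but as observed in the remark following that theorem it applies to arbitrary interval-fibered extensions with compatible embedding data, and joining two components of a disconnected sutured contact manifold by a $1$-handle is such a move; alternatively one reruns the proof of Theorem \ref{thm:1-handle-invariance} in this setting, since Lemma \ref{lem:tau-stretching} and all the constructions of Section \ref{sec:j-independence} are local near the handle and the sutures and go through unchanged. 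So, after fixing $H$-compatible embedding data for $Y(n)\sqcup S^3(2)$ and for $Y(n+1)$, we obtain a canonical isomorphism $\ech(Y(n)\sqcup S^3(2))\isomto\ech(Y(n+1),\xi)$ carrying the contact class to the contact class.

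Next I would identify the ECH of the disjoint union. For generic tailored almost complex structures $J_1$ and $J_2$ on the completions of $Y(n)$ and $S^3(2)$, every closed Reeb orbit of the disjoint union lies in exactly one of the two pieces and no holomorphic curve connects the two pieces, so at the chain level $\ecc(Y(n)\sqcup S^3(2),J_1\sqcup J_2)=\ecc(Y(n),\alpha_1,J_1)\otimes_\zz\ecc(S^3(2),\alpha_2,J_2)$, with the Leibniz differential and contact class $\emptyset\otimes\emptyset$. Since $\ech(S^3(2),\xi_{\mathrm{std}})\cong\zz^2$ is free Abelian by Lemma \ref{lem:s3-2}, the Künneth theorem over $\zz$ carries no Tor correction and yields a natural isomorphism
\[ \ech(Y(n)\sqcup S^3(2))\cong\ech(Y(n),\xi)\otimes_\zz\ech(S^3(2),\xi_{\mathrm{std}})\cong\ech(Y(n),\xi)\otimes_\zz\zz^2, \]
sending $[\emptyset\otimes\emptyset]$ to $c(\xi|_{Y(n)})\otimes(1,0)$ by the last clause of Lemma \ref{lem:s3-2}. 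Composing with the isomorphism $F$ of Theorem \ref{thm:1-handle-invariance}, which preserves the contact class, gives the asserted isomorphism $\ech(Y(n+1),\xi)\cong\ech(Y(n),\xi)\otimes_\zz\zz^2$ taking $c(\xi|_{Y(n+1)})$ to $c(\xi|_{Y(n)})\otimes(1,0)$.

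Finally, the closed formula $\ech(Y(n+1),\xi)\cong\ech(Y(1),\xi)\otimes_\zz\zz^{2^n}$ is a trivial induction on $n\ge 1$: the case $n=1$ is the isomorphism just proved applied to $Y(1)$, and if $\ech(Y(n),\xi)\cong\ech(Y(1),\xi)\otimes_\zz\zz^{2^{n-1}}$ then tensoring with $\zz^2$ gives the statement for $n+1$. I expect the only point genuinely requiring care is the one flagged in the first paragraph — verifying that the $1$-handle invariance machinery of Section \ref{sec:j-independence}, written throughout for connected $M$, really does apply when the handle is attached across two components of a disconnected sutured contact manifold; but since the handle lies away from $\partial M$ and the relevant constructions (the neck-stretching lemmas, the embedding data, the maps $\Phi_{J,J'}$) are all local, nothing essential changes, and indeed the proof of the preceding connected-sum corollary already relies implicitly on exactly this.
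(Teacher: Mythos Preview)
Your proposal is correct and matches the paper's intended argument essentially verbatim: the paper leaves this corollary without an explicit proof, but the preceding paragraph and the proof of the connected-sum corollary lay out exactly the ingredients you use --- realize $Y(n+1)$ as $Y(n)\sqcup S^3(2)$ plus a $1$-handle, apply Theorem~\ref{thm:1-handle-invariance}, identify the chain complex of the disjoint union as a tensor product, and invoke Lemma~\ref{lem:s3-2} and K\"unneth. Your caveat about the $1$-handle theorem applying across components of a disconnected sutured manifold is well-placed and is handled in the paper just as you indicate, by the remark after Theorem~\ref{thm:1-handle-invariance} and the connected-sum corollary's proof.
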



\section{Stabilization and a canonical version of sutured ECH}
\label{sec:stabilize}

Let $(M,\Gamma,\alpha)$ be a sutured contact manifold and $E$ be a choice of embedding data.  The latter consists of a set of contact $1$-handles $\mathcal{H}$ attached to $(M,\Gamma,\alpha)$ to produce a new sutured contact manifold $(M',\Gamma',\alpha')$ with connected suture $\Gamma'$, and a diffeomorphism $f:R_+(\Gamma')\isomto R_-(\Gamma')$ (see Definition \ref{def:embedding-data}). In this section, we prove that various isomorphisms and maps defined in Sections \ref{sec:j-independence} and \ref{sec:alpha-independence} are independent of the choice of embedding data. To do this, we will use Lemma \ref{lem:monopoles-114}, Proposition \ref{prop:comp-62}, and Proposition \ref{prop:comp-63}. In what follows, we emphasize the dependence of various groups, isomorphisms, and maps defined in Sections \ref{sec:j-independence}--\ref{sec:handle-invariance} on a choice of embedding data via a subscript, as in $\ech^L_E(M,\Gamma,\alpha)$.

To apply Lemma \ref{lem:monopoles-114}, Proposition \ref{prop:comp-62}, and Proposition \ref{prop:comp-63}, we need the upcoming lemma. To set the stage, given a sutured contact manifold $(M,\Gamma,\alpha)$ with connected suture $\Gamma$ and contact form $\alpha$ restricting to $U(\Gamma)\simeq(-1,0]_\tau\times[-1,1]\times\Gamma$ as $Cdt+Ke^\tau d\theta$, let $T\geq 0$ and denote by $M^T$ the sutured contact manifold $M\cup_{\{0\}\times[-1,1]\times\Gamma}[0,T]_\tau\times[-1,1]_t\times \Gamma$ with contact form that restricts to $M$ as $\alpha$ and to $[0,T]_\tau\times[-1,1]_t\times \Gamma$ as $Cdt+Ke^\tau d\theta$. A diffeomorphism $f:R_+(\Gamma)\to R_-(\Gamma)$ as in Lemma \ref{lem:glue-pm-filtered} can be extended to $R_+(\Gamma)\cup_{\{0\}\times\{1\}\times\Gamma}[0,T]\times\{1\}\times\Gamma$ so as to restrict to $[0,T]\times\{1\}\times\Gamma$ as the identity map on the $[0,T]$ and $\Gamma$ factors. Having fixed $T>0$, we can choose a set of handles to be used with both $(M,\Gamma,\alpha)$ and $(M^T,\Gamma,\alpha^T)$. To be more explicit, we may choose $A>\frac{(Ke^T)^2}{2}$ in the proof of Theorem \ref{thm:1-handle}, even though the contact forms on these handles for $M$ and $M^T$ would necessarily be different.
\begin{lemma}
\label{lem:tau-neck}
Let $(M,\Gamma,\alpha)$ be a sutured contact manifold with connected suture, and $E$ be a choice of embedding data. For any $L>0$ and $T\geq 0$, there exists a canonical isomorphism 
\[\mathbb{I}^{L,T}_E:\ech^L_E(M,\Gamma,\alpha)\to\ech^L_E(M^T,\Gamma,\alpha^T),\]
satisfying $\mathbb{I}^{L',T}_E\circ i^{L,L'}=i^{L,L'}\circ\mathbb{I}^{L,T}_E$. Hence, in the direct limit as $L\to\infty$, we have canonical isomorphisms 
\[\mathbb{I}^{T}_E:\ech_E(M,\Gamma,\alpha)\to\ech_E(M^T,\Gamma,\alpha^T).\]
Moreover, given a path $\{\alpha_s\}_{s\in[0,1]}$ of contact forms agreeing with $\alpha$ on a neighborhood of $\partial M$, the canonical isomorphisms $\mathbb{I}^{T}_E$ commute with the maps $F_{\alpha_s}$ of Section \ref{sec:alpha-independence}.
\end{lemma}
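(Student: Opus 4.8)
The plan is to build $\mathbb{I}^{L,T}_E$ by factoring the transition from $(M,\Gamma,\alpha)$ to $(M^T,\Gamma,\alpha^T)$ through a single closed manifold and invoking the now-familiar machinery of Section~\ref{sec:j-independence}. First I would observe that, because the added collar $[0,T]_\tau\times[-1,1]_t\times\Gamma$ carries the contact form $Cdt+Ke^\tau d\theta$ whose Reeb vector field is $\tfrac1C\partial_t$, all closed Reeb orbits of $\alpha^T$ lie in $M$, and the symplectic action functional on $\mathcal{A}_{\alpha^T}$ agrees with $\mathcal{A}_\alpha$ under the canonical identification of ECH generators. Moreover, by Definition~\ref{def:tailored} and the remark following Definition~\ref{def:completion}, the completions $(M^*,\alpha^*)$ and $((M^T)^*,(\alpha^T)^*)$ are literally identical: attaching the $\tau$-collar before completing versus after completing produces the same open manifold, since completion already attaches $[-1,\infty)_\tau\times\R\times\Gamma$. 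Hence a tailored $J$ on $\R\times M^*$ is also tailored to $\R\times(M^T)^*$, and one gets a canonical chain isomorphism $\ecc^L(M,\Gamma,\alpha,J)\cong\ecc^L(M^T,\Gamma,\alpha^T,J)$ sending $\Theta\mapsto\Theta$, exactly as in Lemma~\ref{lem:scaling}.

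The content of the lemma is therefore not the existence of \emph{an} isomorphism but its compatibility with the $J$-independence isomorphisms $\Phi^L_{J,J'}$ built from the chosen embedding data $E$, so that it descends to the canonical groups $\ech^L_E$. Here is where the stipulation about choosing $A>\tfrac{(Ke^T)^2}{2}$ in the proof of Theorem~\ref{thm:1-handle} enters: with this choice the same collection of topological $1$-handles $\mathcal H$ (and the same diffeomorphism $f$, extended over $[0,T]\times\{1\}\times\Gamma$ by the identity) serves as embedding data for both $(M,\Gamma,\alpha)$ and $(M^T,\Gamma,\alpha^T)$, producing closed manifolds $Y_n$ and $Y_n^T$ respectively. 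I would then argue that $Y_n$ and $Y_n^T$ are canonically diffeomorphic — indeed the $\tau$-collar of $M^T$ is absorbed into the region $[-1,0]_\tau\times S^1_t\times\Gamma'$ of $U(\Gamma')$ that already appears in the construction of $Y_n$, so one may take $Y_n^T=Y_n$ with $\alpha_n^T$ differing from $\alpha_n$ only by where the ``suture'' sits inside a collar on which the contact form is unchanged — and that under this identification $\alpha_n$ and $\alpha_n^T$ have the same closed Reeb orbits of action $<L$ and the same moduli spaces $\mathcal M_{I=1}$ by Lemma~\ref{lem:canonical-cor}. Applying Lemma~\ref{lem:j-independence-homotopy} to the constant family (or a short rescaling family) on $Y_n$ shows the canonical isomorphism $\ech^L(Y_n,\alpha_n)\to\ech^L(Y_n,\alpha_n^T)$ of \cite[Theorem~1.3]{ht2} is induced by $\Theta\mapsto\Theta$; composing with $\Phi^{L,J}_n$ on each side and taking $n$ large yields $\mathbb{I}^{L,T}_E$ and shows it is well-defined on $\ech^L_E$ independently of $J$, $J_H$, and $n$. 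Compatibility with $i^{L,L'}$ is then immediate since every map in sight is induced by $\Theta\mapsto\Theta$ or commutes with the inclusion maps by \cite[Theorem~1.3(b)]{ht2}, and passing to the direct limit gives $\mathbb{I}^T_E$.

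For the final assertion, that $\mathbb{I}^T_E$ commutes with the isomorphisms $F_{\alpha_s}$ of Section~\ref{sec:alpha-independence}, I would run the construction of $F_{\alpha_s}$ in parallel for $(M,\Gamma,\alpha_s)$ and $(M^T,\Gamma,\alpha^T_s)$ using the common embedding data, obtaining exact symplectic cobordisms $X^n_{01}$, $X^n_{10}$ on one copy of $Y_n$ and $(X^T)^n_{01}$, $(X^T)^n_{10}$ on the identified $Y_n^T=Y_n$. Since the contact forms $\alpha^s_n$ and $(\alpha^T)^s_n$ agree outside $\mathrm{int}(M)$ and the interpolating function $\psi(s)$ in Lemma~\ref{lem:cobordism-family} only matters on $\mathrm{int}(M)$, the Liouville forms $\lambda^{01}_n$ and $(\lambda^T)^{01}_n$ are literally equal under the diffeomorphism $Y_n\cong Y_n^T$; hence the ECH cobordism maps of \cite[Theorem~1.9]{ht2} coincide, and the diagram relating $\Psi^{R,L}_{n,10}$ for $M$ and for $M^T$ commutes with the vertical maps induced by $\Theta\mapsto\Theta$. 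Taking the direct limit over $L$ gives the desired commutativity $\mathbb{I}^T_E\circ F_{\alpha_s}=F_{\alpha_s}\circ\mathbb{I}^T_E$. The main obstacle I anticipate is bookkeeping rather than anything deep: one must be careful that ``the same embedding data'' genuinely yields \emph{the same} closed manifold with contact/almost-complex structures agreeing away from $\mathrm{int}(M)$ — in particular that the $\tau$-direction collar of $M^T$ can be re-coordinatized inside $U(\Gamma')$ so that nothing in the gluing-on-a-solid-torus step of Proposition~\ref{prop:embed-filtered} changes — so that every comparison reduces, via Lemma~\ref{lem:j-independence-homotopy}, to a chain map of the form $\Theta\mapsto\Theta$.
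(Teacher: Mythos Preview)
Your proposal follows the paper's overall strategy --- pass to the closed manifolds $Y_n$ and $Y_n^T$ via the embedding data, identify them, and show via Lemma~\ref{lem:j-independence-homotopy} that the canonical isomorphism is induced by $\Theta\mapsto\Theta$ --- and you correctly observe that $(M^*,\alpha^*)=((M^T)^*,(\alpha^T)^*)$ gives the chain-level identification for free. However, there is a genuine gap in the execution.

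The claim that under an identification $Y_n\cong Y_n^T$ the contact form is ``unchanged'' on the collar, and that one may therefore invoke Lemma~\ref{lem:j-independence-homotopy} with a ``constant family (or a short rescaling family),'' is too strong. Any diffeomorphism $Y_n\to Y_n^T$ must stretch the $\tau$-coordinate on $[-1,0]_\tau\times S^1_t\times\Gamma$ to $[-1,T]$, and on that region $\alpha_n=Cdt+Ke^\tau d\theta$ while the pullback of $\alpha_n^T$ is $Cdt+Ke^{h(\tau)}d\theta$ for the stretch $h$; these are different contact forms (and, as the setup preceding the lemma notes, the contact forms on the handles also differ). The paper addresses this by building an explicit smooth family $\varphi_s:Y_n\to Y_n^{sT}$ of $\tau$-stretches (via functions $h_s:[-1,0]\to[-1,sT]$ which are the identity near $\tau=-1$) and applying Lemma~\ref{lem:j-independence-homotopy} to the genuinely varying family $(\varphi_s^*(\alpha_n^{sT}),\varphi_s^*(J_n^{sT}))$, checking that it is constant near the Reeb orbits of action $<L$ (which all lie in $M$, away from the support of $\varphi_s-\mathrm{Id}$). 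To descend to $\ech^L_E$ one must further show commutativity with the $J$-change isomorphisms $\Phi^L_{J,J';E}$: the paper does this by observing that the admissible deformation from $(\alpha_n,J_n)$ to $(\alpha_n,J'_n)$ is homotopic, through the family $\varphi_s$, to the concatenation passing through $(\alpha_n^T,J_n^T)$ and $(\alpha_n^T,(J'_n)^T)$, and then invokes \cite[Lemma~3.4(a)]{ht2}. Your sketch does not supply either the family or this homotopy-of-deformations step.

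The same issue recurs in your argument for commutativity with $F_{\alpha_s}$: the Liouville forms $\lambda^{01}_n$ and $(\lambda^T)^{01}_n$ are \emph{not} literally equal under the identification, for exactly the reason above. The paper instead notes that by taking the stretching-support parameter $\kappa\gg1$ one arranges $\varphi_s=\mathrm{Id}$ wherever the path $\alpha_s$ is non-constant, and then argues in the manner of Proposition~\ref{prop:alpha-ind-psi-n} (interpolating Liouville forms and using homotopy invariance of the cobordism maps) rather than by a direct equality. In short, your anticipated obstacle --- that one must check the closed manifolds genuinely agree away from $\mathrm{int}(M)$ --- is in fact the crux, and it is resolved not by an equality but by the explicit deformation $\varphi_s$.
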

\begin{proof}
Let $L>0$ be fixed, and $n>0$ be sufficiently large with respect to $L$.  Use $E$, as in Proposition \ref{prop:embed-filtered}, to embed $(M,\alpha)$ and $(M^T,\alpha^T)$ in closed contact manifolds $(Y_n,\alpha_n)$ and $(\ynt,\ant)$, respectively. Note that in each embedding, we can use a solid torus $V_n$ with radius $\frac{n+1}{\pi}$. With the preceding understood, fix a path of strictly increasing functions $h_s:[-1,0]\to[-1,sT]$ such that
\begin{itemize}\leftskip-0.35in
\item For each $s\in[0,1]$, $h_s$ is a diffeomorphism with $h_0={\rm Id}$,
\item For each $s\in[0,1]$, $h_s(\tau)=\tau$ for $\tau\in[-1,-\frac{1}{\kappa})$ and $h_s(\tau)=\tau+sT$ for $\tau\in(-\frac{1}{2\kappa},0]$, where $\kappa\gg1$.
\end{itemize}
Following the proof of Proposition \ref{prop:j-independence-n-iso}, consider the smooth family of diffeomorphisms
\[\varphi_s:Y_n\to {Y_n}^{sT},\]
which restrict to $(Y_n\ssm [-1,0]\times S^1_t\times\Gamma)\cup V_n$ as identity and are defined by $(h_s,{\rm Id}_{S^1},{\rm Id}_\Gamma)$ on $[-1,0]\times S^1_t\times\Gamma$. Note that $(M^\ast,\alpha^\ast)=((M^T)^\ast,(\alpha^T)^\ast)$, and hence a tailored almost complex structure $J$ on $\R\times M^\ast$ is also a tailored almost complex structure on $\R\times(M^T)^\ast$. Pull back via $\varphi_s$ the contact form on ${Y_n}^{sT}$ and the almost complex structure for $\R\times {Y_n}^{sT}$ so as to get a smooth 1-parameter family of contact forms on $Y_n$ and almost complex structures on $\R\times Y_n$. Applying Lemma \ref{lem:j-independence-homotopy} to this smooth family as in the proof of Proposition \ref{prop:j-independence-n-iso}, we get a canonical isomorphism
\[\mathbb{I}^{L,T}_{n;E}:\ech^L(Y_n,\alpha_n,J_n)\to\ech^L(\ynt,\ant,\jnt),\]
induced by the chain map sending an ECH generator $\Theta\in \ecc^L(Y_n,\alpha_n,J_n)$ to its image $\Theta\in \ecc^L(\ynt,\ant,\jnt)$. As a result, we have a commutative diagram
\[ \xymatrixcolsep{5pc}\xymatrix{
\ech^L(M,\Gamma,\alpha,J) \ar[r]^{\tilde{\Phi}^{L,J}_{n;E}} \ar[d]_-{\mathbb{I}^{L,T}} &
\ech^L(Y_n,\alpha_n,J_n)\ar[d]^-{\mathbb{I}^{L,T}_{n;E}}\\
\ech^L(M^T,\Gamma,\alpha^T,J) \ar[r]^{(\tilde{\Phi}^T)^{L,J}_{n;E}} &
\ech^L(\ynt,\ant,\jnt),
} \]
where the canonical isomorphism $\mathbb{I}^{L,T}$ is induced by the chain map sending an ECH generator $\Theta\in \ecc^L(M,\Gamma,\alpha,J)$ to its image $\Theta\in\ecc^L(M^T,\Gamma,\alpha^T,J)$, while $\tilde{\Phi}^{L,J}_{n;E}$ and $(\tilde{\Phi}^T)^{L,J}_{n;E}$ are the isomorphisms defined in Section \ref{sec:j-independence} for $M$ and $M^T$, respectively. Note that the isomorphism $\mathbb{I}^{L,T}$ satisfies $\mathbb{I}^{L',T}\circ i^{L,L'}=i^{L,L'}\circ\mathbb{I}^{L,T}$. Meanwhile, for a pair of $\ech^L$-generic tailored almost complex structures $J,J'$ on $\R\times M^\ast$, we have a commutative diagram
\begin{equation}
\label{eq:tau-neck}
\xymatrix{
\ech^L(Y_n,\alpha_n,J_n) \ar[r]^{} \ar[d]_-{\mathbb{I}^{L,T}_{n;E}} &
\ech^L(Y_n,\alpha_n,J'_n)\ar[d]^-{\mathbb{I}^{L,T}_{n;E}}\\
\ech^L(\ynt,\ant,\jnt) \ar[r]^{} &
\ech^L(\ynt,\ant,\jpnt),
}
\end{equation}
where the horizontal arrows are the canonical isomorphisms of \cite[Theorem 1.3]{ht2}. The commutativity of the above diagram is due to \cite[Lemma 3.4(a)]{ht2}. To see this, note that the admissible deformation, in the sense of \cite[Definition 3.3]{ht2}, from $(\alpha_n,J_n)$ to $(\alpha_n,J'_n)$ is homotopic through admissible deformations defined via the smooth family $\{\varphi_s\}_{s\in[0,1]}$ to the concatenation of the admissible deformations from $(\alpha_n,J_n)$ to $(\ant,\jnt)$, from $(\ant,\jnt)$ to $(\ant,\jpnt)$, and from $(\ant,\jpnt)$ to $(a_n,J'_n)$. Consequently, we have a commutative diagram
\[\xymatrixcolsep{5pc}\xymatrix{
\ech^L(M,\Gamma,\alpha,J) \ar[r]^{\Phi^L_{J,J';E}} \ar[d]_-{\mathbb{I}^{L,T}} &
\ech^L(M,\Gamma,\alpha,J') \ar[d]^-{\mathbb{I}^{L,T}} \\
\ech^L(M^T,\Gamma,\alpha^T,J) \ar[r]^{(\Phi^T)^{L}_{J,J';E}} \ar[r] &
\ech^L(M^T,\Gamma,\alpha^T,J'),
}\]
hence a canonical isomorphism 
\[\mathbb{I}^{L,T}_E:\ech^L_E(M,\Gamma,\alpha)\to\ech^L_E(M^T,\Gamma,\alpha^T),\]
satisfying $\mathbb{I}^{L',T}_E\circ i^{L,L'}=i^{L,L'}\circ\mathbb{I}^{L,T}_E$. Consequently, in the direct limit as $L\to\infty$ we obtain canonical isomorphisms
\[\mathbb{I}^T_E:\ech_E(M,\Gamma,\alpha)\to\ech_E(M^T,\Gamma,\alpha^T).\]
Note that we also have $\mathbb{I}^{L,T}_E\circ P^{L,J}_E=(P^T)^{L,J}_E\circ\mathbb{I}^{L,T}$, where 
\[P^{L,J}_E:\ech^L(M,\Gamma,\alpha,J)\isomto\ech^L_E(M,\Gamma,\alpha),\]
and 
\[(P^T)^{L,J}_E:\ech^L(M^T,\Gamma,\alpha^T,J)\isomto\ech^L_E(M^T,\Gamma,\alpha^T)\]
are the canonical isomorphisms \eqref{eq:P-L-J} defined at the end of Section \ref{sec:j-independence}.

As for the last claim of the lemma, it follows analogously to Proposition \ref{prop:alpha-ind-psi-n} that the canonical isomorphisms $\mathbb{I}^T_E$ commute with maps induced by paths of contact forms agreeing with $\alpha$ on a neighborhood of $\partial M$. In this regard, note that we may choose $\kappa\gg1$ so that the path of contact forms remains constant where the diffeomorphisms $\varphi_s$ are not the identity.
\end{proof}

Now we address the issue of independence of the choice of embedding data. We will do this in two steps. First we consider what happens when we change just the gluing diffeomorphism part of the embedding data. 
\begin{proposition}
\label{prop:gluing-map-doesn't-matter}
Given $L>0$, for any two choices $E$ and $E'$ of embedding data which have the same sets of handles but potentially different diffeomorphisms $f$ and $f'$, there is a canonical isomorphism
\[ \Psi^L_{E,E'}: \ech^L_E(M,\Gamma,\alpha) \isomto \ech^L_{E'}(M,\Gamma,\alpha) \]
for every $\alpha$.  It satisfies the following properties:
\begin{enumerate}\leftskip-0.25in
\item \label{item:psi-transitive} Given a third $E''$ with the same set of handles as $E$ and $E'$, we have
\[ \Psi^L_{E,E''} = \Psi^L_{E',E''} \circ \Psi^L_{E,E'}. \]
In particular, $\Psi^L_{E,E}$ is the identity on $\ech^L_E(M,\Gamma,\alpha)$.
\item For $L < L'$ we have $\Psi^{L'}_{E,E'} \circ i^{L,L'}_E = i^{L,L'}_{E'} \circ \Psi^{L}_{E,E'}$, and thus in the direct limit as $L\to\infty$, we get a canonical isomorphism 
\[\Psi_{E,E'}: \ech_E(M,\Gamma,\alpha) \isomto \ech_{E'}(M,\Gamma,\alpha).\]
\item \label{item:psi-isotopy} Given any path $\{\alpha_s\}_{s\in[0,1]}$ of contact forms agreeing with $\alpha$ on a neighborhood of $\partial M$, the diagram
\[ \xymatrix{
\ech_E(M,\Gamma,\alpha_0) \ar[r]^{F_{\alpha_s}} \ar[d]_{\Psi_{E,E'}} &
\ech_E(M,\Gamma,\alpha_1) \ar[d]^{\Psi_{E,E'}} \\
\ech_{E'}(M,\Gamma,\alpha_0) \ar[r]^{F_{\alpha_s}} &
\ech_{E'}(M,\Gamma,\alpha_1)
} \]
commutes.
\end{enumerate}
\end{proposition}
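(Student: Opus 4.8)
The plan is to reduce the proposition to the single assertion that the $J$--change isomorphisms $\Phi^L_{J,J'}$ of Theorem~\ref{thm:j-independence} are independent of the gluing--map part of the embedding data, i.e.\ $\Phi^L_{J,J';E}=\Phi^L_{J,J';E'}$ for all generic tailored $J,J'$. Indeed, the chain complexes $\ecc^L(M,\Gamma,\alpha,J)$ do not depend on any embedding data; only the comparison isomorphisms do, and $\ech^L_E(M,\Gamma,\alpha)$ is by construction the inverse limit of $\{\ech^L(M,\Gamma,\alpha,J)\}_J$ along the transitive system $\{\Phi^L_{J,J';E}\}$, with canonical isomorphisms $P^{L,J}_E$ as in \eqref{eq:P-L-J} satisfying $(P^{L,J'}_E)^{-1}\circ P^{L,J}_E=\Phi^L_{J,J';E}$. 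So once this equality is known, $\Psi^L_{E,E'}:=P^{L,J}_{E'}\circ(P^{L,J}_E)^{-1}$ is independent of $J$; property~\ref{item:psi-transitive} and the relation $\Psi^{L'}_{E,E'}\circ i^{L,L'}_E=i^{L,L'}_{E'}\circ\Psi^L_{E,E'}$ then follow formally from the analogous properties of the $P^{L,J}_E$ and of the inclusion--induced maps \eqref{eq:i-LL-J}, and taking the direct limit over $L$ produces $\Psi_{E,E'}$.

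To prove $\Phi^L_{J,J';E}=\Phi^L_{J,J';E'}$, fix $n$ large with respect to $L$ and form $(Y_n,\alpha_n,J_n)$ from $E$ and $(Y'_n,\alpha'_n,J'_n)$ from $E'$, with $J_n$ and $J'_n$ both restricting to $J$ on $\R\times M$; then $\ecc^L(Y_n,\alpha_n,J_n)=\ecc^L(Y'_n,\alpha'_n,J'_n)=\ecc^L(M,\Gamma,\alpha,J)$ canonically, and $\Phi^L_{J,J';E}=(\Phi^{L,J'}_{n;E})^{-1}\circ\Phi^{L,J}_{n;E}$, where $\Phi^{L,J}_{n;E}$ is this identification followed by the Hutchings--Taubes isomorphism $\ech^L(Y_n,\alpha_n,J_n)\isomto\ech^L(Y_n,\alpha_n)$. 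When $f$ and $f'$ are isotopic rel $U(\Gamma')$ one can build a diffeomorphism $Y_n\to Y'_n$ equal to the identity on $M$ and conclude with Lemma~\ref{lem:j-independence-homotopy} exactly as in Proposition~\ref{prop:j-independence-n-iso}; in general $f$ and $f'$ may differ by a nontrivial symplectic mapping class of $R_+(\Gamma')$ and $Y_n,Y'_n$ are genuinely distinct manifolds, so instead I would compose the Hutchings--Taubes isomorphism with Taubes's identification of $\ech^L(Y_n,\alpha_n)$ with filtered Seiberg--Witten Floer cohomology (Theorem~\ref{thm:taubes}) and apply Lemma~\ref{lem:monopoles-114} together with Propositions~\ref{prop:comp-62} and~\ref{prop:comp-63} of Appendix~\ref{sec:appendix}: these identify the Seiberg--Witten monopoles and continuation maps for $(Y_n,\alpha_n)$ relevant to the action window $(-\infty,L)$ with those of the completion $(M^\ast,\alpha^\ast)$, canonically and independently of how $Y_n$ was built (invoking Lemma~\ref{lem:tau-neck} as needed so that the completion carries a $\tau$--collar long enough for the appendix's estimates). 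Since $(M^\ast,\alpha^\ast)$ is the same for $E$ and $E'$, the composites $\ech^L(M,\Gamma,\alpha,J)\isomto\ech^L(M,\Gamma,\alpha,J')$ built from $E$ and from $E'$ agree, which is the asserted equality. Independence of the auxiliary $n$ is then checked by the stretching argument of Proposition~\ref{prop:j-independence-n-iso}, applied to families of diffeomorphisms $Y_n\to Y_{n'}$ and $Y'_n\to Y'_{n'}$ that are the identity on $M$.

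For property~\ref{item:psi-isotopy} I would unwind $F_{\alpha_s}$ into the maps $\Psi^{R,L}_{n,10}$ built from the ECH cobordism maps on $X^n_{10}=([R,2R]\times Y_n,\lambda^{10}_n)$ and their $E'$--analogues, using that $F_{\alpha_s}$ is independent of $R$ and $\psi$ (Proposition~\ref{prop:alpha-naturality-psi}) and depends only on the homotopy class of $\{\alpha_s\}$ (Proposition~\ref{prop:F-alpha-homotopy-class}). By Proposition~\ref{prop:gromov-compactness-cobordisms}, for large $n$ every broken holomorphic curve between the relevant ECH generators in the completed cobordism lies in $\R\times M_k$ for a fixed $k$ independent of $n$; the cases of Conjecture~\ref{conj:hm-curves-diagram} established in Appendix~\ref{sec:appendix} then show that, under the correspondences of the previous paragraph, the Seiberg--Witten cobordism map defining $\Psi^{R,L}_{n,10}$ for $E$ agrees with the one for $E'$, both being identified with a single cobordism map associated to the completed sutured cobordism. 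Assembling the resulting commuting squares (and using Lemma~\ref{lem:tau-neck} to reconcile the $\tau$--collars) and passing to the direct limit over $L$ yields the square in~\ref{item:psi-isotopy}.

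The crux, and the main obstacle, is the second paragraph: making precise on the Seiberg--Witten side the heuristic that ``filtered $\ech$ of $Y_n$ sees only a neighbourhood of $M$''. On the ECH side this is Gromov compactness and positivity of intersections, as in Lemmas~\ref{lem:tau-stretching}--\ref{lem:embedding}, but the Hutchings--Taubes $J$--invariance isomorphism is defined through Seiberg--Witten theory, where no such soft argument is available; one genuinely needs the completion--level correspondence of the appendix, and one must keep careful track of the $\spinc$ structures, the perturbation parameter, and the $H_1$--grading as the topology of $Y_n$ varies with the choice of gluing map. Once that input is granted, everything else is a formal diagram chase of the kind already carried out in Sections~\ref{sec:j-independence} and~\ref{sec:alpha-independence}.
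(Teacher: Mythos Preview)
Your proposal is correct and follows essentially the same route as the paper: define $\Psi^L_{E,E'}$ via the identity on each $\ech^L(M,\Gamma,\alpha,J)$, reduce everything to the equality $\Phi^L_{J^0,J^1;E}=\Phi^L_{J^0,J^1;E'}$, pass to $M^T$ via Lemma~\ref{lem:tau-neck}, and then invoke the appendix results (Lemma~\ref{lem:monopoles-114} and Propositions~\ref{prop:comp-62}--\ref{prop:comp-63}) to identify the filtered Seiberg--Witten data on $Y_n^T$ and $(Y_n^T)'$ with that of the completion $M^\ast$; property~\eqref{item:psi-isotopy} is handled in the same way via the cobordism maps. The one technical step the paper makes explicit and you elide is Lemma~\ref{lem:J-constant-at-infinity}, proved inside this very argument: before the second bullet of Proposition~\ref{prop:comp-62} can be applied, the path $\{J^s\}$ joining $J^0$ to $J^1$ must first be homotoped (rel endpoints on $\R\times M$) to be constant on a neighborhood of $\R\times(M^\ast\ssm\mathrm{int}(M))$, since the appendix's comparison of instanton moduli spaces requires the data to be $s$--independent outside $M$.
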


\begin{proof}
Let $E=(\calh,f)$ and $E'=(\calh,f')$ be two choices of embedding data with the same handle set $\calh$, and fix $L>0$ and $n$ sufficiently large. Let $(Y_n,\alpha_n)$ and $(Y'_n,\alpha'_n)$ be the compact contact $3$-manifolds constructed using the embedding data $E$ and $E'$, respectively, as in Section \ref{sec:j-independence}. Then for an $\textit{ECH}^L$-generic tailored almost complex structure $J$, we have canonical isomorphisms
\begin{align*}
\Phi^{L,J}_{n,E} &: \ech^L(M,\Gamma,\alpha,J)\to\ech^L(Y_n,\alpha_n) \\
\Phi^{L,J}_{n,E'} &: \ech^L(M,\Gamma,\alpha,J)\to\ech^L(Y'_n,\alpha'_n)
\end{align*}
and given another $\textit{ECH}^L$-generic tailored almost complex structure $J'$ we have 
\[ \Phi^L_{J,J';E}=(\Phi^{L,J'}_{n,E})^{-1}\circ \Phi^{L,J}_{n,E}:\ech^L(M,\Gamma,\alpha,J)\to \ech^L(M,\Gamma,\alpha,J'), \]
and likewise $\Phi^L_{J,J';E'}$, as defined in Section \ref{sec:j-independence}. We would like to show that for any pair of $\textit{ECH}^L$-generic tailored almost complex structures $J^0$ and $J^1$, the maps $\Phi^L_{J^0,J^1;E}$ and $\Phi^L_{J^0,J^1;E'}$ are the same. It would then follow that there exists an isomorphism 
\[\Psi^L_{E,E'}: \ech_E^L(M,\Gamma,\alpha) \isomto \ech_{E'}^L(M,\Gamma,\alpha), \]
as claimed.

To see this, note that in light of Lemma \ref{lem:tau-neck}, we can replace $(M,\Gamma,\alpha)$ with $(M^T,\Gamma,\alpha^T)$. Then the maps $\Phi^L_{J^0,J^1;E}$ and $\Phi^L_{J^0,J^1;E'}$ are the same if and only if the maps
\begin{eqnarray*}
(\Phi^T)^L_{J^0,J^1;E}&:& \ech^L(M^T,\Gamma,\alpha^T,J^0)\to \ech^L(M^T,\Gamma,\alpha^T,J^1),\\
(\Phi^T)^L_{J^0,J^1;E'}&:& \ech^L(M^T,\Gamma,\alpha^T,J^0)\to \ech^L(M^T,\Gamma,\alpha^T,J^1)
\end{eqnarray*}
are the same. Now, assume without loss of generality\footnote{See \cite[Proposition 2.4]{taubes1} for a justification of this claim.  Note that we used a similar argument in the proof of Lemma \ref{lem:j-independence-homotopy}.} that $(\ant,J^{0,T}_n)$ and $(\ant,J^{1,T}_n)$ obey the conditions in (4-1) of \cite{taubes1} for $L$ on $\ynt$, and similarly for $(\anth,(J^{0,T}_n)')$ and $(\anth,(J^{1,T}_n)')$ on $(\ynt)'$, and set $T=n+1$. Then, the diagram 
\[ \xymatrix{
\ech^L(M^T,\Gamma,\alpha^T,J^i)\ar[r]^-{\sim}_-{\eqref{eq:canid}}\ar@{=}[d]&
\ech^L(\ynt,\ant,J^{i,T}_n)\ar[d]^-{\rotatebox{90}{$\sim$}}_-{\psi^{L,J^i}_{n;E,E'}}\\
\ech^L(M^T,\Gamma,\alpha^T,J^i)\ar[r]^-{\sim}_-{\eqref{eq:canid}}&
\ech^L(\ynth,\anth,(J^{i,T}_n)'),
}\]
commutes for both $i=0$ and $i=1$, where the vertical isomorphism $\psi^{L,J^i}_{n;E,E'}$ is induced by a map identifying ECH generators in the source with their images in the target. We claim that the latter is a chain map from $\ecc^L(\ynt,\ant,J^{i,T}_n)$ to $\ecc^L(\ynth,\anth,(J^{i,T}_n)')$ by Lemma \ref{lem:monopoles-115} and Proposition \ref{prop:comp-63}.

To elaborate on this claim, since the contact forms and almost complex structures obey the conditions in (4-1) of \cite{taubes1}, there are canonical isomorphisms between $\ech^L(\ynt,\ant,J^{i,T}_n)$ and $\HMfrom^*_{L}(\ynt,\ant,J^{i,T}_n,r)$ as well as between $\ech^L(\ynth,\anth,(J^{i,T}_n)')$ and $\HMfrom^*_{L}(\ynth,\anth,(J^{i,T}_n)',r)$ for sufficiently large $r$, induced by canonical identifications between Reeb orbit sets and moduli spaces of solutions to the 3-dimensional Seiberg-Witten equations \cite[Theorem 4.2]{taubes1}. These canonical identifications define chain maps between the corresponding complexes $\ecc^L$ and $\CMfrom^*_{L}$, due to the canonical identifications between moduli spaces of pseudo-holomorphic curves that define the differential on $\ecc^L$ and moduli spaces of solutions to the Seiberg--Witten equations that define the differential on $\CMfrom^*_{L}$ \cite[Theorem 4.3]{taubes1}. Meanwhile, the first bullet of Lemma \ref{lem:monopoles-115} supplies 1--1 correspondences between the sets of generators for the Seiberg--Witten Floer cochain complexes $\CMfrom^*_{L}(\ynt,\ant,J^{i,T}_n,r)$ and $\CMfrom^*_{L}(\ynth,\anth,(J^{i,T}_n)',r)$ for both $i=0$ and $i=1$, and the first bullet point of Proposition \ref{prop:comp-63} provides 1--1 correspondences between the moduli spaces of solutions to the Seiberg--Witten equations that define the differentials for $\CMfrom^*_{L}(\ynt,\ant,J^{i,T}_n,r)$ and $\CMfrom^*_{L}(\ynth,\anth,(J^{i,T}_n)',r)$.

From the above discussion and the fact that the maps $(\Phi^T)^L_{J^0,J^1;E}$ and $(\Phi^T)^L_{J^0,J^1;E'}$ are defined in terms of the isomorphisms of \eqref{eq:canid}, we see that $(\Phi^T)^L_{J^0,J^1;E}$ and $(\Phi^T)^L_{J^0,J^1;E'}$ are the same if and only if the diagram
\begin{equation}
\label{eq:J-path}
\xymatrix{
\ech^L(\ynt,\ant,J^{0,T}_n) \ar[r]^{\sim} \ar[d]_-{\rotatebox{90}{$\sim$}}^-{\psi^{L,J^0}_{n;E,E'}} &
\ech^L(\ynt,\ant,J^{1,T}_n) \ar[d]^-{\rotatebox{90}{$\sim$}}_-{\psi^{L,J^1}_{n;E,E'}}\\
\ech^L(\ynth,\anth,(J^{0,T}_n)') \ar[r]^{\sim} &
\ech^L(\ynth,\anth,(J^{1,T}_n)'),
} 
\end{equation}
commutes, where the horizontal isomorphisms are again the canonical isomorphisms of \cite[Theorem 1.3]{ht2}. The commutativity of the diagram in \eqref{eq:J-path} follows from the second bullet point of Proposition \ref{prop:comp-62} via Proposition \ref{prop:comp-64}. To be more explicit, note that the manifold $\ynt$ is a version of the manifold denoted in Appendix \ref{sec:appendix} by $Y_T$, whereas $M^\ast$ is denoted there by $Y_\infty$. In order to apply Proposition \ref{prop:comp-62}, we need the path joining $J^0$ and $J^1$ to be independent of $s$ when restricted to a neighborhood of $M^\ast\ssm\textit{int}(M)$. To reduce to this case, we make use of the following lemma:

\begin{lemma}
\label{lem:J-constant-at-infinity}
Let $(M,\Gamma,\alpha)$ be a sutured contact manifold with connected suture, $L>0$ be fixed, $J^0$ and $J^1$ be $\ech^L$-generic tailored almost complex structures connected by a path of tailored almost complex structures $\{J^s\}_{s\in[0,1]}$ on $\R\times M^\ast$, and $E$ be a choice of embedding data that gives a closed contact manifold $(Y_n,\alpha_n)$ into which $(M,\alpha)$ embeds. Suppose that both $(\alpha,J^0)$ and $(\alpha,J^1)$ obey the conditions in (4-1) of \cite{taubes1} for $L$ on $Y_n$ for sufficiently large $n>0$. Then, there exists another path $\{\tilde{J}^s\}_{s\in[0,1]}$ of tailored almost complex structures connecting $\tilde{J}^0:=J^0$ to an $\ech^L$-generic tailored almost complex structure $\tilde{J}^1$, where 
\begin{enumerate}\leftskip-0.25in
\item $\tilde{J}^1$ agrees with $J^1$ on the complement in $\R\times M$ of a neighborhood of $\R\times\partial M$ and $\tilde{J}^s$ agrees with $J^0$ on a neighborhood of $\R\times (M^\ast\ssm\textit{int}(M))$.
\item The following diagram commutes:
\[ \xymatrix{
\ech^L(Y_n,\alpha_n,J^0_n) \ar[r]^{\sim} \ar[dr]_-{\rotatebox{-30}{$\sim$}}
& \ech^L(Y_n, \alpha_n,J^1_n) \ar[d]^-{\rotatebox{90}{$\sim$}} \\
& \ech^L(Y_{n}, \alpha_{n},\tilde{J}^1_n)
} \]
where all three isomorphisms are the canonical isomorphisms of \cite[Theorem 1.3]{ht2} and are induced by chain maps sending an ECH generator $\Theta$ to $\Theta$.
\end{enumerate}
\end{lemma}
\begin{proof}
To begin, define two smooth functions $\sigma:M^\ast\to[0,1]$ and $\chi:M^\ast\to[0,1]$ which have support in a neighborhood of $M^\ast \ssm \textit{int}(M)$ and the complement in $M$ of a neighborhood of $\partial M$, respectively, and are equal to $1$ in a slight deformation retract of their support. The restriction of these functions to a neighborhood of $\partial M$ depend only on $t$ and $\tau$, respectively, and admit graphs as seen in Figure \ref{fig:cutoff}.

\begin{figure}[ht]
\labellist
\small \hair 2pt
\pinlabel $\sigma$ [l] at 85 97
\pinlabel $t$ [b] at 166 -1
\pinlabel $\chi$ [l] at 277 97
\pinlabel $\tau$ [b] at 311 -1
\pinlabel $-1$ [t] at 6 -1
\pinlabel $\epsilon-1$ [t] at 34 -1
\pinlabel $1-\epsilon$ [t] at 129 -1
\pinlabel $1$ [t] at 154 -1
\pinlabel $-1$ [t] at 222 -1
\tiny
\pinlabel $1$ [r] at 82 69
\pinlabel $1$ [r] at 274 69
\endlabellist
\centering
\includegraphics{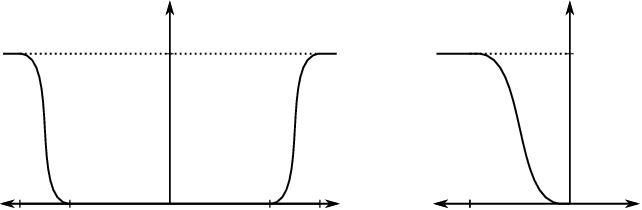}
\caption{The restriction of the functions $\sigma,\chi: M^\ast \to [0,1]$ to a neighborhood of $\partial M$.}
\label{fig:cutoff}
\end{figure}

Now define a homotopy of paths of tailored almost complex structures by
\[\{{J^s}_\lambda:=J^{(1-\chi(1-\sigma))(1-\lambda)s+\chi(1-\sigma)s}\}_{\lambda\in[0,1]}.\]
The starting path of this homotopy is $\{{J^s}_0=J^s\}_{s\in[0,1]}$, and the ending path $\{\tilde{J}^{s}:={J^s}_1\}_{s\in[0,1]}$ consists of tailored almost complex structures $J^{\chi(1-\sigma)s}$ which restrict to $J^0$ in a neighborhood of $\R\times (M^\ast\ssm\textit{int}(M))$ and to $J^s$ on the complement in $\R\times M$ of a neighborhood of $\R\times\partial M$. The path of end points of this homotopy, namely $\{{J^1}_\lambda=J^{1-\lambda(1-\chi(1-\sigma))}\}_{\lambda\in[0,1]}$, connects $J^1$ to $\tilde{J}^1$ through generic tailored almost complex structures that restrict to $J^1$ on the complement in $\R\times M$ of a neighborhood of $\R\times\partial M$, and it restricts on a neighborhood of $\R\times (M^\ast\ssm\textit{int}(M))$ to the path $\{J^{1-\lambda}\}_{\lambda\in[0,1]}$. It follows from Lemmas \ref{lem:tau-stretching} and \ref{lem:t-stretching} that each ${J^1}_\lambda$ is $\ech^L$-generic for $n>0$ sufficiently large. This gives us a commuting diagram of paths
\[ \xymatrix{
J^0 \ar[r]^{J^s}\ar[dr]_-{\tilde{J}^s}
& J^1 \ar[d]^-{{J^1}_\lambda} \\
& \tilde{J}^1.
} \]
Note that since $(\alpha_n,J^1_n)$ is assumed to obey the conditions in (4-1) of \cite{taubes1} for $L$ for sufficiently large $n>0$, so does $(\alpha_n,\tilde{J}^1_n)$. In fact, the path $\{{J^1}_\lambda\}_{\lambda\in[0,1]}$ of $\textit{ECH}^L$-generic tailored almost complex structures yields a path of pairs $\{(\alpha_n,{J^1}_{\lambda,n})\}_{\lambda\in[0,1]}$ obey the conditions in (4-1) of \cite{taubes1} for $L$ on $Y_n$, and by \cite[Lemma 3.4(d)]{ht2}, the isomorphism
\[\ech^L(Y_n, \alpha_n,J^1_n)\isomto \ech^L(Y_{n}, \alpha_{n},\tilde{J}^1_n),\]
is induced by the chain map sending an ECH generator $\Theta$ to $\Theta$. 

Next, we define a homotopy between the concatenation of paths $\{J^s\}_{s\in[0,1]}\ast\{J^1_\lambda\}_{\lambda\in[0,1]}$ and the path $\{\tilde{J}^s\}_{s\in[0,1]}$ as follows:
\[ {\mathbb{J}^s}_r = \begin{cases}
{J^{\frac{2s}{r+1}}}_r & 0\leq s\leq \frac{r+1}{2} \\
{J^1}_{2s-1} & \frac{r+1}{2}\leq s\leq 1.
\end{cases} \]
Note that ${\mathbb{J}^s}_0$ is the path $\{J^s\}_{s\in[0,1]}\ast\{J^1_\lambda\}_{\lambda\in[0,1]}$, while ${\mathbb{J}^s}_1=\tilde{J}^s$. With this homotopy in hand, \cite[Lemma 3.4(a)]{ht2} and \cite[Lemma 3.4(b)]{ht2} imply that we have a commutative diagram
\[ \xymatrix{
\ech^L(Y_n,\alpha_n,J^0_n) \ar[r]^{\sim} \ar[dr]_-{\rotatebox{-30}{$\sim$}}
& \ech^L(Y_n, \alpha_n,J^1_n) \ar[d]^-{\rotatebox{90}{$\sim$}} \\
& \ech^L(Y_{n}, \alpha_{n},\tilde{J}^1_n)
} \]
where the vertical arrow is the canonical map sending an ECH generator $\Theta$ to $\Theta$. 
\end{proof}
\noindent With Lemma \ref{lem:J-constant-at-infinity} in hand, we may assume that the path joining $J^0$ to $J^1$ is constant when restricted to $\R\times (M^\ast\ssm\textit{int}(M))$. Hence, the second bullet of Proposition \ref{prop:comp-62} applies via Proposition \ref{prop:comp-64} to give us the commutativity of the diagram in \eqref{eq:J-path} for all sufficiently large $T>0$. We can employ this proposition due to the canonical isomorphism of Theorem \ref{thm:taubes} between embedded contact homology and Seiberg--Witten Floer cohomology of closed contact 3-manifolds induced by a canonical isomorphism between the respective chain complexes under the assumption that the pair of a contact form and almost complex structure satisfies the conditions depicted in (4-1) of \cite{taubes1}. Given $L>0$, we can choose $r>\kappa_{2\pi L}$ and $T>c_\ast$ sufficiently large where $\kappa_{2\pi L}>0$ is a constant depending only on $L$ and $c_\ast>0$ is a constant possibly depending on $r$ such that there is a canonical 1--1 correspondence between moduli spaces of solutions to the equations \eqref{eq:sw} and \eqref{eq:instanton-sw} for embedding data $\ynt$ and $(\ynt)'$. Therefore, the isomorphism $\Psi^L_{E,E'}$ exists and satisfies property \eqref{item:psi-transitive}. Since the isomorphism $\Psi^L_{E,E'}$ commutes with the inclusion induced maps $i^{L,L'}$, the direct limit as $L\to\infty$ yields the isomorphism $\Psi_{E,E'}$.

Property \eqref{item:psi-isotopy} follows by a similar argument. Given a path $\{\alpha_s\}$ of contact forms on $(M,\Gamma)$ agreeing with $\alpha$ on a neighborhood of $\partial M$, we define $X^n_{10}$ and $(X')^n_{10}$ to be the Liouville cobordisms $([R,2R]\times \ynt, \lambda^{10}_n)$ and $([R,2R]\times \ynth, (\lambda')^{10}_n)$ constructed in Section~\ref{ssec:alpha-independence} for the respective sets of embedding data $E$ and $E'$ from $(M^T,\Gamma,\alpha_s^T)$, and we wish to show that the diagram
\[ \xymatrix{
\ech^{e^{2R}L}(\ynt, e^{2R}\antz) \ar[rr]^-{\Phi^{e^{2R}L}(X^n_{10})} \ar[d]_-{\rotatebox{90}{$\sim$}} &&
\ech^{e^{2R}L}(\ynt, e^{R}\anto) \ar[d]^-{\rotatebox{90}{$\sim$}} \\
\ech^{e^{2R}L}(\ynth, e^{2R}\antzp) \ar[rr]^-{\Phi^{e^{2R}L}((X')^n_{10})} &&
\ech^{e^{2R}L}(\ynth, e^{R}\antop)
} \]
commutes. As above, assume without loss of generality that $(\antz,\jntz)$ and $(\anto,\jnto)$ obey the conditions in (4-1) of \cite{taubes1} for $e^{2R}L$ on $\ynt$, and $J^0$ is connected to $J^1$ by a path of tailored almost complex structures that is constant in a neighborhood of $\R\times (M^\ast\ssm\textit{int}(M))$ via an analog of Lemma \ref{lem:J-constant-at-infinity} for a path $\{\alpha_s\}_{s\in[0,1]}$ of contact forms on $(M,\Gamma)$ agreeing with $\alpha$ on a neighborhood of $\partial M$. Note that this implies that $(\antzp,\jntzp)$ and $(\antop,\jntop)$ obey the conditions in (4-1) of \cite{taubes1} for $e^{2R}L$ on $\ynth$. Then the vertical isomorphisms in the above diagram identify ECH generators in the source with their images in the target by Lemma \ref{lem:monopoles-114} and Proposition \ref{prop:comp-63}, and the commutativity of the diagram follows from the second bullet of Proposition \ref{prop:comp-62} via Proposition \ref{prop:comp-64}. 
\end{proof}

What Proposition \ref{prop:gluing-map-doesn't-matter} shows is that if we fix a collection $\calh$ of handles so that $M' = M \cup \calh$ has connected suture, then for each $L>0$ we have a transitive system of isomorphisms
\[  \left(\{\ech^L_E(M,\Gamma,\alpha)\}_{E}, \{\Psi^L_{E,E'}\}_{E,E'}\right) \]
where $E$ and $E'$ range over all embedding data for $(M,\Gamma,\alpha)$ with precisely the set $\calh$ of handles and any diffeomorphism $R_+(\Gamma') \isomto R_-(\Gamma')$.  This produces a canonical group
\[ \ech^L_\calh(M,\Gamma,\alpha), \]
and a canonical group $\ech_\calh(M,\Gamma,\alpha)$ in the direct limit as $L\to\infty$, associated to the collection $\calh$ of handles.  Moreover, since the maps $F_{\alpha_s}$ are intertwined with the isomorphisms $\Psi_{E,E'}$, we have canonical isomorphisms
\[ F_{\alpha_s,\calh}: \ech_\calh(M,\Gamma,\alpha_0) \to \ech_\calh(M,\Gamma,\alpha_1) \]
depending only on the path $\alpha_s$ and on $\calh$. Note also that $P^{L,J}_{E'}=\Psi^L_{E,E'}\circ P^{L,J}_E$, since $P^{L,J}_E=P^{L,J'}_E\circ\Phi^L_{J,J';E}$ by definition and $\Phi^L_{J,J';E} = \Phi^L_{J,J';E'}$.

Hence, we have canonical isomorphisms
\[P^{L,J}_\calh:\ech^L(M,\Gamma,\alpha,J)\isomto\ech^L_\calh(M,\Gamma,\alpha).\]

\begin{definition}
Let $(M,\Gamma,\alpha)$ be a sutured contact manifold, and let $\calh$ be a collection of contact $1$-handles attached to $(M,\Gamma,\alpha)$ along disjoint closed arcs of $\Gamma$, so that the resulting $(M',\Gamma',\alpha')$ has connected suture $\Gamma'$.  We will refer to $\calh$ as \emph{handle data} for $(M,\Gamma,\alpha)$.
\end{definition}

\begin{definition}
Given two choices of handle data $\calh, \calh'$ for $(M,\Gamma,\alpha)$, we say that $\calh < \calh'$ if the set $\calh$ is a proper subset of the set $\calh'$.  We write $\calp_{(M,\Gamma,\alpha)}$ to denote the partially ordered set of handle data associated to $(M,\Gamma,\alpha)$.
\end{definition}

We remark that this partial order depends not only on where the handles attached but also the handles themselves, up to a notion of equivalence.  More precisely, a handle depends not only on the arcs of $\Gamma$ to which it is attached but also on its contact form; two handles $H_1,H_2$ which are attached along the same arcs of $\Gamma$ may be said to be equivalent if there is some diffeomorphism $f:(H_1,\partial M\cap H_1)\to (H_2,\partial M\cap H_2)$ which pulls back the contact form on $H_2$ to the one on $H_1$.  If $\calh$ and $\calh'$ differ by replacing one handle with an inequivalent handle attached to the same region, then $\calh$ and $\calh'$ are not directly comparable.

\begin{lemma}
\label{lem:poset-connected}
The poset $\calp_{(M,\Gamma,\alpha)}$ is connected.  In other words, given two choices $\calh$ and $\calh'$ of handle data, there is a finite sequence
\[ \calh = \calh_0, \calh_1, \calh_2, \dots, \calh_n = \calh' \]
such that either $\calh_i < \calh_{i+1}$ or $\calh_i > \calh_{i+1}$ for $0 \leq i < n$.
\end{lemma}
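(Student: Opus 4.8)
The plan is to show that any two choices of handle data are connected through a common refinement, which requires producing a single collection of handles containing (up to equivalence) enough of both. First I would observe that connectedness of a poset is equivalent to showing: given any $\calh$ and $\calh'$, there exists $\calh''$ with $\calh < \calh''$ and $\calh' < \calh''$, or more generally a zig-zag between them. The natural candidate is to take $\calh'' = \calh \sqcup \calh'$, i.e.\ the union of the two collections of handles, but one must be careful that the handles in $\calh$ and $\calh'$ are attached along arcs of $\Gamma$ that may overlap, so a literal union need not be a valid collection of handle data (the attaching regions must be disjoint).

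The key steps are as follows. First I would note that the role of the handles is only to connect the components of $\Gamma$, and that we are free to make each handle's attaching arc as small as we like; indeed, given any finite collection of points at which we want to attach handles, we can shrink the arcs so they are pairwise disjoint. Second, given $\calh$ with attaching arcs $\{I_j\}$ and $\calh'$ with attaching arcs $\{I'_k\}$, I would produce a new collection $\calh''$ by first isotoping (within $\Gamma$, i.e.\ reparametrizing so the contact form is unchanged up to the allowed normalizations) the arcs so that all of $\{I_j\} \cup \{I'_k\}$ are pairwise disjoint, then taking $\calh''$ to consist of the handles of $\calh$ together with the handles of $\calh'$. Since adding more handles only adds more $1$-handles and keeps the resulting suture connected (it was already connected after attaching $\calh$ alone), $\calh''$ is valid handle data with $\calh < \calh''$ and $\calh' < \calh''$. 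This gives the length-$2$ zig-zag $\calh < \calh'' > \calh'$, which proves connectedness.

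The main obstacle I anticipate is the bookkeeping around the notion of equivalence of handles discussed just before the lemma: the partial order $\calh < \calh'$ requires $\calh$ to literally be a \emph{subset} of $\calh'$, so I need $\calh''$ to contain each handle of $\calh$ on the nose (as an equivalence class of pairs (handle, contact form on it)), and likewise for $\calh'$. The point is that when I shrink and disjointify the attaching arcs, I can do so while leaving each handle of $\calh$ (and of $\calh'$) unchanged except on a collar of its boundary where it meets $U(\Gamma)$, or equivalently I can pre-shrink the arcs before ever constructing the handles, so that the handles of $\calh$ and $\calh'$ that actually get built are already attached along disjoint arcs. Then $\calh'' := \calh \cup \calh'$ genuinely contains both as subsets. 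A secondary point to check is that $\calh''$ really is handle data, i.e.\ $(M',\Gamma',\alpha')$ has connected suture; but this is immediate since $\Gamma'$ was already connected after attaching the sub-collection $\calh$, and attaching further $1$-handles to a connected multicurve along arcs keeps it connected.
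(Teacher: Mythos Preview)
Your overall strategy---find a common upper bound $\calh''$ containing both $\calh$ and $\calh'$---is the same as the paper's, but there are two genuine gaps.

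\textbf{The main gap is the shrinking step.} You correctly identify that the attaching arcs of $\calh$ and $\calh'$ may overlap, and you propose to ``shrink'' the arcs to make them disjoint. But the partial order requires $\calh$ to be a literal subset of $\calh''$: the handles themselves (with their attaching arcs and contact forms, up to the stated equivalence) must coincide. If you shrink the attaching arc of a handle $H \in \calh$ to produce a smaller handle $\tilde H$, then $\tilde H \neq H$, and the handle data $\tilde\calh$ obtained by replacing $H$ with $\tilde H$ is \emph{not comparable} to $\calh$ in $\calp_{(M,\Gamma,\alpha)}$. Your suggestions to ``pre-shrink the arcs before constructing the handles'' or modify $H$ ``only on a collar'' do not address this: $\calh$ and $\calh'$ are given, and changing the attaching arc is exactly what the equivalence relation does not allow. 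The paper resolves this with a nontrivial zig-zag (its Figure~\ref{fig:stabilization}): one passes from $\{H\}$ to $\{\tilde H\}$ through a sequence of intermediate handle data, each obtained from the previous by adding or removing auxiliary handles near $H$, so that every consecutive pair is genuinely comparable. This stabilization/destabilization trick is the key idea you are missing.

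\textbf{A secondary gap:} your claim that ``attaching further $1$-handles to a connected multicurve along arcs keeps it connected'' is false. A contact $1$-handle with both feet on the same suture component can split that component in two. The paper explicitly notes that after attaching all handles of $\tilde\calh \cup \tilde\calh'$ the suture ``may not have connected suture,'' and fixes this by attaching yet more handles to produce the desired $\calh_0$.
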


\begin{proof}
We first replace $\calh$ with handle data $\tilde{\calh}$ in which every handle $H$ of $\calh$ has been replaced by a ``smaller'' handle $\tilde{H}$, each end of which is attached to $\Gamma$ along an arbitrarily small sub-arc of the corresponding end of $H$.  Figure \ref{fig:stabilization} shows how this can be done in a small neighborhood of $H$ by a series of stabilizations and destabilizations, i.e.\ adding and removing pairs of handles while preserving the connectedness of the resulting suture $\Gamma'$.

\begin{figure}[ht]
\labellist
\small \hair 2pt
\pinlabel $H$ at 25 44
\pinlabel $\tilde{H}$ at 329 55
\pinlabel $<$ at 63 55
\pinlabel $>$ at 139 55
\pinlabel $<$ at 215 55
\pinlabel $>$ at 291 55
\endlabellist
\centering
\includegraphics{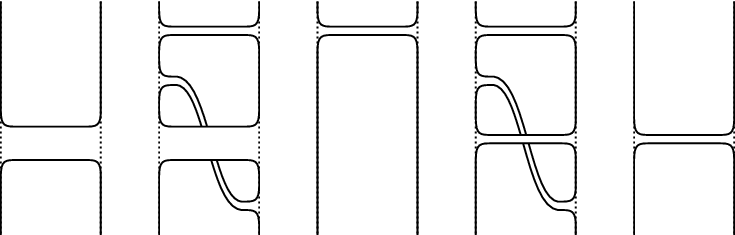}
\caption{A series of choices of handle data which result in replacing the handle $H$ with the smaller $\tilde{H}$, in which each adjacent pair is comparable.  We view the handles (and nearby points of $U(\Gamma)$) by identifying each picture locally as $R_+(\Gamma')\times[-1,1]_t$ and projecting out the $[-1,1]_t$ factor.}
\label{fig:stabilization}
\end{figure}

We can similarly replace $\calh'$ with $\tilde{\calh}'$, and moreover we can choose the new handles of $\tilde{\calh}$ and $\tilde{\calh}'$ to be attached along disjoint arcs since their attaching arcs can be any arbitrarily small sub-arcs of the attaching arcs for $\calh$ and $\calh'$.  In particular we can then attach all of the handles of $\tilde{\calh}$ and $\tilde{\calh}'$ to $(M,\Gamma,\alpha)$ simultaneously, and the resulting sutured contact manifold may not have connected suture but we can attach even more handles to produce $(M',\Gamma',\alpha')$ with $\Gamma'$ connected.  The corresponding handle data $\calh_0$ satisfies $\tilde{\calh} < \calh_0$ and $\tilde{\calh}' < \calh_0$, so $\tilde{\calh}$ and $\tilde{\calh}'$ are in the same connected component of $\calp_{(M,\Gamma,\alpha)}$, hence so are $\calh$ and $\calh'$.
\end{proof}

\begin{proposition}
\label{prop:poset-functor}
For all $\calh,\calh' \in \calp_{(M,\Gamma,\alpha)}$ with $\calh \leq \calh'$, and for all $L>0$, there is a canonical isomorphism
\[ \Psi^L_{\calh,\calh'}: \ech^L_\calh(M,\Gamma,\alpha) \to \ech^L_{\calh'}(M,\Gamma,\alpha) \]
satisfying $\Psi^L_{\calh,\calh} = \mathrm{Id}$ and $\Psi^L_{\calh,\calh''} = \Psi^L_{\calh',\calh''}\circ\Psi^L_{\calh,\calh'}$ whenever $\calh \leq \calh' \leq \calh''$. In the direct limit as $L\to\infty$, we obtain a canonical isomorphism
\[ \Psi_{\calh,\calh'}: \ech_\calh(M,\Gamma,\alpha) \to \ech_{\calh'}(M,\Gamma,\alpha). \]
Given a path $\{\alpha_s\}_{s\in[0,1]}$ of contact forms agreeing with $\alpha$ on a neighborhood of $\partial M$, the diagram
\[ \xymatrix{
\ech_\calh(M,\Gamma,\alpha_0) \ar[rr]^{F_{\alpha_s,\calh}} \ar[d]_{\Psi_{\calh,\calh'}} &&
\ech_\calh(M,\Gamma,\alpha_1) \ar[d]^{\Psi_{\calh,\calh'}} \\
\ech_{\calh'}(M,\Gamma,\alpha_0) \ar[rr]^{F_{\alpha_s,\calh'}} &&
\ech_{\calh'}(M,\Gamma,\alpha_1) \\
} \]
commutes.
\end{proposition}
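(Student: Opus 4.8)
The plan is to follow the proof of Proposition \ref{prop:gluing-map-doesn't-matter} essentially verbatim, with the two gluing diffeomorphisms there replaced by the two comparable choices of handle data here. Fix $L>0$ and write $\calh'=\calh\sqcup\mathcal{K}$ for a nonempty collection of handles $\mathcal{K}$. Recall from the discussion at the end of Section \ref{sec:j-independence} and following the proof of Proposition \ref{prop:gluing-map-doesn't-matter} that for each $\ech^L$-generic tailored $J$ on $\R\times M^*$ there is a canonical isomorphism $P^{L,J}_\calh:\ech^L(M,\Gamma,\alpha,J)\isomto\ech^L_\calh(M,\Gamma,\alpha)$ with $P^{L,J}_\calh=P^{L,J'}_\calh\circ\Phi^L_{J,J';\calh}$, where $\Phi^L_{J,J';\calh}$ denotes the change-of-$J$ isomorphism of Section \ref{sec:j-independence}, which by Proposition \ref{prop:gluing-map-doesn't-matter} depends only on the handle data $\calh$. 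I will define
\[ \Psi^L_{\calh,\calh'} := P^{L,J}_{\calh'}\circ(P^{L,J}_\calh)^{-1}:\ \ech^L_\calh(M,\Gamma,\alpha)\longrightarrow\ech^L_{\calh'}(M,\Gamma,\alpha), \]
and a direct computation with these relations shows that $\Psi^L_{\calh,\calh'}$ is independent of the auxiliary choice of $J$ if and only if $\Phi^L_{J,J';\calh}=\Phi^L_{J,J';\calh'}$ for all $J,J'$. Once this identity is in hand, $\Psi^L_{\calh,\calh}=\mathrm{Id}$ and the transitivity $\Psi^L_{\calh,\calh''}=\Psi^L_{\calh',\calh''}\circ\Psi^L_{\calh,\calh'}$ for $\calh\leq\calh'\leq\calh''$ are immediate, the compatibility $\Psi^{L'}_{\calh,\calh'}\circ i^{L,L'}=i^{L,L'}\circ\Psi^L_{\calh,\calh'}$ follows from the fact that the $P^{L,J}_\calh$ intertwine inclusion-induced maps, and taking the direct limit as $L\to\infty$ produces $\Psi_{\calh,\calh'}$.

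So everything reduces to proving $\Phi^L_{J,J';\calh}=\Phi^L_{J,J';\calh'}$, and here I would argue as in Proposition \ref{prop:gluing-map-doesn't-matter}. First, Lemma \ref{lem:tau-neck}, applied to both handle sets — legitimate because one may take the model-handle parameter $A$ in Theorem \ref{thm:1-handle} large enough that a single abstract collection of handles serves for both $(M,\Gamma,\alpha)$ and $(M^T,\Gamma,\alpha^T)$, as in the discussion preceding that lemma — allows us to replace $(M,\Gamma,\alpha)$ by $(M^T,\Gamma,\alpha^T)$, since the isomorphisms $\mathbb{I}^{L,T}$ intertwine $\Phi^L_{J,J';\calh}$ and $\Phi^L_{J,J';\calh'}$ with their $M^T$-counterparts. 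Unraveling the two change-of-$J$ isomorphisms on $(M^T,\Gamma,\alpha^T)$ exactly as \eqref{eq:embedding-data-gluing-independence} was reduced to \eqref{eq:J-path} in Proposition \ref{prop:gluing-map-doesn't-matter}, the identity becomes the commutativity of the square \eqref{eq:J-path}, where now $\ynt$ and $\ynth$ denote the closed contact manifolds obtained from $\calh^T$- and $(\calh')^T$-embedding data respectively, the horizontal maps are the isomorphisms of \cite[Theorem 1.3]{ht2}, and the vertical maps identify ECH generators (all of which are orbit sets supported in $M$) with their images. Both $\ynt$ and $\ynth$ are instances of the manifold $Y_T$ of Appendix \ref{sec:appendix} relative to the common open completion $Y_\infty=M^*$; after reducing via the analogue of Lemma \ref{lem:J-constant-at-infinity} to a path joining $J$ and $J'$ that is constant near $\R\times(M^*\ssm\mathrm{int}(M))$, and choosing $r$ and $T$ large enough, the commutativity of \eqref{eq:J-path} is exactly what Proposition \ref{prop:comp-62} provides, with the vertical identifications supplied by Lemma \ref{lem:monopoles-114} and Proposition \ref{prop:comp-63} and the chain-level identification of ECH with Seiberg--Witten Floer cohomology from Theorem \ref{thm:taubes}.

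For the last assertion, that $\Psi_{\calh,\calh'}$ intertwines $F_{\alpha_s,\calh}$ and $F_{\alpha_s,\calh'}$ for a path $\{\alpha_s\}_{s\in[0,1]}$ of contact forms agreeing with $\alpha$ near $\partial M$, I would run the same argument with the change-of-$J$ maps replaced by cobordism maps, as in the proof of property \ref{item:psi-isotopy} of Proposition \ref{prop:gluing-map-doesn't-matter}: unravel $F_{\alpha_s,\calh}$ in terms of the maps $\Psi^{R,L}_{10}$ built from the Liouville cobordisms $([R,2R]\times\ynt,\lambda^{10}_n)$ of Section \ref{ssec:alpha-independence} for the two sets of embedding data, reduce to $(M^T,\Gamma,\alpha^T)$ via Lemma \ref{lem:tau-neck}, and obtain a square of filtered ECH groups with cobordism maps $\Phi^{e^{2R}L}(X^n_{10})$ along the top and bottom and generator-identifications on the sides, whose commutativity follows from Proposition \ref{prop:comp-62}, Lemma \ref{lem:monopoles-114}, and Proposition \ref{prop:comp-63} (together with the analogue of Lemma \ref{lem:J-constant-at-infinity} for paths of contact forms); taking direct limits over $L$ then gives the claimed commutative diagram.

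The step I expect to be the main obstacle is verifying that the appendix results genuinely apply to compare the two closed manifolds coming from $\calh$ and from $\calh'$: one must check that both are of the type $Y_T$ studied there relative to the single open manifold $Y_\infty=M^*$ — all Reeb orbits and broken holomorphic curves of action less than $L$ being confined to the common region $M_v$ by the arguments of Lemmas \ref{lem:tau-stretching} and \ref{lem:t-stretching} — so that the Seiberg--Witten comparison, which is always between a closed $Y_T$ and $Y_\infty$, can be composed to relate the two closed manifolds. Since this is precisely the mechanism already underlying Proposition \ref{prop:gluing-map-doesn't-matter}, where $\ynt$ and $\ynth$ need not even be diffeomorphic, no new analytic input should be required; the remaining work — the handle bookkeeping when passing to $M^T$, the choices of the constants $r$, $T$, and $n$, and the reduction to paths of almost complex structures constant at infinity — runs parallel to that proof.
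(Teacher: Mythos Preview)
Your strategy is essentially the paper's, and the definition $\Psi^L_{\calh,\calh'}=P^{L,J}_{\calh'}\circ(P^{L,J}_{\calh})^{-1}$ together with the reduction to $\Phi^L_{J,J';\calh}=\Phi^L_{J,J';\calh'}$ is exactly what the paper does in its Part~1. The use of Lemma~\ref{lem:tau-neck}, Lemma~\ref{lem:J-constant-at-infinity}, and the appendix results is also the same, as is the treatment of the cobordism maps in Part~2.

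There is, however, one structural step in the paper that you skip, and it matters. Before invoking the appendix, the paper factors $\Psi^L_{\calh,\calh'}$ through the handle-gluing isomorphisms of Theorem~\ref{thm:1-handle-invariance} as
\[
\Psi^L_{\calh,\calh'}=(F^L_{\calh'})^{-1}\circ (F')^L_{\calh'\ssm\calh}\circ(\Psi')^L_{\emptyset,\calh'\ssm\calh}\circ F^L_{\calh},
\]
thereby reducing to the comparison of the handle data $\emptyset$ and $\calh'\ssm\calh$ on the manifold $M'=M\cup\calh$, which has \emph{connected} suture. Only then are Lemma~\ref{lem:tau-neck} (whose hypothesis is ``connected suture'') and the appendix applied. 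Your proposal instead applies these directly to $M$ and asserts that both closed manifolds are instances of the appendix's $Y_T$ ``relative to the common open completion $Y_\infty=M^*$.'' That is not quite right: the appendix's $Y_\infty$ is by construction (Section~\ref{ssec:setup}) the completion of a sutured manifold with a \emph{single} suture component, so the relevant $Y_\infty$'s are $(M')^*$ and $(M'')^*$, not $M^*$. These are different open manifolds.

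You anticipate this in your final paragraph and argue by analogy with Proposition~\ref{prop:gluing-map-doesn't-matter}, but the analogy is imperfect: there the two pieces of embedding data share the \emph{same} handle set $\calh$, so the two $Y_\infty$'s are literally both $(M')^*$ and Proposition~\ref{prop:comp-62} applies on the nose. Here the handle sets differ, so the two $Y_\infty$'s differ, and an extra identification is needed (that instantons on $(M')^*$ and $(M'')^*$ correspond, since both localize near $M$). The paper's factorization through $M'$ does not entirely eliminate this issue either, but it places the argument squarely in the framework of the appendix and makes clear what is being compared. Your direct approach can be made to work, but you should either make the passage to $M'$ explicit or spell out why the two distinct completions $(M')^*$ and $(M'')^*$ yield the same Seiberg--Witten moduli spaces.
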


\begin{proof}
Let $(M,\Gamma,\alpha)$ be a sutured contact manifold, not necessarily with connected suture, and $\calh,\calh'\in\calp_{(M,\Gamma,\alpha)}$ with $\calh \leq \calh'$. Denote by $(M',\Gamma',\alpha')$ and $(M'',\Gamma'',\alpha'')$ the sutured contact manifolds obtained from $(M,\Gamma,\alpha)$ via attaching the handles provided by the handle data $\calh$ and $\calh'$, respectively. Note that Theorem \ref{thm:1-handle-invariance} provides canonical isomorphisms 
\[\ech^L_\calh(M,\Gamma,\alpha)\underset{F^L_\calh}{\isomto}\ech^L_\emptyset(M',\Gamma',\alpha')\] 
and 
\[\ech^L_{\calh'\ssm\calh}(M',\Gamma',\alpha')\underset{(F')^{L}_{\calh'\ssm\calh}}{\isomto}\ech^L_\emptyset(M'',\Gamma'',\alpha'')\underset{(F^L_{\calh'})^{-1}}{\isomto}\ech^L_{\calh'}(M,\Gamma,\alpha)\] that commute with the maps induced by paths $\{\alpha_s\}_{s\in[0,1]}$ of contact forms agreeing with $\alpha$ on a neighborhood of $\partial M$. To see this, in the case of the top isomorphism, recall that Theorem \ref{thm:1-handle-invariance} assumes that the embedding data $(\calh,f)$ and $(\emptyset,f')$ for $(M,\Gamma,\alpha)$ and $(M',\Gamma',\alpha')$, respectively, are such that $f'=f$. Meanwhile, given two choices of embedding data for $(M,\Gamma,\alpha)$ with identical handle data but different gluing diffeomorphisms, and a choice of $\ech^L$-generic tailored almost complex structure $J$ on $\R\times M^\ast$, the corresponding isomorphisms from $\ech^L(M,\Gamma,\alpha,J)$ to itself and from $\ech^L(M',\Gamma',\alpha',J)$ to itself as in the proof of Proposition \ref{prop:gluing-map-doesn't-matter} are both the identity. Therefore, the isomorphism of Theorem \ref{thm:1-handle-invariance} defines a canonical isomorphism $F^L_\calh:\ech^L_\calh(M,\Gamma,\alpha)\isomto\ech^L_\emptyset(M',\Gamma',\alpha')$ depending only on the handle data.
\begin{pt}
\label{part1}
This part defines the map $\Psi^L_{\calh,\calh'}$. Recall from Section \ref{sec:j-independence} that we have canonical isomorphisms
\[P^{L,J}_\calh:\ech^L(M,\Gamma,\alpha,J)\to \ech^L_\calh(M,\Gamma,\alpha),\]
such that $P^{L,J^0}_\calh=P^{L,J^1}_\calh\circ\Phi^L_{J^0,J^1;\calh}$. We define $\Psi^L_{\calh,\calh'}=P^{L,J}_{\calh'}\circ (P^{L,J}_\calh)^{-1}$ and claim that the right hand side is independent of the choice of $J$. Note that for $L'>L$, there is an inclusion induced map $i_\calh^{L,L'}:\ech^L_\calh(M,\Gamma,\alpha)\to\ech^{L'}_\calh(M,\Gamma,\alpha)$ with which the isomorphism $\Psi^L_{\calh,\calh'}$ commutes, since $P^{L',J}_\calh\circ i^{L,L'}=i_\calh^{L,L'}\circ P^{L,J}_\calh$ by definition. Meanwhile,
\[\Psi^L_{\calh,\calh'}=(F^L_{\calh'})^{-1}\circ (F')^{L}_{\calh'\ssm\calh}\circ(\Psi')^{L}_{\emptyset,\calh'\ssm\calh}\circ F^L_\calh\]
where 
\[(\Psi')^{L}_{\emptyset,\calh'\ssm\calh}:\ech^L_\emptyset(M',\Gamma',\alpha') \to \ech^L_{\calh'\ssm\calh}(M',\Gamma',\alpha').\] Therefore, it suffices to prove that $(\Psi')^{L}_{\emptyset,\calh'\ssm\calh}$ is independent of the choice of tailored almost complex structure on $\R\times (M')^\ast$. In order to prove this, we need to see that 
\[(\Phi')^{L}_{J^0,J^1;\calh'\ssm\calh}\circ ((\Phi')^{L}_{J^0,J^1;\emptyset})^{-1}={\rm Id}\]
for two $\ech^L$-generic tailored almost complex structures $J^0,J^1$ on $\R\times (M')^\ast$. As before, Lemma \ref{lem:tau-neck} can be used to replace $(M',\Gamma',\alpha')$ with $((M')^T,\Gamma',(\alpha')^T)$. In other words, we would like to prove that 
\[(\Phi^{'T})^{L}_{J^0,J^1;\calh'\ssm\calh}\circ ((\Phi^{'T})^{L}_{J^0,J^1;\emptyset})^{-1}={\rm Id}.\]
which is equivalent to showing that the following diagram commutes: 
\[ \xymatrix{
\ech^L(\ynt,\ant,J^{0,T}_n) \ar[r]^{\sim}\ar[d]_{(\tilde{\Phi}^{'T})^{L,J^0}_{n;E'\ssm E}\circ((\tilde{\Phi}^{'T})^{L,J^0}_{n;\emptyset})^{-1}}  &
\ech^L(\ynt,\ant,J^{1,T}_n) \ar[d]^{(\tilde{\Phi}^{'T})^{L,J^1}_{n;E'\ssm E}\circ((\tilde{\Phi}^{'T})^{L,J^1}_{n;\emptyset})^{-1}}\\ 
\ech^L((\ynt)',(\ant)',(J^{0,T}_n)') \ar[r]^{\sim} &
\ech^L((\ynt)',(\ant)',(J^{1,T}_n)')
} \]
where $\emptyset=(\emptyset,f)$ and $E'\ssm E=(\calh'\ssm\calh,f')$ with $f'$ extending $f$, and the horizontal isomorphisms are the canonical isomorphisms of \cite[Theorem 1.3]{ht2}. Assume without loss of generality that $(\ant,J^0_n)$ and $(\ant,(J^1_n)')$ obey the conditions in (4-1) of \cite{taubes1} for $L$ on $\ynt$, and similarly for $(\anth,(J^0_n)')$ and $(\anth,(J^1_n)')$ on $(\ynt)'$. By Lemma \ref{lem:J-constant-at-infinity}, we may also assume that the path joining $J^0$ to $J^1$ is constant when restricted to $\R\times ((M')^\ast\ssm\textit{int}(M'))$. Then setting $T=n+1$, the vertical isomorphisms identify ECH generators in the source with their images in the target by Lemma \ref{lem:monopoles-114} and Proposition \ref{prop:comp-63}, and the commutativity of the diagram follows from the second bullet of Proposition \ref{prop:comp-62} for all sufficiently large $T>0$. 
\end{pt}
\begin{pt}
\label{part2}
This part concerns the cobordism maps induced by a path $\{\alpha_s\}_{s\in[0,1]}$ of contact forms agreeing with $\alpha$ on a neighborhood of $\partial M$. Similar to Part \ref{part1}, it suffices to prove that the following diagram commutes:
\begin{equation}
\label{eq:handlecobcomdiag}
\xymatrix{
\ech_\emptyset(M',\Gamma',\alpha'_0) \ar[rr]^{F_{\alpha_s,\emptyset}} \ar[d]_{(\Psi')_{\emptyset,\calh'\ssm\calh}} &&
\ech_\emptyset(M',\Gamma',\alpha'_1) \ar[d]^{(\Psi')_{\emptyset,\calh'\ssm\calh}} \\
\ech_{\calh'\ssm\calh}(M',\Gamma',\alpha'_0) \ar[rr]^{F_{\alpha_s,\calh'\ssm\calh}} &&
\ech_{\calh'\ssm\calh}(M',\Gamma',\alpha'_1) \\
} 
\end{equation}
What with Lemma \ref{lem:tau-neck}, the commutativity of the diagram in \eqref{eq:handlecobcomdiag} is equivalent to the commutativity of 
\begin{equation}
\label{eq:neckhandlecomdiag}
\xymatrix{
\ech_\emptyset((M')^T,\Gamma',(\alpha'_0)^T) \ar[rr]^{F_{\alpha_s,\emptyset}} \ar[d]_{(\Psi^{'T})_{\emptyset,\calh'\ssm\calh}} &&
\ech_\emptyset((M')^T,\Gamma',(\alpha'_1)^T) \ar[d]^{(\Psi^{'T})_{\emptyset,\calh'\ssm\calh}} \\
\ech_{\calh'\ssm\calh}((M')^T,\Gamma',(\alpha'_0)^T) \ar[rr]^{F_{\alpha_s,\calh'\ssm\calh}} &&
\ech_{\calh'\ssm\calh}((M')^T,\Gamma',(\alpha'_1)^T).
} 
\end{equation}
Recalling the definition of cobordism maps from Section \ref{ssec:alpha-independence}, the commutativity of the diagram in \eqref{eq:neckhandlecomdiag} is equivalent to the commutativity of 
\[ \xymatrix{
\ech^{e^{2R}L}(\ynt, e^{2R}\antz) \ar[rr]^-{\Phi^{e^{2R}L}(X^n_{10})} \ar[d]_-{\rotatebox{90}{$\sim$}} &&
\ech^{e^{2R}L}(\ynt, e^{R}\anto) \ar[d]^-{\rotatebox{90}{$\sim$}} \\
\ech^{e^{2R}L}(\ynth, e^{2R}\antzp) \ar[rr]^-{\Phi^{e^{2R}L}((X')^n_{10})} &&
\ech^{e^{2R}L}(\ynth, e^{R}\antop).
} \]

Once again, assume without loss of generality that $(\antz,\jntz)$ and $(\anto,\jnto)$ obey the conditions (4-1) in \cite{taubes1} for $e^{2R}L$ on $\ynt$, and $J^0$ is connected to $J^1$ by a path of tailored almost complex structures that is constant in a neighborhood of $\R\times ((M')^\ast\ssm\textit{int}(M'))$ via an analog of Lemma \ref{lem:J-constant-at-infinity} for a path $\{\alpha_s\}_{s\in[0,1]}$ of contact forms on $(M,\Gamma)$ agreeing with $\alpha$ on a neighborhood of $\partial M$. Note that this implies that $(\antzp,\jntzp)$ and $(\antop,\jntop)$ obey the conditions (4-1) in \cite{taubes1} for $e^{2R}L$ on $\ynth$. Then setting $T=n+1$, the vertical isomorphisms identify ECH generators in the source with their images in the target by Lemma \ref{lem:monopoles-114} and Proposition \ref{prop:comp-63}, and the commutativity of the diagram follows from the second bullet of Proposition \ref{prop:comp-62}.
\end{pt}
This completes the proof.
\end{proof}
Finally, we prove the naturality of sutured ECH as we intended to, completing the proofs of Theorems \ref{thm:filtered-sutured-ech} and \ref{thm:intro-alpha-naturality}.
\begin{theorem}
\label{thm:naturality}
Let $(M,\Gamma,\alpha)$ be a sutured contact 3-manifold. Then we have canonically defined groups $\{\ech^L(M,\Gamma,\alpha)\}_L$ which admit inclusion induced maps
\[i^{L,L'}:\ech^L(M,\Gamma,\alpha)\to\ech^{L'}(M,\Gamma,\alpha)\] so that in the direct limit as $L\to\infty$ we obtain canonically defined groups $\ech(M,\Gamma,\alpha)$. Furthermore, for a path $\{\alpha_s\}_{s\in[0,1]}$ of contact forms agreeing with $\alpha$ in a neighborhood of $\partial M$, we have isomorphisms $F_{\alpha_s}:\ech(M,\Gamma,\alpha_0)\to\ech(M,\Gamma,\alpha_1)$ that depend only on the homotopy class of the path $\{\alpha_s\}_{s\in[0,1]}$ relative to its boundary. As a result, we have a canonically defined group $\ech(M,\Gamma,\xi,\alpha|_{\partial M})$ where $\xi=\ker(\alpha)$, and canonical isomorphisms
\[ F_{\xi_s}: \ech(M,\Gamma,\xi_0,\alpha|_{\partial M}) \to \ech(M,\Gamma,\xi_1,\alpha|_{\partial M}) \]
associated to any path $\{\xi_s\}_{s\in[0,1]} \subset \Xi(M,\Gamma,\alpha)$. Moreover, there is a contact class $c(\xi) \in \ech(M,\Gamma,\xi,\alpha|_{\partial M})$ satisfying $F_{\xi_s}(c(\xi_0)) = c(\xi_1)$ for all paths $\xi_s$. 
\end{theorem}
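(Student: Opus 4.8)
The plan is to obtain Theorem~\ref{thm:naturality} by promoting the handle-indexed invariants of Section~\ref{sec:stabilize} into a single transitive system over the whole poset $\calp_{(M,\Gamma,\alpha)}$, and then carrying the $\alpha$-naturality constructions of Section~\ref{ssec:alpha-naturality} through the resulting identifications. The input is that for each choice of handle data $\calh$ we already have, from Theorem~\ref{thm:j-independence}, Proposition~\ref{prop:gluing-map-doesn't-matter}, and the discussion following it, a canonically defined group $\ech^L_\calh(M,\Gamma,\alpha)$ together with a canonical isomorphism $P^{L,J}_\calh\colon\ech^L(M,\Gamma,\alpha,J)\isomto\ech^L_\calh(M,\Gamma,\alpha)$ for every generic tailored $J$, satisfying $P^{L,J^0}_\calh=P^{L,J^1}_\calh\circ\Phi^L_{J^0,J^1;\calh}$ and commuting with the inclusion-induced maps $i^{L,L'}$.

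First I would define isomorphisms $\Psi^L_{\calh,\calh'}\colon\ech^L_\calh(M,\Gamma,\alpha)\isomto\ech^L_{\calh'}(M,\Gamma,\alpha)$ for \emph{every} pair of handle data, not only the comparable ones. By Lemma~\ref{lem:poset-connected} there is a finite zigzag $\calh=\calh_0,\calh_1,\dots,\calh_m=\calh'$ in which consecutive terms are comparable, and I would set $\Psi^L_{\calh,\calh'}$ to be the composition of the maps $\Psi^L_{\calh_i,\calh_{i+1}}$ (or their inverses, when $\calh_i>\calh_{i+1}$) supplied by Proposition~\ref{prop:poset-functor}. The key point is that Proposition~\ref{prop:poset-functor} identifies each such map with $P^{L,J}_{\calh_{i+1}}\circ(P^{L,J}_{\calh_i})^{-1}$ for any fixed $J$, so the zigzag composition telescopes to $P^{L,J}_{\calh'}\circ(P^{L,J}_\calh)^{-1}$; this makes $\Psi^L_{\calh,\calh'}$ independent of the choice of zigzag and of $J$, and shows $\Psi^L_{\calh,\calh}=\mathrm{Id}$ and $\Psi^L_{\calh,\calh''}=\Psi^L_{\calh',\calh''}\circ\Psi^L_{\calh,\calh'}$. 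Thus $(\{\ech^L_\calh(M,\Gamma,\alpha)\}_\calh,\{\Psi^L_{\calh,\calh'}\})$ is a transitive system, yielding a canonical group $\ech^L(M,\Gamma,\alpha)$ with canonical isomorphisms to each $\ech^L_\calh(M,\Gamma,\alpha)$ and hence to each $\ech^L(M,\Gamma,\alpha,J)$. Since the $\Psi^L_{\calh,\calh'}$ commute with the $i^{L,L'}$ by Proposition~\ref{prop:poset-functor}, these inclusion maps descend to $i^{L,L'}\colon\ech^L(M,\Gamma,\alpha)\to\ech^{L'}(M,\Gamma,\alpha)$ with $i^{L,L''}=i^{L',L''}\circ i^{L,L'}$, and the direct limit over $L$ is the canonical group $\ech(M,\Gamma,\alpha)$. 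This also proves Theorem~\ref{thm:filtered-sutured-ech}.

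Next I would transport the $\alpha$-naturality statements through these identifications. For each $\calh$ the path $\{\alpha_s\}$ determines an isomorphism $F_{\alpha_s,\calh}\colon\ech_\calh(M,\Gamma,\alpha_0)\isomto\ech_\calh(M,\Gamma,\alpha_1)$ (Section~\ref{ssec:alpha-naturality} together with Propositions~\ref{prop:gluing-map-doesn't-matter} and \ref{prop:poset-functor}), and the commuting squares in Proposition~\ref{prop:gluing-map-doesn't-matter}\eqref{item:psi-isotopy} and Proposition~\ref{prop:poset-functor} show that the $F_{\alpha_s,\calh}$ are intertwined with all the $\Psi_{\calh,\calh'}$ --- for comparable pairs directly, and for arbitrary pairs by composing along a zigzag. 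Hence the $F_{\alpha_s,\calh}$ assemble into one isomorphism $F_{\alpha_s}\colon\ech(M,\Gamma,\alpha_0)\isomto\ech(M,\Gamma,\alpha_1)$ on the canonical groups. That $F_{\alpha_s}$ depends only on the homotopy class of $\{\alpha_s\}$ rel boundary, is the identity for a constant path, and turns concatenation into composition follows from Propositions~\ref{prop:alpha-naturality-psi} and \ref{prop:F-alpha-homotopy-class} applied at the level of a single $\calh$ (equivalently, a single $J$). Feeding these facts into Lemmas~\ref{lem:f-alpha-independent} and \ref{lem:f-xi-independent}, now interpreted with the canonical groups in place of the embedding-data-dependent ones, produces the canonical group $\ech(M,\Gamma,\xi,\alpha|_{\partial M})$ for $\xi=\ker(\alpha)$ and the canonical isomorphisms $F_{\xi_s}$ for paths $\{\xi_s\}\subset\Xi(M,\Gamma,\alpha)$, depending only on the homotopy class of $\{\xi_s\}$. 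Finally, the class $[\emptyset]$ is preserved by $\Phi^L_{J,J'}$ (Proposition~\ref{prop:contact-class-j}), by the handle maps (Theorem~\ref{thm:1-handle-invariance}), and hence by every $\Psi^L_{\calh,\calh'}$ and $F_{\alpha_s}$, so it defines $c(\xi)\in\ech(M,\Gamma,\xi,\alpha|_{\partial M})$ with $F_{\xi_s}(c(\xi_0))=c(\xi_1)$.

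The main obstacle has in fact already been discharged before this theorem: the serious analytic input --- that maps built from Seiberg--Witten cobordism maps do not see the embedding data --- is exactly Propositions~\ref{prop:gluing-map-doesn't-matter} and \ref{prop:poset-functor}, which rest on the appendix results (Lemma~\ref{lem:monopoles-114}, Propositions~\ref{prop:comp-62} and \ref{prop:comp-63}) and on Lemma~\ref{lem:tau-neck}. At the level of Theorem~\ref{thm:naturality} itself the only delicate point is checking that the zigzag definition of $\Psi^L_{\calh,\calh'}$ over the merely connected (not directed) poset $\calp_{(M,\Gamma,\alpha)}$ is consistent; this is handled cleanly by the telescoping identity with the $P^{L,J}_\calh$ noted above, after which everything else is diagram-chasing and compatibility of direct limits.
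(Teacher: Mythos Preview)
Your proposal is correct and follows essentially the same route as the paper: extend the $\Psi^L_{\calh,\calh'}$ to all pairs via zigzags in $\calp_{(M,\Gamma,\alpha)}$, use the telescoping identity $\Psi^L_{\calh,\calh'}=P^{L,J}_{\calh'}\circ(P^{L,J}_\calh)^{-1}$ to prove well-definedness and transitivity, then descend the inclusion maps, the $F_{\alpha_s}$, and the contact class through the resulting transitive system exactly as you describe. The paper's proof is organized identically, with the same telescoping trick as the linchpin and the same appeals to Propositions~\ref{prop:gluing-map-doesn't-matter} and~\ref{prop:poset-functor} and Lemmas~\ref{lem:f-alpha-independent} and~\ref{lem:f-xi-independent}.
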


\begin{proof}
By Lemma \ref{lem:poset-connected}, the partially ordered set $\calp_{(M,\Gamma,\alpha)}$ of handle data associated to $(M,\Gamma,\alpha)$ is connected. When $\calh \leq \calh'$, Proposition \ref{prop:poset-functor} gives us the canonical isomorphism $\Psi^L_{\calh,\calh'} = P^{L,J}_{\calh'} \circ (P^{L,J}_{\calh})^{-1}$, which is independent of $J$. We extend this to arbitrary pairs as follows: when $\calh \leq \calh'$, we define $\Psi^L_{\calh',\calh} = (\Psi^L_{\calh,\calh'})^{-1} = P^{L,J}_{\calh} \circ (P^{L,J}_{\calh'})^{-1}$. Now given $\calh$ and $\calh'$ which may not be comparable, we have a sequence $\calh = \calh_0, \calh_1, \calh_2, \dots, \calh_n = \calh'$ such that either $\calh_i < \calh_{i+1}$ or $\calh_i > \calh_{i+1}$ for each $i=0,1,\dots,n-1$, and we define
\[ \Psi^L_{\calh,\calh'} = \Psi^L_{\calh_{n-1},\calh'} \circ \Psi^L_{\calh_{n-2},\calh_{n-1}} \circ \dots \circ \Psi^L_{\calh,\calh_1}. \]
But then for any fixed $J$ we have
\[\Psi^L_{\calh,\calh'} = \big(P^{L,J}_{\calh'} \circ (P^{L,J}_{\calh_{n-1}})^{-1}\big) \circ \big(P^{L,J}_{\calh_{n-1}} \circ (P^{L,J}_{\calh_{n-2}})^{-1}\big) \circ \dots \circ \big(P^{L,J}_{\calh_1} \circ (P^{L,J}_{\calh})^{-1}\big)=P^{L,J}_{\calh'} \circ (P^{L,J}_{\calh})^{-1}.\]
These isomorphisms are evidently independent of the path from $\calh$ to $\calh'$ in $\calp_{(M,\Gamma,\alpha)}$, and they satisfy $\Psi^L_{\calh,\calh''} = \Psi^L_{\calh',\calh''} \circ \Psi^L_{\calh,\calh'}$ for any $\calh,\calh',\calh''$. As a result, 
\[ \big(\{\ech^L_\calh(M,\Gamma,\alpha)\}_{\calh\in\calp_{(M,\Gamma,\alpha)}}, \{\Psi^L_{\calh,\calh'}\}_{\calh,\calh'}\big) \]
yields a canonically defined group $\ech^L(M,\Gamma,\alpha)$.

The fact that there are inclusion induced maps $i^{L,L'}:\ech^L(M,\Gamma,\alpha)\to \ech^{L'}(M,\Gamma,\alpha)$ also follows from the proof of Proposition \ref{prop:poset-functor}. To be more precise, for arbitrary $\calh$ and $\calh'$ we have seen that $\Psi^L_{\calh,\calh'} = P^{L,J}_{\calh'} \circ (P^{L,J}_{\calh})^{-1}$, and that the maps $i^{L,L'}_{\calh}$ and $i^{L,L'}_{\calh'}$ are intertwined with the isomorphisms $P^{L,J}_{\calh}$ and $P^{L,J}_{\calh'}$ by definition, so $\Psi^{L'}_{\calh,\calh'} \circ i^{L,L'}_{\calh} = i^{L,L'}_{\calh'} \circ \Psi^{L}_{\calh,\calh'}$.  Therefore there are canonical maps $i^{L,L'}: \ech^L(M,\Gamma,\alpha) \to \ech^{L'}(M,\Gamma,\alpha)$, and the direct limit
\[\ech(M,\Gamma,\alpha):=\lim_{L\to\infty}\ech^L(M,\Gamma,\alpha)\]
is canonically defined.

As for the penultimate claim of the theorem, this follows from the last claim of Proposition~\ref{prop:poset-functor}. To elaborate, when $\calh \leq \calh'$, the isomorphisms $\Psi_{\calh,\calh'}$ of that proposition commute with the isomorphisms of Section \ref{ssec:alpha-naturality} defined by a path $\{\alpha_s\}_{s\in[0,1]}$ of contact forms agreeing with $\alpha$ in a neighborhood of $\partial M$. The latter isomorphisms depend only on the homotopy class of the path $\{\alpha_s\}_{s\in[0,1]}$ relative to its boundary. For arbitrary $\calh,\calh'$, the isomorphism $\Psi_{\calh,\calh'}$ is a composition of such maps and their inverses, hence it has the same properties.  As a result, there exists a canonical isomorphism $F_{\alpha_s}:\ech(M,\Gamma,\alpha_0)\to\ech(M,\Gamma,\alpha_1)$. These isomorphisms compose naturally and therefore result in a canonically defined group $\ech(M,\Gamma,\xi,\alpha|_{\partial M})$ where $\xi=\ker(\alpha)$, and canonical isomorphisms $F_{\xi_s}$ as claimed, exactly as in  Lemmas \ref{lem:f-alpha-independent} and \ref{lem:f-xi-independent} respectively.

Finally, the assertion about the contact class follows in two steps. First, for each $L>0$, there is a well-defined contact class in $\ech^L_\calh(M,\Gamma,\alpha)$ due to Proposition \ref{prop:contact-class-j} and the fact that the isomorphisms $\Psi^L_{J;E,E'}$ send $[\emptyset]$ to $[\emptyset]$. Second, Proposition \ref{prop:contact-class-j} implies that the maps $P^{L,J}_\calh$ send $[\emptyset]$ to this contact class, and hence the maps $\Psi^L_{\calh,\calh'}$ send contact class to contact class. As a result, there is a well-defined contact class in each $\ech^L(M,\Gamma,\xi,\alpha|_{\partial M})$, which yields in the direct limit as $L\to\infty$ a well-defined contact class $c(\xi)\in \ech^L(M,\Gamma,\xi,\alpha|_{\partial M})$ satisfying $F_{\xi_s}(c(\xi_0)) = c(\xi_1)$ for all paths $\xi_s$.
\end{proof}

\newpage
\begin{appendix}
\numberwithin{theorem}{subsection}
\numberwithin{equation}{subsection}
\section{(by C. H. Taubes)}
\label{sec:appendix}

\subsection{Setting the stage}
\label{ssec:setup}
Use $T$ in what follows to denote either a number greater than 16 or infinity. Suppose that $Y_T$ is an oriented 3-manifold of the following sort: $Y_T$ is a compact manifold without boundary when $T$ is finite, and it is a non-compact manifold without boundary if $T$ is infinity. There exists an open set $N\subset Y_T$ with compact closure whose complement is the union of three sets: the closure of $R_+(\Gamma)\times[1,T)$ where $R_+(\Gamma)$ is a connected oriented surface with one boundary component $\Gamma$, the closure of $R_-(\Gamma)\times(-T,-1]$ where $R_-(\Gamma)$ is also a connected oriented surface with one boundary component $\Gamma$, and the closure of $[0,T)\times(-T,T)\times\Gamma$ where $\partial R_+(\Gamma)\times[1,T)$ is identified with $\{0\}\times[1,T)\times\Gamma$, and $\partial R_-(\Gamma)\times(-T,-1]$ is identified with $\{0\}\times(-T,-1]\times\Gamma$.  See Figure~\ref{fig:a-yt}: here the components of $N$ are the interior of the sutured manifold $M$, as well as the interior of the solid torus pictured in the center of the left figure (drawn as a cylinder, with the front and back faces glued together) in the case where $T$ is finite.

\begin{figure}[ht]
\labellist
\small \hair 2pt
\pinlabel $M\!\rightarrow$ [r] at 105 136
\pinlabel $t$ [t] at 213 255
\pinlabel $t$ [b] at 213 16
\pinlabel $\tau$ [b] at 185 90
\pinlabel $\nearrow$ [r] at 380 98
\pinlabel $M$ [r] at 366 84
\pinlabel $t$ [l] at 475 248
\pinlabel $t$ [l] at 475 10
\pinlabel $\tau$ [b] at 556 123
\tiny
\pinlabel $R_+(\Gamma)$ [b] at 143 165
\pinlabel $R_-(\Gamma)$ [t] at 141 106
\pinlabel $\Gamma$ [l] at 170 142
\pinlabel $1$ [r] at 106 153
\pinlabel $T$ [l] at 320 160
\pinlabel $-1$ [r] at 106 118
\pinlabel $-T$ [l] at 321 112
\pinlabel $0$ [l] at 181 75
\pinlabel $T$ [l] at 198 103
\pinlabel $\Gamma$ [l] at 430 142
\pinlabel $1$ [l] at 473 180
\pinlabel $-1$ [l] at 473 76
\pinlabel $0$ [t] at 469 128
\endlabellist
\includegraphics[scale=0.8]{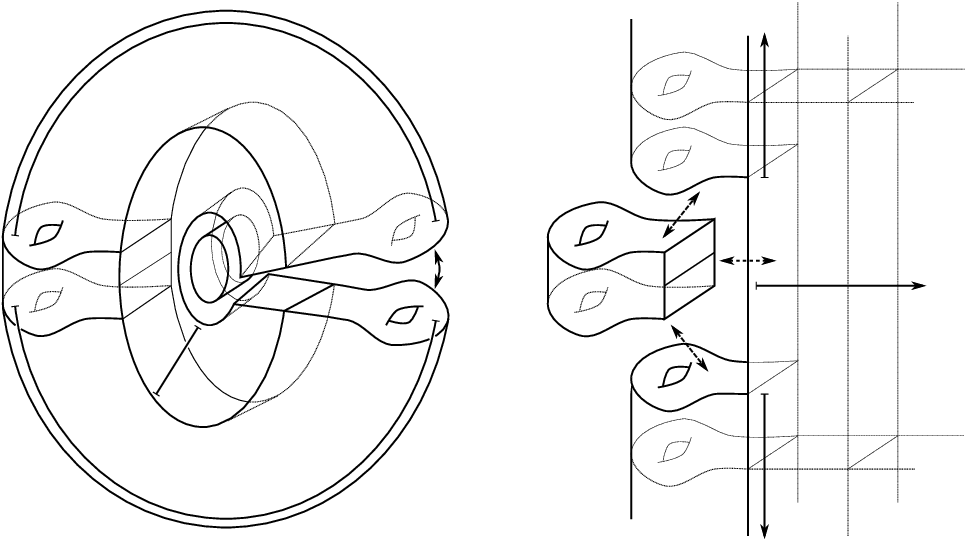}
\caption{The manifolds $Y_T$ for $T < \infty$ (left) and $Y_\infty$ (right).}
\label{fig:a-yt}
\end{figure}

To set the notation, use $t$ to denote the Euclidean coordinate for the $[1,T)$ factor of $R_+(\Gamma)\times[1,T)$, for the $(-T,-1]$ factor of $R_-(\Gamma)\times(-T,-1]$, and for the $(-T,T)$ factor of $[0,T)\times(-T,T)\times\Gamma$. Use $\tau$ to denote the Euclidean coordinate on the $[0,T)$ factor of $[0,T)\times(-T,T)\times\Gamma$. A chosen $\R/2\pi\Z$ is denoted by $\theta$.

Suppose that $Y_T$ is equipped with a contact 1-form denoted by $a$. This 1-form is assumed to have the following properties: $a$ restricts to $R_+(\Gamma)\times[1,T)$ so as to have the form $a=dt+b_+$ where $b_+$ is a 1-form on $R_+(\Gamma)$ such that $db_+$ is an area form on $R_+(\Gamma)$. Likewise, $a$ restricts to $R_-(\Gamma)\times(-T,-1]$ so as to have the form $a=dt+b_-$ where $b_-$ is a 1-form on $R_-(\Gamma)$ such that $db_-$ is an area form on $R_-(\Gamma)$. Meanwhile, $a$ restricts to $[0,T)\times(-T,T)\times\Gamma$ as $dt+e^\tau d\theta$. 

Fix a Riemannian metric on $Y_T$ such that $a$ has unit norm and the kernel of $a$ is orthogonal to the kernel of $da$. The metric on the $[0,T)\times(-T,T)\times\Gamma$ part of $Y_T$ can be chosen so that an orthonormal basis for $T^\ast Y_T$ is given by 
\begin{equation}
\label{eq:1.1}
\hat{e}^1=\frac{1}{2}e^\tau d\tau,\hspace{0.25in}\hat{e}^2=d\theta,\hspace{0.25in}\hat{e}^3=dt+e^\tau d\theta,
\end{equation}
Note that $a=\hat{e}^3$ and $da=2\hat{e}^1\wedge\hat{e}^2$. As a result, the curvature tensor is covariantly constant, and there is no worry about unruly Riemannian curvature bounds as $T$ increases. Note also that the norm of the covariant derivatives of $a$ and $da$ are constant.

In general, let $\rmu$ be a smooth 3-manifold equipped with a contact form $a$ and a Riemannian metric whereby $a$ has unit norm and $2\ast a=da$. Suppose also that there is a uniform bound on the norm of the curvature of the metric. Having fixed such a metric, a $\spinc$ structure on $\rmu$ is a lift of the oriented orthonormal frame bundle of $\rmu$ to a principal $\spinc(3)$-bundle. Since $\spinc(3)$ is the Lie group $U(2)$, the $\spinc$ structure has an associated Hermitian $\C^2$-bundle $\mathbb{S}$, called the \emph{spinor bundle}, and Clifford multiplication by $a$ defines a splitting of $\mathbb{S}$ into $\pm i$ eigenbundles:
\begin{equation}
\label{eq:splitting}
E\oplus (E\otimes K^{-1}),
\end{equation}
where $E$ is the $+i$ eigenbundle and $E\otimes K^{-1}$ is the $-i$ eigenbundle of Clifford multiplication by $a$, and $K^{-1}$ is isomorphic to the oriented 2-plane bundle that is the kernel of $a$ and oriented by $da$. With this splitting in mind, any given section of $\mathbb{S}$ can be written as a pair $(\alpha,\beta)$ where $\alpha$ is a section of $E$ and $\beta$ is a section of $E\otimes K^{-1}$. The $\spinc$ structure with $E$ isomorphic to the product complex line bundle $\underline{\C}$ is called the \emph{canonical ${\textit Spin^c}$ structure}. The spinor bundle associated to the canonical $\spinc$ structure is denoted by $\mathbb{S}_{{\rm I}}$. 
Let $\Ai$ denote the product connection on $\underline{\C}$ and $1_\C$ denote the constant section given by the complex number $1\in\C$. Use the splitting of $\si$ as $\underline{\C}\oplus K^{-1}$ to view the pair $\psii=(1_\C,0)$ as a section of $\si$. 

With $\Ai$ understood, a connection on $K^{-1}$ and the metric's Levi--Civita connection define a Dirac operator on sections of $\si$. In particular, there is a unique connection on $K^{-1}$ such that the resulting Dirac operator, denoted $\textup{D}_{\uptheta_0}$, annihilates the section $\psii$. This connection is called the \emph{canonical connection} on $K^{-1}$. If $E$ is any given bundle, and $A$ is a connection on $E$, then $A$, the canonical connection on $K^{-1}$, and the metric's Levi--Civita connection define a Dirac operator on $\mathbb{S}$. This is denoted by $\textup{D}_{A}$. By way of notation, the covariant derivative defined by $A$ on sections of $E$ is denoted by $\nabla_A$. The covariant derivative defined by $A$ and the canonical connection on $K^{-1}$ on sections of $E\otimes K^{-1}$ is also denoted by $\nabla_A$. This notation is also used to denote the covariant derivatives that is defined by these connections and the Levi--Civita connection on sections of the tensor product of these bundles with powers of $T^\ast Y_T$ and $TY_T$.

Let $A_0$ denote in what follows a second connection on $K^{-1}$, one whose curvature $2$-form is zero on $Y_T \ssm N$.  This is important. It is also important to choose $A_0$ to be independent of $T$ on $N$. (The inclusion-induced homomorphism from $H_2(Y_T \ssm N; \Z)$ to $H_2(Y_T;\Z)$ is zero, so a connection with these properties can always be found.) The curvature 2-form of $A_0$ is denoted in what follows by $F_{A_0}$.

Throughout the rest of the appendix, $c_0>100$ denotes a number that is independent of the relevant solution to \eqref{eq:sw} or \eqref{eq:instanton-sw}, as well as $T$ and $r$. Its value is allowed to increase between successive appearances.

\subsection{Monopoles}
\label{ssec:monopoles}
Fix a $\spinc$ structure on $\rmu$ and let $\mathbb{S}$ denote the associated spinor bundle with the splitting as in (\ref{eq:splitting}). Fix $r\geq 1$. A pair $(A,\psi)$ of a smooth connection $A$ on $E$ and a smooth section $\psi$ of $\mathbb{S}$ is said to obey the Seiberg--Witten equations if it satisfies
\begin{eqnarray}
\label{eq:sw}
\nonumber\ast F_{A}&=&r(\psi^{\dagger} \tau \psi-ia) - \frac{1}{2}*F_{A_0}\\
\textup{D}_{A}\psi&=&0.
\end{eqnarray}
where $F_{A}\in \Omega^2(\rmu,i \mathbb{R})$ denotes the curvature form of the connection $A$, $\psi^{\dagger}\tau\psi$ denotes the section of $iT^{\ast}\mathrm{U}$ which is the metric dual of the homomorphism $\psi^{\dagger}\mathfrak{cl(\cdot)}\psi:T^{\ast}\mathrm{U}\rightarrow i\mathbb{R}$ with $\mathfrak{cl}$ being the Clifford multiplication, and $\textup{D}_{A}$ is the Dirac operator associated to $A$, which is defined by
\[\Gamma(\mathbb{S})\stackrel{\nabla_{A}}\longrightarrow \Gamma(\mathrm{T^*U\otimes \mathbb{S}})\stackrel{\mathfrak{cl}}{\longrightarrow}\Gamma(\mathbb{S}).\]
The linearized version of the equations in (\ref{eq:sw}) defines a differential operator for a pair $(\fra,\eta)$ of ${L^2}_1$ sections of $iT^\ast \rmu\oplus\mathbb{S}$. These equations with an extra gauge fixing equation can be extended to give an elliptic operator $\mathfrak{L}$, from the space of ${L^2}_1$ sections of $i(T^\ast \rmu\oplus \underline{\R})\oplus\mathbb{S}$ to the space of $L^2$ sections of the same bundle. Here and in what follows $\underline{\R}$ denotes the product real line bundle over $\rmu$. Let $\frh=(\fra,\phi,\eta)$ denote an ${L^2}_1$ section of the bundle $i(T^\ast \rmu\oplus \underline{\R})\oplus\mathbb{S}$. Then 
\begin{equation}
\label{eq:linearized}
\mathfrak{L}_{(A,\psi)}(\fra,\phi,\eta)=\left(\begin{array}{c}\ast\mathrm{d\fra}-\mathrm{d}\phi-r^{1/2}(\psi^{\dagger}\tau\eta+\eta^{\dagger}\tau\psi)\\ 
\ast d \ast\fra+r^{1/2}(\psi^{\dagger}\eta-\eta^{\dagger}\psi)\\
\textup{D}_{A}\eta+r^{1/2}(\mathfrak{cl}(\fra)\psi+\phi\psi)\end{array}\right).
\end{equation}
The version of the operator $\mathfrak{L}$ defined by the pair $(\Ai,\psii)$ is denoted in what follows by $\Lif$.
\subsubsection{Exponential decay}
\label{sssec:exponential_decay}
A Bochner--Weitzenb\"ock formula for $\mathfrak{L}^2$ can be found in \cite[Section 5e]{taubes-weinstein1}. The version for $\Lif$ leads in particular to the following assertion:
\begin{quote}
There exists $\kappa>100$ with the following significance: Fix $r>\kappa$ and suppose that $\frh$ is a compactly supported section of $i(T^\ast \rmu\oplus\underline{\R})\oplus\spb$ on $\rmu$. Then 
\begin{equation}
\label{eq:monopoles-15}
{||\Lif\frh||_2}^2\geq (1-\kappa^{-1})({||\nabla_{\Ai}\frh||_2}^2+r{||\frh||_2}^2).
\end{equation}
Here $||\cdot||_2$ denotes the $L^2$-norm.
\end{quote}
To set the stage for the first application of \eqref{eq:monopoles-15}, let $\pzee$ denote a linear operator on sections of $i(T^\ast \rmu\oplus \underline{\R})\oplus\mathbb{S}$ such that if $\frh$ has compact support in $\rmu$, then
\begin{equation}
\label{eq:monopoles-16}
{||\pzee\frh||_2}^2\leq \frac{1}{2}({||\nabla_{\Ai}\frh||_2}^2+r{||\frh||_2}^2).
\end{equation}
By way of a relevant example, suppose that $(A,\psi)$ is a pair of a connection $A$ on $E=\underline{\C}$ and a section $\psi$ of $\spb=\si$. Suppose further that there exists an isomorphism between $E$ and $\underline{\C}$ on $\rmu$ such that we can write $A=\Ai+\aA$ and $\psi=\psii+\eta$ where 
\begin{equation}
\label{eq:monopoles-17}
r^{-1/2}|\aA|+|\eta|\leq {c_0}^{-1},
\end{equation}
for some constant $c_0>1$ which is suitably large, but can be taken to be independent of $r$ and $T$. Then this isomorphism between $E$ and $\underline{\C}$ writes $\LLf_{(A,\psi)}=\Lif+\pzee$ with $\pzee$ obeying \eqref{eq:monopoles-16}. 

In any event, suppose that $\pzee$ is a linear operator on sections of $i(T^\ast \rmu\oplus \underline{\R})\oplus\mathbb{S}$ that obeys \eqref{eq:monopoles-16}. Suppose now that $\frh$ is a section of $i(T^\ast \rmu\oplus \underline{\R})\oplus\mathbb{S}$ with no constraint on its support that obeys the equation
\begin{equation}
\label{eq:monopoles-18}
\Lif\frh+\pzee\frh=0
\end{equation}
on $\rmu$. The upcoming Lemma \ref{lem:monopoles-11} makes an assertion to the effect that $\frh$ is small at distances greater than $\mathcal{O}(r^{-1/2})$ from the \emph{frontier} of $\rmu$. The notion of the frontier is made precise using the following definitions. Fix an open set $V\subset \rmu$. Fix $\rho>0$ and for each integer $k\in\{0,1,\dots\}$, let $V_k$ denote the set of points in $V$ with distance $k\rho$ or more from each point in $\rmu\ssm V$. Assume in any event that $V_k\ssm V_{k+1}$ has compact closure in $\rmu$ for all $k\in\{0,1,\dots\}$. For each integer $k\in\{0,1,\dots\}$, let $||\frh||_{\ast,k}$ denote the positive square root of the number 
\begin{equation}
\label{eq:monopoles-19}
\int_{V_k\ssm V_{k+1}}(|\nabla_{\Ai}\frh|^2+r|\frh|^2),
\end{equation}
this being the measure of the size of $\frh$ on $V_k\ssm V_{k+1}$.
\begin{lemma}
\label{lem:monopoles-11}
There exists $\kappa>1$ with the following significance: Fix $r>\kappa$ and suppose that $\pzee$ obeys \eqref{eq:monopoles-16}. Fix an open set $V\subset \rmu$ and $\rho>\kappa r^{-1/2}$. For each integer $k\in\{0,1,\dots\}$, define $V_k$ as above and assume that $V_k\ssm V_{k+1}$ has compact closure in $\rmu$. If $\frh$ obeys \eqref{eq:monopoles-18}, then
\[||\frh||_{\ast,k}\leq \kappa ||\frh||_{\ast,0}e^{-\sqrt{r}k\rho/\kappa},\]
for each integer $k\in\{0,1,\dots\}$.
\end{lemma}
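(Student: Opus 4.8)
The plan is a localized coercivity (Agmon-type) argument. Let $\phi\colon U\to[0,\infty)$ be the distance to $U\ssm V$, so that $V_k=\{\phi\ge k\rho\}$ by definition, and for $s\ge 0$ put
\[ g(s)=\int_{\{\phi\ge s\}}\bigl(|\nabla_{\Ai}\frh|^2+r|\frh|^2\bigr). \]
This is nonincreasing in $s$, and $\|\frh\|_{*,k}^2\le g(k\rho)-g((k+1)\rho)\le g(k\rho)$, so it suffices to show $g(k\rho)\le c_0\,\|\frh\|_{*,0}^2\,e^{-\sqrt r\,k\rho/\kappa}$. The mechanism is to cut $\frh$ off by a function vanishing on $U\ssm V$, feed the truncation into \eqref{eq:monopoles-15}, and use the equation \eqref{eq:monopoles-18} together with the smallness \eqref{eq:monopoles-16} of $e$ to obtain a decay inequality for $g$. (Here $\phi$ may be replaced by a smoothing within bounded distance of it, adjusting constants, if one prefers the cutoffs below to be smooth.)

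First I would prove a one-step estimate. Fix $0\le s_1<s_2$ and let $\beta=\chi(\phi)$, where $\chi$ increases from $0$ on $\{\phi\le s_1\}$ to $1$ on $\{\phi\ge s_2\}$ with $|\chi'|\le c_0/(s_2-s_1)$; then $\frg:=\beta\frh$ is an $L^2_1$ section supported in $\overline V$, hence of compact support in $U$. Since $\Li$ and $e$ are first order with principal symbol bounded by $c_0$, we have $\Li\frg+e\frg=\beta\,(\Li\frh+e\frh)+\mathfrak q$, where $\mathfrak q$ is pointwise bounded by $c_0|d\beta|\,|\frh|$ and supported in $\{s_1\le\phi\le s_2\}$; by \eqref{eq:monopoles-18} this says $\Li\frg=-e\frg+\mathfrak q$. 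Applying \eqref{eq:monopoles-15} to $\frg$, estimating $\|\Li\frg\|_2\le\|e\frg\|_2+\|\mathfrak q\|_2$, and using \eqref{eq:monopoles-16} to bound $\|e\frg\|_2^2\le\tfrac12\bigl(\|\nabla_{\Ai}\frg\|_2^2+r\|\frg\|_2^2\bigr)$, the fact that $\tfrac12<1-\kappa^{-1}$ for $\kappa$ large lets the $e$-term be absorbed with room to spare, leaving $\|\nabla_{\Ai}\frg\|_2^2+r\|\frg\|_2^2\le c_0\,\|d\beta\|_\infty^2\int_{\{s_1\le\phi\le s_2\}}|\frh|^2$. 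Bounding the last integral by $r^{-1}\bigl(g(s_1)-g(s_2)\bigr)$ and noting that the left side dominates $g(s_2)$ because $\beta\equiv1$ on $\{\phi\ge s_2\}$, I obtain
\[ g(s_2)\le\frac{c_0}{(s_2-s_1)^2\,r}\,\bigl(g(s_1)-g(s_2)\bigr). \]

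To finish, iterate with a step width comparable to $r^{-1/2}$. Choosing $s_2-s_1=\kappa_0 r^{-1/2}$, which is allowed since the hypothesis $\rho>\kappa r^{-1/2}$ gives room provided $\kappa\ge\kappa_0$, the one-step estimate becomes $g(s_2)\le\frac{c_0}{\kappa_0^2}\bigl(g(s_1)-g(s_2)\bigr)$, hence $g(s_2)\le\lambda\,g(s_1)$ with $\lambda=\tfrac{c_0/\kappa_0^2}{1+c_0/\kappa_0^2}$ a universal constant that is $<e^{-4}$ once $\kappa_0$ is large. Applying this along $s_j=j\kappa_0 r^{-1/2}$ gives $g(s_j)\le\lambda^j g(0)$, and since the interval from $\phi=0$ to $\phi=k\rho$ contains $\lfloor k\rho\sqrt r/\kappa_0\rfloor\ge k\rho\sqrt r/\kappa_0-1$ full steps, $g(k\rho)\le c_0\,e^{-4k\rho\sqrt r/\kappa_0}\,g(0)$. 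Finally, the one-step estimate with $s_1=0$, $s_2=\rho$ gives $g(\rho)\le\lambda g(0)$, so $\|\frh\|_{*,0}^2=g(0)-g(\rho)\ge(1-\lambda)g(0)$, i.e.\ $g(0)\le c_0\|\frh\|_{*,0}^2$. Combining, $\|\frh\|_{*,k}^2\le g(k\rho)\le c_0\|\frh\|_{*,0}^2\,e^{-4k\rho\sqrt r/\kappa_0}$, and enlarging $\kappa$ to absorb $c_0$ and the factor from taking the square root yields $\|\frh\|_{*,k}\le\kappa\,\|\frh\|_{*,0}\,e^{-\sqrt r\,k\rho/\kappa}$.

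The one genuinely important point is the last one: a single cutoff across the whole width-$\rho$ annulus between $V_k$ and $V_{k+1}$ only produces a contraction factor polynomial in $(\rho^2 r)^{-1}$, which is far weaker than $e^{-\sqrt r\rho/\kappa}$ when $\rho\gg r^{-1/2}$; subdividing into $O(r^{-1/2})$-width steps is exactly what converts the a priori inequality \eqref{eq:monopoles-15} — whose gap between $\|\Li\frh\|_2^2$ and $r\|\frh\|_2^2$ scales like a mass gap $\sqrt r$ — into a genuine exponential rate in $\sqrt r$. The remaining ingredients are routine: smoothing the Lipschitz distance function to produce the cutoff above, observing that for the $e$ of \eqref{eq:monopoles-17} the commutator term $\mathfrak q$ only involves the symbol of $\Li$ because $e$ there is a zeroth-order multiplication operator, and a density argument to apply \eqref{eq:monopoles-15} to the $L^2_1$ section $\beta\frh$ rather than a smooth one.
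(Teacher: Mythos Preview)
Your argument is correct and is a genuine Agmon-type decay estimate, but the route differs from the paper's. The paper localizes $\frh$ to the annuli $V_k\ssm V_{k+1}$ via a partition of unity $\{\varphi_k\}$ at scale~$\rho$, obtains a three-term recurrence $x_k\le c_0 r^{-1/2}\rho^{-1}(x_{k+1}+x_{k-1})$ for the $L^2$ norms $x_k=\|\varphi_k\frh\|_2$, and then reads off exponential decay from the resulting second-order difference inequality. You instead work with nested cutoffs at the finer scale $r^{-1/2}$ and the cumulative quantity $g(s)$, deriving a two-term geometric recurrence $g(s_2)\le\lambda\,g(s_1)$ that you iterate directly. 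Both methods exploit the same coercivity gap in \eqref{eq:monopoles-15}; yours is slightly more elementary in that it avoids solving a three-term recurrence, at the cost of introducing an auxiliary scale and then reassembling to scale~$\rho$ at the end. Your final paragraph correctly identifies why the subdivision is essential. One small point: in the general statement $e$ is allowed to be first order, so the commutator $\mathfrak q$ picks up the symbol of $e$ as well; you handle this correctly in the body (the bound \eqref{eq:monopoles-16} forces the symbol of $e$ to be bounded), so your closing remark about the zeroth-order case of \eqref{eq:monopoles-17} should be read as an aside, not as part of the main argument.
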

\begin{proof}
Fix a non-negative smooth function $\chi$ on $\R$ that is equal to $0$ on $(-\infty,\frac{1}{8}]$ and equal to $1$ on $[\frac{7}{8},\infty)$, and with derivative bounded by a constant $c_0>1$. Let ${\textsc d}$ denote the function on $V$ that gives the distance to $\rmu\ssm V$. For each non-negative integer $k$, let  $\chi_k=\chi(2\frac{{\textsc d}}{\rho}-k)$. This function is equal to $0$ where $\frac{1}{2}k\rho+\frac{1}{16}\rho\geq{\textsc d}$ and it is equal to $1$ where $\frac{1}{2}k\rho+\frac{7}{16}\rho\leq{\textsc d}$. For any non-negative integer $k$, set $\varphi_k=\chi_k-\chi_{k+1}$. Note that $\varphi_k$ is zero if $\frac{1}{2}k\rho+\frac{1}{16}\rho\geq{\textsc d}$ or $\frac{1}{2}k\rho+\frac{15}{16}\rho\leq{\textsc d}$. Therefore, $\varphi_k$ is supported in $V_k\ssm V_{k+1}$. Moreover $\sum_{k=1}^\infty\varphi_k=1$ on $V_1$. 

Let $\frh$ obey \eqref{eq:monopoles-18}. For any non-negative integer $k$ set $\frh_k=\varphi_k\frh$. These obey an equation that has the schematic form 
\[\Lif\frh_k+\pzee\frh_k=\sigma_\LLf(d\varphi_k)\frh+\sigma_\pzee(d\varphi_k)\frh,\]
with $\sigma_\LLf$ and $\sigma_\pzee$ being the principal symbols of the operators $\mathfrak{L}$ and $\pzee$, respectively. The assumption in \eqref{eq:monopoles-16} implies that $|\sigma_\pzee|\leq c_0$. Since $\varphi_k\neq0$ only where $\varphi_{k-1}$, $\varphi_k$, and $\varphi_{k+1}$ are non-zero, the preceding equation for $\frh_k$ can be written schematically as 
\[\Lif\frh_k+\pzee\frh_k=\rho^{-1}(z_{k-1}\frh_{k-1}+z_k\frh_k+z_{k+1}\frh_{k+1})\]
with $z_{k-1}$, $z_k$, and $z_{k+1}$ being zeroth order operators whose norms are bounded by $c_0$. Let $x_k$ denote the $L^2$-norm of $\frh_k$. Take the $L^2$-norm of both sides of the last equation and then use \eqref{eq:monopoles-15} and \eqref{eq:monopoles-16} to see that the collection $\{x_k\}_{k=0,1,\dots}$ obeys 
\begin{equation}
\label{eq:monopoles-110}
x_k\leq c_0 r^{-1/2}\rho^{-1}(x_{k+1}+x_{k-1}).
\end{equation}
Granted that $r^{1/2}\rho>c_0$, \eqref{eq:monopoles-110} implies a `second order' difference equation that reads
\begin{equation}
\label{eq:monopoles-111}
-(x_{k+1}+x_{k-1}-2x_k)+{c_0}^{-1}r^{1/2}\rho x_k\leq 0.
\end{equation}
This difference equation implies in turn that $x_k$ obeys
\begin{equation}
\label{eq:monopoles-112}
x_k\leq c_0 x_0e^{-\sqrt{r}k\rho/c_0}.
\end{equation}
Then \eqref{eq:monopoles-112} together with \eqref{eq:monopoles-15} and \eqref{eq:monopoles-16} imply what is asserted by Lemma \ref{lem:monopoles-11}.
\end{proof}
\subsubsection{Fredholm property of $\LLf$}
\label{sssec:fredholm}
It is a standard result when $\rmu$ is a compact manifold that any given $(A, \psi)$ version
of the operator $\LLf$ from \eqref{eq:linearized} defines an unbounded, self-adjoint operator on the Hilbert space $L^2(\rmu;i(T^\ast \rmu\oplus\underline{\R})\oplus\spb)$ with pure point spectrum that has no accumulation points. In the case when $\rmu$ is not compact, this is not going to be the case in general. Even so, in the case when $\rmu$ is $Y_\infty$, the operator $\LLf$ will still be self-adjoint and Fredholm if $(A,\psi)$ is suitably constrained on $Y_\infty\ssm N$ and if $\spb=\si$ on $Y_\infty\ssm N$. These constraints are defined in the next paragraph. Pairs that satisfy the constraints are said to be \emph{admissible}. 

A pair $(A,\psi)$ of connection on the $Y_\infty$ version of $E$ and section of the $Y_\infty$ version of $\mathbb{S}$ is said to be admissible if there exists an isomorphism on $Y_\infty\ssm N$ from $\underline{\C}$ to $E$ to be denoted by $g$ with the following properties: both $g^\ast\psi-\psii$ and $\nabla_{\Ai}(g^\ast\psi)$ are square integrable on $Y_\infty\ssm N$ as is both $g^\ast A-\Ai$ and $\nabla(g^\ast A-\Ai)$. All solutions to the $T=\infty$ version of \eqref{eq:sw} are henceforth assumed without further comment to be admissible. 

By way of an example, the pair $(\Ai,\psii)$ is an admissible solution to \eqref{eq:sw} in the case when $\spb$ is the spinor bundle associated to the canonical $\spinc$ structure. By way of notation, the definition of the kernel of $\LLf$ that is used in the forthcoming lemma, and henceforth, for $Y_\infty$ has $\frh$ being in the kernel if and only if $\LLf\frh=0$ and $\frh$ is square integrable on $Y_\infty$. 

\begin{lemma}
\label{lem:monopoles-12}
Take $\rmu$ to be $Y_\infty$. There exists $\kappa>1$ with the following significance: If $r>\kappa$ and if $(A,\psi)$ is admissible, then the corresponding version of the operator $\LLf$ defines an unbounded, self-adjoint operator on $L^2$ with dense domain ${L^2}_1$. As a self-adjoint operator, $\LLf$ has only pure point spectrum in the interval $[-\kappa^{-1}r^{1/2},\kappa^{-1}r^{1/2}]$ with no accumulation points in this interval. In particular, if $\LLf$ has trivial kernel, then there exists $c>1$ such that 
\[{||\LLf(\cdot)||_2}^2\geq c^{-2}({||\nabla_A(\cdot)||_2}^2+r{||\cdot||_2}^2),\]
on the domain of $\LLf$.
\end{lemma}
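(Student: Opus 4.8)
The plan is to establish Lemma~\ref{lem:monopoles-12} in three stages: first the basic functional-analytic properties (self-adjointness, dense domain ${L^2}_1$), then the discreteness of the spectrum in the stated interval, and finally the coercivity estimate as an immediate corollary of trivial kernel plus discreteness. For the first stage, I would note that $\LL_{(A,\psi)}$ differs from $\Li$ by a zeroth-order operator $e$ which, by the admissibility hypothesis and the bound analogous to \eqref{eq:monopoles-17}, is an $L^2$-bounded perturbation away from $N$ and a smooth compactly supported perturbation inside a slightly larger compact set; since $\Li$ is manifestly symmetric on compactly supported sections and \eqref{eq:monopoles-15} gives a G{\aa}rding-type inequality, a standard argument shows $\Li$ is essentially self-adjoint with domain ${L^2}_1$ on $Y_\infty$, and then $\LL = \Li + e$ is self-adjoint on the same domain by the Kato--Rellich theorem once $r$ is large enough that $e$ is relatively bounded with relative bound less than $1$ (which is exactly what \eqref{eq:monopoles-16} provides).

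For the second stage, the key point is to control the essential spectrum. Here I would exploit the product structure of $Y_\infty\ssm N$: on each of the three cylindrical ends the pair $(A,\psi)$ is asymptotic (in ${L^2}_1$, by admissibility) to $(\Ai,\psii)$, and on those ends $\Li$ is translation-invariant in the $t$ or $\tau$ direction, so its essential spectrum can be computed from a family of operators on the cross-section. The Bochner--Weitzenb\"ock inequality \eqref{eq:monopoles-15}, applied to sections supported far out on an end, shows that $\Li$ (and hence $\LL$) has no essential spectrum in $(-\kappa^{-1}r^{1/2},\kappa^{-1}r^{1/2})$ — more precisely, any Weyl sequence for a point in that interval would have to concentrate on the ends, contradicting \eqref{eq:monopoles-15} for compactly supported sections after a cutoff argument like the one in Lemma~\ref{lem:monopoles-11}. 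Combined with the fact that the resolvent is compact when restricted to the region where the end contributions are cut off (Rellich on the compact piece $N$ together with the exponential-decay estimate of Lemma~\ref{lem:monopoles-11} controlling the tails), this forces the spectrum in $[\kappa^{-1}r^{-1/2},\kappa^{-1}r^{1/2}]$ to be pure point with finite multiplicities and no accumulation points. The interior endpoint $\kappa^{-1}r^{-1/2}$ appears because the lowest eigenvalues, coming from the geometry near $N$, are only known to be bounded below by $\mathcal{O}(r^{-1/2})$ rather than $\mathcal{O}(r^{1/2})$.

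For the final coercivity statement: if $\LL$ has trivial kernel, then since $0$ is not in the spectrum and the spectrum is closed, there is a spectral gap $c^{-1} > 0$ around $0$, so $\|\LL\xi\|_2 \geq c^{-1}\|\xi\|_2$ on the domain. Upgrading the $\|\xi\|_2$ on the right-hand side to the full ${L^2}_1$-norm $\|\nabla_A\xi\|_2^2 + r\|\xi\|_2^2$ is then a routine elliptic bootstrapping: from $\|\LL\xi\|_2 \geq c^{-1}\|\xi\|_2$ and the G{\aa}rding inequality $\|\LL\xi\|_2^2 + \|\xi\|_2^2 \gtrsim \|\nabla_A\xi\|_2^2 + r\|\xi\|_2^2$ (itself a consequence of \eqref{eq:monopoles-15} and \eqref{eq:monopoles-16}, after absorbing the perturbation), one solves for the gradient term. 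I expect the main obstacle to be the second stage — pinning down that there is genuinely no essential spectrum in the claimed interval on the noncompact manifold $Y_\infty$. This requires carefully combining the translation-invariance on the ends with the Weitzenb\"ock estimate \eqref{eq:monopoles-15} and the exponential decay of Lemma~\ref{lem:monopoles-11}, and making sure the constants are uniform in $r$; everything else is either standard operator theory (Kato--Rellich, compact resolvent implies discrete spectrum) or a direct consequence of inequalities already recorded in the excerpt.
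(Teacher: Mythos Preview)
Your outline is essentially the paper's approach: write $\LL = \Li + e$, use the coercivity of $\Li$ together with smallness of $e$ far out on the ends to get a lower bound like \eqref{eq:monopoles-113}, and then combine with Rellich on the compact piece and the exponential decay of Lemma~\ref{lem:monopoles-11} to conclude that the spectrum in the stated interval is discrete.

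There is one point where your argument is imprecise and where the paper is more careful. You invoke ``the bound analogous to \eqref{eq:monopoles-17}'' to control $e$, but admissibility gives only that $(\aA,\eta)$ and its covariant derivative lie in $L^2$ on $Y_\infty\ssm N$; it does \emph{not} give the pointwise smallness $r^{-1/2}|\aA|+|\eta|\leq c_0^{-1}$ of \eqref{eq:monopoles-17}. The paper bridges this gap by Sobolev embedding in dimension three: from $L^2_1$ control one gets $|e|\in L^p$ for $p\in\{2,\dots,6\}$, and since this $L^p$ norm is finite one can choose $R>1$ so that \eqref{eq:monopoles-16} holds whenever $\frh$ is supported where $\operatorname{dist}(\cdot,N)>R$. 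This is what drives both the Kato--Rellich step and the coercivity \eqref{eq:monopoles-113} on the ends.

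A second, minor remark: you expect the essential-spectrum argument to be the hard part and propose using translation invariance on the ends plus a Weyl-sequence argument. The paper does not appeal to translation invariance at all. Once \eqref{eq:monopoles-113} is in hand for sections supported far from $N$, one simply notes that for $|\mu|\leq \tfrac{1}{4}r^{1/2}$ the operator $e+\mu$ still satisfies \eqref{eq:monopoles-16}, so Lemma~\ref{lem:monopoles-11} applies to $(\LL-\mu)\frh=0$ directly and forces any eigenfunction to decay exponentially on $Y_\infty\ssm N$; Rellich on the compact part then gives the discreteness. This is somewhat more direct than your Weyl-sequence route, though both are valid.
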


\begin{proof}
The operator $\LLf$ defines a unbounded operator from the space of ${L^2}_1$ sections to the Sobolev space of $L^2$ sections of $i(T^*\rmu \oplus \underline{\R}) \oplus \spb$. If $(A,\psi)$ is admissible, then there is an isomorphism between the bundles $E$ and $\underline{\C}$ on $Y_\infty\ssm N$ that writes $A$ as $\uptheta_0+\aA$ and $\psi$ as $\psii+\eta$ with $(\aA,\eta)$ and its covariant derivatives being in $L^2$ on $Y_\infty\ssm N$. This implies that the isomorphism writes $\LLf$ as $\Lif+\pzee$ with $|\pzee|$ being in $L^p$ for $p\in\{2,\dots,6\}$. This follows from the Sobolev inequalities in dimension 3 because $|\pzee| \leq c_0(|\aA|+|\eta|)$. Keep in mind in this regard that ${L^2}_1$ functions in dimension 3 are in $L^p$ for the indicated range of $p$ (this is the Sobolev inequalities); and since both $\aA$ and $\eta$ are in ${L^2}_1$, their norms are ${L^2}_1$ functions.  Keeping the preceding in mind, suppose that $R>1$ and that $\frh$ is an ${L^2}_1$ section of $i(T^*\rmu\oplus\underline{R})\oplus\spb$ over $Y_\infty$ with compact support on the part of $Y_\infty \ssm N$ where the distance to $N$ is greater than $R$.  Then, the $L^2$ norm of $\pzee\frh$ is bounded by the product of the $L^4$ norms of $\pzee$ and $\frh$.  A dimension-3 Sobolev inequality says that the $L^4$ norm of $\frh$ is bounded by $c_0$ times its ${L^2}_1$ norm. Meanwhile, since the $L^4$ norm of $\pzee$ on $Y_\infty \ssm N$ is finite, the $L^4$ norm of $\pzee$ where the distance to $N$ is greater than $R$ will be less than any positive number specified in advance if $R$ is sufficiently large.  As a consequence, there exists $R>1$ so that \eqref{eq:monopoles-16} holds for all ${L^2}_1$ sections of $i(T^*\rmu\oplus\underline{\R})\oplus\spb)$ on $Y_\infty$ with compact support where the distance to $N$ on $Y_\infty \ssm N$ is greater than $R$. This implies that
\begin{equation}
\label{eq:monopoles-113}
{||\LLf\frh||_2}^2 \geq \frac{1}{4}({||\nabla_{\uptheta_0}\frh||_2}^2 + r{||\frh||_2}^2)
\end{equation}
if $\frh$ has compact support where the distance to $N$ is greater than $R$.

Now suppose that $c>100$ and that $\frh$ is an ${L^2}_1$ section of $i(T^\ast\rmu\oplus\underline{\R})\oplus\spb$ on $Y_\infty$ with the property that $||\LLf\frh||_2 \leq c^{-1}r^{1/2}||\frh||_2$. Let $\chi_R$ denote for the moment a non-negative function that is equal to $1$ where the distance to $N$ is less than $R$ and equal to $0$ where the distance is greater than $R+1$. Such a function can and should be taken so that $|d\chi_R| \leq c_0$.  The inequality in \eqref{eq:monopoles-113} with $\frh$ replaced by $(1-\chi_R)\frh$ leads to 
\begin{itemize}\leftskip-0.25in
\item ${||\nabla_{\uptheta_0}((1-\chi_R)\frh)||_2}^2 + r{||(1-\chi_R)\frh||_2}^2 \leq c_0c^{-1}r{||\chi_R\frh||_2}^2$.
\item ${||\nabla_A((1-\chi_R)\frh)||_2}^2 + r{||(1-\chi_R)\frh||_2}^2 \leq c_0c^{-1}r{||\chi_R\frh||_2}^2$.
\end{itemize}
\begin{equation}
\label{eq:monopoles-new-112}
\end{equation}
if $c>c_0$. To explain how these bounds come about, note that the second bullet's inequality follows from the first bullet's inequality (and vice-versa) using the aforementioned dimension 3 Sobolev inequality concerning $L^4$ norms. This is because of the assumption that $\aA=A-\uptheta_0$ is an $L^2_1$ function where the distance to $N$ is large. A trick is used to derive the top bullet's inequality: Write $\LLf((1-\chi_R)\frh)$ as $(1-\chi_R)\LLf\frh - \sigma_\LLf(d\chi_R)\frh$ with $\sigma_\LLf$ denoting here the principal symbol of $\LLf$. The $L^2$ norm of $(1-\chi_R)\LLf\frh$ is no greater than $c^{-1}r^{1/2}$ times that of $\frh$ which is, in turn, no greater than the sum of the products of $c^{-1}r^{1/2}$ times the $L^2$ norms of $(1-\chi_R)\frh$ and $\chi_R\frh$.  Also, write $d\chi_R$ as $\chi_Rd\chi_R + (1-\chi_R)d\chi_R$ so as to deal with the $L^2$ norm of the $\sigma_\LLf(d\chi_R)\frh$ term. Meanwhile, the Bochner-Weitzenb\"ock formula for $\LLf$ and \eqref{eq:monopoles-new-112} lead to a bound of the form
\begin{equation}
\label{eq:monopoles-new-113}
{||\LLf(\chi_R\frh)||_2}^2 + r{||\chi_R\frh||_2}^2 \geq c_0^{-1}{||\nabla_A(\chi_R\frh)||_2}^2;
\end{equation}
and then, assuming that $c>c_0$ (and after writing $\LLf(\chi_R\frh)$ as $\chi_R\LLf\frh + \sigma_\LLf(d\chi_R)\frh$ and with an appeal to \eqref{eq:monopoles-new-112}):
\begin{equation}
\label{eq:monopoles-new-114}
{||\LLf\frh||_2}^2 + r{||\chi_R\frh||_2}^2 \geq c_0^{-1}{||\nabla_A(\chi_R\frh)||_2}^2.
\end{equation}
Given that $||\LLf\frh||_2 \leq c^{-1}r^{1/2}||\frh||_2$, this bound (and \eqref{eq:monopoles-new-112} again) tell us that
\begin{equation}
\label{eq:monopoles-new-115}
{||\nabla_A(\chi_R\frh)||_2}^2 \leq c_0r{||\chi_R\frh||_2}^2
\end{equation}
when $c > c_0^{-1}$.

Now suppose that $c>100$ and that $\calv$ is a linear subspace of ${L^2}_1$ sections of $i(T^*\rmu\oplus\underline{\R})\oplus\spb$ on $Y_\infty$ with each section obeying $||\LLf\frh||_2 \leq c^{-1}r^{1/2}||\frh||_2$.  The argument that follows proves that the unit sphere in $\calv$ (measured by the $L^2$ norm) is compact.  To prove this, let $\{\frh\}_{i=1,2,\dots}$ denote a sequence in $\calv$ with each $\frh_i$ having $L^2$ norm equal to 1.  Since the $\frh=\frh_i$ version of the left hand side of \eqref{eq:monopoles-new-115} is bounded by $c_0r$ (assuming $c>c_0$), the sequence $\{\chi_R\frh_i\}$ is uniformly bounded in the ${L^2}_1$ topology.  Keeping in mind that the sections in the latter sequence are supported in a fixed compact subset of $Y_\infty$, the Rellich lemma says that the sequence $\{\chi_R\frh_i\}$ converges strongly in the $L^2$ topology. (The Rellich lemma says that the tautological ${L^2}_1$ to $L^2$ map on compact sets is a compact mapping.)  Given the $L^2$ convergence of $\{\chi_R\frh_i\}$ then the $\frh=\frh_i-\frh_k$ versions of \eqref{eq:monopoles-new-115} imply that $\{\chi_R\frh_i\}$ converges strongly in the ${L^2}_1$ topology also.  Meanwhile, the $\frh=\frh_i-\frh_k$ versions of \eqref{eq:monopoles-new-112} imply that $\{(1-\chi_R)\frh_i\}$ also converges strongly in the ${L^2}_1$ topology.  Thus, $\{\frh_i\}$ converges in the ${L^2}_1$ topology.  This implies that $\calv$ is sequentially compact (if $c>c_0$).  What with this last conclusion, standard spectral theory for essentially self-adjoint operators (see for example Kato's book \cite{kato-book}) leads directly to all of the conclusions of Lemma~\ref{lem:monopoles-12}.
\end{proof}
\subsubsection{The behavior of solutions}
\label{sssec:behavior}
Suppose that $V\subset \rmu$ is an open subset. The metric on $\rmu$ may not be complete, which would be the case if $\rmu$ is a subset of a larger manifold. If the metric is not complete, assume that any geodesic arc in $\rmu$ that starts at a point in $V$ can be continued in $\rmu$ for distance at least $1$. 

Suppose that $(A,\psi)$ obeys \eqref{eq:sw} on $\rmu$ with $|\psi|$ being bounded. Let $c$ denote an upper bound for its norm. In the case of $Y_T$ for $T<\infty$ and in the case of $Y_\infty$ if $(A,\psi)$ is admissible, a maximum principle argument can be used to prove that $|\psi| \leq c_0$ (see the proof of Lemma 2.2 in \cite{taubes-weinstein1}).  Let $m$ denote for the moment the distance in $V$ to the locus in $\rmu$ where $|\alpha| \leq 1 - c_0^{-1}$. The following bounds are obeyed on $V$ if $c>c_0$:
\begin{equation}
\label{eq:monopoles-new-116}
\begin{split}
|\alpha| &\leq 1+c_0cr^{-1} \\
r(1-|\alpha|^2) + r^2|\beta|^2 + |\nabla_A\alpha|^2 + r|\nabla_A\beta|^2 &\leq c_0c(re^{-r^{1/2}m/c_0}+1).
\end{split}
\end{equation}
The proof copies that of Lemma 3.6 in \cite{taubes-weinstein2}.  (The only tools used in the proof are the maximum principle and the Bochner-Weitzenb\"ock formula for the Dirac operator. See also the proofs of similar assertions in \cite[Sections 2 and 3]{taubes4} and \cite[Section 6]{taubes-weinstein1}.)  Uniform a priori bounds for the norms of higher derivatives of $(\alpha,\beta)$ can be obtained as well. See for example Lemma 1.7 in \cite{taubes-weinstein2}.

The bounds in \eqref{eq:monopoles-new-116} can be refined on $V$ if the curvature 2-form $F_{A_0}$ from \eqref{eq:sw} is zero on $\rmu$. As explained directly, this is done by invoking Lemma~\ref{lem:monopoles-11}.  To set the stage, suppose that $p\in V$ and $\varepsilon \in (0,1)$ has been specified, and that the distance $m(p)$ from $p$ to the $|\alpha| \leq 1-c_0^{-1}$ locus is greater than $r^{-1/2}c_0(1+c)|\ln \varepsilon|$.  In this event, \eqref{eq:monopoles-new-116} guarantees that
\begin{equation}
\label{eq:monopoles-new-117}
|1-|\alpha|| \leq \frac{1}{100}\varepsilon \mathrm{\ \ \ and\ \ \ } |\beta| \leq r^{-1/2}\frac{1}{100}\varepsilon \mathrm{\ \ \ and\ \ \ } |\nabla_A\psi| \leq c_0c
\end{equation}
at the point $p$ if $r > c_0c\varepsilon^{-2}$.  The bound for $|\alpha|$ in particular has the following consequence: Let $V_{\varepsilon,r} \subset V$ denote the subspace where this bound $m(p) \geq r^{-1/2}c_0(1+c)|\ln\varepsilon|$ holds.  Because $\alpha$ is non-zero on $V_{\varepsilon,r}$, the section $u=\alpha/|\alpha|$ defines an isomorphism between $E$ on $V_{\varepsilon,r}$ and the product $\C$ bundle. Moreover, this isomorphism identifies $\alpha$ with a \emph{real}-valued function that can be written as $1-z$ where $z$ has norm less than $\frac{1}{100}\varepsilon$.  Meanwhile, this identification sends $A$ to a connection on the product bundle that will be written as $\Ai+\aA$.  Because $z$ is real (and has norm at most $\frac{1}{100}\varepsilon$) and $\aA$ is $i\R$-valued, the last inequality in \eqref{eq:monopoles-new-117} implies that $|dz| \leq 1$ and $|\aA| \leq 1.02$. The identification sends $\beta$ to a section of $K^{-1}$ that is denoted below by $\beta_\diamond$.

Let $\frh$ denote $((r^{-1/2}\aA, 0), (z,\beta_\diamond))$, thought of as a section of $i(T^*\rmu \oplus \R) \oplus \spb$. Viewed in this light, $\frh$ on $V$ obeys an equation that has the schematic form of \eqref{eq:monopoles-18} with $\pzee$ being a first order differential operator whose coefficients are either bounded a priori (independent of $T$ and $(A,\psi)$) or are linear functions of $\aA$, $z$, and $\beta_\diamond$. To explain, the $iT^*\rmu$ and $\spb$ summands of \eqref{eq:monopoles-18} are a rewriting of \eqref{eq:sw} in the case when $F_{A_0}$ is absent. The corresponding components of $\pzee$ are zeroth order endomorphisms with pointwise norm bounded by $r^{1/2}|z| + r^{1/2}|\beta|$. These in turn are bounded by $c_0r^{1/2}\varepsilon$ by virtue of \eqref{eq:monopoles-new-117}.  To describe the $i\R$ summand of \eqref{eq:monopoles-18}, note first that the $i\R$ summand of $\Lif\frh$ (with $\frh$ as above) is simply $r^{-1/2}*d*\aA$ because if $\eta=(-z,\beta_\diamond)$ and $z$ is real, then $\psii^\dagger\eta - \eta^\dagger\psii = 0$.  Meanwhile, the Bochner-Weitzenb\"ock formula for ${\rmD_A}^2\psi$ can be used to write the $i\R$-valued function $*d*\aA$ as $-e_\diamond(\eta)$ with $e_\diamond$ being a first order differential operator whose symbol has norm bounded by $c_0(1+|\aA|)$ and whose zeroth order part has norm bounded by $c_0$.  (The equation $*d*\aA + e_\diamond(\eta)=0$ is obtained by writing the imaginary part of the identity $\psii^\dagger{\rmD_A}^2\psii=0$ in terms of $\aA$, $z$, and $\beta_\diamond$.)

It follows from the preceding description of $\pzee$ that $|\pzee(\frh)| \leq c_0(r^{1/2}\varepsilon|\frh| + r^{-1/2}|\nabla\frh|)$ on the set $V_{\varepsilon,r}$. (Remember that $r \geq c_0c\varepsilon^{-2}$ is required.) This implies the following: If $\varepsilon$ is less than $c_0^{-1}$, then $\pzee$ obeys \eqref{eq:monopoles-16} on $V_{\varepsilon,r}$.  Supposing therefore this bound, then Lemma~\ref{lem:monopoles-11} can be invoked to obtain:

\begin{lemma}
\label{lem:monopoles-14}
There exists $\kappa>100$ such that the following is true when $r>\kappa$: Suppose that $\rmu$ and $V$ are as described above and that $(A,\psi)$ obeys the $F_{A_0}=0$ version of \eqref{eq:sw} on $\rmu$.  Let $B\subset V$ denote a ball of radius $\rho\geq \kappa r^{-1/2}$ and suppose that $|\alpha| \geq 1-\kappa^{-1}$ on $B$. There is an isomorphism between $E$ and the product $\C$ bundle on $B$ that identifies $(A,\psi)$ with a pair $(\Ai+\aA,\psii+\eta)$ obeying
\[r^{-1/2}|\aA|+|\eta|+r^{-1}|\nabla\aA|+r^{-1/2}|\nabla_{\Ai}\eta|\leq \kappa e^{-\rho\sqrt{r}/\kappa},\]
on the concentric radius $\frac{1}{2}\rho$ ball.
\end{lemma}
\begin{proof}
The ${L^2}_1$ bound for the norms of $\aA$ and $\eta$ that come via Lemma \ref{lem:monopoles-11} can be parlayed into $C^1$ bounds using standard elliptic regularity inside $B$. 
\end{proof}
\subsubsection{The zero locus of $\alpha$}
\label{sssec:zero_locus}
Suppose that $(A,\psi=(\alpha,\beta))$ obeys \eqref{eq:sw} on $\rmu$. Various things can be said about the zero locus of $\alpha$ if there is an a priori bound on an `energy', $\en$, which is defined as follows:
\begin{equation}
\label{eq:energy}
\en=r\int_{\rmu}|1-|\alpha|^2|.
\end{equation}
If $\rmu$ is a compact manifold, then $\en$ is necessarily finite. If $\rmu$ is the $T=\infty$ version of $Y_T$, then it follows from Lemma \ref{lem:monopoles-14} that $\en$ is finite if $(A,\psi)$ is an admissible solution to \eqref{eq:sw}. 

Suppose as before that $V\subset \rmu$ is an open set such that any geodesic arc in $\rmu$ that starts in $V$ can be continued in $\rmu$ for distance at least $1$. 
\begin{lemma}
\label{lem:monopoles-15}
Given $\cale>1$, there exists $\kappa_\cale>1$ with the following significance: Fix $r>\kappa_\cale$ and suppose that $(A,\psi=(\alpha,\beta))$ is a solution to \eqref{eq:sw} on $\rmu$ with $\en\leq \cale$. There is a set of properly embedded curves in $V$ of total length at most $(1+\kappa_\cale^{-1})\frac{1}{2\pi}\en$ with the properties listed below. This list uses $\Theta$ to denote the set of disjoint embedded curves.
\begin{itemize}\leftskip-0.25in
\item Each $|\alpha|<1-\kappa_\cale^{-1}$ point in $V$ has distance $\leq\kappa_\cale r^{-1/2}$ from some point in a curve from $\Theta$.
\item Conversely, $|\alpha|<1-\kappa_\cale^{-1}$ on the curves from $\Theta$. 
\item The angle between the Reeb vector field and the tangent vector to any curve from $\Theta$ is at most $\kappa_\cale r^{-1/2}$ at each point on the curve.
\end{itemize}
\end{lemma}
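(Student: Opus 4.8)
\textbf{Proof proposal for Lemma \ref{lem:monopoles-15}.}

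The plan is to carry out a local-model-plus-covering argument, entirely patterned on the analysis of the $\alpha$-zero set in the closed-manifold Seiberg--Witten theory (cf.\ \cite[Section 6]{taubes-weinstein1} and \cite[Section 2]{taubes4}), adapted to the possibly incomplete open setting by using the stand-in ``collar'' hypothesis on $V$. First I would fix $\cale$ and take $r$ large enough that all the pointwise and exponential-decay estimates recorded in \eqref{eq:monopoles-114}, \eqref{eq:monopoles-115}, and Lemma \ref{lem:monopoles-14} apply with their stated constants; this is what confines the interesting behavior of $\alpha$ to an $\mathcal{O}(r^{-1/2})$-neighborhood of the locus $Z=\{|\alpha|<1-c_0^{-1}\}$ and gives, away from that neighborhood, a gauge in which $(A,\psi)$ is exponentially close to $(\theta_0,\psi_{\mathrm I})$. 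The curve set $\Theta$ will be extracted as (a small perturbation of) $Z\cap V'$ for a slightly shrunken $V'\subset V$; the collar hypothesis guarantees that points of $V$ sit at the center of unit-size geodesic balls in $U$, so the interior elliptic estimates and the local structure theory can be run at every point of $V$ without boundary effects.

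The key steps, in order, are: (1) Use \eqref{eq:monopoles-114} and the energy bound \eqref{eq:energy} to show $Z$ has an $\mathcal{O}(r^{-1/2})$-tubular neighborhood whose $\frac12 r$-weighted volume is $\mathcal{O}(\cale r^{-1})$, hence the ``transverse area'' is controlled; combined with the fact that $\beta$ is $\mathcal{O}(r^{-1})$ and $\nabla_A\alpha$ is $\mathcal{O}(r^{1/2})$, $\alpha$ looks in any $r^{-1/2}$-ball meeting $Z$ like the standard local vortex solution up to errors controlled by $c_0r^{-1/2}$. (2) Invoke the local vortex structure theorem: in suitable Fermi-type coordinates adapted to the Reeb field, $\alpha$ restricted to a disk transverse to the Reeb direction has a finite zero count, and summing the local degrees along the Reeb flow produces a collection of properly embedded arcs in $V$; transversality of $\alpha$ to $0$ (after a further perturbation of the metric and contact form, exactly as in Lemma \ref{lem:monopoles-13}, which the hypotheses permit) makes these arcs disjoint and embedded. (3) Bound total length: the area form $da=2\hat e^1\wedge\hat e^2$ integrates over any transverse disk to count $\frac{1}{2\pi}$ times the local vortex number, so integrating $\frac{1}{2\pi}r(1-|\alpha|^2)\,da$ along the Reeb flow and comparing with \eqref{eq:energy} gives total length $\le(1+\kappa_\cale^{-1})\frac{1}{2\pi}\cale$; the error term $\kappa_\cale^{-1}$ absorbs the discrepancy between $r(1-|\alpha|^2)$ and the idealized delta-function along $\Theta$, which is exactly what \eqref{eq:monopoles-114} and Lemma \ref{lem:monopoles-14} bound. (4) The three bulleted conclusions: the first (every $|\alpha|<1-\kappa_\cale^{-1}$ point is within $\kappa_\cale r^{-1/2}$ of $\Theta$) and the second ($|\alpha|<1-\kappa_\cale^{-1}$ on $\Theta$) follow from \eqref{eq:monopoles-115} and the fact that $\Theta$ threads $Z$; the third (near-tangency to the Reeb field) follows from estimating $\nabla_A\alpha$ against the $\hat e^3=a$ direction using the Dirac equation $\mathcal{D}_A\psi=0$ together with $|\beta|\le c_0r^{-1}$, which forces the covariant derivative of $\alpha$ in the Reeb direction to be $\mathcal{O}(1)$ while its transverse derivative is $\mathcal{O}(r^{1/2})$, so the zero locus is nearly a Reeb trajectory up to angle $\mathcal{O}(r^{-1/2})$.

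The main obstacle I expect is step (2)--(3): establishing that the local picture of $\alpha^{-1}(0)$ really is a disjoint union of embedded Reeb-almost-tangent arcs of controlled total length, rather than something more degenerate, and making the length bound sharp enough to carry the explicit constant $\frac{1}{2\pi}$ with only a multiplicative $(1+\kappa_\cale^{-1})$ error. This is precisely the content of the local vortex analysis in \cite[Sections 2, 3]{taubes4} and \cite[Section 6]{taubes-weinstein1}, and the adaptation here is routine only because the metric on the ends of $Y_T$ in \eqref{eq:1.1} is covariantly constant (so there are no curvature error terms that degrade as $T\to\infty$) and because the collar hypothesis on $V$ supplies uniform interior elliptic control; I would cite those references for the local model and spend the bulk of the proof verifying that their hypotheses hold uniformly in $T$ and in the admissible solution, with constants depending only on $\cale$. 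The genericity needed for transversality is available by the same argument as Lemma \ref{lem:monopoles-13}, applied on the open manifold using the exponential decay of Lemma \ref{lem:monopoles-11} to localize the perturbation away from the ends.
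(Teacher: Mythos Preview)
Your proposal is correct and takes essentially the same approach as the paper, which simply writes ``The arguments differ little from those in \cite[Section 6d]{taubes-weinstein1}''; you have fleshed out what that citation entails. One small correction: you need not invoke genericity or perturb to make $\alpha$ transverse to $0$ as in your step (2)---the curves in $\Theta$ are \emph{constructed} approximating arcs (threading the $|\alpha|<1-\kappa_\cale^{-1}$ set along near-Reeb directions) rather than the literal zero locus $\alpha^{-1}(0)$, so no transversality hypothesis on the solution is required, and the lemma holds for all solutions, not just generic ones.
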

\begin{proof}
The arguments differ little from those in \cite[Section 6.4]{taubes-weinstein1}.
\end{proof}

By way of a parenthetical remark, it is likely that the number $\kappa_\cale$ in Lemma \ref{lem:monopoles-15} can be assumed to be independent of $\cale$. 
\subsubsection{A compactness theorem for $Y_\infty$}
\label{sssec:compactness}
The uniform bounds in Lemmas \ref{lem:monopoles-14} and \ref{lem:monopoles-15} lead directly to the next two lemmas. What is denoted by $\en$ is the function in \eqref{eq:energy} with $\rmu=Y_\infty$.
\begin{lemma}
\label{lem:monopoles-16}
There exists $\kappa>1$, and given $\cale>1$, there exists $\kappa_\cale>1$ with the following significance: Fix $r>\kappa_\cale$. If $(A,\psi=(\alpha,\beta))$ is an admissible solution to \eqref{eq:sw} on $Y_\infty$ with $\en\leq\cale$, then
\[r(1-|\alpha|^2)+r^2|\beta|^2+|\nabla_A\alpha|^2+r|\nabla_A\beta|^2\leq\kappa re^{-\kappa^{-1}r^{1/2}\operatorname{dist}(\cdot,N)}\]
at points with distance $\kappa$ or more from $N$. 
\end{lemma}
\begin{proof}
This follows from Lemmas \ref{lem:monopoles-14} and \ref{lem:monopoles-15} because there are no compact Reeb orbits in $Y_\infty\ssm N$. In fact, the gradient of the function $\operatorname{dist}(\cdot,N)$ has constant, non-zero angle with the Reeb vector field on $Y_\infty\ssm N$. 
\end{proof}
The second lemma concerns the behavior of sequences of solutions to \eqref{eq:sw} on $Y_\infty$. To set the stage for the lemma, suppose that $g$ is a map from $Y_\infty$ to $S^1$. This map defines an automorphism of $E$ and thus a fiber preserving isometry of $\spb$.  In particular, it induces an action on the set of pairs $(A,\psi)$ in the following way: the map $g$ sends $\psi$ to $g\psi$ (so it acts by multiplication), and it sends $A$ to $A-g^{-1}dg$. The result of this action is denoted in what follows by $(g^\ast A,g^\ast\psi)$. 
\begin{lemma}
\label{lem:monopoles-17}
Given $\cale>1$, there exists $\kappa_\cale>1$ with the following significance: Fix $r>\kappa_\cale$. The space of admissible $\en\leq\cale$ solutions to \eqref{eq:sw} on $Y_\infty$ is compact modulo the action of the group $C^\infty(Y_\infty;S^1)$. This means the following: Let $\{(A_k,\psi_k)\}_{k=1,2,\dots}$ denote a sequence of admissible solutions to \eqref{eq:sw} on $Y_\infty$ with $\en\leq\cale$. There exists a solution $(A,\psi)$ to \eqref{eq:sw} on $Y_\infty$ with $\en\leq\cale$, a strictly increasing subsequence $\Lambda$ of positive integers, and a sequence of maps $\{g_k\}_{k\in\Lambda}$ from $Y_\infty$ to $S^1$ such that $\{(g_k^\ast A_k,g_k^\ast \psi_k)\}_{k\in\Lambda}$ converges in the $C^\infty$-topology to $(A,\psi)$. 
\end{lemma}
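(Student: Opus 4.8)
\textbf{Proof proposal for Lemma \ref{lem:monopoles-17}.}

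The plan is to run the standard Seiberg--Witten compactness machinery, but carefully tracking the behavior on the non-compact ends $R_\pm(\Gamma)\times[1,\infty)$ and $[0,\infty)\times(-\infty,\infty)\times\Gamma$ of $Y_\infty$, where the previous exponential decay estimates give us enough control to reduce to a compact exhaustion. First I would fix the sequence $\{(A_k,\psi_k)\}$ of admissible $\en\le\cale$ solutions. The pointwise bounds of \eqref{eq:monopoles-114} (together with the higher-derivative bounds quoted after it, e.g.\ from Lemma 1.7 in \cite{taubes-weinstein2}) give uniform $C^\infty$ bounds on $(\alpha_k,\beta_k)$ on all of $Y_\infty$, and hence on the curvature $F_{A_k}$ via the first equation in \eqref{eq:sw}. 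The usual argument then produces, after passing to a subsequence and applying gauge transformations $g_k$ on each compact piece of an exhaustion $\{K_j\}$ of $Y_\infty$, a limiting configuration $(A,\psi)$ on $Y_\infty$ to which $(g_k^*A_k,g_k^*\psi_k)$ converges in $C^\infty_{\mathrm{loc}}$; a diagonal argument over $j$ gives a single subsequence $\Lambda$ and a single sequence $\{g_k\}_{k\in\Lambda}$. One must check the gauge transformations can be patched consistently across the pieces of the exhaustion, which is routine once the overlaps are handled with a partition of unity as in the closed case.

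The content beyond the closed case is: (i) that the limit $(A,\psi)$ is admissible, and (ii) that the convergence is genuinely in the $C^\infty$-topology on all of $Y_\infty$, not merely $C^\infty_{\mathrm{loc}}$. For (i) and (ii) together, the key input is Lemma \ref{lem:monopoles-16}: since every admissible $\en\le\cale$ solution satisfies
\[
r(1-|\alpha|^2)+r^2|\beta|^2+|\nabla_A\alpha|^2+r|\nabla_A\beta|^2\le \kappa r e^{-\kappa^{-1}r^{1/2}\operatorname{dist}(\cdot,N)}
\]
on the part of $Y_\infty\ssm N$ at distance $\ge\kappa$ from $N$, this bound is \emph{uniform in $k$}. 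Consequently, on the ends, Lemma \ref{lem:monopoles-14} produces gauge transformations $u_k$ writing $A_k=\theta_0+\hat{\mathrm a}_{A_k}$, $\psi_k=\psii+\eta_k$ with $r^{-1/2}|\hat{\mathrm a}_{A_k}|+|\eta_k|+r^{-1}|\nabla\hat{\mathrm a}_{A_k}|+r^{-1/2}|\nabla_{\theta_0}\eta_k|$ decaying exponentially in $\operatorname{dist}(\cdot,N)$, again uniformly in $k$. Combining these with the patched $g_k$ — i.e.\ arranging $g_k$ to agree with $u_k$ on the far ends up to a $k$-independent gauge on $Y_\infty\ssm N$, which is possible because two such local gauges differ by a map to $S^1$ with exponentially small logarithmic derivative and hence (the end being simply connected in the relevant directions, or after fixing a basepoint) a genuine gauge — shows that $g_k^*A_k-\theta_0$ and $g_k^*\psi_k-\psii$ are uniformly exponentially small far out, with all derivatives. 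The tails being uniformly small in every Sobolev/$C^m$ norm, $C^\infty_{\mathrm{loc}}$ convergence upgrades to $C^\infty$ convergence on $Y_\infty$, and the limit inherits $g^*\psi-\psii$, $\nabla_{\theta_0}(g^*\psi)$, $g^*A-\theta_0$, $\nabla(g^*A-\theta_0)$ square-integrable on $Y_\infty\ssm N$, i.e.\ $(A,\psi)$ is admissible; passing to the limit in \eqref{eq:energy} with the exponential tail control gives $\en(A,\psi)\le\cale$, and the equations \eqref{eq:sw} pass to the limit since convergence is $C^\infty$.

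The main obstacle I anticipate is bookkeeping rather than analysis: reconciling the locally-defined gauge transformations coming from the compact exhaustion with the end gauges of Lemma \ref{lem:monopoles-14} into one global sequence $g_k$, while preserving the uniform exponential decay at infinity. This requires choosing the exhaustion so that its pieces overlap the ``tame'' region $\{\operatorname{dist}(\cdot,N)\ge\kappa\}$ of the ends, gluing the interior limit to the uniform exponential-decay model with a fixed cutoff, and verifying that the transition functions are themselves controlled (their logarithmic derivatives are bounded by the uniform estimates, so they converge after passing to a further subsequence). Everything else — the a priori $C^\infty$ bounds, elliptic regularity for the limit, and the passage to the limit in the equations — is exactly as in the compact case treated in \cite{kmbook}, with \cite{taubes-weinstein1} as the model for the non-compact contact setting.
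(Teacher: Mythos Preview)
Your proposal is correct and follows essentially the same approach as the paper: the paper's proof is a terse three sentences invoking Lemmas \ref{lem:monopoles-14} and \ref{lem:monopoles-16} to obtain uniform $C^1$ (and via elliptic bootstrapping, $C^\infty$) bounds on a gauge-equivalent sequence at distance $\geq c_\cale$ from $N$, standard elliptic regularity on the compact part, and then extraction of the desired subsequence. You have correctly identified these as the key inputs and filled in the bookkeeping about patching gauges and verifying admissibility of the limit that the paper leaves implicit.
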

The proof of Lemma \ref{lem:monopoles-17} introduces the constant $c_0>1$ that depends only on $\cale$ and its value increases between successive appearances.
\begin{proof}
The proof has six parts. Everything but Part \ref{prt:lem26-1} is a very standard application of constructions from \cite{uhlenbeck} in the Abelian case.
\begin{prt}
\label{prt:lem26-1}
Let $\kappa_\ast$ denote the version of the number $\kappa_\cale$ that is given by Lemma \ref{lem:monopoles-16}. Fix $T>\kappa_\ast$ so that when $(A,\psi)$ is an admissible solution to \eqref{eq:sw} on $Y_\infty$ with $\en\leq\cale$, then $|\alpha|>1-\frac{1}{100}{c_0}^{-1}$ where the distance to $N$ is greater than $T$. There is an isomorphism $g$ on this part of $Y_\infty$ between the complex product line bundle $\underline{\C}$ and the given bundle $E$ that pulls $\psi$ back as a pair of complex valued function and a section of $K^{-1}$ with the complex valued function part being $|\alpha|$. Use $\beta'$ to denote the section of $K^{-1}$. Meanwhile, write $g^\ast A$ as $\Ai+\aA$. Note that $g^\ast\nabla_A\alpha$ can be written as 
\begin{equation}
\label{eq:monopoles-lem27-1}
\nabla|\alpha|+\aA|\alpha|.
\end{equation}
Therefore, since $\aA$ is $i\R$-valued and since $|\alpha|>\frac{1}{2}$, a pointwise bound on $|\nabla_A\alpha|$ (such as from Lemma \ref{lem:monopoles-16}) leads to pointwise bounds on both $\nabla|\alpha|$ and $\aA$. Note next that
\begin{equation}
\label{eq:monopoles-lem27-2}
g^*\nabla_A^{\dagger}\nabla^{}_A \alpha=\nabla^\dagger\nabla|\alpha|+|\aA|^2|\alpha|-2\langle \nabla|\alpha|,\aA\rangle+\nabla^\dagger\aA|\alpha|,
\end{equation}
with $\langle\,,\rangle$ denoting the metric pairing. Note in particular that the left most two terms are $R$-valued and the right most two terms are $i\R$-valued. Therefore, this equation can be used to obtain pointwise bounds on the Laplacian of $|\alpha|$ and the divergence of $\aA$ (this being $\nabla^\dagger\aA$) from pointwise bounds on $\nabla_A^{\dagger}\nabla^{}_A\alpha$. given the aforementioned pointwise bounds on $\nabla|\alpha|$ and $\aA$. Meanwhile, the $E$ summand of the Bochner--Weitzenb\"ock formula for ${\mathcal{D}_A}^2\psi$ equates this same $\nabla_A^{\dagger}\nabla^{}_A\alpha$ with a sum of terms that are each proportional to one of $|\alpha|$ and $\beta'$, or the $g^\ast A$-covariant derivatives of $\beta'$ (see, for example, Section 6 of \cite{taubes-weinstein1}.) This understood, then the bound in Lemma \ref{lem:monopoles-16} supplies an apriori bound for $\nabla_A^{\dagger}\nabla^{}_A\alpha$ and thus for $\nabla^\dagger\nabla|\alpha|$ and $\nabla^\dagger\aA$. More to the point, the $\R$ and $i\R$ parts of the $E$ summand of the identity ${\mathcal{D}_A}^2\psi=0$ (with $E$ identified as above with the product complex line bundle) and the $K^{-1}$ summand of this same equation make for an elliptic system of first order equations for the data set $(\aA,(|\alpha|,\beta'))$ that can be bootstrapped using essentially standard arguments to obtain apriori $C^k$ bounds on $(\aA,(|\alpha|,\beta'))$ for any positive integer $k$ where the distance to $N$ is greater than $T+1$.
\end{prt}
\begin{prt}
\label{prt:lem26-2}
If $\{(A_n,\psi_n)\}_{n\in\{1,2,\dots\}}$ is the sequence as in the statement of the lemma, then there is a corresponding sequence $\{g_n\}_{n\in\{1,2,\dots\}}$ with any given $g_n$ being the $(A,\psi)=(A_n,\psi_n)$ version of $g$ from Part \ref{prt:lem26-1}. By virtue of what is said in the preceding paragraph, the sequence of gauge equivalent pairs $(g_n^\ast A_n,g_n^\ast\psi_n)$ has a subsequence that converges in the $C^\infty$ topology where the distance to $N$ is greater than $T+1$. Let $\Lambda\subset\{1,2,\dots\}$ denote the integers that label this subsequence and let $(A_\diamond,\psi_\diamond)$ denote the limiting pair of $\{(g_n^\ast A_n,g_n^\ast\psi_n)\}_{n\in\Lambda}$.
\end{prt}
\begin{prt}
\label{prt:lem26-3}
Let $Z\subset Y_\infty$ denote for the moment a smooth, codimension zero, compact submanifold with boundary that contains the part of $Y_\infty$ where the distance to $N$ is less than $T+3$. Let $A^E$ denote a fixed connection on the bundle to $E$ over $Z$. Suppose again that $(A,\psi)$ is an admissible solution to \eqref{eq:sw} on $Y_\infty$ with $\en\leq\cale$. There is an automorphism of $(A,\psi)$ on $Z$ that pulls back $A$ to a connection that can be written as $A^E+\fra'$ such that $d\ast\fra'=0$, such that $\ast\fra'$ is zero on $\partial Z$, and such that ${L^2}_1$ norm of $\fra'$ on $Z$ is bounded by $c_0$. The construction of this automorphism amounts to little more than Hodge theory on manifolds with boundary. Denote this automorphism by $g'$. The equations in \eqref{eq:sw} when written for $(g'^{\ast}A',g'^{\ast}\psi')$ are elliptic with the extra condition that $d\ast\fra'=0$. Therefore, standard elliptic bootstrapping leads to a priori $C^k$ bounds on $(g'^{\ast}A',g'^{\ast}\psi')$ on the part of $Z$ where the distance to $N$ is less than $T+2$.
\end{prt}
\begin{prt}
\label{prt:lem26-4}
If $\{(A_n,\psi_n)\}_{n\in\Lambda}$ is the sequence from Part \ref{prt:lem26-2}, then there is a corresponding sequence $\{g'_n\}_{n\in\Lambda}$ with any given $g'_n$ being the $(A,\psi)=(A_n,\psi_n)$ version of $g'$ from Part \ref{prt:lem26-3}. By virtue of what is said in the preceding paragraph, the sequence of gauge equivalent pairs $\{({g'_n}^\ast A_n,{g'_n}^\ast\psi_n)\}_{n\in\Lambda}$ has a subsequence whose index set will be denoted by $\Lambda'$ that converges in the $C^\infty$ topology where the distance to $N$ is less than $T+2$. Let $(A'_\diamond,\psi'_\diamond)$ denote the limiting pair of this subsequence of pull-backs. For $n\in\Lambda'$, let $\alpha'_n$ denote $g'_n\alpha_n$.
\end{prt}
\begin{prt}
\label{prt:lem26-5}
Supposing that an integer $n$ is large, let $(A_n,\psi_n)$ denote a pair from the subsequence in Part \ref{prt:lem26-4}, and let $g_n$ denote the corresponding automorphism from Part \ref{prt:lem26-2} for the subsequence from Part \ref{prt:lem26-2}. Let $g'_n$ denote the corresponding automorphism from Part \ref{prt:lem26-4}. Let $u_n$ denote $g^{}_n(g'_n)^{-1}$ which is defined where the distance to $N$ is between $T+1$ and $T+2$. This is an isomorphism between the complex product line bundle and $E$ over this part of $Y_\infty$. It follows from the definitions that $|\alpha|=u^{}_n\alpha'_n$. Thus, the sequence $\{u_n\}_{n\in\Lambda'}$ converges in the $C^\infty$ topology to an isomorphism between the complex product line bundle and $E$. Let $u_\diamond$ denote the limit. This isomorphism is such that $u_\diamond^\ast A'_\diamond=A_\diamond$ and $u_\diamond^\ast\psi'_\diamond=\psi_\diamond$. Therefore, the pairs $(A_\diamond,\psi_\diamond)$ and $(A'_\diamond,\psi'_\diamond)$ define a pair of a connection and a section of a bundle over $Y_\infty$ to be denoted by $E_\diamond$ that is defined by a cocycle data that has $E_\diamond$ being $E$ where the distance to $N$ is less than $T+2$ and as $E_\diamond$ being the complex product line bundle where the distance to $N$ is greater than $T+1$. The map $u_\diamond$ is the gluing cocycle for this new bundle.
\end{prt}
\begin{prt}
\label{prt:lem26-6}
A construction using cut-off function that differs only in notation from \cite{uhlenbeck} in the non-Abelian case can be used to construct a sequence of isomorphisms labeled by elements in $\Lambda'$ between $E_\diamond$ and $E$. These isomorphisms (to be denoted by $\{f_n\}_{n\in\Lambda'}$ have the property that $f_n^{\ast}(A_n,\psi_n)$ converges to the limit pair of a connection and section on $E_\diamond$ from Part \ref{prt:lem26-5}. (Keep in mind that this limit pair is defined by $(A_\diamond,\psi_\diamond)$ where the distance to $N$ is greater than $T+1$ and defined by $(A'_\diamond,\psi'_\diamond)$ where the distance to $N$ is less than $T+2$.) The construction of $f_n$ uses the fact that $\{u_n{u_\diamond}^{-1}\}_{n\in\Lambda'}$ converges to $1$; as noted, it is an instance of what is done in \cite{uhlenbeck}. To construct $f_n$, first fix a smooth cut-off function, $\chi_\diamond$, that is equal to $1$ where the distance to $N$ is less that $T+\frac{3}{2}$ and equal to zero where the distance to $N$ is greater than $T+2$. With $\chi_\diamond$ in hand, write the large $n\in\Lambda'$ versions of $u_n{u_\diamond}^{-1}$ as $\exp(\varphi_n)$ with $\varphi_n$ being an $i\R$-valued function with small positive norm. These are such that the corresponding sequence $\{\varphi_n\}_{n\in\Lambda'}$ converges to zero in the $C^\infty$ topology. Set $g_n''=\exp(-\chi_\diamond\varphi_n)g_n$. It then follows that $g''_n(g'_n)^{-1}=u_\diamond$ where the distance to $N$ is between $T+1$ and $T+\frac{3}{2}$. This being the case, the pair $(g''_n,g'_n)$ defines an isomorphism from $E$ to $E_\diamond$. This is the isomorphism $f_n$. It follows from what is said in Parts \ref{prt:lem26-2} and \ref{prt:lem26-4} that $f_n$ has the desired properties. \qedhere
\end{prt}
\end{proof}
As explained later, these lemmas have implications for solutions to \eqref{eq:sw} on the finite $T$ version of $Y_T$. 

The compactness assertion in Lemma \ref{lem:monopoles-17} with an extra assumption implies that there are but a finite number of $C^\infty(Y_\infty;S^1)$-orbits of admissible solutions to \eqref{eq:sw} with a priori bound on $\en$. The extra assumption is that the operator in \eqref{eq:linearized} that is defined by an admissible solution has trivial kernel. This finiteness result is stated formally in the next lemma.
\begin{lemma}
\label{lem:monopoles-18}
Suppose that the $Y_\infty$ version of the operator $\LLf$ in \eqref{eq:linearized} defined by any admissible solution to \eqref{eq:sw} on $Y_\infty$ has trivial cokernel. Then there are at most a finite number of $C^\infty(Y_\infty;S^1)$ orbits of admissible solutions to \eqref{eq:sw} on $Y_\infty$ with given a priori bound on $\en$.
\end{lemma}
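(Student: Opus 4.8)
The plan is to argue by contradiction, combining the compactness result of Lemma \ref{lem:monopoles-17} with the nondegeneracy hypothesis: the latter forces each gauge orbit of admissible solutions with $\en\leq\cale$ to be isolated in the quotient, while compactness forces an infinite family of such orbits to accumulate, which is impossible.

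First I would suppose that there are infinitely many pairwise distinct $C^\infty(Y_\infty;S^1)$-orbits of admissible solutions to \eqref{eq:sw} on $Y_\infty$ with $\en\leq\cale$, and choose a representative $(A_k,\psi_k)$ for each, $k=1,2,\dots$. By Lemma \ref{lem:monopoles-17} there is a strictly increasing subsequence $\Lambda$ and gauge transformations $g_k\in C^\infty(Y_\infty;S^1)$ for $k\in\Lambda$ such that $(g_k^\ast A_k,g_k^\ast\psi_k)$ converges in the $C^\infty$-topology to an admissible solution $(A,\psi)$ of \eqref{eq:sw}, which necessarily also satisfies $\en(A,\psi)\leq\cale$. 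After relabelling we may assume the sequence itself has this property.

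Next I would use the nondegeneracy. By hypothesis the operator $\LL_{(A,\psi)}$ of \eqref{eq:linearized} has trivial cokernel; since by Lemma \ref{lem:monopoles-12} it is self-adjoint, its kernel is trivial as well, and the same lemma furnishes a constant $c>1$ with $\|\LL_{(A,\psi)}(\cdot)\|_2^2\geq c^{-2}(\|\nabla_A(\cdot)\|_2^2+r\|\cdot\|_2^2)$ on the domain of $\LL$. This is precisely the statement that $(A,\psi)$ is a nondegenerate solution, and a standard implicit-function/slice argument — carried out in the ${L^2}_1$-completion of the space of sections of $i(T^\ast Y_\infty\oplus\underline{\R})\oplus\spb$, and using that $\LL$ already incorporates the Coulomb gauge-fixing equation and is Fredholm in the relevant range — produces an ${L^2}_1$-neighborhood of $(A,\psi)$ in which the only solutions of \eqref{eq:sw} satisfying the gauge-fixing condition relative to $(A,\psi)$ form exactly the orbit of $(A,\psi)$. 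Admissibility is an open condition near an admissible solution in this topology, since the defects $g^\ast\psi-\psii$ and $\nabla(g^\ast A-\Ai)$ on $Y_\infty\ssm N$ depend continuously on $(A,\psi)$ and the exponential-decay bounds of Lemmas \ref{lem:monopoles-11}, \ref{lem:monopoles-14} and \ref{lem:monopoles-16} persist under small perturbation, so no admissible solutions near $(A,\psi)$ are missed. Because $(A_k,\psi_k)\to(A,\psi)$ modulo gauge and in $C^\infty$, for large $k$ a further gauge transformation puts $(A_k,\psi_k)$ in this slice and inside the neighborhood, whence $(A_k,\psi_k)$ is gauge-equivalent to $(A,\psi)$; this contradicts the assumption that the orbits $[(A_k,\psi_k)]$ are distinct, completing the proof.

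The main obstacle will be the implicit function / slice step on the noncompact manifold $Y_\infty$: one must set up Banach-space completions in which the gauge group acts smoothly and $\LL$ is genuinely Fredholm, and verify that the local Kuranishi model at $(A,\psi)$ is cut out by $\LL_{(A,\psi)}$ alone — this is exactly where the coercive estimate of Lemma \ref{lem:monopoles-12} is invoked, and exponential decay off $N$ (Lemmas \ref{lem:monopoles-11} and \ref{lem:monopoles-16}) is what makes the completions behave. Once that is in place the remaining compactness-plus-isolation packaging is routine. One should note that $r$ is fixed throughout (it is the parameter in \eqref{eq:sw}), so all constants, including $\kappa_\cale$, may depend on it, consistent with the hypothesis $r>\kappa_\cale$ carried over from Lemma \ref{lem:monopoles-17}.
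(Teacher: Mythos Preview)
Your overall strategy matches the paper's: argue by contradiction, use the compactness of Lemma \ref{lem:monopoles-17} to extract a limit $(A,\psi)$, and then use the trivial-kernel hypothesis together with Lemma \ref{lem:monopoles-12} to show that the limit is isolated modulo gauge. The place where your write-up diverges from the paper, and where it remains only a sketch, is the isolation step.

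You invoke a ``standard implicit-function/slice argument'' in ${L^2}_1$-completions and a local Kuranishi model. The paper deliberately avoids this machinery on the noncompact $Y_\infty$. Instead it introduces a \emph{weak} $\epsilon$-slice (see \eqref{eq:monopoles-118}): one does not require the gauge-fixing expression \eqref{eq:monopoles-117} to vanish, only that its $L^2$-norm be at most $\epsilon\|\frh\|_{\bbh}$. A separate lemma (Lemma \ref{lem:monopoles-19}) shows that any sequence of admissible solutions converging to $(A,\psi)$ can be gauge-transformed into this $\epsilon$-slice; its proof is a five-step construction which solves an exact slice equation on a large compact piece $Z_T$, uses the canonical $|\alpha|^{-1}\bar\alpha$ gauge far from $N$, and patches the two via the exponential decay of Lemma \ref{lem:monopoles-11}. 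With the weak slice in hand, the isolation is then a direct estimate rather than an inverse function theorem: writing $\frh=(r^{-1/2}(A'-A),\psi'-\psi)$ for another solution $(A',\psi')$ in the $\epsilon$-slice, the Seiberg--Witten equations for both pairs give $\LL\frh+e\frh=0$ with $\|e\frh\|_2\leq c_0(\|\frh\|_{\bbh}+\epsilon)\|\frh\|_{\bbh}$, and the coercive bound from Lemma \ref{lem:monopoles-12} forces $\frh=0$ once $\epsilon$ and $\|\frh\|_{\bbh}$ are below $c_0^{-1}c^{-1}$.

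So your proposal is correct in spirit, but the step you flag as ``the main obstacle'' is exactly the content the paper supplies, and it does so by a more elementary route than you outline: no Banach-manifold setup, no Kuranishi model, no need to verify that admissibility is open---just an approximate slice and a quadratic-remainder estimate.
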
 
A `slice' lemma is needed to prove Lemma \ref{lem:monopoles-18}. As in the case of a compact manifold, such a lemma is needed to account for the gauge invariance of the equations that are depicted in \eqref{eq:sw}. The point being that this gauge invariance implies that the equations have infinitely many solutions if they have just one. A weak slice lemma is proved that is sufficient for the present purposes. 

To set the stage for the slice lemma, note that the vanishing of the function
\begin{equation}
\label{eq:monopoles-117}
\ast d\ast\fra+r^{1/2}(\psi^\dagger\eta-\eta^\dagger\psi)
\end{equation}
is formally a slice constraint akin to the Coulomb gauge condition that is defined by the rule $\ast d\ast \fra=0$. Keep in mind that the expression in \eqref{eq:monopoles-117} is also the $i\underline{\R}$ component of the operator $\LLf$ in \eqref{eq:linearized}.

The upcoming weak slice condition requires only that \eqref{eq:monopoles-117} be small in a suitable sense. To make this notion precise, fix $\upvarepsilon>0$. The \emph{$\upvarepsilon$-slice} centered at $(A,\psi)$ consists of the set of admissible pairs of a connection on $E$ and a section of $\spb$ on $Y_\infty$ that obey the following constraint: Let a pair from this set be denoted by $(A',\psi')$. The section $\frh$ of $iT^\ast Y_\infty\oplus\spb$ given by the pair $(\fra=r^{-1/2}(A'-A),\eta=\psi'-\psi)$ obeys
\begin{equation}
\label{eq:monopoles-118}
||\ast d\ast\fra+r^{1/2}(\psi^\dagger\eta-\eta^\dagger\psi)||_2\leq\upvarepsilon||\frh||_{\bbh},
\end{equation}
where $||\frh||_\bbh$ denotes $({||\nabla_A\frh||_2}^2+r{||\frh||_2}^2)^{1/2}$.

The lemma that follows gives the promised weak slice assertion:
\begin{lemma}
\label{lem:monopoles-19}
Given $\cale>1$, there exists $\kappa_\cale>1$ with the following significance: Fix $r>\kappa_\cale$ and suppose that $(A,\psi)$ is an admissible solution to \eqref{eq:sw} on $Y_\infty$ with $\en\leq\cale$. Suppose that $\{\frc_k=(A_k,\psi_k)\}_{k=1,2,\dots}$ is a sequence of admissible solutions to \eqref{eq:sw} on $Y_\infty$ with $\en\leq\cale$ that converges to $(A,\psi)$ in the $C^\infty$-topology. Given $\upvarepsilon>0$, there is a corresponding sequence of gauge transformations $\{h_k\}_{k=1,2,\dots}$ such that all sufficiently large $k$ versions of $h_k\frc_k$ are in the $\upvarepsilon$-slice centered at $(A,\psi)$.
\end{lemma}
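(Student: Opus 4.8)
The plan is to produce the slicing gauge transformations $h_k$ by solving, on each $\frc_k$, a small gauge-fixing equation that makes the $i\underline{\R}$-component of $\LL$ vanish identically, and then to show that because $\frc_k \to (A,\psi)$ in $C^\infty$ the resulting $h_k$ are close to the identity, so that the slice inequality \eqref{eq:monopoles-118} holds with room to spare for large $k$. Write $\frh_k = (\fra_k,\eta_k)$ for the pair $(r^{-1/2}(A_k-A),\psi_k-\psi)$; by hypothesis $\|\frh_k\|_\bbh \to 0$. First I would set up the gauge-fixing problem: a gauge transformation $h = e^{if}$ with $f: Y_\infty \to \R$ sends $A_k$ to $A_k - i\,df$ and $\psi_k$ to $e^{if}\psi_k$, and I want to choose $f = f_k$ so that the new pair $h_k\frc_k$ lies in a \emph{true} Coulomb-type slice relative to $(A,\psi)$, i.e.\ so that the expression in \eqref{eq:monopoles-117} built from $h_k\frc_k$ vanishes. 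Linearizing, this is an equation of the schematic form $\Delta f_k = (\text{divergence of }\fra_k) + r^{1/2}(\text{quadratic in }\eta_k, f_k)$, where $\Delta = \ast d\ast d$ plus a zeroth-order term of size $\mathcal{O}(r)$ coming from the $\psi^\dagger(\cdot)\psi$ piece; this zeroth-order term is strictly positive (since $|\psi|\approx 1$ away from $N$ and is controlled near $N$), so the relevant operator is invertible on $L^2_2(Y_\infty)$ with a bound independent of $k$.

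The key steps, in order, are: (1) record that the admissibility of $(A,\psi)$ and the $C^\infty$-convergence give, on all of $Y_\infty$, uniform $C^\infty$ bounds on $\frc_k$ and in particular $\|\frh_k\|_\bbh \to 0$, together with the exponential decay of $\frh_k$ on $Y_\infty \ssm N$ coming from Lemma \ref{lem:monopoles-16} applied to $\frc_k$ (this is what makes the forthcoming analysis work on the noncompact end); (2) use the invertibility just described — essentially an application of Lemma \ref{lem:monopoles-12}, whose conclusion gives coercivity of $\LL$ and hence of its $i\underline{\R}$-component on a complementary subspace — to solve the gauge-fixing equation for $f_k$ by a fixed-point argument, obtaining $\|f_k\|_{L^2_2} \leq c_0\, r^{1/2}\|\frh_k\|_\bbh$ for $k$ large, where $c_0$ depends only on $\cale$; (3) observe that since $f_k$ is this small, the gauge-transformed pair $h_k\frc_k$ differs from $\frc_k$ by a term of the same order, so the new $\frh_k' := (r^{-1/2}(h_k^\ast A_k - A),\, h_k^\ast\psi_k - \psi)$ satisfies $\|\frh_k'\|_\bbh \leq c_0\|\frh_k\|_\bbh$; (4) for $h_k\frc_k$ the quantity in \eqref{eq:monopoles-117} vanishes \emph{exactly} by construction, so \eqref{eq:monopoles-118} holds trivially for any $\epsilon > 0$ — or, if one only wants to solve the gauge-fixing equation approximately, the residual is bounded by $c_0 \|\frh_k'\|_\bbh^2 \leq c_0 \|\frh_k\|_\bbh \cdot \|\frh_k'\|_\bbh$, and since $\|\frh_k\|_\bbh \to 0$ this is $\leq \epsilon \|\frh_k'\|_\bbh$ once $k$ is large enough.

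The main obstacle will be step (2): controlling the gauge-fixing Laplacian uniformly on the noncompact manifold $Y_\infty$. On a compact manifold this is routine, but here one must confirm that the model of $Y_\infty$ on the ends — where the metric is the flat model of Section \ref{ssec:setup} and $(A,\psi)$ is the admissible pair, so that $\LL = \Li + e$ with $e \in L^p$ decaying — still yields a Poincaré-type inequality with constant independent of $k$. The exponential decay of $\frh_k$ off $N$ from Lemma \ref{lem:monopoles-16}, together with \eqref{eq:monopoles-113} (which gives coercivity of $\LL$ far from $N$) and the Rellich lemma near $N$, are exactly the ingredients needed: they let one patch a near-$N$ estimate to a far-from-$N$ estimate and conclude the gauge-fixing operator is Fredholm of index zero with trivial kernel, hence uniformly invertible. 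Granting that, the rest is a standard contraction-mapping argument, and the lemma follows; I would flag that the quantitative constants all depend only on $\cale$ and $r$, consistent with the statement.
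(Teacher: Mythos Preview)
Your approach is valid but takes a genuinely different route from the paper's. You propose to solve the gauge-fixing equation globally on $Y_\infty$ by inverting $d^\dagger d + r|\psi|^2$ on $L^2_2(Y_\infty)$ and running a contraction-mapping argument; this yields an \emph{exact} slice, stronger than the $\epsilon$-slice required. The paper instead (i) solves the gauge-fixing equation with Dirichlet boundary conditions on a large but compact domain $Z_T$ to produce a gauge $h_0$ making the slice condition vanish where $\operatorname{dist}(\cdot,N) < T+2$; (ii) uses the explicit gauge $|\alpha|^{-1}\bar\alpha$ on the ends, where Lemma~\ref{lem:monopoles-16} gives uniform exponential decay, to produce a second gauge $\hi$; (iii) patches $h_0$ and $\hi$ via a cutoff, incurring a slice error of size $c_0 rT e^{-r^{1/2}T/c_0}\|\frh\|_2$ supported in the transition region; and (iv) chooses $T$ large enough that this error is below $\epsilon\|\frh\|_\bbh$.

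Two remarks on your argument. First, your claim that the zeroth-order term $r|\psi|^2$ is ``strictly positive'' is not quite right: $\psi$ can vanish on the $|\alpha|<1-c_0^{-1}$ locus near $M$. Injectivity of $d^\dagger d + r|\psi|^2$ still holds, but for the reason that a kernel element is locally constant and must vanish on the (open, dense) set where $|\psi|>0$; Fredholmness then follows from $|\psi|^2\to 1$ at infinity, by arguments parallel to (but not literally an application of) Lemma~\ref{lem:monopoles-12}. Second, the paper's compact-domain-plus-patching strategy is deliberately chosen so that the same argument transfers with only cosmetic changes to the analogous slice lemma on the finite-$T$ manifolds $Y_{T(k)}$ needed in the proof of Lemma~\ref{lem:monopoles-113}; your global approach would need to be rerun on each $Y_{T(k)}$ with $T$-uniform bounds, which is a separate (though not difficult) verification.
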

This lemma is proved momentarily.
\begin{proof}[Proof of Lemma \ref{lem:monopoles-18}]
If $(A',\psi')$ is an admissible solution to \eqref{eq:sw} on $Y_\infty$ and if it is in an $\upvarepsilon$-slice centered at $(A,\psi)$, then $\frh=(r^{-1/2}(A'-A),\psi'-\psi)$ when viewed as a section of $i(T^\ast Y_\infty\oplus\underline{\R})\oplus\spb$ obeys an equation that has the schematic form 
\begin{equation}
\label{eq:monopoles-119}
\LLf\frh+\pzee\frh=0
\end{equation}
with $\pzee\frh$ obeying
\begin{equation}
\label{eq:monopoles-120}
{||\frh||_2}^2\leq c_0(||\frh||_\bbh+\upvarepsilon)||\frh||_\bbh.
\end{equation}
Note that the $\upvarepsilon$-slice condition in \eqref{eq:monopoles-118} makes the $L^2$-norm of the $i\underline{\R}$ component of $\LLf\frh$ to be $\mathcal{O}(\upvarepsilon||\frh||_\bbh)$. The $L^2$-norms of the other components of $\LLf\frh$ are $\mathcal{O}({||\frh||_\bbh}^2)$ because $(A',\psi')$ obeys \eqref{eq:sw}.

With the preceding understood, suppose in addition that the $(A,\psi)$ version of the operator $\LLf$ on $Y_\infty$ has trivial kernel and let $c$ denote the corresponding number from Lemma \ref{lem:monopoles-12}. It follows from \eqref{eq:monopoles-118} and \eqref{eq:monopoles-120} that $\frh$ must be zero if $\upvarepsilon$ and $||\frh||_\bbh$ are both less than $c_0^{-1}c^{-1}$. This can be seen by writing \eqref{eq:monopoles-119} as $\LLf\frh=-\pzee\frh$ and taking the $L^2$-norms of both sides. Then Lemma \ref{lem:monopoles-18} follows from what was just said and Lemmas \ref{lem:monopoles-17} and \ref{lem:monopoles-19}.
\end{proof}

\begin{proof}[Proof of Lemma \ref{lem:monopoles-19}]
The proof has five steps.
\begin{step}
Fix $T>c_0$ for the moment. Fix a smooth codimension-0 submanifold in $Y_\infty$ with the boundary that is contained in the part of $Y_\infty$ with distance less than $T+10$ from $N$ and contains in its interior the part of $Y_\infty$ with distance less than $T+5$ from $N$. Denote this submanifold by $Z_T$. As explained momentarily, an appeal to the inverse function theorem using the operator $d^\dagger d+r|\psi|^2$ with Dirichlet boundary conditions on $Z_T$ can be used to prove the next assertion:
\begin{quote}
If $(A',\psi')$ is sufficiently close to $(A,\psi)$ in the $C^2$-topology on $Z_T$, then there is a smooth map from the interior of $Z_T$ to $i\R$ with the following properties: Denote this map by $u$ and let $h_0=e^{u}$, this being a map to $S^1$. Then
\begin{itemize}\leftskip-0.15in
\item With $\fra$ defined to be $r^{-1/2}(h_0^\ast A'-A)$ and $\eta$ defined to be $h_0^\ast\psi'-\psi$, the expression in \eqref{eq:monopoles-117} vanishes on the part of $Z_T$ where the distance to $N$ is less than $T+2$.
\item the ${L^2}_1$-norm of the pair $(\fra,\eta)$ is bounded by a constant multiple of the $C^2$-norm of the pair $(r^{-1/2}(A'-A),\psi'-\psi)$ with the constant being independent of $(A',\psi')$.
\end{itemize}
\begin{equation}
\label{eq:monopoles-121}
\end{equation}
\end{quote}
The argument for \eqref{eq:monopoles-121} differs little from the proof of the analogous assertion in the compact manifold case. Here is an outline of the argument: Suppose that $u$ is smooth on $\partial Z_T$ and write the $\fra_u=r^{-1/2}(A'-A-du)$ and $\eta_u=e^{iu}\psi'-\psi$ version of \eqref{eq:monopoles-117} schematically as
\begin{equation}
\label{eq:monopoles-122}
d^\dagger du+r|\psi|^2u+r^{1/2}\mathfrak{T}_0+r\mathfrak{R}(u)
\end{equation}
where $\mathfrak{T}_0$ is the $\fra_0=r^{-1/2}(A'-A)$ and $\eta_0=\psi'-\psi$ version of \eqref{eq:monopoles-117} and $\mathfrak{R}$ is a function of $u$ obeying $|\mathfrak{R}(u)|\leq c_0|u|^2+|u|(|\fra_0|+|\eta_0|)$.

Construct a smooth non-negative function on $Y_\infty$ to be denoted by $\chi_T$ that is equal to $1$ where the distance from $N$ is less than $T+3$ and equal to $0$ where the distance to $N$ is greater than $T+4$. This function can and should be constructed so that the norm of its derivative is bounded by $c_0$. What is needed is a solution to the equation
\begin{equation}
\label{eq:monopoles-123}
d^\dagger du+r|\psi|^2u+\chi_T(r^{1/2}\mathfrak{T}_0+r\mathfrak{R}(u))=0
\end{equation}
with $u$ being zero on $\partial Z_T$. A relatively straightforward application of the inverse function theorem proves that \eqref{eq:monopoles-123} has a solution if $(\fra_0,\eta_0)$ has sufficiently small $C^1$-norm. Note that the applications to come have no need to control the $r$ and $T$ dependence of the constant in the second bullet of \eqref{eq:monopoles-121} nor the $r$ and $T$ dependence of what is meant by `sufficiently small'.
\end{step}
\begin{step}
Now suppose that $(A',\psi')$ obeys \eqref{eq:sw} on the part of $Y_\infty$ with distance less than $T$ from $N$. Supposing that \eqref{eq:monopoles-121} can be invoked, let $\frh=(\fra,\eta)$. If $|\frh|$ is small, which will be the case if $(A',\psi')$ is sufficiently close to $(A,\psi)$, then $\frh$ viewed as a section over the part of $Y_\infty$ with distance less than $T$ from $N$ obeys an equation that has the schematic form 
\begin{equation}
\label{eq:monopoles-124}
\LLf\frh +r^{1/2}\frh\ast\frh=0,
\end{equation}
where $\LLf$ is defined as in \eqref{eq:linearized} by the pair $(A,\psi)$. If $|\frh|$ is small, then this equation can be written as $\LLf\frh+\pzee\frh=0$ with $\pzee$ obeying \eqref{eq:monopoles-16}. Thus, Lemma \ref{lem:monopoles-11} can be invoked to bound the $||\cdot||_\bbh$-norm of $\frh$ where the distance from $N$ is between $\frac{1}{4}T$ and $\frac{3}{4}T$ by 
\begin{equation}
\label{eq:monopoles-125}
c_0(||\frh||_{2,10}+||\frh||_{2,T})e^{-r^{1/2}T/c_0},
\end{equation}
where $||\frh||_{2,10}$ and $||\frh||_{2,T}$ are the respective $L^2$-norms of $\frh$ on the regions in $Y_\infty$ where the distance to $N$ is less than $10$ and where the distance to $N$ is between $T-1$ and $T$.
\end{step}
\begin{step}
As noted previously, there is a gauge for $(A,\psi)$ on the part of $Y_\infty$ with distance greater than $c_\cale$ from $N$ that writes the $E$ component $\alpha$ of $\psi$ as $1-z$ with $z$ being a small real number. This gauge writes $A$ as $\Ai+r^{1/2}\fra$  with $|\fra|\leq c_0r^{-1/2}|\nabla_A\alpha|$. As already noted, the pair $\frq=(\fra,\eta=\psi-\psii)$ obeys an equation of the form $\Lif\frq+\pzee\frq=0$ with $\pzee$ obeying \eqref{eq:monopoles-16}. There is an analogous gauge on this part of $Y_\infty$ for $(A',\psi')$ leading to a $\frq'$ obeying $\Lif\frq'+\pzee'\frq'=0$ with $\pzee'$ obeying \eqref{eq:monopoles-16}. Use $\frhi$ to denote the difference $\frq'-\frq$. This obeys an equation that has the form $\Lif\frhi+\pzee''\frhi=0$ with $\pzee''$ again obeying \eqref{eq:monopoles-16}. It follows as a consequence that $\frhi$ on the part of $Y_\infty$ where the distance to $N$ is between $\frac{1}{4}T$ and $\frac{3}{4}T$ has $||\cdot||_\bbh$-norm bounded by 
\begin{equation}
\label{eq:monopoles-126}
||\frhi||_{2,10}e^{-r^{1/2}T/c_0}.
\end{equation}
\end{step}
\begin{step}\label{lem:monopoles-19step4}
What is denoted by $\frhi$ can be written as $(r^{-1/2}(\hi^\ast A'-A),\hi^\ast\psi'-\psi)$ for a suitable gauge transformation $\hi$. It then follows that 
\begin{equation}
\label{eq:monopoles-127}
(r^{-1/2}(\hi^\ast A'-h_0^\ast A'),\hi^\ast\psi'-h_0^\ast\psi')
\end{equation}
restricted to the part of $Y_\infty$ where the distance to $N$ is between $\frac{1}{4}T$ and $\frac{3}{4}T$ is bounded by the sum of the expressions in \eqref{eq:monopoles-125} and \eqref{eq:monopoles-126}. This implies that $h_0^{-1}\hi$ can be written as $e^{v}$ where $|v|$ is bounded by the same expressions.

With the preceding understood, let $\chi$ denote a non-negative function on $Y_\infty$ that is equal to $1$ where the distance to $N$ is greater than $\frac{1}{2}T$ and equal to $0$ where the distance to $N$ is less than $\frac{1}{2}T-1$. This function can and should be constructed so that the norm of its derivative is bounded by $c_0 T$. Let $h$ denote the gauge transformation $h_0e^{\chi v}$. The latter is equal to $h_0$ where the distance to $N$ is less than $\frac{1}{2}T-1$ and it is equal to $\hi$ where the distance to $N$ is greater that $\frac{1}{2}T$. This gauge transformation is such that the pair $\frh=(\fra=r^{-1/2}(h^\ast A'-A),h^\ast\psi'-\psi)$ obeys 
\begin{equation}
\label{eq:monopoles-128}
||\ast d\ast\fra+r^{1/2}(\psi^\dagger\eta-\eta^\dagger\psi)||_2\leq c_0 r Te^{-r^{1/2}T/c_0}||\frh||_2.
\end{equation}
\end{step}
\begin{step}
The preceding analysis can be applied to any sufficiently large $k$ version of $(A_k,\psi_k)$ to produce a gauge transformation $h_k$ with the desired properties. \qedhere
\end{step}
\end{proof}
\subsubsection{Limits as $T\to\infty$}
\label{sssec:limit_at_infty}
Lemma \ref{lem:monopoles-16} has an analog on the large $T$ versions of $Y_T$. The precise statement will appear momentarily. By way of background, assume in the case $T<\infty$, the submanifold $N$ is a union of two parts, $M\cup N_T$ with the parameter $t$ from Section \ref{ssec:setup} having values in $[-1,1]$ on the boundary of the closure of $M$, and in the interval $[-T,T]$ on the boundary of the closure of $N_T$. Likewise, the parameter $\tau$ has value $0$ on the boundary of the closure of $M$ and it has value $T$ on the boundary of the closure of $N_T$. In comparison to the body of this article, $M$ denotes the interior of a sutured contact manifold with connected suture.  The case $T < \infty$ is shown in Figure~\ref{fig:a-m_union_nt}.

\begin{figure}[ht]
\labellist
\small \hair 2pt
\pinlabel $M$ [t] at 65 424
\pinlabel $N_T$ [t] at 134 424
\pinlabel $t$ [r] at 76 365
\pinlabel $\tau$ [t] at 97 341
\tiny
\pinlabel $1$ [r] at 78 376
\pinlabel $-1$ [r] at 78 353
\pinlabel $0$ [t] at 81 343
\pinlabel $T$ [t] at 112 343
\pinlabel $T$ [l] at 157 381
\pinlabel $-T$ [l] at 155 370
\footnotesize
\pinlabel $M$ at 245 389
\pinlabel $N_T$ at 302.5 372
\endlabellist
\includegraphics[scale=0.8]{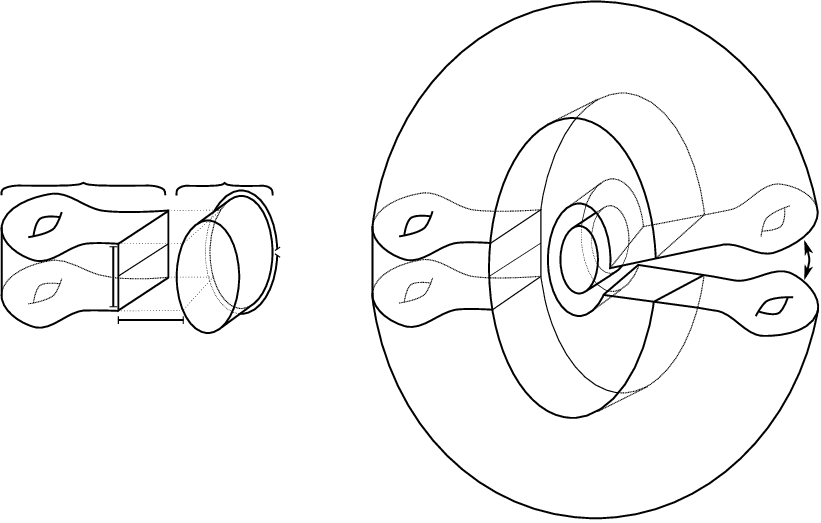}
\caption{The manifold $N = M \cup N_T$, viewed both separately and as part of $Y_T$.}
\label{fig:a-m_union_nt}
\end{figure}

For $T<\infty$, the Riemannian curvature tensor on $Y_T$ has $T$-independent upper bound, so do the covariant derivatives of $a$ and $da$. Let $L(T)$ denote the length of the shortest Reeb orbit contained in $N_T$. Note that $\{L(T)\}_{T>16}$ has no bounded subsequences for the version of $N_T$ that is depicted in Figures~\ref{fig:a-yt} and \ref{fig:a-m_union_nt}. 
\begin{lemma}
\label{lem:monopoles-110}
There exists $\kappa>1$, and given $\cale>1$, there exists $\kappa_\cale>1$ with the following significance: Fix $T>\kappa_\cale$. If $(A,\psi)$ is an admissible solution to \eqref{eq:sw} on $Y_T$ with $\en\leq\cale$, then
\[r(1-|\alpha|^2)+r^2|\beta|^2+|\nabla_A\alpha|^2+r|\nabla_A\beta|^2\leq\kappa re^{-\kappa^{-1}r^{1/2}\operatorname{dist}(\cdot,N)}\]
at points in $Y_T$ with distance $\kappa$ or more from $N=M\cup N_T$. 
\end{lemma}
\begin{proof}
This follows as a corollary of Lemmas \ref{lem:monopoles-14} and \ref{lem:monopoles-15}.
\end{proof}
Lemma \ref{lem:monopoles-110} suggests that it is possible to compare solutions to \eqref{eq:sw} on the large $T$ versions of $Y_T$ with solutions to \eqref{eq:sw} on $Y_\infty$. This is the content of the next lemma.
\begin{lemma}
\label{lem:monopoles-111}
There exists $\kappa>1$, and given $\cale>1$, there exists $\kappa_\cale>1$ with the following significance: Suppose that $\{T(k)\}_{k=1,2,\dots}$ is an increasing unbounded sequence of positive numbers each greater than $16$. Fix $r>\kappa_\cale$. For each positive integer $k$, let $\frc_k=(A_k,\psi_k)$ denote a solution to \eqref{eq:sw} on $Y_{T(k)}$ with energy $\en\leq\cale$. There exists
\begin{enumerate}\leftskip-0.25in
\item A solution $\frc=(A,\psi)$ to \eqref{eq:sw} on $Y_\infty$ with energy $\en\leq\cale$,
\item A subsequence $\Lambda$ of positive integers,
\item A sequence $\{g_k\}_{k\in\Lambda}$ with any given $k\in\Lambda$ version of $g_k$ mapping $Y_{T(k)}\ssm N_{T(k)}$ to $S^1$.
\item A sequence $\{\hat{u}_k\}_{k\in\Lambda}$ with any given $k\in\Lambda$ version of $\hat{u}_k$ mapping $N_{T(k)}$ to $S^1$.
\end{enumerate}
The data $\frc$, $\Lambda$, $\{g_k\}_{k\in \Lambda}$, and $\{\hat{u}_k\}_{k\in \Lambda}$ have the following properties:
\begin{itemize}\leftskip-0.25in
\item The sequence indexed by $\Lambda$ whose $k^{th}$ term is the $C^1$-norm of the pair $e^{r\operatorname{dist}(\cdot,M)/\kappa}(g_k\frc_k-\frc)$ on $Y_{T(k)}\ssm N_{T(k)}$ has limit zero as $k\to\infty$.
\item For each positive integer $m$, the sequence indexed by $\Lambda$ whose $k^{th}$ term is the $C^m$-norm of the pair $g_k\frc_k-\frc$ on $Y_{T(k)}\ssm N_{T(k)}$ has limit zero as $k\to\infty$.
\item For each positive integer $m$, the sequence indexed by $\Lambda$ whose $k^{th}$ term is the $C^m$-norm of $\hat{u}_k\frc_k-(\Ai,\psii)$ on $N_{T(k)}$ has limit zero as $k\to\infty$.
\end{itemize}
\end{lemma}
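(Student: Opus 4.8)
The plan is to establish Lemma~\ref{lem:monopoles-111} by a Gromov-style compactness extraction over an exhaustion of $Y_\infty$, using the uniform exponential decay estimates of Lemmas~\ref{lem:monopoles-14}, \ref{lem:monopoles-15}, and \ref{lem:monopoles-110}, and then to upgrade the resulting local convergence to the three weighted bullets by feeding the relevant differences of configurations into the linear decay estimate of Lemma~\ref{lem:monopoles-11}, patching gauge transformations in the manner of Step~\ref{lem:monopoles-19step4} of the proof of Lemma~\ref{lem:monopoles-19}. To begin, fix $\cale>1$ and $r>\kappa_\cale$ and record the consequences of the bound $\en\leq\cale$. By Lemma~\ref{lem:monopoles-15} the locus $\{|\alpha_k|<1-\kappa_\cale^{-1}\}$ lies within distance $\kappa_\cale r^{-1/2}$ of a union $\Theta_k$ of embedded curves of total length at most $(1+\kappa_\cale^{-1})\tfrac{1}{2\pi}\cale$ whose tangent directions are within $\kappa_\cale r^{-1/2}$ of the Reeb field. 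Since the Reeb field of $a$ has no closed orbits on $Y_{T(k)}\ssm N$, is non-recurrent near $\partial M$, and has only closed orbits of length at least $L(T(k))$ inside $N_{T(k)}$, for $k$ large each curve of $\Theta_k$ is contained in a fixed compact subset of $\mathrm{int}(M)$, independent of $k$. Hence there is a fixed compact neighborhood $N\subset Y_{T(k)}$ of $\mathrm{int}(M)$, independent of $k$, off which $|\alpha_k|>1-\kappa_\cale^{-1}$, and where Lemma~\ref{lem:monopoles-110} gives, uniformly in $k$,
\[
r\bigl(1-|\alpha_k|^2\bigr)+r^2|\beta_k|^2+|\nabla_{A_k}\alpha_k|^2+r|\nabla_{A_k}\beta_k|^2 \ \leq\ \kappa\, r\, e^{-\kappa^{-1}r^{1/2}\operatorname{dist}(\cdot,M)} .
\]

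Next I would extract the limit. On the part of $Y_{T(k)}\ssm N_{T(k)}$ where $|\alpha_k|$ is close to $1$, use the gauge transformation $|\alpha_k|^{-1}\bar\alpha_k$ of Section~\ref{sssec:behavior}; on a fixed neighborhood of $N$, use any smooth gauge fixing placing $\frc_k$ restricted there into a bounded set; and patch these using a cutoff applied to the difference of the two gauge transformations over the annular region between, exactly as in Step~\ref{lem:monopoles-19step4}. Exhausting $Y_\infty$ by compact sets $K_j$, each of which embeds in $Y_{T(k)}$ for $T(k)$ large, the interior elliptic estimates for \eqref{eq:sw} together with Lemma~\ref{lem:monopoles-14} and the displayed bound give uniform $C^m$ bounds on each $K_j$ for the gauge-transformed configurations. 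A diagonal subsequence argument, as in the proof of Lemma~\ref{lem:monopoles-17}, then yields the subsequence $\Lambda$, the gauge transformations $\{g_k\}_{k\in\Lambda}$ on $Y_{T(k)}\ssm N_{T(k)}$, and a configuration $\frc=(A,\psi)$ on $Y_\infty$ with $g_k\frc_k\to\frc$ in $C^\infty$ on compact sets. Passing to the limit in \eqref{eq:sw} shows $\frc$ solves \eqref{eq:sw}; passing to the limit in the displayed bound shows $\frc$ obeys the estimate of Lemma~\ref{lem:monopoles-16}, so $\en(\frc)\leq\cale$ by Fatou's lemma and $\frc$ is admissible.

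For the first and second bullets, observe that on $Y_{T(k)}\ssm N_{T(k)}$, and outside the fixed compact set $N$, both $g_k\frc_k$ and $\frc$ lie in the $|\alpha|^{-1}\bar\alpha$ gauge and are, by Lemmas~\ref{lem:monopoles-16} and \ref{lem:monopoles-110}, exponentially close to $(\Ai,\psii)$ as functions of $\operatorname{dist}(\cdot,M)$. Hence their difference, viewed as a section $\frh_k$ of $i(T^\ast Y_\infty\oplus\underline{\R})\oplus\spb$ as in Section~\ref{sssec:behavior}, obeys an equation of the schematic form $\Li\frh_k+e\frh_k=0$ in which $e$ satisfies \eqref{eq:monopoles-16}, its coefficients being quadratically small in $\frh_k$ and linearly small in $\frc-(\Ai,\psii)$. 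Lemma~\ref{lem:monopoles-11} then bounds $\|\frh_k\|_{\ast,\ell}$ off $N$ by $\|\frh_k\|_{\ast,0}$ times a factor decaying exponentially in $\operatorname{dist}(\cdot,M)$ at a rate of order $r^{1/2}$, and elliptic bootstrapping as in the proof of Lemma~\ref{lem:monopoles-14} upgrades this to the claimed weighted $C^1$ and $C^m$ bounds; these tend to zero because $\|\frh_k\|_{\ast,0}$, concentrated on the fixed compact $N$, tends to zero by $C^\infty$ convergence on compact sets. For the third bullet, on $N_{T(k)}$ the distance to $M$ is of order $T(k)$, so Lemma~\ref{lem:monopoles-110} bounds $1-|\alpha_k|^2$, $|\beta_k|$, $|\nabla_{A_k}\alpha_k|$, and $|\nabla_{A_k}\beta_k|$ there by $c_0 r\, e^{-\kappa^{-1}r^{1/2}T(k)}$; the gauge transformation $\hu_k=|\alpha_k|^{-1}\bar\alpha_k$ then writes $\hu_k\frc_k=(\Ai+\aA,\psii+\eta_k)$ with $r^{-1/2}|\aA|+|\eta_k|$ of the same order, and Lemma~\ref{lem:monopoles-14} promotes this to $C^m$ bounds, which go to zero as $k\to\infty$.

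I expect the main obstacle to be the gauge bookkeeping in the second paragraph: for each large $k$ one must produce a single gauge transformation $g_k$ on all of $Y_{T(k)}\ssm N_{T(k)}$ that simultaneously places $\frc_k$ near $(\Ai,\psii)$ on the long collar — so that both the extraction of a limiting configuration on the noncompact $Y_\infty$ and the weighted estimates go through — and is compatible with the bounded gauge used near $\mathrm{int}(M)$, where $\alpha_k$ has zeros and the $|\alpha_k|^{-1}\bar\alpha_k$ gauge is unavailable; and one must then verify that in this gauge the error operator $e$ in $\Li\frh_k+e\frh_k=0$ genuinely obeys \eqref{eq:monopoles-16} uniformly in $k$. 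This is where the quadratic smallness coming from the exponential decay toward $(\Ai,\psii)$ on the collar, together with the cutoff argument of Step~\ref{lem:monopoles-19step4}, must be combined with some care; the rest of the argument is a routine adaptation of the compactness and decay machinery already developed in Section~\ref{ssec:monopoles}.
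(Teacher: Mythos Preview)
Your approach is correct and is essentially a detailed expansion of the paper's own proof, which consists of a single sentence invoking Lemma~\ref{lem:monopoles-110} together with Lemma~\ref{lem:monopoles-14} and standard elliptic regularity to obtain uniform convergence on $Y_{T(k)}\ssm N_{T(k)}$ to an a priori admissible solution on $Y_\infty$. Your explicit invocation of Lemma~\ref{lem:monopoles-11} for the weighted decay and the gauge-patching of Step~\ref{lem:monopoles-19step4} simply unpacks what the paper leaves implicit in that sentence; the first-paragraph detour through Lemma~\ref{lem:monopoles-15} is not needed since Lemma~\ref{lem:monopoles-110} already supplies the decay directly, but it does no harm.
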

\begin{proof}
Lemma \ref{lem:monopoles-110} can be invoked with Lemma \ref{lem:monopoles-14} and standard elliptic regularity arguments to obtain uniform convergence in the manner dictated above on $Y_{T(k)}\ssm N_{T(k)}$ to a solution on $Y_\infty$ that is a priori admissible.
\end{proof}

Lemma \ref{lem:monopoles-111} leads to the following observation: Fix $\cale>1$ and then $r>c_\cale$ with $c_\cale$ denoting here and in what follows a number greater than $1$ and depends on $\cale$. Its precise value can be assumed to increase between appearances. Fix $\upvarepsilon\in (0,{c_\cale}^{-1}]$. Supposing that $T$ is sufficiently large given $r$ and $\upvarepsilon$, let $\frc_T=(A_T,\psi_T)$  denote a solution to \eqref{eq:sw} on $Y_T$ with $\en<\cale$. There exists
\begin{enumerate}\leftskip-0.25in
\item A $C^\infty(Y_\infty;S^1)$ equivalence class of admissible solutions to \eqref{eq:sw} on $Y_\infty$ with $\en<\cale$.
\item Given $\frc=(A,\psi)$ in this equivalence class, there exists a map $g_T:Y_T\ssm N_T\to S^1$.
\item A map $\hu_T:N_T\to S^1$.
\end{enumerate} 
\begin{equation}
\label{eq:monopoles-129}
\end{equation}
These are such that 
\begin{itemize}\leftskip-0.25in
\item $e^{r\operatorname{dist}(\cdot,M)/c_0}(g_T\frc_T-\frc)$ on $Y_T\ssm N_T$ has $C^1$-norm less than $\upvarepsilon$.
\item $\hu_T\frc_T-(\Ai,\psii)$ on $N_T$ has $C^1$-norm less than $\upvarepsilon e^{-rT/c_0}$.
\end{itemize}
\begin{equation}
\label{eq:monopoles-130}
\end{equation}
It follows from Lemma \ref{lem:monopoles-18} that the $C^\infty$ equivalence class in item (1) of \eqref{eq:monopoles-129} is unique if the version of the operator $\LLf$ that are defined by the $\en\leq \cale$ solutions to \eqref{eq:sw} on $Y_\infty$ for the given value of $r$ have trivial kernel.
\subsubsection{Existence of solutions with a bound on $\cale$} Suppose that $\cale>1$ has been specified and that $\frac{1}{2\pi} \cale$ cannot be written as a sum with positive integer coefficients of the lengths of the closed orbits of the Reeb vector field. Assume in addition that the other conditions in (4-1) of \cite{taubes1} can be met when what is denoted in \cite{taubes1} by $L$ is $\frac{1}{2\pi} \cale$. Now suppose that $r_\upvarepsilon$ is such that the conclusions of Lemmas \ref{lem:monopoles-15} and \ref{lem:monopoles-110} hold for $r>r_\upvarepsilon$. Note that these conclusions require only that $T\geq \kappa_\cale$ with $\kappa_\cale$ coming from Lemma \ref{lem:monopoles-110}. The analysis that proves Theorem 4.2 in \cite{taubes1} can be repeated given the conclusions of Lemmas \ref{lem:monopoles-15} and \ref{lem:monopoles-110}. The result is stated in the following lemma.
\begin{lemma}
\label{lem:monopoles-114}
Fix $\cale>1$ and suppose that the $L=\frac{1}{2\pi} \cale$ version of (4-1) in \cite{taubes1} holds. There exists $\kappa_\cale>1$ such that if $r\geq \kappa_\cale$, then the following is true:
\begin{itemize}\leftskip-0.25in
\item Supposing that $T\geq \kappa_\cale$ or that $T=\infty$, then the set of gauge equivalence classes of solutions to \eqref{eq:sw} on $Y_T$ with $\en\leq\cale$ are in 1--1 correspondence with the set of generators of the ECH chain complex with total length less than $\frac{1}{2\pi}\cale$.
\item Given $\upvarepsilon\in(0,1]$, there exists $\kappa_{\cale,\upvarepsilon}>\kappa_\cale$ with the following significance: Fix $T>\kappa_{\cale,\upvarepsilon}$ and suppose that $\frc_T$ and $\frc_\infty$ are respective $\en<\cale$ solutions to \eqref{eq:sw} on $Y_T$ and $Y_\infty$ from gauge equivalence classes that correspond to the same generator of the ECH chain complex. There exists a map $g_T:Y_T\ssm N_T\to S^1$ and a map $\hu_T:N_T\to S^1$ such that the conclusions of \eqref{eq:monopoles-130} hold.
\item Supposing that $T> \kappa_\cale$ or that $T=\infty$, let $\frc_T$ denote a solution to \eqref{eq:sw} on $Y_T$ with $\en<\cale$. The spectrum of the corresponding version of the operator $\LLf$ in \eqref{eq:linearized} has distance at least ${\kappa_\cale}^{-1}$ from $0$. 
\end{itemize}
\end{lemma}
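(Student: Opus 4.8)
Looking at Lemma \ref{lem:monopoles-114}, the proof plan is as follows.

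\textbf{Overall strategy.} The three bullet points are, respectively: (i) a count of solutions to \eqref{eq:sw} on $Y_T$; (ii) a quantitative comparison between solutions on $Y_T$ and admissible solutions on $Y_\infty$; and (iii) a uniform spectral gap for the linearized operator $\LL$ on $Y_T$. The plan is to deduce (i) in two stages, $Y_T \leftrightarrow Y_\infty \leftrightarrow$ ECH generators, deduce (ii) directly from the $T\to\infty$ compactness/gluing machinery, and deduce (iii) by quoting Lemma \ref{lem:monopoles-112}. The key input is that the hypothesis ``the $L=\frac{1}{2\pi}\cale$ version of (4-1) in \cite{taubes1} holds'' is exactly what licenses the use of Taubes's correspondence between $\en<\cale$ Seiberg--Witten solutions on a \emph{closed} contact $3$-manifold and ECH generators of total symplectic action less than $\frac{1}{2\pi}\cale$ (Theorem 4.2 of \cite{taubes1}, as recalled just before the lemma statement). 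Note that $Y_T$ for finite $T$ is \emph{not} closed, but all Reeb orbits lie in the compact piece $M \subset N$, and by Lemmas \ref{lem:monopoles-14}, \ref{lem:monopoles-15}, \ref{lem:monopoles-110} the solutions are exponentially localized near $M$; this is precisely the situation in which ``the analysis that proves Theorem 4.2 in \cite{taubes1} can be repeated,'' as the paragraph preceding the lemma asserts.

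\textbf{Step-by-step.} First I would fix $\cale$ so that $\frac{1}{2\pi}\cale$ is not a sum of lengths of closed Reeb orbits, so there are no solutions with $\en = \cale$ exactly (on either $Y_T$ or $Y_\infty$, using Lemma \ref{lem:monopoles-111} to transfer this nonexistence to finite $T$), and so the relevant ``$\leq$'' and ``$<$'' versions agree. Second, invoke Lemma \ref{lem:monopoles-13} to choose the metric and contact form generically on $N$ so that every $\en<\cale$ admissible solution on $Y_\infty$ has trivial $\LL$-kernel; this puts us in the hypothesis class of Lemmas \ref{lem:monopoles-18}, \ref{lem:monopoles-112}, and \ref{lem:monopoles-113}. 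Third, for the first bullet: apply Lemma \ref{lem:monopoles-113} to get a bijection between gauge equivalence classes of $\en\leq\cale$ solutions on $Y_T$ (for $T>c$) and those on $Y_\infty$; then rerun the Taubes argument for Theorem 4.2 of \cite{taubes1} --- using the exponential decay estimate Lemma \ref{lem:monopoles-110} in place of the closed-manifold a priori bounds, together with Lemma \ref{lem:monopoles-15}'s identification of the zero locus of $\alpha$ with a union of near-Reeb-orbit curves --- to match $\en<\cale$ solutions on $Y_\infty$ (equivalently on $Y_T$) with ECH generators of total length less than $\frac{1}{2\pi}\cale$. Fourth, for the second bullet: this is a restatement of the content of \eqref{eq:monopoles-129}--\eqref{eq:monopoles-130}, which follow from Lemma \ref{lem:monopoles-111} applied along any sequence $T(k)\to\infty$; given $\frc_T$ and a $\frc_\infty$ in the corresponding $Y_\infty$ class, one produces $g_T$ on $Y_T\ssm N_T$ and $\hu_T$ on $N_T$ with the stated $C^1$-closeness, the exponential factor $e^{-rT/c_0}$ on $N_T$ coming from Lemma \ref{lem:monopoles-16} (or \ref{lem:monopoles-110}). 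Fifth, for the third bullet: this is immediate from Lemma \ref{lem:monopoles-112}, which gives a uniform-in-$T$ lower bound $c^{-1}$ on the distance from the spectrum of $\LL$ to $0$ for all $\en\leq\cale$ solutions on $Y_T$ with $T>c$; set $\kappa_\cale$ at least as large as this $c$.

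\textbf{Main obstacle.} The genuinely substantive point --- and the one I would need to be most careful about --- is the first bullet's claim that ``the analysis that proves Theorem 4.2 in \cite{taubes1} can be repeated.'' Taubes's proof of the existence/uniqueness of Seiberg--Witten solutions near each ECH generator is a delicate gluing construction carried out on a \emph{closed} manifold, and transplanting it to the open manifold $Y_T$ (or $Y_\infty$) requires knowing that nothing escapes to the noncompact ends. The exponential-decay Lemmas \ref{lem:monopoles-14}, \ref{lem:monopoles-15}, \ref{lem:monopoles-16}, \ref{lem:monopoles-110} are designed for exactly this purpose: they confine both the fields and their linearizations to a uniform neighborhood of $M$, where the geometry is identical to a piece of a closed contact manifold. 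So the real work is checking that every estimate in Taubes's construction is \emph{local} near the Reeb orbits (all of which lie in $M$), so that the cut-off/gluing arguments --- of the type already illustrated in the proof of Lemma \ref{lem:monopoles-19}, Steps 1--5, and in the surjectivity argument of Lemma \ref{lem:monopoles-113} using the cut-off $\chi$ --- go through with the closed-manifold Bochner--Weitzenböck and inverse-function-theorem inputs unchanged. I expect this to be routine given everything assembled in Section \ref{ssec:monopoles}, but it is where the bulk of the verification lies; the spectral-gap and comparison bullets are essentially corollaries of lemmas already proved.
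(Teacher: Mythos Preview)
Your proposal is correct and essentially matches the paper's own (very brief) proof. The paper's argument is even more terse than yours: it says the gluing construction of solutions from Reeb orbits ``differs little from that in \cite{taubes2}'' thanks to Lemma~\ref{lem:monopoles-11}, and that the remaining assertions follow from the analysis in \cite[Section~2a]{taubes3} and \cite[Section~2]{taubes4}. Your route through Lemmas~\ref{lem:monopoles-111}--\ref{lem:monopoles-113} to factor the first bullet as $Y_T \leftrightarrow Y_\infty \leftrightarrow$ ECH generators is a perfectly valid unpacking of the same content, and your identification of the third bullet with Lemma~\ref{lem:monopoles-112} and the second with \eqref{eq:monopoles-129}--\eqref{eq:monopoles-130} is on target; you have also correctly located the ``main obstacle'' exactly where the paper places it.
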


\begin{proof}
The gluing theorem that constructs solutions to the equations in \eqref{eq:sw} from Reeb orbits differs only cosmetically from that in \cite[Section~3]{taubes2}.  The other assertions in the lemma are proved using the analysis in \cite[Section~2a]{taubes3} and \cite[Section~2]{taubes4}.  The analysis from these papers can be borrowed almost verbatim because of Lemmas~\ref{lem:monopoles-11} and \ref{lem:monopoles-110}.  The latter says in effect that the solutions to \eqref{eq:sw} can be approximated for all intents and purposes by the canonical pair $(\Ai,\psii)$ where the distance to the $Y_\infty$ version of $M$ is greater than a fixed number that depends solely on $\cale$.  Granted this, then Lemma~\ref{lem:monopoles-11} says in effect that equations of the form $\LLf\frh + \frf = 0$ on $Y_T$ or $Y_\infty$ with $\frf$ supported near $M$ become very small at large distances from $M$.
\end{proof}

By way of some perspective on Lemma~\ref{lem:monopoles-114}: A 1-1 correspondence between the respective gauge equivalence classes of $\en\leq\cale$ admissible solutions to \eqref{eq:sw} on $Y_\infty$ and any sufficiently large $T$ version of $Y_T$ can be constructed directly if the spectrum of the operator $\LLf$ for all $\en\leq 2\cale$ admissible solutions on $Y_\infty$ is uniformly bounded away from zero.  The lower bound for $T$ is determined by $\cale$ and this spectral gap, but it is independent of $r$.  This correspondence is asserted by the upcoming Lemma~\ref{lem:monopoles-115}.  (Lemma~\ref{lem:monopoles-12} with the analysis in \cite[Section~2]{taubes2}, \cite[Section~2a]{taubes3}, and \cite[Section~2]{taubes4} can be borrowed to prove that the desired spectral gap does in fact exist when $r$ is large if the contact 1-form and compatible metric are suitably generic on $M$.  As noted above, the borrowing from these references is possible because of what is said by Lemmas~\ref{lem:monopoles-11} and \ref{lem:monopoles-110}.)

\begin{lemma} \label{lem:monopoles-115}
Given $\cale > 1$, there exists $\kappa_\cale > 1$ with the following significance: Suppose that $r > \kappa_\cale$ and that the version of the operator $\LLf$ in \eqref{eq:linearized} for any admissible solution to \eqref{eq:sw} on $Y_\infty$ with $\en \leq 2\cale$ has trivial kernel.  Fix $c>1$ so that the spectrum of each such version of $\LLf$ has distance $c^{-1}$ or more from $0$.  There exists $\kappa_* > \kappa_\cale$ such that the assertions in the bullets that follow are true when $T > \kappa_*$.
\begin{itemize}\leftskip-0.25in
\item The set of gauge equivalence classes of solutions to \eqref{eq:sw} on $Y_T$ with $\en<\cale$ enjoys a bijective correspondence with the set of gauge equvalence classes of admissible solutions to \eqref{eq:sw} on $Y_\infty$.
\item Supposing that $\upvarepsilon \in (0,1]$, there exists $\kappa_{\cale,\upvarepsilon} > \kappa_\cale$ such that if $T > \kappa_{\cale,\upvarepsilon}$ and if $\frc_T$ and $\frc_\infty$ are respective $\en<\cale$ solutions to \eqref{eq:sw} on $Y_T$ and $Y_\infty$ from corresponding gauge equivalence classes, then there are maps $g_T: Y_T \ssm N_T \to S^1$ and $\hu_T \to S^1$ that make \eqref{eq:monopoles-130} hold.
\item Let $\LLf$ denote the version of the operator in \eqref{eq:linearized} that is defined by a given $\en \leq \cale$ solution to \eqref{eq:sw} on $Y_T$. Then $\LLf$ has trivial kernel and the distance in $\R$ from the spectrum of $\LLf$ to $0$ is greater than $\frac{1}{2}c^{-1}$.
\end{itemize}
\end{lemma}

\begin{proof}
It is assumed that the $Y_\infty$ version of the operator in \eqref{eq:linearized} has trivial kernel when it is defined by an admissible solution to \eqref{eq:sw} with $\en \leq 2\cale$. Granted that this is so, then Lemma~\ref{lem:monopoles-18} asserts that the space of $C^\infty(Y_\infty;S^1)$ equivalence classes of admissible solutions to \eqref{eq:sw} on $Y_\infty$ is a discrete set, and that there are at most a finite number of such equivalence classes with $\en \leq \frac{3}{2}\cale$.

In what follows, $c_\cale$ is used to denote numbers that are greater than $1$ and are independent of any particular $T$ or solution to \eqref{eq:sw} on $Y_T$ for the given value of $r$. It can depend on $\cale$ however. Lemma~\ref{lem:monopoles-111} has the following implications: Given $\upvarepsilon>0$ and $T$ sufficiently large (with a lower bound depending only on $\cale$ and $\upvarepsilon$, but not $r$), let $\frc_T$ denote a solution to \eqref{eq:sw} on $Y_T$ with $\en \leq \cale$. There is an admissible solution to \eqref{eq:sw} on $Y_\infty$ with $\en \leq \cale$ to be denoted by $\frc$ and maps $g_T: Y_T \ssm N_T \to S^1$ and $\hu_T: N_T \to S^1$ such that \eqref{eq:monopoles-130} holds.  The assignment of $\frc$'s gauge equivalence class to the solution $\frc_T$ defines a map from the space of gauge equivalence classes of solutions to \eqref{eq:sw} on $Y_T$ with $\en \leq \cale$ to the space of gauge equivalence classes of admissible solutions to \eqref{eq:sw} on $Y_\infty$ with $\en \leq \cale$.  This map is well defined for the following two reasons: First, there are but finitely many gauge equivalence classes of admissible solutions on $Y_\infty$ with $\en \leq \cale$ to choose from. Second, because of Lemma~\ref{lem:monopoles-110}, only one gauge equivalence class on $Y_\infty$ will obey the condition in the first bullet of \eqref{eq:monopoles-130} for any given $\frc_T$ if $\upvarepsilon < c_\cale^{-1}$.

Denote the map just defined by $\T$.  It does not depend on $\upvarepsilon$ for the following reason: If $\frc$ is an $\en \leq \cale$ admissible solution to \eqref{eq:sw} on $Y_\infty$ that meets the conditions in the first bullet of \eqref{eq:monopoles-130} for a given $\upvarepsilon < c_\cale^{-1}$, and if $\frc'$ meets the conditions for a given $\upvarepsilon' < \upvarepsilon$, then $\frc'$ meets the conditions for $\upvarepsilon$. This implies that $\frc'$ and $\frc$ must be gauge equivalent because (as noted in the preceding paragraph) there is only one gauge equivalence class on $Y_\infty$ that obeys the condition in the first bullet of \eqref{eq:monopoles-130} for any given solution $\frc_T$.

With $\T$ in hand, consider (out of turn) the third bullet of the lemma. To this end, suppose that $T$ is greater than $c_\cale^{-1}$ (so as to use the map $\T$), and suppose that $\frc_T$ is a solution to \eqref{eq:sw} on $Y_T$ with $\en \leq \cale$. Let $\LLf_T$ denote the $\frc_T$ version of the operator in \eqref{eq:linearized} and let $\frh$ denote an element in its domain with $L^2$ norm equal to 1. What follows is a consequence of Lemma~\ref{lem:monopoles-11} and Lemma~\ref{lem:monopoles-110}: Given $\delta > 0$, there exists $R \geq 1$ which depends only on $\delta$ and $\cale$ such that $\frh$ has $L^2$ norm less than $\delta$ where the distance to $M$ is greater than $R$. Let $\chi_R$ denote a smooth function on $Y_T$ mapping to $[0,1]$ that is zero where the distance to $Y_T$ is greater than $2R$ and 1 where the distance is less than $R$. This function can be chosen so that $|d\chi_R| \leq 10R^{-1}$. These bounds imply that
\begin{equation} \label{eq:monopoles-new-129}
||\LLf_T(\chi_R \frh)||_2 \leq ||\LLf_T \frh||_2 + c_\cale\delta^2R^{-1}.
\end{equation}
Now let $\frc = \T(\frc_T)$ and let $\LLf$ denote $\frc$'s version of the operator in \eqref{eq:linearized}. It follows from \eqref{eq:monopoles-130} that there is a map from $Y_\infty$ to $S^1$ (to be denoted by $g$) such that
\begin{equation} \label{eq:monopoles-new-130}
||\LLf(g\frh)||_2 \leq ||\LLf_T(\chi_R \frh)||_2 + c_R \upvarepsilon^2,
\end{equation}
with $c_R$ denoting a number that depends only on $\cale$ and $R$. The claim made by Lemma~\ref{lem:monopoles-115}'s third bullet follows directly from these last two inequalities.

The argument that $\T$ is 1-1 onto its image differs little from the argument that proves Lemma~\ref{lem:monopoles-18} when $T$ is large. Here is how it goes: Assume that $\T$ isn't a bijection to generate nonsense with a sequence $\{(A_i,\psi_i)\}_{i\in\{1,2,\dots\}}$ and $\{(A'_i,\psi'_i)\}_{i\in\{1,2,\dots\}}$ with $i$th member on $Y_{T(i)}$ such that $\{T(i)\}_{i\in\{1,2,\dots\}}$ diverges.  Assume that the primed and unprimed pairs are solutions to \eqref{eq:sw} with $\en \leq \cale$ and such that $\T$ maps them to the same equivalence class.  Having just established the third bullet of Lemma~\ref{lem:monopoles-115}, it is enough to prove an $\upvarepsilon$ slice lemma to play the role of Lemma~\ref{lem:monopoles-19} so as to generate nonsense using the argument for the proof of Lemma~\ref{lem:monopoles-18}.  The statement of the analogous slice theorem is the same except that in this case, the $i$th elements of the sequences $\{(A_i,\psi_i)\}_{i\in\{1,2,\dots\}}$ and $\{(A'_i,\psi'_i)\}_{i\in\{1,2,\dots\}}$ are on the corresponding $Y_{T(i)}$. The proof of this analog copies almost verbatim the proof of Lemma~\ref{lem:monopoles-19}. In fact, one need only replace ``\emph{the part of $Y_\infty$ with distance greater than $c_*$ from $N$}'' with ``\emph{the part of $Y_T$ with distance greater than $c_\cale$ from $M$}'' and replace ``\emph{$Y_\infty$}'' with ``\emph{the versions of $Y_{T(i)}$ with $T(i) \gg T$}''.  Otherwise, there is no essential change.

The argument that $\T$ is onto uses a cut-off function $\chi$ on $Y_\infty$ that is defined as in Step 4 of the proof of Lemma~\ref{lem:monopoles-19} for a $T > c_\cale$. Write an $\en < \cale$ solution $(A,\psi)$ on $Y_\infty$ on the part of $Y_\infty$ with distance greater than $c_\cale$ from $M$ as $A=\Ai + \aA$ and $\psi = (1-z)\psii + \eta$ with $z$ being real with norm less than $\frac{1}{100}$. Setting $(A_T,\psi_T) = (A,\psi)$ near $M$ and setting $(A_T,\psi_T) = (\Ai+(1-\chi\beta)\aA, (1-(1-\chi)z)\psii + (1-\chi)\eta)$ otherwise defines a pair (to be denoted by $\frc_T$) on $Y_T$ that comes close to solving \eqref{eq:sw}.  The error is bounded pointwise by $c_\cale r^{1/2} e^{-\sqrt{r}T/c_0}$ with the error having components given by $r^{-1/2}(*F_{A_T}-r\psi_T^\dagger \tau\psi_T)$ and $\mathcal{D}_{A_T}\psi$.  That this is so follows from Lemma~\ref{lem:monopoles-16}. One can then look for $\frh$ obeying an equation that has the form
\begin{equation} \label{eq:monopoles-new-131}
\LLf_T\frh + r^{1/2}\frh \ast \frh + \mathrm{error} = 0
\end{equation}
with $\LLf_T$ being the $\frc_T$ version of the operator in \eqref{eq:linearized} on $Y_T$ and with $\frh*\frh$ denoting a section whose components are quadratic functions of the entries of $\frh$. This is to say that $\frh*\frh$ is the image of $\frh\otimes\frh$ via a certain (canonical) vector bundle homomorphism from $\otimes_2 (i(T^*Y_T\oplus\R)\oplus\spb)$ to $i(T^*Y_T\oplus\R)\oplus\spb$. In particular, the norm of $\frh*\frh$ is bounded by $c_0|\frh|^2$ and that of its $A$-covariant derivative by $c_0|\nabla_A\frh||\frh|$.  (The precise form of $\frh*\frh$, except for it being quadratic in the components of $\frh$, is of no consequence in what follows.)  Now, the spectrum of $\LLf_T$ has distance greater than $\frac{1}{2}c^{-1}$ from $0$. This can be proved using the same argument that proves the third bullet point of Lemma~\ref{lem:monopoles-115}. Granted that this is so, then \eqref{eq:monopoles-new-131} is equivalent to the fixed point equation on the domain of $\LLf_T$:
\begin{equation} \label{eq:monopoles-new-132}
\frh = -\LLf_T^{-1}(r^{1/2}\frh \ast \frh + \mathrm{error}).
\end{equation}
This equation can be solved with the solution $\frh$ being small using the contraction mapping theorem when $T \geq c_\cale$ and $r > c_\cale$.
\end{proof}

\subsection{Instantons}
\label{ssec:instantons}

Throughout the rest of the appendix, we assume that $Y_T$ is as described in Section \ref{ssec:monopoles}, where $T$ is either a large number greater than $16$ or it is $\infty$. Of interest here are solutions to the Seiberg--Witten equations on $\R\times Y_T$ with the properties listed in Section \ref{sssec:instanton-sw}. Let $a_+$ and $a_-$ be contact 1-forms on $Y_T$ of the sort described in Section \ref{ssec:setup}. suppose that these 1-forms agree on $Y_T\ssm N$, and in the case when $N=M\cup N_T$ with $M$ and $N_T$ as in Section \ref{sssec:limit_at_infty}, assume further that they agree on $N_T$. This last assumption is only for convenience and can be violated if needs be with only notational consequences.

Introduce by way of notation $s$ to denote the Euclidean coordinate on the $\R$ factor of $\R \times Y_T$. Assume that there exists $s_0>1$ and a 1-form to be denoted by $a$ on $\R\times Y_T$ that is equal to $a_-$ where $s\leq -s_0$, equal to $a_+$ where $s\geq s_0$. Require that $a$ be independent of $s$ on $\R\times(Y_T\ssm N)$ and on $\R\times N_T$ in the case when $N=M\cup N_T$. We also assume that the 2-form $e^{2s}(ds\wedge(2a+\frac{\partial}{\partial s}a)+da)$ is a symplectic form on $\R\times Y_T$. Here $d$ denotes the exterior derivative along the $Y_T$ factor of $\R\times Y_T$. We use $\omega$ to denote the 2-form $ds\wedge(a+\frac{1}{2}\frac{\partial}{\partial s}a)+\frac{1}{2}da$.

To define a metric, let $\frg_-$ denote a metric that is defined by $a_-$ on $Y_T$ and let $\frg_+$ denote one that is defined by $a_+$ on $Y_T$ in the manner that is described in Section \ref{ssec:setup}. We shall assume that these two metrics agree on $Y_T\ssm N$ and on $N_T$ when $N=M\cup N_T$. The metric on $\R\times Y_T$ is taken to be the product metric $ds^2+\frg_-$ on the $s\leq -s_0$ part, to be the product metric $ds^2+\frg_+$ on the $s\geq s_0$ part, and to be the interpolating product metric on the part $\R\times (Y_T\ssm N)$. The metric on the rest of $\R\times Y_T$ is of the form $ds^2+\frg$ with $\frg$ being an $s\in\R$ dependent metric on $Y_T$ whose Hodge-star is chosen so that $\ast da=2a+\frac{\partial}{\partial s}a$ and is such that $\ast da$ has length $2$. The 2-form $\omega$ with this metric is self-dual and has norm $\sqrt{2}$. The metric should also be chosen so that the norms of the covariant derivatives of $a$, the curvature tensor, and the covariant derivatives of the curvature tensor are bounded by $T$-independent constants.
\subsubsection{The Seiberg--Witten equations}
\label{sssec:instanton-sw}
Supposing that $\spb^+$ is the associated self-dual spinor bundle for a $\Sc$ structure on $\R\times Y_T$, there is a splitting as in \eqref{eq:splitting} with the summands being the respective $+i$ and $-i$ eigenbundles for Clifford multiplication by $\frac{1}{2}\omega$, the $E$ summand being the $+i$ eigenbundle. The canonical $\Sc$ structure has $\spb^+$ splitting as in \eqref{eq:splitting} with $E=\underline{\C}$.

Fix $r>1$.  Also, fix a smooth, $s\in [-1,1]$ dependent $i\R$-valued $2$-form on $M$ with the following properties: Its norm and that of its first and second order covariant derivatives should be bounded by 1; it must also have compact support in an $s$-independent open set in the interior of $M$; and it must vanish when $s$ is near $1$ or $-1$. Use $\frp$ to denote this $s$-dependent family of 2-forms. This family of $2$-forms can (and will) be viewed equivalently as a single $2$-form on $\R\times M$ with support in a compact subset of $(-1,1)\times M$ that annihilates all vectors tangent to the $\R$ factor. This latter incarnation is also denoted by $\frp$. By way of a look ahead, the 2-form $\frp$ plays a role only in Proposition~\ref{prop:comp-64}, which is at the very end of this Appendix.

A pair $(A,\psi)$ of connection $A$ on $E$ and section $\psi$ of $\spb^+$ obeys the Seiberg-Witten equations on $\R\times Y_T$ when
\begin{equation}
\label{eq:instanton-sw}
F_A^+ = \frac{1}{2}r((\psi^\dagger \tau\psi) - i\omega) - \frac{1}{2}(F_{A_0}^++\frp^+)\mathrm{\ \ \ and\ \ \ } \mathcal{D}_A \psi=0
\end{equation}
where the notation is as follows: First, $F_A^+$ is the self-dual part of the curvature 2-form of the connection $A$. Likewise, $F_{A_0}^+$ is the self-dual part of $F_{A_0}$ (viewed as a 2-form on $\R \times Y_T$ that annihilates the vector field $\frac{\partial}{\partial t}$), and $\frp^+$ is the self-dual part of $\frp$ (viewed similarly).  Meanwhile, $\mathcal{D}_A$ denotes here the Dirac operator on $\R\times Y_T$ that is defined as follows: Having specified the Riemannian metric on $\R\times Y_T$, the definition of the Dirac operator on $\R\times Y_T$ requires only the choice of a Hermitian connection on the line bundle $\textit{det}(\spb)$. In the case of $\mathcal{D}_A$, such a connection is defined by the connection $A$ on $E$ and a certain canonical connection on the bundle $K^{-1}$ that appears in the $\R\times Y_T$ analog of \eqref{eq:splitting}. This canonical connection is defined with the help of a chosen unit section of the product line bundle $\underline{\C}$. This section is chosen once and for all, and once chosen, it is denoted by $\psii$. Let $\Ai$ again denote the product connection on $\underline{\C}$. There is a unique connection on $K^{-1}$ such that the latter with $\Ai$ define a connection on the line bundle $\textit{det}(\spb)$, giving a Dirac operator $\mathcal{D}_{\Ai}$ that annihilates $\psii$. This connection on $K^{-1}$ is used to define $\mathcal{D}_A$ also.
\subsubsection{The Bochner--Weitzenb\"ock formula}
\label{sssec:instanton-bw}

The formal linearization of \eqref{eq:instanton-sw} with the extra gauge fixing equation defines a first order differential operator mapping sections of the bundle $iT^*(\R\times Y_T) \oplus \spb^+$ to sections of the bundle $i(\Lambda^+\oplus \R)\oplus \spb^-$ with $\Lambda^+$ denoting the bundle of self-dual 2-forms and with $\spb^-$ denoting the bundle of anti-self dual spinors. This operator is denoted by $\LL$ and it is defined by the rule whereby the respective $i\Lambda^+$, $i\R$, and $\spb^-$ summands of $\LL(\fra,\eta)$ are
\begin{equation}
\label{eq:instanton-linearize}
\LL(\fra,\eta) = \left(\begin{array}{c}
(d\fra)^+ - r^{1/2}\left(\psi^\dagger \tau\eta + \eta^\dagger\tau\psi\right) \\
*d*\fra + r^{1/2}\left(\psi^\dagger \eta - \eta^\dagger \psi\right) \\
\mathcal{D}_A\eta + r^{1/2}\clm(\fra)\psi
\end{array}\right).
\end{equation}
The notation here uses $d$ to denote the exterior derivative on $\R\times Y_T$. We use $\Li$ in what follows to denote the version of \eqref{eq:instanton-linearize} that is defined by the canonical pair $(\Ai,\psii)$ from Section \ref{ssec:setup}.  Supposing that $\rmu\subset \R\times Y_T$ is an open set, there is the analog of \eqref{eq:monopoles-15} that reads
\begin{quote}
There exists a number $\kappa > 100$ with the following significance: Fix $r>\kappa$ and suppose that $\frh$ is a section of $iT^*\rmu\oplus\spb^+$ on $\rmu$ with compact support. Then
\begin{equation}
\label{eq:instanton-li-l21-bound}
||\Li\frh||_2{}^2 \geq (1-\kappa^{-1})\left(||\nabla_{\Ai}\frh||_2{}^2 + r||\frh||_2{}^2\right).
\end{equation}
\end{quote}
Here, $||\cdot||_2$ denotes the $L^2$ norm on $\rmu$. The notation with \eqref{eq:instanton-li-l21-bound} also has $\nabla_A$ denoting the covariant derivative that is defined by a given connection $A$ (in this case $\Ai$) and the Levi-Civita connection.
\subsubsection{Exponential decay}
\label{sssec:instanton-decay}
There is an almost verbatim analog of Lemma \ref{lem:monopoles-11} for the operator $\LL$ with almost word for word the same proof.  To set the background, suppose that $\pzee$ is a first order operator mapping sections of $iT^*\rmu \oplus \spb^+$ to $i(\Lambda^+\oplus \R) \oplus \spb^-$. Of interest in Lemma \ref{lem:instanton-21} are sections $\frh$ that obey the equation
\begin{equation}
\label{eq:instanton-lipluse}
\Li\frh + \pzee\frh = 0.
\end{equation}
The lemma uses the following notation: Suppose that $V\subset \rmu$ is an open set and that $\rho$ is a positive number. For each integer $k \in \{0,1,\dots\}$, let $V_k \subset V$ denote the set of points with distance $k\rho$ or more from each point in $\rmu\ssm V$.  Given a section $\frh$ of $iT^*\rmu\oplus \spb^+$, define $||\frh||_{*,k}$ as in \eqref{eq:monopoles-19}.

\begin{lemma}
\label{lem:instanton-21}
There exists $\kappa>1$ with the following significance: Suppose that $\rmu$ is an open set in $\R\times Y_T$ and that $\pzee$ is a first order differential operator defined on $\rmu$ mapping sections of $iT^*\rmu\oplus\spb^+$ to sections of $i(\Lambda^+\oplus\R)\oplus \spb^-$ that obeys
\[ ||\pzee\frh||_2{}^2 \leq \frac{1}{2}\left(||\nabla_{\Ai}\frh||_2{}^2 + r||\frh||_2{}^2\right) \]
when $\frh$ has compact support in $\rmu$. Fix $r>\kappa$ and suppose that $\frh$ obeys \eqref{eq:instanton-lipluse}. Fix $V\subset \rmu$ with compact closure. Then $||\frh||_{*,k} \leq \kappa||\frh||_{*,0} e^{-\sqrt{r}k\rho/\kappa}$.
\end{lemma}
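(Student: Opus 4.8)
\textbf{Proof proposal for Lemma \ref{lem:instanton-21}.}

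The plan is to imitate verbatim the proof of Lemma \ref{lem:monopoles-11}, which is the three-dimensional analog, the only substantive difference being that we now work on a four-dimensional open set $U\subset\R\times Y_T$ and use the elliptic inequality \eqref{eq:instanton-li-l21-bound} in place of \eqref{eq:monopoles-15}. First I would fix a smooth cutoff $\chi:\R\to[0,1]$ vanishing on $(-\infty,\tfrac18]$, equal to $1$ on $[\tfrac78,\infty)$, with derivative bounded by $c_0$. Letting ${\textsc d}$ be the distance function to $U\ssm V$ on $V$, set $\chi_k=\chi(2{\textsc d}/\rho-k)$ and $\varphi_k=\chi_k-\chi_{k+1}$, so that $\varphi_k$ is supported in $V_k\ssm V_{k+1}$ and $\sum_{k\geq1}\varphi_k=1$ on $V_1$. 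The derivative bound on $\chi$ gives $|d\varphi_k|\leq c_0/\rho$ with the understanding that consecutive $\varphi$'s overlap only with their immediate neighbors.

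Next I would localize: given $\frh$ obeying \eqref{eq:instanton-lipluse}, set $\frh_k=\varphi_k\frh$ and compute that $\frh_k$ satisfies an equation of the schematic form
\[
\Li\frh_k+e\frh_k=\rho^{-1}\left(z_{k-1}\frh_{k-1}+z_k\frh_k+z_{k+1}\frh_{k+1}\right),
\]
where the $z$'s are zeroth-order operators of norm at most $c_0$, the terms coming from the commutator of $d\varphi_k$ with the principal symbols of $\Li$ and of $e$; here one uses the hypothesis on $e$ to bound $|\mathfrak S_e|\leq c_0$. Writing $x_k=\|\frh_k\|_2$, taking $L^2$ norms of both sides, and invoking \eqref{eq:instanton-li-l21-bound} together with the smallness hypothesis on $e$ (which absorbs $\|e\frh_k\|_2$ into half of $\|\nabla_{\Ai}\frh_k\|_2^2+r\|\frh_k\|_2^2$), one gets
\[
x_k\leq c_0 r^{-1/2}\rho^{-1}(x_{k+1}+x_{k-1}).
\]
Provided $r^{1/2}\rho>c_0$ this rearranges to the second-order difference inequality
\[
-(x_{k+1}+x_{k-1}-2x_k)+c_0^{-1}r^{1/2}\rho\,x_k\leq0,
\]
whose standard maximum-principle/comparison analysis yields $x_k\leq c_0\,x_0\,e^{-\sqrt r\,k\rho/c_0}$. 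Finally, feeding this back into \eqref{eq:instanton-li-l21-bound} and the bound on $e$ converts the $L^2$ decay of $\frh_k$ into the claimed decay of $\|\frh\|_{*,k}$, with $\kappa$ chosen large enough to dominate all the $c_0$'s and to enforce $\rho>\kappa r^{-1/2}$; I would also note that the compactness of $\overline V$ guarantees $\frh\in L^2_1$ on a neighborhood of each $V_k$ so all the integrals are finite and the integration by parts defining \eqref{eq:instanton-li-l21-bound} is legitimate.

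The only point requiring genuine care — and the place where the dimension shift matters — is checking that the commutator terms really are controlled as above: in four dimensions $\Li$ involves $(\rmD\fra)^+$ and $*\rmD*\fra$ rather than the three-dimensional $*d\fra-d\phi$ and $*d*\fra$, so one must verify that the principal symbol $\mathfrak S_{\Li}(d\varphi_k)$ is still a bounded algebraic operator and that multiplying by $\varphi_k$ commutes with $\Li$ up to exactly such a term; this is routine since $\varphi_k$ is scalar and $\Li$ is first order, but it is the step I would write out in full. I expect no other obstacle: the solvability of the difference inequality, the absorption of $e$, and the passage from $L^2$ to $\|\cdot\|_{*,k}$ are all identical to the three-dimensional case. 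Hence the proof reduces to a line-by-line transcription, and I would simply say so and point to Lemma \ref{lem:monopoles-11}, indicating the substitution of \eqref{eq:instanton-li-l21-bound} for \eqref{eq:monopoles-15}.
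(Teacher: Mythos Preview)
Your proposal is correct and follows exactly the paper's approach: the paper's proof consists solely of the remark that it ``differs little from the proof of Lemma \ref{lem:monopoles-11},'' and your write-up is precisely that transcription with \eqref{eq:instanton-li-l21-bound} substituted for \eqref{eq:monopoles-15}. The symbol-commutator check you flag is indeed routine since $\Li$ is first order and $\varphi_k$ is scalar, so there is nothing further to add.
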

As noted above, the proof of this lemma differs little from the proof of Lemma \ref{lem:monopoles-11}.

There is an analog of this lemma that holds on the large $|s|$ parts of $\R\times Y_T$. To set the stage, suppose that $(A_+,\psi_+)$ is a pair of connection on $Y_T$ and section of $\spb$ over $Y_T$. The bundle $\spb$ can be identified in a canonical fashion with both $\spb^+$ and $\spb^-$ by using the vector field $\frac{\partial}{\partial s}$ on $\R\times Y_T$ to view the principal $SO(4)$-bundle of orthonormal frames on $\R\times Y_T$ as an associated bundle to the pull-back via the projection to $Y_T$ of the latter's orthonormal frame bundle.  Suppose now that the $(A_+,\psi_+)$ version of the operator $\LLf$ is such that $||\LLf\frh||_{Y,2} > z||\frh||_{Y,2}$ for all ${L^2}_1$ sections $\frh$ of $i(T^*Y_T\oplus \R)\oplus \spb$ with $z$ being a fixed positive number and $||\cdot||_{Y,2}$ indicating the $L^2$-norm on $Y_T$.  Note that the Bochner--Weitzenb\"ock formula for $\LLf$ in this case implies that
\begin{equation}
\label{eq:instanton-bw25}
||\LLf\frh||_{Y,2} > c_z^{-1} ||\nabla_A \frh||_{Y,2},
\end{equation}
with $c_z > 1$ being a number that depends on $z$, the norm of the curvature of $A$ and the norm of the covariant derivative $\nabla_{A_+}\psi_+$.  The next lemma views the pair $(A_+,\psi_+)$ as a pair of connection over $\R\times Y_T$ and section of $\spb^+$ over $\R\times Y_T$.

\begin{lemma}
\label{lem:instanton-22}
Given $(A_+,\psi_+)$ as just described, there exists $\kappa>1$ with the following significance: Let $\pzee$ denote a first order differential operator mapping sections of $iT^*(\R\times Y_T)\oplus \spb^+$ to $i(\Lambda^+\oplus\R)\oplus \spb^-$ with the property that $||\pzee\frh||_2 \leq \kappa^{-1}\left(||\nabla_A \frh||_2 + ||\frh||_2\right)$ for all sections $\frh$ with compact support.  Fix $s_0\in\R$ and suppose that $\frh$ is an $L^2$ section of $iT^*(\R\times Y_T) \oplus \spb^+$ on the $s>s_0$ or $s<-s_0$ part of $\R\times Y_T$ that obeys $\LL\frh + \pzee\frh=0$ with $\LL$ being defined by \eqref{eq:instanton-linearize} using the pair $(A_+,\psi_+)$.  Let $||\frh||_{\Delta,2}$ denote the $L^2$ norm of $\frh$ on the part of $\R\times Y_T$ where $s\in[s_0,s_0+1]$ or $s\in[-s_0-1,-s_0]$ as the case may be.  For $s>s_0+2$ or $s<-s_0-2$,
\[ ||\nabla_{A_+} \frh|_s||_{Y,2} + ||\frh|_s||_{Y,2} \leq \kappa e^{-|s|/\kappa} ||\frh||_{\Delta,2}. \]
\end{lemma}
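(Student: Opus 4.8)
The plan is to derive Lemma~\ref{lem:instanton-22} as a consequence of Lemma~\ref{lem:instanton-21} together with the spectral bound hypothesis on $\LL_{(A,\psi)}$, by exploiting the product structure of $\R \times Y_T$ on the region $|s| > 1$ where $(A,\psi)$ is pulled back from $Y_T$. First I would set up the standard difference-inequality machinery: for integers $k \geq 0$, let $x_k$ denote the $L^2$-norm of $\varphi_k \frh$, where $\{\varphi_k\}$ is a partition of unity adapted to the slabs $s \in [k+1, k+2]$ (respectively $s \in [-k-2, -k-1]$), exactly as in the proof of Lemma~\ref{lem:monopoles-11}. Cutting off the equation $\LL\frh + e\frh = 0$ produces a schematic equation $\LL(\varphi_k \frh) + e(\varphi_k \frh) = \mathfrak{S}_{\LL}(d\varphi_k)\frh + \mathfrak{S}_e(d\varphi_k)\frh$, and since the $d\varphi_k$ are supported where at most three consecutive cutoffs are nonzero, the right side is controlled by $c_0(x_{k-1} + x_k + x_{k+1})$.

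The key point where this differs from Lemma~\ref{lem:instanton-21} is that I cannot use $\Li$ and \eqref{eq:instanton-li-l21-bound} directly, because on the $|s|>1$ region the relevant operator is the one defined by $(A,\psi)$ rather than the canonical pair; the small-ball exponential decay of \eqref{eq:monopoles-15}-type estimates is not available. Instead, on each slab $s \in [k+1,k+2]$ the operator $\LL_{(A,\psi)}$ is $s$-independent and splits as $\frac{\partial}{\partial s} + \LL_{Y}$ (after the canonical identification of $\spb^\pm$ with $\spb$ via $\partial_s$), where $\LL_Y = \LL_{(A,\psi)}$ on $Y_T$ is the operator satisfying $\|\LL_Y \frg\|_{Y,2} > z\|\frg\|_{Y,2}$. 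The hypothesis that $e$ has operator norm $\leq \kappa^{-1}(\|\nabla_A \cdot\|_2 + \|\cdot\|_2)$ then lets me absorb $e$ into $\LL$ at the cost of shrinking $z$ to $z/2$, using \eqref{eq:instanton-bw25}. Testing the cutoff equation in $L^2$ and using the spectral gap $z/2$ for $\LL_Y$ — together with the elementary one-dimensional spectral estimate for $\frac{\partial}{\partial s} + \LL_Y$ on a slab, or equivalently by squaring and integrating by parts in $s$ — yields $x_k \leq c_z r^{-1/2}(x_{k-1} + x_{k+1})$ for $r$ large (here the factor $r^{1/2}$ enters because $z \gtrsim r^{-1/2}$ from the monopole theory, but in fact the cleanest route is to rescale and observe that $\LL_Y$ has a gap of size $c^{-1}$ bounded below independently of slab, so one gets $x_k \leq c_0 r^{-1/2}(x_{k-1}+x_{k+1})$ directly from the $r$-weighted norm). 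This is the familiar second-order difference inequality $-(x_{k+1} + x_{k-1} - 2x_k) + c_0^{-1}r^{1/2} x_k \leq 0$, which forces $x_k \leq c_0 x_0 e^{-\sqrt{r}k/c_0}$ by the maximum principle for difference equations, precisely as in \eqref{eq:monopoles-110}--\eqref{eq:monopoles-112}.

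Converting the $L^2$ decay of the $x_k$ into the pointwise-in-$s$, $L^2$-on-$Y_T$ bound $\|\nabla_A \frh|_s\|_{Y,2} + \|\frh|_s\|_{Y,2} \leq \kappa e^{-\sqrt{r}|s|/\kappa}\|\frh\|_{\Delta,2}$ is then a routine application of interior elliptic regularity (as in the proof of Lemma~\ref{lem:monopoles-14}): on a unit slab centered at $s$, the equation $\LL\frh + e\frh = 0$ plus the $L^2$ bound on the adjacent slabs gives an ${L^2}_1$ bound, and a trace/Sobolev estimate in the $s$-variable converts this to the restriction bound on the slice $\{s\}\times Y_T$. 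One subtlety to handle: the case distinction between $s > 0$ and $s < 0$, where the relevant background pair is $(A_+,\psi_+)$ versus $(A_-,\psi_-)$ — but since the hypothesis is phrased for a single $(A,\psi)$ and the estimate is stated separately for $s > 3$ and $s < -3$, this is just bookkeeping and the argument is identical on each side. The main obstacle I anticipate is getting the spectral-gap absorption of $e$ into $\LL_Y$ cleanly with the correct $r$-dependence: one must be careful that the hypothesis $\|e\frh\|_2 \leq \kappa^{-1}(\|\nabla_A\frh\|_2 + \|\frh\|_2)$ (with no $r^{1/2}$ weight on $\|\frh\|_2$) is still enough, given that the spectral lower bound $z$ from Lemma~\ref{lem:monopoles-12} is only $\sim r^{-1/2}$; this is why the statement of Lemma~\ref{lem:instanton-22} takes $\kappa$ to depend on $z$, the curvature of $A$, and $\nabla_A\psi$, and with that dependence the absorption goes through after possibly replacing $z$ by $z/2$.
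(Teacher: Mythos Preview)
Your approach is essentially the same as the paper's: the paper's entire proof is the single sentence ``The proof of this lemma differs little from the proof of Lemma~\ref{lem:instanton-21},'' and Lemma~\ref{lem:instanton-21} in turn is stated to have almost word-for-word the proof of Lemma~\ref{lem:monopoles-11}, i.e., exactly the cutoff-and-difference-inequality argument you outline. Your extra step of passing from slab $L^2$ bounds to slice bounds via interior elliptic regularity is needed for the stated conclusion and is implicit in the paper's one-line proof.

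One small comment: your discussion of the $r^{-1/2}$ factor in the recursion is muddled. With unit-width slabs and the hypothesis $\|\LL_Y\frg\|_{Y,2} > z\|\frg\|_{Y,2}$, the recursion you actually obtain is $x_k \leq c_z z^{-1}(x_{k-1}+x_{k+1})$, giving decay $e^{-|s|/c_z}$ with $c_z$ depending on $z$; the $\sqrt{r}$ in the exponent only appears because in the applications $z$ is of order $\sqrt{r}$ (via Lemma~\ref{lem:monopoles-12}) and $\kappa$ is allowed to depend on $(A,\psi)$ and $z$. This does not affect the validity of your argument, only the bookkeeping of constants.
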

The proof of this lemma differs little from the proof of Lemma \ref{lem:instanton-21}.
\subsubsection{Fredholm property of $\LL$}
\label{sssec:instanton-fredholm}
Lemmas \ref{lem:instanton-21} and \ref{lem:instanton-22} have implications for the operator $\LL$ when $(A,\psi)$ is a pair of connection on $E$ and $\psi$ is a section of $\spb^+$ over $\R\times Y_T$ with asymptotics as described below. To set the notation, $(A_+,\psi_+)$ and $(A_-,\psi_-)$ denote admissible pairs of connections and sections of $\spb$ on $Y_T$.

\begin{itemize}\leftskip-0.25in
\item There is a gauge transformation where $s\geq1$ that changes $(A,\psi)$ to $(A_+ + \ahat_+, \psi_++\sigma_+)$ with $\frh_+ = (r^{-1/2}\ahat_+,\sigma_+)$ obeying $\displaystyle \int_{[1,\infty)\times Y_T} \left(|\nabla_{A_+} \frh_+|^2 + |\frh_+|^2\right) <\infty$.
\item There is a gauge transformation where $s\leq-1$ that changes $(A,\psi)$ to $(A_-+\ahat_-, \psi_-+\sigma_-)$ with $\frh_- = (r^{-1/2}\ahat_-,\sigma_-)$ obeying $\displaystyle \int_{(-\infty,-1]\times Y_T} \left(|\nabla_{A_-} \frh_-|^2 + |\frh_-|^2\right) <\infty$.
\item There is a gauge transformation on $\R\times (Y_T\ssm N)$ that changes $(A,\psi)$ to $(\Ai+\ahat_0,\psii+\sigma_0)$ with $\frh_0 = (r^{-1/2}\ahat_0,\sigma_0)$ obeying $\displaystyle \int_{[-2,2]\times(Y_T\ssm N)} \left(|\nabla_{\Ai} \frh_0|^2+|\frh_0|^2\right) < \infty$.
\end{itemize}
\begin{equation}
\label{cond:instanton-26}
\end{equation}

The next lemma assumes that Condition \ref{cond:instanton-26} is obeyed. The lemma uses $\LL^\dagger$ to denote the formal $L^2$-adjoint of the operator $\LL$.

\begin{lemma}
\label{lem:instanton-fredholm}
Suppose that $(A,\psi)$ is as just described with $(A_+,\psi_+)$ and $(A_-,\psi_-)$ being admissible pairs on $Y_T$ with respective versions of the operator $\LLf$ in \eqref{eq:linearized} that have trivial $L^2$ kernel. The operator $\LL$ defined by $(A,\psi)$ defines a Fredholm operator from the space of ${L^2}_1$ sections of $iT^*(\R\times Y_T) \oplus \spb^+$ to the space of $L^2$ sections of $i(\Lambda^+ \oplus \R) \oplus \spb^-$.  The cokernel of $\LL$ consists of ${L^2}_1$ sections of $i(\Lambda^+ \oplus \R) \oplus \spb^-$ that are in the kernel of the operator $\LL^\dagger$.
\end{lemma}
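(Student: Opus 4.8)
The plan is to establish Lemma~\ref{lem:instanton-fredholm} by the standard parametrix-patching argument for first-order elliptic operators on cylindrical-end manifolds, using the exponential decay estimates already in hand. The three regions of $\R\times Y_T$ in Condition~\ref{cond:instanton-26} — the two cylindrical ends $\{s>1\}$ and $\{s<-1\}$, and the ``side'' region $\R\times(Y_T\ssm N)$ together with $\R\times N_T$ when $N=M\cup N_T$ — are each governed by a translation-invariant (or nearly so) model operator whose mapping properties are understood, so the Fredholm property should be deduced by gluing local inverses. I would first record that $\LL$ is a genuine elliptic operator with injective symbol (this is immediate from the form of \eqref{eq:instanton-linearize}, since it is a compact perturbation of $\rmD$ coupled to the Dirac operator), so that interior elliptic estimates give the semi-Fredholm property on any compact piece; the only issue is control at infinity in the $s$-direction and along the $\tau$-end of $Y_T$ when $T=\infty$.

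The key steps, in order, would be: (1) On the end $\{s>s_0\}$, where the metric and contact form are $s$-independent and $(A,\psi)$ is asymptotic in the $C^1$ (indeed, by Condition~\ref{cond:instanton-26}, ${L^2}_1$) sense to the admissible pair $(A_+,\psi_+)$, identify $\LL$ with $\frac{\partial}{\partial s}+\LL_{(A_+,\psi_+)}$ modulo a term that decays in $s$; since the $Y_T$-operator $\LL_{(A_+,\psi_+)}$ has trivial $L^2$ kernel by hypothesis, invoke Lemma~\ref{lem:monopoles-12} (and its analog, using the lower bound \eqref{eq:instanton-bw25}) to conclude it is invertible with a spectral gap, hence $\frac{\partial}{\partial s}+\LL_{(A_+,\psi_+)}$ is invertible on the half-cylinder with no decay/growth weight needed. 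Do the same on $\{s<-s_0\}$ with $(A_-,\psi_-)$. (2) On $\R\times(Y_T\ssm N)$ (and $\R\times N_T$), $(A,\psi)$ is $L^2$-close to $(\Ai,\psii)$ and $s$-independent, so $\LL=\Li+e$ with $e$ a small (in the sense of \eqref{eq:instanton-li-l21-bound}'s hypothesis) lower-order perturbation, and Lemma~\ref{lem:instanton-21} together with \eqref{eq:instanton-li-l21-bound} gives an a priori lower bound $\|\LL\frh\|_2 \gtrsim \|\nabla_{\Ai}\frh\|_2 + r^{1/2}\|\frh\|_2$ for $\frh$ supported in this region, hence an inverse there as well. (3) Patch: choose a partition of unity subordinate to (compact core) $\cup$ (the three model regions), build an approximate inverse $Q$ by summing the cut-off local inverses, and check that $\LL Q - \mathrm{Id}$ and $Q\LL - \mathrm{Id}$ are compact (they are supported near the finitely many cut-off regions and are lower order there, so Rellich applies using the exponential decay from Lemmas~\ref{lem:instanton-21} and~\ref{lem:instanton-22}). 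This yields the Fredholm statement. (4) Finally, the cokernel description: $\LL$ being Fredholm, $\mathrm{coker}(\LL)\cong\ker(\LL^\dagger)$, and one checks by the same decay estimates (applied to $\LL^\dagger$, which has the same structural form) that any $L^2$ element of $\ker(\LL^\dagger)$ is automatically in ${L^2}_1$ — indeed decays exponentially on all ends — so it is a classical solution; this is elliptic regularity plus the weighted estimates.

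The main obstacle I anticipate is Step~(2)–(3) in the case $T=\infty$, where $Y_\infty\ssm N$ itself has a noncompact end (the $\tau\to\infty$ direction $[0,\infty)_\tau\times\R_t\times\Gamma$), so $\R\times(Y_\infty\ssm N)$ is noncompact in two independent directions and one must ensure the exponential-decay estimate of Lemma~\ref{lem:instanton-21} genuinely controls $\frh$ uniformly out to the corner at $(s,\tau)=(\infty,\infty)$. The resolution is that the estimate \eqref{eq:instanton-li-l21-bound} is a bound with a fixed positive constant times $r\|\frh\|_2^2$ with \emph{no} reference to the geometry of the end — it only uses the Bochner--Weitzenb\"ock formula for $\Li$ and the bounded-curvature hypothesis from Section~\ref{ssec:setup}, both of which hold uniformly on all of $\R\times Y_T$ — so $\Li$ is actually invertible on $L^2$ of the whole side region without any weight, and the perturbation $e$ is small there by Condition~\ref{cond:instanton-26}. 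Thus the corner causes no trouble once one commits to the unweighted $L^2$ setup; the compactness of the patching errors then follows because those errors are supported in a neighborhood of the compact core together with the finitely many gluing collars, where ordinary Rellich suffices. I would note that this is essentially the argument of Lemma~\ref{lem:monopoles-12} promoted to the cylinder $\R\times Y_T$, with Lemmas~\ref{lem:instanton-21} and~\ref{lem:instanton-22} playing the roles that Lemma~\ref{lem:monopoles-11} played there, so the write-up can be kept brief by citing those.
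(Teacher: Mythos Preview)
Your proposal is correct and takes essentially the same approach as the paper: both use the a priori lower bounds on $\|\LL\frh\|_2$ coming from \eqref{eq:instanton-li-l21-bound} on the $Y_T\ssm N$ side and from the trivial-kernel hypothesis on the $|s|\to\infty$ ends, combined with Rellich on the compact core, to obtain the Fredholm property and the cokernel description. The only cosmetic difference is that you phrase the argument in terms of patching local parametrices, whereas the paper works directly with the a priori estimates and cut-offs; these are equivalent formulations of the standard cylindrical-end Fredholm argument.
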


\begin{proof}
Lemma \ref{lem:instanton-22} says that the semi-norm $\frh \to ||\LL\frh||_2$ is bounded from below by $c_0^{-1}r^{1/2}||\frh||_2$ when $\frh$ is supported where $t \gg 1$ on $Y_\infty$, and the lack of a kernel for the $|s|\to\infty$ versions of $\LL$ imply that this semi-norm is also bounded from below by a multiple of $||\frh||_2$ when $\frh$ has support where $|s|\gg 1$. These bounds are used with suitable cut-off functions to prove that the kernel is finite dimensional. They are also used with the Rellich lemma (for a compact domain) to prove that the cokernel is also finite dimensional. The Rellich lemma tells us that sections that are in the cokernel are smooth. One can then use cut-off functions and integration by parts to prove that elements in the cokernel are annihilated by the $L^2$ adjoint of $\LL$ and vice-versa.
\end{proof}
\subsubsection{The behavior of solutions as $|s|\to\infty$ or as $t\to\infty$}
\label{sssec:instanton-behavior}
A solution to \eqref{eq:instanton-sw} is said below to be an \emph{instanton} solution when Condition \ref{cond:instanton-26} is met with $(A_+,\psi_+)$ and $(A_-,\psi_-)$ being admissible solutions to the respective $a_+$ and $a_-$ versions of \eqref{eq:sw} on $Y_T$. I assume in what follows that both the $(A_+,\psi_+)$ and $(A_-,\psi_-)$ versions of the operator $\LLf$ in \eqref{eq:linearized} have trivial kernel.

The first step to analyzing solutions is to prove an assertion to the effect that $(A,\psi)$ decays exponentially fast to $(A_+,\psi_+)$ on the respective $s\gg 1$ and $s\ll -1$ parts of $\R\times Y_T$, and to $(\Ai,\psii)$ on the $t\gg 1$ part in the case of $\R\times Y_\infty$. The following lemma summarizes the story. The lemma uses $\Delta$ to denote the distance on $\R \times Y_\infty$ to $\R\times M$.

\begin{lemma}
\label{lem:instanton-24}
Suppose that $(A,\psi)$ is an instanton solution to \eqref{eq:instanton-sw} in $\R\times Y_T$ as described in Lemma~\ref{lem:instanton-fredholm}.
\begin{itemize}\leftskip-0.25in
\item The gauge transformations on $[1,\infty)\times Y_T$ and on $(-\infty,-1]\times Y_T$ from the respective first two bullets of \eqref{cond:instanton-26} can be chosen so that the corresponding elements $\frh_+$ and $\frh_-$ obey $\displaystyle \lim_{s\to\pm\infty} e^{|s|/\kappa} |\frh_\pm| = 0$, with $\kappa>1$ depending on $(A_+,\psi_+)$ and on $(A_-,\psi_-)$.

\item In the case of $Y_\infty$, the gauge transformation in the third bullet of \eqref{cond:instanton-26} can be chosen on $\R\times (Y_\infty\ssm M)$ so that $\frh_0$ obeys $\displaystyle \lim_{\Delta\to\infty} e^{\sqrt{r}\Delta/\kappa}|\frh_0| = 0$ with $\kappa\geq 1$ being independent of $(A_+,\psi_+)$ and $(A_-,\psi_-)$.

\end{itemize}
\end{lemma}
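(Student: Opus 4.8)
\textbf{Proof proposal for Lemma \ref{lem:instanton-24}.}

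The plan is to treat the three regions --- $s\gg 1$, $s\ll -1$, and (for $Y_\infty$) $t\gg 1$ --- by the same two-stage mechanism that was used for the monopole equations in Section \ref{sssec:behavior}: first a pointwise decay estimate for the spinor using the Bochner--Weitzenb\"ock formula and the maximum principle, and then a linearized exponential-decay argument via Lemma \ref{lem:instanton-22} (or Lemma \ref{lem:instanton-21} in the $t$-direction) applied in a good gauge. I would start with the large-$|s|$ ends, say $s\gg 1$. Since $(A,\psi)$ is an instanton solution, Condition \ref{cond:instanton-26} already gives a gauge on $[s_0,\infty)\times Y_T$ writing $(A,\psi)=(A_+ + \ahat_+,\psi_+ + \sigma_+)$ with $\frh_+=(\ahat_+,\sigma_+)$ having $L^2_1$-norm on slices $\{s\}\times Y_T$ tending to $0$. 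The pair $\frh_+$ then satisfies an equation of the schematic form $\LL_+\frh_+ + r^{1/2}\frh_+\ast\frh_+ = 0$, where $\LL_+$ is the operator of \eqref{eq:instanton-linearize} built from the translation-invariant pair $(A_+,\psi_+)$; when $|\frh_+|$ is small the nonlinear term is absorbed into a perturbation $e$ obeying the smallness hypothesis of Lemma \ref{lem:instanton-22}. Since $(A_+,\psi_+)$ is admissible with trivial $L^2$-kernel of its $\LL$, Lemma \ref{lem:monopoles-12} supplies the spectral gap $z>0$ feeding \eqref{eq:instanton-bw25}, so Lemma \ref{lem:instanton-22} applies verbatim and yields $\|\nabla_{A_+}\frh_+|_s\|_{Y,2} + \|\frh_+|_s\|_{Y,2}\le \kappa e^{-\sqrt r |s|/\kappa}\|\frh_+\|_{\Delta,2}$ for $s>3$; the constant $\kappa$ depends only on $(A_\pm,\psi_\pm)$ and on $r$. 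The identical argument on $(-\infty,-s_0]\times Y_T$ handles the $s\ll -1$ end. Converting the $L^2_1$-slice bound to a pointwise bound with the stated exponential rate is then routine elliptic bootstrapping inside unit $s$-balls, exactly as in the passage from Lemma \ref{lem:monopoles-11} to Lemma \ref{lem:monopoles-14}; this is where the explicit $\kappa$ in the first bullet comes from.

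For the second bullet, the region of interest is $\R\times(Y_\infty\ssm M)$, where the contact form is $s$-independent and the Reeb field has no closed orbits; the relevant distance is $\Delta=\operatorname{dist}(\cdot,M)$ on $Y_\infty$, so the decay is in $\Delta$ rather than in $s$. Here I would first invoke the pointwise bounds \eqref{eq:monopoles-114}--\eqref{eq:monopoles-115} adapted to $\R\times Y_\infty$ (the proof uses only the maximum principle and the Bochner--Weitzenb\"ock formula for the four-dimensional Dirac operator, together with the fact that $\operatorname{dist}(\cdot,M)$ has constant nonzero angle with the Reeb vector field on $Y_\infty\ssm M$, precisely as in Lemma \ref{lem:monopoles-16}) to conclude that where $\Delta\gg r^{-1/2}$ one has $|1-|\alpha||$, $|\beta|$, and $r^{-1/2}|\nabla_A\alpha|$ all bounded by $e^{-c_0^{-1}\sqrt r\,\Delta}$. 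Using the gauge transformation $u=|\alpha|^{-1}\bar\alpha$ as in Section \ref{sssec:behavior}, write $(A,\psi)=(\Ai+\ahat_0,\psii+\sigma_0)$ on this region, so that $\frh_0=(\ahat_0,\sigma_0)$ obeys $\Li\frh_0 + e\frh_0 = 0$ with $e$ satisfying \eqref{eq:monopoles-16}. Then Lemma \ref{lem:instanton-21}, applied with $V$ the part of $\R\times(Y_\infty\ssm M)$ where $\Delta$ is large and $\rho$ a multiple of the injectivity radius, gives $\|\frh_0\|_{*,k}\le \kappa\|\frh_0\|_{*,0}e^{-\sqrt r k\rho/\kappa}$, which after the standard $C^0$-bootstrap becomes $\lim_{t\to\infty}e^{\sqrt r\Delta/\kappa}|\frh_0| = 0$ with $\kappa$ independent of $(A_\pm,\psi_\pm)$ as claimed (the geometry near $M$ is fixed, whereas the $|s|\to\infty$ ends see the data $(A_\pm,\psi_\pm)$, which is why the two $\kappa$'s differ in their dependence).

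The main obstacle I anticipate is organizational rather than deep: one must be careful that the three gauges --- on $[s_0,\infty)\times Y_T$, on $(-\infty,-s_0]\times Y_T$, and on $\R\times(Y_\infty\ssm M)$ --- are mutually compatible and that the perturbation term $e$ in each case genuinely satisfies the smallness hypothesis of the relevant decay lemma. This requires knowing a priori that $|\frh_\pm|$ and $|\frh_0|$ are uniformly small on the large-$|s|$ and large-$\Delta$ loci, which is exactly where the pointwise bounds of Section \ref{sssec:behavior} and the no-closed-Reeb-orbit hypothesis on $Y_\infty\ssm M$ enter; once that is in hand, the decay lemmas are black boxes. A secondary subtlety is that on the four-manifold one needs the spectral gap of the three-dimensional operators $\LL_\pm$ (from Lemma \ref{lem:monopoles-12}, using that $(A_\pm,\psi_\pm)$ have trivial $L^2$-kernel) in order to invoke Lemma \ref{lem:instanton-22}; this is assumed in the hypothesis of the lemma, so no new work is needed there. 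Everything else is a transcription of the monopole-case arguments to $\R\times Y_T$, with $\rmD = ds\wedge\partial_s + d$ replacing $d$.
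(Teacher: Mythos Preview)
Your treatment of the second bullet is essentially the paper's: take the gauge $u=|\alpha|^{-1}\bar\alpha$ so the $E$-component of $\psi$ is real, use the Weitzenb\"ock formula for $\nabla_A^\dagger\nabla_A\psi$ to write the middle component of $\Li\frh_0$ as a small first-order operator $e_\ast(\frh_0)$ (the paper's \eqref{eq:instanton-210}), and then feed $\Li\frh_0 + e\frh_0 = 0$ into Lemma~\ref{lem:instanton-21}. One small point: the bounds \eqref{eq:monopoles-114}--\eqref{eq:monopoles-115} you invoke are for solutions of the three-dimensional equations \eqref{eq:sw}, not for slices of a four-dimensional instanton, so they need a separate justification here.

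The first bullet, however, has a genuine gap, and you underestimate it by calling the gauge issue ``organizational''. You claim that in the gauge from Condition~\ref{cond:instanton-26}, $\frh_+$ obeys $\LL_+\frh_+ + r^{1/2}\frh_+\ast\frh_+ = 0$ with $\LL_+$ the four-dimensional operator \eqref{eq:instanton-linearize}. But \eqref{eq:instanton-sw} only controls the $i\Lambda^+$ and $\spb^-$ components of $\LL_+\frh_+$; the middle $i\underline{\R}$-component $\ast\rmD\ast\fra + r^{1/2}(\psi_+^\dagger\eta - \eta^\dagger\psi_+)$ is a gauge-fixing condition and has no reason to be small in an arbitrary gauge, so Lemma~\ref{lem:instanton-22} does not apply as stated. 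The $|\alpha|^{-1}\bar\alpha$ trick is unavailable here because $(A_+,\psi_+)\neq(\Ai,\psii)$ and $\alpha_+$ can vanish. The paper takes a different route: first pass to temporal gauge (parallel transport along $\partial_s$ so $\ahat_+$ has no $ds$-component), then impose the three-dimensional slice condition \eqref{eq:instanton-211} on each $\{s\}\times Y_T$. The instanton equations then become the flow equation \eqref{eq:instanton-212}, $\Pi(\partial_s\frh) + \LL_+\frh + \Pi(\frh\#\frh) = 0$, with $\LL_+$ now the \emph{three}-dimensional operator of \eqref{eq:linearized} restricted to the slice subspace $\bbv$; the exponential decay in $|s|$ is then obtained via the Kronheimer--Mrowka arguments of \cite[Chapters~13.5--13.6]{kmbook}, not via Lemma~\ref{lem:instanton-22}. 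Finally, the paper proves the second bullet \emph{first}: when $T=\infty$ the slices $\{s\}\times Y_\infty$ are non-compact, and the $\Delta$-decay from the second bullet is precisely what justifies the integrations by parts needed in the first-bullet flow argument.
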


\begin{proof}
The proof has eight parts. The second bullet is treated first in Parts 1--6.
\begin{pt3}
This part makes some observations about \eqref{cond:instanton-26} that are used (sometimes implicitly) in the subsequent proof. The first point to make is that neither $\frh_+$, $\frh_-$ nor $\frh_0$ are gauge invariant. This is to say that any changing $(A,\psi)$ to $(A-g^{-1}dg,g\psi)$ with $g$ being a map to $S^1$ will change $\frh_+$, $\frh_-$ and/or $\frh_0$. Likewise, changing $(A_+,\psi_+)$ and/or $(A_-,\psi_-)$ by a gauge transformation will change $\frh_+$ and/or $\frh_-$. Nonetheless, certain gauge invariant functions and differential forms that are constructed from $(A,\psi)$ (or $(A_+,\psi_+)$ or $(A_-,\psi_-)$) can be bounded in terms of $\frh_+$, $\frh_-$, $\frh_0$ and their derivatives, The examples that follow are directly relevant:

\noindent\textsc{The curvature:} The curvature $F_A$ where $\Delta>1$ is $r^{1/2}$ times the exterior derivative of the 1-form component of $\frh_0$. Thus, its norm is bounded by $r^{1/2}|\nabla_{{\Ai}}\frh_0|$. Likewise, the differences $F_A-F_{A_+}$ where $s>1$ and $F_A-F_{A_-}$ where $s<-1$ are (respectively) $r^{1/2}$ times the exterior derivatives of the 1-form components of $\frh_+$ and $\frh_-$. Therefore, the norms of these differences are bounded by $r^{1/2}|\nabla_{A_+}\frh_+|$ and $r^{1/2}|\nabla_{A_-}\frh_-|$. 

\noindent\textsc{The norm of} $\psi$: The norm of $|\psi|-1$ where $\Delta>1$ is bounded by the norm of the $\spb^+$ component of $|\frh_0|$. By the same token, the norm of $|\psi|-|\psi_+|$ where $s>1$ and the norm of $|\psi|-|\psi_-|$ where $s<-1$ are bounded respectively by the norms of $\frh_+$ and $\frh_-$.

To state the next example, write the spinor bundle $\spb^+$ as $E\oplus E\otimes K^{-1}$ (analogous to \eqref{eq:splitting}) and then write $\psi$ accordingly as $(\alpha,\beta)$.

\noindent\textsc{The norms of $\nabla_A\alpha$ and $\nabla_A\beta$:} The functions $\nabla_A\alpha$ and $\nabla_A\beta$ where $\Delta>1$ are bounded respectively by $|\nabla_{\Ai}\frh_0|+r^{1/2}|\frh_0|^2$ and $|\nabla_{\Ai}\frh_0|+c_0|\frh_0|+r^{1/2}|\frh_0|^2$. By the same token, the norms of $|\nabla_A\alpha|-|\nabla_{A_+}\alpha_+|$ and $|\nabla_A\beta|-|\nabla_{A_+}\beta_+|$ where $s>1$ are bounded by $|\nabla_{A_+}\frh_+|+r^{1/2}|\frh_+|^2$ and $|\nabla_{A_+}\frh_+|+c_0|\frh_0|+r^{1/2}|\frh_+|^2$; and there are analogous inequalities with $+$ changed to $-$ where $s<-1$.

The examples with \eqref{cond:instanton-26} imply that the functions $|F_A|$, $|\alpha|-1$, $|\beta|$, $|\nabla_A\alpha|$, and $\nabla_A\beta$ have finite $L^2$ norm on any domain of the form $I\times Y_\infty$ with $I\subset\R$ being an interval of length $1$ (and that these $L^2$ norms are bounded independently of $I$). The examples given above and \eqref{cond:instanton-26} also imply that the difference between these functions and their $(A_+,\psi_+)$ analogs have finite $L^2$ norm on the $s>1$ part of $\R\times Y_\infty$ (and likewise for $\R\times Y_T$ with $T<\infty$). And, the difference between these functions and their $(A_-,\psi_-)$ analogs have finite $L^2$ norm on the $s<-1$ part of $\R\times Y_\infty$ (and likewise for $\R\times Y_T$ with $T<\infty$). (A dimension $4$ Sobolev inequality is used here to bound the $L^4$ norms of $|\frh_\pm|$ and $|\frh_0|$ by $c_0$ times their $L^2_1$ norms.)
\end{pt3}
\begin{pt3}
The argument for the second bullet starts by writing the spinor bundle $\spb^+$ as $E\oplus E\otimes K^{-1}$ and then writing $\psi$ with respect to this splitting as $(\alpha,\beta)$. Using this notation, the respective $E$ and $E\otimes K^{-1}$ parts of the identity $\diracop_A^\dagger\diracop_A\psi=0$ can be written as 
\begin{itemize}\leftskip-0.35in
\item ${\nabla_A}^{\dagger}\nabla_A\alpha+r(|\alpha|^2-1+|\beta|^2)\alpha+c_0\alpha+c_1\nabla_A\beta+c_2\beta=0$.
\item ${\nabla_A}^{\dagger}\nabla_A\beta+r(|\alpha|^2+1+|\beta|^2)\beta+c_3\nabla_A\beta+c_4\beta+c_5\nabla_A\alpha \,+\,c_6\alpha=0$.
\end{itemize}
\begin{equation}
\label{cond:instanton-27}
\end{equation}
With regards to the notation, what is denoted by $c_0(F_{A_0}+\frp)$ is an endomorphism of $E$ that is linear in the curvature of $A_0$ and the perturbing form $\frp$. Of particular importance is that this vanishes where $A_0$ is flat and $\frp$ is zero on $Y_T\smallsetminus N$ for example. Meanwhile, $\{c_k\}_{k=1,\dots,6}$ are $(A,\psi)$-independent and $r$-independent (and $s$-independent) sections of vector bundles over $\R\times Y_\infty$ that are bounded with bounded derivatives.  (The endomorphism $c_6$ is also linear in the curvature of $A_0$ and $\frp$.)

The equations in \eqref{cond:instanton-27} are used first to prove the following when $r>c_0$:
\begin{itemize}\leftskip-0.35in
\item $|\alpha|\leq 1+c_0r^{-1}$.
\item $|\beta|\leq c_0 r^{-1/2}$.
\end{itemize}
\begin{equation}
\label{cond:instanton-28}
\end{equation}
To prove \eqref{cond:instanton-28}, fix for the moment a number greater than $1$ to be denoted by $c$ and then let $\pzq=|\alpha|^2+c^{-1}r|\beta|^2$. If $c\geq c_0$ and if $r$ is sufficiently large (depending on $c$ but not on $A$, $\alpha$, or $\beta$) then (after some manipulations) the identities in \eqref{cond:instanton-27} lead to the inequality 
\begin{equation}
\label{eq:instanton-29}
\frac{1}{2}d^{\dagger}d\pzq+r(\pzq-1)\pzq-c_0\pzq\leq 0.
\end{equation}
On a compact manifold, this inequality would imply directly that $\pzq\leq 1+c_0r^{-1}$ via the maximum principle. This same bound is implied here also, but \eqref{cond:instanton-26} is needed to deduce it. To say more, fix for the moment a ball $B\subset\R\times Y_\infty$ of radius equal to $\frac{1}{100}$ times the injectivity radius of $Y_\infty$ (which is greater than ${c_0}^{-1}$); and let $\upchi_B$ denote a smooth non-negative function with compact support on $B$ that is equal to $1$ on the concentric ball of half the radius. Let $p$ denote the center of $B$ and let $G_p$ denote the Green's function for the operator $d^\dagger d$ on $B$ with Dirichlet boundary conditions. Note that $0\leq G_p\leq c_0\frac{1}{\dis(p,\cdot)^2}$.

Multiply both sides of \eqref{eq:instanton-29} by $\upchi_B G_p$ (which is non-negative) and then integrate over the part of $B$ where $\pzq>1$. Integration by parts leads to a bound saying that
\begin{equation}
\label{eq:instanton-210}
(\pzq(p)-1)_+\leq c_0\int_{B}\frac{1}{\dis(p,\cdot)^2}(\pzq-1)_+
\end{equation}
with $(\pzq-1)_+$ denoting the maximum of $(\pzq-1)$ and $0$. The right hand side of \eqref{eq:instanton-210} is bounded by virtue of the inequalities in Part 1, \eqref{cond:instanton-26} and what is called Hardy's inequality \cite{hlp}. This tells us immediately that $|\alpha|\leq c_0$ and $|\beta|\leq c_0 r^{-1/2}$.

Now, given $\upvarepsilon>0$, the right hand side of \eqref{eq:instanton-210} is less than $\upvarepsilon$ if $\Delta(p)$ is sufficiently large. Note also that what is meant by `sufficiently large' does not depend on the $s$ coordinate of $p$. This is to say that given only the value $\upvarepsilon>0$, there exists $\mathrm{R}>16$ such that if  $\Delta(p)>\mathrm{R}$, then the right hand side of \eqref{eq:instanton-210} is less than $\upvarepsilon$ (irrespective of the value of the $s$-coordinate of $p$). That this is so follows from $(A_\pm,\psi_\pm)$ versions of Lemma \ref{lem:monopoles-16}. (Lemma \ref{lem:monopoles-16} requires $\en$ in \eqref{eq:energy} be finite, which it is for admissible solutions such as $(A_\pm,\psi_\pm)$ by virtue of Lemma \ref{lem:monopoles-14}.) Of particular interest is the case when $\upvarepsilon=r^{-1}$ in which case one learns this: There exists $\mathrm{R}$ such that if $p\in\R\times Y_\infty$ and $\Delta(p)\geq\mathrm{R}$, then $\pzq\leq 1+r^{-1}$.

Meanwhile, the right hand side of \eqref{eq:instanton-210} is less than $c_0r^{-1}$ if $|s|$ is sufficiently large because of the top line of \eqref{eq:monopoles-new-116} and the appearance of $r^2|\beta|^2$ in the second line of \eqref{eq:monopoles-new-116}.

To summarize the preceding two paragraphs: There exists $m<c_0$ such that the bound $\pzq\leq 1+mr^{-1}$ holds on the complement of a compact subset of $\R\times Y_\infty$. Thus, if $\pzq$ is ever larger than $1+mr^{-1}$, then it takes on its maximum in this same compact subset. With the preceding understood, suppose that $p$ is a point in this compact subset where $\pzq$ has a local or global maximum. Then, by virtue of \eqref{eq:instanton-29}, one has $r(\pzq-1)-c_0\leq 0$ at the point $p$ because $d^\dagger d\pzq\geq 0$ at a local maximum of $\pzq$. Thus $\pzq\leq 1+c_0r^{-1}$ at its maximum so $\pzq\leq 1+c_0r^{-1}$ on the whole of $\R\times Y_\infty$.
\end{pt3}
\begin{pt3}
The equations in \eqref{cond:instanton-27} have a second consequence which is this: Given $\upvarepsilon>0$, there exists $\mathrm{R}_\upvarepsilon>10$ such that the bounds that follow hold.
\begin{itemize}\leftskip-0.35in
\item $|1-|\alpha||<\upvarepsilon$ where $\Delta>\mathrm{R}_\upvarepsilon$.
\item $\displaystyle \int_{\{p\in I\times\R:\Delta(p)\geq \mathrm{R}_\upvarepsilon\} }(|\nabla_A\nabla_A\alpha|^2+|\nabla_A\nabla_A\beta|^2)<\upvarepsilon$ if $I\subset\R$ is an interval of length $1$.
\end{itemize}
\begin{equation}
\label{cond:instanton-211}
\end{equation}
The existence of $\mathrm{R}_\upvarepsilon$ for the top bullet in \eqref{cond:instanton-211} follows from \eqref{eq:instanton-210} using \eqref{cond:instanton-26} and Lemma \ref{lem:monopoles-16} with aforementioned Hardy's inequality. The existence of $\mathrm{R}_\upvarepsilon$ for the lower bullet is argued as follows: Write the two equations in \eqref{cond:instanton-27} as 
\begin{equation}
\label{eq:instanton-212}
{\nabla_A}^\dagger\nabla_A\alpha=\mathcal{C}_\alpha\quad\textrm{and}\quad {\nabla_A}^\dagger\nabla_A\beta=\mathcal{C}_\beta
\end{equation}
by moving terms from left hand side of \eqref{cond:instanton-27} to the right hand side. Fix a ball (to be denoted by B) as before, but where $\Delta>20$. Multiply both sides of both equations in \eqref{eq:instanton-212} by $\upchi_B$ and then integrate the square of the norm of what results over the ball. The $L^2$ norms of $\upchi_B\mathcal{C}_\alpha$ and $\upchi_B\mathcal{C}_\beta$ are bounded by $c_0$ times the $L^2$ norm over the ball of one of the following:
\begin{itemize}\leftskip-0.35in
\item $|\beta_+|$, $|\nabla_{A_+}\alpha_+|$, $|\nabla_{A_+}\beta_+|$, $|\frh_+|$, $r^{1/2}|\frh_+|^2|\nabla_{A_+}\frh_+|$,
\item $|\beta_-|$, $|\nabla_{A_-}\alpha_-|$, $|\nabla_{A_-}\beta_-|$, $|\frh_-|$, $r^{1/2}|\frh_-|^2|\nabla_{A_-}\frh_-|$,
\item $|\frh_0|$, $r^{1/2}|\frh_0|^2$, $|\nabla_{\Ai}\frh_0|$,
\end{itemize}
\begin{equation}
\label{cond:instanton-213}
\end{equation}
as the case may be. (Keep in mind what is said in Part 1.) Then because of \eqref{eq:instanton-212}, the $L^2$ norm over the ball of what appears in one of the bullets in \eqref{cond:instanton-213} times $c_0$ dominates the $L^2$ norms of $\upchi_B{\nabla_A}^\dagger\nabla_A\alpha$ and $\upchi_B{\nabla_A}^\dagger\nabla_A\beta$. Meanwhile some integration by parts and commuting of derivatives can be used to bound the $L^2$ norms of $\nabla_A\nabla_A\alpha$ and $\nabla_A\nabla_A\beta$ by those of $\upchi_B{\nabla_A}^\dagger\nabla_A\alpha$ and $\upchi_B{\nabla_A}^\dagger\nabla_A\beta$ plus $c_0$ times the $L^2$ norms of $\nabla_A\alpha$, $\nabla_A\beta$, and $F_A$ over the ball. The first two norms are also bounded by $c_0$ times the $L^2$ norms over the ball of what appears in one of the bullets in \eqref{cond:instanton-213}. Meanwhile, the $L^2$ norm of $F_A$ is bounded by these same $L^2$ norms (times $r^{1/2}$) plus (depending on the ball) $c_0$ times the $L^2$ norm over the ball of either $F_{A_+}$ or $F_{A_-}$. (Remember what is said in Part 1 about $F_A$.)

The just derived bound on the $L^2$ norm of $\nabla_A\nabla_A\alpha$ and $\nabla_A\nabla_A\beta$ over radius $\frac{1}{2}$ balls imply (with \eqref{cond:instanton-26} and Lemma \ref{lem:monopoles-16}) what is asserted by the lower bullet of \eqref{cond:instanton-211}. This is because the $\Delta>10$ part of $I\times Y_\infty$ has an open cover by balls of this sort with the property that at most $c_0$ elements in the cover contain a given point.
\end{pt3}
\begin{pt3}
Let $\mathrm{R}$ denote $\upvarepsilon=\frac{1}{100}$ version of $\mathrm{R}_\upvarepsilon$ from \eqref{cond:instanton-211}. Because $|\alpha|$ is greater than $1-\frac{1}{100}$ where $\Delta>\mathrm{R}$, there is an isomorphism between the line bundle $E$ and the product $\C$-bundle where $\Delta>\mathrm{R}$ that identifies $\alpha$ with a section of the product $\C$-bundle which can be written as $1-\pzz$ with $\pzz$ denoting an $\R$-valued function with norm less than $\frac{1}{100}$. (Note that $\pzz$ also has finite $L^2$ norm on the $\Delta>2\mathrm{R}$ part of $I\times Y_\infty$ because its norm at any given point is at most $c_0$ times that of either $\frh_+$, $\frh_-$ or $\frh_0$.) Use this same isomorphism to write the connection $A$ as $\Ai+r^{1/2}\fra$ with $\fra$ denoting an $i\R$ valued $1$-form. Using these identifications of $\alpha$ and $A$, then $\nabla_A\alpha$ appears as $-d\pzz+(1-\pzz)r^{1/2}\fra$. The key point now is this: Since $\pzz$ is real valued and $\fra$ is $i\R$ valued, the function $|\nabla_A\alpha|$ bounds ${c_0}^{-1}$ times both $|d\pzz|$ and $r^{1/2}|\fra|$. And, since \eqref{cond:instanton-26} and Lemma \ref{lem:monopoles-16} imply that $\nabla_A\alpha$ has square integrable norm where $\Delta>2\mathrm{R}$ on $I\times Y_\infty$, this is also the case for both $d\pzz$ and $\fra$.

It also follows from the lower bullet in \eqref{cond:instanton-211} that $|\nabla_A\alpha|$ is an ${L^2}_1$ function on the $\Delta>2\mathrm{R}$ part of $I\times Y_\infty$ whose ${L^2}_1$ norm is uniformly small where $\Delta$ is large (independent of the choice for the interval $I$). As a consequence of this and a dimension $4$ Sobolev inequality (the $L^4$ norm of an ${L^2}_1$ function on a ball is bounded by $c_0$ times the ${L^2}_1$ norm on the ball), both $|d\pzz|$ and $|\fra|$ have bounded $L^4$ norms on the $\Delta>2\mathrm{R}$ part of $I\times Y_\infty$; and these norms are uniformly small where $\Delta$ is large (independent of $I$). Because 
\begin{equation}
\label{eq:instanton-214}
\nabla_A\nabla_A\alpha=-\nabla d\pzz-(1-\pzz)r\fra\otimes\fra-r^{1/2}(d\pzz\otimes\fra+\fra\otimes d\pzz)+(1-\pzz)r^{1/2}\nabla\fra,
\end{equation}
these $L^4$ bounds imply in turn that both $\nabla d\pzz$ and $\nabla\fra$ have bounded $L^2$ norms on the $\Delta>2\mathrm{R}$ part of $I\times Y_\infty$ which are also uniformly small independent of $I$ where $\Delta$ is large. (Note in this regard that $\nabla d\pzz$ is $\R$ valued and $\nabla\fra$ is $i\R$ valued. As a consequence, the $L^2$ norm of $\nabla_A\nabla_A\alpha$ bounds the $L^2$ norms of both $\nabla d\pzz$ and $\nabla\fra$ when the $L^4$ norms of $d\pzz$ and $\fra$ are bounded.)

The isomorphism that writes $\alpha$ as $1-\pzz$ writes $\psi$ as $(1-\pzz,\beta_{\diamond})$ with $\beta_\diamond$ being the $E\otimes K^{-1}$ component. Let $\frh_\diamond$ denote the section of $iT^\ast(\R\times Y_\infty)\oplus\spb^+$ over the $\Delta>2\mathrm{R}$ part of $\R\times Y_\infty$ given by $(\fra,(-\pzz,\beta_\diamond))$. Since $\fra$ and $\nabla\fra$ have finite $L^2$ norms (and likewise $\pzz$ and $d\pzz$, and $\beta_\diamond$ and $\nabla_A\beta_\diamond$), this section $\frh_\diamond$ has finite ${L^2}_1$ norm over the $\Delta>2\mathrm{R}$ part of $I\times Y_\infty$ and its ${L^2}_1$ norm is uniformly small when $\Delta$ is large. This means that the isomorphism that writes $\alpha$ as $1-\pzz$ with real $\pzz$ (and thus $(A,\psi)$ as $(\Ai,\psii)+(r^{1/2}\fra,(-\pzz,\beta_\diamond))$) has the following property: Given $\upvarepsilon>0$, there exists $\mathrm{R}_\upvarepsilon>\mathrm{R}$ such that if $I\subset\R$ has length $1$, then
\begin{equation}
\label{eq:instanton-215}
\int\limits_{\{p\in I\times Y_\infty\;:\;\Delta(p)>\mathrm{R}_\upvarepsilon\}}(|\nabla_{\Ai}\frh_\diamond|^2+|\frh_\diamond|^2)\leq\upvarepsilon.
\end{equation}
This implies (among other things) that the aforementioned isomorphism that writes $\alpha$ as $1-\pzz$ can be used for the isomorphism in the third bullet of \eqref{cond:instanton-26}. (If it is used there, then $\frh$ plays the role of $\frh_\diamond$.)
\end{pt3}
\begin{pt3}
This part of the proof and the next prove that $\lim_{\Delta\to\infty}e^{\sqrt{r}\Delta/ c_0}|\frh_\diamond|=0$ which finishes the proof of the second bullet of Lemma \ref{lem:instanton-24}. The argument starts with the observation (about which more is said in Part 6) that $\frh_\diamond$ obeys a differential equation on the $\Delta>r\mathrm{R}$ part of $\R\times Y_\infty$ that has the form
\begin{equation}
\label{eq:instanton-216}
\Li\frh_\diamond+\pzee(\frh_\diamond)=0
\end{equation}
with $\Li$ being the $(\Ai,\psii)$ version of the operator that is defined in \eqref{eq:instanton-linearize}, and with $\pzee$ denoting a homomorphism from $iT^\ast(\R\times Y_T)\oplus\spb^+$ to $i(\Lambda^+\oplus\R)\oplus\spb^-$ that obeys this:
\begin{quote}
Given $\upvarepsilon>0$ and if $r$ is sufficiently large given $\upvarepsilon$, then there exists $\mathrm{L}_\upvarepsilon>\mathrm{R}$ such that $||\pzee(\frh)||_2\leq\upvarepsilon||\Li\frh||_2$ when $\frh$ has compact support on the $\Delta>\mathrm{L}_\upvarepsilon$ part of $\R\times Y_\infty$.
\begin{equation}
\label{cond:instanton-217}
\end{equation}
\end{quote}
Granted that $\frh_\diamond$ obeys \eqref{eq:instanton-216} with $\pzee$ as in \eqref{cond:instanton-217} (see Part 6 for why this is so), then Lemma \ref{lem:instanton-21} can be brought to bear to see that if $r\geq c_0$ and if $\mathrm{L}$ is sufficiently large then
\begin{equation}
\label{eq:instanton:218}
\int\limits_{\{p\in I\times Y_\infty\;:\;\mathrm{L}<\Delta(p)<\mathrm{L}+1\}}|\frh_\diamond|^2\leq e^{-\sqrt{r}\mathrm{L}/ c_0}
\end{equation}
when $I\subset\R$ is any length $1$ interval. This and \eqref{eq:instanton-li-l21-bound} imply in turn that 
\begin{equation}
\label{eq:instanton:219}
\int\limits_{\{p\in I\times Y_\infty\;:\;\mathrm{L}<\Delta(p)<\mathrm{L}+1\}}(|\nabla_{\Ai}\frh_\diamond|^2+|\frh_\diamond|^2)\leq e^{-\sqrt{r}\mathrm{L}/ c_0}
\end{equation}
for any length $1$ interval $I$. Given this last bound, and given the structure of $\pzee(\cdot)$ described in Part 6 (it is quadratic in the components of $\frh$), then standard elliptic bootstrapping arguments can be used to prove the pointwise bound $|\frh_\diamond|\leq c_\ast e^{-\sqrt{r}\Delta/c_0}$ with $c_\ast$ being constant.
\end{pt3}
\begin{pt3}
This part of the proof explains where \eqref{eq:instanton-216} and \eqref{cond:instanton-217} come from. The components in the $i\Lambda^+\oplus\spb^+$ summand of \eqref{eq:instanton-216} are a rewriting of the equations in \eqref{eq:instanton-sw} when $(A,\psi)$ is written as $(\Ai+r^{1/2}\fra,\psii+(-\pzz,\beta_\diamond))$. The $i\Lambda^+\oplus\spb^+$ part of $\pzee(\frh)$ is the image of $r^{1/2}\frh_\diamond\otimes\frh$ via a canonical homomorphism from $\otimes^2(iT^\ast(\R\times Y_T)\oplus\spb^+)$ to $i(\Lambda^+\oplus\R)\oplus\spb^-$. The image of $\frz\otimes\frw$ under this homomorphism is written as $\frz\#\frw$. Since it is linear separately with respect to $\frz$ and $\frw$, it obeys $|\frz\#\frw|\leq c_0|\frz||\frw|$ and it also obeys $|\nabi(\frz\#\frw)|\leq c_0(|\frw||\nabi\frz|+|\frz||\nabi\frw|)$.

To see about \eqref{cond:instanton-217} for this part of $\pzee(\frh)$, suppose that $\mathrm{L}$ is some large positive number and that $I\subset\R$ is an interval of length $1$. It follows from what was said in the preceding paragraph that the square of the $L^2$ norm of the $i\Lambda^+\oplus\spb^-$ part if $\pzee(\frh)$ on the $\Delta>\mathrm{L}$ part of $I\times Y_\infty$ obeys
\begin{equation}
\label{eq:instanton-220}
\int\limits_{\{p\in I\times Y_\infty\;:\;\Delta(p)>\mathrm{L}\}}|\pzee(\frh)|^2\leq c_0r(\int\limits_{\{p\in I\times Y_\infty\;:\;\Delta(p)>\mathrm{L}\}}|\frh_\diamond|^4)^{1/2}(\int\limits_{\{p\in I\times Y_\infty\;:\;\Delta(p)>\mathrm{L}\}}|\frh|^4)^{1/2}.
\end{equation}
Granted this, then the dimension $4$ Sobolev inequality bounding the $L^4$ norm by the ${L^2}_1$ norm can be used to bound the right hand side of \eqref{eq:instanton-220} by $c_0r$ times the product of the squares of the ${L^2}_1$ norms of $\frh_\diamond$ and $\frh$ on the $\Delta>\mathrm{L}$ part of $I\times Y_\infty$. Therefore, if $\upvarepsilon>0$ is given, and supposing that $\mathrm{L}$ is sufficiently large (depending on $\upvarepsilon$ but not on $\frh$ nor on $I$), then (because of \eqref{eq:instanton-215}) the right hand side of \eqref{eq:instanton-220} is at most $\upvarepsilon$ times the square of the ${L^2}_1$ norm of $\frh$ on the $\Delta>\mathrm{L}$ part of $I\times Y_\infty$. This has the following implication (by summing over the instances with $I$ having integer endpoints): If $\frh$ has compact support in the $\Delta>\mathrm{L}$ part of $\R\times Y_\infty$, then the $L^2$ norm of $\pzee(\frh)$ is at most $\upvarepsilon$ times the ${L^2}_1$ norm of $\frh$. This in turn is at most $c_0\upvarepsilon$ times the $L^2$ norm of $\Li\frh$ if $r>c_0$ (because of \eqref{eq:instanton-li-l21-bound}). This last observation implies that \eqref{eq:instanton-216} holds for the $i\Lambda^+\oplus\spb^-$ part of $\pzee(\frh)$.

To describe the $i\R$ summand of $\pzee(\frh)$, note first that the $i\R$ component of $\Li\frh_\diamond$ is $\ast d\ast\fra$ because $\pzz$ is real valued (look at the middle term in \eqref{eq:instanton-linearize}). Meanwhile, it follows from the top bullet of \eqref{cond:instanton-27} and from \eqref{eq:instanton-214} that $\ast d\ast\fra$ can be written as a sum of terms having the following form:
\begin{equation}
\label{eq:instanton-221}
\ast d\ast\fra = - \big(x_1(r^{-1/2}\nabi\beta_\diamond +\fra\otimes\beta_\diamond)+r^{-1/2}x_2\cdot\beta_\diamond+\pzq_1(d\pzz\otimes\fra)\big)
\end{equation}
with the notation as follows: $\{x_k\}_{k=1,2}$ and $\pzq_1$ denote certain canonical sections of vector bundles. In particular, these are bounded with bounded covariant derivatives. With \eqref{eq:instanton-221} understood, write a section $\frh$ of $iT^\ast(\R\times Y_\infty)\oplus\spb^+$ as $\frh=(\frb,(u_0,u_1))$. Now, define the $i\R$ summand of $\pzee(\frh)$ to be 
\begin{equation}
\label{eq:instanton-222}
-(x_1(r^{-1/2}\nabi u_1+\fra\otimes u_1)+r^{-1/2}x_2\cdot u_1+\pzq_1(d\pzz\otimes\frb)).
\end{equation}
The $i\R$ part of \eqref{cond:instanton-217} follows directly from this definition.

To see about \eqref{eq:instanton-216} for the $i\R$ part of $\pzee(\frh)$, suppose that $I\subset\R$ is an interval of length $1$ and that $\mathrm{L}$ is large. The key point is that the $L^2$ norm of what is written in \eqref{eq:instanton-222} on the $\Delta>\mathrm{L}$ part of $I\times Y_\infty$ is at most $c_0$ times the sum of 
\begin{itemize}\leftskip-0.35in
\item The product of the ${L^2}_1$ norm of $\frh$ on this domain and $r^{-1/2}$.
\item The product of the $L^4$ norms of $\fra$ and $\frh$ on this domain.
\item The product of the $L^4$ norms of $d\pzz$ and $\frh$ on this domain.
\end{itemize}
\begin{equation}
\label{cond:instanton-223}
\end{equation}
Now, given $\upvarepsilon>$, then what is described by the first bullet of \eqref{cond:instanton-223} is less than $\upvarepsilon$ times the ${L^2}_1$ norm of $\frh$ on the $\Delta>\mathrm{L}$ part of $I\times Y_\infty$ if $r>c_0\upvarepsilon^{-1}$. The argument used previously for $i\Lambda^+\oplus\spb^-$ component of $\pzee(\frh)$ can be repeated to see what is described by the middle bullet of \eqref{cond:instanton-223} is no greater than $\upvarepsilon$ times the ${L^2}_1$ norm of $\frh$ on the $\Delta>\mathrm{L}$ part of $I\times Y_\infty$ if $\mathrm{L}$ is sufficently large (`large' depending on $\upvarepsilon$ but not on $I$ or $\frh$). These same arguments can be used to draw the same conclusion with regards to what is described by the third bullet of \eqref{cond:instanton-223}. This is because the ${L^2}_1$ norm of $d\pzz$ on any given domain is bounded by $c_0$ times the sum of the $L^2$ norms of $|\nabla_A\nabla_A\alpha|$ and $|\nabla_A\alpha|$ and both are uniformly small on the $\Delta>\mathrm{L}$ part of $I\times Y_\infty$ when $\mathrm{L}$ is uniformly large (this follows from the second bullet of \eqref{cond:instanton-211} and from what is said in Part 1.)
\end{pt3}
\begin{pt3}
This part of the proof and Part 8 address the assertion in the first bullet of Lemma \ref{lem:instanton-24}. What follows considers only the case where $s\to\infty$. The other case is treated the same way but for some changes in the notation.

To start, suppose for the moment that there exists a gauge transformation as described by the top bullet of \eqref{cond:instanton-26} so that the resulting version of $\frh_+$ obeys 
\begin{equation}
\label{eq:instanton-224}
\ast d\ast(r^{-1/2}\ahat_+)+r^{1/2}(\psi^\dagger\upsigma_+-{\upsigma_+}^\dagger\psi)=0
\end{equation}
where $s\gg 1$. Assuming this, then the equations in \eqref{eq:instanton-sw} when written in terms of $\frh_+$ and the equation in \eqref{eq:instanton-224} can be written schematically (where $s\ll 1$) as
\begin{equation}
\label{eq:instanton-225}
\LL_+\frh_++r^{1/2}\frh_+\#\frh_+=0,
\end{equation}
where the notation is as follows: First, $\LL_+$ is the version of \eqref{eq:instanton-linearize} that is defined by viewing $(A_+,\psi_+)$ as an $s$-independent pair on $\R\times Y_T$. Second, what is denoted by $\frh_+\#\frh_+$ is the image of $\frh_+\otimes\frh_+$ under a certain canonical vector bundle homomorphism. (The notation in what follows uses $\frf_1\#\frf_2$ to denote the image of $\frf_1\otimes\frf_2$ under this homomorphism.) This vector bundle homomorphism obeys $|\frf_1\#\frf_2|\leq c_0|\frf_1||\frf_2|$; and there are also uniform bounds that hold for its covariant derivatives.

The key observation is the following: There exists $c>0$ such that if $\frh$ is an ${L^2}_1$ section of $iT^\ast(\R\times Y_T)\oplus\spb^+$, then 
\begin{equation}
\label{eq:instanton-226}
c\int\limits_{\R\times Y_T}(|\nabla_{A_+}\frh|^2+|\frh|^2)\leq \int\limits_{\R\times Y_T}|\LL_+\frh|^2.
\end{equation}
This follows from the assumption that the $(A_+,\psi_+)$ version of the operator $\LLf$ (which is depicted in \eqref{eq:linearized}) has trivial kernel and because the operator $\LL_+$ can be written as $\frac{\partial}{\partial s}+\LLf$. What follows directly is now a consequence of \eqref{eq:instanton-226}, the top bullet of \eqref{cond:instanton-26} and a dimension $4$ Sobolev inequality (the $L^4$ norm is bounded by $c_0$ times the ${L^2}_1$ norm): There exists $\mathrm{R}_\ast>1$ such that if $\mathrm{R}\geq\mathrm{R}_\ast$ and if $\frh$ is an ${L^2}_1$ section of $iT^\ast(\R\times Y_T)\oplus\spb^+$ with support where $s>\mathrm{R}$, then
\begin{equation}
\label{eq:instanton-227}
\frac{1}{2}c\int\limits_{\R\times Y_T}(|\nabla_{A_+}\frh|^2+|\frh|^2)\leq \int\limits_{\R\times Y_T}|\LL_+\frh_++r^{1/2}\frh_+\#\frh|^2.
\end{equation} 
Let $\en_k$ denote the integral of $|\nabla_{A_+}\frh_+|^2+|\frh_+|^2$ over the $s\geq \mathrm{R}+k$ part of $\R\times Y_T$. The number $\mathrm{R}$ is chosen in particular so that $\en_k\ll r^{-1}c$. This is made precise below.

With \eqref{eq:instanton-227} in mind, fix a positive integer $k$ for the moment, and let $\upchi$ now denote a smooth function of $s$ which is zero for $s\leq0$ and $1$ for $s\geq 1$. Having fixed $\upchi$ and supposing that $k$ is a positive integer, let $\upchi_k$ denote the function $s\mapsto\upchi(s-k-\mathrm{R})$. This is equal to $1$ where $s\geq \mathrm{R}+k+1$ and zero where $s\leq\mathrm{R}+k$. Set $\frh_k=\upchi_k\frh_+$. It is a consequence of \eqref{eq:instanton-225} that
\begin{equation}
\label{eq:instanton-228}
|\LL_+\frh_k+r^{1/2}\frh_+\#\frh|\leq c_0\theta_k(|\frh_+|+r^{1/2}|\frh_+|^2)
\end{equation}
where $\theta_k$ is the characteristic function of the set in $\R$ where $s$ is between $\mathrm{R}+k$ and $\mathrm{R}+k+1$. It now follows from \eqref{eq:instanton-228}, \eqref{eq:instanton-227}, and the aforementioned Sobolev inequality that if $\mathrm{R}$ is sufficiently large, then 
\begin{equation}
\label{eq:instanton-229}
\en_{k+1}\leq c_\ast\int\limits_{[k+\mathrm{R},k+1+\mathrm{R}]\times Y_T}(|\nabla_{A_+}\frh_+|^2+|\frh_+|^2)
\end{equation}
with $c_\ast>1$ being determined by $(A_+,\psi_+)$ (in particular, it is independent of $k$). Because the right hand side of this is $c_\ast(\en_k-\en_{k+1})$, this implies in return that
\begin{equation}
\label{eq:instanton-230}
\en_{k+1}\leq \upzeta\en_k,
\end{equation}
with $\upzeta$ being a $k$-independent number obeying $0<\upzeta <1$. In particular, by virtue of $\upzeta$ being less than $1$, the bound in \eqref{eq:instanton-230} implies that $\en_{k+1}\leq e^{-k/c_{\ast\ast}}\en_1$ with $c_{\ast\ast}>1$ being determined by $(A_+\psi_+)$. This is to say that the ${L^2}_1$ norm of $\frh_+$ on the $s\geq \mathrm{R}+k$ part of $\R\times Y_T$ has exponential decay to zero as $k\to\infty$. granted this fact, then standard elliptic regularity arguments can be used to prove the pointwise exponential decay that is claimed by the lemma. (Keep in mind that \eqref{eq:instanton-225} is an elliptic equation.)
\end{pt3}
\begin{pt3}
This part of the proof explains why there is a gauge transformation on the large $s$ part of $\R\times Y_T$ that changes $(A,\psi)$ to $(A+\ahat_+,\psi_++\upsigma_+)$ with $(\ahat_+,\upsigma_+)$ obeying \eqref{eq:instanton-224} and with $\frh_+=(r^{-1}\ahat_+,\upsigma_+)$ as described in the top bullet of \eqref{cond:instanton-26}. Note in this regard that one can assume at the outset that there is a gauge transformation that changes $(A,\psi)$ to $(A_++\ahat,\psi_++\upeta)$ with $\frh=(r^{-1}\ahat,\upeta)$ obeying
\begin{equation}
\label{eq:instanton-231}
\int\limits_{[1,\infty)\times Y_T}(|\nabla_{A_+}\frh|^2+|\frh|^2)<\infty.
\end{equation}
Now suppose that $\rmx$ is a real valued function on $[1,\infty)\times Y_T$. Then $e^{i\rmx}$ is a map from this domain of $\rmx$ to $S^1$. Viewed as a gauge transformation, it changes $(A_++\ahat,\psi_++\upeta)$ to $(A_++\ahat-id\rmx,\psi_++(e^{i\rmx}-1)\psi_++e^{i\rmx}\upeta)$. This understood, the plan for what follows is to find a function $\rmx$ so that $\ahat_+=\ahat-id\rmx$ and $\upsigma_+=(e^{i\rmx}-1)\psi_++e^{i\rmx}\upeta$ obey \eqref{eq:instanton-224} and so that the corresponding $\frh_+$ obeys the finite integral constraint in the top bullet of \eqref{cond:instanton-26}. Note in this regard that \eqref{eq:instanton-224} is obeyed if $\rmx$ obeys the equation
\begin{eqnarray}
\label{eq:instanton-232}
\nonumber d^\dagger d\rmx+2r|\psi_+|^2\rmx-i(\ast d\ast\ahat+r({\psi_+}^\dagger\upeta-\upeta^\dagger\psi_+))&&\\
\nonumber&&\hspace{-1.5in}-i(e^{i\rmx}-1-i\rmx)r|\psi_+|^2-ir((e^{i\rmx}-1){\psi_+}^\dagger\upeta-(e^{-i\rmx}-1)\upeta^\dagger\psi_+)=0.\\
\end{eqnarray}
This is a non-linear, inhomogeneous equation for $\rmx$. The term that is independent of $\rmx$ is $-i(\ast d\ast\ahat+r({\psi_+}^\dagger\upeta-\upeta^\dagger\psi_+))$ which is square integrable on $[1,\infty)\times Y_T$ because of the assumption that \eqref{eq:instanton-231} holds. The term linear in $\rmx$ is $d^\dagger d\rmx+2r|\psi_+|^2\rmx$. Supposing that $\mathrm{R}>1$ has been specified, then the operator that defines this term, $d^\dagger d+2r|\psi_+|^2$, is an isomorphism from the ${L^2}_2$ Sobolev space of functions on $[R,\infty)\times Y_T$ that vanish at $s=\mathrm{R}$ to the $L^2$ Sobolev space of functions on $[\mathrm{R},\infty)\times Y_T$. This is because $|\psi_+|$ is non-zero somewhere, and in the case $T=\infty$, its norm limits to $1$ as $\Delta\to\infty$. Meanwhile, the non-linear term in \eqref{eq:instanton-232}, which is $-i(e^{i\rmx}-1-i\rmx)r|\psi_+|^2-ir((e^{i\rmx}-1){\psi_+}^\dagger\upeta-(e^{-i\rmx}-1)\upeta^\dagger\psi_+)$, is pointwise bounded by $c_0r(|\rmx|^2+|\rmx||\upeta|)$; and its derivative is pointwise bounded by $c_0r((|\rmx|+|\upeta|)|d\rmx|+|\rmx||\nabla_{A_+}\upeta|)$.

Granted the preceding bounds, then a contraction mapping argument (or an inverse function theorem argument) can be used to prove that there exists $\mathrm{R}>1$ so that \eqref{eq:instanton-232} has a unique solution on the $s\geq\mathrm{R}$ part of $\R\times Y_T$ that vanishes at $\rmx=\mathrm{R}$ and with ${L^2}_2$ norm bounded by $c_0$ times the integral of $|\nabla_{A_+}\frh|^2+|\frh|^2$ over the $s\geq\mathrm{R}$ part of $\R\times Y_T$. The details of setting this up are straightforward and left to the reader. \qedhere
\end{pt3}
\end{proof}

\subsection{A priori bounds for instantons}
\label{ssec:instanton-bounds}

This section gives the analogs for $\R\times Y_T$ of the bounds that are supplied by \cite[Section 3]{taubes4}.

\subsubsection{The action functional}
\label{sssec:action-functional}

This subsection is a digression of sorts to define a functional on the space of admissible pairs of connection on $Y_T$ and section of $\spb$.  To this end, let $a$ denote for the moment a given $\R$-valued 1-form on $Y_T$. Supposing that $\frc = (A,\psi)$ and $\frc' = (A',\psi')$ are pairs of connection on $E$ and section of $\spb$ (which are admissible if $T=\infty$), define
\begin{equation}
\label{eq:bounds-31}
\fra(\frc,\frc') = -\frac{1}{2} \int_{Y_T} (A-A')\wedge *(B_A + B_{A'}+B_{A_0}) - \frac{i}{2}r\int_{Y_T}(A-A')\wedge\ast a + r\int_{Y_T} \langle \psi, \textup{D}_A\psi\rangle - r\int_{Y_T} \langle \psi', \textup{D}_{A'}\psi'\rangle.
\end{equation}
Here $B_A$ denotes the Hodge-star dual of the curvature form $F_A$, and likewise $B_{A'}$ and $B_{A_0}$ denote the Hodge stars of the curvature 2-forms of $A'$ and $A_0$.  (Remember that $A_0$ is a chosen connection on $Y_T$ whose curvature is zero on $Y_T \ssm N$.) Note that $\fra(\frc,\frc')=-\fra(\frc',\frc)$ and that if $\frc$, $\frc'$, and $\frc''$ are three admissible pairs on $Y_T$, then $\fra(\frc,\frc'')=\fra(\frc,\frc')+\fra(\frc',\frc'')$. In any event, it is traditional to keep $\frc'$ fixed and view $\fra$ as a function of the pair $\frc$. Viewed in this way, the function $\fra$ is called the \emph{action function}. 

Supposing that $T=\infty$, it is necessary to impose some conditions on $a$ and the Riemannian metric to ensure that $\fra(\cdot,\frc')$ is finite. In the cases of interest, it is sufficient to assume that $|a|$ is bounded and that there exists $c>1$ such that if $R>1$, then the part of $Y_\infty$ where the distance to $M$ is less than $R$ has volume at most $ce^{cR}$.

Now suppose that $\frd = (A,\psi)$ is an instanton solution to \eqref{eq:instanton-sw} on $\R\times Y_T$. Introduce by way of notation $\frc_+=(A_+,\psi_+)$ and $\frc_-=(A_-,\psi_-)$ to denote the $s\to\infty$ and $s\to-\infty$ limits of $(A,\psi)$, these being solutions on $Y_T$ to the versions of \eqref{eq:sw} that are defined by $a_+$ and $a_-$. Given $s\in\R$, use $\frd|_s$ to denote the restriction of $(A,\psi)$ to $\{s\}\in Y_T$, this being a pair of connection on $E$ over $Y_T$ and section of $\spb$ over $Y_T$. Use $\fra(\frd|_s,\frc_+)$ to denote the $\frc'=\frc_+$ version of \eqref{eq:bounds-31} with $a$ evaluated at the indicated value of the $s$ coordinate. By the same token, $\fra_-(\frc_-,\frc_+)$ denotes the $\frc'=\frc_+$ version of \eqref{eq:bounds-31} with $a$ being the contact form $a_-$.

To continue with notation, introduce $i\R$-valued 1-forms $E_A$ and $B_A$ along the $Y_T$ factor of $\R\times Y_T$ by writing the curvature 2-form $F_A$ as $ds\wedge E_A + *B_A$ with $*$ here and below denoting the Hodge star operator that is defined along the $Y_T$ factor of $\R\times Y_T$ at any given $s\in\R$ by the metric $\frg$. With $B_A$ understood, define $\calB$ by the rule
\begin{equation}
\label{eq:bounds-32}
\calB = B_A - r(\psi^\dagger \tau\psi - i\hat{a}) - \frac{1}{2}B_{A_0},
\end{equation}
with $\hat{a}$ denoting $a+ \frac{1}{2}\frac{\partial}{\partial s} a$. The 1-form $\calB$ is viewed below as an $i\R$-valued 1-form along the $Y_T$ factor of $\R\times Y_T$. (Note that if $a$ is independent of $s$, then $\calB=0$ is the leftmost equation in \eqref{eq:sw}.)

\begin{lemma}
\label{lem:bounds-31}
There exists $\kappa>1$ which works for any $T\in (16,\infty]$ with the following significance: Fix $r>\kappa$ and suppose that $\frd = (A,\psi)$ is an instanton solution to \eqref{eq:instanton-sw}.
\begin{itemize}\leftskip-0.25in
\item Supposing that $s\in (-\infty,-s_0]$, let $I=(-\infty,s]$. Then
\[ \fra_-(\frc_-\frc_+) - \fra(\frd|_s,\frc_+) = \frac{1}{2}\int_{I\times Y_T} \left(|E_A|^2 + |\calB|^2 + 2r\left(|\nabla_{A,s}\psi|^2 + |\textup{D}_A\psi|^2\right)\right). \]

\item Supposing that $s\in [s_0,\infty)$, let $I=[s,\infty)$. Then
\[ \fra(\frd|_s,\frc_+) = \frac{1}{2}\int_{I\times Y_T} \left(|E_A|^2 + |\calB|^2 + 2r\left(|\nabla_{A,s}\psi|^2 + |\textup{D}_A\psi|^2\right)\right). \]

\item Supposing that $s_- < s_+$ are in $\R$, let $I=[s_-,s_+]$. Then $\fra(\frd|_{s_-},\frc_+) - \fra(\frd|_{s_+},\frc_+)$ is equal to
\begin{multline}
\frac{1}{2}\int_{I\times Y_T} \left(|E_A|^2 + |\calB|^2 + 2r\left(|\nabla_{A,s}\psi|^2 + |\textup{D}_A\psi|^2\right)\right) \\ + ir\int_{I\times Y_T} \left((B_A-B_{A_+}) \wedge *\frac{\partial}{\partial s} a\right)
+ r\int_{I\times Y_T}\langle \psi,\mathcal{R}\psi\rangle - \frac{1}{4}\int_{I\times M} |\frp|^2
\end{multline}
with $\mathcal{R}$ being an endomorphism of $\spb$ that has compact support in $[-s_0,s_0]\times Y_T$, is zero where the metric $\frg$ is independent of $s$, and has norm bounded by $\kappa$.
\end{itemize}
\end{lemma}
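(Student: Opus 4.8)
\textbf{Proof proposal for Lemma \ref{lem:bounds-31}.}

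The plan is to obtain all three identities by differentiating the action function $\fra(\frd|_s,\frc_+)$ in $s$ and recognizing the derivative as an integrand of the claimed shape, then integrating over the appropriate interval $I$. First I would record the pointwise formula
\[ \frac{d}{ds}\fra(\frd|_s,\frc_+) = -\int_{\{s\}\times Y_T}\Big(\langle E_A, B_A - r(\psi^\dagger\tau\psi - i\ahat) - i*d\ai\rangle + 2r\langle \nabla_{A,s}\psi, \mathcal{D}_A\psi\rangle\Big) + (\text{terms from }\tfrac{\partial}{\partial s}a), \]
which follows from the chain rule applied to \eqref{eq:bounds-31}: the $(A-A')\wedge *(B_A+B_{A'})$ term and the $r\langle\psi,\mathcal{D}_A\psi\rangle$ terms contribute the Dirac and curvature pieces, while the term $-\tfrac{i}{2}r\int (B_A - B_{A'})\wedge *a$ contributes both the $i\ahat$ correction inside $\calB$ and the explicit $ir\int (B_A - B_{A_0})\wedge *\tfrac{\partial}{\partial s}a$ term, the latter arising precisely because $a$ depends on $s$. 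This is a standard computation of the type carried out in \cite[Section 3]{taubes4} and in Chapter 4 of \cite{kmbook}; I would cite those and only indicate the integration-by-parts steps needed to move the exterior derivative off $\ai$.

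Next I would invoke the Seiberg--Witten equation \eqref{eq:instanton-sw}. Decomposing $F_A = ds\wedge E_A + *B_A$ and writing the self-dual projection, the first equation of \eqref{eq:instanton-sw} becomes $E_A = \calB$ after accounting for the definition \eqref{eq:bounds-32} of $\calB$ (so that $\calB$ measures the failure of the $s$-independent part of the equation), and the Dirac equation $\mathcal{D}_A\psi = 0$ on $\R\times Y_T$ unwinds, in the $(1+3)$ splitting, into $\nabla_{A,s}\psi = -\mathcal{D}_A\psi$ up to the usual Clifford identifications. Substituting these into the derivative formula converts the cross terms $\langle E_A, \calB\rangle$ and $\langle \nabla_{A,s}\psi, \mathcal{D}_A\psi\rangle$ into the sums of squares $|E_A|^2 + |\calB|^2$ and $|\nabla_{A,s}\psi|^2 + |\mathcal{D}_A\psi|^2$ appearing in the statement; the compactly supported endomorphism $\mathcal{R}$ collects the commutator terms between the Levi--Civita connection and Clifford multiplication that are nonzero only where the metric $\frg$ varies, i.e.\ on $[-s_0,s_0]\times Y_T$, and whose norm is bounded by a $T$-independent $\kappa$ because the metric and its covariant derivatives are bounded independently of $T$ by the hypotheses of Section \ref{ssec:instantons}. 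With the differentiated identity in hand, the third bullet follows by integrating over $I=[s_-,s_+]$. For the first two bullets I would additionally use Lemma \ref{lem:instanton-24}: on $(-\infty,-s_0]$ and $[s_0,\infty)$ the form $\tfrac{\partial}{\partial s}a$ vanishes and $\mathcal{R}\equiv 0$, so the extra terms drop out, and the exponential decay $\frh_\pm\to 0$ as $s\to\pm\infty$ from Lemma \ref{lem:instanton-24} guarantees that $\fra(\frd|_s,\frc_+) \to \fra_-(\frc_-,\frc_+)$ as $s\to-\infty$ and $\fra(\frd|_s,\frc_+)\to 0$ as $s\to+\infty$; this is where the convergence properties imposed on the action function (the volume growth bound in the $T=\infty$ case) are used to justify passing to the limit in the boundary term of the integration by parts.

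The main obstacle I anticipate is not the formal computation but controlling the boundary terms at infinity in the $Y_\infty$ case: one must be sure that the action $\fra(\frd|_s,\frc_+)$ is finite for each $s$ and that it genuinely converges to the stated limits, which requires combining the $s$-direction decay of Lemma \ref{lem:instanton-24}'s first bullet with the $t$-direction (i.e.\ $\Delta\to\infty$) decay of its second bullet, together with the at-most-exponential volume growth of $\{\operatorname{dist}(\cdot,M)\le R\}$ assumed in Section \ref{sssec:action-functional}. A secondary care point is the bookkeeping of signs and Clifford-multiplication conventions so that the cross terms reassemble as positive sums of squares rather than differences; I would handle this by fixing conventions to match \cite{taubes4} and citing the corresponding identities there rather than rederiving them, and by verifying the single sign in front of the $ir\int(B_A - B_{A_0})\wedge *\tfrac{\partial}{\partial s}a$ term directly from \eqref{eq:bounds-31}.
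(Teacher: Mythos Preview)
Your proposal is correct and follows essentially the same route as the paper: the paper expands the vanishing squares $|\nabla_{A,s}\psi+\mathcal{D}_A\psi|^2=0$ and $\frac{1}{2}(|E_A|^2+|\calB|^2)=-\langle E_A,\calB\rangle$ coming from \eqref{eq:instanton-sw}, integrates the cross terms by parts in $s$ to produce the action boundary terms, and tracks the extra $\frac{\partial}{\partial s}a$ and $\mathcal{R}$ contributions exactly as you describe; your ``differentiate $\fra(\frd|_s,\frc_+)$ and integrate'' is the same computation read in the other direction. The paper likewise invokes Lemma~\ref{lem:instanton-24} to justify the $T=\infty$ integrations by parts and the $s\to\pm\infty$ limits, matching the care points you flagged.
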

Henceforth, let $\fra_s(\frd|_s)$ denote $\fra(\frd|_s,\frc_+)$ and $\fra_-(\frc_-)=\fra_-(\frc_-,\frc_+)$. Note in this regard that the rightmost term in \eqref{eq:bounds-31} is zero because $\frc_+$ is a solution to \eqref{eq:sw} on $Y_T$.
\begin{proof}
Fix numbers $s_- < s_+$ and let $I=[s_-,s_+]$. The rightmost equation in \eqref{eq:instanton-sw} can be written as
\begin{equation}
\label{eq:bounds-33}
\nabla_{A,s}\psi + \textup{D}_A\psi = 0
\end{equation}
with $\nabla_{A,s}$ denoting the covariant derivative along the $\R$ factor of $\R\times Y_T$ and with $\textup{D}_A$ denoting the operator that appears in \eqref{eq:sw}. Taking the $L^2$ norm of this on $I\times Y_T$ gives an equation that reads
\begin{equation}
\label{eq:bounds-34}
\int_{I\times Y_T} \left(|\nabla_{A,s}\psi|^2 + |\textup{D}_A\psi|^2\right) = -2\int_{I\times Y_T} \langle \nabla_{A,s}\psi, \textup{D}_A\psi\rangle.
\end{equation}
Integration by parts writes the right hand side of \eqref{eq:bounds-34} as
\begin{equation}
\label{eq:bounds-35}
\int_{\{s_-\}\times Y_T} \langle \psi,\textup{D}_A\psi\rangle - \int_{\{s_+\}\times Y_T} \langle \psi,\textup{D}_A\psi\rangle - \int_{I\times Y_T} \langle E_A,\psi^\dagger \tau\psi\rangle + \int_{I\times Y_T} \langle \psi,\mathcal{R}\psi\rangle.
\end{equation}
The notation here has $\langle\cdot,\cdot\rangle$ denoting either the inner product on $\spb$ or, in the rightmost integral, $(-1)$ times the Riemannian inner product.  Lemma \ref{lem:instanton-24} can be used to see that the integrals in \eqref{eq:bounds-35} are convergent when $T=\infty$. What is denoted by $\mathcal{R}$ in \eqref{eq:bounds-35} is an endomorphism of $\spb$ that is defined from the derivatives of $\frg$ along the $\R$ factor of $\R\times Y_T$. In particular, $\mathcal{R}$ is zero where $g$ is independent of $s\in\R$ and its norm is bounded by $c_0$.

Meanwhile, the leftmost equation in \eqref{eq:instanton-sw} says that $E_A + \calB = \frac{1}{2}\frp$, which implies in turn that
\begin{equation}
\label{eq:bounds-36}
\tfrac{1}{2}\left(|E_A|^2 + |\calB|^2\right) = -\langle E_A,B_A+\tfrac{1}{2}B_{A_0}\rangle + r\langle E_A,\psi^\dagger\tau\psi\rangle - i\langle E_A,r\hat{a}\rangle + \tfrac{1}{4}|\frp|^2.
\end{equation}
The notation here has the inner products on the right hand side being $(-1)$ times the Riemannian inner products. Integrate \eqref{eq:bounds-36} on $I\times Y_T$ and add the result to $r$ times the integral identity in \eqref{eq:bounds-34}. Use $\mathcal{I}$ to denote the resulting sum of integral identities.

Suppose first that $a$ and the metric $\frg$ are independent of $s$ on $I\times Y_T$. Keeping in mind that $E_A$ can be written as $\frac{\partial}{\partial s}(A-A_+)$, an integration by parts and \eqref{eq:bounds-35} leads from $\mathcal{I}$ to the identities in the first and second bullets of Lemma \ref{lem:bounds-31} because
\begin{equation}
\label{eq:bounds-37}
\fra(\frd|_{s=s_-}) - \fra(\frd|_{s=s_+}) = \frac{1}{2}\int_{I\times Y_T} \left(|E_A|^2 + |\calB|^2 + 2r\left(|\nabla_{A,s}\psi|^2 + |\textup{D}_A\psi|^2\right)\right).
\end{equation}
Take $s_+ < -s_0$ and then send $s_-$ to $-\infty$ to obtain the first bullet of Lemma \ref{lem:bounds-31}. Take $s_- > s_0$ and send $s_+$ to $\infty$ to obtain the second bullet of Lemma \ref{lem:bounds-31}.

The term with $\mathcal{R}$ in the third bullet of Lemma \ref{lem:bounds-31} comes from \eqref{eq:bounds-35}. The term with $\frac{\partial}{\partial s}a$ in the third bullet of the lemma comes from the $-2i\langle E_A,r\hat{a}\rangle$ term in \eqref{eq:bounds-36}. To say more about this, write $\hat{a}$ in \eqref{eq:bounds-36} as $a+\frac{1}{2}\frac{\partial}{\partial s}a$. Recalling that the metric $g$ on $Y$ is such that $*da=2a+\frac{\partial}{\partial s}a$, it follows that
\begin{equation}
\label{eq:bounds-38}
-ir\int_{\{s\}\times Y_T} \langle E_A,\hat{a}\rangle = -ir\int_{\{s\}\times Y_T} (E_A \wedge da).
\end{equation}
An integration by parts writes the right hand side of \eqref{eq:bounds-38} as
\begin{equation}
\label{eq:bounds-39}
-ir\int_{\{s\}\times Y_T} \left(\frac{\partial}{\partial s} B_A \wedge \ast a\right).
\end{equation}
This uses the identity $dE_A = \frac{\partial}{\partial s} B_A$. Lemma \ref{lem:instanton-24} can be used to prove that this equation holds when $T=\infty$. A second integration by parts writes \eqref{eq:bounds-39} as
\begin{eqnarray}
\label{eq:bounds-310}
\nonumber-ir\int_{\{s_+\}\times Y_T} \left((B_A-B_{A_+})\wedge \ast a\right) &+& ir\int_{\{s_-\}\times Y_T} \left((B_A-B_{A_+})\wedge \ast a\right) \\ &+& ir\int_{I\times Y_T} \left((B_A-B_{A_+}) \wedge \ast\frac{\partial}{\partial s}a\right).
\end{eqnarray}
The boundary terms in this equation with the boundary terms in \eqref{eq:bounds-35} and those from the integral of $-\langle E_A,B_A+\frac{1}{2}B_{A_0}\rangle$ in \eqref{eq:bounds-36} give the identity in the third bullet of the lemma.
\end{proof}

\subsubsection{$L^2$ bounds on cylinders}
\label{sssec:l2-bounds}

Various local bounds for instanton solutions that are proved in Section 7.3 of \cite{ht2} are needed for what is to come. The first is the analog of Lemma 7.3 in \cite{ht2} which states the following:

\begin{lemma}
\label{lem:bounds-32}
There exists $\kappa>1$ which works for any $T\in (16,\infty]$ such that if $r>\kappa$ and $(A,\psi=(\alpha,\beta))$ is an instanton solution to \eqref{eq:instanton-sw}, then
\begin{itemize}\leftskip-0.25in
\item $|\alpha| \leq 1 + \kappa r^{-1}$.
\item $|\beta|^2 \leq \kappa r^{-1} \left(1-|\alpha|^2\right) + \kappa^2 r^{-2}$.
\end{itemize}
\end{lemma}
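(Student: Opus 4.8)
The plan is to establish the two pointwise bounds by the standard Kronheimer--Mrowka style maximum principle argument, applied to the Weitzenb\"ock formula for $\mathcal{D}_A^2$, exactly as in \cite[Lemma 7.3]{ht2} but checking that every constant that appears is insensitive to $T$. First I would record the Weitzenb\"ock identity: on $\R\times Y_T$ one has $\mathcal{D}_A^{\ast}\mathcal{D}_A\psi = \nabla_A^{\ast}\nabla_A\psi + \tfrac{1}{4}s\psi + \tfrac{1}{2}\mathrm{cl}(F_A^{+})\psi$, where $s$ is the scalar curvature of the product-type metric on $\R\times Y_T$; substituting the curvature equation $F_A^{+}=\tfrac12 r(\psi^{\dagger}\tau\psi - i\omega)$ turns the zeroth-order term into the usual combination $\tfrac{r}{4}(|\psi|^2 - \text{(something bounded)})\psi$ plus a curvature term. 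The crucial point for $T$-independence is that, by the choices in Section \ref{ssec:instantons} (the metric on $\R\times Y_T$ is a product metric outside $\R\times N$, and on $\R\times N$ the curvature tensor, the covariant derivatives of $a$ and $da$, and the further covariant derivatives of curvature are all bounded by $T$-independent constants), the scalar curvature $s$ and the $s$-derivative endomorphism entering the Dirac operator are bounded uniformly in $T$. Hence all the geometric constants in the Weitzenb\"ock formula are absorbed into $c_0$, which by the convention of Section \ref{ssec:setup} is already $T$-independent.

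Next I would prove the bound on $|\alpha|$. Applying $\langle \alpha, \cdot\rangle$ to the $E$-component of the equation $\mathcal{D}_A^{\ast}\mathcal{D}_A\psi = 0$ (which holds because $\mathcal{D}_A\psi=0$) and using the Weitzenb\"ock formula, one gets a differential inequality of the schematic form
\[
d^{\ast}d\, |\alpha|^2 + 2|\nabla_A\alpha|^2 + \tfrac{r}{2}|\alpha|^2\bigl(|\alpha|^2 - 1 - c_0 r^{-1} - c_0 r^{-1}|\beta|^2\bigr) \le 0,
\]
where the constant $c_0$ comes from the curvature of the metric and from the $i\omega$ term, and is $T$-independent by the discussion above. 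Since $|\psi|$ is a priori bounded for an instanton solution (this is part of the asymptotic control in Lemma \ref{lem:instanton-24}, which forces $\psi$ to converge to bounded limiting configurations as $|s|\to\infty$ and, on $Y_\infty$, as $t\to\infty$), the function $|\alpha|^2$ attains its maximum; at an interior maximum $d^{\ast}d\,|\alpha|^2\ge 0$ and $|\nabla_A\alpha|=0$, so the sign of the zeroth-order term forces $|\alpha|^2 \le 1 + c_0 r^{-1}(1+|\beta|^2)$, and feeding in the crude bound $|\beta|\le c_0$ (again from Lemma \ref{lem:instanton-24} or a first pass of the maximum principle) upgrades this to $|\alpha|\le 1+\kappa r^{-1}$ after re-expanding the square root. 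The only subtlety is to make sure the maximum is attained: on the finite-$T$ manifolds $\R\times Y_T$ the noncompactness is only in the $\R$-direction and along the cylindrical ends of $Y_T$, and Lemma \ref{lem:instanton-24} gives exponential decay of $\psi-\psi_\pm$ there, so $|\alpha|^2$ is asymptotically bounded by $1+\kappa r^{-1}$ automatically; on $Y_\infty$ the additional end $t\to\infty$ is handled the same way by the $t$-decay bullet of Lemma \ref{lem:instanton-24}.

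For the bound on $|\beta|$ I would repeat the maneuver with the $E\otimes K^{-1}$-component $\beta$ of $\psi$. Pairing with $\beta$ and using the Weitzenb\"ock formula gives an inequality schematically of the form
\[
d^{\ast}d\,|\beta|^2 + 2|\nabla_A\beta|^2 + \tfrac{r}{2}|\beta|^2\bigl(|\alpha|^2 + |\beta|^2 - c_0 r^{-1}\bigr) \le c_0\bigl(1-|\alpha|^2\bigr)|\beta| + c_0 r^{-1},
\]
where the right-hand side inhomogeneity comes from the fact that $F_A^{+}$ has an $i\omega$ piece which pairs $\alpha$ with $\beta$, together with the bounded curvature contribution; all constants are $T$-independent for the same reason as before. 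At an interior maximum of $|\beta|^2$ the second-order and gradient terms drop out, and since $|\alpha|^2 \ge 1 - c_0 r^{-1} - (1-|\alpha|^2)$ one rearranges to get $r|\beta|^2 \lesssim c_0(1-|\alpha|^2)|\beta| + c_0 r^{-1}$, and then Young's inequality $c_0(1-|\alpha|^2)|\beta| \le \tfrac{r}{4}|\beta|^2 + c_0 r^{-1}(1-|\alpha|^2)^2 \le \tfrac{r}{4}|\beta|^2 + \kappa r^{-1}(1-|\alpha|^2)$ (using $1-|\alpha|^2 \le 2$ from the first bullet) absorbs the bad term and yields $|\beta|^2 \le \kappa r^{-1}(1-|\alpha|^2) + \kappa^2 r^{-2}$. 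Again the asymptotic behavior at all noncompact ends is controlled by Lemma \ref{lem:instanton-24}, so the maximum principle applies.

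\textbf{Main obstacle.} The genuine content of the lemma over what is in \cite[Lemma 7.3]{ht2} is the assertion that $\kappa$ \emph{does not depend on $T$}. So the step I expect to require the most care is the bookkeeping of constants: one must verify that every appearance of the metric's scalar curvature, of $\nabla a$ and $\nabla da$, and of the $s$-dependent endomorphism $\mathcal{R}$ in the Weitzenb\"ock formula is controlled by the uniform geometric bounds imposed in Section \ref{ssec:instantons} (product metric away from $\R\times N$, $T$-independent curvature bounds on $\R\times N$), and that the a priori boundedness of $|\psi|$ needed to run the maximum principle is itself $T$-uniform, which it is because it follows from Lemma \ref{lem:instanton-24} whose constants depend only on the fixed limiting solutions $(A_\pm,\psi_\pm)$ and on the $T$-independent geometry near $M$. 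Once this is checked, the maximum-principle arguments are literally those of \cite[Section 3]{taubes4} and \cite[Section 7.3]{ht2} and go through verbatim.
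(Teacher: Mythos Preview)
Your proposal is correct and follows essentially the same approach as the paper: the paper's proof simply says ``the proof is much the same as that of Lemma 3.1 in \cite{taubes4}'' (which is the maximum-principle/Weitzenb\"ock argument you describe) and notes that ``these bounds are already obeyed as $|s|\to\infty$ and as $|t|\to\infty$ in the case of $Y_\infty$ by virtue of Lemma \ref{lem:instanton-24},'' exactly as you invoke it to guarantee the maxima are attained. Your identification of $T$-independence of the geometric constants as the only new content is spot on and matches the paper's implicit reasoning.
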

\noindent The proof is much the same as that of Lemma 3.1 in \cite{taubes4}. Note that these bounds are already obeyed as $|s|\to\infty$ and as $|t|\to\infty$ in the case of $Y_\infty$ by virtue of Lemma \ref{lem:instanton-24}.

To state the second of the required local bounds, suppose that $\frd = (A,\psi)$ is an instanton solution to \eqref{eq:instanton-sw}. Let $\mathcal{A}_\frd$ denote $\fra_-(\frc_-)$, this being the version of \eqref{eq:bounds-31} with $\frc=\frc_-$, $\frc'=\frc_+$, and $a=a_-$.

\begin{lemma}
\label{lem:bounds-33}
There exists $\kappa>1$ which works for any $T\in (16,\infty]$ with the following significance: Fix $r>\kappa$ and suppose that $\frd=(A,\psi)$ is an instanton solution to \eqref{eq:instanton-sw} with $\mathcal{A}_\frd \leq r^2$. Let $I$ denote a given interval in $\R$ of length $2$ and let $\rmu\subset Y_T$ denote a given open set with compact closure. Then
\[ \int_{I\times \rmu} \left(|F_A|^2 + r|\nabla_A\psi|^2\right) \leq \kappa r^2 (1+\mathrm{vol}(\rmu)). \]
\end{lemma}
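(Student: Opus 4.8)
\textbf{Proof plan for Lemma \ref{lem:bounds-33}.}

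The plan is to mimic closely the proof of the analogous statement in \cite[Section 7.3]{ht2} (which in turn follows \cite[Section 3]{taubes4}), with the only new feature being the presence of two ends and the horizontal end of $Y_\infty$, all of which are controlled by the exponential decay estimates already established. First I would fix the interval $I = [s_-, s_+]$ of length $2$ and an open set $U \subset Y_T$ with compact closure, and invoke the third bullet of Lemma \ref{lem:bounds-31} applied over a slightly larger interval $I' \supset I$, together with the monotonicity $\fra_s(\frd|_s) \leq \mathcal{A}_\frd \leq r^2$ coming from the first two bullets of that lemma (the action is monotone along instanton trajectories modulo the $\mathcal{R}$ and $\frac{\partial}{\partial s}a$ corrections, which are supported in the compact region $[-s_0,s_0]\times Y_T$ and bounded by $\kappa$, hence contribute at most $c_0(1+r)$ or so after using the pointwise bounds of Lemma \ref{lem:bounds-32}). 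This gives a bound of the shape
\[ \int_{I\times Y_T}\bigl(|E_A|^2 + |\calB|^2 + 2r|\nabla_{A,s}\psi|^2 + 2r|\mathcal{D}_A\psi|^2\bigr) \leq \kappa r^2, \]
after absorbing the correction terms; here the key point is that the correction terms are $r$-times-bounded quantities integrated over a fixed compact region, so they are $O(r)$, dominated by $r^2$.

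Next I would convert this control on $E_A$, $\calB$, and the derivatives of $\psi$ into control on the full curvature $F_A = ds\wedge E_A + {*}B_A$ and on $\nabla_A\psi$. The anti-self-dual part is handled by the equation \eqref{eq:instanton-sw} itself, which bounds $F_A^+$ pointwise in terms of $r(\psi^\dagger\tau\psi - i\omega)$, hence by $O(r)$ using Lemma \ref{lem:bounds-32}; integrating over $I\times U$ gives $O(r^2\,\mathrm{vol}(U))$. For the self-dual part, $|F_A^-|^2 = |F_A^+|^2 - F_A\wedge F_A$ pointwise up to constants, and over $I\times U$ the term $\int F_A\wedge F_A$ is controlled by a boundary/Chern–Simons argument — or, more simply, one uses $B_A = \calB + r(\psi^\dagger\tau\psi - i\hat a) + i{*}d\ai$, so $|B_A|^2 \leq c_0(|\calB|^2 + r^2)$ pointwise, and integrating gives $\int_{I\times U}|B_A|^2 \leq c_0(\kappa r^2 + r^2\,\mathrm{vol}(U))$. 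Combining the $E_A$ and $B_A$ contributions yields $\int_{I\times U}|F_A|^2 \leq \kappa r^2(1 + \mathrm{vol}(U))$. The term $\int_{I\times U} r|\nabla_A\psi|^2$ is estimated by splitting $\nabla_A\psi$ into its $\partial_s$-component $\nabla_{A,s}\psi$ and its $Y_T$-component; the former is directly bounded by the action identity, while for the latter one uses the Bochner–Weitzenböck formula for $\mathcal{D}_A^2$ (as in \cite[Section 3]{taubes4}): integrating $|\mathcal{D}_A\psi|^2 = |\nabla_A^{Y}\psi|^2 + (\text{curvature and potential terms})\cdot|\psi|^2 + (\text{boundary})$ over $I\times U$ converts the already-established bound on $\int r|\mathcal{D}_A\psi|^2$ plus the $L^\infty$ bounds on $\psi$ and the integral bound on $|F_A|$ into the desired bound on $\int r|\nabla_A^Y\psi|^2$.

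The main obstacle I anticipate is controlling the boundary contributions when passing between $I$ and $I'$ and when integrating by parts in the Weitzenböck step — specifically, making sure that the slice-wise quantities $\fra_s(\frd|_s)$ and the energy densities at the endpoints of $I'$ are finite and uniformly bounded. On the two symplectization ends $s\to\pm\infty$ this is guaranteed by the exponential decay of Lemma \ref{lem:instanton-24}; on the horizontal end $t\to\infty$ of $Y_\infty$ it is guaranteed by the second bullet of Lemma \ref{lem:instanton-24} together with the volume growth hypothesis $\mathrm{vol}(\{\Delta\le R\}) \le c e^{cR}$, which beats the exponential decay $e^{-\sqrt r\Delta/\kappa}$ once $r$ is large — this is where one sees why $T$-independence of $\kappa$ holds, since the geometry of $Y_T\ssm N$ and $N_T$ is $T$-independent and carries no closed Reeb orbits. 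Once these convergence/finiteness points are dispatched, the estimate is exactly the $\R\times Y_T$ transcription of \cite[Lemma 7.3]{ht2} and \cite[Section 3]{taubes4}, and I would simply cite those arguments for the remaining routine details rather than reproduce them.
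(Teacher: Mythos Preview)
Your approach is essentially the paper's: use Lemma~\ref{lem:bounds-31} with $\mathcal{A}_\frd\le r^2$ and Lemma~\ref{lem:bounds-32} to get the $c_0r^2$ bound on $\int_{I\times Y_T}(|E_A|^2+|\calB|^2+2r|\nabla_{A,s}\psi|^2+2r|\mathcal{D}_A\psi|^2)$, then convert $|\calB|^2$ to $|B_A|^2$ via $|\calB|^2\ge\tfrac12|B_A|^2-c_0r^2$ and upgrade $|\mathcal{D}_A\psi|^2$ to $|\nabla_A^\perp\psi|^2$ by the Bochner--Weitzenb\"ock formula. One technical point to fix: in the Weitzenb\"ock step the paper multiplies by the square of a smooth cutoff $\sigma_U$ (equal to $1$ on $U$, compactly supported in $Y_T$) before integrating by parts on each constant-$s$ slice, so the would-be $\partial U$ boundary terms become $\int(\sigma_U|\nabla^{\otimes 2}\sigma_U|+|\nabla\sigma_U|^2)|\psi|^2$, which is bounded via Lemma~\ref{lem:bounds-32}. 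Your plan to integrate by parts directly over $I\times U$ leaves boundary terms on $I\times\partial U$ involving $\nabla_A\psi$, and those are not controlled by anything you have at this stage; the exponential decay you discuss handles the $s\to\pm\infty$ and $\Delta\to\infty$ ends (relevant for the convergence of the action identities in step~1), but it says nothing about $\partial U$. Insert the cutoff $\sigma_U$ and the rest of your outline goes through.
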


\begin{proof}
Suppose that $\mathcal{A}_\frd \leq r^2$. Use this assumption with Lemmas \ref{lem:bounds-31} and \ref{lem:bounds-32} to see that
\begin{equation}
\label{eq:bounds-311}
\int_{I\times Y_T} \left(|E_A|^2 + |\calB|^2 + 2r(|\nabla_{A,s}\psi|^2 + |\textup{D}_A\psi|^2)\right) \leq c_0 r^2.
\end{equation}
Now let $\sigma_\rmu$ denote a smooth non-negative function on $Y_T$ with value $1$ on $\rmu$ and with compact support otherwise. Letting $\nabla_A^\perp \psi$ denote the covariant derivative of $\psi$ along the $Y_T$ factor of $I\times Y_T$, integrate by parts on each constant $s\in I$ slice of $I\times Y_T$ and use the Bochner-Weitzenb\"ock formula for the corresponding version of the operator ${\textup{D}_A}^2$ to write the integral of $\sigma_\rmu^2 |\mathcal{D}_A\psi|^2$ as an integral of $\sigma_\rmu^2|\nabla_A^\perp\psi|^2$ plus a curvature term plus a term that is bounded by $c_0$ times the integral of $(\sigma_\rmu|\nabla^{\otimes 2}\sigma_\rmu| + |\nabla \sigma_\rmu|^2)|\psi|^2$. The curvature term is bounded by the integral over $I\times Y_T$ of $c_0\sigma_\rmu^2(1+|B_A|)|\psi|^2$.  Meanwhile, by virtue of the triangle inequality, the integral of this term is no larger than $c_0r\sigma_\rmu^2 + \frac{1}{2000r}\sigma_\rmu^2|B_A|^2$ (because $|\psi|^2$ is bounded by $2$ when $r>c_0$ courtesy of Lemma~\ref{lem:bounds-32}).  Therefore, the integral of $\sigma_\rmu^22r|\textup{D}_A\psi|^2$ over $I\times Y_T$ is equal to that of $\sigma_\rmu^2 2r|\nabla_A^\perp\psi|^2$ plus an error term that is a priori bounded by $c_0r^2\operatorname{volume}(\rmu)$ plus the integral over $I\times Y_T$ of $\frac{1}{1000}\sigma_\rmu^2|B_A|^2$.  Meanwhile,
\begin{equation}
\label{eq:bounds-312}
|\calB|^2 \geq \frac{1}{2}|B_A|^2 - c_0 r^2,
\end{equation}
courtesy again of Lemma \ref{lem:bounds-32}. These bounds lead directly to the bound asserted by Lemma \ref{lem:bounds-33} when $\mathcal{A}_\frd \leq r^2$.
\end{proof}

\subsubsection{Pointwise bounds}
\label{sssec:pointwise-bounds}

The lemmas that follow in this subsection assert analogs of the various pointwise bounds for $F_A$ and $\nabla_A\psi$ that are proved in \cite[Sections 3a-c]{taubes4}. To set the stage for this, fix $T\in(16,\infty]$ for the moment and let $\frd = (A,\psi)$ denote an instanton solution to \eqref{eq:instanton-sw} on $\R\times Y_T$. The upcoming lemmas refer to a number to be denoted by $c_\frd$ that is defined so that if $\rmu\subset Y_T$ is an open set with compact closure and $I\subset \R$ is an interval of length $2$, then 
\begin{equation}
\label{eq:bounds-314}
\int_{I\times \rmu} \left(|F_A|^2 + r|\nabla_A\psi|^2\right) \leq c_\frd r^2 (1+\mathrm{vol}(\rmu)).
\end{equation}
Lemma \ref{lem:bounds-33} gives a priori bounds for $c_\frd$ when the condition $\mathcal{A}_\frd < r^2$ holds. Therefore, the bound in \eqref{eq:bounds-314} with Lemma \ref{lem:bounds-33} is an a priori bound that holds with the \emph{same} version of $c_\frd$ for any instanton solution $(A,\psi)$ on $\R\times Y_T$ (supposing that $\mathcal{A}_\frd<r^2$) and for any $T\in[16,\infty)$. The upcoming lemmas \ref{lem:bounds-34}--\ref{lem:bounds-37} give detailed bounds for $\alpha$, $\beta$, their covariant derivatives, and also $F_A$ that depend on an upper bound for $c_\frd$ in \eqref{eq:bounds-314}. Thus, if the condition $\mathcal{A}_\frd<r^2$ holds, then (by virtue of Lemma \ref{lem:bounds-33}) their detailed bounds are a priori bounds that hold for any instanton solution $(A,\psi)$ on $\R\times Y_T$ and for any $T\in[16,\infty)$. An a priori bound for $c_\frd$ is also needed to make effective use of the upcoming Proposition~\ref{prop:conv-41}.

The first of the bounds concerns the norms of $F_A$ and the components of $\nabla_A \psi$. The following lemma gives a priori bounds that are the analog of those in Lemmas 3.2 and 3.6 of \cite{taubes4}.

\begin{lemma}
\label{lem:bounds-34}
Given $c>1$, there exists $\kappa>1$ with the following significance: Suppose that $T\in (16,\infty]$, that $r \geq \kappa$ and that $\frd = (A,\psi=(\alpha,\beta))$ is an instanton solution to \eqref{eq:instanton-sw} on $\R\times Y_T$ such that \eqref{eq:bounds-314} holds with $c_\frd < c$. Then
\begin{itemize}\leftskip-0.25in
\item $|F_A| \leq \kappa r$.
\item $|\nabla_A \alpha|^2 \leq \kappa r$.
\item $|\nabla_A \beta|^2 \leq \kappa$.
\end{itemize}
In addition, supposing that $q$ is a positive integer, there exists a constant $\kappa_q \geq 1$ such that if $r>\kappa$, then $|\nabla_A^{\otimes q}\alpha|^2 + r|\nabla_A^{\otimes q}\beta|^2 \leq \kappa_q r^q$.
\end{lemma}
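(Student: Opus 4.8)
The plan is to follow the template of \cite[Sections 3a--c]{taubes4}, adapting each step to the noncompact setting of $\R\times Y_T$ by using the uniform local energy bound \eqref{eq:bounds-314} encoded in $c_\frd\leq c$ together with the uniform geometry of $Y_T$ from Section \ref{ssec:setup}. First I would establish the bound $|F_A|\leq\kappa r$. The starting point is the Weitzenb\"ock/Bochner formula for $\nabla_A^\dagger\nabla_A$ applied to the curvature equation in \eqref{eq:instanton-sw}: differentiating $F_A^+=\tfrac12 r(\psi^\dagger\tau\psi-i\omega)$ and using the second Seiberg--Witten equation expresses $\nabla_A^\dagger\nabla_A(r^{-1}F_A)$ in terms of algebraic quantities built from $\psi$, $\nabla_A\psi$, the curvature of the metric, and $F_A$ itself. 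Lemma \ref{lem:bounds-32} already gives $|\psi|\leq 1+\kappa r^{-1}$, so the dangerous term is the one quadratic in $F_A$; one controls it exactly as in \cite[Lemma 3.2]{taubes4} by a maximum-principle argument applied to a function of the form $w=r^{-2}|F_A|^2 + $ (lower order), noting that where $w$ is large the elliptic inequality $d^\dagger d\, w \leq -\kappa^{-1} r\, w + \kappa r$ forces $w\leq\kappa$. The noncompactness is handled by Lemma \ref{lem:instanton-24}: the desired bounds already hold as $|s|\to\infty$ and (for $Y_\infty$) as $t\to\infty$, so $w$ attains its supremum at an interior point and the maximum principle applies there.

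Next, with $|F_A|\leq\kappa r$ in hand, I would bound $|\nabla_A\alpha|^2\leq\kappa r$ and $|\nabla_A\beta|^2\leq\kappa$. These follow from the Weitzenb\"ock formula for $\mathcal D_A^2\psi=0$ together with \eqref{eq:instanton-sw}, which yields a coupled elliptic system for the functions $|\alpha|^2$, $|\beta|^2$, $|\nabla_A\alpha|^2$, $|\nabla_A\beta|^2$; the argument is the one in \cite[Lemma 3.6]{taubes4}. Again Lemma \ref{lem:bounds-32} supplies the needed sup bound on $|\psi|$, the curvature bound just obtained eliminates the $F_A$-coupling, and the uniform bounds on the Riemannian curvature and on $\nabla a,\nabla da$ from Sections \ref{ssec:setup} and \ref{ssec:instantons} make the inhomogeneous terms $T$-independent. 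The maximum principle is again legitimate because Lemma \ref{lem:instanton-24} provides the asymptotic decay that confines any supremum to a compact region of $\R\times Y_T$.

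Finally, for the higher-order estimates $|\nabla_A^{\otimes q}\alpha|^2+r|\nabla_A^{\otimes q}\beta|^2\leq\kappa_q r^q$ I would argue by induction on $q$, differentiating the Seiberg--Witten equations $q$ times and applying elliptic interior regularity on unit cylinders $[s-1,s+1]\times B$: the first-derivative bounds just proved provide the base case and the necessary $C^0$ control of the coefficients, and each successive differentiation trades one factor of the normalization scale $r^{1/2}$ for one derivative, exactly as in \cite[Section 3c]{taubes4}. The uniform geometry of $Y_T$ ensures the elliptic constants are independent of $T$, and $c_\frd\leq c$ gives the $L^2$ input \eqref{eq:bounds-314} on each cylinder.

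I expect the main obstacle to be purely bookkeeping rather than conceptual: verifying that, in the present mixed-asymptotics setting (instanton ends at $s=\pm\infty$ plus the cylindrical end $t\to\infty$ when $T=\infty$), the maximum-principle arguments genuinely apply, i.e.\ that the auxiliary functions $w$ in each step decay at \emph{all} ends so that no supremum escapes to infinity. This is precisely what Lemma \ref{lem:instanton-24} is designed to guarantee, so the real content is to package its two bullets correctly and to check that the various curvature and torsion terms entering the Weitzenb\"ock formulas carry $T$-independent bounds, which is immediate from the covariantly-constant model metric \eqref{eq:1.1} on the ends and the stated uniform bounds in Section \ref{ssec:instantons}.
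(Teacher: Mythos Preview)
Your proposal is correct and follows essentially the same approach as the paper: the paper's proof simply says that the $|F_A|$ bound is proved as in \cite[Lemma~3.2]{taubes4} with Lemma~\ref{lem:bounds-32} and \eqref{eq:bounds-314} playing the roles of Lemmas~3.1 and~3.3 there, and that the covariant-derivative bounds follow \cite[Lemma~3.6]{taubes4} almost verbatim. Your added remark that Lemma~\ref{lem:instanton-24} justifies the maximum principle on the noncompact $\R\times Y_T$ is a useful clarification the paper leaves implicit.
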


The proof of the bound for $|F_A|$ differs only in notation from the proof of Lemma 3.2 in \cite{taubes4}. The roles played in the latter proof by Lemmas 3.1 and 3.3 of \cite{taubes4} are played here by Lemma \ref{lem:bounds-32} and \eqref{eq:bounds-314}. The proof of the bounds for the components of the covariant derivatives of $\psi$ is almost word for word the same as the proof of Lemma 3.6 in \cite{taubes4}.

The next lemma refers to a function on $\R$ that is denoted by $\scm$ and it is defined by the rule whereby
\begin{equation}
\label{eq:bounds-315}
\scm(s) = \int_{[s,s+1]\times Y_T} \left|1-|\alpha|^2\right|.
\end{equation}
Lemma \ref{lem:instanton-24} guarantees that $\scm(\cdot)$ is finite. The following lemma is the analog here of Lemmas 3.8 and 3.9 in \cite{taubes4}.

\begin{lemma}
\label{lem:bounds-35}
Given $c>1$ and $\mathcal{K}\geq1$, there exists $\kappa>1$ with the following significance: Suppose that $T\in (16,\infty]$, that $r \geq \kappa$ and that $\frd = (A,\psi=(\alpha,\beta))$ is an instanton solution to \eqref{eq:instanton-sw} on $\R\times Y_T$ such that \eqref{eq:bounds-314} holds with $c_\frd < c$.   Fix a point $s_0\in\R$ and $R\geq 2$ and suppose that $\scm(\cdot)\leq \mathcal{K}$ on $[s_0-R-3, s_0+R+3]$. Let $X_*$ denote the subset of points where $1-|\alpha| \geq \kappa^{-1}$. The bounds that are stated below hold on the domain $[s_0-R,s_0+R] \times Y_T$:
\begin{itemize}\leftskip-0.25in
\item $|\nabla_A\alpha|^2 + r|\nabla_A\beta|^2 \leq \kappa r(1-|\alpha|^2) + \kappa^2$.
\item $(1-|\alpha|^2) \leq \kappa (r^{-1} + e^{-\sqrt{r}\operatorname{dist}(\cdot,X_*)/\kappa})$.
\item $|\nabla_A\alpha|^2 + r|\nabla_A\beta|^2 \leq \kappa(r^{-1} + re^{-\sqrt{r}\operatorname{dist}(\cdot,X_*)/\kappa})$.
\item $|\beta|^2 \leq \kappa(r^{-2} + r^{-1}e^{-\sqrt{r}\operatorname{dist}(\cdot, X_*)/\kappa})$.
\end{itemize}
In addition,
\begin{itemize}\leftskip-0.25in
\item $|\frac{\partial}{\partial s}A + B_A| \leq r(1-|\alpha|^2) + \kappa$.
\item $|\frac{\partial}{\partial s}A - B_A| \leq r(1-|\alpha|^2) + \kappa$.
\end{itemize}
\end{lemma}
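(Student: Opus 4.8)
The statement to prove (Lemma \ref{lem:bounds-35}) collects six pointwise bounds for an instanton solution $\frd=(A,\psi=(\alpha,\beta))$ on $[s_0-R,s_0+R]\times Y_T$ under the hypothesis that $\scm(\cdot)\le\mathcal{K}$ on a slightly larger interval and that $c_\frd\le c$. The plan is to adapt, more or less verbatim, the proofs of Lemmas 3.8 and 3.9 in \cite{taubes4}, replacing that paper's closed $3$-manifold by the noncompact $Y_T$ and using the a priori bounds already established in this appendix. The main point is that every tool used in \cite{taubes4} has a counterpart here: the pointwise $L^\infty$ bounds $|\alpha|\le 1+\kappa r^{-1}$ and $|\beta|^2\le\kappa r^{-1}(1-|\alpha|^2)+\kappa^2 r^{-2}$ are Lemma \ref{lem:bounds-32}; the local $L^2$ curvature and derivative bounds encoded by $c_\frd$ are \eqref{eq:bounds-314} (valid here by Lemma \ref{lem:bounds-33} when $\mathcal{A}_\frd\le r^2$, but here we simply assume $c_\frd\le c$); the degree-$q$ elliptic derivative estimates are the last clause of Lemma \ref{lem:bounds-34}; and, crucially for passing from $L^2$-in-$s$ control to pointwise control near the frontier of a noncompact manifold, we have the exponential-decay Lemmas \ref{lem:instanton-21} and \ref{lem:instanton-24}, which guarantee the needed bounds are already satisfied as $|s|\to\infty$ and, on $Y_\infty$, as $t\to\infty$.

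First I would prove the first bullet, $|\nabla_A\alpha|^2+r|\nabla_A\beta|^2\le\kappa r(1-|\alpha|^2)+\kappa^2$. Following \cite[Lemma 3.8]{taubes4}, one sets up a differential inequality of Bochner type for the function $w=r(1-|\alpha|^2)+\kappa_0^{-1}(|\nabla_A\alpha|^2+r|\nabla_A\beta|^2)$ using the Weitzenb\"ock formula for $\mathcal{D}_A^2$ on $\R\times Y_T$, the Seiberg--Witten equations \eqref{eq:instanton-sw}, and the $C^0$ and $c_\frd$ bounds to absorb error terms; then a maximum-principle argument on $[s_0-R-1,s_0+R+1]\times Y_T$ gives the bound. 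The one new feature is that $Y_T$ (or $Y_\infty$) is noncompact, so the maximum principle needs to know the relevant quantities are already small at the frontier; this is exactly what Lemma \ref{lem:instanton-24} supplies on the $|s|\gg 1$ and $t\gg 1$ ends, so the comparison function can be taken to dominate near infinity. Having the first bullet, the second and third bullets (the exponential-in-$\operatorname{dist}(\cdot,X_*)$ decay of $r(1-|\alpha|^2)$, $|\nabla_A\alpha|^2$, $r|\nabla_A\beta|^2$, and $r^2|\beta|^2$) follow by feeding the first bullet into a second comparison argument, again copying \cite[Lemma 3.9]{taubes4}: one shows $1-|\alpha|^2$ satisfies a differential inequality of the form $\ge$ (a positive multiple of $r$ times itself) away from $X_*$, so it is dominated by $e^{-\sqrt r\,\operatorname{dist}(\cdot,X_*)/\kappa}$ there, and the hypothesis $\scm\le\mathcal{K}$ controls the integral that enters the comparison. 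The last two bullets, $|\frac{\partial}{\partial s}A\pm B_A|\le r(1-|\alpha|^2)+\kappa$, come directly from the self-dual curvature equation $F_A^+=\frac12 r(\psi^\dagger\tau\psi-i\omega)$ together with the identity $\ast da=2a+\frac{\partial}{\partial s}a$ and Lemma \ref{lem:bounds-32}: writing $F_A=ds\wedge E_A+\ast B_A$, the self-duality relation expresses $E_A\pm B_A$ algebraically in terms of $\psi^\dagger\tau\psi-i\hat a$ plus the $s$-derivative of $a$, and $|\psi^\dagger\tau\psi-i\hat a|\le r^{-1}|B_A|$-type estimates plus $|1-|\alpha|^2|$ and $|\beta|^2$ bounds finish it; this part is purely local and requires no noncompactness input.

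The main obstacle I anticipate is not any single estimate but the bookkeeping needed to make the maximum-principle/comparison arguments genuinely $T$-uniform and valid on the noncompact $Y_T$: one must check that all the error terms generated by the Weitzenb\"ock formula and by commuting covariant derivatives are bounded by $T$-independent constants (which is why Section \ref{ssec:setup} insists on $T$-independent bounds on the metric, the curvature, $a$, $da$, and their covariant derivatives), and one must verify that the comparison functions can be chosen to dominate near the frontier using Lemmas \ref{lem:instanton-21}, \ref{lem:instanton-22}, and \ref{lem:instanton-24} rather than compactness. Once those two uniformity points are in place, the argument is a transcription of \cite[Sections 3a--c]{taubes4} with the dictionary: Lemma 3.1 $\leftrightarrow$ Lemma \ref{lem:bounds-32}, Lemma 3.3 / the $L^2$ curvature bound $\leftrightarrow$ \eqref{eq:bounds-314}, Lemmas 3.2, 3.6 $\leftrightarrow$ Lemma \ref{lem:bounds-34}, and Lemmas 3.8, 3.9 $\leftrightarrow$ the present Lemma \ref{lem:bounds-35}; I would therefore organize the write-up as ``the proof is the same as in \cite[Lemmas 3.8 and 3.9]{taubes4}, using Lemmas \ref{lem:bounds-32} and \ref{lem:bounds-34} and \eqref{eq:bounds-314} in place of the corresponding estimates there, and Lemma \ref{lem:instanton-24} to handle the ends of $\R\times Y_T$,'' and then point to the last two bullets as an immediate consequence of the curvature equation and Lemma \ref{lem:bounds-32}.
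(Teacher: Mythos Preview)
Your overall plan---adapt the proofs of Lemmas 3.8 and 3.9 of \cite{taubes4}, replacing their inputs by Lemmas \ref{lem:bounds-32}, \ref{lem:bounds-34} and \eqref{eq:bounds-314}---is the paper's approach, and your remarks about $T$-uniformity are to the point.  The attribution is slightly off (the paper assigns the first three bullets to Lemma~3.8 and the last two to Lemma~3.9), but that is harmless.  There is, however, a real gap in your treatment of the last two bullets.

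The self-dual curvature equation $F_A^+=\tfrac12 r(\psi^\dagger\tau\psi-i\omega)$ constrains only $F_A^+$; in the decomposition $F_A=ds\wedge E_A+\ast B_A$ it determines exactly \emph{one} of the combinations $E_A\pm B_A$ (in the paper's conventions $E_A=-\mathcal{B}$, so $E_A+B_A$ is algebraic in $\psi^\dagger\tau\psi-i\hat a$).  The other combination involves $F_A^-$ and is \emph{not} controlled algebraically by \eqref{eq:instanton-sw}; bounding $|\tfrac{\partial}{\partial s}A-B_A|$ by $r(1-|\alpha|^2)+\kappa$ genuinely requires the hypothesis $\scm\le\mathcal{K}$ and is the content of Lemma~3.9 in \cite{taubes4}, not a local algebraic identity.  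Moreover, both Lemmas~3.8 and~3.9 in \cite{taubes4} take as input the preliminary estimate of Lemma~3.7 there,
\[
\left|\tfrac{\partial}{\partial s}A - B_A\right| \le r\bigl(1+\kappa\mathcal{K}^{1/2}r^{-1/2}\bigr)(1-|\alpha|^2)+\kappa,
\]
which is where the $\scm$-bound first enters; the paper records this explicitly as \eqref{eq:bounds-316} and notes that its proof carries over with $\kappa$ now depending on $c$ via $c_\frd\le c$.  You should insert this Lemma~3.7 step before invoking Lemma~3.8, and replace ``immediate consequence of the curvature equation and Lemma~\ref{lem:bounds-32}'' for the last two bullets by the Lemma~3.9 argument of \cite{taubes4}, fed by \eqref{eq:bounds-316}.
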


\begin{proof}
The proof of the first bullet in the top set of four bullets differs only cosmetically from the proof of the first bullet in Lemma 3.8 of \cite{taubes4}.  The second bullet in Lemma 3.8 as stated in the published version \cite{taubes4} is not correct.  The argument in \cite{taubes4} does however prove the following:
\begin{equation}
\label{eq:bounds-new-316}
r(1-|\alpha|^2) + |\nabla_A\alpha|^2 + r|\nabla_A\beta|^2 \leq \kappa\left(1+re^{-\sqrt{r}\operatorname{dist}(\cdot,X_*)/\kappa}\right).
\end{equation}
This implies what is said by the second bullet from the top set of four. The argument for the $|\nabla_A\alpha|^2$ bound in the third bullet from the top set of four is given directly in seven steps. The eighth step of what follows derives the bounds for $|\beta|^2$ and $|\nabla_A\beta|^2$ that are asserted by the third and fourth bullets from the top set of four. Meanwhile, the proof of the last two bullets of the lemma (these are the bounds for $|\frac{\partial}{\partial s}A \pm B_A|$) differs only slightly from the proof of Lemma 3.9 in \cite{taubes4} (the latter proof does not use Lemma 3.8 in \cite{taubes4}).

\begin{sp6}
Let $\kappa_*$ denote the version of the number $\kappa$ that appears in the first and second bullets of the lemma. Then, the bound that is asserted by the third bullet holds (with $\kappa=c_0\kappa_*$) where the distance to $X_*$ is less than $100\kappa_*\sqrt{r}$ because of the first two bullets.
\end{sp6}

\begin{sp6}
According to the second bullet, $|\alpha| \geq \frac{99}{100}$ where the distance to $X_*$ is greater than $c_0\kappa_*\sqrt{r}$ (assuming $r>c_0$). As a consequence, the bundle $E$ on this part of $\R \times Y_T$ is isomorphic to the product $\C$ bundle via the isomorphism that takes $\alpha$ to a real number that can be written as $1-z$ with $z$ obeying $|z| \leq \kappa(r^{-1} + e^{-\sqrt{r}\operatorname{dist}(\cdot,X_*)/\kappa})$. This same isomorphism identifies $\beta$ with a section of $K^{-1}$ to be denoted by $\beta_\diamond$; and it identifies $A$ with a connection that is written as $\Ai + \aA$.

This same isomorphism identifies $\nabla_A\alpha$ with a section of $T^*(\R\times Y_T)$ that can be written as $-dz+\aA(1-z)$. Thus, bounds for $|dz|$ and $|\aA|$ lead to bounds for $|\nabla_A\alpha|$ (and vice versa because $z$ is real and $\aA$ is $i\R$-valued). Thus, $c_0r^{-1/2}(1+re^{-\sqrt{r}\operatorname{dist}(\cdot,X_*)/\kappa})$ bounds for the latter imply what is asserted by the third of the top four bullets in the lemma.
\end{sp6}

\begin{sp6}
The Riemannian metric on $\R\times Y_T$ with the self-dual 2-form $\omega$ (which is $ds\wedge (a+\frac{1}{2}\frac{\partial}{\partial s}a) + \frac{1}{2}*da$) defines an almost complex structure on $T(\R\times Y_T)$. With this almost complex structure understood, introduce by way of notation $\bar\partial_A\alpha$ to denote the $T^{0,1}$ part of $\nabla_A\alpha$. The rightmost equation in \eqref{eq:instanton-sw} when written in terms of $\alpha$ and $\beta$ identifies $\bar\partial_A\alpha$ with $x(\nabla_A\beta)$ where $x$ is a homomorphism with norm bounded by $c_0$.  Using the isomorphism of the preceding step, this identification takes the form
\begin{equation} \label{eq:bounds-new-317}
\bar\partial z + \aA{}^{0,1}(1-z) = x(\nabla_A\beta)
\end{equation}
with $\bar\partial z$ denoting the $T^{0,1}$ part of $dz$ and with $\aA{}^{0,1}$ denoting the analogous part of $\aA$. Because $\aA$ is $i\R$-valued, this last identity with \eqref{eq:bounds-new-316} leads to the bound
\begin{equation} \label{eq:bounds-new-318}
|\aA| \leq c_0\left(|\bar\partial z| + r^{-1/2}\left(1+re^{-\sqrt{r}\operatorname{dist}(\cdot,X_*)/\kappa}\right)\right).
\end{equation}
This implies that a $c_0r^{-1/2}(1+re^{-\sqrt{r}\operatorname{dist}(\cdot,X_*)/\kappa})$ bound on $|dz|$ leads to a corresponding $c_0r^{-1/2}(1+re^{-\sqrt{r}\operatorname{dist}(\cdot,X_*)/\kappa})$ bound on $|\aA|$.

Steps 4-7 derive the desired $c_0r^{-1/2}(1+re^{-\sqrt{r}\operatorname{dist}(\cdot,X_*)/\kappa})$ bound for $|dz|$.
\end{sp6}

\begin{sp6}
Since $\alpha$ obeys the first bullet of \eqref{cond:instanton-27}, the function $z$ obeys an equation that can be written schematically as 
\begin{equation} \label{eq:bounds-new-319}
d^\dagger dz + 2rz = -|\aA|^2(1-z) + \tfrac{1}{4}r(1-|\alpha|^2+|\beta|^2+z)z + x_0(1-z) + x_1(\nabla_A\beta)+x_2(\beta)
\end{equation}
with $x_0$, $x_1$, and $x_2$ each denoting a homomorphism whose norm is bounded by $c_0$. (The derivation invokes \eqref{eq:instanton-214} with $\fra = r^{-1/2}\aA$.)
\end{sp6}

\begin{sp6} \label{step:bounds-35-5}
With \eqref{eq:bounds-new-319} in hand, fix a point $p\in \R\times Y_T$ with distance $c_0\kappa r^{1/2}$ or more from the boundary of $X_*$. Let $\rho$ denote this distance and let $B$ denote the ball of radius equal to the minimum of $\frac{1}{2}\rho$ and $1$ centered at $p$.

Define the bump function $\chi_p$ by the rule $\chi_p(\cdot) = \chi(4\rho^{-1}\operatorname{dist}(\cdot,p))$.  This function is equal to $1$ where the distance to $p$ is less than $\frac{1}{16}\rho$ and it is equal to zero where the distance to $p$ is greater than $\frac{1}{4}\rho$. Let $z_p = \chi_p z$.  This function has compact support in $B$; and it obeys an equation that has the schematic form
\begin{equation} \label{eq:bounds-new-320}
d^\dagger dz_p + 2rz_p = -2\langle d\chi_p,dz\rangle + d^\dagger d\chi_p z + \chi_p\mathpzc{h}
\end{equation}
where $\mathpzc{h}$ is shorthand for what appears on the right hand side of \eqref{eq:bounds-new-319}.
\end{sp6}

\begin{sp6}
Let $q$ denote for the moment a point in $B$ with distance less than $\frac{1}{32}\rho$ from $p$.  Introduce now $G_q$ to denote the Dirichlet Green's function for the operator $d^\dagger d + \frac{1}{2}r$ on $B$ with pole at the point $q$. This Green's function is zero on the boundary of $B$, positive inside $B$ and smooth except at $q$. Moreover, it obeys:
\begin{equation} \label{eq:bounds-new-321}
G_q \leq c_0 \frac{1}{\operatorname{dist}(\cdot,q)^2}e^{-\sqrt{r}\operatorname{dist}(\cdot,q)/c_0} \mathrm{\ \ \ and\ \ \ } |(dG)_q| \leq \frac{1}{\operatorname{dist}(\cdot,q)^3}(1+\sqrt{r}\operatorname{dist}(\cdot,q))e^{-\sqrt{r}\operatorname{dist}(\cdot,q)/c_0}.
\end{equation}
\end{sp6}

\begin{sp6} \label{step:bounds-35-7}
The Green's function $G_q$ can be used to write $z_p$ at any point $q$ with distance less than $\frac{1}{32}\rho$ from $p$ as
\begin{equation} \label{eq:bounds-new-322}
z_p|_q = \int_B G_q(-2\langle dz, d\chi_p\rangle + d^\dagger d\chi_p z + \chi_p \mathpzc{h}).
\end{equation}
The exterior derivative of this identity gives an identity for $dz|_q$ that has the same form but with $(dG)_q$ replacing $G_q$. The desired bound $|dz|_p| \leq c_0r^{-1/2}(1+re^{-\sqrt{r}\operatorname{dist}(p,X_*)/c_0})$ follows from the $q=p$ version of the latter identity using the right hand inequality in \eqref{eq:bounds-new-321}, the inequality in \eqref{eq:bounds-new-316} and the bound $|\beta| \leq c_0r^{-1/2}$. (These are used to bound the term in the integrand with $\mathpzc{h}$ and the terms in the integrand with derivatives of $\chi_p$. Keep in mind in this regard that derivatives of $d\chi_p$ are zero where the distance to $p$ is less than $\frac{1}{32}\rho$.)
\end{sp6}

\begin{sp6}
This final step of the proof derives the bounds for $|\beta|^2$ and for $|\nabla_A\beta|^2$ that are asserted by the third and fourth bullets of Lemma~\ref{lem:bounds-35}. To start: The $|\beta|^2$ bound follows directly from \eqref{eq:bounds-new-316} and the second bullet of Lemma~\ref{lem:bounds-32}.  Meanwhile, the proof of the $|\nabla_A\beta|^2$ bound uses the Green's function in \eqref{eq:bounds-new-322} in conjuction with the equation in the second bullet of \eqref{cond:instanton-27}.

To say more, note first that $|\nabla_A\beta|$ is the same as $|\nabla_{\Ai+\aA}\beta_\diamond|$ at points where the distance to $X_*$ is greater than $c_0r^{-1/2}$. Let $p$ denote such a point and let $B$ denote the ball from Step~\ref{step:bounds-35-5}. If the radius of $B$ is less than $c_0^{-1}$, then the bundle $K^{-1}$ on $B$ is isomorphic to the product $\C$ bundle. In particular, one can and should fix an isomorphism that identifies the canonical connection on $K^{-1}$ with a connection on the product bundle that can be written as $\Ai+\Gamma$ with $|\Gamma|$ and $|\nabla\Gamma|$ both less than $c_0$. Having done this, then writing \eqref{cond:instanton-27} using this isomorphism gives an equation for $\beta_\diamond$ that has the schematic form
\begin{equation} \label{eq:bounds-new-323}
d^\dagger d\beta_\diamond + r\beta_\diamond = c_6 + \mathcal{R}
\end{equation}
where $c_6$ is the same as its namesake in the lower bullet of \eqref{cond:instanton-27} after accounting for the isomorphism between $K^{-1}$ and the product bundle; and where $\mathcal{R}$ obeys the pointwise bound $|\mathcal{R}| \leq c_0r^{-1/2}(1+re^{-\sqrt{r}\operatorname{dist}(p,X_*)/c_0})$. (The $c_6$ term is part of the $c_6\alpha$ term in \eqref{cond:instanton-27} and it appears when $\alpha$ is written as $(1-z)$.  The $-c_6z$ part is incorporated into the $\mathcal{R}$ term in \eqref{eq:bounds-new-323}.)  The only contribution to $\mathcal{R}$ that does not directly obey this bound by virtue of what has been proved previously is a term that comes from $(\nabla_{\Ai+\aA})^\dagger \nabla_{\Ai+\aA}\beta_\diamond$, this being $(d^\dagger\aA)\beta_\diamond$.  But, then $d^\dagger\aA$ can be identified as done in \eqref{eq:instanton-221} which leads directly to the asserted bound for the absolute value of $(d^\dagger\aA)\beta_\diamond$.

With \eqref{eq:bounds-new-323} understood, then the argument used to bound $dz$ in Step~\ref{step:bounds-35-7} can be used with only notational changes and one additional observation to bound the norm of $\nabla_{\rm I}\beta_\diamond$ by $c_0r^{-1}(1+re^{-\sqrt{r}\operatorname{dist}(p,X_*)/c_0})$.  The additional observation is that the solution $w$ to the equation $d^\dagger dw+\frac{1}{4}rw = c_6$ with Dirichlet boundary conditions on the ball $B$ is such that its norm and that of $dw$ are both bounded by $c_0r^{-1}$.  (This is because $c_6$ is smooth.)  This bound on $|\nabla_{\rm I}\beta_\diamond|$ with the previous bounds on $\aA$ and $\beta$ lead to the desired bound $|\nabla_{A}\beta| \leq c_0r^{-1}(1+re^{-\sqrt{r}\operatorname{dist}(p,X_*)/c_0})$. \qedhere
\end{sp6}
\end{proof}

Lemma \ref{lem:bounds-35} is strengthened with the addition of the following lemma.

\begin{lemma}
\label{lem:bounds-36}
There exists $\kappa>1$ with the following significance: Suppose that $T\in (16,\infty]$, that $r > \kappa$ and that $\frd = (A,\psi=(\alpha,\beta))$ is an instanton solution to \eqref{eq:instanton-sw} on $\R\times Y_T$ such that \eqref{eq:bounds-314} holds with $c_\frd<c$.  Let $\rmu\subset \R\times Y_T$ denote an open set where $1-|\alpha|^2 \leq \kappa^{-1}$ and where $F_{A_0}$ and $\frp$ both vanish. Use $\partial \rmu$ to denote the boundary of the closure of $\rmu$. There exists an identification between $E|_\rmu$ with the product bundle $\rmu\times \C$ that writes $(A,\psi)$ on $\rmu$ as $(\Ai + r^{1/2}\fra, \psii+\eta)$ with $\frh = (\fra,\eta)$ obeying the bound $|\frh| \leq \kappa e^{-\sqrt{r}\operatorname{dist}(\cdot,\partial U)/\kappa}$.
\end{lemma}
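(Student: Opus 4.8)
The plan is to deduce Lemma \ref{lem:bounds-36} from the pointwise a priori bounds of Lemma \ref{lem:bounds-35} together with the exponential-decay machinery of Lemma \ref{lem:instanton-21}, in essentially the same way that Lemma \ref{lem:monopoles-14} was deduced from Lemma \ref{lem:monopoles-11} in the three-dimensional setting. First I would fix a small universal constant and use the first and third bullets of Lemma \ref{lem:bounds-35} (with $X_\ast$ replaced by $\partial U$, or rather with $U$ contained in the complement of $X_\ast$) to record that on $U$ one has $1-|\alpha|^2$, $|\beta|$, $|\nabla_A\alpha|$, and $|\nabla_A\beta|$ all small; to invoke Lemma \ref{lem:bounds-35} I need the hypothesis $1-|\alpha|^2\le\kappa^{-1}$ on $U$, which is exactly what is assumed. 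Here I should note that the relevant cutoff function $\scm$ appearing in Lemma \ref{lem:bounds-35} is controlled because of Lemma \ref{lem:instanton-24}, which forces exponential decay as $|s|\to\infty$ and as $t\to\infty$ on $Y_\infty$; this is what keeps $c_\frd$ bounded and lets me treat all $T\in(16,\infty]$ uniformly.

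Next I would build the gauge. On $U$ the section $\alpha$ is non-vanishing, so $u = |\alpha|^{-1}\bar\alpha$ is a well-defined map $U\to S^1$; applying the gauge transformation $u$ writes the $E$-component of $u\psi$ as $1-z$ for a real function $z$ with $|z|\le c_0 e^{-\sqrt r\operatorname{dist}(\cdot,\partial U)/c_0}$ by the second bullet of Lemma \ref{lem:bounds-35}, and writes the transformed connection as $\Ai + r^{1/2}\fra$ where, by the bound on $|\nabla_A\alpha|$ in that same bullet, $|\fra|\le c_0 e^{-\sqrt r\operatorname{dist}(\cdot,\partial U)/c_0}$. Set $\frh = (\fra,\eta)$ with $\eta=(z,\beta)$, viewed as a section of $iT^\ast U\oplus\spb^+$. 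The point is then that $\frh$ solves an equation of the schematic form $\Li\frh + e\frh = 0$ on $U$: the equations \eqref{eq:instanton-sw} are linear in $(\fra,z,\beta)$ up to quadratic terms, and the missing middle (gauge-fixing) component $\ast\rmD\ast\fra + r^{1/2}(\psii^\dagger\eta-\eta^\dagger\psii) = r^{-1/2}\ast\rmD\ast\ahat$ can be expressed via the Bochner--Weitzenb\"ock formula for $\mathcal{D}_A^2$ as a first-order operator applied to $\eta$ — exactly as in the derivation of \eqref{eq:instanton-27} and in the three-dimensional discussion preceding Lemma \ref{lem:monopoles-14}. Because the coefficients of $e$ are bounded or linear in the small quantities $\fra,z,\beta$, the operator $e$ satisfies the smallness hypothesis \eqref{eq:instanton-li-l21-bound}-companion needed for Lemma \ref{lem:instanton-21} once $r$ is large.

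Then I would apply Lemma \ref{lem:instanton-21} with $U$ as given, $V$ the sub-open-set of $U$ where the distance to $\partial U$ exceeds $\kappa r^{-1/2}$, and $\rho$ a fixed multiple of $r^{-1/2}$: this converts the already-small pointwise bound into an $L^2$-with-covariant-derivative bound on $\frh$ that decays like $e^{-\sqrt r\,\operatorname{dist}(\cdot,\partial U)/\kappa}$. Finally, standard interior elliptic regularity for the elliptic operator $\Li + e$ (or directly for \eqref{eq:instanton-sw}, using the higher-derivative bounds from the last clause of Lemma \ref{lem:bounds-34}) upgrades this $L^2_1$ decay to the claimed $C^0$ bound $|\frh|\le\kappa e^{-\sqrt r\operatorname{dist}(\cdot,\partial U)/\kappa}$ on the slightly shrunken set; reabsorbing the shrinkage into $\kappa$ and renaming constants finishes the argument. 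I expect the main obstacle to be the bookkeeping in verifying that $\frh$ genuinely satisfies $\Li\frh + e\frh = 0$ with $e$ of the required form — in particular, correctly extracting the gauge-fixing component from the Dirac equation via the Weitzenb\"ock identity so that $e$ is first-order with coefficients linear in $(\fra,z,\beta)$ — since everything downstream (the applicability of Lemma \ref{lem:instanton-21} and the regularity step) is then a routine repetition of arguments already used several times in the appendix.
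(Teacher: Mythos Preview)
Your proposal is correct and follows essentially the same approach as the paper: use the gauge $u=|\alpha|^{-1}\bar\alpha$ to write $(A,\psi)=(\Ai+r^{1/2}\fra,\psii+\eta)$ with $\frh=(\fra,\eta)$ initially small by the second bullet of Lemma~\ref{lem:bounds-35}, extract the gauge-fixing component from the Bochner--Weitzenb\"ock identity (this is exactly \eqref{eq:instanton-210} in the paper), rewrite \eqref{eq:instanton-sw} as $\Li\frh+r^{1/2}\frh\#\frh+e_*(\frh)=0$ and absorb the quadratic term into a small $e$, and then invoke Lemma~\ref{lem:instanton-21}. Your final remark about needing elliptic regularity to pass from the $L^2_1$-type bound of Lemma~\ref{lem:instanton-21} to the pointwise estimate is accurate and is left implicit in the paper's terse last sentence.
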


\begin{proof}
Assume that $|\alpha| > 1-\frac{1}{100}$ on $\rmu$. The unit length element $\alpha/|\alpha|$ defines a bundle isomorphism from $\rmu \times \C$ to $E|_\rmu$ and the inverse isomorphism writes $\alpha$ as $(1-z)$ with $z$ being real with norm less than $\frac{1}{10}$. The bound for $(1-|\alpha|^2)$ in the second bullet of Lemma \ref{lem:bounds-35} implies that $|z|\leq c_0(r^{-2}+e^{-\sqrt{r}\operatorname{dist}(\cdot,\partial \rmu)/c_0})$. Meanwhile, the isomorphism writes $A$ as $\Ai+r^{1/2}\fra$ and the bound in the second bullet of Lemma \ref{lem:bounds-35} for $|\nabla_A\alpha|$ implies that $|\fra| \leq c_0(r^{-1}+e^{-\sqrt{r}\operatorname{dist}(\cdot,\partial \rmu)/c_0})$. Let $\ahat_0$ denote $r^{-1/2}\fra$. This $i\R$-valued 1-form obeys the equation in \eqref{eq:instanton-221}. This implies in turn that \eqref{cond:instanton-26} is obeyed when $\frh_\diamond$ is given by $(\fra, ((1-z),\beta_\diamond))$ where $\beta_\diamond$ corresponds to $\beta$ via the bundle isomorphism that writes $\alpha$ as $(1-z)$. Moreover, what is denoted by $\pzee$ in this particular version of \eqref{cond:instanton-26} obeys $||\pzee(\frh)||_2 \leq {c_0}^{-1} \frac{1}{100}||\Li\frh||_2$ when $\frh$ has compact support on the part of $\rmu$ with distance greater than $c_0r^{-1/2}$ from $\partial\rmu$ (assuming that $r>c_0$).  This is because bounds from Lemma~\ref{lem:bounds-35} say that
\begin{equation} \label{eq:bounds-new-324}
|\frh|^2 \leq c_0\left(r^{-1} + e^{-\sqrt{r}\operatorname{dist}(\cdot, \partial\rmu)/c_0}\right)
\end{equation}
and the homomorphism $\pzee$ has linear dependence on the components of $\frh$.  Granted the preceding, it follows that $\pzee$ obeys the bounds in Lemma~\ref{lem:instanton-21} when $r>c_0$ and when $\frh$ has compact support where the distance to $\partial\rmu$ is greater than $c_0r^{-1/2}$. With this understood, then what is said by Lemma~\ref{lem:bounds-36} follows from Lemma~\ref{lem:instanton-21}.
\end{proof}

An analog here for Lemma 3.10 in \cite{taubes4} is also needed. It follows directly.

\begin{lemma}
\label{lem:bounds-37}
Given $c>1$ and $\mathcal{K} \geq 1$, there exists $\kappa>1$ with the following significance: Suppose here that $r\geq \kappa$ and that $\frd = (A,\psi=(\alpha,\beta))$ is an instanton solution to \eqref{eq:instanton-sw} such that \eqref{eq:bounds-314} holds with $c_\frd \leq c$. Fix a point $s_0\in\R$ and $R\geq 2$ and suppose that $\scm(\cdot)\leq \mathcal{K}$ on $[s_0-R-4,s_0+R+4]$. Given ${\bf x} \in [s_0-R-1, s_0+R+1] \times Y_T$ and given a number $\rho \in (r^{-1/2},\kappa^{-1})$, let $\scm({\bf x},\rho)$ denote the integral of $r(1-|\alpha|^2)$ over the radius $\rho$ ball in $\R\times M$ centered at ${\bf x}$. Then
\begin{itemize}\leftskip-0.25in
\item If $\rho_1 > \rho_0$ are in $(r^{-1/2},\kappa^{-1})$, then $\scm({\bf x},\rho_1) \geq \kappa^{-1} \rho_1^2/\rho_0^2 \scm({\bf x},\rho_0)$.
\item $\scm({\bf x},\rho) \leq \kappa\mathcal{K}\rho^2$.
\item Suppose that $|\alpha|({\bf x}) \leq \frac{1}{2}$. If $\rho \in (r^{-1/2},\kappa^{-1})$, then $\scm({\bf x},\rho) \geq \kappa^{-1}\rho^2$.
\end{itemize}
\end{lemma}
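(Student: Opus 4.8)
The final statement to prove is Lemma~\ref{lem:bounds-37}, the analog for $\R\times Y_T$ of Lemma~3.10 in \cite{taubes4}, giving monotonicity and upper bounds for the local energy function $\scm(\mathbf{x},\rho)$.

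\textbf{Plan of proof.} The strategy is to reduce everything to the corresponding analysis in \cite[Section 3b]{taubes4}, exactly as the remark following the statement suggests, by checking that the a~priori bounds we have established for instanton solutions on $\R\times Y_T$ (Lemmas~\ref{lem:bounds-32}, \ref{lem:bounds-34}, \ref{lem:bounds-35}, \ref{lem:bounds-36}) are strong enough to make Taubes's argument run verbatim, with the universal constants there replaced by constants depending on the bound $c$ for $c_\frd$ and on $\mathcal{K}$. First I would record the key differential inequality: from the Seiberg--Witten equations \eqref{eq:instanton-sw} together with the Bochner--Weitzenb\"ock formula for $\mathcal{D}_A$, the function $w = 1-|\alpha|^2$ satisfies a differential inequality of the form $d^\dagger d w + 2r|\alpha|^2 w \leq \kappa(r|\beta|^2 + |\nabla_A\beta|^2 + \text{curvature terms})$ on $\R\times Y_T$, which after using Lemma~\ref{lem:bounds-32} and Lemma~\ref{lem:bounds-34} becomes $d^\dagger d w + 2r|\alpha|^2 w \leq \kappa r w + \kappa$, i.e.\ essentially the inequality used in \cite{taubes4} to run the frequency-function/monotonicity argument. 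The hypothesis $\scm(\cdot)\leq\mathcal{K}$ on the enlarged interval $[s_0-R-4,s_0+R+4]$ is precisely what is needed to control the $L^2$ norm of $w$ on the balls in question and to guarantee that the local energy $\scm(\mathbf{x},\rho)$ is finite and behaves well under rescaling.

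The three bullets are then obtained as follows. For the first bullet (the lower monotonicity bound $\scm(\mathbf{x},\rho_1)\geq \kappa^{-1}(\rho_1/\rho_0)^2\scm(\mathbf{x},\rho_0)$), I would integrate the differential inequality for $w$ over concentric balls of radius between $r^{-1/2}$ and $\kappa^{-1}$ centered at $\mathbf{x}$, using Stokes and the non-negativity of $w$ (up to the $\kappa r^{-1}$ error from Lemma~\ref{lem:bounds-32}), to get that $\rho\mapsto \rho^{-2}\scm(\mathbf{x},\rho)$ is, up to a bounded multiplicative factor and the small additive error, almost monotone; this is the standard Taubes frequency-function estimate and the rescaled geometry of $\R\times Y_T$ near $\mathbf{x}$ has uniformly bounded curvature by the setup in Section~\ref{ssec:instantons}, so no new geometric input is needed. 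For the second bullet ($\scm(\mathbf{x},\rho)\leq\kappa\mathcal{K}\rho^2$), I would combine the first bullet with the hypothesis $\scm(\cdot)\leq\mathcal{K}$: taking $\rho_1$ comparable to $\kappa^{-1}$ and $\rho_0=\rho$, the monotonicity gives $\scm(\mathbf{x},\rho)\leq \kappa(\rho/\rho_1)^2\scm(\mathbf{x},\rho_1)\leq \kappa'\rho^2\scm(\mathbf{x},\kappa^{-1})$, and $\scm(\mathbf{x},\kappa^{-1})$ is bounded by a constant multiple of $\scm(s(\mathbf{x})-1)+\scm(s(\mathbf{x}))+\scm(s(\mathbf{x})+1)\leq 3\mathcal{K}$ since a ball of radius $\kappa^{-1}$ is contained in such a union of unit-length slabs. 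For the third bullet (the lower bound $\scm(\mathbf{x},\rho)\geq \kappa^{-1}\rho^2$ when $|\alpha|(\mathbf{x})\leq\tfrac12$), I would argue by contradiction using Lemma~\ref{lem:bounds-34}: if $\scm(\mathbf{x},\rho)$ were too small, then by the monotonicity of the first bullet $\scm(\mathbf{x},r^{-1/2})$ would be small, but the derivative bound $|\nabla_A\alpha|^2\leq\kappa r$ forces $1-|\alpha|^2\geq \tfrac14$ on a ball of radius $\kappa^{-1}r^{-1/2}$ around $\mathbf{x}$, so $\scm(\mathbf{x},r^{-1/2})\geq \kappa^{-1}$, which after rescaling gives the claim --- this is exactly the argument for the third bullet of Lemma~3.10 in \cite{taubes4}.

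\textbf{Main obstacle.} The substantive point is not any single estimate but verifying that the dependence of constants is uniform in $T\in(16,\infty]$; this is where the hypotheses of Section~\ref{ssec:instantons} (the $T$-independent bounds on the curvature tensor, on $\nabla a$, on $\nabla da$, and the product structure of the metric on $\R\times(Y_T\ssm N)$ and on $\R\times N_T$) are essential, and where one must be slightly careful that the cutoff functions and the geometry of small geodesic balls used in the frequency-function argument do not introduce hidden $T$-dependence. Since all the prior lemmas in this subsection were proved with $T$-uniform constants, the argument of \cite[Section 3b]{taubes4} transplants without essential change, and I would simply cite that section for the detailed calculations, indicating the three places where the universal constant is replaced by one depending on $c$ (via $c_\frd\leq c$) and, for the second and third bullets, on $\mathcal{K}$.
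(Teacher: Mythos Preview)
Your proposal is correct and takes essentially the same approach as the paper: the paper's proof is the single sentence ``The proof of this lemma is virtually identical to the proof of Lemma 3.10 in \cite{taubes4},'' and your sketch simply unpacks how that transplantation works, with the same reliance on Lemmas~\ref{lem:bounds-32}--\ref{lem:bounds-36} for the $T$-uniform a priori bounds.
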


The proof of this lemma is virtually identical to the proof of Lemma 3.10 in \cite{taubes4}.

\subsection{Local convergence to pseudo-holomorphic curves}
\label{ssec:local-convergence}

The propositions in this section are the analogs for $\R\times Y_T$ of Propositions 4.5 and 5.5 in \cite{taubes4}. These propositions refer to a connection on the bundle $E$ that is constructed from a pair $(A,\psi=(\alpha,\beta))$ as follows: fix a smooth, non-decreasing function $\frP:[0,\infty)\to[0,1]$ with $\frP(x)=x$ for $x<\frac{1}{2}$ and with $\frP(x)=1$ for $x>\frac{3}{4}$. Having chosen $\frP$, define $\hat{A}$ by the formula
\begin{equation}
\label{eq:flatconn}
\hat{A}=A-\frac{1}{2}\frP(|\alpha|^2)|\alpha|^{-2}(\bar{\alpha}\nabla_A\alpha-\alpha\nabla_A\bar{\alpha}).
\end{equation}
The curvature of $\hat{A}$ is given by the formula
\begin{equation}
\label{eq:curvature-flatconn}
F_{\hat{A}}=(1-\frP)F_A-\frP'\nabla_A\bar{\alpha}\wedge\nabla_A\alpha,
\end{equation}
with $\frP'$ denoting the function on $\R\times Y_T$ given by evaluating the derivative of $\frP$ on the function $|\alpha|^2$. Note that $F_{\hat{A}}=0$ where $\frP=1$ and thus $|\alpha|\geq\frac{\sqrt{3}}{2}$. It is also the case that $\nabla_{\hat{A}}\frac{\alpha}{|\alpha|}=0$ where $\frP=1$.

\begin{proposition}
\label{prop:conv-41}
Suppose that $\mathcal{K}>1$ and that each Reeb orbit with length at most $\frac{1}{2\pi}\mathcal{K}$ is non-degenerate. Given such $\mathcal{K}$ and given $c>1$, there exists $\kappa\geq1$, and given in addition $\delta>0$, there exists $\kappa_\delta\geq1$; these numbers $\kappa$ and $\kappa_\delta$ having the following significance: Suppose that $T\in(16,\infty]$, that $r>\kappa_\delta$ and that $\frd=(A,\psi=(\alpha,\beta))$ is an instanton solution to \eqref{eq:instanton-sw} such that \eqref{eq:bounds-314} holds with $c_{\frd}<c$. Let $\mathbb{I}\subset\R$ denote a connected subset of length at least $2\delta^{-1}+16$ such that $\mathrm{sup}_{s\in\mathbb{I}} \scm(s)\leq\mathcal{K}$. Let $I\subset\mathbb{I}$ denote the set of points with distance at least $7$ from any boundary of $\mathbb{I}$. 
\begin{itemize}\leftskip-0.25in
\item Each point in $I\times N$ where $|\alpha|\leq1-\delta$ has distance $\kappa r^{-1/2}$ or less from a point in $\alpha^{-1}(0)$.
\item There exists 
\begin{itemize}
\item[(a)] A positive integer $\rmn\leq\kappa$ and a cover of $I$ as $\cup_{1\leq k\leq \rmn}I_k$ by connected open  sets of length at least $2\delta^{-1}$. These are such that $I_k\cap I_{k'}=\emptyset$ is $|k-k'|>1$. If $|k-k'|=1$, then $I_k\cap I_{k'}$ has length between $\frac{1}{128}\delta^{-1}$ and $\frac{1}{64}\delta^{-1}$.  
\item[(b)] For each $k\in\{1,\dots,\rmn\}$, a set $\vartheta_k$ whose typical element is a pair $(C,m)$ where $m$ is a positive integer and $C\subset \mathbb{I}\times Y_T$ is a pseudoholomorphic subvariety defined on a neighborhood of $I_k\times Y_T$. These elements of $\vartheta_k$ are constrained that no two pairs share the same subvariety component so that $\sum_{(C,m)\in\vartheta_k}m\int_Cda\leq\kappa$. 
\end{itemize}
In addition, the collection $\{\vartheta_k\}_{k=1,\dots,\rmn}$ is such that
\begin{enumerate}\leftskip-0.3in
\item $\mathrm{sup}_{z\in\cup_{(C,m)\in\vartheta_k}\,\mathrm{and}\,s(z)\in I_k}\operatorname{dist}(z,\alpha^{-1}(0))+\mathrm{sup}_{z\in\alpha^{-1}(0)\,\mathrm{and}\,s(z)\in I_k}\operatorname{dist}(z,\cup_{(C,m)\in\vartheta_k}C)<\delta$.
\item Let $k\in\{1,\dots,\rmn\}$, let $I'\subset I_k$ denote an interval of length $1$, and let $\upsilon$ denote the restriction to $I'\times Y_T$ of a 2-form on $\mathbb{I}\times Y_T$ with $|\upsilon|$ on $\mathbb{I}\times Y_T$ bounded by 1 and with $|\nabla\upsilon|$ on $\mathbb{I}\times Y_T$ bounded by $\delta^{-1}$. Then 
\[\left|\frac{i}{2\pi}\int_{I'\times Y_T}\upsilon\wedge F_{\hat{A}}-\sum_{(C,m)\in\vartheta_k}m\int_C\upsilon\right|<\delta.\]
\end{enumerate}
\item Suppose that $\mathbb{I}$ is unbounded from above. Fix $\en_+\leq\mathcal{K}$ and assume with regards to Reeb orbits only that all with length at most $\frac{1}{2\pi}\en_+$ are non-degenerate. Assume in addition that $\lim_{s\to\infty}\en(\frd|_s)\leq\en_+$. Then the conclusions of the first two bullets hold with the constant $\kappa$ depending on $\mathcal{K}$ and $\en_+$, and with $\kappa_\delta$ depending on the latter and on $\delta$. Moreover, if $\mathbb{I}=\R$, and all Reeb orbits of length at most $\frac{1}{2\pi}\en_+$ are non-degenerate, then $\lim_{s\to-\infty}\en(\frd|_s)\leq\en_++\delta$.
\end{itemize}
\end{proposition}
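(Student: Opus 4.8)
The plan is to follow closely the proofs of Propositions 4.5 and 5.5 of \cite{taubes4}, which establish exactly these statements for instanton solutions on $\R\times Y$ with $Y$ closed; the content here is that none of the arguments in \cite{taubes4} actually used compactness of $Y$ in an essential way — they are all local estimates on cylinders $I\times U$ — once one has the a priori bounds already collected in Section \ref{ssec:instanton-bounds}. First I would record the key inputs: Lemma \ref{lem:bounds-32} (pointwise bound on $|\alpha|$ and $|\beta|$), Lemma \ref{lem:bounds-33} (the $L^2$ bound on $|F_A|^2 + r|\nabla_A\psi|^2$ over two-unit cylinders, which is precisely the hypothesis $c_\frd\le c$ reorganized), Lemmas \ref{lem:bounds-34}--\ref{lem:bounds-37} (the $|F_A|\le\kappa r$, $|\nabla_A\alpha|^2\le\kappa r$ bounds, the exponential decay of $r(1-|\alpha|^2)$ away from the zero locus $X_*$, and the monotonicity/frequency bounds for $\scm(\mathbf x,\rho)$), and Lemma \ref{lem:bounds-36} (local gauge with exponential decay where $|\alpha|$ is close to $1$). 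These are the $\R\times Y_T$ analogs of Lemmas 3.1--3.10 of \cite{taubes4}, and with them in hand the proofs of Propositions 4.5 and 5.5 there go through verbatim up to notation, because all the relevant integrals and differential inequalities take place on finite-length subcylinders of $I\times Y_T$.

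Second I would address the structural part of the proof — the construction of the integer $\rmn$, the cover $\cup_k I_k$, and the pseudoholomorphic currents $\vartheta_k$. Here the mechanism is the Gromov compactness argument for the connections $\hat A$ of \eqref{eq:flatconn}, whose curvature $F_{\hat A}$ by \eqref{eq:curvature-flatconn} is supported near $\alpha^{-1}(0)$ and, after the rescaling by $r$, defines currents converging to $\omega$-positive pseudoholomorphic subvarieties. The bound $\mathrm{sup}_{s\in\mathbb I}\scm(s)\le\mathcal K$ gives a uniform bound on $\int_C\omega$; the nondegeneracy of short Reeb orbits is what forces the subvarieties to have the expected asymptotics and bounds the number of components uniformly, hence the integer $\rmn\le\kappa$. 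One covers $\mathbb I$ by overlapping intervals of length $\sim 2\delta^{-1}$ so that on each the relevant Gromov limit is taken, and on overlaps the limits agree; the combinatorics of the cover (the $\frac1{128}\delta^{-1}$–$\frac1{64}\delta^{-1}$ overlap condition) is exactly as in \cite{taubes4}. The estimate in item (2), comparing $\frac{i}{2\pi}\int_{I'\times Y_T}\upsilon\wedge F_{\hat A}$ to $\sum_{(C,m)\in\vartheta_k}m\int_C\upsilon$, is the quantitative form of this convergence and follows from \eqref{eq:curvature-flatconn} together with Lemmas \ref{lem:bounds-35}--\ref{lem:bounds-37} controlling where $F_{\hat A}$ is concentrated.

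Third, for the final bullet — the case $\mathbb I$ unbounded above, with the limiting energy hypothesis $\lim_{s\to\infty}\en(\frd|_s)\le\en_+$ — I would invoke Lemma \ref{lem:instanton-24} to control $(A,\psi)$ near $s=+\infty$ (exponential convergence to the solution $\frc_+$ on $Y_T$), which pins down $\scm(s)$ for large $s$ and lets one take $\mathcal K$ effectively governed by $\en_+$ rather than by the whole of $\mathbb I$; the first two bullets then apply with constants depending only on $\mathcal K$ and $\en_+$. For the statement that when $\mathbb I=\R$ one gets $\lim_{s\to-\infty}\en(\frd|_s)\le\en_++\delta$, the argument is the monotonicity of $s\mapsto\fra_s(\frd|_s)$ from Lemma \ref{lem:bounds-31} (the action decreases as $s$ increases, up to the controlled error terms involving $\frac{\partial}{\partial s}a$ and $\mathcal R$, which vanish outside $[-s_0,s_0]\times Y_T$) combined with the relation between the action and the energy $\en$; this is the $\R\times Y_T$ version of the corresponding energy-monotonicity statement in \cite{taubes4}. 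I expect the main obstacle to be bookkeeping rather than new ideas: one must check that every estimate in \cite[Sections 4 and 5]{taubes4} that is invoked is genuinely local (so that the noncompactness and the $\R$-dependence of $a$ on $\R\times(Y_T\ssm N)$ cause no trouble), and that the constants can be taken uniform in $T\in(16,\infty]$ — which is guaranteed because, as noted in Sections \ref{ssec:setup} and \ref{ssec:instantons}, the metric, contact form, curvature tensor, and their covariant derivatives all have $T$-independent bounds, and in the noncompact directions the geometry is a fixed cylinder. The only genuinely delicate point is ensuring the Gromov-convergence / current-compactness step on the semi-infinite interval is controlled by $\en_+$ and not by the behavior at $s\to-\infty$, for which Lemma \ref{lem:instanton-24} and the action monotonicity are exactly the tools needed.
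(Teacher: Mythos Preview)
Your proposal is correct and matches the paper's approach: the paper's proof is essentially a one-line remark that the arguments for Proposition~4.5 in \cite{taubes4} are local in nature and carry over cosmetically, with Lemmas~\ref{lem:bounds-35}, \ref{lem:bounds-36}, and \ref{lem:bounds-37} here playing the roles of Lemmas~3.8, 3.9, and 3.10 there. The only addition worth noting is that the paper also points to Propositions~7.8 and 7.9 of \cite{ht2} for handling the region where the form $a$ depends on the $\R$-parameter, which is the bookkeeping you anticipate in your final paragraph.
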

The proof of this proposition differs only cosmetically from the proof of Proposition 4.5 in \cite{taubes4} because the arguments for Proposition 4.5 in \cite{taubes4} are essentially local in nature. Note that Lemmas \ref{lem:bounds-35}, \ref{lem:bounds-36}, and \ref{lem:bounds-37} play the same role in the proof of Proposition \ref{prop:conv-41} as the role played by Lemmas 3.8, 3.9, and 3.10 in the proof of Proposition 4.5 in \cite{taubes4}. See also the proof of Propositions 7.8 and 7.9 in \cite{ht2} for dealing with the part of $\R\times Y_T$ where the form $a$ depends on the $\R$-parameter. 

The next proposition plays the role of Proposition 5.5 in \cite{taubes4}.
\begin{proposition}
\label{prop:conv-42}
There exists $\kappa>1$ such that if $c>\kappa$, and if the norms of $\frp$ and its covariant derivatives are pointwise less than $c^{-1}$, and if
\[|\frac{\partial}{\partial s}a|+|\frac{\partial}{\partial s}da|\leq c^{-1}\;and\;s_0<\kappa^{-1}c,\]
then what comes next is true. Suppose that $\mathcal{K}>1$ and that each Reeb orbit with length at most $\frac{1}{2\pi}\mathcal{K}$ is non-degenerate. There exists $\kappa_\ast$, and given $\delta>0$, there exists $\kappa_\delta\geq 1$ which have the following significance: Suppose that $T\in (16,\infty]$, that $r>\kappa_\delta$ and that $T\in(16,\infty]$, and that $\frd=(A,\psi=(\alpha,\beta))$ is an instanton solution to \eqref{eq:instanton-sw} on $\R\times Y_T$ with $\mathcal{A}_\frd\leq\mathcal{K} r$. Assume that $\en_+=\lim_{s\to\infty}\en(\frd|_s)\leq\mathcal{K}$.
\begin{itemize}\leftskip-0.25in
\item Let $\mathfrak{c}_-=\lim_{s\to-\infty}(A,\psi)|_s$. Then $\mathfrak{c}_-$ is a solution to \eqref{eq:sw} with $\en(\mathfrak{c}_-)\leq\en_++\delta$.
\item Each point in $\R\times Y_T$ where $|\alpha|\leq1-\delta$ has distance $\kappa_\ast r^{-1/2}$ or less from a point in $\alpha^{-1}(0)$. 
\item Moreover, there exists 
\begin{itemize}
\item[(a)] A positive integer $\rmn\leq\kappa$ and a cover of $\R$ as $\cup_{1\leq k\leq \rmn}I_k$ by connected open  sets of length at least $2\delta^{-1}$. These are such that $I_k\cap I_{k'}=\emptyset$ is $|k-k'|>1$. If $|k-k'|=1$, then $I_k\cap I_{k'}$ has length between $\frac{1}{128}\delta^{-1}$ and $\frac{1}{64}\delta^{-1}$.  
\item[(b)] For each $k\in\{1,\dots,\rmn\}$, a set $\vartheta_k$ whose typical element is a pair $(C,m)$ where $m$ is a positive integer and $C\subset \R\times Y_T$ is a pseudoholomorphic subvariety. These elements of $\vartheta_k$ are constrained so that $\sum_{(C,m)\in\vartheta_k}m\int_Cda\leq\kappa$. 
\end{itemize}
In addition, these sets $\{\vartheta_k\}_{k=1,\dots,\rmn}$ are such that
\begin{enumerate}\leftskip-0.3in
\item $\mathrm{sup}_{z\in\cup_{(C,m)\in\vartheta_k}\,\mathrm{and}\,s(z)\in I_k}\operatorname{dist}(z,\alpha^{-1}(0))+\mathrm{sup}_{z\in\alpha^{-1}(0)\,\mathrm{and}\,s(z)\in I_k}\operatorname{dist}(z,\cup_{(C,m)\in\vartheta_k}C)<\delta$.
\item Let $k\in\{1,\dots,\rmn\}$, let $I'\subset I_k$ denote an interval of length $1$, and let $\upsilon$ denote the restriction to $I'\times Y_T$ of a 2-form on $\R\times Y_T$ with $|\upsilon|$ on $\R\times Y_T$ bounded by $1$ and with $|\nabla\upsilon|$ on $\R\times Y_T$ bounded by $\delta^{-1}$. Then 
\[\left|\frac{i}{2\pi}\int_{I'\times Y_T}\upsilon\wedge F_{\hat{A}}-\sum_{(C,m)\in\vartheta_k}m\int_C\upsilon\right|\leq\delta.\]
\end{enumerate}
\item The version of the function $\scm$ that is defined by $\frd$ is bounded by $\kappa_\ast$.
\end{itemize}
\end{proposition}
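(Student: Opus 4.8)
The plan is to follow the template of Proposition 5.5 in \cite{taubes4} almost verbatim, since the hypothesis $|\frac{\partial}{\partial s}a|+|\frac{\partial}{\partial s}da|\leq c^{-1}$ and $s_0<\kappa^{-1}c$ guarantees that the $s$-dependence of $a$ and the metric is so slow that the global structure of an instanton over a long cylinder is effectively that of a trajectory for a fixed contact form. First I would establish the identification $\mathfrak{c}_-=\lim_{s\to-\infty}(A,\psi)|_s$: by Lemma \ref{lem:instanton-24} (second bullet applied where $Y_T=Y_\infty$, and trivially otherwise since all Reeb orbits lie in compact $N$) such a limit exists, and by Lemma \ref{lem:bounds-31} the action $\fra_s(\frd|_s)$ is monotone up to the explicitly bounded $\mathcal{R}$ and $\frac{\partial}{\partial s}a$ error terms; the bound $\mathcal{A}_\frd\leq\mathcal{K} r$ together with the smallness of $\frac{\partial}{\partial s}a$ then forces $\en(\mathfrak{c}_-)\leq\en_++\delta$, where the $\delta$ slack absorbs the $s$-dependent contributions. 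This is the content of the first bullet. Note that $\mathcal{A}_\frd\leq\mathcal{K}r$ is the normalization that lets Lemma \ref{lem:bounds-33} supply a uniform constant $c_\frd\leq c$, so the pointwise bounds of Lemmas \ref{lem:bounds-34}--\ref{lem:bounds-37} are all in force.

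Next I would prove the second bullet---that points with $|\alpha|\leq 1-\delta$ are within $\kappa r^{-1/2}$ of $\alpha^{-1}(0)$---by invoking the third bullet of Lemma \ref{lem:bounds-37}: if $|\alpha|({\bf x})\leq\frac 12$ then $\scm({\bf x},\rho)\geq\kappa^{-1}\rho^2$ on an $r^{-1/2}$-scale ball, while the first bullet of Lemma \ref{lem:bounds-35} upgrades $|\alpha|\leq 1-\delta$ to $|\alpha|\leq\frac 12$ on a slightly smaller scale (or one works directly with the exponential decay of $1-|\alpha|$ in $\operatorname{dist}(\cdot,X_*)$). The key point is that the constants are controlled once $\scm(s)\leq\mathcal{K}$ on a long enough interval, which follows from $\mathcal{A}_\frd\leq\mathcal{K}r$ via Lemma \ref{lem:bounds-31}; this is exactly the hypothesis needed to feed Proposition \ref{prop:conv-41}. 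Then the third bullet---existence of the cover $\{I_k\}$, the sets $\vartheta_k$ of weighted pseudoholomorphic subvarieties, and the two quantitative conclusions (1) and (2) about the concentration set of $\alpha^{-1}(0)$ near the $C$'s and the current identity $\frac{i}{2\pi}\int\upsilon\wedge F_{\hat A}\approx\sum m\int_C\upsilon$---is obtained by applying Proposition \ref{prop:conv-41} on each finite subinterval of $\R$ together with a diagonal/exhaustion argument, using $\lim_{s\to\infty}\en(\frd|_s)\leq\en_+$ and the just-established $\lim_{s\to-\infty}\en(\frd|_s)\leq\en_++\delta$ to control the energy at both ends and hence the number $\rmn$ of charts.

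The main obstacle, and the only place where anything genuinely new relative to \cite{taubes4} happens, is controlling the region where $a$ actually varies with $s$, i.e.\ the band $|s|\leq s_0$. Here the Bochner--Weitzenb\"ock identities and the curvature normal-form $\ast da=2a+\frac{\partial}{\partial s}a$ acquire $s$-dependent correction terms; the remedy is precisely the hypothesis $|\frac{\partial}{\partial s}a|+|\frac{\partial}{\partial s}da|\leq c^{-1}$ with $s_0<\kappa^{-1}c$, which makes every such correction term of size $O(c^{-1})$ and hence harmless after absorbing it into $\delta$ and into the constant $\kappa$ (this is the same device used in Propositions 7.8 and 7.9 of \cite{ht2} to handle cobordism ends). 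Concretely I would: (i) rerun the proof of Lemma \ref{lem:bounds-37} and of the first bullet of Proposition \ref{prop:conv-41} with these extra terms carried along, noting they contribute to the integrands at most a fixed multiple of $c^{-1}$ times $(|F_A|^2+r|\nabla_A\psi|^2)$ plus lower order, controlled by Lemma \ref{lem:bounds-33}; and (ii) check that the limiting current $\sum_{(C,m)}mC$ produced over the band is still pseudoholomorphic with respect to the $s$-dependent almost complex structure compatible with $\omega=ds\wedge(a+\frac 12\frac{\partial}{\partial s}a)+\frac 12 da$, which it is because $\omega$ is self-dual of norm $\sqrt 2$ by construction of the metric in Section \ref{ssec:instantons}. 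Everything else---compactness of the subvarieties, the integer multiplicities, the bound $\sum m\int_C\omega\leq\kappa$---transfers without change, since the relevant a priori bounds are all stated uniformly in $T\in(16,\infty]$ in Lemmas \ref{lem:bounds-32}--\ref{lem:bounds-37}. With the finite-$T$ and $T=\infty$ cases handled simultaneously this way, the proof concludes exactly as Proposition 5.5 of \cite{taubes4} does.
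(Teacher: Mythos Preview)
There is a genuine gap. You assert that ``$\scm(s)\leq\mathcal{K}$ on a long enough interval \dots\ follows from $\mathcal{A}_\frd\leq\mathcal{K}r$ via Lemma \ref{lem:bounds-31},'' and you then feed this into Proposition \ref{prop:conv-41}. But deriving a $T$-independent a priori bound on $\scm$ from $\mathcal{A}_\frd\leq\mathcal{K}r$ is precisely the substantive content of the proof, and it does \emph{not} follow from Lemma \ref{lem:bounds-31}. That lemma controls $\int(|E_A|^2+|\calB|^2+\dots)$, an $L^2$-type quantity; $\scm(s)=r\int_{\{s\}\times Y_T}|1-|\alpha|^2|$ is an $L^1$ quantity on a manifold whose volume is unbounded in $T$. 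The paper is explicit that ``the arguments in \cite{taubes4} use the fact that the 3-manifold in question \dots\ has a given finite volume; and these arguments also use a priori bounds for the Green's function of the operator $d+d^\dagger$,'' neither of which is available uniformly in $T$.

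The actual argument occupies six parts. One first shows (Part 1) that the signed integral $\scL(s)=r\int(1-|\alpha|^2+|\beta|^2)-\dots$ is a proxy for $\en(\frd|_s)$ by bounding the $|\alpha|>1$ contribution $\scu$ via a covering argument on the $|\alpha|\approx 1-\kappa_0^{-1}$ shell, yielding $\scu\leq(\ln r)^8(r^{-1}\en+r^{-1/4})$. Then (Part 2) a differential equation $\frac{d}{ds}\pze=-2\pze+2\scL$ reduces a bound on $\uscL$ to one on $\upze(s)=\int_s^{s+1}\!\big(i\!\int B_A\wedge\ast a\big)$. Bounding $\upze$ requires a replacement for equation (5.x) of \cite{taubes4} with constants independent of $T$; this is Lemma \ref{lem:conv-44}, whose proof hinges on the localization observation \eqref{eq:conv-430} (that $|\alpha|>1-\tfrac{1}{1000}r^{-1}$ far from $M$ once $\pze$ is controlled), a compactly supported representative $\upomega_0$ of a class $\hatom\in H^2_c(M)$ (Lemma \ref{lem:conv-43}), and a Hodge decomposition on a fixed domain $\Omega\supset M$ to handle the Chern--Simons term. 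When $c_1(\det\spb)$ is not torsion in $H^2_c(M)$ there is an additional term $\frq$ that must be separately controlled. Finally (Parts 5--6), the $s$-dependent band contributes the extra terms you mention, but these interact with the $\upze$ bootstrap in a nontrivial way via \eqref{eq:conv-465}--\eqref{eq:conv-468} and cannot simply be absorbed into $\delta$ before the $\scm$ bound is in hand. Your proposal skips this entire chain; without it Proposition \ref{prop:conv-41} cannot be invoked, and neither the first bullet (which in the paper is a consequence of the curve convergence, not of action monotonicity) nor the third follows.
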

\begin{proof}
This proposition differs from the previous one in two ways, the first being that the subvarieties that comprise each set from the collection $\{\vartheta_k\}_{k=1,\dots, \rmn}$ are defined on the whole of $\R\times Y_T$. This modification is straightforward to prove and the proof differs only in notation from the proof of this assertion in Proposition 5.5 of \cite{taubes4}. The substantive difference is the lack in Proposition \ref{prop:conv-42} of the assumption in Proposition \ref{prop:conv-41} of an a priori bound for the function $s\to \scm(s)$. Proposition 5.1 in \cite{taubes4} states the equivalent of Proposition \ref{prop:conv-41} without the assumed bound on $\scm$. The proof that follows directly is much like the proof of Proposition 5.1 in \cite{taubes4}. Note that if $T$ is finite and if the numbers $\kappa$ and $\kappa_\delta$ are allowed to depend on $T$, then the arguments in \cite{ht2} for Proposition 7.1 can be used almost verbatim to prove Proposition \ref{prop:conv-42}.

Proposition \ref{prop:conv-42} follows from Proposition \ref{prop:conv-41} given the assertion in the final bullet of Proposition \ref{prop:conv-42} to the effect that there is an a priori bound for the function $\scm$. The six parts that follow prove that this is the case. Supposing that $z>1$ and that $\mathcal{A}_\frd<zr$, the convention in the proof has $c_z$ denoting a number that is greater than $100$ that depends only on $z$. Its value can be assumed to increase between successive appearances. Note an important point: Given $z>1$, then the bound $\mathcal{A}_\frd<zr$ implies that $\mathcal{A}_\frd<r^2$ when $r\geq z$. This implies in turn that Lemma \ref{lem:bounds-33} can be invoked assuming $r>z$ which implies that \eqref{eq:bounds-314} holds with $c_\frd\leq c_z$. Granted that, then Lemmas \ref{lem:bounds-34}--\ref{lem:bounds-37} and Proposition \ref{prop:conv-41} can be invoked if their versions of $c$ are also bounded by $c_z$. As a consequence, the various versions of $\kappa$ from Lemmas \ref{lem:bounds-34}--\ref{lem:bounds-37} are bounded by $c_z$; and $\kappa$ and $\kappa_\delta$ from Proposition \ref{prop:conv-41} depend now only on $\mathcal{K}$ and $z$, and also $\delta$ in the case of $\kappa_\delta$.
\begin{pt2}
\label{part1:prop42}
Let $\scL$ denote the function on $\R$ given by the rule 
\begin{equation}
\label{eq:functionL}
s\to\scL(s)=r\int_{\{s\}\times Y_T}(1-|\alpha|^2+|\beta|^2)-\int_{\{s\}\times Y_T}i(B_{A_0}+\frp).
\end{equation}
Remember from Section~\ref{ssec:setup} that $A_0$ is a chosen connection on the bundle $K^{-1}$ over $Y_T$ with zero curvature $2$-form on $Y_T \ssm N$.  It is assumed to be $T$-independent on $N$.  Meanwhile, $\frp$ is the perturbation form that appears in \eqref{eq:instanton-sw}.  It is independent of $T$ and has compact support in $(-1,1) \times M$.

Let $\uscL$ denote the function 
\begin{equation}
\label{eq:conv-44}
s\to\uscL(s)=\int_s^{s+1}\scL(x)dx.
\end{equation}
As explained below, the function $\uscL$ can serve as a proxy for $\scm$ because $\scL$ obeys a bound of the form 
\begin{equation}
\label{eq:conv-45}
\scL(s)>(1-\frac{1}{100}{c_z}^{-1})\en(\frd|_s)-c_z
\end{equation}
when $r\geq c_z$. Keep in mind that $\en(\frd|_s)$ is the integral of the function $|1-|\alpha|^2|$ over $\{s\}\times Y_T$ and $\scm(s)$ is the integral of the function $s\to \en(\frd|_s)$ over $[s,s+1]$. 

If $T$ is finite, then $\scL(s)>\en(\frd|_s)-c_T$ with $c_T$ being a number that depends on $T$. This follows from Lemma \ref{lem:bounds-32} because the rightmost term in \eqref{eq:functionL} is bounded by $c_0$ and the leftmost term in \eqref{eq:functionL} would be greater than the integral of $|1-|\alpha|^2|$ over $\{s\}\times Y_T$ were it not for the contribution from the $|\alpha|>1$ part of $\{s\}\times Y_T$.

To prove \eqref{eq:conv-45}, let $u$ denote the maximum of zero and $|\alpha|^2-1$ and let $\scu$ denote the function defined by the rule
\begin{equation}
\label{eq:functionU}
s\mapsto\scu(s)=r\int_{\{s\}\times Y_T}u.
\end{equation}
It follows from the definitions and Lemma \ref{lem:bounds-32} that 
\begin{equation}
\label{eq:conv-47}
\scL(s)\geq\en(\frd|_s)-2\scu(s)-c_z
\end{equation}
and so \eqref{eq:conv-45} follows with a proof that $\scu\leq\frac{1}{100}{c_z}^{-1}\en+c_z$ when $r\geq c_z$. The four steps that follow prove a stronger assertion, this being that $\scu\leq (\ln{r})^8(r^{-1}\en+r^{-1/4})$ when $r\geq c_z$.
\begin{sp2}
\label{step1part1:prop42}
Let $\kappa_0$ denote for the moment the version of $\kappa$ that appears in Lemma \ref{lem:bounds-36}. Let $U$ denote the part of $\R\times Y_T$ where $|\alpha|$ is greater than $1-2{\kappa_0}^{-1}$ but less than $1-\frac{1}{2}{\kappa_0}^{-1}$. Supposing that $I\subset\R$ is a bounded interval, let $U_I$ denote the part of $U$ in $I\times Y_T$ and $\nu_I$ denote the volume of $U_I$. It follows from the bound in the second bullet of Lemma \ref{lem:bounds-34} that there exists a positive integer to be denoted by $\rmn_I$ such that the following is true:
\begin{itemize}\leftskip-0.25in
\item $U_I$ has a cover by $\rmn_I$ balls of radius $r^{-1/2}$ with centers in $U_I$.
\item $\nu_I\geq {c_z}^{-1}\rmn_Ir^{-2}$.
\end{itemize}
\begin{equation}
\label{eq:conv-48}
\end{equation}
A cover that obeys these conditions is obtained by taking the centers of the balls to be the points in a maximal subset of $U_I$ with the property that the distance between any two points in the subset is no less than $\frac{1}{2}r^{-1/2}$. Let $\scq_I$ denote the integral of $r(1-|\alpha|^2)^2$ over $U_I$. The second bullet above implies that 
\begin{equation}
\label{eq:conv-49}
\scq_I\geq {c_z}^{-1}r\nu_I.
\end{equation}
\end{sp2}
\begin{sp2}
\label{step2part1:prop42}
Fix $s\in\R$ and for $k\in\Z$, let $I_k$ denote $[s+kr^{-1/2},s+(k+1)r^{-1/2}]$, this being an interval of length $r^{-1/2}$. For $k\in\Z$, let $Z_k$ denote the part of $\{s\}\times Y_T$ where the closest point in $U$ comes from $I_k\times Y_T$ and let $\sch_k$ denote the integral of $ru$ on $Z_k$. Thus,
\begin{equation}
\label{eq:conv-410}
\scu(s)=\sum_{k\in\Z}\sch_k.
\end{equation}
To see about the size of $\sch_k$, fix a cover of $U_{I_k}$ of the sort that is described above and denote it by $\mathfrak{U}_{I_k}$. Supposing that $L\geq |k|$, then the subset of $\{s\}\times Y_T$ with distance between $Lr^{-1/2}$ and $(L+1)r^{-1/2}$ from any given ball from the cover $\mathfrak{U}_{I_k}$ has volume at most $c_0L^3r^{-3/2}$. It follows as a consequence of Lemma \ref{lem:bounds-36} that 
\begin{equation}
\label{eq:conv-411}
\sch_k\leq c_z r^{-3/2} \rmn_{I_k}e^{-|k|/c_z}.
\end{equation} 
What with \eqref{eq:conv-49} and the second bullet of \eqref{eq:conv-48}, it follows that 
\begin{equation}
\label{eq:conv-412}
\sch_k\leq c_z r^{1/2}\scq_{I_k}e^{-|k|/{c_z}}.
\end{equation}
This bound will be used when $|k|$ is large but not when $k$ is (relatively) small.
\end{sp2}
\begin{sp2}
\label{step3part1:prop42}
To obtain a bound when $|k|$ is small, first use \eqref{eq:conv-48} to see that the volume of the subset of $\{s\}\times Y_T$ with distance at most $Lr^{-1/2}$ from $U_k$ is at most $c_0L^3r^{-3/2}\rmn_{I_k}$. It then follows from \eqref{eq:conv-48} and \eqref{eq:conv-49} that this volume is at most $c_0L^3r^{-1/2}\scq_{I_k}$ What with the top bullet of Lemma \ref{lem:bounds-32}, it follows that this part of $U_k$ contributes at most $c_zL^3r^{-1/2}\scq_{I_k}$ to $\sch_k$. Meanwhile, it follows from Lemma \ref{lem:bounds-36} that the remaining part of $U_k$ contributes at most $c_zr^{1/2}\scq_{I_k}e^{-L/c_z}$ to $\sch_k$. Take $L$ to equal $c_z100\ln{r}$ and use the preceding pair of bounds to see that 
\begin{equation}
\label{eq:conv-413}
\sch_k\leq c_z(\ln{r})^3r^{-1/2}\scq_{I_k}.
\end{equation}
\end{sp2}
\begin{sp2}
\label{step4part1:prop42}
It follows from the definitions that 
\begin{equation}
\label{eq:conv-414}
\scq_{I_k}\leq r\int_{I_k\times Y_T}(1-|\alpha|^2)^2.
\end{equation}
To bound the right hand side of \eqref{eq:conv-414}, it proves convenient to define a non-negative function $\rmx$ on $\R$ by the rule whereby $\rmx^2$ at any $s'\in\R$ is 
\begin{equation}
\label{eq:conv-415}
\rmx^2(s')=\int_{\{s'\}\times Y_T} (1-|\alpha|^2)^2.
\end{equation}
The derivative of $\rmx(\cdot)$ is bounded by $c_0(\int_{\{s'\}\times Y_T}|\nabla_{A,s}\alpha|^2)^{1/2}$. This understood, it follows from Lemma \ref{lem:bounds-31} and the first bullet of Lemma \ref{lem:bounds-34} that 
\begin{equation}
\label{eq:conv-4.16}
\rmx(s')\leq\rmx(s)+c_zr^{1/2}|s-s'|^{1/2}.
\end{equation}
Use this bound with $s'\in I_k$ to bound the right hand side of \eqref{eq:conv-414} by 
\begin{equation}
\label{eq:conv-417}
2r^{-1/2}\int_{\{s\}\times Y_T}(1-|\alpha|^2)^2+c_z|k|.
\end{equation}
This in turn is no greater than $2r^{-3/2}\en(\frd|_s)+c_z|k|$ and so $\scq_{I_k}\leq c_z(r^{-1/2}\en(\frd|_s)+r|k|)$.

Use the preceding bound for $\scq_{I_k}$ in \eqref{eq:conv-412} and \eqref{eq:conv-413} to obtain respective bounds 
\begin{equation}
\label{eq:conv-418}
\sch_k\leq c_z(r^{-1}\en(\frd|_s)+r^{1/2}|k|)e^{-|k|/c_z}\; {\rm and}\; \sch_k\leq c_z(\ln{r})^3(r^{-1}\en(\frd|_s)+r^{-1/2}|k|).
\end{equation}
Use the leftmost bound in \eqref{eq:conv-415} when summing over the values of $|k|>(\ln{r})^2$; and use the rightmost bound in \eqref{eq:conv-413} when summing over the values of $|k|\leq (\ln{r})^2$ to see that
\begin{equation}
\label{eq:conv-419}
\scu(s)\leq c_z (\ln{r})^7(r^{-1}\en(\frd|_s)+c_z r^{-1/2}).
\end{equation}
This inequality leads directly to the desired bound.
\end{sp2}
\end{pt2}
\begin{pt2}
\label{part2:prop42}
As noted in Part \ref{part1:prop42}, an upper bound for the function $\uscL$ leads to an upper bound for the function $\scm$. An upper bound for $\uscL$ is seen in this part of the proof to follow from an upper bound for another function on $\R$, this denoted by $\upze$. To define the latter, first define the function $\pze$ on $\R$ by the rule 
\begin{equation}
\label{eq:conv-420}
s\mapsto \pze(s)=i\int_{\{s\}\times Y_T}B_A\wedge\ast a.
\end{equation}
The function $\upze$ is defined from $\pze$ by the rule
\begin{equation}
\label{eq:conv-421}
s\mapsto \upze(s)=\int_{s}^{s+1}\pze(x)dx.
\end{equation}
To see that a bound for $\upze$ leads to one for $\uscL$, suppose first that $s>s_0$ or that $s<-s_0$ so that the 1-form $a$ is independent of the factor $\R$ in $\R\times Y_T$. Differentiate \eqref{eq:functionL} and use the curvature equation \eqref{eq:instanton-sw} to see that
\begin{equation}
\label{eq:conv-422}
\frac{d}{ds}\pze=-2\pze+2\scL.
\end{equation}
If the interval $[s,s+t]$ is disjoint from $[-s_0,s_0]$, then integrating the equation gives
\begin{equation}
\label{eq:conv-423}
\pze(s+t)=e^{-2t}\pze(s)+e^{-2(s+t)}\int_s^{s+t}e^{2x}\scL(x)dx.
\end{equation}
Since $\scL\geq-c_z$ by virtue of \eqref{eq:conv-45}, this equation implies that
\begin{equation}
\label{eq:conv-424}
\pze(s+t)\geq e^{-2t}\pze(s)-e^{-2t}c_z,
\end{equation}
and it implies when $\tau=s+t$ that $\pze(\tau)\geq -c_0$.

Integrating \eqref{eq:conv-422} leads to the bounds 
\begin{equation}
\label{eq:conv-425}
\pze(s)\leq c_0+c_0\upze(s+1)\;\;\;\;\;{\rm and}\;\;\;\;\;\uscL(s)\leq c_0+c_0(\upze(s)+\upze(s+2))
\end{equation}
if the interval $[s,s+3]$ is disjoint from $[-s_0,s_0]$. The leftmost inequality in \eqref{eq:conv-425} implies that a bound for $\upze$ leads to a bound for $\uscL$ when $s\geq s_0$ or $s\leq -s_0-3$. 

To see about bounding $\uscL$ using $\upze$ for values of $s$ near $[-s_0,s_0]$, note first that \eqref{eq:conv-422} for $s$ near this interval is replaced by the equation
\begin{equation}
\label{eq:conv-426}
\frac{d}{ds}\pze=-2\pze+2\scL-i\int_{\{s\}\times Y_T}B_A\wedge\ast\frac{\partial}{\partial s}a.
\end{equation}
Let $I$ denote the interval $[-s_0-10,s_0+10]$. Integrating this equation over $I$ leads to an inequality asserting that 
\begin{equation}
\label{eq:conv-427}
\pze(s_0+10)-\pze(-s_0-10)+2\int_I\pze(x)dx=2\int_I\scL(x)dx-i\int_{I\times Y_T}B_A\wedge\ast\frac{\partial}{\partial s}a.
\end{equation}
The rightmost integral on the right hand side of \eqref{eq:conv-427} is bounded from below courtesy of Lemma \ref{lem:bounds-31} by $c_0z$. Let $I'$ denote the interval $[-s-13,s+13]$. Given the bound $\scL\geq -c_z$ from \eqref{eq:conv-45}, this $c_0z$ bound and the leftmost bound in \eqref{eq:conv-425} lead to a bound of the form
\begin{equation}
\label{eq:conv-428}
sup_{s\in I}\uscL(s)\leq c_z+c_z sup_{s'\in I'}\upze(s').
\end{equation}
The latter bound with that in \eqref{eq:conv-425} proves that a bound on $\upze$ leads to a bound on $\uscL$. 
\end{pt2}
\begin{pt2}
\label{part3:prop42}
The arguments in \cite[Section 5b]{taubes4} to obtain a bound for $\upze$ must be modified to obtain bounds that don't depend on $T$. The main issue concerns the way the action function in \eqref{eq:bounds-31} is used. To say more, suppose that $\frd=(A,\psi)$ is an instanton solution to \eqref{eq:instanton-sw} and that $u:Y_T\to S^1$ is a gauge transformation, then set $\frd^u=(A-u^{-1}du,u\psi)$. Define $\calB$ as done in \eqref{eq:bounds-32} and let $\sco$ denote the function on $\R$ given by the rule whereby $\sco$ is the integral of $|\calB|^2+r|\mathcal{D}_A\psi|^2$ on $\{s\}\times Y_T$. The proof of Proposition 5.1 in \cite{taubes4} invokes an inequality for $\fra_s(\frd^u|_s)$ that reads
\begin{equation}
\label{eq:conv-429}
\fra_s(\frd^u|_s)\leq c_\ast(1+\scL+r^{1/2}\sco^{1/2}+\sco+r^{2/3}\scL^{4/3})+\frq_u-\frac{1}{2}r\pze,
\end{equation}
with $c_\ast$ being a constant greater than $1$ and $\frq_u$ being the function on $\R$ that is defined as follows: Use $\frp(A^u)$ for the moment to denote the $L^2$ orthogonal projection of the $i\R$ valued $1$-form $A-u^{-1}du-A_+$ to the space of harmonic $1$-forms on $Y_T$. What is denoted by $\frq_u$ is the integral on $\{s\}\times Y_T$ of $-i\frp(A^u)\wedge\ast B_{A_+}$. It seems that all of the steps in \cite[Section 5]{taubes4} that lead to \eqref{eq:conv-429} need to be modified to get an analogous formula with $c_\ast$ being independent of $T$ when $T$ is very large. 

The statement of the analog here of \eqref{eq:conv-429} requires a preliminary digression to talk about $s\to\infty$ limit of a given instanton solution to \eqref{eq:instanton-sw}. Letting $\frd$ denote the instanton in question, let $\frc_+=(A_+,\psi_+)$ denote $\lim_{s\to\infty}\frd|_s$. Keep in mind that $\frc_+$ is a solution to \eqref{eq:sw} on $Y_T$. Suppose that $\cale>1$ and that $\en(\frc_+)\leq\cale$. It follows from Lemma \ref{lem:monopoles-110} that there exists $c_\cale$ with the following significance: if $r>c_\cale$ and $T>c_\cale$, then there exists $R_0\in(1,c_0)$ and a smooth map $u_+:Y_T\ssm M\to S^1$ such that the $E\subset\spb$ component of $u_+\psi_+$ on the part of $Y_T$ where $\operatorname{dist}(\cdot,M)\geq R_0$ has the form $1-z$ with $z$ real and $|z|$ less than $\frac{1}{1000}$. As explained in the next two paragraphs, the map $u_+$ in turn can be used to view the first Chern class of $c_1(det(\spb))$ as an element of $H^2_c(M)$, the cohomology of $M$ with compact support. The coefficient here and subsequently are in $\Z$.

To obtain an element in $H^2_c(M)$ from $u_+$, let $\sigma$ denote for the moment a smooth function that is equal to $1$ where the distance to $M$ is less than $R_0$ and equal to $0$ where the distance to $M$ is greater than $R_0+1$. Write the connection $A_+-{u_+}^{-1}du_+$ as $\Ai+\sca_+$ where $\operatorname{dist}(\cdot,M)\geq R_0$ and define a new connection on $E$ by setting it equal to $\uptheta_0+\sigma\sca_+$ where the distance to $M$ is greater than $R_0$ and equal to $A_+$ where the distance to $M$ is less than or equal to $R_0$. Denote this connection by ${A'}_+$. Its curvature $2$-form has compact support where the distance to $M$ is less than $R_0-1$ and so $i$ times this curvature $2$-form defines and element in the 2-dimensional compactly supported cohomology of the $\operatorname{dist}(\cdot,M)\leq R_0+1$ part of $Y_T$. This part of $Y_T$ deformation retracts onto $M$, so $i$ times the curvature of ${A'}_+$ defines a class in $H^2_c(M)$. This is the desired cohomology class. It is denoted in what follows by $\hatom$. 

The topological significance is as follows: The class $c_1(det(\spb))$ pulls back as zero to $Y_T\ssm M$ and so it comes from $H^2(Y_T,Y_T\ssm M)$ via the canonical homomorphism to $H^2(Y_T)$. This group $H^2(Y_T,Y_T\ssm M)$ is isomorphic to $H^2_c(M)$. Meanwhile, the kernel of the homomorphism from $H^2(Y_T,Y_T\ssm M)$ to $H^2(Y_T)$ is $H^1(Y_T\ssm M)/H^1(Y_T)$. The map $u_+$ defines an element in this quotient group and thus an element of $H^2(Y_T,Y_T\ssm M)\cong H^2_c(M)$ that maps to $c_1(det(\spb))$. This class in $H^2_c(M)$ is $2\pi\hatom$. 
\begin{lemma}
\label{lem:conv-43}
Given $\cale>0$, there exists $\kappa_\cale>1$ with the following significance: At most $\kappa_\cale$ classes can be obtained in the manner just described from all $\en\leq\cale$ solutions to \eqref{eq:sw} on all $T\in(16,\infty]$ versions of $Y_T$. In particular, if $\hatom$ is a class that is obtained in this way, then it can be represented by a smooth closed $2$-form with compact support in $M$ with norm bounded by $\kappa_\cale$. 
\end{lemma}
\begin{proof}
It follows from Lemma \ref{lem:monopoles-110} that $R_0$ can be chosen in $[1,c_0]$ so that $\sca_+$ obeys $|\sca_+|\leq c_0 e^{-r/c_0}$ on the support of $d\sigma$. This implies in turn that the curvature $2$-form of ${A'}_+$ obeys $|F_{{A'}_+}|\leq c_0|F_{A_+}|+c_0e^{-r/c_0}$. It follows as a consequence that the $L^1$ norm of the curvature $2$-form of ${A'}_+$ is bounded by $c_0\cale$. This has the following implication: let $\upupsilon$ denote a closed $1$-form on the $\operatorname{dist}(\cdot,M)<R_0+2$ part of $Y_T$. Then the integral of $\upupsilon\wedge F_{{A'}_+}$ is bounded by $c_0\cale\sup{|\upupsilon|}$. Granted this bound, it follows using Poincar\'e--Lefschetz duality that the class $\hatom$ lies in a compact, $r$ and $T$ independent subset of $H^2_c(M)$. 
\end{proof}
Fix a closed $2$-form with compact support in $M$ and norm less than $c_\cale$ that represents $\hatom$. This class is denoted by $\upomega_0$. Note that the curvature of ${A'}_+$ has sup norm that is $\mathcal{O}(r)$ so it cannot be used for $\upomega_0$. 

The next lemma states the desired analog of \eqref{eq:conv-429}.
\begin{lemma}
\label{lem:conv-44}
There exists $\kappa\geq 1$ and given $\frE<\infty$, there exists $\kappa_\frE>1$; these numbers $\kappa$ and $\kappa_\frE$ having the following significance: Fix $T\in (\kappa_\frE,\infty]$ and $r>\kappa_\frE$. Let $\frd=(A,\psi)$ denote an instanton solution to \eqref{eq:instanton-sw} with $\mathcal{A}_\frd<r^2$. Suppose that $\tau\in\R$ and that $\pze(\cdot)\leq \frE$ on the interval $[\tau,\infty)$. If $s\in[\tau+100,\infty)$, then $\fra(\frd_u|_s)$ obeys
\[\fra_s(\frd|_s,\frc_+)\leq \kappa(1+\scL+r^{1/2}\sco^{1/2}+\sco+r^{2/3}\scL^{4/3}+r^{2/3}\frE^{4/3}+r\cale)+\frq-\frac{1}{2}r\pze,\]
with $\frq(s)=-i\int_{\{s\}\times Y_T}(A-A_+)\wedge\upomega_0$.
\end{lemma}
The proof of \eqref{eq:conv-429} in \cite{taubes4} cannot be used verbatim to get $T$ independent bounds on \eqref{eq:conv-429}'s number $c_\ast$ because the arguments in \cite{taubes4} use the fact that the 3-manifold in question, in this case $Y_T$, has a given finite volume; and these arguments also use a priori bounds for the Green's function of the operator $d+d^\dagger$. The volume issue must be dealt with in any event, and it is not likely that there is a $T$ independent bound for the relevant Green's function on $Y_T$. As explained below, the assumption on $\pze$ in Lemma \ref{lem:conv-44} circumvents both of these issues because this assumption implies that $B_A$ for large $r$ (given $\frE$) is concentrated for the most part very near to $M$. In particular, the proof uses the following:
\begin{quote}
Suppose that $\frE<\infty$ and that $\pze(\cdot)\leq \frE$ on an interval of the form $[\tau,\infty)$. If $r$ is large with lower bound depending only on $\frE$, if $T$ is sufficiently large with a lower bound depending on $\frE$, and if $s\in[\tau+32,\infty)$, then $|\alpha|_s|>1-\frac{1}{1000}r^{-1}$ where $\operatorname{dist}(\cdot, M)>c_0$.
\begin{equation}
\label{eq:conv-430}
\end{equation}
\end{quote}
This follows from Proposition \ref{prop:conv-41}, Lemmas \ref{lem:bounds-35} and \ref{lem:bounds-36}, and \eqref{eq:conv-47}, and \eqref{eq:conv-428} because a pseudoholomorphic curve that extends out of $M$ must have area that is $\mathcal{O}(T)$. Note in this regard that Proposition \ref{prop:conv-41} and Lemmas \ref{lem:bounds-35} and \ref{lem:bounds-36} can be invoked with their versions of the number $c$ bounded a priori via Lemma \ref{lem:bounds-33} because of the assumption that $\mathcal{A}_\frd<r^2$ in Lemma \ref{lem:conv-44}.
\begin{proof}[Proof of Lemma \ref{lem:conv-44}]
The proof has six steps.
\begin{sp3}
\label{step1:lemma44}
The rightmost integral in the $\frc'=\frc_+$ version of \eqref{eq:bounds-31} is zero because $\frc_+$ obeys \eqref{eq:sw}. The next to rightmost integral is that of $r\langle\psi,\mathcal{D}_A\psi\rangle$. To bound this integral, invoke \eqref{eq:conv-430} to find $R\in[16,c_0)$ so that $|\alpha|>1-\frac{1}{1000}r^{-1}$ where both $s>\tau+32$ and the distance to $M$ is greater than $R$. Let $\sigma$ denote a smooth function on $Y_T$ that is equal to $1$ where the distance to $M$ is less than $R+5$ and equal to $0$ where the distance to $M$ is greater than $R+6$. This function can be assumed to have derivative bounded by $c_0$ and to be independent of $T$. The absolute value of the integral of $r\sigma\langle\psi,\mathcal{D}_A\psi\rangle$ is no greater than $c_0 r^{1/2}\sco^{1/2}$. This is because $\sco$ is greater than the integral of $r|\mathcal{D}_A\psi|^2$.

An $c_z\frE e^{-\sqrt{r}/c_z}$ bound for the norm of the integral of $r(1-\sigma)\langle\psi,\mathcal{D}_A\psi\rangle$ is obtained using a strategy much like that used in Part \ref{part1:prop42}. As in Step \ref{step2part1:prop42} of Part \ref{part1:prop42}, let $I_k$ for $k\in\Z$ denote $[s+kr^{-1/2}, s+(k+1)r^{-1/2}]$, this being an interval in $\R$ of length $r^{-1/2}$. For $k\in\Z$, let $Z_k$ denote the part of $\{s\}\times Y_T$ where $(1-\sigma)>0$ and where the closest point to $U$ comes from $I_k\times Y_T$. Let $\scp_k$ denote the integral of $|\textup{D}_A\psi|$ on $Z_k$. Thus,
\begin{equation}
\label{eq:conv-431}
|\int_{\{s\}\times Y_T}(1-\sigma)\langle\psi,\textup{D}_A\psi\rangle|\leq c_0\sum_{k\in\Z}\scp_k.
\end{equation}
Let $\scq_{I_k}$ denote the integral of $r(1-|\alpha|^2)$ over the part of $U$ in $I_k\times Y_T$. The arguments in Step \ref{step2part1:prop42} of Part \ref{part1:prop42} for \eqref{eq:conv-412} can be repeated almost verbatim to see that
\begin{equation}
\label{eq:conv-432}
\scp_k\leq c_z\scq_{I_k}e^{-|k|/c_z}.
\end{equation}
Step \ref{step4part1:prop42} of Part \ref{part1:prop42} proves that $\scq_{I_k}\leq c_z(r^{-1/2}\en(\frd|_s)+rk)$, and so \eqref{eq:conv-432} implies that
\begin{equation}
\label{eq:conv-433}
\scp_k\leq c_z(r^{-1/2}\en(\frd|_s)+r|k|)e^{-|k|/c_z}.
\end{equation}
The preceding bound is useful when $|k|\geq r^{1/2}$. When $|k|\leq r^{1/2}$, then the distance from $s$ to $I_k$ is less than $2$ but the distance from $Z_k$ to $U_k$ is greater than $c_0$. It follows in this case from Lemma \ref{lem:bounds-36} that
\begin{equation}
\label{eq:conv-434}
\scp_k\leq c_z e^{-\sqrt{r}/c_z}(\en(\frd|_s)+|k|).
\end{equation}
The bounds in \eqref{eq:conv-433} and \eqref{eq:conv-434} lead directly to a $c_z\frE e^{-\sqrt{r}/c_z}$ bound for the right hand side of \eqref{eq:conv-431}.
\end{sp3}
\begin{sp3}
\label{step2:lemma44}
The integral of $-\frac{i}{2}r B_A\wedge\ast a$ that appears in \eqref{eq:bounds-31} is by definition $-\frac{1}{2}r\pze(\frd|_s)$ and the integral of $\frac{i}{2}r B_{A_+}\wedge\ast a$ that appears in the $\frc'=\frc_+$ version of \eqref{eq:bounds-31} is $\frac{1}{2}r\en(\frc_+)$ in the case that $a=a_+$; but it is bounded in any case by $c_0r\cale$.
\end{sp3}
\begin{sp3}
\label{step3:lemma44}
This step and the next make preliminary observations that are used subsequently to analyze the remaining term in the formula for $\fra_s(\frd|_s,\frc_+)$, this being 
\begin{equation}
\label{eq:conv-435}
-\frac{1}{2}\int_{\{s\}\times Y_T}(A-A_+)\wedge\ast(B_A+B_{A_+}).
\end{equation}
The latter is denoted in what follows by $\frcs(A,A_+)$.

Keep in mind that there exists $R_0\in (1,c_0)$ and a map from $Y_T\ssm M$ to $S^1$, this denoted by $u_+$ such that the $E$ summand component in $\spb$ of $u_+\psi_+$ can be written on the $\operatorname{dist}(\cdot,M)>R_0$ part of $Y_T$ as $1-z_+$ with $z_+$ being real and having norm less than $\frac{1}{1000}$. This is relevant for $\frd|_s$ when $s\in[\tau+32,\infty)$ for the following reason: it follows from \eqref{eq:conv-430} that there exists a map $u_s$ from $Y_T\ssm M$ to $S^1$ such that $u_+$ and $u_s$ are homotopic, and there exists $R'=R_0+c_0$ such that the $E$ component of $u_s\psi|_s$ can be written on the part of $Y_T$ where $\operatorname{dist}(\cdot,M)>R'$ as $1-z_s$ with $z_s$ and $|z_s|$ at most $\frac{1}{1000}$. Moreover, when $T=\infty$, the map $u=u_s{u_+}^{-1}$ obeys $\lim_{\operatorname{dist}(\cdot,M)\to\infty}(|u-1|+|du|)=0$.
\end{sp3}
\begin{sp3}
\label{step4:lemma44}
Fix a smooth connection on the $Y_T$ version of $E$, to be denoted by $A_{00}$, with two salient properties: The first property is that $iF_{A_{00}}=\upomega_0$; the second property is that $A_{00}-{u_+}^{-1}du_+=\Ai$ on $Y_T\ssm M$. For the purposes of this proof, let $\frc_0$ denote $(A_{00},0)$, this being a pair of connection on $E$ and section of $\spb$.

With $A_{00}$ in hand, note that
\begin{equation}
\label{eq:conv-436}
\frcs(A,A_+)=-\frac{1}{2}\int_{\{s\}\times Y_T}(A-A_{00})\wedge\ast (B_A+B_{A_{00}})+\frac{1}{2}\int_{\{s\}\times Y_T}(A_+-A_{00})\wedge\ast (B_{A_+}+B_{A_{00}})
\end{equation}
because if $A$, $A'$, and $A''$ are any three connections on $E$, then $\frcs(A,A')-\frcs(A'',A')$ is equal to $\frcs(A,A')$. The leftmost term on the right hand side of \eqref{eq:conv-436} is rewritten as the sum of two integrals:
\begin{equation}
\label{eq:conv-437}
-\frac{1}{2}\int_{\{s\}\times Y_T}(A-A_{00})\wedge\ast (B_A-B_{A_{00}})-\int_{\{s\}\times Y_T}(A-A_{00})\wedge\ast B_{A_{00}}.
\end{equation}
The rightmost term on the right hand side of \eqref{eq:conv-436} can likewise be written as 
\begin{equation}
\label{eq:conv-438}
\frac{1}{2}\int_{\{s\}\times Y_T}(A_+-A_{00})\wedge\ast (B_{A_+}-B_{A_{00}})+\int_{\{s\}\times Y_T}(A_+-A_{00})\wedge\ast B_{A_{00}}.
\end{equation}
The sum of \eqref{eq:conv-437} and \eqref{eq:conv-438} can be written as
\begin{equation}
\label{eq:conv-439}
\frcs_\#(A,A_{00})-\frcs_\#(A_+,A_{00})+\frq
\end{equation}
with $\frq$ as defined in the statement of Lemma \ref{lem:conv-44}, with $\frcs_\#(A,A_{00})$ denoting the leftmost term in \eqref{eq:conv-437} and with $\frcs_\#(A_+,A_{00})$ denoting $-1$ times the leftmost term in \eqref{eq:conv-438}.

A bound for the absolute value of $\frcs_\#(A,A_{00})$ is derived below. The $s\to\infty$ limit of the latter bound is a bound for the absolute value of $\frcs_\#(A_+,A_{00})$. 
\end{sp3}
\begin{sp3}
\label{step5:lemma44}
This step starts the story for the leftmost version of $\frcs_\#$ in \eqref{eq:conv-439}, this being the leftmost term in \eqref{eq:conv-437}. To this end, write $A-{u_s}^{-1}du_s$ on the $\operatorname{dist}(\cdot, M)>R'$ part of $Y_T$ as $\Ai+\sca_\infty$. The $1$-form $\sca_\infty$ is bounded by $c_0|\nabla_A\alpha|$. More to the point, Lemma \ref{lem:bounds-36} can be employed to bound $\sca_\infty$ where the distance to $M$ is greater than $c_0$. In particular, the bounds from Lemma \ref{lem:bounds-36} can be used to find $R_\diamond=R'+c_0$ such that
\begin{equation}
\label{eq:conv-440}
\int_{\operatorname{dist}(\cdot,M)\geq R_\diamond}|\sca_\infty||d\sca_\infty|\leq c_0\frE e^{-\sqrt{r}/c_z}.
\end{equation}
The detailed arguments that lead to \eqref{eq:conv-440} are much like those in Step \ref{step1:lemma44}. Since $A_{00}$ on $Y_T\ssm M$ is such that $A_{00}-{u_+}^{-1}du_+=\Ai$, it follows that $A-A_{00}=\sca_\infty-u^{-1}du$ with $u=u_s {u_+}^{-1}$. Because $u$ is null-homotopic, it can be written as $d\frf_\infty$ with $\frf_\infty$ being and $i\R$-valued function of $Y_T\ssm M$. In the case when $T=\infty$, it follows from what was said at the end of Step \ref{step3:lemma44} that $\frf_\infty$ can be chosen so that $\lim_{\operatorname{dist}(\cdot,M)\to\infty}(|\frf_\infty|+|d\frf_\infty|)=0$.

Now let $\Omega\subset Y_T$ denote a compact, codimension-$0$ submanifold with boundary that contains $M$ and is homeomorphic to $M$. We will assume that the function $\operatorname{dist}(\cdot, M)$ on $\partial\Omega$ is greater that $R_\diamond +10$ but less than $R_\diamond +11$, and that $\partial\Omega$ is homeomorphic to $\partial M$. The manifold $\Omega$ can be constructed in a straightforward way using what is said in Section \ref{ssec:setup}. Hodge theory on manifolds with boundary can be used to construct a smooth $i\R$-valued function on $\Omega$ to be denoted by $\frf$ such that $A$ on $\Omega$ can be written as $A_{00}-d\frf+\sca_M$ with $\ast\sca_M$ being a closed form that pulls back as $0$ to $\partial\Omega$. In turn, the $i\R$-valued $1$-form $\sca_M$ can be written as $\hat{\sca}_M+\upnu$ with $\upnu$ obeying $d\upnu=0$, $d\ast\upnu=0$ and $\ast\upnu$ pulling back as $0$ to $\partial\Omega$; and with $\hat{\sca}_M$ obeying
\begin{equation}
\label{eq:conv-441}
\int_{\{p\in\Omega\,:\,\operatorname{dist}(p,\partial M)\geq1\}}|\hat{\sca}_M||d\hat{\sca}_M|\leq c_0(\sco+r^{2/3}\en^{4/3}).
\end{equation}
The Green's function for the operator $\ast d+d\ast$ can be used to obtain the bounds in \eqref{eq:conv-441}.

The $1$-form $\upnu$ defines a class in $H^1(\Omega;i\R)$. As such, $\upnu$ can be written as $\mathrm{u}^{-1}d\mathrm{u}+\upnu_\ast$ with $\mathrm{u}$ being a harmonic map from $\Omega$ to $S^1$ and with $\upnu_\ast$ being a harmonic $i\R$-valued $1$-form that obeys $|\upnu_\ast|\leq c_0$. Let $u_\Omega$ denote $e^\frf \mathrm{u}$, this being a map from $\Omega$ to $S^1$. 

Both of the $i\R$-valued $1$-forms $\hat{\sca}_M+\upnu$ and $\sca_\infty-u^{-1}du$ are defined on the part of $\Omega$ where $\operatorname{dist}(\cdot,M)\in(R_\diamond,R_\diamond+10)$. They differ here according to the rule 
\begin{equation}
\label{eq:conv-4.42}
\sca_\infty-u^{-1}du=\hat{\sca}_M+\upnu-d\frh
\end{equation}
with $\frh=\frf-\frf_\infty$ being an $i\R$-valued function. Meanwhile, $\hat{\sca}_M+\upnu_\ast$ and $\sca_\infty$ differ where $\operatorname{dist}(\cdot,M)\in(R_\diamond,R_\diamond+10)$ according to the rule $\sca_\infty=\hat{\sca}_M+\upnu_\ast+h^{-1}dh$ with $h=u {u_\Omega}^{-1}$. being a map to $S^1$. Note that this requires that the norm $|dh|$ obey $|dh|\leq |\hat{\sca}_M|+|\upnu_\ast|+|\sca_\infty|$.
\end{sp3}
\begin{sp3}
\label{step6:lemma44}
To see about the norm of $\frcs_\#(A,A_{00})$, let $\sigma_\diamond$ denote a smooth non-negative function that is equal to $1$ where the distance to $M$ is less than $R_\diamond +5$ and equal to zero where the distance to $M$ is greater than $R_\diamond+6$. Use this function to write $\frcs_\#(A,A_{00})$ as
\begin{equation}
\label{eq:conv-443}
-\int_{\{s\}\times Y_T}\sigma_\diamond(A-A_{00})\wedge\ast (B_A-B_{A_{00}})-\int_{\{s\}\times Y_T}(1-\sigma_\diamond)(A-A_{00})\wedge\ast(B_A-B_{A_{00}}).
\end{equation} 
It follows from this and what is said in Step \ref{step5:lemma44}, and from \eqref{eq:conv-440} and \eqref{eq:conv-441} that
\begin{eqnarray}
\label{eq:conv-444}
\nonumber|\frcs_\#(A,A_{00})|\leq c_0(\sco+r^{2/3}\en^{4/3})+c_z\frE e^{-\sqrt{r}/c_z}&-&\frac{1}{2}\int_{Y_T}\sigma_\diamond {u_\Omega}^{-1}du_\Omega\wedge\ast(B_A-B_{A_{00}})\\
\nonumber &-&\frac{1}{2}\int_{Y_T}(1-\sigma_\diamond)u^{-1}du\wedge\ast B_A.\\
\end{eqnarray}
Keeping in mind that $\ast(B_A-B_{A_{00}})$ is exact, and that $A_{00}$ is the product connection on $Y_T\ssm M$, integration by part writes the contribution from two integrals on the left hand side of \eqref{eq:conv-444} as 
\begin{equation}
\label{eq:conv-445}
-\frac{1}{2}\int_{Y_T}d\sigma_\diamond\wedge h^{-1}dh\wedge (A-A_{00}).
\end{equation}
Since the map $u=u_s{u_+}^{-1}$ is homotopic to the identity, the integral in \eqref{eq:conv-445} is the same as 
\begin{equation}
\label{eq:conv-446}
-\frac{1}{2}\int_{Y_T}d\sigma_\diamond\wedge h^{-1}dh\wedge\sca_\infty.
\end{equation}
What is said in Step \ref{step5:lemma44} about $\sca_M$ and $dh$ leads to a $c_0(\sco+r^{2/3}\en^{4/3})+c_z\frE e^{-\sqrt{r}/c_z}$ bound on the integral in \eqref{eq:conv-446}, and thus a $c_0(\sco+r^{2/3}\en^{4/3})+c_z\frE e^{-\sqrt{r}/c_z}$ bound for $|\frcs_\#(A,A_0)|$.
\end{sp3}
\end{proof}
\end{pt2}
\begin{pt2}
\label{part4:prop42}
If $T$ is finite and if $a$ and the metric are independent of the coordinate $s$ (and if $\frp=0$ in \eqref{eq:instanton-sw}), then \cite[Section 5b]{taubes4} uses \eqref{eq:conv-429} to derive an inequality for $\upze(s-1)$ that reads
\begin{equation}
\label{eq:conv-447}
r\upze(s-1)\leq -c_\ast\fra(u\frc_+)+c_\ast(r+\fra(\frc_-)-\fra(\frc_+))+c_\ast r^{2/3}\operatorname{sup}_{x\geq s}\upze(x)^{4/3}+c_\ast \operatorname{sup}_{[s,s+1]}|\frq_u|,
\end{equation}
with each occurrence of $c_\ast$ denoting a constant greater than $1$. This part explains how Lemma \ref{lem:conv-44} can be used in lieu of \eqref{eq:conv-429} to obtain a similar inequality with $c_\ast$ being independent of $T$ in the case when $a$ and the metric are independent of $s$. This part also explains how the upcoming analog of \eqref{eq:conv-447} leads to the desired bound on $\operatorname{sup}_{s\in\R}\upze(s)$. The case when $a$ or the metric depends on $s$ requires additional modifications (even when $T$ is finite) and so it is deferred for the moment.

The analog of \eqref{eq:conv-447} is derived using the assumption that $\en(\frc_+)\leq\cale$ for some given number $\cale$. The analog then holds if $r$ and $T$ are greater than $c_\cale$. Keeping in mind that $\en(\frc_+)\leq\cale$, suppose that $\frE>100\cale$ has been chosen. Let $\tau$ denote the largest value of $s\in\R$ such that $\operatorname{sup}_{s\geq\tau}\upze(s)\leq\frE$. Lemma \ref{lem:conv-44} can be invoked assuming that $r>\kappa_\frE$. To see what Lemma \ref{lem:conv-44} implies, it proves useful to introduce 
\begin{equation}
\label{eq:conv-448}
\usco(s)=\int_{[s,s+1]\times Y_T}(|E_A|^2+|\calB|^2+2r(|\nabla_{A,s}\psi|^2+|\textup{D}_A\psi|^2)).
\end{equation}
It follows from \eqref{eq:conv-424} and \eqref{eq:conv-425} that if $s\geq\tau+200$, then Lemma \ref{lem:conv-44} leads to the bound
\begin{equation}
\label{eq:conv-449}
r\upze(s-1)\leq-2\fra(\frd|_s,\frc_+)+c_0(r+\usco(s)+r^{2/3}\frE^{4/3}+r\cale+\operatorname{sup}_{[s,s+1]}|\frq|).
\end{equation}

Now the fact that the function $s\to \fra(\frd|_s,\frc_+)$ is \emph{decreasing} when $a$ and the metric are independent of $s$, and in particular what is said in Lemma \ref{lem:bounds-31} in this case can be invoked to deduce the following two crucial inequalities:
\begin{equation}
\label{eq:conv-450}
0\leq \fra(\frd|_s,\frc_+)\;\mathrm{and}\;\usco(s)\leq\fra(\frc_-,\frc_+)=\mathcal{A}_\frd.
\end{equation}
Supposing that $\mathcal{A}_\frd\leq zr$, then \eqref{eq:conv-449} and \eqref{eq:conv-450} imply that 
\begin{equation}
\label{eq:conv-451}
\upze(s-1)\leq c_0(z+\cale)+c_0r^{-1/2}\frE^{4/3}+c_0 r^{-1}\operatorname{sup}_{[s,s+1]}|\frq|
\end{equation}
where $r\geq\kappa_\frE$ and $s\in[\tau+100,\infty)$. This is the analog of \eqref{eq:conv-447}.

To see how this leads to a bound for $\operatorname{sup}_{s\in\R}\upze(s)$, invoke \eqref{eq:conv-451} for $s=\tau+200$. Since $\upze(\tau)=\frE$, the inequality in \eqref{eq:conv-424} implies that $\upze(s-1)\geq {c_0}^{-1}\frE$ and so \eqref{eq:conv-451} implies that 
\begin{equation}
\label{eq:conv-452}
\frE\leq c_0(\cale+z)+c_0 r^{-1/3}\frE^{4/3}+c_0 r^{-1}\operatorname{sup}_{[s,s+1]}|\frq|.
\end{equation}
Now there are two cases to consider, these distinguished by whether or not the first chern class of $\spb$ is torsion in $H^2_c(M)$. Supposing that this class is torsion, then the $2$-form $\upomega_0$ can be taken to be equal to $0$ and so the $\frq$ term in \eqref{eq:conv-452} is absent. In this case, \eqref{eq:conv-452} leads to 
\begin{equation}
\label{eq:conv-453}
\frE\leq c_0(\cale+z)+c_0 r^{-1/3}\frE^{4/3}
\end{equation}
which implies that
\begin{equation}
\label{eq:conv-454}
\mathrm{If}\;\;\frE\leq {c_0}^{-1},\;\mathrm{then}\;\;\frE\leq c_0(\cale+z).
\end{equation}
It follows as a consequence that $\upze(s)\leq c_0(\cale+z)$ for all $s\in\R$ if $r$ is greater than a constant that depends only on $\cale$. 

Some extra steps must be used to draw this same conclusion when $det(\spb)$ is not torsion in $H^2_c(M)$. These extra steps employ \eqref{eq:conv-430} again to localize the analysis to the part of $Y_T$ near $M$. Except for this and one other item discussed directly, the arguments differ little from what is done in \cite[Section 5d]{taubes4}. (More is said about this in Step \ref{step2part6:prop42} of Part \ref{part6:prop42}.) The additional item concerns the freedom to change $\upomega_0$ by adding the differential of a $1$-form with compact support in $M$. In particular, suppose that $\upupsilon_0$ is such a $1$-form and let $\upomega_0'=\upomega_0+d\upupsilon_0$. The differential forms $\upomega_0$ and $\upomega_0'$ represent the same class in $H^2_c(M)$. Let $\frq'$ denote the $\upomega_0'$ version of $\frq$. Since $\upupsilon_0$ has compact support on $M$, integration by parts writes
\begin{equation}
\label{eq:conv-455}
\frq'-\frq=\int_{\{s\}\times Y_T}(B_A-B_{A_+})\wedge\ast\upupsilon_0.
\end{equation}
Note that it follows in any event from Lemma \ref{lem:bounds-34} that $|\frq'-\frq|\leq c_z \operatorname{sup}_M|\upupsilon_0|r$.
 \end{pt2}
\begin{pt2}
\label{part5:prop42}
This part of the proof derives certain bounds that are used in Part \ref{part6:prop42} to deal with the case when $a$ and/or the metric depend on the coordinate $s$ for the $\R$ factor in $\R\times Y_T$ (and/or when the perturbation term $\frp$ is present in \eqref{eq:instanton-sw}). As is shown in Part \ref{part6:prop42}, an inequality that is analogous to the one in \eqref{eq:conv-452} holds in this case provided that the norm of $\frp$ and its covariant derivatives is less than $c_0^{-1}$, and provided that
\begin{equation}
\label{eq:conv-456}
|\frac{\partial}{\partial s}a|+|\frac{\partial}{\partial s}da|\leq c^{-1}\;and\;c^{-1}s_0<c_0^{-1}
\end{equation}
with $c$ being greater than $1$ but less than $c_0$. By way of a reminder, $s_0$ is defined by the requirement that $\frac{\partial}{\partial s}a\neq 0$ only on the $[-s_0,s_0]\times Y_T$ part of $\R\times Y_T$. This implies that \eqref{eq:conv-456} can be satisfied if $|a_+-a_-|\leq c^{-1}{c_0}^{-1}$.

The derivation in Part \ref{part4:prop42} makes fundamental use of the inequalities that are asserted by Lemma \ref{lem:bounds-31} in the case when $a$ and the metric are independent of $s$. This application of Lemma \ref{lem:bounds-31} is summarized by \eqref{eq:conv-450}. But, a verbatim repetition of the derivation in Part \ref{part4:prop42} runs afoul of the inequality in the third bullet of the lemma if either $a$ or the metric depends on $s$. This is so even when $T$ is finite. The offending terms are
\begin{equation}
\label{eq:conv-457}
ir\int_{\{s\}\times Y_T}((B_A-B_{A_+})\wedge\ast\frac{\partial}{\partial s}a)+r\int_{I\times Y_T}\langle\psi,\mathcal{R}\psi\rangle - \frac{1}{4}\int_{I\times M} |\frp|^2.
\end{equation}
These terms in \eqref{eq:conv-457} cause problems because the arguments in Part \ref{part4:prop42} explicitly use the fact that the function given by the rule $s\mapsto\fra(\frd|_s,\frc_+)$ is decreasing when $a$ and the metric are independent of $s$ and when $\frp$ is absent.  But noting that the term with $|\frp|^2$ is bounded from below in any event by $-c_0$ times the sup norm of $|\frp|$, the previous arguments could be repeated with only cosmetic changes to accommodate the presence of $\frp$.  More needs to be said about the other two terms in \eqref{eq:conv-457}.

The second term in \eqref{eq:conv-457} is bounded by $c_0r$ because $\mathcal{R}$ has compact support on $M$. Denote the leftmost term in \eqref{eq:conv-457} by $r\scq$. To deal with $r\scq$, it is useful to introduce the function $\frP(|\alpha|^2)$ from \eqref{eq:flatconn}. Let
\begin{equation}
\label{eq:conv-458}
\frb=\frac{1}{2}\frP(|\alpha|^2)|\alpha|^{-2}(\overline{\alpha}\nabla_A\alpha-\alpha\nabla_A\overline{\alpha}).
\end{equation}
Noting that
\begin{equation}
\label{eq:conv-459}
d\frb=\frP F_A+\frP'\nabla_A\overline{\alpha}\wedge\nabla_A\alpha,
\end{equation}
write $F_A=(1-\frP)F_A+d\frb-\frP'\nabla_A\overline{\alpha}\wedge\nabla_A\alpha$ and then write the offending term 
\begin{eqnarray}
\label{eq:conv-460}
\nonumber r\scq&=&\frac{i}{2}r\int_{\{s\}\times Y_T}((1-\frP)B_A\wedge\ast\frac{\partial}{\partial s}a)+\frac{i}{2}r\int_{\{s\}\times Y_T}(\frb\wedge\frac{\partial}{\partial s}da)\\
\nonumber&&\hspace{0.5in}+\frac{i}{2}r\int_{\{s\}\times Y_T}(\frP'\nabla_A\overline{\alpha}\wedge\nabla_A\alpha\wedge\frac{\partial}{\partial s}a)-\frac{i}{2}r\int_{\{s\}\times Y_T}(B_{A_+}\wedge\ast\frac{\partial}{\partial s}a).\\
\end{eqnarray}
The norm of the rightmost term in \eqref{eq:conv-460} is bounded by $c_0r$. To deal with the remaining terms, we need an independent bound for the integral of $|\nabla_A\psi|^2$ on sets of the form $I\times U$ with $I\subset\R$ being an interval of length $1$ and with $U\subset Y_T$ being an open subset with compact closure. Of particular interest is when each point in $U$ has distance at most $4$ from $M$. Such a bound comes from the Bochner--Weitzenb\"ock formula using the rightmost equation in \eqref{eq:sw}. What with the bounds in Lemma \ref{lem:bounds-32}, integrating this formula leads to a bound of the form
\begin{equation}
\label{eq:conv-461}
\int_{I\times Y_T}{\sigma_\diamond}^2(|\nabla_{A,s}\psi|^2+|\nabla_A\psi|^2)\leq c_0(1+r\int_{I\times Y_T}{\sigma_\diamond}^2|1-|\alpha|^2|).
\end{equation}

To exploit this bound, use the formula in \eqref{eq:conv-460} now to obtain the bound
\begin{equation}
\label{eq:conv-462}
r|\int_I\scq(s)ds|\leq c_0c^{-1}(1+r\int_{I\times Y_T}{\sigma_\diamond}^2(1-\frP)|B_A|+r^2\int_{I\times Y_T}{\sigma_\diamond}^2|1-|\alpha|^2|).
\end{equation}
write $I$ as $[s_-,s_+]$. Since $|B_A|\leq |\calB|+c_0r|1-|\alpha|^2|+c_0$, this last inequality can be used to replace the third bullet of Lemma \ref{lem:bounds-31} by
\begin{eqnarray}
\label{eq:conv-463}
\nonumber \fra_{s_-}(\frd|_{s_-},\frc_+)-\fra_{s_+}(\frd|_{s_+},\frc_+)&\geq&\frac{1}{8}\int_{I\times Y_T}(|E_A|^2+|\calB|^2+2r(|\nabla_{A,s}\psi|^2+|\textup{D}_A\psi|^2))\\
\nonumber&&\hspace{1in}-c_0r^2c^{-1}\int_{I\times Y_T}{\sigma_\diamond}^2|1-|\alpha|^2|-c_0r.\\
\end{eqnarray}
Now use what is said after \eqref{eq:conv-47} with the definitions in \eqref{eq:functionL} and \eqref{eq:conv-44} replace this by
\begin{eqnarray}
\label{eq:conv-464}
\nonumber \fra_{s_-}(\frd|_{s_-},\frc_+)-\fra_{s_+}(\frd|_{s_+},\frc_+)&\geq&\frac{1}{2}\int_{I\times Y_T}(|E_A|^2+|\calB|^2+2r(|\nabla_{A,s}\psi|^2+|\textup{D}_A\psi|^2))\\
\nonumber&&\hspace{2in}-c_0r(c^{-1}\uscL+1),\\
\end{eqnarray}
and then use \eqref{eq:conv-422} and \eqref{eq:conv-425} to go from \eqref{eq:conv-462} to the bound
\begin{eqnarray}
\label{eq:conv-465}
\nonumber \fra_{s_-}(\frd|_{s_-},\frc_+)-\fra_{s_+}(\frd|_{s_+},\frc_+)&\geq&\frac{1}{2}\int_{I\times Y_T}(|E_A|^2+|\calB|^2+2r(|\nabla_{A,s}\psi|^2+|\textup{D}_A\psi|^2))\\
\nonumber&&\hspace{1.5in}-c_0r(c^{-1}\operatorname{sup}_{s\in I}\upze(s)+1).\\
\end{eqnarray}
Let $x$ now denote the smaller of ${c_0}^{-1}r$ and $\operatorname{sup}_{s\in[-s_0-100,s_0+100]}\upze(s)$. The bounds in \eqref{eq:conv-463} and \eqref{eq:conv-465} for $s\in[-s_0-100,s_0+100]$ imply two useful inequalities:
\begin{eqnarray}
\label{eq:conv-466}
\nonumber\fra_{-s_0-100}(\frd|_{-s_0-100},\frc_+)-\fra_s(\frd|_s,\frc_+)&\geq&-\kappa_0 c^{-1}rx,\\
\fra_s(\frd|_s,\frc_+)-\fra_{s_0+100}(\frd|_{s_0100},\frc_+)&\geq&-\kappa_0 c^{-1}rx,
\end{eqnarray}
with $\kappa_0$ being a positive number that depends only on $s_0$. Taking $s=s_-$ in the top inequality and $s=s_+$ in the bottom inequality, these inequalities with those in the first two bullets of Lemma \ref{lem:bounds-31} imply in turn that 
\begin{equation}
\label{eq:conv-467}
\fra_{s_-}(\frd|_{s_-},\frc_+)-\fra_{s_+}(\frd|_{s_+},\frc_+)\leq r(z+\kappa_0c^{-1}x),
\end{equation}
when $\mathcal{A}_\frd\leq rz$. This bound and \eqref{eq:conv-465} lead in turn to a bound of the form
\begin{equation}
\label{eq:conv-468}
\usco(s)\leq\kappa_0r(z+c^{-1}x+1)\;\mathrm{when}\;s\in[-s_0-10,s_0+10],
\end{equation}
with $\kappa_0$ denoting here and in what follows a number that depends only on $s_0$. In particular, it does not depend on $T$.
\end{pt2}
\begin{pt2}
\label{part6:prop42}
This part of the proof derives an analog of the inequality in \eqref{eq:conv-452} in the case when $a$ and/or the metric depend on the $s$ coordinate. It uses this analog to draw the analogous conclusion: There is an $r$ and $T$ independent bound on $\upze$. The arguments in this case are presented in four steps.
\begin{sp4}
\label{step1part6:prop42}
Fix $m>c_0$. Let $\tau_1$ denote the largest $s\in\R$ such that $\fra_s(\frd_s,\frc_+)\geq mr$. Suppose first that $\tau_1\geq s_0$. Given $\frE>100\cale$, either $\upze(s)\leq\frE$ on $[\tau_1,\infty)$ or not. Suppose for the sake of argument that $\upze(s)>\frE$ on this interval for some $s$. Let $\tau$ denote the largest value of $s$ with this property. Since $\tau>s_0$, the second bullet of Lemma \ref{lem:bounds-31} can be invoked to replace \eqref{eq:conv-450} by
\begin{equation}
\label{eq:conv-469}
0\leq \fra(\frd|_s,\frc_+)\;\;\;\mathrm{and}\;\;\;\usco(s)\leq\fra_{\tau_1}(\frd|_{\tau_1},\frc_+)\leq r(m+c_0\cale).
\end{equation}
This inequality and Lemma \ref{lem:conv-44} lead to the analog below of \eqref{eq:conv-452}:
\begin{equation}
\label{eq:conv-470}
\frE\leq c_0(m+\cale)+c_0r^{-1/3}\frE^{4/3}+c_0r^{-1}\operatorname{sup}_{[s,s+1]}|\frq|.
\end{equation}
In the case when $c_1(det(\spb))$ is a torsion class in $H^2_c(M)$, then there is no $|\frq|$ term and this inequality leads to the following analog of \eqref{eq:conv-453}:
\begin{equation}
\label{eq:conv-471}
\mathrm{If}\;\frE\leq {c_0}^{-1}r,\;\mathrm{then}\;\frE\leq c_0(\cale+m).
\end{equation}
Arguments that employ \eqref{eq:conv-430} but otherwise differ little from what is done in \cite[Section 5d]{taubes4} can be used to prove that \eqref{eq:conv-470} also holds when $c_1(det(\spb))$ is not torsion. More is said about this in the next step.
\end{sp4}
\begin{sp4}
\label{step2part6:prop42}
To see about the size of the number $\tau_1$, suppose for the sake of argument that $\tau_1\geq s_0+100$. It then follows from \eqref{eq:conv-424} that $\upze(s)\leq c_0(\cale+m)$ on $[\tau_1-100,\infty)$. This implies in turn that $\scm(s)\leq c_0(\cale+m)$ on $[\tau_1-50,\infty)$. The latter bound can be used to invoke Lemma \ref{lem:bounds-35} on $[\tau_1-40,\infty)$. As explained directly, there is a tension between the bounds in Lemma \ref{lem:bounds-35} and the fact that $\fra_{\tau_1}(\frd|_{\tau_1},\frc_+)=mr$.

This tension plays out via a modified version of the bounds given in Lemma \ref{lem:conv-44}. In particular, the bounds in Lemma \ref{lem:bounds-35} for $|\nabla_A\psi|$ can be used to replace the bound on the contribution to $\fra$ from the integral $r\langle\psi,\textup{D}_A\psi\rangle$ that is derived in Step \ref{step1:lemma44} of the proof of Lemma \ref{lem:conv-44} by $c_0r^{1/2}\en^{1/2}+c_0\frE e^{-\sqrt{r}/c_z}$. Meanwhile, the bounds in Lemma \ref{lem:bounds-35} imply bounds for $|B_A|$ in terms of $r(1-|\alpha|^2)$; and these can be used in Step \ref{step5:lemma44} of the proof of Lemma \ref{lem:conv-44} to replace the right hand side of \eqref{eq:conv-441} by $c_0r^{2/3}\en^{4/3}$. These replacements then lead to the inequality
\begin{equation}
\label{eq:conv-472}
\fra_s(\frd|_s,\frc_+)\leq c_0(1+\pze+r^{1/2}\pze^{1/2}+r^{2/3}\pze^{4/3}+r\cale)-\frac{1}{2}r\pze+|\frq|
\end{equation}
which holds when $s\in[\tau_1-20,\infty)$. Since $\pze\leq c_0(\cale+m)$ and $\fra_s(\frd|_s,\frc_+)=mr$, the $s=\tau_1$ version of this inequality leads to the following conclusions: if $c_1(det(\spb))$ is a torsion class in $H^2_c(M)$ (so there is no $\frq$ term), then
\begin{equation}
\label{eq:conv-473}
m\leq c_0\;\;\mathrm{if}\;\; m\leq{c_0}^{-1}r.
\end{equation}
If $c_1(det(\spb))$ is not torsion, a modification of arguments from \cite[Section 5d]{taubes4} leads to the same conclusion. The next paragraphs say more about this.

To deal with $\frq$, let $\pzq$ denote a number greater than $100$ and let $\tau_2$ denote the largest number $s\in[\tau_1,\infty)$ where $|\frq|=\pzq$. For $s>\tau_2$, there is an a priori bound on $\fra_s(\frd|_s,\frc_+)$ and an a priori bound on $\upze$, first on $[\tau_2+100,\infty)$ and then, by invoking \eqref{eq:conv-424}, on the larger interval $[\tau_2-1--,\infty)$. The inequalities in Lemmas \ref{lem:bounds-35} and \ref{lem:bounds-36} are then used just as their analogs are used in \cite[Section 5d]{taubes4} to prove that $|\frq|\leq c_0\pzq$ for $s\geq \tau_2-100$ if $\pzq\geq c_0\frE$. This is done by using \eqref{eq:conv-455} and the bound $|\frq'-\frq|\leq c_0 \operatorname{sup}_M|\upupsilon_0|\frE$ with $\upupsilon_0$ chosen so that $\upomega_0+d\upupsilon_0$ has support very near a conveniently chosen embedded surface in $M$ that is dual to $c_1(det(\spb))$. Proposition \ref{prop:conv-41} is then invoked on $[\tau_2-100,\infty)$ to view $\frq$ as an approximation to the intersection number between a pseudo-holomorphic curve and this embedded surface. It follows as a consequence that the bound $\pzq$ is not realized on the interval $[\tau_1,\infty)$ unless $\pzq\leq c_0\frE$.
\end{sp4}
\begin{sp4}
\label{step3part6:prop42}
Turning \eqref{eq:conv-473} around, one can conclude that if $c_0(\frE+z)\leq m<{c_0}^{-1}r$ and if $\tau_1\in[s_0+10,\infty)$ is such that $\fra_{\tau_1}(\frd|_{\tau_1},\frc_+)=mr$, then it must be that $\tau_1\leq s_0+100$. This step uses the preceding conclusion to prove that $\operatorname{sup}_{s\in\R}\fra_s(\frd|_s,\frc_+)<(c_z+c_0\cale)r$. To do this, fix for the moment $s\in[-s_0-150, s_0+150]$. Since \eqref{eq:conv-47} bounds $\scL$ from below by $-c_z$, integrating both sides of \eqref{eq:conv-426} over the interval $[s,s+1]$ leads to an inequality for $\upze$ on the interval $[-s_0-150,s_0+150]$ that reads
\begin{equation}
\label{eq:conv-474}
\frac{d}{ds}\upze+2\upze\geq \scm-c_z-|\int_I\scq(s)ds|,
\end{equation}
with $\scq(s)$ being $\int_{\{s\}\times Y_T}((B_A-B_{A_0})\wedge\ast\frac{\partial}{\partial s}a)$. Use Lemma \ref{lem:bounds-34}'s bound for $|B_A|$ by $c_0r$ with \eqref{eq:conv-462} to bound the absolute value of the integral of $\scq$ in \eqref{eq:conv-474} by $c_0c^{-1}\scm$. Granted the latter bound, and granted that $c^{-1}<{c_0}^{-1}$, then \eqref{eq:conv-473} implies that
\begin{equation}
\label{eq:conv-475}
\frac{d}{ds}\upze+2\upze\geq {c_0}^{-1}\scm-c_z,
\end{equation}
for $s\in[-s_0-150, s_0+150]$. Integrating \eqref{eq:conv-475} leads in turn to the bound
\begin{equation}
\label{eq:conv-476}
\upze(s)\leq c_0\upze(-s_0-150)+c_z\leq c_0(\cale+m)+c_z,
\end{equation}
for $s\in[-s_0-150, s_0+150]$.

To bound in \eqref{eq:conv-476} implies that the number $x$ that appears in \eqref{eq:conv-466}--\eqref{eq:conv-468} is no greater than $c_z(\cale+m)$. With this understood, suppose that $s\in[-s_0-100,s_0+100]$ is such that $\fra_s(\frd^{u_+}|_s)=mr$. It then follows from \eqref{eq:conv-466} that
\begin{equation}
\label{eq:conv-477}
\fra_{-s_0-100}(\frd_{-s_0-100},\frc_+)\geq (1-\kappa_0c_0c^{-1})mr-\kappa_0c^{-1}c_0r\cale-\kappa_0c_zr.
\end{equation}
Supposing that $c>\kappa_0c_0$, this implies in turn that
\begin{equation}
\label{eq:conv-478}
\fra_{-s_0-100}(\frd|_{-s_0-100},\frc_+)\geq \frac{1}{2}mr-r\cale-\kappa_0 c_zr.
\end{equation}
The preceding with the first bullet of Lemma \ref{lem:bounds-31} leads to the conclusion that 
\begin{equation}
\label{eq:conv-479}
\fra_-(\frc_-,\frc_+)\geq \frac{1}{2}mr-r\cale-\kappa_0c_zr.
\end{equation}
Now, by assumption, $\mathcal{A}_\frd\leq rz$ which is to say that $\fra_-(\frc_-,\frc_+)\leq rz$. The latter bound with \eqref{eq:conv-479} leads to a bound on $m$ and thus to the conclusion that $\fra_s(\frd|_s,\frc_+)\leq c_0r(c_z+\cale)$ if $s\in[-s_0-100,\infty)$. The top bullet of Lemma \ref{lem:bounds-31} then implies the same for all $s\in\R$. 
\end{sp4}
\begin{sp4}
\label{step4part6:prop42}
Having determined that $\fra_s(\frd|_s,\frc_+)\leq c_0r(c_z+\cale)$ for all $s\in\R$, then the arguments that led to \eqref{eq:conv-469}--\eqref{eq:conv-471} can be repeated to obtain the following conclusion: Suppose that $\frE\geq 100\cale$ and that $\tau_1$ is the largest $s\in\R$ such that $\upze(s)=\frE$. If $r$ is greater than a number that depends only on $\frE$, then
\begin{equation}
\label{eq:conv-480}
\frE\leq c_z+c_0\cale\;\;\mathrm{if}\;\;\frE\leq {c_0}^{-1}r.
\end{equation}
This then gives a uniform bound for $\upze$ for all $s\in\R$. As explained previously, the latter bound is what is needed to complete the proof of Proposition \ref{prop:conv-42}. \qedhere
\end{sp4}
\end{pt2}
\end{proof}

\subsection{Convergence of instantons as $T\to\infty$}
\label{ssec:instanton-convergence}

This section uses Proposition \ref{prop:conv-42} to prove the instanton analogue of Lemma \ref{lem:monopoles-111}.  To set the notation, fix $T \in (16,\infty]$ for the moment and suppose that $\frd$ is an instanton solution to \eqref{eq:instanton-sw} on $\R \times Y_T$.  Let $\frc_-$ and $\frc_+$ again denote the respective $s \to -\infty$ and $s \to \infty$ limits of $\frd|_s$.  The upcoming proposition uses $\ind$ to denote the spectral flow between the $\frc_-$ and $\frc_+$ versions of the operator that appears in \eqref{eq:linearized}.  Note in this regard that this spectral flow between $\LLf_{\frc_-}$ and $\LLf_{\frc_+}$ is defined when $T$ is finite as in \cite{taubes-weinstein1}.  When $T=\infty$, these operators are still Fredholm, and essentially self-adjoint (by virtue of Lemma~\ref{lem:monopoles-12}).  The spectral flow can be defined as in \cite{taubes-weinstein1} if there exists a 1-parameter family of such operators that interpolate between $\LLf_{\frc_-}$ and $\LLf_{\frc_+}$.  And, there exists such a family when there is an instanton $\frd$ whose respective $s \to \pm\infty$ limits are $\frc_-$ and $\frc_+$.  (The required 1-parameter family of operators exists when there is a smoothly varying family of admissible pairs $\{(A_s,\psi_s)\}_{s\in\R}$ of connection on $E$ and section of $\spb$ (over $Y_\infty$) that interpolate between $\frc_-$ and $\frc_+$ as $s\to\pm\infty$.  An instanton supplies just such a family because of Lemma~\ref{lem:instanton-24}.)

The upcoming proposition also reintroduces the positive number $s_0$ from the beginning of Section~\ref{ssec:instantons}: It is chosen so that the $s$-dependence of $a$ is confined to the $-s_0 < s < s_0$ part of $\R\times Y_T$.

\begin{proposition}
\label{prop:Tconv-51}
There exists $\kappa>1$, and given $\cale > 1$, there exists $\kappa_\cale > 1$; these numbers $\kappa$ and $\kappa_\cale$ have the following significance: Assume that $c>\kappa$ and that the supremum norm of $\frp$ and its covariant derivative are less than $c^{-1}$.  Also assume that
\[ \left|\frac{\partial}{\partial s} a\right| + \left|\frac{\partial}{\partial s} da\right| \leq c^{-1}
\mathrm{\ \ \ and\ \ \ }
s_0 < \kappa^{-1}c. \]	
Suppose that either $\{T(k)\}_{k=1,2,\dots}$ is an increasing, unbounded sequence of numbers (each greater than $16$), or that $T(k)=\infty$ for each positive integer $k$.  Fix $r > \kappa_\cale$.  For each integer $k \in \{1,2,\dots\}$, let $\frd_k = (A_k,\psi_k)$ denote an instanton solution to \eqref{eq:instanton-sw} on $\R \times Y_{T(k)}$ obeying $\displaystyle \lim_{s\to\pm\infty} \en(\frd_k|_s) \leq \cale$ and such that the absolute value of the spectral flow $\upiota_{\frd_k}$ is bounded by $\cale r$.  There exists
\begin{enumerate}\leftskip-0.25in
\item an instanton solution $\frd = (A,\psi)$ to \eqref{eq:instanton-sw} on $\R\times Y_\infty$ with $\sup_{s\in\R} \scm(s) \leq \kappa_\cale$,
\item a subsequence $\Lambda$ of positive integers,
\item a sequence $\{g_k\}_{k\in\Lambda}$ with any given $k\in\Lambda$ version of $g_k$ mapping $\R\times (Y_{T(k)}\ssm N_{T(k)})$ to $S^1$,
\item a sequence $\{\hu_k\}_{k\in\Lambda}$ with any given $k\in\Lambda$ version of $\hu_k$ mapping $\R\times N_{T(k)}$ to $S^1$.
\end{enumerate}
The data $\frd$, $\Lambda$, $\{g_k\}_{k\in\Lambda}$, and $\{\hu_k\}_{k\in\Lambda}$ have the properties listed in the subsequent bullets. The first two bullets use $I\subset\R$ to denote any given bounded interval.
\begin{itemize}\leftskip-0.25in
\item The sequence indexed by $\Lambda$ whose $k$th term is the $C^0$ norm of $e^{r\operatorname{dist}(\cdot,M)/\kappa}(g_k^\ast\frd_k -\frd)$ on $I \times (Y_{T(k)} \ssm N_{T(k)})$ is bounded and has limit zero as $k\to\infty$.
\item For each positive integer $m$, the sequence indexed by $\Lambda$ whose $k$th term is the $C^m$ norm of $(g_k^\ast\frd_k - \frd)$ on $I \times (Y_{T(k)} \ssm N_{T(k)})$ has limit zero as $k\to\infty$.
\item For each positive integer $m$, the sequence indexed by $\Lambda$ whose $k$th term is the $C^m$ norm of $(\hu_k^\ast\frd_k - (\Ai,\psii))$ on $\R\times N_{T(k)}$ has limit zero as $k\to\infty$.
\end{itemize}
\end{proposition}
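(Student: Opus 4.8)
The plan is to follow the proof of Lemma \ref{lem:monopoles-111} essentially verbatim, with Proposition \ref{prop:conv-42} playing the role of Lemma \ref{lem:monopoles-110} and the pointwise bounds of Section \ref{ssec:instanton-bounds} playing the role of Lemmas \ref{lem:monopoles-14} and \ref{lem:monopoles-15}. The first thing I would do is extract a $T$-independent a priori bound $\mathcal{A}_{\frd_k}\leq\mathcal{K}r$ with $\mathcal{K}$ depending only on $\cale$. This should come from combining the hypotheses $\lim_{s\to\pm\infty}\en(\frd_k|_s)\leq\cale$ and $|\upiota_{\frd_k}|\leq\cale r$ with the standard relation between the action functional $\fra$, the Chern--Simons functional, and spectral flow (cf. \cite{taubes3}): a bound on $|\upiota_{\frd_k}|$ controls the action drop up to corrections of order $r$ governed by the end energies, and the first two bullets of Lemma \ref{lem:bounds-31} show that the $s$-dependent error terms coming from $\frac{\partial}{\partial s}a$ and the endomorphism $\mathcal{R}$ are also $\mathcal{O}(r)$, precisely under the standing hypotheses $|\frac{\partial}{\partial s}a|+|\frac{\partial}{\partial s}da|\leq c^{-1}$ and $s_0<\kappa^{-1}c$. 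With $\mathcal{A}_{\frd_k}\leq\mathcal{K}r$ in hand, Proposition \ref{prop:conv-42} applies to each $\frd_k$ and supplies the crucial uniform bound $\sup_{s\in\R}\scm(s)\leq\kappa_\cale$, with $\kappa_\cale$ depending only on $\cale$ and, in particular, not on $T(k)$; delivering this $T$-uniformity is exactly what Proposition \ref{prop:conv-42} was designed for.

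Next I would feed the bound on $\scm$ into the machinery of Section \ref{ssec:instanton-bounds}. Lemma \ref{lem:bounds-33} gives a $T$-independent constant $c$ with $c_{\frd_k}\leq c$ for all $k$, and Lemma \ref{lem:bounds-34} then yields the uniform pointwise bounds $|F_{A_k}|\leq\kappa r$, $|\nabla_{A_k}\alpha_k|^2\leq\kappa r$, $|\nabla_{A_k}\beta_k|^2\leq\kappa$, together with $|\nabla_{A_k}^{\otimes q}\alpha_k|^2+r|\nabla_{A_k}^{\otimes q}\beta_k|^2\leq\kappa_q r^q$ for every $q$, all independent of $T(k)$. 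On $\R\times M$ and on the part of each $\R\times Y_{T(k)}$ at bounded distance from $M$, standard elliptic regularity for \eqref{eq:instanton-sw} upgrades these to uniform $C^\infty$ bounds on every compact subset of $\R\times Y_\infty$, once one records that the sets $Y_{T(k)}\ssm N_{T(k)}$ are canonically identified with an exhausting family of open subsets of $Y_\infty$. A diagonal argument over an exhaustion of $\R\times Y_\infty$ by compact sets then produces a subsequence $\Lambda$ together with gauge transformations so that the gauge-transformed pairs converge in $C^\infty_{\mathrm{loc}}$ to a pair $\frd=(A,\psi)$ solving \eqref{eq:instanton-sw} on $\R\times Y_\infty$ and inheriting $\sup_{s\in\R}\scm(s)\leq\kappa_\cale$.

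The third ingredient is the exponential weight estimate along the ends. By Lemma \ref{lem:bounds-37} and the localization argument at the end of the proof of Proposition \ref{prop:conv-42}, the locus where $1-|\alpha_k|$ fails to be small lies in a region of bounded volume; since $Y_{T(k)}\ssm N$ contains no closed Reeb orbits, and a nonempty pseudoholomorphic curve which leaves a fixed neighborhood of $M$ must traverse the long collar and hence have area of order $T(k)$, that region is confined to a bounded neighborhood of $M$ once $T(k)$ is large. Off that neighborhood the distance to the region is therefore comparable to $\operatorname{dist}(\cdot,M)$, and Lemma \ref{lem:bounds-35} gives $r(1-|\alpha_k|^2)+|\nabla_{A_k}\alpha_k|^2+r|\nabla_{A_k}\beta_k|^2\leq\kappa(r^{-1}+re^{-\sqrt{r}\operatorname{dist}(\cdot,M)/\kappa})$ there. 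Lemma \ref{lem:bounds-36}, applied through the usual device of gauging $\alpha_k/|\alpha_k|$ so as to write $\alpha_k=1-z$ with $z$ real, then produces gauge transformations $g_k$ on $\R\times(Y_{T(k)}\ssm N_{T(k)})$ with $g_k^\ast\frd_k-(\Ai,\psii)=(r^{1/2}\fra,\eta)$ and $|(\fra,\eta)|\leq\kappa e^{-\sqrt{r}\operatorname{dist}(\cdot,M)/\kappa}$, and gauge transformations $\hu_k$ on $\R\times N_{T(k)}$ with $|\hu_k^\ast\frd_k-(\Ai,\psii)|\leq\kappa e^{-\sqrt{r}\operatorname{dist}(\cdot,M)/\kappa}$, which is $\mathcal{O}(e^{-\sqrt{r}T(k)/\kappa})$ on $N_{T(k)}$ and hence tends to zero. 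The elliptic bounds of the previous paragraph turn these $C^0$ estimates into $C^m$ estimates for each $m$. Patching the $C^\infty_{\mathrm{loc}}$-convergent gauges with the $g_k$ on the overlap near $M$, and using the exponential decay of both $g_k^\ast\frd_k$ and the limit $\frd$ towards $(\Ai,\psii)$ along the ends to control the weighted norms, yields the three displayed convergence statements on bounded intervals $I$; the same exponential decay verifies Condition \ref{cond:instanton-26} for $\frd$, so $\frd$ is an instanton solution as required. Apart from the input of Proposition \ref{prop:conv-42}, this is the argument of Lemma \ref{lem:monopoles-111} and of the analogous assertions in \cite[Section 3]{taubes4} and \cite[Section 7]{ht2}.

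I expect the main obstacle to be the first step: securing $\mathcal{A}_{\frd_k}\leq\mathcal{K}r$ with $\mathcal{K}$ and the threshold $\kappa_\cale$ independent of $T(k)$, so that Proposition \ref{prop:conv-42} can be invoked uniformly in $k$. The subtlety is that the action is not controlled by the end energies alone; one must use the almost-monotonicity of $s\mapsto\fra_s(\frd_k|_s)$ from Lemma \ref{lem:bounds-31}, with careful bookkeeping of its $s$-dependent error terms — which is why the hypotheses on $\frac{\partial}{\partial s}a$ and $s_0$ are imposed — together with the spectral-flow hypothesis, and one must verify that no constant introduced along the way depends on $T$. Once that input is established, the remaining compactness, elliptic bootstrapping, and gauge-patching are routine and parallel the monopole case.
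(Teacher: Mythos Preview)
Your overall architecture is correct and matches the paper: bound $\mathcal{A}_{\frd_k}$ uniformly in $T(k)$, feed this into Proposition~\ref{prop:conv-42} to bound $\scm$, then use Lemmas~\ref{lem:bounds-35} and~\ref{lem:bounds-36} together with elliptic bootstrapping to extract a convergent subsequence and the gauge transformations. The paper's proof says exactly this in its first paragraph and then devotes nine steps to the action bound, which you correctly flag as the main obstacle.

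The gap is in your first step. You write that the bound $\mathcal{A}_{\frd_k}\leq \mathcal{K}r$ ``should come from combining the hypotheses \dots\ with the standard relation between the action functional, the Chern--Simons functional, and spectral flow,'' and that one must merely ``verify that no constant introduced along the way depends on $T$.'' That verification is not routine and is the entire content of the paper's argument. The standard estimate $|\fra - 2\pi^2\upiota|\leq c_0 r^{2/3}\cale^{4/3}(\ln r)^{c_0}$ (e.g.\ Proposition~1.9 of \cite{taubes-weinstein2}) is proved on a fixed compact $3$-manifold and its constants depend on the geometry; invoking it directly on $Y_{T(k)}$ gives constants that blow up with $T(k)$. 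Moreover the spectral flows $\upiota_{\frd_k}$ are computed on different Hilbert spaces $L^2_1(Y_{T(k)};\cdot)$, so there is no a priori way to compare them.

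What the paper does to close this gap is substantial: it first rewrites $\fra_-(\frc_-,\frc_+)$ (Steps~\ref{step1:prop51}--\ref{step3:prop51}) so that, modulo terms bounded by $c_0(r\cale + r^{2/3}\cale^{4/3})$, only a pairing $\int \textup{u}^{-1}d\textup{u}\wedge(\upomega_{0-}+\upomega_{0+})$ survives, which vanishes when $c_1(\det\spb)$ is torsion in $H^2_c(M)$. In the non-torsion case it introduces (Steps~\ref{step4:prop51}--\ref{step8:prop51}) a fixed auxiliary manifold $Y_{2R}$ with $R=\mathcal{O}(c_\cale)$, transplants each $\en\leq\cale$ solution to a pair $(A_R,\psi_R)$ on $Y_{2R}$, and shows via a coupled system \eqref{eq:Tconv-513} and a compact perturbation operator $\frT_{T,\lambda}$ that the spectral flow on $Y_T$ agrees, up to $c_0(\ln r)^{c_0}\cale$, with a spectral flow on the fixed $Y_{2R}$. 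Only then can the standard action/spectral-flow estimate be applied on $Y_{2R}$ (Step~\ref{step9:prop51}) to yield \eqref{eq:Tconv-524} with $T$-independent constants. Your proposal contains no analogue of this repackaging, and without it the first step does not go through.
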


With regards to the convergence on bounded parts of the $\R$ factor in the first and second bullets of the proposition, the convergence on the whole of $\R \times (Y_{T(k)} \ssm N_T)$ can't be claimed because there will almost surely be cases where the limit is a ``broken trajectory'' that looks like the gluing of widely separated instantons (with respect to the parameter $s$ for the $\R$ factor).  (The notion of a ``broken trajectory'' is discussed at length in \cite{kmbook}, see first the end of Chapter~2.1 in \cite{kmbook}, and then Chapter~16 in \cite{kmbook}.)

\begin{proof}[Proof of Proposition \ref{prop:Tconv-51}]
Suppose first that the sequence $\{\mathcal{A}_{\frd_k}\}_{k=1,2,\dots}$ is bounded by $rz$. Granted this bound, then (as asserted by the last bullet of Proposition \ref{prop:conv-42}) the following is a consequence: If $r$ is greater than a number that depends only on $z$ and $\cale$, then the sequence whose $k$th term is the $\frd_k$ version of $\sup_{s\in\R} \scm(s)$ is bounded. In addition (because of Lemma \ref{lem:bounds-33} assuming the lower bound for $r$ is greater than $z$), the conclusions of Lemmas \ref{lem:bounds-35} and \ref{lem:bounds-36} hold for each $(A_k,\psi_k)$ with $k$-independent versions of $\kappa$. By virtue of the bounds from these lemmas, there is a subsequence $\Lambda \subset \{1,2,\dots\}$ and a corresponding sequence of gauge transformations $\{g_k\}_{k\in\Lambda}$ and $\{\hu_k\}_{k\in\Lambda}$ that obey the assertions of the three bullets of the proposition.

With the preceding understood, it remains to prove that the assumptions about the $s\to\pm\infty$ limits of $\frd_k|_s$ lead to an a priori bound on $\{\mathcal{A}_{\frd_k}\}_{k=1,2,\dots}$.  The nine steps that follow derive such a bound.

\begin{sp5}
\label{step1:prop51}
Supposing that $\frd$ is an instanton solution to \eqref{eq:instanton-sw}, let $\frc_- = (A_-,\psi_-)$ and $\frc_+ = (A_+,\psi_+)$ denote the respective $s\to\pm\infty$ limits of $\frd|_s$. Assume that $\en(\frc_-)$ and $\en(\frc_+) \leq \cale$.  It follows from the formula in \eqref{eq:bounds-31} that
\begin{equation}
\label{eq:Tconv-51}
\fra_-(\frc_-,\frc_+) = -\frac{1}{2} \int_{Y_T} (A_- - A_+) \wedge \ast(B_{A_-} + B_{A_+}) + \fre,
\end{equation}
with $\fre$ obeying $|\fre| \leq c_0 r \cale$.  As explained in Part \ref{part3:prop42} of the proof of Proposition \ref{prop:conv-42}, the solution $\frc_+$ defines a map $u_+: Y_T \ssm M \to S^1$; its salient feature being that the $E$ summand of $u_+\psi_+$ can be written as $1-z_+$ with $z_+$ being real and having norm at most $\frac{1}{1000}$.  It follows from Lemma \ref{lem:monopoles-110} that $|z_+| \leq c_0 e^{-\sqrt{r}\operatorname{dist}(\cdot,M)/c_0}$.  Moreover, $A_+ - u_+^{-1}du_+$ can be written as $\Ai + \sca_+$ where the distance to $M$ is greater than $c_0$ with the norm $|\sca_+| \leq c_0 e^{-\sqrt{r}\operatorname{dist}(\cdot,M)/c_0}$ also.  Since $\en(\frc_-) \leq \cale$, there is a $\frc_-$ analogue of this map, the latter denoted by $u_-$.  There is a corresponding $z_-$ and $\sca_-$.  Let $u=u_- u_+^{-1}$. These bounds for $z_+$, $z_-$, $|\sca_+|$ and $|\sca_-|$ imply that the $i\R$-valued 1-form $A_-$ can be written where the distance to $M$ is greater than $c_0$ as
\begin{equation}
\label{eq:Tconv-52}
A_- = A_+ - u^{-1}du + \sca_- - \sca_+
\mathrm{\ \ \ with\ \ \ }
|\sca_- - \sca_+| \leq c_0 e^{-\sqrt{r}\operatorname{dist}(\cdot,M)/c_0}.
\end{equation}
In the case $T=\infty$, the map $u$ obeys
\begin{equation}
\label{eq:Tconv-53}
\lim_{\operatorname{dist}(\cdot,M)\to\infty} (|u-1| + |du|) = 0.
\end{equation}
This is a consequence of \eqref{cond:instanton-26} given the assumption that there is an instanton with $s\to-\infty$ limit equal to $\frc_-$ and with $s\to\infty$ limit equal to $\frc_+$.

Fix $R_\Diamond \in (1,c_0)$ so that the bounds in \eqref{eq:Tconv-52} hold where the distance to $M$ is greater than $R_\Diamond$ and so that the $E$ components of both $u_+\psi_+$ and $u_-\psi_-$ differ from 1 by at most $\frac{1}{1000}$ where the distance to $M$ is greater than $c_0$. Let $\sigma_\Diamond$ denote a smooth, non-negative function that is equal to 1 where the distance to $M$ is less than $R_\Diamond$ and equal to zero where the distance to $M$ is greater than $R_\Diamond+1$. It follows from the bounds in \eqref{eq:Tconv-52} and from those in Lemma \ref{lem:monopoles-110} that
\begin{align}
\label{eq:Tconv-54}
-\frac{1}{2} \int_{Y_T} (A_- - A_+) \wedge \ast(B_{A_-} + B_{A_+})
=& -\frac{1}{2}\int_{Y_T} \sigma_\Diamond (A_- - A_+) \wedge \ast(B_{A_-} + B_{A_+}) \notag\\
& + \frac{1}{2}\int_{Y_T} (1-\sigma_\Diamond)u^{-1}du \wedge \ast(B_{A_-} + B_{A_+}) + \fre',
\end{align}
with $\fre'$ being a term whose norm is at most $c_0 e^{-\sqrt{r}R_\Diamond/c_0}$. Moreover, an integration by parts (with \eqref{eq:Tconv-53} in the case $T=\infty$) writes the right hand side of \eqref{eq:Tconv-54} as
\begin{equation}
\label{eq:Tconv-55}
-\frac{1}{2}\int_{Y_T} \sigma_\Diamond (A_- - A_+) \wedge \ast(B_{A_-} + B_{A_+}) - \frac{1}{2}\int_{Y_T} d\sigma_\Diamond \wedge u^{-1}du \wedge (\sca_- + \sca_+) + \fre'',
\end{equation}
with $\fre''$ being a term whose norm is also bounded by $c_0 e^{-\sqrt{r}R_\Diamond/c_0}$. 
\end{sp5}

\begin{sp5}
\label{step2:prop51}
Let $\Omega \subset Y_T$ denote an open set in the $\operatorname{dist}(\cdot,M) < R_\Diamond + 10$ part of $Y_T$ with smooth boundary.  Assume that $\Omega$ contains the $\operatorname{dist}(\cdot,M) \leq R_\Diamond+8$ part of $Y_T$ so that $\partial\Omega$ has distance between $R_\Diamond+8$ and $R_\Diamond+10$ from $M$.  Use $\he$ in what follows to denote the normal vector to $\partial\Omega$. Hodge theory for manifolds with boundary can be used to write $A_- - A_+$ on $\Omega$ as
\begin{equation}
\label{eq:Tconv-56}
A_- - A_+ = \sca_\Omega + d\frf + \frv
\end{equation}
where $\sca_\Omega$ is coclosed and it annihilates $\he$ along $\partial\Omega$. In addition, $\sca_\Omega$ is $L^2$-orthogonal to all harmonic 1-forms on $\Omega$ that annihilate $\he$ on $\partial\Omega$. Meanwhile, $\frf$ is an $i\R$-valued smooth function on $\Omega$ and $\frv$ is an $i\R$-valued harmonic 1-form on $\Omega$ that annihilates $\he$ on $\partial \Omega$.  (The decomposition in \eqref{eq:Tconv-56} is a `Hodge' type decomposition: It is writing $A_--A_+$ as a sum of three terms which are pairwise $L^2$ orthogonal on $\Omega$ by virtue of the boundary constraints.  The first (which is $\sca_\Omega$) is annihilated by $d^\dagger$, the second (which is $d\frf$) is annihilated by $d$, and the third (which is $\frv$) is annihilated by both $d$ and $d^\dagger$.)

Keeping in mind that the support of $\sigma_\Diamond$ is inside $\Omega$, the Green's function for the first order operator $d + \ast d \ast$ can be used to bound the absolute value of the $\ahat_\Omega$ contribution to the leftmost integral in \eqref{eq:Tconv-55} by $c_0 r^{2/3} \cale^{4/3}$. Meanwhile, the sum $d\frf+\frv$ can be written as $-\textup{u}^{-1}d\textup{u} + \frv_\ast$ with $\textup{u}$ being a smooth map from $\Omega$ to $S^1$ and with $\frv_\ast$ being a harmonic 1-form on $\Omega$ with norm bounded by $c_0$.  The absolute value of the $\frv_\ast$ contribution to the leftmost integral in \eqref{eq:Tconv-55} is bounded by $c_0 \cale$.

The contribution from $\textup{u}^{-1}d\textup{u}$ to the left most integral in \eqref{eq:Tconv-55} can be dealt with as follows: Let $\upomega_{0-}$ and $\upomega_{0+}$ denote the respective $A_-$ and $A_+$ versions of the 2-form $\upomega_0$ that appears in Lemma \ref{lem:conv-44}.  Keeping in mind that $\upomega_{0-}$ and $\upomega_{0+}$ have compact support in $M$, the $\textup{u}^{-1}d\textup{u}$ contribution to this same integral is written as
\begin{equation}
\label{eq:Tconv-57}
-\frac{i}{2}\int_{Y_T} \textup{u}^{-1}d\textup{u} \wedge (\upomega_{0-}+\upomega_{0+}) + \frac{1}{2} \int_{Y_T} \sigma_\Diamond \textup{u}^{-1}d\textup{u} \wedge ((\ast B_{A_-} + i\upomega_{0-}) + (\ast B_{A_+} + i\upomega_{0+})).
\end{equation}
A second integration by parts writes the rightmost integral in \eqref{eq:Tconv-57} as
\begin{equation}
\label{eq:Tconv-58}
\frac{1}{2}\int_{Y_T} d\sigma_\Diamond \wedge \textup{u}^{-1}d\textup{u} \wedge ((A_- - A_{0-}) + (A_+ - A_{0+}))
\end{equation}
with $A_{0-}$ and $A_{0+}$ being connections on $E$ that can be written on $Y_T \ssm M$ as $A_{0-} = \Ai + u_-^{-1} du_-$ and $A_{0+} = \Ai + u_+^{-1}du_+$; and have respective curvatures given by $-i\upomega_{0-}$ and $-i\upomega_{0+}$. Since $\sca_+$ is defined by writing $A_+ = \Ai + u_+^{-1}du_+ + \sca_+$ and $\sca_-$ is defined by writing $A_- = \Ai + u_-^{-1}du_- + \sca_-$, it follows that \eqref{eq:Tconv-58} can also be written as
\begin{equation}
\label{eq:Tconv-59}
\frac{1}{2}\int_{Y_T} d\sigma_\Diamond \wedge \textup{u}^{-1}d\textup{u} \wedge (\sca_- + \sca_+).
\end{equation}
What with \eqref{eq:Tconv-57} and \eqref{eq:Tconv-59}, the expression in \eqref{eq:Tconv-55} can be written as
\begin{equation}
\label{eq:Tconv-510}
-\frac{i}{2}\int_{Y_T} \textup{u}^{-1}d\textup{u} \wedge (\upomega_{0-}+\upomega_{0+}) + \frac{1}{2} \int_{Y_T} d\sigma_\Diamond \wedge (\textup{u}^{-1}d\textup{u}-u^{-1}du) \wedge (\sca_- + \sca_+) + \fre'''
\end{equation}
where $|\fre'''| \leq c_0 r^{2/3} \cale^{4/3}$.
\end{sp5}

\begin{sp5}
\label{step3:prop51}
The absolute value of the $d\sigma_\Diamond \wedge (\textup{u}^{-1}d\textup{u} - u^{-1}du) \wedge (\sca_- + \sca_+)$ integral in \eqref{eq:Tconv-510} is also bounded by $c_0 r^{2/3} \cale^{4/3}$. This is because the definitions of $\textup{u}$ and $u$ are such that the $i\R$-valued 1-form $A_- - A_+$ on the support of $d\sigma$ is equal to $\sca_- - \sca_+ + u^{-1}du$ on the one hand, and equal to $\sca_\Omega + \frv_\ast + \textup{u}^{-1}d\textup{u}$ on the other. Thus,
\begin{equation}
\label{eq:Tconv-511}
|\textup{u}^{-1}d\textup{u} - u^{-1}du| \leq c_0(|\sca_\Omega| + |\frv_\ast| + |\sca_-| + |\sca_+|).
\end{equation}
Aforementioned bounds for the various terms on the right hand side of \eqref{eq:Tconv-511} lead to the $c_0 r^{2/3} \cale^{4/3}$ bound for the absolute value of the $d\sigma_\Diamond \wedge (\textup{u}^{-1}d\textup{u} - u^{-1}du) \wedge (\sca_- + \sca_+)$ integral in \eqref{eq:Tconv-510}.

If $\upomega_{0-} + \upomega_{0+}$ defines torsion classes in $H^2_c(M)$, then the leftmost term on the right hand side of \eqref{eq:Tconv-510} vanishes. In this special case, it follows from what has been said previously that $|\fra_-(\frc_-,\frc_+)| \leq c_0(r\cale + r^{2/3}\cale^{4/3})$.

Suppose henceforth that the sum $\upomega_{0-} + \upomega_{0+}$ is not torsion. The spectral flow information must be used in this case to obtain the desired bound on $|\fra_-(\frc_-,\frc_+)|$. This is done by introducing $\fra_-^{\frf}(\frc_-,\frc_+) = \fra_-(\frc_-,\frc_+) - 2\pi^2 \ind$ when $\frd$ is an instanton solution to \eqref{eq:instanton-sw} with $\frd|_s$ having $s\to-\infty$ limit equal to $\frc_-$ and $s\to\infty$ limit equal to $\frc_+$. The assumption that $|\upiota_{\frd_k}| \leq \cale r$ in the proposition implies that $|\mathcal{A}_{\frd_k}| \leq c_\cale r$ if the absolute value of $\fra_-^{\frf}(\frc_-,\frc_+)$ is bounded by $c_\cale r$. The proof that this is so is a bit tricky because the spectral flow for different values of $k$ is defined by operators on different Hilbert spaces.

The plan for proving the desired bounds on $\fra_-^{\frf}(\frc_-,\frc_+)$ is to repackage the definition of $\ind$ so that all of the spectral flows are for operators on the same Hilbert space, this being the space $L^2_1(Y_{2R}; T^\ast Y_{2R} \oplus \underline{\R} \oplus \spb)$ for a suitable value of $R$ bounded by $c_\cale$. This repackaging of the spectral flow occupies Steps \ref{step4:prop51}--\ref{step8:prop51}.  Step \ref{step9:prop51} uses this repackaging to complete the proof of Proposition \ref{prop:Tconv-51}.
\end{sp5}

\begin{sp5}
\label{step4:prop51}
Letting $\kappa_\cale$ denote the number supplied by Lemma \ref{lem:monopoles-110}, fix $R=2\kappa_\cale$. Suppose that $T>2R$ and that $(A,\psi)$ is a pair defined on $Y_T$ that is admissible if $T=\infty$ and in any event has the properties that are listed below.
\begin{itemize}\leftskip-0.25in
\item $|\beta| \leq e^{-\sqrt{r} R/c_0}$ where the distance to $M$ is greater than $\frac{1}{2} R$.
\item There is a gauge transformation $\hu: Y_T\ssm M \to S^1$ such that the following hold where the distance to $M$ is greater than $\frac{1}{2} R$:
\begin{enumerate}\leftskip-0.25in
\item $\hu \alpha = (1-z)$ with $z$ being a real number with norm bounded by $c_0 e^{-\sqrt{r} R/c_0}$.
\item $A = \Ai + \hu^{-1}d\hu + \hat{\sca}$ with $|\hat{\sca}| \leq c_0 e^{-\sqrt{r} R/c_0}$.
\end{enumerate}
\end{itemize}
\begin{equation} \label{eq:Tconv-512} \end{equation}
A $c_0$ lower bound on $r$ is chosen so that the $e^{-\sqrt{r} R/c_0}$ bound in \eqref{eq:Tconv-512} is much less than $1$. Note that Lemma \ref{lem:monopoles-110} guarantees that these conditions are met if $r>c_\cale$ and $T > c_\cale$ when $(A,\psi)$ is an $\en \leq \cale$ solution to \eqref{eq:sw}.

Let $\sigma_+$ denote a non-negative function on $Y_T$ that equals $1$ where the distance to $M$ is less than $R+4$ and equals zero where the distance to $M$ is greater than $R+5$.  The pair $(A,\psi)$ now defines a pair $(A_R, \psi_R)$ on $Y_{2R}$ by setting $(A_R, \psi_R) = (A,\psi)$ where the distance to $M$ is less than $R+4$ and setting $(A_R,\psi_R)$ to be $(\Ai + \sigma_+ \hat{\sca}, \psii + \sigma_+(\hu\psi - \psii))$ where the distance to $M$ is greater than $R+4$.

Let $\sigma_-$ denote another non-negative function on $Y_T$, this one equaling $1$ where the distance to $M$ is less than $R$ and equaling zero where the distance to $M$ is greater than $R+1$. The pair $(A,\psi)$ now defines a new pair on $Y_T$; this new pair is denoted by $(A_\ast, \psi_\ast)$. This is a pair of a connection on $\underline{\C}$ and section of the canonical spin bundle $\si$ that is defined by setting $(A_\ast,\psi_\ast) = (\Ai + (1-\sigma_-)\hat{\sca}, \psii + (1-\sigma_-)(\hu\psi - \psii))$.

Let $\LL$ denote the version of \eqref{eq:linearized} on $Y_T$ that is defined by $(A,\psi)$ and let $\LL_\ast$ denote the version of \eqref{eq:linearized} that is defined on $Y_T$ by the pair $(A_\ast, \psi_\ast)$. Meanwhile, let $\LL_R$ denote the version of the operator in \eqref{eq:linearized} on $Y_{2R}$ that is defined by $(A_R,\psi_R)$.
\end{sp5}

\begin{sp5}
\label{step5:prop51}
Let $\sigma$ denote yet a third nonnegative function; it is equal to $1$ where the distance to $M$ is less than $R+2$ and it is equal to zero where the distance to $M$ is greater than $R+3$. Let $\frS(d\sigma)$ denote the homomorphism defined by $\LL\sigma - \sigma\LL$, this being independent of the pair $(A,\psi)$; it depends only on the symbol of $\LL$. The homomorphism $\frS(d\sigma)$ has compact support where the distance to $M$ is between $R+2$ and $R+3$.

Let $(\eta_R,\eta_\infty)$ denote a pair with $\eta_R$ being a section on $Y_{2R}$ of $(T^\ast Y_{2R} \oplus \underline{\R}) \oplus \spb$ and $\eta_\infty$ being a section on $Y_T$ of the analogous $(T^\ast Y_T\oplus\underline{\R})\oplus\spb_{\rm I}$ version of this bundle. Suppose in addition that $\lambda$ is a real number and that $(\eta_R,\eta_\infty)$ obey the coupled equations
\begin{equation}
\label{eq:Tconv-513}
\LL_R \eta_R + \frS(d\sigma)(\eta_R-\eta_\infty) = \lambda \eta_R
\mathrm{\ \ \ and\ \ \ }
\LL_\ast \eta_\infty + \frS(d\sigma)(\eta_R - \eta_\infty) = \lambda \eta_\infty.
\end{equation}
Set $\eta = \sigma\eta_R + (1-\sigma)\eta_\infty$ to be viewed as a section on $Y_T$ of $(T^\ast Y_T \oplus \underline{\R}) \oplus \spb$. It follows from \eqref{eq:Tconv-513} that this section obeys $\LL\eta = \lambda\eta$. (Note that $\LL_R = \LL_\ast$ on the support of $d\sigma$. This is the reason for having three cut-off functions $\sigma$, $\sigma_-$, and $\sigma_+$ instead of just one.)

Conversely, if $r>c_0$ and $|\lambda| \leq c_0^{-1}r^{1/2}$ and $\eta$ is a section on $Y_T$ of $(T^\ast Y_T \oplus \underline{\R}) \oplus \spb$ that obeys $\LL\eta = \lambda\eta$, then there is a unique pair $(\eta_R,\eta_\infty)$ obeying \eqref{eq:Tconv-513} with the property that $\eta = \sigma\eta_R + (1-\sigma)\eta_\infty$. The proof that this is so is in the next paragraphs.

To start, let $\sigma_0$ denote a nonnegative function that equals $1$ where the distance to $M$ is less than $R+1$ and equal $0$ where the distance to $M$ is greater than $R+2$. With $\sigma_0$ in hand, write $\xi_R = \sigma_0 \eta + \sigma(1-\sigma_0)\eta$ and $\xi_\infty = (1-\sigma_0)\eta + \sigma(1-\sigma_0)\eta$.  View the former as being defined on $Y_{2R}$ and the latter as being defined on $Y_T$ but with the spinor bundle $\spb_{\rm I}$.  The definitions are designed so that $\eta = \sigma\xi_R + (1-\sigma)\xi_\infty$. The pair $(\xi_R,\xi_\infty)$ does not obey \eqref{eq:Tconv-513}, but if $r>c_0$ and $|\lambda| < c_0^{-1} r^{1/2}$, then Lemma \ref{lem:monopoles-11} can be used to see that the difference between the right and left hand sides of the two equations in the $(\xi_R,\xi_\infty)$ version of \eqref{eq:Tconv-513} has norm bounded by $e^{-\sqrt{r} R/c_0}$.

With the preceding in mind, suppose that $\phi$ is a section of $(T^\ast Y_T \oplus \underline{\R}) \oplus \si$.  If
\begin{equation}
\label{eq:Tconv-514}
\eta_R = \xi_R + (1-\sigma)\phi \mathrm{\ \ \ and\ \ \ } \eta_\infty = \xi_\infty - \sigma\phi,
\end{equation}
then $\sigma\eta_R + (1-\sigma)\eta_\infty$ is also equal to $\eta$. Meanwhile, $(\eta_R,\eta_\infty)$ will obey \eqref{eq:Tconv-513} if $\phi$ obeys
\begin{equation}
\label{eq:Tconv-515}
\LL_\ast\phi - \lambda\phi = -(\LL_R\xi_R + \frS(d\sigma)(\xi_R-\xi_\infty) - \lambda \xi_R) + (\LL_\infty\xi_\infty + \frS(d\sigma)(\xi_R-\xi_\infty) - \lambda\xi_\infty).
\end{equation}
Keep in mind that the right hand side of \eqref{eq:Tconv-515} is non-zero only where the distance to $M$ is between $R+1$ and $R+4$ and that its norm in any event is bounded by $e^{-\sqrt{r} R/c_0}$. Since $\LL_\ast$ differs from $\Li$ by a homomorphism with norm bounded by $e^{-\sqrt{r} R/c_0}$, it follows from Lemma \ref{lem:monopoles-11} that there is a unique solution $\phi$ to \eqref{eq:Tconv-515} if $r>c_0$ and $|\lambda| \leq c_0^{-1} r^{1/2}$.
\end{sp5}

\begin{sp5}
\label{step6:prop51}
As noted previously, the operators $\Li$ and $\LL_\ast$ differ when $r>c_0$ by a zeroth order operator whose norm is bounded by $c_0 e^{-\sqrt{r} R/c_0}$. It follows as a consequence of Lemma \ref{lem:monopoles-11} that the right hand equation in \eqref{eq:Tconv-513} can be solved for $\eta_\infty$ in terms of $\eta_R$ when $r$ is larger than $c_0$ and $|\lambda| < c_0 r^{1/2}$. In particular, it follows from Lemma \ref{lem:monopoles-11} that this solution obeys $||\eta_\infty||_2 \leq e^{-\sqrt{r} R/c_0}||\eta_R||_2$.   Keeping this fact in mind, the right hand equation in \eqref{eq:Tconv-513} can be solved for $\eta_\infty$ in terms of $\eta_R$ to write the left hand equation in \eqref{eq:Tconv-513} as
\begin{equation}
\label{eq:Tconv-516}
\LL_R\eta_R + \frS(d\sigma)\eta_R + \frT_{T,\lambda}(\eta_R) = \lambda \eta_R
\end{equation}
where $\frT_{T,\lambda}$ is the operator
\begin{equation}
\label{eq:Tconv-517}
\frT_{T,\lambda} = -\frS(d\sigma) \frac{1}{\LL_\ast - \lambda - \frS(d\sigma)} \frS(d\sigma).
\end{equation}
The operator $\frT_{T,\lambda}$ is a compact operator on $L^2(Y_{2R}; T^\ast Y_{2R} \oplus \underline{\R} \oplus \spb)$ that obeys
\begin{equation}
\label{eq:Tconv-518}
||\frT_{T,\lambda} \frb||_2 \leq c_0 r^{-1/2} ||\frb||_2
\end{equation}
for any $\frb \in L^2(Y_{2R}; T^\ast Y_{2R} \oplus \underline{\R} \oplus \spb)$.

Keep in mind $\frT_{T,\lambda}$ is an operator on $Y_R$ but that it nonetheless depends on the value of $T$. This is because $\LL_\ast - \lambda - \frS(d\sigma)$, whose inverse appears in the definition, is acting on $L^2_1(Y_T; T^\ast Y_T \oplus \underline{\R} \oplus \spb)$. This is why the subscript $T$ is used in the notation.
\end{sp5}

\begin{sp5}
\label{step7:prop51}
To summarize from the previous step: Suppose that $r>c_\cale$, that $R>c_\cale$, and that $T>c_0 R$. Fix $\lambda \in \R$ with norm bounded by $c_0^{-1} r^{1/2}$. Then the kernel of $\LL - \lambda$ on $L^2_1(Y_T; T^\ast Y_T \oplus \underline{\R} \oplus \spb)$ is in 1--1 correspondence with the kernel of $\LL_R + \frS(d\sigma) + \frT_{T,\lambda}$ on the space $L^2_1(Y_{2R}; T^\ast Y_{2R} \oplus \underline{\R} \oplus \spb)$. With the preceding fact in mind, introduce $\frQ$ to denote the operator on $L^2_1(Y_{2R}; T^\ast Y_{2R} \oplus \underline{\R} \oplus \spb)$ given by
\begin{equation}
\label{eq:Tconv-519}
\frQ = \LL_R + \frS(d\sigma) + \frT_{T,0}
\end{equation}
with $\frT_{T,0}$ denoting the $\lambda=0$ version of the operator in \eqref{eq:Tconv-516}. It follows from what was said prior to \eqref{eq:Tconv-519} that the kernel of $\frQ$ is in 1--1 correspondence with the kernel of $\LL$ acting on $L^2_1(Y_T; T^\ast Y_T \oplus \underline{\R} \oplus \spb)$.
\end{sp5}

\begin{sp5}
\label{step8:prop51}
This step argues that there is very little spectral flow between any given version of the operator depicted in \eqref{eq:Tconv-519} and the corresponding version of $\LL_R$. To see this, note first that if $\eta$ is an $L^2_1$ section of $T^\ast Y_{2R} \oplus \underline{\R} \oplus \spb$, then $||(\frQ - \LL_R)\eta||_2 \leq c_0||\eta||_2$ because of \eqref{eq:Tconv-518} and because $|d\sigma| \leq c_0$. With this bound in hand, then the framework in \cite{taubes-asymptotic} can be used to see that a given eigenvalue can move a distance at most $c_0$ as the parameter $\mu \in [0,1]$ is changed in the family of operators
\begin{equation}
\label{eq:Tconv-520}
\mu \to \LL_R + \mu(\frS(d\sigma) + \frT_{T,0}).
\end{equation}
By way of a reminder, the eigenvalues for the various $\mu \in [0,1]$ members of the family can be labeled consistently as functions of $\mu$ to define piecewise differentiable paths in $\R$. Let $\mu \to \lambda(\mu)$ denote such a path. The fact that $|\lambda(0) - \lambda(1)| \leq c_0$ implies that only eigenvalues that start with norm less than $c_0$ can cross zero and thus contribute to the spectral flow. Now if $m \in [1,c_0^{-1}r^{1/2}]$ and if $|\lambda| \leq m$ and if $\eta$ is an eigenvector with eigenvalue $\lambda$, then $|\eta|$ must have most of its support where $1-|\alpha| \geq c_0^{-1}$. This is a consequence of Lemma \ref{lem:monopoles-11}.  Given what is said by \eqref{eq:Tconv-512}, the latter fact can be used to see that the number of linearly independent eigenvectors with eigenvalue $\lambda$ with norm less than $m$ is bounded by $c_0 m(\ln r)^{c_0} \cale$. The arguments for this are much like those in the article \cite{taubes-weinstein2} that prove Proposition~3.1 in this same article.
\end{sp5}

\begin{sp5}
\label{step9:prop51}
In the context of Proposition \ref{prop:Tconv-51}, let $\frc_{R-} = (A_{R-}, \psi_{R-})$ and $\frc_{R+} = (A_{R+}, \psi_{R+})$ denote the respective versions of $(A_R,\psi_R)$ that are obtained using first $\frc = u_- \frc_-$ and then using $\frc = u_+ \frc_+$.  The analysis done in Steps \ref{step1:prop51} and \ref{step2:prop51} that lead to the depiction of $\fra_-(\frc_-,\frc_+)$ in \eqref{eq:Tconv-510} can be repeated to write
\begin{equation}
\label{eq:Tconv-521}
\fra_-(\frc_{R-},\frc_{R+}) = -\frac{i}{2} \int_{Y_T} \textup{u}^{-1}d\textup{u} \wedge (\upomega_{0-}+\upomega_{0+}) + \fre_R
\end{equation}
with $\fre_R$ obeying $|\fre_R| \leq c_0(r\cale + r^{2/3} \cale^{4/3})$. Given what is said in Step \ref{step3:prop51} about \eqref{eq:Tconv-510}, it follows that
\begin{equation}
\label{eq:Tconv-522}
|\fra_-(\frc_-,\frc_+) - \fra_-(\frc_{R-},\frc_{R+})| \leq c_0(r\cale + r^{2/3}\cale^{4/3}).
\end{equation}

Let $\upiota_R$ denote the spectral flow between the $\frc_{R-}$ version of the operator $\LL_R$ and the $\frc_{R+}$ version.  It follows from \eqref{eq:Tconv-522} and from what is said in Steps \ref{step5:prop51}--\ref{step7:prop51} that
\begin{equation}
\label{eq:Tconv-523}
|(\fra_-(\frc_-,\frc_+)-2\pi^2\ind) - (\fra_-(\frc_{R-},\frc_{R+})-2\pi^2 \upiota_R)| \leq c_0(r\cale + r^{2/3}\cale^{4/3})
\end{equation}
also. This inequality in \eqref{eq:Tconv-523} is used in the next step to bound $|\fra(\frc_-,\frc_+) - 2\pi^2\ind|$ given a bound for $|\fra_-(\frc_{R-},\frc_{R+}) - 2\pi^2 \upiota_R|$.

The advantage of using $\fra_-(\frc_{R-},\frc_{R+}) - 2\pi^2 \upiota_R$ as a proxy for $\fra_-(\frc_-,\frc_+) - 2\pi^2 \ind$ is that the manifold in the former case is $Y_{2R}$ with $R$ being a fixed $\mathcal{O}(c_0)$ number. This means that estimates that involve geometric properties are uniform.  In particular, the analysis that proves Proposition~1.9 in \cite{taubes-weinstein2} can be repeated with only cosmetic changes to see that $|\fra_-(\frc_{R-},\frc_{R+}) - 2\pi^2\upiota_R| \leq c_0r^{2/3}\cale^{4/3}(\ln r)^{c_0}$. Given the latter bound, it then follows from \eqref{eq:Tconv-523} that
\begin{equation}
\label{eq:Tconv-524}
|\fra_-(\frc_-,\frc_+) - 2\pi^2\ind| \leq c_0 (r\cale + r^{2/3} \cale^{4/3} (\ln r)^{c_0}).
\end{equation}
Apply the bound in \eqref{eq:Tconv-524} to the sequence in Proposition \ref{prop:Tconv-51} to obtain the desired a priori bound on $\{\mathcal{A}_{\frd_k}\}_{k=1,2,\dots}$. \qedhere
\end{sp5}
\end{proof}

\subsection{Comparing instantons on $Y_\infty$ and $Y_T$}
\label{ssec:instanton-comparison}
The propositions in this section assert compactness theorems for moduli spaces of instantons on $\R\times Y_\infty$ and $\R\times Y_T$ for $T<\infty$. This is done by invoking Proposition \ref{prop:Tconv-51}; but to invoke the latter, a number $\cale>1$ must be fixed in advance and then attention restricted to those instantons whose $s\to\pm\infty$ limits have $\en<\cale$. An integer $\fri$ must also be fixed a priori so as to consider only instantons with $|\upiota_\frd|\leq\fri$. The upcoming propositions consider only the cases $\ind=1$ and $\ind=0$. Note that Proposition \ref{prop:Tconv-51} requires a certain a priori bound for the pointwise norms of the $s$-derivatives of both $a$ and $da$, and it requires a bound for the pointwise norms of $\frp$ and its covariant derivatives.  These bounds are assumed (implicitly) to hold in this section so that appeals to Proposition~\ref{prop:Tconv-51} can be made.

So as to avoid talking about broken trajectories (they are defined as in Chapter~16 in \cite{kmbook}), the upcoming Proposition \ref{prop:comp-61} assumes implicitly that there are no instantons that interpolate between the $\en<c_\cale$ solutions on $Y_\infty$ with $\ind<\fri$ except the $s$-independent solution in the case when $a$ and the metric are independent of $s$. On a very much related note, Proposition \ref{prop:comp-61} also introduces the notation of an instanton being \emph{unobstructed}. This means that the operator depicted in \eqref{eq:instanton-linearize} has trivial cokernel. 

With regards to the statement of Proposition \ref{prop:comp-61}, keep in mind that when $a$ and the metric are independent of $s$ and when  $\frp$ is absent from \eqref{eq:instanton-sw}, there is an action of the semi-direct product of $\R$ and $C^\infty(\R\times Y_\infty;S^1)$ on the space of solutions, with the $\R$ action induced by the action of $\R$ as the group of translations along the $\R$ factor of $\R\times Y_\infty$.

\begin{proposition}
\label{prop:comp-61}
Given $\cale>1$ there exists $\kappa_\cale>1$ with the following significance: Fix $r>\kappa_\cale$ and assume that the operator $\LLf$ in \eqref{eq:linearized} that is defined by any $\en<\kappa_\cale$ solution of \eqref{eq:sw} on $Y_\infty$ with $a=a_-$ or with $a=a_+$ has trivial cokernel. Assume in addition that there are no non-constant solutions to either the $a=a_-$ or $a=a_+$, constant metric, and $\frp=0$ version of \eqref{eq:instanton-sw} on $\R\times Y_\infty$ with $\ind\leq 0$ and with $\lim_{s\to\pm\infty}\en(\frd|_s)\leq\kappa_\cale$.
\begin{itemize}\leftskip-0.25in
\item Suppose that $a$ and the metric are independent of the coordinate $s$ and that $\frp=0$ in \eqref{eq:instanton-sw}. The space of $\R\times C^\infty(\R\times Y_\infty;S^1)$ orbits of instanton solutions to \eqref{eq:instanton-sw} on $\R\times Y_\infty$ with $\ind=1$ and $\lim_{s\to\pm\infty}\en(\frd|_s)\leq\cale$ is compact. This space is finite if each of its instantons is unobstructed.
\item Suppose that $a$ and/or the metric depend on the coordinate $s$, and/or that $\frp$ in \eqref{eq:instanton-sw} is non-zero.  Assume that there are no instanton solutions to \eqref{eq:instanton-sw} on $\R\times Y_\infty$ with $\lim_{s\to\pm\infty} \en(\frd|_s) \leq \cale$ and $\ind < 0$.  Then, the space of $C^\infty(\R\times Y_\infty;S^1)$ orbits of instanton solutions to \eqref{eq:instanton-sw} on $\R\times Y_\infty$ with $\ind=0$ and $\lim_{s\to\pm\infty}\en(\frd|_s)\leq\cale$ is compact. This space is finite if each of its instantons is unobstructed.
\end{itemize}
\end{proposition}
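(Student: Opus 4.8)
\emph{Proof proposal.} The plan is to run the standard Seiberg--Witten--Floer compactness argument for the moduli space in question, using the a priori estimates of Sections \ref{ssec:instanton-bounds}--\ref{ssec:instanton-convergence} to tame the non-compact ends of $Y_\infty$, and using the two structural hypotheses of the proposition — trivial cokernel on the relevant slices, and the absence of non-constant instantons with $\ind\le 0$, together with the implicit assumption on interpolating instantons recorded just before the statement — to rule out the splitting of trajectories in the $\R$-direction and to deduce finiteness. I would treat the two bullets in parallel: in the first the prescribed spectral flow is $\ind=1$ and one quotients by $\R\times C^\infty(\R\times Y_\infty;S^1)$, while in the second $\ind=0$ and one quotients only by $C^\infty(\R\times Y_\infty;S^1)$. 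The conditions $|\tfrac{\partial}{\partial s}a|+|\tfrac{\partial}{\partial s}da|\le c^{-1}$ and $s_0<\kappa^{-1}c$ needed for the estimates of Section \ref{ssec:local-convergence} are, per the remark at the start of Section \ref{ssec:instanton-comparison}, assumed throughout.

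The first step is to fix the endpoints and bound the action. By Lemma \ref{lem:monopoles-18}, the cokernel hypothesis implies that there are only finitely many $C^\infty(Y_\infty;S^1)$-orbits of admissible $\en<\cale$ solutions to the $a=a_-$ and $a=a_+$ versions of \eqref{eq:sw}; by Lemma \ref{lem:instanton-24} and the definition of an instanton, any $\frd$ in the moduli space has $s\to\pm\infty$ limits $\frc_\pm$ among these. Since $\ind$ is prescribed and in particular $|\ind|\le\cale r$, the estimate \eqref{eq:Tconv-524}, whose derivation in Steps \ref{step1:prop51}--\ref{step9:prop51} of the proof of Proposition \ref{prop:Tconv-51} applies verbatim to a single instanton on $\R\times Y_\infty$, gives $|\fra_-(\frc_-,\frc_+)-2\pi^2\ind|\le c_\cale\,r^{2/3}(\ln r)^{c_\cale}$; as there are finitely many possible endpoint pairs this bounds $\mathcal{A}_\frd=\fra_-(\frc_-,\frc_+)$ by some $\mathcal{K}_\cale r$ uniformly over the moduli space.

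With $\mathcal{A}_\frd\le\mathcal{K}_\cale r$ in hand, I would appeal to Parts \ref{part1:prop42}--\ref{part6:prop42} of the proof of Proposition \ref{prop:conv-42}, which establish the a priori bound on $s\mapsto\scm(s)$, together with \eqref{eq:conv-430} and the hypothesis $\lim_{s\to\pm\infty}\en(\frd|_s)\le\cale$, to obtain a uniform bound $\sup_{s\in\R}\scm(s)\le\kappa_\cale$ for every instanton in the moduli space; here it is essential, as in the proofs of Lemmas \ref{lem:monopoles-16} and \ref{lem:instanton-24}, that the absence of closed Reeb orbits in $Y_\infty\ssm M$ forces the pseudoholomorphic pieces detected by Proposition \ref{prop:conv-41} to remain within a bounded distance of $M$. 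Given this $\scm$-bound, Lemmas \ref{lem:bounds-34}, \ref{lem:bounds-35} and \ref{lem:bounds-37} supply uniform $C^m$ bounds on $I\times K$ for every bounded interval $I\subset\R$ and every compact $K\subset Y_\infty$, while Lemma \ref{lem:bounds-36} gives, in a suitable gauge on the locus where $1-|\alpha|^2$ is small, the uniform exponential decay $|\frh|\le\kappa e^{-\sqrt r\operatorname{dist}(\cdot,\partial U)/\kappa}$; since the complementary locus stays near $M$, these two ingredients promote the bounds on $I\times K$ to uniform bounds on all of $I\times Y_\infty$. An Arzel\`a--Ascoli / diagonal extraction over an exhaustion of $\R\times Y_\infty$ then produces, from any sequence in the moduli space, a subsequence that converges modulo gauge in $C^\infty_{\mathrm{loc}}$ — hence, by the uniform decay, in $C^m$ on every $I\times Y_\infty$ — to a solution $\frd_\infty$ of \eqref{eq:instanton-sw} on $\R\times Y_\infty$, whose ends satisfy Condition \ref{cond:instanton-26} by the uniform decay estimates.

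The remaining point, which I expect to be the main obstacle, is to show that no energy escapes to $s=\pm\infty$ along the sequence, so that $\frd_\infty$ genuinely lies in the moduli space — in particular has spectral flow exactly $\ind$ — after reinstating an $\R$-translation in the first bullet. As usual, failure of convergence on all of $\R\times Y_\infty$ would force a subsequence to converge to a broken trajectory whose levels are nonconstant instantons for the $a_\pm$-invariant equations on the two outer levels and, in the second bullet, one instanton for the $s$-dependent equation on the single inner level, the sum of whose spectral flows equals the prescribed value of $\ind$. By the hypothesis that there are no nonconstant instantons for the $s$-invariant equations with $\ind\le 0$, every outer level has spectral flow at least $1$; combined with the implicit assumption recorded before the statement, which excludes nontrivial $s$-dependent instantons of index $<\fri$ interpolating between the $\en<\kappa_\cale$ solutions, the index budget $\ind\in\{0,1\}$ leaves no room for any breaking. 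Hence $\frd_\infty$, after an $\R$-translation in the first bullet, is the $C^\infty_{\mathrm{loc}}$ limit on all of $\R\times Y_\infty$ and lies in the moduli space, which is therefore compact. Finally, Lemma \ref{lem:instanton-fredholm} shows that the operator \eqref{eq:instanton-linearize} attached to an instanton is Fredholm, so when every instanton in the moduli space is unobstructed the space is a smooth manifold whose dimension is the Fredholm index of \eqref{eq:instanton-linearize}, namely $\ind=0$ in the second bullet and, after quotienting by translation, $\ind-1=0$ in the first; a compact $0$-manifold being finite, the proof is complete.
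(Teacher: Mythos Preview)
Your proposal is essentially correct and follows the same strategy as the paper's proof: obtain the a priori $\scm$ bound from the action/spectral-flow estimates underlying Propositions~\ref{prop:conv-42} and~\ref{prop:Tconv-51}, extract a convergent subsequence via Lemmas~\ref{lem:bounds-35}--\ref{lem:bounds-36}, rule out breaking by index additivity together with the hypotheses, and deduce finiteness from the unobstructed assumption. The paper's proof is much terser but makes the same moves, packaging your first several paragraphs into a single appeal to ``Proposition~\ref{prop:Tconv-51} with Lemma~\ref{lem:bounds-35}.''

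The one place you gloss over a genuine technical point is the finiteness step. You conclude from Lemma~\ref{lem:instanton-fredholm} plus ``unobstructed'' that the moduli space is a smooth compact $0$-manifold, hence finite; but to know that the quotient by $C^\infty(\R\times Y_\infty;S^1)$ is locally modeled on the kernel of \eqref{eq:instanton-linearize} you need a slice theorem for the gauge action, and on the non-compact $Y_\infty$ this is not entirely standard. The paper addresses precisely this by invoking an instanton analog of the weak $\epsilon$-slice Lemma~\ref{lem:monopoles-19} (proved the same way, using Lemma~\ref{lem:bounds-35} and the a priori $\scm$ bound from Proposition~\ref{prop:conv-42}), and then arguing as in the proof of Lemma~\ref{lem:monopoles-18} that trivial cokernel forces local uniqueness on the slice. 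Your argument would be complete once you insert this ingredient; without it the step from ``Fredholm and unobstructed'' to ``manifold'' is asserted rather than proved.
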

\begin{proof}
Supposing that $r$ is greater than a number $r_\cale$ that depend only on $\cale$, then, Proposition \ref{prop:Tconv-51} with Lemma \ref{lem:bounds-35} can be used to see that any sequence of instanton solutions to \eqref{eq:instanton-sw} with $\ind=1$ in the case of the top bullet and with $\ind=0$ in the case of the lower bullet, and in either case with $\lim_{s\to\pm\infty}\en(\frd|_s)\leq\cale$ must converge to a broken trajectory with each constituent instanton having $s\to\pm\infty$ limit with $\en$ bounded by a number that is determined a priori only by $\cale$. (Numbers of this sort are denoted by $c_\cale$ in what follows. As before, the precise value can be assumed to increase between successive appearances.) The convergence here is analogous to that described in Chapter~16 of \cite{kmbook} which considers the case of a compact 3-manifold.

To elaborate slightly: Let $\{\frd_k\}_{k\in\Lambda}$ denote a sequence of instanton solutions that obey the constraints for either the first or the second bullet of Proposition \ref{prop:comp-61}. Now let $\{s_k\}_{k=1,2,\dots}$ any sequence of real numbers. Given such a sequence, let $\{{\frd'}_k\}_{k=1,2,\dots}$ denote the sequence of instantons that is defined by the rule whereby 
\[{\frd'}_k\big|_s=\frd_k\big|_{s-s_k}\;for\;each\;k\in\{1,2,\dots\}.\]
Thus, ${\frd'}_k$ is obtained from $\frd_k$ by translating a distance $s_k$ in the $\R$ factor. Proposition \ref{prop:Tconv-51} can be invoked for each such translated sequence. (Translation along the $\R$ factor does not change the a priori upper bound for the versions of the function $\scm$ on $\R$ from the last bullet of Proposition \ref{prop:conv-42}; nor does it change the bound on the various versions of $\mathcal{A}_\frd$.) The arguments in Chapter 16 of \cite{kmbook} can be repeated with only cosmetic changes to obtain a subsequence $\Lambda\subset\{1,2,\dots\}$, a positive integer $\rmn$ less than $c_\cale$ and a collection of $\rmn$ sets of translations (to be denoted by $\{\{s_{\upalpha,k}\}_{k\in\Lambda}\}_{\upalpha\in\{1,\dots,\rmn\}}$ that have the following properties:
\begin{itemize}\leftskip-0.25in
\item For any distinct $\upalpha,\upbeta\in\{1,\dots,\rmn\}$ with $\upalpha>\upbeta$, the sequence $\{s_{\upalpha,k}-s_{\upbeta,k}\}_{k\in\Lambda}$ is a positive, increasing and unbounded sequence.
\item Each $\{s_{\upalpha,k}\}_{k\in\Lambda}$ version of $\{{\frd'}_k\}_{k\in\Lambda}$ converges after termwise gauge transformations in the manner of the three bullets of Proposition \ref{prop:Tconv-51} to a non-constant instanton on $\R\times Y$. This solution is denoted by $\frd_{\ast\upalpha}$.
\item The finite set $\{\{s_{\upalpha,k}\}_{k\in\Lambda}\}_{\upalpha\in\{1,\dots,\rmn\}}$ is maximal in the following sense: If $\{s_k\}_{k\in\Lambda}$ is any sequence in $\R$ with $\{|s_k-s_{\upalpha,k}|\}_{k\in\Lambda}$ increasing and unbounded, then any Proposition \ref{prop:Tconv-51} limit of the corresponding $\{{\frd'}_k\}_{k\in\Lambda}$ is a constant solution on $\R\times Y$ (which is to say that it is gauge equivalent to the pull-back to $\R\times Y$ via the projection map to $Y$ of an admissible solution to \eqref{eq:sw}.)
\end{itemize}
\begin{equation}
\label{eq:instacomp-61}
\end{equation}
The collection $\{\frd_{\ast\upalpha}\}_{\upalpha\in\{1,\dots,\rmn\}}$ is the \emph{broken instanton} limit of $\{\frd_k\}_{k\in\Lambda}$. The assertion that the maximal number of non-trivial instanton limits is finite in the sense of the third bullet of \eqref{eq:instacomp-61} is ultimately a consequence of two properties of what is denoted by $\fra(\frc,\frc')$ in \eqref{eq:bounds-31}. The first (noted in the paragraph after \eqref{eq:bounds-31}) is that it is additive in the sense that $\fra(\frc,\frc')+\fra(\frc',\frc'')=\fra(\frc,\frc'')$. The second (noted by the first bullet of Lemma \ref{lem:bounds-31}) is that it is decreasing where $a$ is independent of the $s$ coordinate on the $\R$ factor in $\R\times Y$.

The first bullet in \eqref{eq:instacomp-61} and the third bullet in \eqref{eq:instacomp-61} have the following implications about the broken instanton $\{\frd_{\ast\upalpha}\}_{\upalpha\in\{1,\dots,\rmn\}}$:
\begin{itemize}\leftskip-0.25in
\item The $s\to-\infty$ limit of $\{\frd_{\ast1}\big|_s\}$ is the $k\to-\infty$ limit of $\lim_{s\to-\infty}\{\frd_k\big|_s\}_{k\in\Lambda}$.
\item The $s\to\infty$ limit of each $\upalpha\in\{1,\dots,\rmn-1\}$ version of $\{\frd_{\ast\upalpha}\big|_s\}$ is the same (up to a gauge transformation) as the corresponding $s\to-\infty$ limit of $\{\frd_{\ast\upalpha_1}\big|_s\}$.
\item The $s\to\infty$ limit of $\{\frd_{\ast\rmn}\big|_s\}$ is the $k\to\infty$ limit of $\lim_{s\to\infty}\{\frd_k\big|_s\}_{k\in\Lambda}$.
\end{itemize}
\begin{equation}
\label{eq:instacomp-62}
\end{equation}
These last bullets imply that the sum of the various $\upalpha\in\{1,\dots,\rmn\}$ versions of $\mathcal{A}_\frd$ is equal to the $k\to\infty$ limit of the various $\frd_k$ versions of $\mathcal{A}_\frd$ (remember that $\fra$ in \eqref{eq:bounds-31} is additive). This is analogous to what happens in the compact case as discussed in \cite{kmbook}.

Also as is the case in \cite{kmbook}, the respective values of $\ind$ for the constituent instantons from the broken instanton set $\{\frd_{\ast\upalpha}\}_{\upalpha\in\{1,\dots,\rmn\}}$ must add up to $1$ in the case of the top bullet of Proposition \ref{prop:comp-61}; and they must add up to $0$ in the case of the lower bullet. This is proved by a gluing argument which (given Lemmas \ref{lem:monopoles-12} and \ref{lem:monopoles-14}, and \eqref{eq:instacomp-62}) differs only cosmetically from the arguments in \cite[Chapter 19]{kmbook} that imply the analogous assertions were $Y_\infty$ compact. Since the assumptions preclude the existence of $\ind<1$ instantons when $a$ and the metric are constant, it follows that there is just one constituent broken instanton in the case of either the top or the lower bullet. Granted that this is so, then it follows directly that the space of $\R\times C^\infty(\R\times Y_\infty;S^1)$ orbits is compact.

The assertion that this moduli space is finite if each element is unobstructed follows from a local slice theorem that is the instanton equivalent of the slice assertion made by Lemma \ref{lem:monopoles-19}. What with Lemma \ref{lem:bounds-35} and the a priori bound $\scm$ bound from Proposition \ref{prop:conv-42}, the proof of this instanton version is not so different from the proof of Lemma \ref{lem:monopoles-19}. This being the case, the details are left to the reader.
\end{proof}

The next proposition gives conditions that lead to a 1--1 correspondence between gauge equivalence classes of instanton solutions to \eqref{eq:instanton-sw} on $\R\times Y_T$ and instanton solutions to these same equations on $\R\times Y_\infty$. 
\begin{proposition}
\label{prop:comp-62}
Given $\cale>1$ there exists $\kappa_\cale>1$ with the following significance: Fix $r>\kappa_\cale$ and assume that the operator $\LLf$ in \eqref{eq:linearized} that is defined by any $\en<\kappa_\cale$ solution of \eqref{eq:sw} on $Y_\infty$ with $a=a_-$ or with $a=a_+$ has trivial cokernel. Assume in addition that there are no non-constant solutions to either the $a=a_-$ or $a=a_+$, constant metric, and $\frp=0$ version of \eqref{eq:instanton-sw} on $\R\times Y_\infty$ with $\ind\leq 0$ and with $\lim_{s\to\pm\infty}\en(\frd|_s)\leq\kappa_\cale$.
\begin{itemize}\leftskip-0.25in
\item Suppose that $a$ and the metric are independent of the coordinate $s$ and that $\frp=0$ in \eqref{eq:instanton-sw}. Assume in addition that each instanton solution to \eqref{eq:instanton-sw} on $\R\times Y_\infty$ that has $\ind=1$ and $\lim_{s\to\pm\infty}\en(\frd|_s)\leq\cale$  is unobstructed. If $T$ is sufficiently large (given $r$), then the space of $\R\times C^\infty(\R\times Y_T;S^1)$ orbits of instanton solutions to \eqref{eq:instanton-sw} on $\R\times Y_T$ with $\ind=1$ and $\lim_{s\to\pm\infty}\en(\frd|_s)\leq\cale$ is in 1--1 correspondence with the space of $\R\times C^\infty(\R\times Y_\infty;S^1)$ orbits of instanton solutions to \eqref{eq:instanton-sw} on $\R\times Y_\infty$ with $\ind=1$ and $\lim_{s\to\pm\infty}\en(\frd|_s)\leq\cale$.

\item Suppose that $a$ and/or the metric depend on the coordinate $s$, and/or that $\frp$ in \eqref{eq:instanton-sw} is non-zero.  Assume that there are no instanton solutions to \eqref{eq:instanton-sw} on $\R\times Y_\infty$ with $\lim_{s\to\pm\infty} \en(\frd|_s) \leq \cale$ and $\ind < 0$; and assume that each instanton solution on $\R\times Y_\infty$ with $\lim_{s\to\pm\infty} \en(\frd|_s) \leq \cale$ and with $\ind=0$ is unobstructed.  If $T$ is sufficiently large (given $r$), then the space of $C^\infty(\R\times Y_T;S^1)$ orbits of instanton solutions to \eqref{eq:instanton-sw} on $\R\times Y_T$ with $\ind=0$ and $\lim_{s\to\pm\infty}\en(\frd|_s)\leq\cale$ is in 1--1 correspondence with the space of $C^\infty(\R\times Y_\infty;S^1)$ orbits of instanton solutions to \eqref{eq:instanton-sw} on $\R\times Y_\infty$ with $\ind=0$ and $\lim_{s\to\pm\infty}\en(\frd|_s)\leq\cale$.
\end{itemize}
In both cases the correspondence has the following property: For each $\upvarepsilon\in (0,1)$, there exists $\kappa_{\cale,\upvarepsilon}>1$ such that if $T>\kappa_{\cale,\upvarepsilon}$ and if $\frd_T$ and $\frd_\infty$ are respective instanton solutions to \eqref{eq:instanton-sw} from corresponding gauge equivalence classes of instanton solutions to \eqref{eq:instanton-sw} on $\R\times Y_T$ and $\R\times Y_\infty$, then there are maps $g_T:\R\times(Y_T\ssm N_T)\to S^1$ and $\hat{u}_T:\R\times N_T\to S^1$ such that 
\begin{enumerate}\leftskip-0.25in
\item[(a)] The $C^0$-norm of $e^{r \operatorname{dist}(\cdot,M)/c_0}(g_T\frd_T-\frd_\infty)$ on $\R\times (Y_T\ssm N_T)$ is less than $\upvarepsilon$.
\item[(b)] The $C^0$-norm of $\hat{u}_T\frd_T-(\uptheta_0,\psii)$ on $\R\times N_T$ is less than $\upvarepsilon e^{-2T/c_0}$.
\end{enumerate}
\end{proposition}
\begin{proof}
The proof is much like that of Lemma \ref{lem:monopoles-115} and left to the reader.
\end{proof}

The next proposition talks only about the $\ind=1$ case when $a$ and the metric are independent of the coordinate $s$ and when the perturbing 2-form $\frp$ in \eqref{eq:instanton-sw} is zero. We assume in this case that some finite $\cale$ has been chosen and that the contact form on $M$ and the associated almost complex structure are $L$-flat for $L\geq c_0\cale$. This is to say that the conditions in (4-1) of \cite{taubes1} are met. A second assumption is that the almost complex structure is suitably generic, which is to say that it is in the $Y_\infty$ analog of the residual set $\mathcal{J}_a$ that is the analog here of the set that is described in Section 2c of \cite{taubes1}. (Note that the conditions for membership in $\mathcal{J}_a$ need only hold for pseudo-holomorphic curves that lie entirely where the distance in $\R\times Y_\infty$ is less than $c_\cale$ because these are the only pseudo-holomorphic curves that are relevant.)
\begin{proposition}
\label{prop:comp-63}
There exists $\kappa>1$ and, given $\cale>1$, there exists $\kappa_\cale>16$ with the following significance: Assume that the contact form and almost complex structure obey the conditions in (4-1) of \cite{taubes1} for $L>\kappa\cale$ and that the almost complex structure is in the $Y_\infty$ analog of the residual set $\mathcal{J}_a$ from Section 2c of \cite{taubes1} with regards to pseudo-holomorphic subvarieties with distance less than $\kappa_\cale$ from $\R\times M$. Under these assumptions, there exists $\kappa_\ast$ such that if $r>\kappa_\ast$, then the following are true:
\begin{itemize}\leftskip-0.25in
\item Supposing that $T\geq\kappa_\ast$ or that $T=\infty$, the space of $\R\times C^\infty(\R\times Y_T;S^1)$ orbits of instanton solutions to \eqref{eq:instanton-sw} on $\R\times Y_T$ with $\ind=1$ and $\lim_{s\to\pm\infty}\en(\frd|_s)\leq\cale$ is in 1--1 correspondence with the space of $\R\times C^\infty(\R\times Y_\infty;S^1)$ orbits of instanton solutions to \eqref{eq:instanton-sw} on $\R\times Y_\infty$ with $\ind=1$ and $\lim_{s\to\pm\infty}\en(\frd|_s)\leq\cale$. Moreover, given $\upvarepsilon>0$, there exists $T_\upvarepsilon$ (independent of $r$) such that items (a) and (b) of Proposition \ref{prop:comp-62} hold when $T>T_\upvarepsilon$.
\item If $T$ is greater than $\kappa_\ast$, then the conclusions of Theorems 4.2 and 4.3 in \cite{taubes1} hold with their versions of $\kappa$ being independent of $T$.
\end{itemize}
\end{proposition}
 
The proofs of Theorems 4.2 and 4.3 in \cite{taubes1} can be repeated in this context with only minor changes because of the control given by Lemmas~\ref{lem:bounds-35} and \ref{lem:bounds-36}, and by Lemma~\ref{lem:instanton-22} on the $\R\times(Y_T\ssm M)$ part of $\R\times Y_T$.  (These lemmas are used to turn the question of whether or not there is a cokernel to an instanton version of $\LL$ into a question about operators on $\R\times Y_R$ for fixed $R \geq c_\cale$ (much like what was done in Steps 5--9 of the proof of Proposition \ref{prop:Tconv-51}). Meanwhile, Proposition \ref{prop:conv-42} and the arguments from \cite{taubes1} and \cite{taubes3} can be used (when $r \geq c_\cale$) to answer the operator question for $\R\times Y_R$ by writing the $\R\times Y_R$ operator using a $\bar\partial$ operator on certain $J$-holomorphic submanifolds.)

Proposition~\ref{prop:comp-63} requires that $a$ and the metric be independent of the coordinate $s$, and that $\frp$ be absent from \eqref{eq:instanton-sw}. When $a$ and the metric have $s$-dependence, then the 2-form $\frp$ in \eqref{eq:instanton-sw} can be used to guarantee that the conditions for the second bullet of Proposition~\ref{prop:comp-62} can be met. The following proposition makes a precise statement to this effect.

\begin{proposition}
\label{prop:comp-64}
Given $\cale > 1$, there exists $\kappa_\cale$ with the following significance: Fix $r > \kappa_\cale$ and assume that the operator $\LLf$ in \eqref{eq:linearized} that is defined by any $\en \leq \cale$ solution of \eqref{eq:sw} on $Y_\infty$ with $a=a_-$ or with $a=a_+$ has trivial cokernel.  Assume in addition that there are no non-constant solutions to either the $a=a_-$ or $a=a_+$, constant metric, and $\frp=0$ version of \eqref{eq:instanton-sw} on $\R\times Y_\infty$ with $\ind\leq 0$ and with $\lim_{s\to\pm\infty} \en(\frd|_s) \leq \cale$.  If the form $\frp$ in \eqref{eq:instanton-sw} is sufficiently generic (which means that it is chosen from a certain Baire set of smooth forms with compact support in $(-1,1)\times M$), then there are no instanton solutions to \eqref{eq:instanton-sw} on $\R \times Y_\infty$ with $\lim_{s\to\pm\infty} \en(\frd|_s) \leq \cale$ and $\ind < 0$; and all instanton solutions with $\lim_{s\to\pm\infty} \en(\frd|_s) \leq \cale$ and $\ind=0$ are unobstructed.  In particular, the assumptions for the second bullet of Proposition~\ref{prop:comp-62} are met with this choice of $\frp$; and therefore the conclusions of the second bullet of Proposition~\ref{prop:comp-62} also hold.
\end{proposition}
\begin{proof}
This is an application of the Smale-Sard theorem. The details of the argument are almost identical to those used in the proof of the regularity of Seiberg-Witten moduli spaces with non-zero spinor in the case of a compact 4-manifold.  See, e.g.\ \cite{morgan-book}.  One can also use the argument in Chapter 24.3 of \cite{kmbook}, keeping in mind that the assumptions of the proposition are such that no extra perturbation on the ends of $\R\times Y_T$ are needed (so the perturbation denoted by $\hat\frq$ in Equation (24.2) of \cite{kmbook} can be set equal to zero).
\end{proof}

\end{appendix}
\newpage
\bibliographystyle{hplain}
\bibliography{References}

\begin{thebibliography}{10}

\bibitem{bs1}
John~A. Baldwin and Steven Sivek.
\newblock Naturality in sutured monopole and instanton homology.
\newblock {\em J. Differential Geom.}, 100(3):395--480, 2015.

\bibitem{behwz}
F.~Bourgeois, Y.~Eliashberg, H.~Hofer, K.~Wysocki, and E.~Zehnder.
\newblock Compactness results in symplectic field theory.
\newblock {\em Geom. Topol.}, 7:799--888, 2003.

\bibitem{cgh-openbook}
Vincent Colin, Paolo Ghiggini, and Ko~Honda.
\newblock Embedded contact homology and open book decompositions.
\newblock 2013, math.SG/1008.2734v2.

\bibitem{cghh}
Vincent Colin, Paolo Ghiggini, Ko~Honda, and Michael Hutchings.
\newblock Sutures and contact homology {I}.
\newblock {\em Geom. Topol.}, 15(3):1749--1842, 2011.

\bibitem{ding-geiges}
Fan Ding and Hansj{\"o}rg Geiges.
\newblock The diffeotopy group of {$S^1\times S^2$} via contact topology.
\newblock {\em Compos. Math.}, 146(4):1096--1112, 2010.

\bibitem{eliashberg-ot}
Y.~Eliashberg.
\newblock Classification of overtwisted contact structures on {$3$}-manifolds.
\newblock {\em Invent. Math.}, 98(3):623--637, 1989.

\bibitem{eliashberg-twenty}
Yakov Eliashberg.
\newblock Contact {$3$}-manifolds twenty years since {J}. {M}artinet's work.
\newblock {\em Ann. Inst. Fourier (Grenoble)}, 42(1-2):165--192, 1992.

\bibitem{geiges-klukas}
Hansj\"org Geiges and Mirko Klukas.
\newblock The fundamental group of the space of contact structures on the
  3-torus.
\newblock {\em Math. Res. Lett.}, 21(6):1257--1262, 2014.

\bibitem{giroux-convex}
Emmanuel Giroux.
\newblock Convexit\'e en topologie de contact.
\newblock {\em Comment. Math. Helv.}, 66(4):637--677, 1991.

\bibitem{hlp}
G.~H. Hardy, J.~E. Littlewood, and G.~P\'olya.
\newblock {\em Inequalities}.
\newblock Cambridge, at the University Press, 1952.
\newblock 2d ed.

\bibitem{hutchings-icm}
Michael Hutchings.
\newblock Embedded contact homology and its applications.
\newblock In {\em Proceedings of the {I}nternational {C}ongress of
  {M}athematicians. {V}olume {II}}, pages 1022--1041, New Delhi, 2010.
  Hindustan Book Agency.

\bibitem{hutchings-budapest}
Michael Hutchings.
\newblock Lecture notes on embedded contact homology.
\newblock In {\em Contact and symplectic topology}, volume~26 of {\em Bolyai
  Soc. Math. Stud.}, pages 389--484. J\'anos Bolyai Math. Soc., Budapest, 2014.

\bibitem{ht0}
Michael Hutchings and Clifford~Henry Taubes.
\newblock Gluing pseudoholomorphic curves along branched covered cylinders.
  {II}.
\newblock {\em J. Symplectic Geom.}, 7(1):29--133, 2009.

\bibitem{ht2}
Michael Hutchings and Clifford~Henry Taubes.
\newblock Proof of the {A}rnold chord conjecture in three dimensions, {II}.
\newblock {\em Geom. Topol.}, 17(5):2601--2688, 2013.

\bibitem{juhasz-sutured}
Andr{\'a}s Juh{\'a}sz.
\newblock Holomorphic discs and sutured manifolds.
\newblock {\em Algebr. Geom. Topol.}, 6:1429--1457, 2006.

\bibitem{juhasz-thurston}
Andr{\'a}s Juh{\'a}sz and Dylan~P. Thurston.
\newblock Naturality and mapping class groups in {H}eegaard {F}loer homology.
\newblock 2012, math.GT/0210.4996.

\bibitem{kato-book}
Tosio Kato.
\newblock {\em Perturbation theory for linear operators}.
\newblock Die Grundlehren der mathematischen Wissenschaften, Band 132.
  Springer-Verlag New York, Inc., New York, 1966.

\bibitem{km-contact}
P.~B. Kronheimer and T.~S. Mrowka.
\newblock Monopoles and contact structures.
\newblock {\em Invent. Math.}, 130(2):209--255, 1997.

\bibitem{kmbook}
Peter Kronheimer and Tomasz Mrowka.
\newblock {\em Monopoles and three-manifolds}, volume~10 of {\em New
  Mathematical Monographs}.
\newblock Cambridge University Press, Cambridge, 2007.

\bibitem{km-excision}
Peter Kronheimer and Tomasz Mrowka.
\newblock Knots, sutures, and excision.
\newblock {\em J. Differential Geom.}, 84(2):301--364, 2010.

\bibitem{mcduff}
Dusa McDuff.
\newblock Singularities and positivity of intersections of {$J$}-holomorphic
  curves.
\newblock In {\em Holomorphic curves in symplectic geometry}, volume 117 of
  {\em Progr. Math.}, pages 191--215. Birkh\"auser, Basel, 1994.
\newblock With an appendix by Gang Liu.

\bibitem{micallef-white}
Mario~J. Micallef and Brian White.
\newblock The structure of branch points in minimal surfaces and in
  pseudoholomorphic curves.
\newblock {\em Ann. of Math. (2)}, 141(1):35--85, 1995.

\bibitem{morgan-book}
John~W. Morgan.
\newblock {\em The {S}eiberg-{W}itten equations and applications to the
  topology of smooth four-manifolds}, volume~44 of {\em Mathematical Notes}.
\newblock Princeton University Press, Princeton, NJ, 1996.

\bibitem{taubes-compendium}
Clifford~Henry Taubes.
\newblock A compendium of pseudoholomorphic beasts in {$\mathbb R\times
  (S^1\times S^2)$}.
\newblock {\em Geom. Topol.}, 6:657--814 (electronic), 2002.

\bibitem{taubes-asymptotic}
Clifford~Henry Taubes.
\newblock Asymptotic spectral flow for {D}irac operators.
\newblock {\em Comm. Anal. Geom.}, 15(3):569--587, 2007.

\bibitem{taubes-weinstein1}
Clifford~Henry Taubes.
\newblock The {S}eiberg-{W}itten equations and the {W}einstein conjecture.
\newblock {\em Geom. Topol.}, 11:2117--2202, 2007.

\bibitem{taubes-weinstein2}
Clifford~Henry Taubes.
\newblock The {S}eiberg-{W}itten equations and the {W}einstein conjecture.
  {II}. {M}ore closed integral curves of the {R}eeb vector field.
\newblock {\em Geom. Topol.}, 13(3):1337--1417, 2009.

\bibitem{taubes1}
Clifford~Henry Taubes.
\newblock Embedded contact homology and {S}eiberg-{W}itten {F}loer cohomology
  {I}.
\newblock {\em Geom. Topol.}, 14(5):2497--2581, 2010.

\bibitem{taubes2}
Clifford~Henry Taubes.
\newblock Embedded contact homology and {S}eiberg-{W}itten {F}loer cohomology
  {II}.
\newblock {\em Geom. Topol.}, 14(5):2583--2720, 2010.

\bibitem{taubes3}
Clifford~Henry Taubes.
\newblock Embedded contact homology and {S}eiberg-{W}itten {F}loer cohomology
  {III}.
\newblock {\em Geom. Topol.}, 14(5):2721--2817, 2010.

\bibitem{taubes4}
Clifford~Henry Taubes.
\newblock Embedded contact homology and {S}eiberg-{W}itten {F}loer cohomology
  {IV}.
\newblock {\em Geom. Topol.}, 14(5):2819--2960, 2010.

\bibitem{taubes5}
Clifford~Henry Taubes.
\newblock Embedded contact homology and {S}eiberg-{W}itten {F}loer cohomology
  {V}.
\newblock {\em Geom. Topol.}, 14(5):2961--3000, 2010.

\bibitem{tw}
W.~P. Thurston and H.~E. Winkelnkemper.
\newblock On the existence of contact forms.
\newblock {\em Proc. Amer. Math. Soc.}, 52:345--347, 1975.

\bibitem{uhlenbeck}
Karen~K. Uhlenbeck.
\newblock Connections with {$L^{p}$} bounds on curvature.
\newblock {\em Comm. Math. Phys.}, 83(1):31--42, 1982.

\bibitem{weinstein}
Alan Weinstein.
\newblock Contact surgery and symplectic handlebodies.
\newblock {\em Hokkaido Math. J.}, 20(2):241--251, 1991.

\bibitem{wendl-hierarchy}
Chris Wendl.
\newblock A hierarchy of local symplectic filling obstructions for contact
  {$3$}-manifolds.
\newblock {\em Duke Math. J.}, 162(12):2197--2283, 2013.

\bibitem{yau-overtwisted}
Mei-Lin Yau.
\newblock Vanishing of the contact homology of overtwisted contact 3-manifolds.
\newblock {\em Bull. Inst. Math. Acad. Sin. (N.S.)}, 1(2):211--229, 2006.
\newblock With an appendix by Yakov Eliashberg.

\end{thebibliography}

\end{document}